\documentclass{amsart}
\usepackage{amssymb}
\usepackage{graphicx}
\usepackage{comment}
\usepackage{tikz}
\usepackage{tikz-cd}
\usetikzlibrary{positioning}

\usepackage{amsmath, amsthm}
\usepackage{hyperref}
\hypersetup{
    linkcolor=blue
}
\usepackage[capitalise, nameinlink]{cleveref}
\usepackage{thmtools}
\usepackage{zref-clever}

\theoremstyle{definition}

\theoremstyle{remark}

\numberwithin{equation}{section}

\begin{document}

\title[Arithmetic in $(LO, +)$]{full title}
\title{The Additive Arithmetic of Linear Orders}


\author{Garrett Ervin}
\address{Garrett Ervin, E\"otv\"os Lor\'and University, Institute of Mathematics, P\'azm\'any P\'eter stny. 1/C, 1117 Budapest, Hungary}
\curraddr{}
\email{garrette@ttk.elte.hu}
\thanks{The first author is supported by the Momentum MSCA Programme, which is co-funded by the European Commission through the HORIZON-MSCA-2023-COFUND programme and the Secretariat of the Hungarian Academy of Sciences (MTA)}

\author{Eric Paul}
\address{Eric Paul, University of Illinois Urbana-Champaign, Siebel Center for Computer Science, 201 N Goodwin Ave, Urbana, IL 61801, United States}
\curraddr{}
\email{epaul9@illinois.edu}
\thanks{}

\subjclass[2020]{Primary 06A05, Secondary 06F05}

\date{\today}

\dedicatory{}

\begin{abstract}
We present a systematic development of the arithmetic of the class of linear orders under the ordered sum $(LO, +)$ and prove a number of new results. Our approach is based on a Euclidean algorithm for pairs of linear orders that almost additively commute. 

Among our results:

\begin{itemize}
    \item[i.] We generalize and give unified proofs of the main classical theorems for $(LO, +)$, including Lindenbaum's division theorem for $(LO, +)$ and a representation theorem for additively commuting pairs of linear orders due to Aronszajn.
    \item[ii.] We solve the following problem, posed by Tarski in 1956: is it true that for every pair of linear orders $A, B$ and quadruple of natural numbers $n, m, k, l \geq 1$, if $nA + mB \cong kB + lA$ then $A + B \cong B + A$? Tarski and Chang showed the answer is yes for certain choices of the coefficients $n, m, k, l$. We show the answer is yes in general. 
    \item[iii.] We prove the following characterization of the additively commuting pairs in $LO$: $A + B \cong B + A$ if and only if $\omega A$ embeds initially in $\omega B$ and $\omega^* A$ embeds finally in $\omega^* B$, or vice versa. We show this can be viewed as a correctly revised version of a refuted conjecture of Tarski.
    \item[iv.] We characterize the commutative semigroups $(S, \oplus)$ that can be represented in $(LO, +)$ and show in particular they are all subsemigroups of naturally totally ordered semigroups in the sense of Clifford. 
\end{itemize}
\end{abstract}

\maketitle

\section{Introduction} \label{section:intro}

Let $LO$ denote the class of linear orders. Given two orders $A, B \in LO$, their \textit{ordered sum} $A + B$ is the order obtained by placing a copy of $B$ to the right of $A$. Given a natural number $n \geq 1$, $nA$ denotes the $n$-fold sum $A + A + \cdots + A$.

Many of the fundamental results about sums of linear orders are due to Lindenbaum and Tarski, and were announced in their joint 1926 paper \cite{LindenbaumTarski}. Outstanding among these results are the following two theorems of Lindenbaum. 

\theoremstyle{definition}
\newtheorem{lct}{Theorem}[section]
\begin{lct}\label{lct}
(Lindenbaum's Cancellation Theorem; \cite[3.13]{LindenbaumTarski}) Suppose $A$ and $B$ are linear orders and there is a natural number $n \geq 1$ such that $nA \cong nB$. Then $A \cong B$. 
\end{lct}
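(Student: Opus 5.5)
Given $nA \cong nB$, the plan is to compare the two decompositions $A_1 + \cdots + A_n$ and $B_1 + \cdots + B_n$ of a single order $L \cong nA \cong nB$, with $A_i \cong A$ and $B_i \cong B$, and to show directly that $A \cong B$. The case $n=1$ is trivial, so fix $n \geq 2$.

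\textbf{Step 1: overlapping pieces.} If the copy $A_1$ coincides with $B_1$ as a set, we are done. Otherwise, by symmetry assume $A_1 \subsetneq B_1$ is a proper initial segment. Then $B = A + C$ for some nonempty $C$. Thus one order is isomorphic to an initial segment of the other, and similarly for final segments. The approach is to play these partial absorptions against each other.

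\textbf{Step 2: reduce to a commuting-type identity.} Write $B \cong A + C$ and substitute this into $nA \cong nB$:
\[ nA \cong (A + C) + (A + C) + \cdots + (A + C). \]
Next I would locate, inside $nA$, the image of the first block $A_1$ of the right-hand decomposition, and then track how the successive copies $A_i$ sit relative to the copies $B_j$. Counting positions, $B_1 \supsetneq A_1$ forces the last copy $B_n$ to be a proper final segment of $A_n$. Symmetrically, $A \cong D + B$ for some $D$.

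The aim is then to get equations of the form $A + C \cong B$ and $D + B \cong A$. Substituting gives $A \cong D + A + C$. Together with the global identity, I would try to show $D + A \cong A$ and $A + C \cong A$. From these, $B \cong A + C \cong A$.

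\textbf{Step 3: the absorption argument (main obstacle).} Deriving $A + C \cong A$ from $A \cong D + A + C$ and $nA \cong n(A+C)$ is where the real work lies. Absorption in the middle does not in general give one-sided absorption. For this I would use a Schröder--Bernstein/back-and-forth style argument for initial segments.

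From $A \cong D + A + C$, iterating gives
\[ A \cong \omega D + X + \omega^* C \]
for a suitable middle part $X$, in the sense of a limit decomposition. I would then try to show that $\omega^* C$ absorbs a trailing $C$, that is, $\omega^* C + C \cong \omega^* C$. This would give $A + C \cong A$.

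The delicate point is that the middle part $X$, defined as the intersection of the nested segments, must be handled carefully, perhaps being empty. I would carry this out with an explicit cut-by-cut construction of the isomorphism.

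If this direct route fails, the fallback is an induction on $n$. I would cancel the matching pieces $A_1 \subseteq B_1$ at both ends to pass to an equation with smaller coefficients. The obstacle then becomes ensuring that the reduced equation still has the symmetric form $n'A' \cong n'B'$.
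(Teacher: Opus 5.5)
\textbf{The gap is in Step 2.} You claim that $A_1 \subsetneq B_1$ forces $B_n$ to be a proper final segment of $A_n$. This holds for $n=2$, where each decomposition has a single cut. It is false for $n \geq 3$.

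Take $A = B = \mathbb{Q}$, $n=3$, $L=\mathbb{Q}$, and irrational cuts $a_1 < b_1 < b_2 < a_2$. Every interval between irrational cuts is isomorphic to $\mathbb{Q}$, so these cuts witness $3A \cong 3B$. Yet $A_1 \subsetneq B_1$ and also $A_3 \subsetneq B_3$.

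So in general both end comparisons can go the same way: $B \cong A + C \cong D' + A$. Your argument silently excludes exactly this configuration, and it is the whole difficulty. The ``crossed'' case you do treat ($A \leqslant_{init} B$, $B \leqslant_{fin} A$) is just Lindenbaum's Corollary~\ref{CSBLO}. In the non-crossed case the end relations alone say nothing; for instance $A=\mathbb{Z}$ is both a proper initial and a proper final segment of $B=\mathbb{Z}+1+\mathbb{Z}$. So more of the hypothesis $nA \cong nB$ must be used there.

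\textbf{Step 3 is not where the work lies.} Absorption in the middle \emph{does} give one-sided absorption: that is Lindenbaum's Theorem~\ref{XcongAXBiffcongAXandXB}. Your own sketch is already a complete proof of it: $A \cong \omega D + X + \omega^* C$ by the orbital decomposition, then $\omega^*C + C \cong \omega^*C$. Nothing is delicate about $X$, and an empty $X$ is harmless.

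\textbf{The missing idea} is the paper's one-sided cancellation, Theorem~\ref{leqslantinitfincancellationthm}. Consider the bad case $A_1 \subsetneq B_1$ and $A_n \subseteq B_n$, with cuts $a_i$, $b_i$ and left endcut $c$.
\begin{itemize}
\item Drop the last copies. Then $(n-1)B \cong [c, b_{n-1}]$ is initial in $[c, a_{n-1}] \cong (n-1)A$.
\item The pigeonhole Corollary~\ref{sumAiconvsumBi} gives $B \leqslant_{conv} A$, say $A \cong Y + B + Z$.
\item Substituting $B \cong D' + A$ gives $A \cong Y + D' + A + Z$.
\item Theorem~\ref{XcongAXBiffcongAXandXB} then yields $A \cong Y + D' + A \cong Y + B$, i.e.\ $B \leqslant_{fin} A$.
\item Together with $A \leqslant_{init} B$, Corollary~\ref{CSBLO} gives $A \cong B$.
\end{itemize}

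This also shows why your fallback aims at the wrong target. After peeling off the mismatched ends you do not retain an isomorphism $n'A' \cong n'B'$, only a convex embedding $(n-1)B \leqslant_{conv} (n-1)A$. The point is that this weaker relation already suffices, via pigeonhole and absorption, with no inductive isomorphism needed.
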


\theoremstyle{definition}
\newtheorem{ldt}[lct]{Theorem}
\begin{ldt}\label{ldt}
(Lindenbaum's Division Theorem; \cite[3.14]{LindenbaumTarski}) Suppose $A$ and $B$ are linear orders and there are natural numbers $n, m \geq 1$ with $\gcd(n, m) = 1$ such that $nA \cong mB$. Then there is a linear order $C$ such that $A \cong mC$ and $B \cong nC$. 
\end{ldt}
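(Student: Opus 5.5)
We prove Lindenbaum's Division Theorem (\cref{ldt}) by induction on $n+m$, using the Euclidean-algorithm idea the abstract advertises. The basic tool is a \emph{refinement} observation. If $X_1 + \cdots + X_p \cong Y_1 + \cdots + Y_q$, then the two decompositions of the common order can be overlaid. In particular, from $nA \cong mB$ with $n > m$, we compare the initial segment $A$ (the first summand on the left) with the initial segment $B$ (the first summand on the right). Since both are initial segments of the same order, one contains the other; because $nA$ is longer, $B \supseteq A$. So $B \cong A + D$ for some order $D$.

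\textbf{Case $n > m$.} The plan is to convert the relation into an almost-commuting relation between $A$ and $D$, and then reduce coefficients.

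1. Write $B \cong A + D$ and substitute into $nA \cong mB$. This gives
$nA \cong A + D + A + D + \cdots + A + D$, with $m$ copies of $A+D$.
2. Cancel the leading $A$ on both sides. This is legitimate because we are literally removing the initial segment corresponding to the first copy of $A$ in the common order. The result is
$(n-1)A \cong D + A + D + \cdots$.
3. Read off from the initial segment $D$ of the right-hand side, and from the final segment $D$ in the decomposition of $mB$, that
$(n-1)A \cong D + (m-1)(A+D)$ and also $(n-1)A \cong (m-1)(D+A) + D$.

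The goal is to show $A + D \cong D + A$. Once commutation is known, $mB \cong mA + mD$, and cancellation of the common initial piece $mA$ yields $(n-m)A \cong mD$, with $\gcd(n-m,m)=1$. The induction hypothesis then gives $C$ with $A \cong mC$ and $D \cong (n-m)C$. Hence $B \cong A + D \cong nC$, as required.

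\textbf{Case $n < m$.} This is symmetric, with the roles of $A$ and $B$ exchanged. When $n = m = 1$ the statement is trivial with $C = A$.

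\textbf{Main obstacle: the commutation step.} The hardest part is showing $A + D \cong D + A$ from relations like $(n-1)A \cong D + (m-1)(A+D)$. The point is that $A$ is simultaneously an initial segment and a final segment of the order $nA \cong mB$, in shifted positions.

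My first attempt would be a direct self-similarity argument in the order $L = nA \cong mB$. The $A$-cuts sit at positions that are multiples of a period, and the $B$-cuts sit at multiples of another. By coprimality, every cut pattern is realized, in the spirit of the Euclidean algorithm on $(n,m)$. I would explicitly build the isomorphism $A + D \to D + A$ by composing the shift maps $nA \to nA$ (moving by one $A$-block) and $mB \to mB$ (moving by one $B$-block). These shifts are defined on all but one block, and their composites translate by $D$.

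If that bookkeeping becomes unwieldy, the fallback is to prove a general lemma first. The lemma says: if $kX + Y \cong Y + kX$-type relations hold, or more generally $X + Z \cong Z + Y$ with $X, Y$ suitably related, then an Aronszajn-style representation holds. That representation writes both orders as sums of a common $C$, or as $C$-power orders like $\omega C$ and $\omega^* C$. From this representation one then shows that the infinite case cannot occur here, since $n$ and $m$ are finite. Lindenbaum's Cancellation Theorem (\cref{lct}) would be used to pin down that the finite pieces agree.

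The delicate point is ruling out the $\omega$-type limit pieces. I expect this to be the real content of the proof.
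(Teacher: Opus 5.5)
Your reduction has two genuine gaps, and they sit exactly where the content of the theorem lies.

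\textbf{First gap: commutation is never proved.} The claim $A + D \cong D + A$ is not established. The shift-map composition is only a heuristic. The fallback also misdescribes what a representation theorem yields. In the paper's Aronszajn-type theorems the nontrivial alternative is a replacement of a real interval up to the orbit relation of $\langle 1, a_1\rangle$ with $a_1$ irrational, not sums involving $\omega C$ or $\omega^* C$. Excluding that alternative is not a matter of $n, m$ being finite. It needs the non-splitting hypothesis together with a length-invariance or continued-fraction argument.

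Your relation $(n-1)A \cong D + (m-1)(A+D)$ is an isomorphism of $AD$-sums offset on both sides. So the commutation you need is precisely an instance of Tarski's generalized sum problem, Theorem~\ref{generalizedsumproblemsoln}. Its proof uses the full Euclidean-algorithm analysis of Theorems~\ref{offsetomegaABsumthm} and~\ref{BplusAinitandfinAplusB}. The paper invokes this once, to get $A + B \cong B + A$ (Proposition~\ref{nAcongmBimpliesABcommute}). It then fixes a symmetric setup. Each stage of the algorithm preserves symmetry of the setup, so all later remainders commute automatically, with no cancellation needed.

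\textbf{Second gap: the cancellation step fails.} Even granting commutation, cancelling the common initial piece $mA$ in $mA + (n-m)A \cong mA + mD$ is left cancellation. Left cancellation is false in $(LO,+)$, and it fails here. Take $A = \omega + \mathbb{Q}\mathbb{Z} + \omega^*$, which is splitting and satisfies $1 + A \cong A \cong A + 1$. Take $n = 3$, $m = 2$. In $L = 3A$, let the first $B$-block end one point to the right of the end of the first $A$-block. Both $B$-blocks are then isomorphic to $A$, so this witnesses $3A \cong 2B$ with $B = A$. Your $D$ is a single point and $A + D \cong D + A$, yet $(n-m)A = A \not\cong 2 = mD$.

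So the splitting case must be split off. The paper does this via Lemma~\ref{2AconvnB2BconvmAsplittingiff}: $A$ splits iff $B$ does, and then $A \cong B$ and $C = A$ works. In the non-splitting case the cancellation is true, but proving it takes substantial machinery, e.g.\ that $[A]_{\approx}$ represents a cancellative strict halfgroup (Theorem~\ref{Aapproxclasseitherfullystrictortrivialrepn}).

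The paper's proof of Theorem~\ref{divtheorem} avoids cancellation altogether:
\begin{itemize}
\item It lets the order-theoretic algorithm choose its own coefficients $N_k$.
\item It derives $b_0A_1 \cong b_1A_0$ using only $k(X+Y) \cong kX + kY$ for commuting $X, Y$.
\item It forces $a_0b_1 = a_1b_0$ from $a_0b_1A_1 \cong a_1b_0A_1$ via the splitting dichotomy (Theorem~\ref{splittingdichthm}).
\item It excludes non-termination the same way, by comparing continued-fraction convergents.
\end{itemize}

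\textbf{Minor point.} The claim ``$B \supseteq A$ because $nA$ is longer'' needs justification by Theorem~\ref{leqslantinitfincancellationthm}, applied to $mA \leqslant_{init} nA \cong mB$. That only gives $A \leqslant_{init} B$ abstractly. It does not show that the first $B$-block contains the first $A$-block inside the common order, which is what your step~2 uses.
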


If we view each natural number $n$ as a finite linear order with $n$-many points, then the ordered sum $+$ agrees with the usual sum on $\mathbb{N}$, and can be viewed as an extension of this operation to the much larger class $LO$. When the orders $A$ and $B$ from the statement of the division theorem are natural numbers, the order $C$ must equal $\gcd (A, B)$. Thus for more general orders, $C$ may be viewed as the order-theoretic greatest common divisor of $A$ and $B$. It follows from the cancellation theorem that $C$ is unique up to isomorphism. 

Lindenbaum's theorems are strikingly general: they apply to an arbitrary pair of linear orders $A$ and $B$ with a common finite multiple $nA \cong mB$. This generality suggests that the class of all linear orders, not just special subclasses such as the class of natural numbers or the class of ordinals, has a reasonable and global additive arithmetic that may be studied as such. More specifically, the theorems suggest that $(LO, +)$ has a Euclidean division structure generalizing that of $(\mathbb{N}, +)$.

The results in Lindenbaum and Tarski's paper \cite{LindenbaumTarski} are presented without proof, and Lindenbaum's own proofs of his theorems were never published. Proofs of the cancellation and division theorems did not appear until 30 years later, after Lindenbaum's death, in Tarski’s book \textit{Ordinal Algebras} \cite{Tarski}.

In \textit{Ordinal Algebras}, Tarski undertakes a systematic study of the arithmetic of the ordered sum by passing from $LO$ to a more abstract context. He axiomatizes a type of algebraic structure called an ordinal algebra. Such algebras have the form
\[
\mathfrak{A} = (A, +, \sum, *, 0), 
\]
where $A$ is the universe of the algebra, $+$ is a binary ordered sum operation, $\sum$ is an $\mathbb{N}$-ary ordered sum operation, $*$ is a unary reversal operator, and $0$ is an additive identity. 

These operations generalize the corresponding operations for linear orders: if we distinguish linear orders only up to isomorphism, then 
\[
\mathfrak{A} = (LO, +, \sum, *, \emptyset)
\]
is the prototypical example of an ordinal algebra, and it was Tarski's earlier work with Lindenbaum on the arithmetic of this particular algebra that inspired his abstract definition.  

Tarski's stated goal was to generalize the arithmetic study of sums of linear orders to ordered sums of general binary relations. In the introduction to \textit{Ordinal Algebras}, he writes:

\begin{quote}
An important task in creating the general theory of binary
relations is to develop the \textit{arithmetic of relation types}, i.e., to study operations by means of which the isomorphism types of complicated relations can be obtained from those of simpler ones. This arithmetic may eventually become a powerful instrument which will give us a better insight into the structural variety of relations. 

[...]

The main purpose of this monograph is just the development of the \textit{theory of ordinal addition} for arbitrary [binary] relation types.
\end{quote}
Here, ``ordinal addition" means ``ordered addition" of binary relation types, abstracting from the ordered addition of linear orders. This is in contrast to the (unordered) ``cardinal addition" that Tarski had previously systematized in his book \textit{Cardinal Algebras} \cite{TarskiCard}, abstracting from the addition of cardinals in the absence of the full axiom of choice.

After developing the theory of ordinal algebras from their axioms, Tarski proves that Lindenbaum's division theorem holds in an arbitrary ordinal algebra, and hence in the specific ordinal algebra $(LO, +, \sum, *, \emptyset)$. See \cite[Theorem 1.50]{Tarski}.

Tarski's proof of the division theorem, which he labels ``Euclid's Theorem," is elegant but somewhat opaque, and despite the label not transparently connected to Euclidean division in a familiar arithmetic context such as $(\mathbb{N}, +)$ or $(\mathbb{R}, +)$. We were originally motivated to better understand the theorem, and to see its proof as a proof by division in a sense that clearly generalizes division in the natural numbers. 

This paper is the result of our research in this direction. In it we will develop the arithmetic of $(LO, +)$ on the basis of a Euclidean algorithm for pairs of linear orders $A, B$ that satisfy a one-sided weak commutativity relation. This algorithm is a generalization of the classical Euclidean algorithm on $\mathbb{N}$. Combinatorially, it is equivalent to the dynamical system arising from a two-piece generalized interval exchange transformation on the unit interval
\[
[0, 1) = A B \mapsto B A X = [0, 1),
\]
in which the maps between the corresponding copies of the intervals $A$ and $B$ are required to preserve the orientation of these intervals but not necessarily their lengths, and whose images leave a gap on one side of $[0, 1)$. Readers familiar with interval exchanges or orientation-preserving maps on the circle will recognize the mechanics of such transformations in many of our proofs. 

We will show that the algorithm yields a new proof of Lindenbaum's division theorem that directly generalizes a proof of the theorem for $(\mathbb{N}, +)$ obtained by applying its classical version. We also use it to solve two open problems from \textit{Ordinal Algebras}, one stated explicitly and one implicitly posed, concerning additively commuting pairs of linear orders; see the discussion in Sections \ref{subsect:tarskicommutingsumprob} and \ref{subsect:tarskicommutingpairsconj} below. 

In the second half of the paper, we use the systems of isomorphisms arising from runs of the Euclidean algorithm to prove representation theorems for the linear orders appearing in the course of such a run. In the last section, we use these representations to characterize the commutative semigroups $(S, \oplus)$ that can be represented as semigroups of linear order types under the ordered sum. See the discussion in Section \ref{subsect:reppinsemigroupsintro} below.  

A version of the algorithm can be applied in an arbitrary ordinal algebra. In a forthcoming companion paper \cite{ErvinPaul}, we will show that as a consequence several of the arithmetic results established in this paper for $(LO, +)$ hold in an arbitrary ordinal algebra. 

\subsection{Tarski's Commuting Sums Problem}\label{subsect:tarskicommutingsumprob}

Tarski devotes significant effort in \textit{Ordinal Algebras} to the question of characterizing the pairs of elements in a given ordinal algebra that additively commute. He does not explain his interest in this question, but in hindsight it is a natural one: we will see that there is a close connection between commutativity and Euclidean division in $(LO, +)$, and the same is true in an arbitrary ordinal algebra $\mathfrak{A}$. 

On page 43, Tarski poses the following question:

\theoremstyle{definition}
\newtheorem{tsp}[ldt]{Problem}
\begin{tsp}\label{tsp}
(Tarski's Sum Problem; \cite[pg. 43]{Tarski}) Suppose $A$ and $B$ are linear orders and there are natural numbers $n, m, k, l \geq 1$ such that 
\[
nA + mB \cong kB + lA.\]
Does it follow that $A + B \cong B + A$?
\end{tsp}

Tarski shows in Corollary 1.53 that the answer to Problem \ref{tsp} is yes if either $n = l$ or $m = k$, and then notes that the general problem remains open. 

In Appendix A of \textit{Ordinal Algebras}, written by C.C. Chang, Chang extends Tarski's result to get a positive answer for the case when both $n \leq l$ and $m \leq k$, leaving open the case when one of $(n < l) \wedge (m > k)$ or $(n > l) \wedge (m < k)$ holds. Regarding this case, Chang says: 
\begin{quote}
The discussion of this remaining problem seems to present considerable difficulties.
\end{quote}
See \cite[pg. 92]{Tarski}. 

Immediately after posing Problem \ref{tsp}, Tarski poses the following generalization:

\theoremstyle{definition}
\newtheorem{tgsp}[ldt]{Problem}
\begin{tgsp}\label{tgsp}
(Tarski's Generalized Sum Problem; \cite[pg. 43]{Tarski}) Suppose $A$ and $B$ are linear orders and there is an isomorphism
\[
C_0 + C_1 + \cdots + C_{n-1} \cong D_0 + D_1 + \cdots + D_{m-1}
\]
such that $n, m \geq 2$, $C_i, D_j \in \{A, B\}$ for $0 \leq i < n$ and $0 \leq j < m$, $C_0 = D_{m-1} = A$, and $D_0 = C_{n-1} = B$. 

Does it follow that $A + B \cong B + A$?
\end{tgsp}

In Section \ref{section:commutativitylaws}, we solve Tarski's generalized sum problem for $(LO, +)$ in the affirmative, using data accrued from a run of the Euclidean algorithm for the orders $A$ and $B$ appearing in an instance of the problem. See Theorem \ref{generalizedsumproblemsoln}.  

Tarski poses Problems \ref{tsp} and \ref{tgsp} for an arbitrary ordinal algebra; the proof given in Section \ref{section:commutativitylaws} solves the restriction of these problems to $LO$. In the forthcoming \cite{ErvinPaul}, we will show that the proof can be generalized to an arbitrary ordinal algebra, giving a complete solution to Tarski's problems.

\subsection{Tarski's Commuting Pairs Conjecture}\label{subsect:tarskicommutingpairsconj}

In unpublished work from the 1930s (see the introduction to \cite{Aronszajn}, and the footnote in \cite[pg. 80]{Tarski}), Tarski made a conjecture to the effect that the only ways that a pair of linear orders $A$ and $B$ can satisfy $A + B \cong B + A$ are the obvious ones.

\theoremstyle{definition}
\newtheorem{tarconj}[ldt]{Conjecture}
\begin{tarconj}\label{tarconj}
(Tarski's Commuting Pairs Conjecture; \cite[pg. 80]{Tarski}) Suppose that $A$ and $B$ are linear orders. Then $A + B \cong B + A$ if and only if one of the following conditions holds:
\begin{itemize}
    \item[i.] There is a linear order $C$ and natural numbers $n, m \geq 1$ such that $A \cong nC$ and $B \cong mC$; 
    \item[ii.] $A + B \cong B + A \cong A$; 
    \item[iii.] $A + B \cong B + A \cong B$. 
\end{itemize}
\end{tarconj}

Tarski showed the conjecture is true if $A$ and $B$ are assumed to be either countable or scattered. After appropriately generalizing the notion of a scattered element to an arbitrary ordinal algebra, he showed that the conjecture holds for such elements in any ordinal algebra (\cite[Theorem 1.65]{Tarski}). 

Lindenbaum found a counterexample to Tarski's conjecture (also unpublished). This example was later generalized by Aronszajn, who in \cite{Aronszajn} gives a structural characterization of the additively commuting pairs of linear orders $A$ and $B$. Aronszajn showed that such pairs are obtained by replacing the points in a closed interval of $\mathbb{R}$ with linear orders in a way that respects a group of translations on $\mathbb{R}$; see Theorem \ref{symmetricaronszajnrepnthm} for a precise statement of the result.

Tarski discusses this history in a footnote on page 80 of \textit{Ordinal Algebras}, concluding there:
\begin{quote}
Aronszajn's results, however, cannot be formulated within the arithmetic of ordinal algebras.
\end{quote}

This is in contrast to the characterization of the commuting pairs conjectured in \ref{tarconj}, which \textit{can} be stated in the arithmetic language of ordinal algebras (though of course not proved, since it fails in $LO$). 

J\'onsson retells this story in his paper \cite{Jonsson} from 1982:

\begin{quote}
Many of the deeper properties of ordered addition were listed without proofs in Lindenbaum and Tarski [1926], and these properties were developed axiomatically in Tarski [1956]. More
specifically, the operations treated there were ordinal addition, as applied to pairs of types and to $\omega$-sequences of types, and conversion. The axioms for ordinal algebras hold for the isomorphism types of arbitrary binary relations, and most of the known properties of these operations are consequences of the axioms. However, some facts about ordinal addition cannot even be formulated within this framework. A notable example is Aronszajn's [1952] characterization of commuting pairs of isomorphism types.
\end{quote}

These remarks raise the implicit question of whether one can revise Tarski's conjecture to get a correct arithmetic characterization of the isomorphism $A + B \cong B + A$, i.e. one that is expressible in the language of ordinal algebras. 

We will show the answer is yes. There is a natural weakening of (i.) in Tarski's conjecture which is expressible in the language of ordinal algebras and, when substituted for (i.), yields a correct arithmetic characterization of the additively commuting pairs $A, B \in LO$.  

For a linear order $X$, $\omega X$ denotes the right infinite sum of $X$: 
\[
X + X + \cdots = \sum_{i} X
\]
and $\omega^* X$ denotes left-infinite sum 
\[
\cdots + X + X = \left(\sum_i X^*\right)^*.
\]

\theoremstyle{definition}
\newtheorem{revisedtarconj}[ldt]{Theorem}
\begin{revisedtarconj}\label{revisedtarconj}
Suppose that $A$ and $B$ are linear orders. Then $A + B \cong B + A$ if and only if one of the following conditions holds:
\begin{itemize}
    \item[i.] $\omega A \cong \omega B$ and $\omega^*A \cong \omega^*B$; 
    \item[ii.] $A + B \cong B + A \cong A$; 
    \item[iii.] $A + B \cong B + A \cong B$. 
\end{itemize}
\end{revisedtarconj}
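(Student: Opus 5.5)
The plan is to prove the two directions separately. The reverse direction is trivial for (ii) and (iii), so all its work is in (i). The forward direction is a case analysis driven by the Euclidean algorithm for the pair $A,B$.

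\emph{Forward direction.} Assume $A + B \cong B + A$ and fix an isomorphism $f\colon A+B \to B+A$. The image of the initial copy of $A$ is an initial segment of $B + A$, so either $A$ embeds initially in $B$, or $A \cong B + A'$ for some $A'$ that is initial in $A$. By symmetry, assume $B \cong A + B_1$ or $A \cong B + A_1$. One checks that the remainder again commutes with the smaller order: for example, $A + B_1 + A \cong A + A + B_1$ gives $B_1 + A \cong A + B_1$ by left cancellation of the common initial copy. Left cancellation is not available in general, so I would obtain it by pushing $f$ forward and reading off the residual isomorphism directly. Iterating yields a run of the Euclidean algorithm $(A_0,B_0)=(A,B),(A_1,B_1),\dots$ of commuting pairs. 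There are then three cases.
\begin{itemize}
\item[(a)] The run terminates with $A_k \cong B_k = C$. Back-substitution gives $A \cong nC$ and $B \cong mC$, so (i) holds trivially.
\item[(b)] The run is infinite with infinitely many alternations. Then $\omega A$ and $\omega B$ are both the limit of the nested initial decompositions, since each is $\omega$ copies of the previous remainder pattern. Hence $\omega A \cong \omega B$. The same argument applied to final segments, using the reversed algorithm on $A^*,B^*$ with $B^*+A^* \cong A^*+B^*$, gives $\omega^* A \cong \omega^* B$.
\item[(c)] The run stalls, with one order being eventually subtracted forever. For instance $A \cong nB + A_n$ for all $n$, which yields $A \cong \omega B + A'$. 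Then $B + A \cong A$, and with commutativity this gives (ii); the symmetric situation gives (iii).
\end{itemize}

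\emph{Reverse direction, case (i).} Assume $\omega A \cong \omega B$ and $\omega^* A \cong \omega^* B$, and that neither (ii) nor (iii) is in force (otherwise we are done). Comparing where the first copy of $A$ lands in $\omega B$ shows that either $A \cong nB + A'$ with $A'$ a proper initial segment of $B$, or $A$ absorbs $\omega B$. I would run the same subtraction procedure, now driven by the isomorphism $\omega A\cong\omega B$, in parallel with the dual procedure driven by $\omega^*A\cong\omega^*B$. From these two runs I would build $A+B \cong B+A$ by a back-and-forth/Schröder--Bernstein-style gluing. Write $A+B = A + B + \omega^*$-tail information and $B + A = B + A + \omega$-head information. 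The left halves are matched using $\omega A \cong \omega B$ (both $A + \omega B$ and $B + \omega A$ are isomorphic to $\omega A$). The right halves are matched using $\omega^* A + B \cong \omega^* B + A\cong \omega^*A$.

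The concrete route I would try is to show that (i) forces the initial and final Euclidean runs to produce the same sequence of quotients. Both runs are determined by the same data, namely the ratio of $A$ to $B$ read off from $\omega A\cong\omega B$. Once the quotient sequences agree, the resulting systems of isomorphisms can be spliced into a single isomorphism $A + B \to B + A$, exactly as in an interval exchange with matching rotation number.

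I expect this splicing step to be the main obstacle. Knowing the initial and final behaviour separately does not immediately control the middle of $A+B$, and one must verify that the two runs cover everything, with any leftover ``core'' (a point or gap in the Aronszajn-style picture) handled by a separate limit argument.
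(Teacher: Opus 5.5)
Your forward direction follows the paper's strategy: run the Euclidean algorithm on a symmetric setup, whose successive stages stay symmetric. Cases (a) and (c) are essentially fine, with one correction. An absorption at a stage $k>0$ does not give (ii) or (iii) for the original pair. There you absorb $A_{k+1}$ into $A_k$, the run effectively terminates in the divisor $A_k$, and you get (i).

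The genuine gap is case (b). Your argument uses only the nested decompositions $A_k\cong N_kA_{k+1}+A_{k+2}$. These are equally present in a non-terminating one-sided (left) division system, and there $\omega A_0\cong\omega A_1$ can fail. In the real representation one has $\omega A_k\cong R+(0,a_k)(K_{[x]})+M+R+(0,a_k)(K_{[x]})+M+\cdots$ with $M\in\{L,L'\}$. Identifying this with $R+(0,\infty)(K_{[x]})$ requires the translations in $G$ to act by automorphisms, i.e.\ sufficiency $L+R\cong L'+R$. The paper notes that without sufficiency, $\omega A_0\not\cong\omega A_1$ can occur. So the commuting isomorphism has to be used at the limit, and your sketch never does this.

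The paper separates two cases. In the splitting case, $A\cong B$ by Proposition \ref{isosoftermsinsplittingsystems}. In the non-splitting case, Theorem \ref{symmetricrealrepnthm} proves $L\cong L'$ as follows. By Proposition \ref{Dbranchingintersectionpropn}, which needs non-splitting, $L$ and $L'$ are the final $\sim_{\mathcal D}$-classes of $A_0$ and $A_1$. Any isomorphism $A_0+A_1\to A_1+A_0$ preserves $\sim_{\mathcal D}$-classes, hence carries $L'$ onto $L$. Proposition \ref{sufficiencyimpliesIsosforomegaprods} then gives $\omega A\cong\omega B$ and $\omega^*A\cong\omega^*B$.

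Your reverse direction under (i) is not a proof: the splicing you flag as the main obstacle is the entire content. The identities $A+B+\omega B\cong B+A+\omega B$ and $\omega^*B+A+B\cong\omega^*B+B+A$ do not yield $A+B\cong B+A$, since there is no cancellation in $LO$. The claim that the left and right runs have the same quotients is unsupported. Even granting it, you would still have to match the leftover pieces at the limit of the decompositions, which is the same rigidity problem as in (b).

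The missing idea is to reduce to one-sided comparisons. By Theorem \ref{offsetomegaABsumthm}, $\omega A\leqslant_{init}\omega B$ forces both $A+B$ and $B+A$ to be initial in $\omega B$, so one of them is initial in the other. Dually, $\omega^*A\leqslant_{fin}\omega^*B$ makes one final in the other. If these comparisons point in opposite directions, Corollary \ref{CSBLO} gives $A+B\cong B+A$. Otherwise you have, say, $B+A\leqslant_{init}A+B$ and $B+A\leqslant_{fin}A+B$, and Theorem \ref{BplusAinitandfinAplusB} applies.

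That theorem is where the limit is controlled. One runs the left algorithm on the asymmetric setup. In the non-terminating non-splitting case, one writes $A_0+A_1$ and $A_1+A_0$ as $\sum_iN_iA_{i+1}$ followed by a remainder. One then shows that a final embedding $A_1+A_0\to A_0+A_1$ must send the cut before one remainder to the cut before the other, since otherwise some $A_{k+5}$ or $A_{k+6}$ would be splitting.
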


If $A \cong nC$ and $B \cong mC$ for some $C \in LO$ and $n, m \in \mathbb{N}$ as in \ref{tarconj}.(i.), then $A$ and $B$ satisfy the revised condition (i.) from Theorem \ref{revisedtarconj}. In this case, $\omega A \cong \omega B \cong \omega C$ and $\omega^* A \cong \omega^*B \cong \omega^*C$. 

In \cite{ErvinPaul}, we will show this revised arithmetic characterization of the commuting pairs in $LO$ holds in an arbitrary ordinal algebra. 

We also prove the following reformulation of Theorem \ref{revisedtarconj}. Here, $\leqslant_{init}$ means ``embeds onto an initial segment of" and $\leqslant_{fin}$ ``embeds onto a final segment of."

\theoremstyle{definition}
\newtheorem{symmetrizedtarconj}[lct]{Theorem}
\begin{symmetrizedtarconj}\label{symmetrizedtarconj}
$A + B \cong B + A$ if and only if one of the following conditions holds:
\begin{itemize}
    \item[i.] $\omega A \leqslant_{init} \omega B$ and $\omega^* A \leqslant_{fin} \omega^*B$; 
    \item[ii.] $\omega B \leqslant_{init} \omega A$ and $\omega^* B \leqslant_{fin} \omega^* A$. 
\end{itemize}
\end{symmetrizedtarconj}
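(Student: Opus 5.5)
The forward direction will follow from Theorem~\ref{revisedtarconj}; for the converse I will reduce the hypotheses to the conditions of Theorem~\ref{revisedtarconj}, with a direct Euclidean-algorithm argument as fallback.

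\emph{Forward direction.} Suppose $A+B\cong B+A$ and apply Theorem~\ref{revisedtarconj}. If case (i.) holds, then $\omega A\cong\omega B$ and $\omega^*A\cong\omega^*B$, so both conditions of the present theorem hold trivially.

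If case (ii.) holds, then $A\cong B+A$ and $A\cong A+B$. Iterating the first isomorphism gives a decomposition
\[
A\cong B+A_1,\quad A_1\cong A,\quad A_1\cong B+A_2,\quad\ldots
\]
Pulling back the successive copies of $B$ produces an initial segment of $A$ isomorphic to $B+B+\cdots=\omega B$. The remainder is the intersection of the nested final segments. Hence $\omega B\leqslant_{init}A\leqslant_{init}\omega A$. Symmetrically, iterating $A\cong A+B$ gives $\omega^*B\leqslant_{fin}A\leqslant_{fin}\omega^*A$, so condition (ii.) of the present theorem holds. Case (iii.) of Theorem~\ref{revisedtarconj} gives condition (i.) here by the same argument with $A$ and $B$ swapped.

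\emph{Converse.} By symmetry, assume $\omega A\leqslant_{init}\omega B$ and $\omega^*A\leqslant_{fin}\omega^*B$; the goal is $A+B\cong B+A$. The first step is a comparison. Both $A$ and $B$ are initial segments of $\omega B$ (since $A\leqslant_{init}\omega A\leqslant_{init}\omega B$), so they are comparable: either $B\cong A+C$ or $A\cong B+C$ for some $C$. Dually, on the right we get $B\cong C'+A$ or $A\cong C'+B$.

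I would then try to show that these one-sided relations make $A,B$ an instance of the paper's one-sided almost-commuting setup, and run the Euclidean algorithm on the pair. In the case $B\cong A+C$, we have $\omega B\cong A+\omega(C+A)$. The hypothesis $\omega A\leqslant_{init}\omega B$ should, after cancelling the leading copy of $A$, give $\omega A\leqslant_{init}\omega(C+A)$. This would let us pass to the pair $(A,C)$, or $(C,A)$, with the same two hypotheses preserved, exactly like a subtraction step $(a,b)\mapsto(a,b-a)$ in $\mathbb{N}$.

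There are two ways the run can end.
\begin{itemize}
\item[1.] If the run terminates after finitely many steps, back-substitution yields a common divisor $D$ with $A\cong nD$ and $B\cong mD$, or a stage where one order absorbs the other on both sides. In either situation $A+B\cong B+A$ follows immediately.
\item[2.] If the run is infinite, I would use the systems of isomorphisms accumulated during the run. Via the representation results the paper develops later (an Aronszajn-type representation over a translation group on an interval of $\mathbb{R}$), they should assemble into an explicit isomorphism $A+B\cong B+A$.
\end{itemize}
Alternatively, one could aim to show directly that the hypotheses force (i.) or (ii.) of Theorem~\ref{revisedtarconj}, and then invoke its converse.

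\emph{Main obstacle.} The hardest step is the cancellation used in each step of the run: from $A+P\leqslant_{init}A+Q$, or the corresponding final-segment statement, recover $P\leqslant_{init}Q$. This is false for general orders, since an embedding need not send the leading $A$ onto itself. I would try to exploit the $\omega$-periodic structure, so that the embedding can be shifted along the copies of $A$. Failing that, I would fall back on case-splitting according to whether the initial embedding of $\omega A$ into $\omega B$ is onto, reducing the onto case to Theorem~\ref{revisedtarconj}(i.) and the proper case to the absorption case (ii.).
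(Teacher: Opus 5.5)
\paragraph{Gaps.} The converse has a genuine gap, and the forward direction only relocates the difficulty.

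\emph{The converse.} Your primary route needs left cancellation $A+P\leqslant_{init}A+Q\Rightarrow P\leqslant_{init}Q$ at every step. As you note, this fails in general, and nothing replaces it.

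For an infinite run you only assert that the accumulated isomorphisms ``should assemble'' into $A+B\cong B+A$. That is exactly the hard case. For a non-terminating one-sided system, the real representation gives $A_0+A_1\cong A_1+A_0$ only once you know it is symmetric, i.e.\ that the tails $L,L'$ left after $\sum_i N_iA_{i+1}$ in $A_0+A_1$ and in $A_1+A_0$ agree (Proposition~\ref{sufficiencyimpliesIsosforomegaprods}(ii.)). Your plan has no mechanism for this.

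Even your terminating case is not immediate. A one-sided run ending in an absorbed factor gives absorption on one side only, and the paper needs real casework with the final embedding to finish.

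The fallback does not close the gap either:
\begin{itemize}
\item[--] Ontoness on the $\omega$ side says nothing about the $\omega^*$ side, so there are four cases, not two.
\item[--] A non-onto initial embedding of $\omega A$ in $\omega B$ need not land inside the first copy of $B$. Extracting ``$\omega A\leqslant_{init}B$ or $\omega A\cong\omega B$'' from it is the content of Proposition~\ref{lesssiminitandlesssimfinrefinements}.
\item[--] The mixed cases need their own splitting argument.
\item[--] The case $\omega A\cong\omega B$, $\omega^*A\cong\omega^*B$ is the backward half of Theorem~\ref{revisedtarconj}(i.). The paper obtains that from Corollary~\ref{omegaAomegastarAinitfinomegaBomegastarB}(iii.), i.e.\ from exactly the implication you are proving.
\end{itemize}

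\emph{The forward direction.} It is a correct deduction from Theorem~\ref{revisedtarconj}. However, the paper reads off the forward half of that theorem from the proof of this very statement: the algorithm is run on a symmetric setup, and the non-terminating non-splitting case requires Proposition~\ref{Isosofomegaprodsfornonsplsymmsystems}. So that citation defers the substance rather than supplying it.

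\paragraph{The missing idea.} Compare $A+B$ with $B+A$, not $A$ with $B$.
\begin{itemize}
\item[1.] Since $\omega A$ and $\omega B$ are offset $\omega$-$AB$-sums, Theorem~\ref{offsetomegaABsumthm} (elementary in this special case) places both $A+B$ and $B+A$ initially in $\omega B$. Hence one is initial in the other. Dually, one is final in the other.
\item[2.] If the two comparisons point in opposite directions, Corollary~\ref{CSBLO} finishes.
\item[3.] Otherwise you have, say, $B+A\leqslant_{init}A+B$ and $B+A\leqslant_{fin}A+B$. Then the key lemma, Theorem~\ref{BplusAinitandfinAplusB}, applies.
\end{itemize}
That theorem runs the left algorithm on a fixed left setup inside $A+B$, so nothing ever has to be cancelled. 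It uses the final embedding $f\colon A_1+A_0\to A_0+A_1$ only at the end. In the non-terminating non-splitting case, it shows that $f$ must carry the limit cut of $A_1+A_0\cong\sum_iN_iA_{i+1}+L'$ to that of $A_0+A_1\cong\sum_iN_iA_{i+1}+L$. Otherwise some $2A_{k+3}$ embeds convexly in $A_{k+5}+A_{k+6}$, forcing a term of the system to split. Hence $L\cong L'$, which is precisely the symmetry your infinite case lacks.
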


See Theorems \ref{ABcommuteifOmegasumsinitfin} and \ref{revisedtarconjintext}. 

\subsection{Commutative semigroups of order types}\label{subsect:reppinsemigroupsintro}

The first part of the paper centers around the Euclidean algorithm defined in Section \ref{section:euclideanalgos}, and culminates in the solution to Tarski's Generalized Sum Problem in Section \ref{section:commutativitylaws} and a proof of Lindenbaum's Division Theorem in Section \ref{section:linddivisthm}. 

The second part of the paper begins in Section \ref{section:symboldynamrepnsofdivissystems}. There, we turn from the purely arithmethic results of the first part (i.e., results expressible in the language of ordinal algebras) to a finer structural analysis of linear orders appearing in a run of the Euclidean algorithm. The main results of Section \ref{section:symboldynamrepnsofdivissystems} are representation theorems showing that such orders are obtained by replacing points in a closed interval of $\mathbb{R}$ in a way that respects the forward orbit equivalence relation of a pair of translations on $\mathbb{R}$; see Theorems \ref{leftrealrepnthm}, \ref{rightrealrepnthm}, \ref{symmetricrealrepnthm}. We show in Section \ref{section:aronszajnrepnthm} that these theorems may be viewed as generalizations of Aronszajn's characterization of the commuting pairs in $LO$ from \cite{Aronszajn}. 

In the final section of the paper, we use this analysis to characterize the commutative semigroups that appear as semigroups of linear order types. More precisely, if we distinguish linear orders only up to isomorphism, we may view $(LO, +)$ as a class semigroup, and ask which commutative semigroups appear as subsemigroups of $LO$. In order to state our characterization of these semigroups, we need the following definition. 

\theoremstyle{definition}
\newtheorem{replacementsemigroupdefnintro}[lct]{Definition}
\begin{replacementsemigroupdefnintro}\label{replacementsemigroupdefnintro}
Suppose that $X$ is a linear order and $\{S_x: x \in X\}$ is a collection of semigroups indexed by $X$ with corresponding semigroup operations $\{\oplus_x: x \in X\}$. 

Let $X(S_x)$ denote the set of ordered pairs $(x, s)$ with $x \in X$ and $s \in S_x$, and define a semigroup operation $\oplus$ on $X(S_x)$ by the following rule:
\[
\begin{array}{rcl}
    (x, s) \oplus (y, t) & = & \left\{   \begin{array}{ll}
                                        (x, s) & x > y \\
                                        (y, t) & x < y \\
                                        (x, s \oplus_x t) & x = y.
                                        \end{array}  
                                        \right.
\end{array}
\]
We call $X(S_x)$ the \textit{replacement semigroup} of $X$ by the semigroups $S_x$. 
\end{replacementsemigroupdefnintro}

Here is the characterization. 

\theoremstyle{definition}
\newtheorem{commutativesemigroupsrepableinLOtheoremintro}[lct]{Theorem}
\begin{commutativesemigroupsrepableinLOtheoremintro}\label{commutativesemigroupsrepableinLOtheoremintro}
A commutative semigroup $(S, \oplus)$ appears as a subsemigroup of $(LO, +)$ if and only if $S$ is isomorphic to a replacement semigroup $X(S_x)$ over some linear order $X$ such that for every $x \in X$, one of the following holds:
\begin{itemize}
    \item[i.] There is a strict subgroup $G \leq (\mathbb{R}, +)$ such that $S_x$ is isomorphic to a subsemigroup of the positive cone $G_{>0}$; 
    \item[ii.] $S_x$ is isomorphic to the semigroup on one element. 
\end{itemize}
\end{commutativesemigroupsrepableinLOtheoremintro}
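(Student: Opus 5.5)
The plan is to prove the two directions separately. The forward direction extracts the linear order $X$ and the archimedean components $S_x$ from a given commutative $S \subseteq LO$ using Theorem \ref{revisedtarconj}. The reverse direction builds explicit order types realizing each $X(S_x)$.

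\emph{Only if.} Let $S \subseteq LO$ be a commutative subsemigroup. Any two $A, B \in S$ commute, so by Theorem \ref{revisedtarconj} exactly one of three things happens: $A + B \cong A$ (write $B \prec A$), $A + B \cong B$ (write $A \prec B$), or $\omega A \cong \omega B$ and $\omega^* A \cong \omega^* B$ (write $A \sim B$).
\begin{itemize}
\item[1.] Show $\sim$ is an equivalence relation on $S$. Transitivity is immediate from the definition via $\omega$-sums.
\item[2.] Show $\prec$ induces a strict linear order on $X := S/{\sim}$. For instance, if $B \prec A$ and $C \sim B$, then $A + C \cong A$ should follow by comparing $\omega$- and $\omega^*$-sums.
\item[3.] Check the rule of Definition \ref{replacementsemigroupdefnintro}: if $A \prec B$ then $A + B \cong B + A \cong B$, so the sum lands in the larger class. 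Hence $S \cong X(S_x)$, where $S_x$ is the class $x$ with the restricted operation.
\end{itemize}
It remains to identify each $S_x$. If some $A \in S_x$ satisfies $A + A \cong A$, I will show $S_x = \{A\}$, which is case (ii). I expect to use that $B \sim A$ together with $A + B \cong B + A$ forces $A + B$ to be absorbing, and then compare with $\omega$-sums. This needs care, and I am not yet sure of the exact argument.

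Otherwise, every pair in $S_x$ commutes without absorption. Here I will use the real representation theorems announced for Section \ref{section:symboldynamrepnsofdivissystems}, i.e.\ the Aronszajn-type representation. Fix a base element $A_0 \in S_x$. Each $B \in S_x$ is then a replacement of a real interval $[0, \lambda(B))$ invariant under a group $G$ of translations containing all the lengths. The map $\lambda$ should be additive, $\lambda(B + C) = \lambda(B) + \lambda(C)$, because the representation of a sum is the concatenation of intervals. It should be injective by the uniqueness part of the representation, together with Lindenbaum-style cancellation (Theorem \ref{lct}). This embeds $S_x$ into $G_{>0}$.

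Strictness of $G$ will come from the replacement pattern being nonconstant modulo $G$. If the pattern were constant, or if $G = \mathbb{R}$, the intervals of different lengths would all be isomorphic and we would be in the absorbing case. I expect this embedding step, and in particular the interplay between absorbing and non-absorbing elements within one class, to be the main obstacle.

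\emph{If.} Given a strict subgroup $G < \mathbb{R}$ and a subsemigroup $T \le G_{>0}$, realize $T$ as follows. For $g \in T$, let $L_g$ be $[0, g) \subseteq \mathbb{R}$ with each point $r \in G$ replaced by a two-point order and every other point kept as a single point.
\begin{itemize}
\item[1.] Translation by $g$ preserves $G$, so $L_g + L_h \cong L_{g+h}$.
\item[2.] Since $G \neq \mathbb{R}$, an isomorphism $L_g \cong L_h$ induces an order isomorphism of the real intervals, possibly after collapsing pairs. I will argue it must preserve the set of doubled points and hence force $g = h$.
\end{itemize}
A one-element $S_x$ is realized by any idempotent such as $\eta$ or $\zeta$.

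To assemble over $X$, I would attach to each $x \in X$ a distinct large absorbing order $U_x$. The requirements are $U_x + V \cong V + U_x \cong U_x$ for every order $V$ built at smaller indices, with $U_x + U_x \cong U_x$. A natural candidate is $U_x = \zeta^{\kappa_x}$ for a strictly increasing family of ordinals $\kappa_x$, or a saturated dense order of increasing cardinality. I would then send $(x, s)$ to $U_x + L^{(x)}_s + U_x$. The internal copies must be chosen so that $U_x + U_x$ collapses and the $L$-parts still add. If that fails, I would instead use a product construction in which each point of $L_s$ is replaced by $U_x$. Verifying that this map is an injective homomorphism is routine bookkeeping once the absorption identities hold.
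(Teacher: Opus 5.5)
\textbf{There are genuine gaps in both directions; the construction for the converse fails outright.}

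\textbf{Converse direction.} Doubling the points of $G$ in $[0,g)$ cannot separate lengths once $G$ is dense, and that is the main case (e.g.\ $G=\mathbb{Z}+\mathbb{Z}\sqrt{2}$). In that case $G\cap[0,g)$ is a countable dense subset of $[0,g)$ with least element $0$ and no greatest element. A back-and-forth isomorphism $G\cap[0,g)\to G\cap[0,h)$ therefore extends to an order isomorphism $[0,g)\to[0,h)$, so $L_g\cong L_h$ for all $g,h>0$. For $G=\mathbb{Q}$ the map $x\mapsto 2x$ already gives $L_1\cong L_2$. In particular every $L_g$ is splitting: preserving the doubled points does not force a translation.

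The missing idea is to tag \emph{every} real by its $G$-coset. Then any isomorphism $f$ satisfies $f(x)-x\in G$ for all $x$. Since $f(x)-x$ is continuous and $\mathbb{R}\setminus G$ is dense, $f$ must be a translation (Theorem \ref{groupsoftranslationsarefull}). The tags are then encoded in an uncolored order by replacing points with pairwise non-convexly-embeddable orders; the paper uses $\aleph_i$-dense orders.

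Your assembly over $X$ also fails. A strictly increasing $X$-indexed family of ordinals or cardinals exists only when $X$ is well-ordered, whereas $X$ is arbitrary (e.g.\ $\omega^*$). Also, $\zeta$ is not idempotent.

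The paper instead does the following:
\begin{itemize}
\item[1.] It takes the Hahn-type power $(A_x)^{X^*}$ with well-ordered supports and $A_x=[0]_{E_x}$.
\item[2.] It inserts colored points for $\mathbb{R}\setminus A_y$ at each level $y$.
\item[3.] It sets $I_{(y,s)}=[\overline{0},\overline{0}^{[c,y)}s\overline{0}^{(y,d]})$.
\end{itemize}
Absorption between levels and additivity within a level come from translating (or, for trivial $S_y$, scaling) a single coordinate. Distinctness comes from the colors.

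\textbf{Forward direction.} Splitting $S$ into $\approx$-classes linearly ordered by $\lessapprox$ is what the paper does. You also need each class closed under $+$ (Lemma \ref{approxclassesclosedundersum}).

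The identification of a non-splitting class, however, is asserted rather than proved. The representation theorems do not by themselves give a single $\mathbb{R}(K_{[x]})$ in which every $B\in S_x$ is an interval $[0,\lambda(B))$ and which is invariant under translation by each $\lambda(B)$. Nor do they show that $\lambda$ is well defined; that is exactly length invariance for non-splitting systems, and cancellation does not give it.

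The paper's route has four parts:
\begin{itemize}
\item[1.] A real representation of $\mathbb{Z}A_0$ exists when $A_0$ starts a non-terminating symmetric or rational system. If none exists, iterating common divisors shows every element of the class is a multiple $nC$, giving $\mathbb{Z}_{>0}$.
\item[2.] For non-splitting systems, length is an isomorphism invariant, and every automorphism of $\mathbb{R}(K_{[x]})$ condenses to a translation (Propositions \ref{lengthisinvariantwhenDnonsplitting}, \ref{autosofRKxaretranslationsDnonsplitting}).
\item[3.] For $B\approx A_0$, the centered isomorphism $\mathbb{Z}B\cong_{cent}\mathbb{Z}A_0$ transports the shift of $\mathbb{Z}B$ to an automorphism condensing to translation by some $s>0$, with $B\cong R+(0,s)(K_{[x]})+L$.
\item[4.] The strict group is the image of $\textrm{Aut}(\mathbb{R}(K_{[x]}))$ under condensation. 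It is strict because full $\mathbb{R}$-invariance would give $A_0\cong 2A_0$.
\end{itemize}

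The splitting case you left open is short. If $A$ splits and $B\approx A$, Lemma \ref{2AconvnB2BconvmAsplittingiff} makes $B$ split. Then each of $A,B$ is initial and final in the other, and $A\cong B$ by Corollary \ref{CSBLO}.
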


This is established as Theorem \ref{representablesemigroupstheorem}. 

\subsection{Organization of the paper}\label{subsect:organizationofpaper}

In Section \ref{section:notationandterminology}, we introduce the notation and terminology for working with linear orders and their sums that we will use throughout the paper. 

In Section \ref{section:partialconvexselfembeds}, we analyze the elementary dynamics of convex self-embeddings of linear orders. Such embeddings arise in applications of our Euclidean algorithm for $(LO, +)$. 

In Section \ref{section:basicarithmetic}, we establish a number of arithmetic facts concerning sums of linear orders that we will use in our later results. Most of these facts are due to Lindenbaum and Tarski. 

In Section \ref{section:euclideanalgos}, we introduce our Euclidean algorithm for pairs of linear orders $A$ and $B$ that almost additively commute (in a sense specified there), and discuss the arithmetic data about $A$ and $B$ that accrues from a run of the algorithm. In Section \ref{section:divisionsystems}, we study this arithmetic data in an abstract setting, and prove structural results about the linear orders that appear during the course of such a run. 

In Section \ref{section:commutativitylaws}, we solve Tarski's Generalized Sum Problem \ref{tgsp} for $(LO, +)$ in the affirmative, and prove some related arithmetic results. 

In Section \ref{section:linddivisthm}, we use the Euclidean algorithm to give a new proof of Lindenbaum's Division Theorem \ref{ldt}. 

In Section \ref{section:symboldynamrepnsofdivissystems}, we establish representation theorems for the linear orders appearing as arithmetic terms in the course of a run of the Euclidean algorithm. These representation theorems are the basis for the remaining work in the paper. 

In Section \ref{section:aronszajnrepnthm}, we use these theorems to prove one-sided generalizations of Aronszajn's representation from \cite{Aronszajn} for commuting pairs of linear orders. 

In Section \ref{section:revisedtarskiconj}, we use the representations to prove our revised version \ref{revisedtarconj} of Tarski's Commuting Pairs Conjecture \ref{tarconj}. 

In Section \ref{section:continuationandvaluation}, we develop an arithmetic theory of archimedean dominance and archimedean equivalence in $(LO, +)$, and show that these notions induce global valuations on $LO$ analogous to valuations in abelian orderable groups in terms of their archimedean classes. 

Section \ref{section:coloringsoflinearorders} is a preparatory section for a technical aspect of the construction in Section 14. Finally, in Section \ref{section:commutesemigroupsinLOrepn} we prove our characterization of the commutative semigroups that can be represented in $(LO, +)$.

\section*{AI use statement} AI tools were not used in the writing of this paper, nor in the formulation and proofs of its results, which are the work of the authors. 

\section*{Acknowledgments} We thank Alekos Kechris for several helpful comments.

\section{Notation and basic terminology} \label{section:notationandterminology}

A \textit{linear order} is a pair $(X, <)$ where $X$ is a set and $<$ is a strict, total order on $X$. The class of linear orders is denoted $LO$. When not otherwise specified, capital letters $A, B, X, Y, \ldots$ appearing in the text and the statements of propositions below stand for fixed linear orders.

The expression $x \leq y$ abbreviates the assertion ``$x < y$ or $x = y$."

A \textit{suborder} of $X$ is a subset $Y \subseteq X$ equipped with the inherited ordering relation from $X$. 

We write $X^*$ for the \textit{reverse} of $X$. The orders $X$ and $X^*$ share the same underlying set of points, with $x < y$ in $X^*$ if and only if $y < x$ in $X$. Note that $X^{**} = X$.

An order $X$ is \emph{dense} (or \emph{densely ordered}) if $X$ contains at least two points and whenever $x, y \in X$ and $x < y$, there is $z \in X$ with $x < z < y$. 

If $x < y$ and there is no $z$ such that $x < z < y$, then $x$ is the \textit{predecessor} of $y$ and $y$ is the \textit{successor} of $x$. We sometimes say $x$ and $y$ form a \textit{jump pair}. Thus $X$ is dense if and only if $X$ contains no jump pairs. 

A suborder $Y \subseteq X$ is \emph{dense in $X$} (or, $Y$ is a \emph{dense suborder of $X$}) if whenever $x, y \in X$ and $x < y$, then either $x$ and $y$ both belong to $Y$ or there is $z \in Y$ such that $x < z < y$.

\subsection{Intervals and cuts}\label{subsection:intervalsandcuts}

An \textit{interval} or \textit{convex subset} of $X$ is a suborder $I \subseteq X$ such that whenever $x < z < y$ and $x, y \in I$ then $z \in I$. In particular, singleton suborders $\{x\} \subseteq X$ are intervals of $X$. 

Given two intervals $I, J \subseteq X$, we write $I < J$ if $x < y$ for all $x \in I$ and $y \in J$. This defines a (strict) partial ordering on the collection of intervals of $X$ that we call the \textit{induced ordering}.

An \textit{initial segment} of $X$ is a subset $I \subseteq X$ such that whenever $y < x$ and $x \in I$ then $y \in I$. A \textit{final segment} of $X$ is a subset $J \subseteq X$ such that whenever $x < y$ and $x \in J$, then $y \in J$. Observe that initial and final segments are intervals, and that $I$ is an initial segment of $X$ if and only if $X \setminus I$ is a final segment of $X$.

A \textit{cut} in $X$ is a pair $c = (I, J)$ where $I$ is an initial segment of $X$ and $J = X \setminus I$ is the corresponding final segment. We think of $c$ as the space between $I$ and $J$. $\mathsf{Cut}(X)$ denotes the set of cuts in $X$. 

The ordering $<$ on $X$ naturally extends to an ordering on $X \cup \mathsf{Cut}(X)$. Given $x \in X$ and a cut $c = (I, J)$ in $\mathsf{Cut}(X)$, define $x < c$ if $x \in I$ and $c < x$ if $x \in J$. Given another cut $c' = (I', J') \in \mathsf{Cut}(X)$, define $c < c'$ if there is $x \in X$ such that $c < x < c'$, or equivalently, if $I \subsetneq I'$. It is straightforward to check that this extension of $<$ linearly orders $X \cup \mathsf{Cut}(X)$.

We will sometimes informally quantify over $X \cup \mathsf{Cut}(X)$ by using clauses of the form ``point or cut $c \in X$" instead of ``$c \in X \cup \mathsf{Cut}(X)$."

Suppose $K \subseteq X$ is an interval. The \textit{cut at the left} of $K$ is $c = (I, J)$, where 
\[
I = \{x \in X: \forall y \in K, x < y\}.
\]
The \textit{cut at the right} of $K$ is $d = (I', J')$ where 
\[
J' = \{x \in X: \forall y \in K, x > y\}.
\]
The cuts at the left and right of $K$ are the \textit{endcuts} of $K$. By convention, the cuts at the left and right of $X$ are $(\emptyset, X)$ and $(X, \emptyset)$ respectively. 

Cuts in $K$ may be identified with cuts in $X$ in the natural way. If $c = (I, J)$ is a cut in $K$, we identify it with $c' = (I', J') \in \mathsf{Cut}(X)$, where 
\[
I' = \{x \in X: \exists y \in I, x \leq y\}.
\]
The cuts at the left and right of $K$ are identified with the cuts at the left and right of $K$ in $X$. Under this identification, $K \cup \mathsf{Cut}(K)$ is an interval in $X \cup \mathsf{Cut}(X)$.

More generally, we may define cuts at the left and right of an arbitrary suborder of $X$. Given $Y \subseteq X$, the \textit{convex closure} of $Y$ is the interval 
\[
\textrm{conv}(Y) = \{x \in X: \exists a, b \in Y, a \leq x \leq b\}.
\]
The cuts at the left and right of $Y$ are defined as the cuts at the left and right of $\textrm{conv}(Y)$. 

We will use both points in $X$ and cuts in $X$ to define intervals in $X$. Given $a, b \in X \cup \mathsf{Cut}(X)$ with $a \leq b$, define
\begin{itemize}
    \item $[a, b] = \{x \in X: a \leq x \leq b\}$,
    \item $[a, b) = \{x \in X: a \leq x < b\}$,
    \item $(a, b] = \{x \in X: a < x \leq b\}$,
    \item $(a, b) = \{x \in X: a < x < b\}$.
\end{itemize}
Thus we are viewing the usual endpoint notation for intervals in $X \cup \mathsf{Cut}(X)$ as denoting the restriction of these intervals to $X$.

If $K$ is an interval of $X$, then 
\[
K = [c, d] = [c, d) = (c, d] = (c, d)
\]
where $c$ and $d$ are the cuts at the left and right of $K$, respectively. We will usually use closed interval notation when defining intervals by their endcuts. The phrase ``Label $K = [c, d]$" means ``Let $c$ and $d$ denote the cuts at the left and right of $K$, respectively."

For $a, b \in X \cup \mathsf{Cut}(X)$ not necessarily satisfying $a \leq b$, we write $[\{a, b\}]$ to mean $[a, b]$ when $a \leq b$ and $[b, a]$ when $a \geq b$. Analogously, we write $[\{a, b\})$, $(\{a, b\}]$, and $(\{a, b\})$ to denote the interval of points between $a$ and $b$ that excludes $b$, excludes $a$, and excludes both $a$ and $b$, respectively.

\subsection{Convex embeddings}\label{subsection:convexembeddings} 

An \textit{embedding} of $X$ into $Y$ is a map $f: X \rightarrow Y$ such that $x < y$ in $X$ implies $f(x) < f(y)$ in $Y$. Since the ordering relations that we consider are strict, embeddings are automatically injective. 

An \textit{isomorphism} is a surjective embedding. We say $X$ and $Y$ are \textit{isomorphic} and write $X \cong Y$ if there is an isomorphism $f: X \rightarrow Y$. An \textit{automorphism} of $X$ is an isomorphism $f: X \rightarrow X$. 

An \emph{order type} is an isomorphism class of linear orders. Though we will work with linear orders as opposed to order types throughout, often we are interested in a given linear order $X$ only up to isomorphism, and most of the definitions and theorems we present could be rephrased in terms of order types.

A \textit{partial embedding} of $X$ into $Y$ is an embedding $f: K \rightarrow Y$ whose domain $K$ is a suborder of $X$. Given such an embedding, $f^{-1}$ is a partial embedding of $Y$ into $X$ with domain $f[K]$. 

An embedding $f: X \rightarrow Y$ is \textit{convex} if $f[X]$ is an interval in $Y$. We write $X \leqslant_{conv} Y$ if there exists a convex embedding of $X$ into $Y$. Note that isomorphisms are convex embeddings. 

A convex embedding $f: X \rightarrow Y$ is \textit{initial} if $f[X]$ is an initial segment of $Y$ and \textit{final} if $f[X]$ is a final segment of $Y$. We write $X \leqslant_{init} Y$ if there is an initial embedding of $X$ into $Y$, and $X \leqslant_{fin} Y$ if there is a final embedding of $X$ into $Y$. Observe that all three relations $\leqslant_{conv}, \leqslant_{init}$, and $\leqslant_{fin}$ are transitive on $LO$.

If $f: X \rightarrow Y$ is a convex embedding and $c = (I, J)$ is a cut in $X$, then $(f[I], f[J])$ is a cut in $f[X]$, which as above we may view as a cut in $Y$. Conversely, every cut in $f[X]$ is of the form $(f[I], f[J])$ for some $(I, J) \in \mathsf{Cut}(X)$. Thus $f$ may be uniquely extended to a convex embedding of $X \cup \mathsf{Cut}(X)$ into $Y \cup \mathsf{Cut}(Y)$ by defining, for each $c = (I, J) \in \mathsf{Cut}(X)$,
\[
f(c) = (f[I], f[J]).
\]
We identify $f$ with this extension. 

Note then that if $K = [c, d]$ is an interval in $X$ labeled by its endcuts, the convexity of $f$ implies $f[K] = [f(c), f(d)]$. We express this by saying that the image of an interval under a convex embedding is determined by the images of its endcuts. 

\subsection{Discrete orders}\label{subsection:discreteorders}

We use both $\omega$ and $\mathbb{N}$ to denote the set of natural numbers $\{0, 1, 2, \ldots\}$ equipped with its usual ordering relation. We identify each $n \in \omega$ with the set of its predecessors $\{0, 1, \ldots, n-1\}$, and view $n$ as a suborder of $\omega$. In particular, $0$ denotes the empty order. 

The \textit{discrete orders} are the finite orders $n$, along with $\omega$, $\omega^*$, and $\mathbb{Z}$. 

\subsection{Replacements}\label{subsection:replacements}

Suppose that $\{I_x: x \in X\}$ is a collection of linear orders indexed by $X$. Define
\begin{equation*}
    X(I_x) = \{(x, i): x \in X, i \in I_x\}
\end{equation*}
and order $X(I_x)$ lexicographically by the rule $(x, i) < (y, j)$ if $x < y$ (in $X$), or $x = y$ and $i < j$ (in $I_x = I_y$). Equipped with this ordering, $X(I_x)$ is a linear order that we call the \textit{replacement} of $X$ by the orders $I_x$.

For a fixed $x \in X$, the set of pairs $\{(x, i) \in X(I_x): i \in I_x\}$ is an interval in $X(I_x)$ that is isomorphic to $I_x$. We sometimes informally refer to this interval also by $I_x$. Observe that $I_x < I_y$ in $X(I_x)$ if and only if $x < y$ in $X$.

Given a suborder $K \subseteq X$, we write $K(I_x)$ for the restriction \[
\{(x, i) \in X(I_x): x \in K\}
\]
of $X(I_x)$ to $K$. Observe that if $K$ is an interval in $X$, then $K(I_x)$ is an interval in $X(I_x)$. 

We explicitly allow the replacement orders $I_x$ to be empty. Thus $X(I_x) = K(I_x)$, where $K = \{x \in X: I_x \neq \emptyset\}$.

\subsection{Replacements up to an equivalence relation}\label{subsection:replacementsuptoE}

Suppose that $E$ is an equivalence relation on $X$. In practice, $E$ will often be the orbit equivalence relation of a collection of partial convex self-embeddings of $X$. For a given $x \in X$, the $E$-class of $x$ is denoted $[x]_E$, or simply $[x]$ when $E$ is clear from context.

Suppose $\{I_{[x]}: x \in X\}$ is a collection of linear orders indexed by the $E$-classes of $X$. We write $X(I_{[x]})$ for the replacement $X(I_x)$ of $X$ in which $I_x = I_{[x]}$ for every $x \in X$. Thus for every $x, y \in X$ with $x E y$, we have 
\[
I_x = I_y = I_{[x]} = I_{[y]}.
\]
A replacement of the form $X(I_{[x]})$ is a \emph{replacement of $X$ up to $E$}. 

More generally, we call a replacement $X(I_x)$ a replacement up to $E$, and denote it by $X(I_{[x]})$, even if we only have $x E y \Rightarrow I_x \cong I_y$ (instead of $x E y \Rightarrow I_x = I_y$) for all $x, y \in X$.

\subsection{Sums}\label{subsection:sums}

A replacement $X(I_x)$ is sometimes called an \emph{ordered sum} and denoted $\sum_{x \in X} I_x$. We usually use replacement notation; however, when $X$ is isomorphic to a discrete order we often use summation notation instead.

When $X = 2 = \{0, 1\}$, the replacement $X(I_x)$ is called the \emph{sum} of the orders $I_0$ and $I_1$ and denoted $I_0 + I_1$. We write $(LO, +)$ for the class of linear orders equipped with the sum. 

Similarly, when $X = n = \{0, 1, \ldots, n-1\}$, we write $I_0 + I_1 + \cdots + I_{n-1}$ for the replacement $n(I_x)$ and call such a replacement an \emph{$n$-sum}. We also write
\begin{equation*}
\begin{array}{rclcl}
\omega(I_x) & = & \sum_{x \in \omega} I_x & = & I_0 + I_1 + \cdots \\
\omega^*(I_x) & = & \sum_{x \in \omega^*} I_x & = & \cdots + I_1 + I_0 \\
\mathbb{Z}(I_x) & = & \sum_{x \in \mathbb{Z}} I_x & = & \cdots + I_{-1} + I_0 + I_1 + I_2 + \cdots 
\end{array}
\end{equation*}
and call replacements of these forms \emph{$\omega$-sums}, \emph{$\omega^*$-sums}, and \emph{$\mathbb{Z}$-sums}, respectively. To distinguish replacements of discrete orders from more general replacements $X(I_x)$, we refer to them as \textit{discrete sums}. 

Sums are closely related to cuts. If $c = (I, J)$ is a cut in $X$, then $X \cong I + J$. In the other direction, in an order of the form $I_0 + I_1$ there is a cut between the initial segment corresponding to $I_0$ and final segment corresponding to $I_1$. We call this cut the \emph{cut at the $+$ sign}. Similarly, we may refer to cuts at $+$ signs in longer discrete sums as well. 

We have $(I_0 + I_1) + I_2 \cong I_0 + (I_1 + I_2) \cong I_0 + I_1 + I_2$ for all $I_0, I_1, I_2 \in LO$. The sum is an associative operation in this sense.

More generally, we have the associative law 
\begin{equation*}
X(I_x)(J_{(x, i)}) \cong X(I_x(J_{(x, i)})).
\end{equation*}
That is, if we replace the points in a replacement $X(I_x)$ by orders $J_{(x, i)}$, then the resulting order $X(I_x)(J_{(x, i)})$ is isomorphic to the order obtained by first forming, for each $x \in X$, the replacement $I_x(J_{(x, i)})$, and then forming the replacement $X(I_x(J_{(x, i)}))$.

If $I_x = Y$ for all $x \in X$, then the replacement $X(I_x)$ is called the \emph{lexicographic product} of $X$ and $Y$ and denoted $XY$. Discrete products of the form $nY$, $\omega Y$, $\omega^* Y$, and $\mathbb{Z}Y$ will arise frequently in our study of $(LO, +)$.

Products distribute over sums on the right: 
\[
(A + B)C \cong AC + BC
\]
for all $A, B, C \in LO$. More generally, products distribute over replacements on the right: given a replacement $X(I_x)$ and an order $A$, we have 
\[
X(I_x)A \cong X(I_xA).
\]
This follows from the general associativity law above. Products do not in general distribute over replacements (or sums) on the left.

\subsection{Condensations}\label{subsection:condensations}

An equivalence relation $\sim$ on $X$ is a \emph{condensation} if every $\sim$-class is an interval of $X$.

We reserve the symbol $\sim$ for condensations, and use $E$ for more general equivalence relations. Instead of $[x]_{\sim}$ or $[x]$, we write $\gamma_{\sim}(x)$ or $\gamma(x)$ for the $\sim$-class of a given $x \in X$. The set of $\sim$-classes is denoted both by $X/{\sim}$ and $X/{\gamma}$, depending on context. We view $\gamma$ as a map $\gamma: X \rightarrow X /{\sim}$ called the \textit{condensation map}. 

Since $X/{\sim}$ is a collection of pairwise disjoint intervals in $X$, the induced order on $X/{\sim}$ is a linear order. We view $X/{\sim}$ as equipped with this order. Then the condensation map $\gamma$ is a surjective order-homomorphism of $X$ onto $X/{\sim}$, i.e. a surjective map satisfying $x < y \Rightarrow \gamma(x) \leq \gamma(y)$. 

Conversely, any surjective order-homomorphism $\gamma: X \rightarrow L$ from $X$ onto an order $L$ induces a condensation of $X$ defined by
\[
\textrm{$x \sim_{\gamma} y$ if $\gamma(x) =\gamma(y)$}
\]
and we have $X/{\gamma} \cong L$.

Given a replacement $X(I_x)$, the map $\gamma: X(I_x) \rightarrow X$ defined by $\gamma(x, i) = x$ is a surjective order-homomorphism. It induces the \textit{replacement condensation} on $X(I_x)$ defined by the rule 
\[
\textrm{$(x, i) \sim (y, j)$ if $x = y$.}
\]
The replacement condensation condenses each of the orders $I_x$ back to a point, yielding $X(I_x)/{\gamma} \cong X$. 

\section{Partial convex self-embeddings of linear orders}\label{section:partialconvexselfembeds}

Arithmetic identities over $(LO, +)$ are often naturally represented by systems of partial convex embeddings on a linear order $X$. In this section, we examine the basic dynamical structure of such embeddings.

\theoremstyle{definition}
\newtheorem{partialconvselfembeddef}[lct]{Definition}
\begin{partialconvselfembeddef}\label{partialconvselfembeddef}
A \textit{partial convex self-embedding of $X$} is a convex embedding $f: K \rightarrow X$ whose domain $K$ is an interval in $X$. 
\end{partialconvselfembeddef}

For the remainder of this section, let $f: K \rightarrow X$ denote a fixed partial convex self-embedding of $X$ with domain $K$. Then $f^{-1}$ is a partial convex self-embedding of $X$ with domain $f[K]$. 

Label $K = [c, d]$ by its endcuts and identify $f$ with its unique extension to $K \cup \mathsf{Cut}(K)$. We view $f$ as defined at $c$ and $d$, so that $f[K] = [f(c), f(d)]$. 

The \textit{extent of $f$}, denoted $\textrm{ext}(f)$, is the convex closure of $K \cup f[K]$. 

If $K \cap f[K] \neq \emptyset$, then $\textrm{ext}(f) = K \cup f[K]$, in which case we say $f$ is \textit{overlapping}. In practice, most of the partial convex self-embeddings we consider are overlapping.

\theoremstyle{definition}
\newtheorem{pushcompexpslidedefn}[lct]{Definition}
\begin{pushcompexpslidedefn}\label{pushcompexpslidedefn}
\phantom{.}
\begin{itemize}
    \item[i.] If $f[K] < K$ (i.e. $f(d) \leq c$), then $f$ is a \textit{left push}; if $f[K] > K$ (i.e. $f(c) \geq d$), then $f$ is a \textit{right push}.
    \item[ii.] If $f[K] \subseteq K$ (i.e. $f(c) \geq c$ and $f(d) \leq d$), then $f$ is a \textit{compression}.
    \item[iii.] If $K \subseteq f[K]$ (i.e. $f(c) \leq c$ and $f(d) \geq d$), then $f$ is an \textit{expansion}.
    \item[iv.] If $f(c) < c < f(d) < d$, then $f$ is a \textit{left slide}; if $c < f(c) < d < f(d)$, then $f$ is a \textit{right slide}. 
\end{itemize}
\end{pushcompexpslidedefn}

Notice that $f$ is overlapping exactly when $f$ is a compression, expansion, or slide.

\subsection{Orbits}\label{subsect:orbits}

Fix a point or cut $x \in K \cup f[K]$. For an integer $n \geq 0$, $f^n(x)$ denotes the $n$-fold iterate $(ff \cdots f)(x)$, whenever this iterate is defined. For $n < 0$, $f^n(x)$ denotes $(f^{-1})^{(-n)}(x)$.

The \textit{orbit of $x$ under $f$}, denoted $o_f(x)$, is the set of defined $f$-iterates of $x$: 
\begin{equation*}
    o_f(x) = \{f^n(x): n \in \mathbb{Z}, \textrm{$f^n(x)$ is defined}\}.
\end{equation*}
Notice $o_f(y) = o_f(x)$ if and only if $y \in o_f(x)$, if and only if $y = f^n(x)$ for some $n \in \mathbb{Z}$. Note also that $o_f(x) = o_{f^{-1}}(x)$.

Since $x \in K \cup f[K]$, at least one of $f(x)$ and $f^{-1}(x)$ is defined and thus included in $o_f(x)$. Either there exists a least $N \geq 0$ such that $f^{N+1}(x)$ is undefined, or $f^n(x)$ is defined for all $n \geq 0$. Symmetrically, there is either a least $M \geq 0$ such that $f^{-(M+1)}(x)$ is undefined, or $f^m(x)$ is defined for all $m \leq 0$. Thus $o_f(x)$ has one of the following forms:
\begin{itemize}
    \item[\phantom{}] $\{\ldots, f^{-1}(x), x, f(x), f^2(x), \ldots\}$,
    \item[\phantom{}] $\{f^{-M}(x), f^{-M+1}(x), \ldots\}$,
    \item[\phantom{}] $\{\ldots, f^{N-1}(x), f^N(x)\}$,
    \item[\phantom{}] $\{f^{-M}(x), f^{-M+1}(x), \ldots, x, \ldots, f^{N-1}(x), f^N(x)\}$.
\end{itemize}

If $f(x) > x$, then $f$ is \textit{increasing} at $x$. In this case, since $f$ is order-preserving it follows inductively that $f^n(x) < f^{n+1}(x)$ for every $n \in \mathbb{Z}$ for which $f^n(x)$ and $f^{n+1}(x)$ are both defined. Thus $o_f(x)$ has one of the following forms:
\begin{itemize}
    \item[\phantom{}] $\ldots < f^{-1}(x) < x < f(x) < f^2(x) < \ldots$,
    \item[\phantom{}] $f^{-M}(x) < f^{-M+1}(x) < \ldots$, 
    \item[\phantom{}] $\ldots < f^{N-1}(x) < f^N(x)$,
    \item[\phantom{}] $f^{-M}(x) < f^{-M+1}(x) < \ldots < x < \ldots < f^{N-1}(x) < f^N(x)$.
\end{itemize}
We describe these situations respectively by saying $o_f(x)$ is an increasing $\mathbb{Z}$-\textit{orbit}, $\omega$-\textit{orbit}, $\omega^*$-\textit{orbit}, or \textit{finite} orbit. Orbits of the latter three forms are \textit{truncated}, on the left, right, and both sides, respectively.

If $f(x) < x$, then $f$ is \textit{decreasing} at $x$. The possible forms for $o_f(x)$ in this case are obtained from the increasing forms by replacing $<$ with $>$.

If $f(x) = x$, then $f$ \textit{fixes} $x$. Then $o_f(x) = \{x\}$ is a \textit{singleton orbit}. By convention, singleton orbits are non-truncated. 

\subsection{Orbitals of endcuts}\label{subsect:orbsofends}

\theoremstyle{definition}
\newtheorem{fjumps}[lct]{Definition}
\begin{fjumps}\label{fjumps}
Suppose $x \in \mathsf{Cut}(K)$. Define
\[
A_{x, f} = [\{x, f(x)\}].
\]
We call $A_{x, f}$ the \textit{$f$-jump at the cut $x$}.
\end{fjumps}

When $x$ is a point, $A_{x, f}$ is not defined. 

Any two cuts from the same orbit have isomorphic jumps. More precisely, given $x \in \mathsf{Cut}(K)$ and $n \in \mathbb{Z}$, if $f^n(x)$ and $f^{n+1}(x)$ are both defined we have 
\[
f^n[A_{x, f}] = [\{f^n(x), f^{n+1}(x)\}] = A_{f^{n}(x), f}.
\]
Since $f^n[A_{x, f}] \cong A_{x, f}$, we have $A_{x, f} \cong A_{f^n(x), f}$.

Our next goal is to define, for each point or cut $x \in K \cup f[K]$, the \textit{orbital} $O_f(x)$ of $x$ under $f$. 

Abstractly, $O_f(x)$ is the smallest convex subset of $\textrm{ext}(f)$ containing $o_f(x)$ that is invariant under both $f$ and $f^{-1}$. Often, it is just $\textrm{conv}(o_f(x))$, but for points or cuts near the endcuts of $K$, it may be a strict superset of $\textrm{conv}(o_f(x))$. 

We will take a more concrete approach to the definition by first defining the orbitals $O_f(c)$ and $O_f(d)$ of the endcuts of $K$ directly, and then working inward. An important arithmetic feature of orbitals is that they may be decomposed as discrete sums of $f$-jumps.

We begin with the left endcut $c$. The definition of $O_f(c)$ depends on the structure of the orbit $o_f(c)$. 

\underline{Case (1.)}: $f(c) = c$. In this case, define $O_f(c) = o_f(c) = \{c\}$. 

\underline{Case (2.)}: $f(c) > c$. 

$\rightarrow$ \underline{Case (2.1)}: $o_f(c)$ is finite. In this case, let $N \geq 1$ be least such that $f^{N+1}(c)$ is undefined. Since $f$ is increasing at $c$ and $c$ is the left endcut of the domain of $f$, $f^{-1}(c)$ is undefined. Thus $o_f(c)$ consists of the points
\[
c < f(c) < f^2(c) < \ldots < f^{N-1}(c) < f^N(c).
\]
We claim $f^{N-1}(c) \leq d < f^N(c)$. Indeed, if $f^N(c) \leq d$, then $f^N(c) \in [c, d] = K$ and $f^{N+1}(c)$ would be defined; and if $d < f^{N-1}(c)$, then $f^N(c)$ would lie strictly above $f(d)$ and hence outside of $f[K]$, both impossible. Thus $f^N(c) \leq f(d)$ as well. 

Tracking backward, we have $f^{N-1-k}(c) \leq f^{-k}(d) < f^{N-k}(c) \leq f^{-k + 1}(d)$ for $0 \leq k \leq N-1$. it follows that $o_f(d)$ consists of the points
\[
f^{-(N-1)}(d) < f^{-(N-2)}(d) < \ldots < d < f(d).
\]
Since $f$ is increasing at both $c$ and $d$, $\textrm{ext}(f) = [c, f(d)]$. 

The cut sequence 
\[
c < f(c) < f^2(c) < \ldots < f^{N-1}(c) < f^N(c) \leq f(d)
\]
spans $\textrm{ext}(f) = [c, f(d)]$ and yields the sum decomposition
\begin{equation}\label{finitaryleftorbitaldecomp}
\begin{array}{rcl}
\textrm{ext}(f) & \cong & [c, f(c)] + [f(c), f^2(c)] + \cdots + [f^N(c), f(d)] \\
& \cong & A_{c, f} + A_{f(c), f} + \cdots + A_{f^{N-1}(c), f} + B \\
& \cong & NA_{c, f} + B
\end{array}
\end{equation}
where $B = [f^N(c), f(d)]$. Observe that $B$ is isomorphic to the initial segment $[c, f^{-(N-1)}(d)]$ of $A_{c, f}$. 

We have that $f$ is overlapping when $N > 2$, or when $N = 2$ and $B \neq \emptyset$. The case when $N = 2$ and $B = \emptyset$ occurs when $f(c) = d$, i.e. when $K$ and $f[K]$ are directly adjacent but non-intersecting intervals in $X$. 

We view $f$ as acting on the sum decomposition \ref{finitaryleftorbitaldecomp} by mapping each copy $A_{f^n(c), f}$ of $A_{c, f}$ onto the copy $A_{f^{n+1}(c), f}$ to its right for $n < N-1$, and mapping the initial segment $f^{-1}[B] = [f^{N-1}(c), d]$ of $A_{f^{N-1}(c), f}$ onto $B$. 

Notice that \ref{finitaryleftorbitaldecomp} restricts to a decomposition of $K = \textrm{dom}(f) = [c, d]$:
\begin{equation}\label{finitaryleftorbitaldecompdomain}
\begin{array}{rcl}
[c, d] & \cong & [c, f(c)] + \cdots + [f^{N-2}(c), f^{N-1}(c)] + [f^{N-1}(c), d] \\
& \cong & A_{c, f} + A_{f(c), f} + \cdots + A_{f^{N-2}(c), f} + f^{-1}[B] \\
& \cong & (N-1)A_{c, f} + B,
\end{array}
\end{equation}

Now let $C = [d, f^N(c)]$, so that 
\[
A_{d, f} = [d, f(d)] \cong [d, f^N(c)] + [f^N(c), f(d)] = C + B.
\]

Observe
\[
A_{f^{N-1}(c), f} \cong [f^{N-1}(c), d] + [d, f^N(c)] = f^{-1}[B] + C \cong B + C. 
\]
Hence $A_{c, f} \cong B + C$ as well. Thus we obtain from the sum decomposition \ref{finitaryleftorbitaldecomp} the alternate decomposition in terms of the jump $A_{d, f}$:
\[
\begin{array}{rcl}
\textrm{ext}(f) & \cong & NA_{c,f} + B \\
& \cong & A_{c, f} + A_{c,f} + \cdots + A_{c,f} + B \\
& \cong & (B + C) + (B + C) + \cdots (B + C) + B \\
& \cong & B + (C + B) + \cdots + (C + B) + (C + B) \\
& \cong & B + NA_{d, f}.
\end{array}
\]
This decomposition can also be obtained directly from the cut sequence
\[
c < f^{-N+1}(d) < f^{-N+2}(d) < \ldots < d < f(d)
\]
consisting of the left endcut $c$ and the cuts in $o_f(d)$. 

In this case, we define the orbital of $c$ to be
\[
O_f(c) = \textrm{ext}(f) = [c, f(d)],
\]
and call the orbital \textit{finitary}.

Further, for every point or cut $x \in K \cup f[K]$, we define $O_f(x) = O_f(c)$. In particular, $O_f(d) = O_f(c)$. From above, we have two sum representations of this orbital in terms of the jumps $A_{c, f}$ and $A_{d, f}$:
\begin{equation}\label{finitaryorbitaldecompsboth}
\begin{array}{rcl}
O_f(c) & \cong & NA_{c,f} + B \\
& \cong & B + NA_{d, f} \\
& \cong & O_f(d).
\end{array}
\end{equation}

$\rightarrow$ \underline{Case (2.2)}: $o_f(c)$ is infinite. In this case, $o_f(c)$ is an increasing $\omega$-orbit consisting of the points
\[
c < f(c) < f^2(c) < \ldots.
\]
In this case we define the orbital of $c$ to be the convex closure of its orbit,
\[
O_f(c) = \textrm{conv}(o_f(c))
\]
and call $O_f(c)$ an \textit{$\omega$-orbital}. Further, for every point or cut $x \in O_f(c)$ we also define $O_f(x) = O_f(c)$.  

Since $f^n(c)$ is defined for every $n \in \omega$, $o_f(c)$ is a subset of the domain $K = [c, d]$ of $f$. Thus $O_f(c)$ is initial not only in $\textrm{ext}(f)$ but in $K$. 

We have that $O_f(c)$ decomposes as an $\omega$-sum of the initial jump $A_{c, f}$:
\begin{equation}\label{omegaorbitaldecomposition}
\begin{array}{rcl}
O_f(c) & \cong & [c, f(c)] + [f(c), f^2(c)] + [f^2(c), f^3(c)] +  \cdots \\
& \cong & A_{c, f} + A_{c, f(c)} + A_{c, f^2(c)} + \cdots \\ 
& \cong & \omega A_{c, f}.
\end{array}
\end{equation}
We view $f$ as acting on this decomposition by mapping each copy of $A_{c, f}$ onto the copy to its right. Observe that $f \upharpoonright O_f(c)$ is a final self-embedding of $O_f(c)$ onto its final segment beginning at $f(c)$. 

\underline{Case (3.)}: $f(c) < c$. In this case, write $f' = f^{-1}$ and $c' = f(c)$. Then $f'$ is a partial convex self-embedding of $X$ on the domain $K' = f[K]$ with left endcut $c'$, and $f'(c') > c'$. Use the conventions above to define $O_{f'}(c')$. Then define $O_f(c) = O_{f'}(c') = O_{f^{-1}}(f(c))$.  

There are two cases as before, and they yield sum decompositions of $O_f(c)$ of the same forms, the difference being that now $f$ is decreasing $O_f(c)$ instead of increasing. 

If $o_f(c)$ is finite, $O_f(c)$ is finitary and we again have decompositions of the form
\[
O_f(c) = O_f(d) = \textrm{ext}(f) = [f(c), d] \cong NA_{c, f} + B \cong B + NA_{d, f}.
\]

If $o_f(c)$ is infinite, $O_f(c)$ is an $\omega$-orbital and we again have
\[
O_f(c) \cong \omega A_{c, f}.
\]

We now turn to the right endcut $d$. The conventions for defining the orbital $O_f(d)$ are symmetric with the definition of $O_f(c)$.

If $o_f(d) = \{d\}$, then $O_f(d) = \{d\}$.

If $o_f(d)$ is finite, then $O_f(d) = \textrm{ext}(f) = O_f(c)$. We say the orbital is \textit{finitary} and we have sum decompositions of the form
\begin{equation*}
\begin{array}{rcl}
O_f(d) & \cong & B + NA_{d,f} \\
& \cong & NA_{c, f} + B \\
& \cong & O_f(c).
\end{array}
\end{equation*}

If $o_f(d)$ is an $\omega^*$-orbit, we define $O_f(d) = \textrm{conv}(o_f(d))$. We say $O_f(d)$ is an \textit{$\omega^*$-orbital} and we have the decomposition
\[
O_f(d) \cong \omega^*A_{d, f}.
\]

We view $f$ as acting on these sum decompositions for $O_f(d)$ by shifting each copy (or partial copy) of $A_{d, f}$ in the domain $K$ onto the copy to its left (when $f$ is decreasing) or right (when $f$ is increasing). 

It remains to define the orbitals $O_f(x)$ for points or cuts $x \in K \cup f[K]$ not belonging to $O_f(c)$ or $O_f(d)$. 

For such an $x$ to exist, we must have $O_f(c) < O_f(d)$. Suppose this is the case and let $c'$ denote the cut at the right of $O_f(c)$ (which coincides with $c$ if $O_f(c)$ is a singleton), and let $d'$ denote the cut at the left of $O_f(d)$.

Label $O_f(c) = [c, c']$ by its endcuts. From the definition of $O_f(c)$, we have $f[O_f(c)] = [f(c), c']$, that is, $f[[c, c']] = [f(c), c']$. Since $f$ is convex, we also have $f[[c, c']] = [f(c), f(c')]$, and it follows $f(c') = c'$. That is, $f$ fixes the cut at the right of $O_f(c)$. Symmetrically, $f(d') = d'$. More generally, $f(b) = b$ for any cut $b$ on a non-truncated side of an $f$-orbit. 

Fix a point or cut $x \in K \cup f[K]$ with $O_f(c) < x < O_f(d)$. Then $c' \leq x \leq d'$. Hence $f^n(c') \leq f^n(x) \leq f^n(d')$ for any $n \in \mathbb{Z}$, which gives $c' \leq f^n(x) < d'$ for any $n \in \mathbb{Z}$. In particular, $f^n(x)$ is defined for every $n \in \mathbb{Z}$, and it follows that $o_f(x)$ is either a $\mathbb{Z}$-orbit or a singleton.

In either case, we define $O_f(x) = \textrm{conv}(o_f(x))$. Notice that $O_f(y) = O_f(x)$ if and only if $y \in \textrm{conv}(o_f(x))$, if and only if $\textrm{conv}(o_f(y)) = \textrm{conv}(o_f(x))$. 

If $o_f(x)$ is an increasing $\mathbb{Z}$-orbit, then it consists of the sequence
\[
\ldots < f^{-1}(x) < x < f(x) < f^2(x) < \ldots.
\]
If $x$ is a cut, this sequence yields the $\mathbb{Z}$-sum decomposition
\[
\begin{array}{rcl}
O_f(x) & \cong & \cdots + [f^{-1}(x), x] + [x, f(x)] + [f(x), f^2(x)] + \cdots \\
& \cong & \cdots + A_{f^{-1}(x), f} + A_{x, f} + A_{f(x), f} + \cdots \\
& \cong & \mathbb{Z}A_{x, f}.
\end{array}
\]
We view $f$ as acting on this sum by shifting each copy of $A_{x, f}$ onto the copy to its right. The situation is symmetric if $o_f(x)$ is a decreasing $\mathbb{Z}$-orbit. 

In summary, $\textrm{ext}(f)$ decomposes as a pairwise disjoint collection of orbitals: an initial orbital $O_f(c)$, which is either a singleton orbital, finitary orbital, or $\omega$-orbital, a final orbital $O_f(d)$, which is either a singleton, finitary, or an $\omega^*$-orbital, and a (possibly empty) collection of intermediate orbitals $O_f(x)$, each of which are either singletons or $\mathbb{Z}$-orbitals. 

Further, we have $O_f(c) = O_f(d)$ if and only if one (equivalently, both) of these orbitals is finitary, in which case $O_f(c) = O_f(d) = \textrm{ext}(f)$ and $f$ is either increasing or decreasing on all of $K$. 

From this, we get the following relationships between the orbital decomposition of $f$ and its designation from Definition \ref{pushcompexpslidedefn}:
\begin{itemize}
    \item[i.] If $f$ is a push, then $O_f(c) = O_f(d)$.
    \item[ii.] If $f$ is an expansion or compression, then $f$ is non-increasing at one of the endcuts $c, d$ and non-decreasing at the other; hence $O_f(c) < O_f(d)$. 
    \item[iii.] If $f$ is a slide, then it may be that either $O_f(c) = O_f(d)$ or $O_f(c) < O_f(d)$. 
\end{itemize}

\section{Basic arithmetic in $(LO, +)$}\label{section:basicarithmetic}

Here we collect the basic arithmetic facts about discrete sums in $(LO, +)$ that we will need in later sections. Many of these results are due to one or both of Lindenbaum \cite{LindenbaumTarski} and Tarski \cite{Tarski}. 

Perhaps the fundamental observation about sums of linear orders is Lindenbaum's Theorem \ref{XcongAXBiffABladder}, along with its alternate form \ref{XcongAXBiffcongAXandXB}. In the language of the previous section, it says that if $f: K \rightarrow X$ is a compression that does not fix either endcut of its domain $K = [c, d]$, then $O_f(c)$ is an $\omega$-orbital and $O_f(d)$ is an $\omega^*$-orbital. 

\textit{Cut conventions}: When working with isomorphisms or convex embeddings of discrete sums, we will often label cuts in the target order corresponding to cuts at the $+$ signs in the embedded sum. 

For example, suppose $X = [c,d]$ is a linear order labeled by its endcuts, and $X \cong A_0 + A_1 + A_2$. A cut sequence in $X$ witnessing this isomorphism is a sequence of cuts of the form 
\begin{equation*}
c = a_0 < a_1 < a_2 < a_3 = d
\end{equation*}
where $[a_i, a_{i+1}] \cong A_i$ for $0 \leq i \leq 2$. Sometimes we double label such cuts. For example, if $X \cong A + B$ we may write 
\begin{equation*}
c = a_l < a_r = b_l < b_r = d
\end{equation*} 
for a sequence witnessing this decomposition, i.e. for which we have $[a_l, a_r] \cong A$ and $[b_l, b_r] \cong B$. When there is no danger of confusion, we sometimes informally identify such orders with their corresponding segments in $X$. For example, given a map $f: X \rightarrow Y$, we may write $f[A]$ for the image of $[a_l, a_r]$ under $f$.

\theoremstyle{definition}
\newtheorem{leqslantstosums}[lct]{Proposition}
\begin{leqslantstosums}\label{leqslantstosums} \phantom{.}
\begin{itemize}
    \item[i.] $X \leqslant_{init} Y$ if and only if $Y \cong X + A$ for some $A \in LO$;
    \item[ii.] $X \leqslant_{fin} Y$ if and only if $Y \cong A + X$ for some $A \in LO$;
    \item[iii.] $X \leqslant_{conv} Y$ if and only if $Y \cong A + X + B$ for some $A, B \in LO$. 
\end{itemize}
\end{leqslantstosums}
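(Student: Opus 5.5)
The plan is to unwind the definitions of initial, final, and convex embeddings and use the identification of cuts with sum decompositions from Section \ref{subsection:sums}. All three parts follow the same pattern, so I will prove (i.) carefully and obtain (ii.) by reversal and (iii.) by combining the two.

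For (i.), forward direction: let $f: X \rightarrow Y$ be an initial embedding. Then $I = f[X]$ is an initial segment of $Y$ and $J = Y \setminus I$ is the corresponding final segment, so $c = (I, J)$ is a cut in $Y$. By the remark that $Y \cong I + J$ whenever $(I, J)$ is a cut, we get $Y \cong f[X] + J \cong X + J$, since $f$ is an isomorphism of $X$ onto $f[X]$. Set $A = J$. Conversely, suppose $g: X + A \rightarrow Y$ is an isomorphism. The copy of $X$ in $X + A$ is the initial segment to the left of the cut at the $+$ sign. Restricting $g$ to this copy, and precomposing with the canonical isomorphism of $X$ onto it, gives an embedding of $X$ into $Y$. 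Its image is $g$ applied to an initial segment, and an isomorphism maps initial segments onto initial segments, so the embedding is initial.

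Part (ii.) is the mirror image: replace initial segments by final segments, and use the final segment $f[X]$ together with its complementary initial segment. Alternatively, apply (i.) to $X^*$ and $Y^*$, using $(A + X)^* \cong X^* + A^*$ and the fact that final embeddings of $X$ into $Y$ are exactly initial embeddings of $X^*$ into $Y^*$.

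For (iii.), the reverse direction is the same restriction argument: the middle summand of $A + X + B$ is an interval, and isomorphisms carry intervals to intervals. For the forward direction, let $f[X] = [c, d]$ be labeled by its endcuts, with $c = (I, J)$ and $d = (I', J')$. Take $A = I$, note that $f[X] = J \cap I'$, and take $B = J'$. Since $c \le d$, the sequence $(\emptyset, Y) \le c \le d \le (Y, \emptyset)$ is a cut sequence witnessing $Y \cong A + f[X] + B \cong A + X + B$, using associativity of the sum.

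None of these steps is a real obstacle. The only point that needs care is checking that the three pieces $I$, $f[X]$, $J'$ partition $Y$ in the correct order. This follows from convexity of $f[X]$ together with the definition of the endcuts, since every point of $Y$ is either below all of $f[X]$, inside it, or above all of it.
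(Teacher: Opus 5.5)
Your proof is correct and is exactly the routine unwinding of the definitions that the paper has in mind; the paper's own proof is just ``Clear.'' One small caveat concerns (iii): the endcut construction presupposes $X \neq \emptyset$, since for $f[X] = \emptyset$ the left endcut is $(Y, \emptyset)$, which lies above the right endcut $(\emptyset, Y)$, so in that case you should simply take $A = Y$ and $B = \emptyset$ directly.
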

\begin{proof}
Clear.
\end{proof}

\theoremstyle{definition}
\newtheorem{bothinitbothfin}[lct]{Proposition}
\begin{bothinitbothfin}\label{bothinitbothfin}
\phantom{.}
\begin{itemize}
    \item[i.] If $X \leqslant_{init} Z$ and $Y \leqslant_{init} Z$, then either $X \leqslant_{init} Y$ or $Y \leqslant_{init} X$. 
    \item[ii.] If $X \leqslant_{fin} Z$ and $Y \leqslant_{fin} Z$, then either $X \leqslant_{fin} Y$ or $Y \leqslant_{fin} X$. 
\end{itemize}
\end{bothinitbothfin}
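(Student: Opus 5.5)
The plan is to work with the images of initial embeddings inside $Z$ and use the fact that initial segments of a linear order are linearly ordered by inclusion. Part (ii.) will then follow from part (i.) by reversal.

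For (i.), I fix initial embeddings $f: X \rightarrow Z$ and $g: Y \rightarrow Z$. Their images $f[X]$ and $g[Y]$ are initial segments of $Z$. Any two initial segments $I, I'$ of $Z$ are comparable under inclusion: if some $z \in I \setminus I'$ existed alongside some $z' \in I' \setminus I$, then without loss of generality $z < z'$. Since $z' \in I'$ and $I'$ is initial, this would force $z \in I'$, a contradiction.

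So assume, say, $f[X] \subseteq g[Y]$. I then consider the composite $h = g^{-1} \circ f: X \rightarrow Y$. It is an embedding, being a composition of embeddings, and it is well defined because $f[X] \subseteq g[Y]$. Its image is $g^{-1}[f[X]]$. I check that this image is initial in $Y$. Suppose $y' < y$ with $y = g^{-1}(f(x))$. Then $g(y') < g(y) = f(x)$, and since $f[X]$ is initial in $Z$ we get $g(y') \in f[X]$. Hence $y' \in g^{-1}[f[X]]$. Therefore $X \leqslant_{init} Y$, and the other inclusion symmetrically gives $Y \leqslant_{init} X$.

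Alternatively, (i.) could be phrased via Proposition \ref{leqslantstosums}: from $Z \cong X + A \cong Y + B$, compare the two cuts at the $+$ signs in $Z$. The direct argument above is the same idea.

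For (ii.), I apply (i.) to the reversed orders $X^*, Y^*, Z^*$, using that a final embedding $X \rightarrow Z$ is an initial embedding $X^* \rightarrow Z^*$. I do not expect any real obstacle. The only point needing care is checking that the pulled-back image $g^{-1}[f[X]]$ is initial in $Y$, and that uses only that $g$ is order-preserving and $f[X]$ is initial in $Z$.
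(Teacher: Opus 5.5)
Your proposal is correct and is the routine verification the paper has in mind when it simply writes ``Clear'': the images of the two initial embeddings are nested initial segments of $Z$, pulling the smaller image back along the other embedding gives the required initial embedding, and (ii.) follows by passing to reverse orders.
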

\begin{proof}
Clear.
\end{proof}

\theoremstyle{definition}
\newtheorem{BinitCimpliesABinitAC}[lct]{Proposition}
\begin{BinitCimpliesABinitAC}\label{BinitCimpliesABinitAC}
\phantom{.}
\begin{itemize}
    \item[i.] If $B \leqslant_{init} C$ then $A + B \leqslant_{init} A + C$.
    \item[ii.] If $B \leqslant_{fin} C$, then $B + A \leqslant_{fin} C + A$. 
\end{itemize}
\end{BinitCimpliesABinitAC}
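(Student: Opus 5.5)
The plan is to convert the embedding statements into sum statements using Proposition \ref{leqslantstosums} and then use associativity of $+$.

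For (i.), assume $B \leqslant_{init} C$. By Proposition \ref{leqslantstosums}(i.) there is $D \in LO$ with $C \cong B + D$. Adding $A$ on the left and using associativity of the ordered sum gives
\[
A + C \cong A + (B + D) \cong (A + B) + D.
\]
Applying Proposition \ref{leqslantstosums}(i.) again, this time in the reverse direction, yields $A + B \leqslant_{init} A + C$.

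Concretely, if $g: B \rightarrow C$ is an initial embedding, define $h: A + B \rightarrow A + C$ to be the identity on the copy of $A$ and $g$ on the copy of $B$. Then $h$ is order-preserving, and its image is the initial segment of $A + C$ consisting of the copy of $A$ followed by $g[B]$.

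Part (ii.) is symmetric. From $B \leqslant_{fin} C$, Proposition \ref{leqslantstosums}(ii.) gives some $D$ with $C \cong D + B$. Then
\[
C + A \cong D + (B + A),
\]
so $B + A \leqslant_{fin} C + A$ by the same proposition. Alternatively, (ii.) follows by applying (i.) to the reversed orders, since $(X + Y)^* = Y^* + X^*$ and reversal swaps initial and final embeddings.

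There is no real obstacle here. The only point needing care is that an isomorphism $C \cong B + D$ composes correctly with the identity on $A$; this is immediate from the lexicographic definition of replacements.
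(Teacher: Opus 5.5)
Your proof is correct and is the same as the paper's. Both use Proposition \ref{leqslantstosums} to write $C \cong B + D$, then add $A$ on the left and use associativity to get $A + C \cong A + B + D$, and convert back to $A + B \leqslant_{init} A + C$; part (ii.) is handled symmetrically.
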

\begin{proof}
For (i.): $B \leqslant_{init} C$ implies $C \cong B + D$ for some $D$ by Proposition \ref{leqslantstosums}. Hence $A + C \cong A + B + D$, and so $A + B \leqslant_{init} A + C$ by the same proposition.

The proof for (ii.) is symmetric. 
\end{proof}

\theoremstyle{definition}
\newtheorem{AplusBconvXplusY}[lct]{Proposition}
\begin{AplusBconvXplusY}\label{AplusBconvXplusY}
If $A + B \leqslant_{conv} X + Y$, then either $A \leqslant_{conv} X$ or $B \leqslant_{conv} Y$.  
\end{AplusBconvXplusY}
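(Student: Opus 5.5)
Fix a convex embedding $f: A + B \rightarrow X + Y$ and label the relevant cuts. In $X + Y$, let $e$ be the cut at the $+$ sign, so that $X = [\text{left end}, e]$ and $Y = [e, \text{right end}]$. In $A + B$, let $a$ be the cut at the $+$ sign. The image $f[A+B]$ is an interval of $X + Y$. By the convention of Section \ref{subsection:convexembeddings}, we extend $f$ to cuts, and the image splits as $f[A] = [f(\text{left end of } A), f(a)]$ followed by $f[B] = [f(a), f(\text{right end of } B)]$. The whole argument is a case split on where the cut $f(a)$ lies relative to $e$ in the linear order $(X+Y) \cup \mathsf{Cut}(X+Y)$.

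Suppose first that $f(a) \leq e$. Every point of $f[A]$ lies strictly below the cut $f(a)$, hence below $e$, so $f[A] \subseteq X$. Since $f[A]$ is an interval of $X + Y$ contained in $X$, it is an interval of $X$. Therefore $f \upharpoonright A$ is a convex embedding of $A$ into $X$, which gives $A \leqslant_{conv} X$.

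Suppose instead that $f(a) > e$. Then every point of $f[B]$ lies above $f(a)$ and hence above $e$, so $f[B] \subseteq Y$. It is an interval of $Y$, and so $f \upharpoonright B$ witnesses $B \leqslant_{conv} Y$.

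The only point needing care is that the comparison between the cuts $f(a)$ and $e$ is legitimate. This holds because $(X+Y) \cup \mathsf{Cut}(X+Y)$ is linearly ordered, as noted in Section \ref{subsection:intervalsandcuts}. It also holds because a point $x$ satisfies $x < f(a)$ exactly when $x \in f[I]$ for the initial part $I = A$; this is the definition of the extension $f(c) = (f[I], f[J])$. With that convention in place there is no real obstacle, since both cases are immediate.
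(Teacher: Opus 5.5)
Your proof is correct and is essentially the paper's argument. The paper fixes a cut sequence $a_l < a_r = b_l < b_r$ witnessing the convex embedding and compares $a_r$ (your $f(a)$) with the cut $p$ at the $+$ sign of $X+Y$ (your $e$), exactly as you do. One small quibble: your claim that $x < f(a)$ holds exactly when $x \in f[A]$ is only true for points $x$ of the image $f[A+B]$, but you only use that points of $f[A]$ lie below $f(a)$ and points of $f[B]$ lie above it, so nothing breaks.
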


\begin{proof}
Label $X + Y = [c, d]$ by its endcuts. Let $p$ denote the cut at the $+$ sign in $X + Y$, so that $[c, p] \cong X$ and $[p, d] \cong Y$. Let 
\[
a_l < a_r = b_l < b_r
\]
be a sequence of cuts in $X + Y$ witnessing $A + B \leqslant_{conv} X + Y$, i.e. such that $[a_l, a_r] \cong A$ and $[b_l, b_r] \cong B$. If $a_r \leq p$ then $[a_l, a_r] \subseteq [c, p]$, which witnesses $A \leqslant_{conv} X$. Symmetrically, if $a_r \geq p$, then $B \leqslant_{conv} Y$. 
\end{proof}

\theoremstyle{definition}
\newtheorem{sumAiconvsumBi}[lct]{Corollary}
\begin{sumAiconvsumBi}\label{sumAiconvsumBi}
Fix a natural number $n \geq 1$. If \[
A_0 + A_1 + \cdots + A_{n-1} \leqslant_{conv} B_0 + B_1 + \cdots + B_{n-1},
\]
then $A_i \leqslant_{conv} B_i$ for some $i < n$.
\end{sumAiconvsumBi}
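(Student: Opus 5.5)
The plan is induction on $n$, with Proposition \ref{AplusBconvXplusY} as the inductive step. For $n = 1$ the statement is trivial: the hypothesis is $A_0 \leqslant_{conv} B_0$.

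For the inductive step, suppose the claim holds for $n$ and that
\[
A_0 + \cdots + A_{n-1} + A_n \leqslant_{conv} B_0 + \cdots + B_{n-1} + B_n.
\]
By associativity we regroup both sides as two-fold sums, $(A_0 + \cdots + A_{n-1}) + A_n$ and $(B_0 + \cdots + B_{n-1}) + B_n$. Proposition \ref{AplusBconvXplusY} then gives one of two alternatives. If $A_n \leqslant_{conv} B_n$, we take $i = n$ and are done. Otherwise $A_0 + \cdots + A_{n-1} \leqslant_{conv} B_0 + \cdots + B_{n-1}$, and the inductive hypothesis produces some $i < n$ with $A_i \leqslant_{conv} B_i$.

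One small point needs a remark. Convex embeddability is invariant under isomorphism of either argument, since $\leqslant_{conv}$ is transitive and isomorphisms are convex embeddings. So the regrouped sums, which are isomorphic rather than literally equal to the original ones, may be used freely.

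There is no real obstacle here. The only care needed is that the split in Proposition \ref{AplusBconvXplusY} must pair the first $n$ summands with the first $n$ summands and the last with the last, which is exactly how the regrouping above is arranged.
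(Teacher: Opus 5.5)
Your proof is correct and follows essentially the same route as the paper: induction on $n$, regrouping both sides as a two-fold sum that splits off the last summand, and applying Proposition \ref{AplusBconvXplusY} to land either in $A_n \leqslant_{conv} B_n$ or in the inductive hypothesis. Your remark that $\leqslant_{conv}$ is invariant under isomorphism, which is what justifies the regrouping, is a detail the paper leaves implicit.
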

\begin{proof}
By induction on $n$. The claim for $n = 1$ is immediate. 

Assume the result for $n$ and suppose $A_0 + A_1 + \cdots + A_n \leqslant_{conv} B_0 + B_1 + \cdots + B_n$. Writing this as 
\[
(A_0 + \cdots + A_{n-1}) + A_n \leqslant_{conv} (B_0 + \cdots + B_{n-1}) + B_n,
\]
Proposition \ref{AplusBconvXplusY} gives that either $A_0 + \cdots + A_{n-1} \leqslant_{conv} B_0 + \cdots + B_{n-1}$, which yields the claim by induction, or $A_n \leqslant_{conv} B_n$, which also yields the claim. 
\end{proof}

\theoremstyle{definition}
\newtheorem{directedrefinementpost}[lct]{Proposition}
\begin{directedrefinementpost} \label{directedrefinementpost}
(Tarski's directed refinement postulate; cf. \cite[1.1.III]{Tarski}): 
\begin{itemize}
    \item[i.] If $\sum_{i \in \omega} A_i \leqslant_{conv} X + Y$, either $\sum_{i \in \omega} A_i \leqslant_{conv} X$ or $\sum_{i\in \omega} A_{i+k_0} \leqslant_{conv} Y$ for some $k_0 \in \omega$. 
    \item[ii.] If $\sum_{i \in \omega^*} A_i \leqslant_{conv} X + Y$, either $\sum_{i \in \omega^*} A_i \leqslant_{conv} Y$ or $\sum_{i\in \omega^*} A_{i+k_0} \leqslant_{conv} X$ for some $k_0 \in \omega$. 
\end{itemize}
\end{directedrefinementpost}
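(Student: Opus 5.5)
We argue exactly as in the proof of Proposition \ref{AplusBconvXplusY}, locating the $+$ sign of $X + Y$ relative to the cut sequence that witnesses the embedding of the infinite sum. We prove (i.); part (ii.) follows by the symmetric argument, or by applying (i.) to the reversed orders, since $\left(\sum_{i \in \omega^*} A_i\right)^* \cong \sum_{i \in \omega} A_i^*$ and $(X+Y)^* \cong Y^* + X^*$.

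Label $X + Y = [c, d]$ by its endcuts and let $p$ be the cut at the $+$ sign, so that $[c, p] \cong X$ and $[p, d] \cong Y$. Fix a convex embedding $f : \sum_{i \in \omega} A_i \rightarrow X + Y$. Let $a_i$ be the image under $f$ of the cut at the left of the $i$-th summand $A_i$, and let $a_\omega$ be the image of the cut at the right end of the whole sum. This gives an increasing (weakly, since summands may be empty) sequence of cuts
\[
a_0 \leq a_1 \leq a_2 \leq \cdots \leq a_\omega
\]
with $[a_i, a_{i+1}] \cong A_i$. By convexity of $f$, the image of the sum is $[a_0, a_\omega]$.

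\textbf{Case 1: $a_\omega \leq p$.} Then $[a_0, a_\omega] \subseteq [c, p]$, so $\sum_{i \in \omega} A_i \leqslant_{conv} X$.

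\textbf{Case 2: $a_\omega > p$.} Then $p < a_\omega$, and $a_\omega$ is the cut at the right of the set $\bigcup_i [a_0, a_i]$. So there is a point of that set lying above $p$, and hence some $k_0$ with $a_{k_0} \geq p$. (If $a_0 \geq p$, take $k_0 = 0$.) Now $[a_{k_0}, a_\omega] \subseteq [p, d]$, and $f$ restricted to the tail $\sum_{i \geq k_0} A_i$ is a convex embedding onto $[a_{k_0}, a_\omega]$. Since $\sum_{i \geq k_0} A_i \cong \sum_{i \in \omega} A_{i + k_0}$, we get $\sum_{i \in \omega} A_{i+k_0} \leqslant_{conv} Y$.

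The only delicate step is the existence of $k_0$ in Case 2. It rests on the fact that the image of the $\omega$-sum is exactly the union of the images of its finite initial sums. This holds because every point of the sum lies in some summand $A_i$, so a point of the image above $p$ lies in some $[a_i, a_{i+1}]$, which forces $a_{i+1} > p$. Everything else is routine bookkeeping with cuts under the identification of $f$ with its extension to cuts.
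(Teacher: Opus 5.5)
Your proof is correct and follows the paper's argument essentially verbatim. Fix the cut sequence witnessing the embedding and compare its right limit with the cut $p$ at the $+$ sign; when that limit exceeds $p$, take $k_0$ with $a_{k_0} \geq p$, so the tail sits inside $Y$. Your explicit justification that such a $k_0$ exists, and your reduction of (ii.) to (i.) by reversal, are sound additions to what the paper leaves as routine or symmetric.
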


\begin{proof}
For (i.): Label $X + Y = [c, d]$ and let $p$ be the cut at the $+$ sign. Fix an increasing sequence of cuts 
\[
a_0 < a_1 < \ldots
\]
in $X + Y$ such that $[a_i, a_{i+1}] \cong A_i$ for all $i \in \omega$, and let $b$ denote the cut at the right of this sequence. If $b \leq p$, then the sequence $a_i$ lies entirely in $X$ and witnesses $\sum_{i \in \omega} A_i \leqslant_{conv} X$. If $b > p$, let $k_0$ be least such that $a_{k_0} \geq p$. Then the tail-sequence $a_{i + k_0}$ lies in $Y$ and witnesses $\sum_{i\in \omega} A_{i+k_0} \leqslant_{conv} Y$.

The proof for (ii.) is symmetric.
\end{proof}

Induction yields the following $n$-sum version of Proposition \ref{directedrefinementpost}. 

\theoremstyle{definition}
\newtheorem{directedrefinementfornsums}[lct]{Corollary}
\begin{directedrefinementfornsums}\label{directedrefinementfornsums}
Fix a natural number $n \geq 1$. 
\begin{itemize}
    \item[i.] If $\sum_{i \in \omega} A_i \leqslant_{conv} X_0 + X_1 + \cdots + X_{n-1}$, then there is a natural number $k_0$ and index $i_0 < n$ such that $\sum_{i \in \omega} A_{i+k_0} \leqslant_{conv} X_{i_0}$. 
    \item[ii.] If $\sum_{i \in \omega^*} A_i \leqslant_{conv} X_0 + X_1 + \cdots + X_{n-1}$, then there is a natural number $k_0$ and index $i_0 < n$ such that $\sum_{i \in \omega^*} A_{i+k_0} \leqslant_{conv} X_{i_0}$.
\end{itemize}
\end{directedrefinementfornsums}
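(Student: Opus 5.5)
We argue by induction on $n$, following the pattern of the proof of Corollary \ref{sumAiconvsumBi}, and treat only (i.); part (ii.) is the mirror image and goes through by the same argument with $\omega^*$-sums and Proposition \ref{directedrefinementpost}(ii.).

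For the base case $n = 1$, take $k_0 = 0$ and $i_0 = 0$; the hypothesis is then exactly the conclusion.

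For the inductive step, assume the statement for $n$ and suppose
\[
\sum_{i \in \omega} A_i \leqslant_{conv} X_0 + \cdots + X_n.
\]
Group the target as $(X_0 + \cdots + X_{n-1}) + X_n$ and apply Proposition \ref{directedrefinementpost}(i.) with $X = X_0 + \cdots + X_{n-1}$ and $Y = X_n$. This leaves two cases.

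In the first case, $\sum_{i \in \omega} A_i \leqslant_{conv} X_0 + \cdots + X_{n-1}$. The inductive hypothesis then gives $k_0$ and $i_0 < n$ with $\sum_{i\in\omega} A_{i+k_0} \leqslant_{conv} X_{i_0}$, and we are done.

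In the second case, there is $k_0$ with $\sum_{i \in \omega} A_{i+k_0} \leqslant_{conv} X_n$. We take $i_0 = n$ and are done.

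Both cases apply their tools directly, so no index-shift composition is needed. Had the inductive step instead been applied to a tail sum $\sum_i A_{i+k}$, we would have to compose shifts, using that the tail of a tail is a tail ($A_{(i+k)+k'} = A_{i+(k+k')}$). The argument needs no other bookkeeping and has no real obstacle: it is a direct induction from Proposition \ref{directedrefinementpost}.
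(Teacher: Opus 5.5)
Your proof is correct and is the same induction on $n$ via Proposition \ref{directedrefinementpost} that the paper intends; the paper only says the corollary follows by induction. For (ii.), note that with the grouping $(X_0+\cdots+X_{n-1})+X_n$ the shifted tail $\sum_{i\in\omega^*}A_{i+k_0}$ lands in the left block, so you must either regroup as $X_0+(X_1+\cdots+X_n)$ or use your tail-of-a-tail remark; either way the argument goes through.
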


Since for any natural number $k_0$ we have $\omega A =\sum_{i \in \omega} A_i = \sum_{i \in \omega} A_{i+k_0}$, where $A_i = A$ for all $i \in \omega$ (and symmetrically, for $\omega^* A$), Corollary \ref{directedrefinementfornsums} yields the following. 

\theoremstyle{definition}
\newtheorem{directedrefinementforomegaA}[lct]{Corollary}
\begin{directedrefinementforomegaA}\label{directedrefinementforomegaA}
Fix a natural number $n \geq 1$. 
\begin{itemize}
    \item[i.] If $\omega A \leqslant_{conv} X_0 + X_1 + \cdots + X_{n-1}$, then $\omega A \leqslant_{conv} X_{i_0}$ for some $i_0 < n$.
    \item[ii.] If $\omega^* A \leqslant_{conv} X_0 + X_1 + \cdots + X_{n-1}$, then $\omega^* A \leqslant_{conv} X_{i_0}$ for some $i_0 < n$.
\end{itemize}
\end{directedrefinementforomegaA}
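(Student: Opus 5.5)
This is a direct specialization of Corollary \ref{directedrefinementfornsums}. Take the constant sequence $A_i = A$ for all $i \in \omega$, so that $\sum_{i \in \omega} A_i = \omega A$.

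For (i.), suppose $\omega A \leqslant_{conv} X_0 + \cdots + X_{n-1}$. Corollary \ref{directedrefinementfornsums}(i.) gives a natural number $k_0$ and an index $i_0 < n$ with $\sum_{i \in \omega} A_{i+k_0} \leqslant_{conv} X_{i_0}$. Since $A_{i+k_0} = A$ for every $i$, this tail sum is literally $\omega A$, so $\omega A \leqslant_{conv} X_{i_0}$.

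Part (ii.) is identical. Apply Corollary \ref{directedrefinementfornsums}(ii.) to the constant $\omega^*$-indexed sequence, and use that any tail of $\omega^* A$ is again $\omega^* A$.

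The underlying Corollary \ref{directedrefinementfornsums} follows by induction on $n$ from Proposition \ref{directedrefinementpost}. Write $X_0 + \cdots + X_n$ as $(X_0 + \cdots + X_{n-1}) + X_n$ and apply the proposition. In either outcome you obtain a tail (possibly the whole sequence) embedded in a shorter sum or in $X_n$; in the first case apply the inductive hypothesis to that tail and add the two shifts. If that corollary is taken as given, there is no real obstacle. The only point needing care is that a shifted tail of a constant sequence is the same order, so the shift $k_0$ disappears.
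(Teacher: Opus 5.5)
Your proposal is correct and matches the paper's argument: the paper derives this corollary directly from Corollary \ref{directedrefinementfornsums} by taking the constant sequence $A_i = A$ and noting that every shifted tail $\sum_{i \in \omega} A_{i+k_0}$ (and symmetrically every tail of the $\omega^*$-sum) is again $\omega A$ (respectively $\omega^* A$). Your sketch of how Corollary \ref{directedrefinementfornsums} follows inductively from Proposition \ref{directedrefinementpost} also agrees with the paper's remark that induction yields it.
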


\theoremstyle{definition}
\newtheorem{AplusomegaA}[lct]{Lemma}
\begin{AplusomegaA}\label{AplusomegaA}
The following identities hold:
\begin{itemize}
    \item[i.] $A + \omega A \cong \omega A$;
    \item[ii.] $\omega^*B + B \cong B$. 
\end{itemize}
\end{AplusomegaA}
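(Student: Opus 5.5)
Both identities come from re-indexing a discrete sum, so I will write down an explicit isomorphism for (i.) and get (ii.) by reversing orders.

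For (i.), view $A + \omega A$ as the discrete sum $\sum_{i \in 1 + \omega} A$, i.e.\ the replacement $(1+\omega)(I_x)$ with every $I_x = A$. By the associativity law for replacements, a replacement is determined up to isomorphism by its index order and its summands. The same holds for products: $\omega A$ is the replacement $\omega(I_x)$ with $I_x = A$. Now fix an order isomorphism $\sigma \colon 1 + \omega \to \omega$; concretely, send the unique point of $1$ to $0$ and each $n \in \omega$ to $n+1$. Define
\[
F(x, a) = (\sigma(x), a) \quad \text{for } x \in 1+\omega,\ a \in A.
\]
This is a bijection from $(1+\omega)(A)$ onto $\omega(A)$. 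It preserves the lexicographic order because $\sigma$ preserves order on the first coordinate and $F$ is the identity on the second. Hence $A + \omega A \cong \omega A$.

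Equivalently, in the language of cut sequences: if $0 = a_0 < a_1 < \cdots$ witnesses $\omega A \cong A + A + \cdots$, then the initial jump $[a_0, a_1] \cong A$ together with the final segment beginning at $a_1$ gives the decomposition $\omega A \cong A + \omega A$. That final segment is isomorphic to $\omega A$ because its witnessing sequence is the shifted sequence $a_1 < a_2 < \cdots$. This is exactly the statement that $n \mapsto n+1$ is a final self-embedding of $\omega A$, as for $\omega$-orbitals in Section \ref{section:partialconvexselfembeds}.

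For (ii.), I reduce to (i.) by reversal. Recall $\omega^* B = (\omega B^*)^*$, and reversal satisfies $(X+Y)^* = Y^* + X^*$, which follows directly from the definition of the lexicographic order. Then
\[
\omega^* B + B \cong \bigl(B^* + \omega B^*\bigr)^* \cong \bigl(\omega B^*\bigr)^* = \omega^* B,
\]
where the middle step applies (i.) to $B^*$. Note that the statement as printed reads $\omega^* B + B \cong B$. The intended identity is $\omega^* B + B \cong \omega^* B$, since the printed form fails for $B = 1$. I would prove the corrected form and note the typo.

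There is no real obstacle here. The only care needed is in checking that re-indexing by an order isomorphism of the index set yields an isomorphism of replacements, and in tracking the reversal convention for $\omega^*$.
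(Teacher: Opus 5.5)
Your proof is correct and is essentially the paper's. The paper writes down the same shift isomorphism $(0,a)\mapsto(0,a)$, $(1,(n,a))\mapsto(n+1,a)$, and remarks that one may equivalently use $1+\omega\cong\omega$ with right distributivity, which is your re-indexing; it calls (ii.) "symmetric", and your reversal argument makes that precise. You are also right that (ii.) as printed is a typo for $\omega^*B + B \cong \omega^*B$, which is the form the paper actually relies on, for instance in the backward direction of Theorem \ref{XcongAXBiffABladder}.
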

\begin{proof}
For (i.): Informally, we have
\begin{equation*}
A + \omega A \cong A + (A + A + \cdots) \cong A + A + A + \cdots = \omega A.
\end{equation*}

Formally, by definition we have 
\[
\begin{array}{rcl}
\omega A & = & \{(n, a): n \in \omega, a \in A\} \\
A + \omega A & = & \{(0, a): a \in A\} \cup \{(1, (n, a)): n \in \omega, a \in A\}
\end{array}
\]
both with the lexicographic ordering. The map $f: A + \omega A \rightarrow \omega A$ defined by $f(0, a) = (0, a)$ and $f(1, (n, a)) = (n+1, a)$ is an isomorphism. 

The proof for (ii.) is symmetric. 
\end{proof}

One way to justify the informal associativity argument in the proof above is to show directly the visually obvious fact that $1 + \omega \cong \omega$, and then quote right distributivity of the product to write $A + \omega A \cong (1 + \omega)A \cong \omega A$. 

We will sometimes make use of similarly basic (and visually clear) additive and multiplicative absorption properties of the discrete orders $\omega, \omega^*$, and $\mathbb{Z}$, sometimes without explicit justification. For example, for any natural number $n \geq 1$ we have $\omega n \cong \omega$, $\omega^* n \cong \omega^*$, and $\mathbb{Z} n \cong \mathbb{Z}$, which yields the following proposition. 

\theoremstyle{definition}
\newtheorem{omeganXcongomegaX}[lct]{Proposition}
\begin{omeganXcongomegaX}\label{omeganXcongomegaX}
For any $n \in \omega$, the following identities hold:
\begin{itemize}
    \item[i.] $\omega(nX) \cong \omega X$;
    \item[ii.] $\omega^*(nX) \cong \omega^*X$;
    \item[iii.] $\mathbb{Z}(nX) \cong \mathbb{Z}X$.
\end{itemize}
\end{omeganXcongomegaX}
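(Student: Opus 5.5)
The plan is to reduce each identity to an isomorphism of index orders, using the associativity law for replacements and right distributivity of products over replacements from Section \ref{subsection:sums}. For (i.), we have $\omega(nX) \cong (\omega n)X$. To see this, write $nX$ as the replacement $n(X)$. Then $\omega(nX)$ is the replacement of $\omega$ by copies of $n(X)$. By the general associativity law $X(I_x)(J_{(x,i)}) \cong X(I_x(J_{(x,i)}))$, applied with $I_x = n$ and $J_{(x,i)} = X$, this is isomorphic to the replacement of $\omega n$ by copies of $X$, which is $(\omega n)X$. The problem is then reduced to showing $\omega n \cong \omega$ as index orders, since any isomorphism $g: L \to L'$ of index orders induces an isomorphism $LX \cong L'X$ via $(l, x) \mapsto (g(l), x)$.

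For the isomorphism $\omega n \cong \omega$, we will give it explicitly. The order $\omega n$ consists of the pairs $(k, i)$ with $k \in \omega$ and $i < n$, ordered lexicographically. The map $(k, i) \mapsto kn + i$ is an order-preserving bijection onto $\omega$ when $n \geq 1$, by uniqueness of division with remainder. The same formula handles (iii.): it gives $\mathbb{Z}n \cong \mathbb{Z}$ via $(k, i) \mapsto kn + i$ for $k \in \mathbb{Z}$. For (ii.), we use $\omega^* n \cong \omega^*$. Here $\omega^*$ is the reverse of $\omega$, and $\omega^* n$ is the lexicographic product whose blocks are arranged in reverse order, each block being a copy of $n$ in its usual order. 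We can either send $(k, i)$ to $kn + (n-1-i)$ inside $\omega^*$, or note that $\omega^* n \cong (\omega n^*)^* \cong (\omega n)^* \cong \omega^*$, using $n^* \cong n$. A direct check that the first formula preserves order suffices.

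One degenerate point remains. The statement says $n \in \omega$, so $n = 0$ is formally included. In that case $nX = \emptyset$, and the identities hold only when $X = \emptyset$. We will therefore read the statement as intended for $n \geq 1$, matching the preceding sentence of the paper, and remark on this.

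No step is a real obstacle. The only point needing care is the bookkeeping in the associativity step: identifying $\omega(nX)$ literally with a replacement of $\omega n$ by copies of $X$. We will handle it by writing the explicit map $(k, (i, x)) \mapsto ((k, i), x)$ and checking that it respects the lexicographic orders.
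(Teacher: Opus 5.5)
Your proof is correct and follows the paper's route. The paper deduces the proposition from the index-order isomorphisms $\omega n \cong \omega$, $\omega^* n \cong \omega^*$, $\mathbb{Z} n \cong \mathbb{Z}$ together with associativity of the lexicographic product, which it leaves implicit and you make explicit. Your remark about $n = 0$ is also right: the statement should read $n \geq 1$, as in the sentence preceding it.
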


\theoremstyle{definition}
\newtheorem{XcongAXBiffABladder}[lct]{Theorem}
\begin{XcongAXBiffABladder}\label{XcongAXBiffABladder}
(Lindenbaum; \cite[3.1]{LindenbaumTarski}) $X \cong A + X + B$ if and only if there is a linear order $C$ such that $X \cong \omega A + C + \omega^* B$. 
\end{XcongAXBiffABladder}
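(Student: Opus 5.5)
The reverse direction is a direct computation. Suppose $X \cong \omega A + C + \omega^* B$. By Lemma \ref{AplusomegaA}(i.) we have $A + \omega A \cong \omega A$, and by its part (ii.) we have $\omega^* B + B \cong \omega^* B$. Hence
\[
A + X + B \cong (A + \omega A) + C + (\omega^* B + B) \cong \omega A + C + \omega^* B \cong X.
\]

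For the forward direction I will use the orbital analysis of Section \ref{section:partialconvexselfembeds}. Fix an isomorphism $g: A + X + B \to X$. Label $X = [c, d]$ by its endcuts. In $A + X + B$, let $p$ be the cut between $A$ and $X$ and $q$ the cut between $X$ and $B$. Transport these cuts along $g$ to cuts $a = g(p)$ and $b = g(q)$ in $X$. Then $X = [c, a] + [a, b] + [b, d]$ with
\[
[c, a] \cong A, \qquad [a, b] \cong X, \qquad [b, d] \cong B.
\]
Let $f: X \to X$ be the composite of the canonical isomorphism of $X$ onto the middle summand with $g$. This is a convex embedding with image $[a, b] = [f(c), f(d)]$, so $f$ is a compression with domain $K = X$. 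Its jumps are $A_{c,f} = [c, a] \cong A$ and $A_{d,f} = [b, d] \cong B$.

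Next I analyze the orbitals of $f$ at the endcuts. Since $f[K] \subseteq K$, every forward iterate $f^n(c)$ and $f^n(d)$ is defined. If $A \neq \emptyset$, then $f(c) > c$, so $f$ is increasing at $c$ and $o_f(c)$ is infinite. By Case (2.2) and decomposition \ref{omegaorbitaldecomposition}, $O_f(c) \cong \omega A_{c,f} \cong \omega A$, and it is an initial segment of $K = X$. If $A = \emptyset$, then $O_f(c) = \{c\}$, which is empty as an order and agrees with $\omega A = \emptyset$. Symmetrically, $O_f(d) \cong \omega^* B$ is a final segment of $X$, with the same convention when $B = \emptyset$.

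The step needing care is checking that these two orbitals do not overlap and together with a middle piece fill $X$. The summary after the orbital construction says that for a compression we have $O_f(c) < O_f(d)$, since the two finitary cases coincide only when $f$ is increasing or decreasing throughout. That argument should be rechecked here for the degenerate cases, such as $A = \emptyset$ or $B = \emptyset$, and the case where $f$ fixes an endcut. Directly, $f^n(c) \leq f^n(d')$ holds for any cut $d'$ lying above all the $f^n(c)$. More simply, the sequence $f^n(c)$ is increasing and the sequence $f^n(d)$ is decreasing, with $f^n(c) \leq f^n(d)$ for all $n$. Hence $\sup_n f^n(c) \leq \inf_n f^m(d)$. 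Let $c'$ be the right endcut of $O_f(c)$ and $d'$ the left endcut of $O_f(d)$, so $c' \leq d'$. Setting $C = [c', d']$, possibly empty, we get
\[
X = O_f(c) + C + O_f(d) \cong \omega A + C + \omega^* B.
\]
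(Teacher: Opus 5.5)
Your proof is correct and follows the paper's argument: you realize the isomorphism as a compression $f$ of $X$ onto its middle copy, whose endcut jumps are $A$ and $B$. Its initial and final orbitals are then $\omega A$ and $\omega^* B$, and the leftover middle interval serves as $C$. Your explicit check that $\sup_n f^n(c) \le \inf_m f^m(d)$ (from $f^n(c) \le f^N(c) \le f^N(d) \le f^m(d)$ with $N = \max(n,m)$) spells out the disjointness the paper simply asserts from $f$ being increasing at $c$ and decreasing at $d$, and your treatment of empty $A$ or $B$ covers the cases the paper dismisses as similar.
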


\begin{proof}
The backward direction follows from Lemma \ref{AplusomegaA} and the associativity of the sum. So assume $X \cong A + X + B$ and label $X = [c, d]$ by its endcuts. We assume $A$ and $B$ are non-empty; the argument is similar when one or both of these orders is empty. 

Fix a sequence of cuts
\begin{equation*}
c = a_l < a_r = x_l < x_r = b_l < b_r = d
\end{equation*}
in $X$ witnessing the isomorphism $X \cong A + X + B$, i.e. such that $[a_l, a_r] \cong A$, $[x_l, x_r] \cong X$, and $[b_l, b_r] \cong B$. 

Fix an isomorphism $f: X \rightarrow [x_l, x_r]$. Then $f$ is a convex self-embedding of $X$ with 
\begin{equation*}
A_{c, f} = [c, f(c)] = [a_l, a_r] \cong A,
\end{equation*}
and similarly $A_{d, f} \cong B$. Hence $O_f(c) \cong \omega A$ and $O_f(d) \cong \omega^* B$, and these orbitals are disjoint since $f$ is increasing at $c$ and decreasing at $d$. Letting $c_l, c_r$ denote the cuts at the right of $O_f(c) = [c, c_l]$ and left of $O_f(d) = [c_r, d]$ respectively, and letting $C = [c_l, c_r]$, we have 
\begin{equation*}
\begin{array}{rcl}
X & \cong & [c, c_l] + [c_l, c_r] + [c_r, d] \\
& \cong & \omega A + C + \omega^* B,
\end{array}
\end{equation*}
as claimed.
\end{proof}

\theoremstyle{definition}
\newtheorem{absorbsleftrightdef}[lct]{Definition}
\begin{absorbsleftrightdef}\label{absorbsleftrightdef}
\phantom{.}
\begin{itemize}
    \item[i.] $X$ \textit{absorbs $A$ on the left} if $A + X \cong X$;
    \item[ii.] $X$ \textit{absorbs $A$ on the right} if $X + A \cong X$. 
\end{itemize}
\end{absorbsleftrightdef}

\theoremstyle{definition}
\newtheorem{absorbsleftrightcoro}[lct]{Corollary}
\begin{absorbsleftrightcoro}\label{absorbsleftrightcoro}
(cf. \cite[1.25]{Tarski})
\begin{itemize}
    \item[i.] $X$ absorbs $A$ on the left if and only if $\omega A \leqslant_{init} X$;
    \item[ii.] $X$ absorbs $A$ on the right if and only if $\omega^* A \leqslant_{fin} X$. 
\end{itemize}
\end{absorbsleftrightcoro}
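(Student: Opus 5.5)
This corollary follows from Lindenbaum's Theorem \ref{XcongAXBiffABladder}, specialized to the case where one of the two summands is empty, together with Lemma \ref{AplusomegaA} and Proposition \ref{leqslantstosums}. I will prove (i.) in detail; (ii.) is the mirror image, obtained by exchanging left and right, or equivalently by applying (i.) to reversed orders.

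For the forward direction of (i.), suppose $A + X \cong X$. Then $X \cong A + X + \emptyset$, and Theorem \ref{XcongAXBiffABladder} with $B = \emptyset$ gives an order $C$ with
\[
X \cong \omega A + C + \omega^*\emptyset \cong \omega A + C.
\]
By Proposition \ref{leqslantstosums}(i.), this says $\omega A \leqslant_{init} X$.

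The one point needing care is that the proof of Theorem \ref{XcongAXBiffABladder} was written with $A, B$ nonempty and only remarked that the empty cases are similar. So I will check the case $B = \emptyset$ directly with the same argument. Fix an isomorphism $f \colon X \to [a_r, d]$, where $[c, a_r] \cong A$. Then $f$ fixes the right endcut $d$, and if $A \neq \emptyset$ it is increasing at $c$. In that case the orbit of $c$ cannot be finitary, because a finitary orbital contains the right endcut $d$ while $f(d) = d$. Hence $O_f(c)$ is an $\omega$-orbital, and $O_f(c) \cong \omega A_{c,f} \cong \omega A$ is initial in $K = X$. If $A = \emptyset$, then $\omega A = \emptyset$ is trivially initial in $X$.

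For the backward direction of (i.), suppose $\omega A \leqslant_{init} X$. By Proposition \ref{leqslantstosums}(i.), $X \cong \omega A + D$ for some $D$. Then by associativity and Lemma \ref{AplusomegaA}(i.),
\[
A + X \cong (A + \omega A) + D \cong \omega A + D \cong X.
\]

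For (ii.), the backward direction uses Lemma \ref{AplusomegaA}(ii.) in the form $\omega^* A + A \cong \omega^* A$ and runs the same way. The forward direction applies Theorem \ref{XcongAXBiffABladder} with the left summand empty, together with Proposition \ref{leqslantstosums}(ii.). I expect no step to be a real obstacle; the only subtle point is the empty-summand case of Theorem \ref{XcongAXBiffABladder}, which the orbital argument above covers.
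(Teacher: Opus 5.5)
Your proof is correct and takes the same approach as the paper: one direction follows from $X \cong \omega A + D$ together with Lemma \ref{AplusomegaA}, and the other from Theorem \ref{XcongAXBiffABladder} with one summand taken empty. Your direct check of the empty-summand case is a detail the paper leaves as ``similar.'' In that check, $f$ maps $X$ into itself, so every iterate $f^n(c)$ is defined and $O_f(c)$ is an $\omega$-orbital initial in $X$.
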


\begin{proof}
For (i.): Since $\omega A \leqslant_{init} X$ implies $X \cong \omega A + C$ for some linear order $C$, the forward direction follows from Lemma \ref{AplusomegaA}. For the backward direction, apply Theorem \ref{XcongAXBiffABladder} with $B = \emptyset$. 

The proof for (ii.) is symmetric. 
\end{proof}

\theoremstyle{definition}
\newtheorem{XcongAXBiffcongAXandXB}[lct]{Theorem}
\begin{XcongAXBiffcongAXandXB}\label{XcongAXBiffcongAXandXB}
(Lindenbaum; \cite[3.2]{LindenbaumTarski}) $X \cong A + X + B$ if and only if $X \cong A + X$ and $X \cong X + B$. 
\end{XcongAXBiffcongAXandXB}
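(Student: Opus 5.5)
The plan is to reduce both directions to Theorem \ref{XcongAXBiffABladder} and Corollary \ref{absorbsleftrightcoro}, so that each isomorphism becomes a statement about initial copies of $\omega A$ and final copies of $\omega^* B$.

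For the forward direction, assume $X \cong A + X + B$. By Theorem \ref{XcongAXBiffABladder} there is a linear order $C$ with $X \cong \omega A + C + \omega^* B$. Then
\[
A + X \cong (A + \omega A) + C + \omega^* B \cong \omega A + C + \omega^* B \cong X,
\]
using Lemma \ref{AplusomegaA}(i.). Symmetrically, $X + B \cong X$ follows from Lemma \ref{AplusomegaA}(ii.). Alternatively, $\omega A \leqslant_{init} X$ together with Corollary \ref{absorbsleftrightcoro}(i.) gives $A + X \cong X$ directly, and similarly on the right.

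For the backward direction, assume $X \cong A + X$ and $X \cong X + B$. The one-line route is
\[
A + X + B \cong A + (X + B) \cong A + X \cong X,
\]
which uses only the right absorption $X + B \cong X$, then associativity, then the left absorption $A + X \cong X$. I will double-check this chain against the definitions. It seems too easy, and the substance of Lindenbaum's theorem may lie in the finer structural statement, namely producing a single decomposition $X \cong \omega A + C + \omega^* B$ in which the $\omega A$ and $\omega^* B$ parts are disjoint. That refined decomposition can be obtained as a corollary: from $X \cong A + X + B$, just established, Theorem \ref{XcongAXBiffABladder} applies.

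So the main proof is essentially routine. The only point where care is needed is that the isomorphism $X + B \cong X$ may be substituted inside the sum $A + (\cdot)$, which holds because sums respect isomorphism in each summand. I expect no real obstacle. If the authors intend the refined form, the work is already done in Theorem \ref{XcongAXBiffABladder}, where the orbitals $O_f(c)$ and $O_f(d)$ of the compression are disjoint.
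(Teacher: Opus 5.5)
Your proposal is correct and matches the paper's proof. In the forward direction you use Theorem \ref{XcongAXBiffABladder} to get $\omega A \leqslant_{init} X$ and $\omega^* B \leqslant_{fin} X$, then absorb via Corollary \ref{absorbsleftrightcoro} (equivalently Lemma \ref{AplusomegaA}); the backward direction is exactly the paper's one-line substitution $X \cong A + X \cong A + (X + B)$, and it is not ``too easy'' because the real content already lives in Theorem \ref{XcongAXBiffABladder}.
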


\begin{proof}
If $X \cong A + X + B$, then $\omega A \leqslant_{init} X$ and $\omega^*B \leqslant_{fin} X$ by Theorem \ref{XcongAXBiffABladder}. By Corollary \ref{absorbsleftrightcoro}, $A + X \cong X + B \cong X$.

Conversely, if $A + X \cong X$ and $X + B \cong X$, then $X \cong A + (X + B) \cong A + X + B$.
\end{proof}

\theoremstyle{definition}
\newtheorem{CSBLO}[lct]{Corollary}
\begin{CSBLO} \label{CSBLO}
(Lindenbaum; \cite[3.3]{LindenbaumTarski}) If $X \leqslant_{init} Y$ and $Y \leqslant_{fin} X$ then $X \cong Y$. 
\end{CSBLO}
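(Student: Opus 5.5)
Write the hypotheses out with Proposition \ref{leqslantstosums}. From $X \leqslant_{init} Y$ we get $Y \cong X + B$ for some $B \in LO$. From $Y \leqslant_{fin} X$ we get $X \cong A + Y$ for some $A \in LO$. Substituting the first into the second gives
\[
X \cong A + X + B.
\]
This is exactly the hypothesis of Lindenbaum's Theorem \ref{XcongAXBiffcongAXandXB}, which we will apply next.

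That theorem yields $X \cong X + B$ (and also $X \cong A + X$, which we will not need). Therefore $Y \cong X + B \cong X$, as required.

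There is essentially no obstacle here: the corollary is a Cantor--Schr\"oder--Bernstein-type consequence of the absorption theorem. The only place some care is needed is checking that the orientations line up. The initial embedding of $X$ into $Y$ supplies a right summand $B$, and the final embedding of $Y$ into $X$ supplies a left summand $A$. Substituting one into the other then gives the two-sided form $X \cong A + X + B$, rather than something like $X \cong X + B + A$. Once that is confirmed, the only substantive input is Theorem \ref{XcongAXBiffcongAXandXB}. That theorem in turn rests on the orbital analysis of Theorem \ref{XcongAXBiffABladder}, which shows $\omega^* B \leqslant_{fin} X$, together with Corollary \ref{absorbsleftrightcoro}.
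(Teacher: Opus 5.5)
Your proof is correct and matches the paper's argument: write $Y \cong X + B$ and $X \cong A + Y$, substitute to get $X \cong A + X + B$, and apply Theorem \ref{XcongAXBiffcongAXandXB} to conclude $X \cong X + B \cong Y$.
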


\begin{proof}
The hypotheses give $Y \cong X + B$ and $X \cong A + Y$ for some orders $A$ and $B$. Hence $X \cong A + X + B$, which gives $X \cong X + B$ by Theorem \ref{XcongAXBiffcongAXandXB}, and so $X \cong Y$.  
\end{proof}

\theoremstyle{definition}
\newtheorem{XinitXfinYconv}[lct]{Corollary}
\begin{XinitXfinYconv}\label{XinitXfinYconv}
If $X \leqslant_{init} Y$, $X \leqslant_{fin} Y$, and $Y \leqslant_{conv} X$, then $X \cong Y$. 
\end{XinitXfinYconv}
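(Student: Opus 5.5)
We translate the three hypotheses into sums via Proposition \ref{leqslantstosums} and then apply Lindenbaum's Theorem \ref{XcongAXBiffcongAXandXB} twice. From $X \leqslant_{init} Y$ and $X \leqslant_{fin} Y$ we obtain orders $B, A'$ with $Y \cong X + B$ and $Y \cong A' + X$. From $Y \leqslant_{conv} X$ we obtain orders $P, Q$ with $X \cong P + Y + Q$.

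The first step is to show that $X$ absorbs $B$ on the right. Substitute $Y \cong X + B$ into the convex decomposition:
\[
X \cong P + Y + Q \cong P + X + (B + Q).
\]
Theorem \ref{XcongAXBiffcongAXandXB} then gives $X \cong X + (B + Q)$. Since the absorbed order is $B+Q$ rather than $B$, we cannot yet conclude $X \cong X+B$.

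To handle this, we apply the theorem again, now using the form $X \cong X + B + Q$ and reading off absorption on the two sides of the pieces. Theorem \ref{XcongAXBiffABladder} applied with $A = \emptyset$ gives $\omega^*(B+Q) \leqslant_{fin} X$. I would rather avoid analyzing $\omega^*(B+Q)$ and instead argue directly:
\[
X + B \cong (X + B + Q) + B \cong X + (B + Q + B).
\]
This computation does not obviously simplify, so I switch approaches and use the other expression for $Y$ to make $Y$ sit on the inside of the sum.

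The cleaner route goes through $Y$ itself. We have
\[
Y \cong X + B \cong P + Y + Q + B.
\]
By Theorem \ref{XcongAXBiffcongAXandXB}, $Y \cong P + Y$ and $Y \cong Y + (Q + B)$. Symmetrically,
\[
Y \cong A' + X \cong A' + P + Y + Q,
\]
which gives $Y \cong (A' + P) + Y$ and $Y \cong Y + Q$.

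Now consider $X \cong P + Y + Q$. Since $Y \cong Y + Q$, we get $X \cong P + Y$. Since $Y \cong P + Y$, this becomes $X \cong P + Y \cong Y$. This completes the proof.

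The only delicate point is choosing which order to apply the absorption theorem to. Working with $Y$, rather than $X$, makes both $P$ and $Q$ absorbed, and everything else is routine substitution.
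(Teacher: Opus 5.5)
Your final argument is correct: the two applications of Theorem \ref{XcongAXBiffcongAXandXB} to $Y$ give $Y \cong P + Y$ (from $Y \cong P + Y + Q + B$) and $Y \cong Y + Q$ (from $Y \cong A' + P + Y + Q$), and then $X \cong P + Y + Q \cong P + Y \cong Y$. This is essentially the paper's argument, differing only in bookkeeping. The paper applies the same absorption theorem once to $X \cong C + A + X + D$, which yields $X \cong C + Y$ and hence $Y \leqslant_{fin} X$. It then finishes with Corollary \ref{CSBLO}, which is itself a second application of that theorem. The abandoned detours in your middle paragraphs should be deleted from the write-up.
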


\begin{proof}
The hypotheses $X \leqslant_{fin} Y$ and $Y \leqslant_{conv} X$ give that $Y \cong A + X$ and $X \cong C + Y + D$ for some $A, C, D$. Thus $X \cong C + A + X + D$, which gives $X \cong C + A + X$ by Theorem \ref{XcongAXBiffcongAXandXB}. Hence $X \cong C + Y$, and so $Y \leqslant_{fin} X$. But then since $X \leqslant_{init} Y$, we have $X \cong Y$ by Corollary \ref{CSBLO}.
\end{proof}

\theoremstyle{definition}
\newtheorem{XcongAXMXB}[lct]{Corollary}
\begin{XcongAXMXB} \label{XcongAXMXB}
If $X \cong A + X + M + X + B$ then $X \cong X + M + X$. 
\end{XcongAXMXB}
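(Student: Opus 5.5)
Set $Y = X + M + X$. The hypothesis reads $X \cong A + Y + B$, which gives $Y \leqslant_{conv} X$. We also have $X \leqslant_{init} Y$ and $X \leqslant_{fin} Y$, since $Y \cong X + (M + X)$ and $Y \cong (X + M) + X$. Corollary \ref{XinitXfinYconv} then yields $X \cong Y = X + M + X$ directly.

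The whole argument is an application of Corollary \ref{XinitXfinYconv}, so I would check its three hypotheses in turn. The first two, initial and final embedding of $X$ into $Y$, follow from Proposition \ref{leqslantstosums} applied to the two bracketings of $X + M + X$. The third, a convex embedding of $Y$ into $X$, follows from the hypothesis together with Proposition \ref{leqslantstosums}(iii.).

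Nothing here is a real obstacle; the only point needing care is the rebracketing $A + X + M + X + B \cong A + (X + M + X) + B$, which is justified by associativity of the sum. If one instead wanted a direct argument, the route would be as follows. From $X \cong (A + X + M) + X + B$, Theorem \ref{XcongAXBiffcongAXandXB} gives $X \cong X + B$ and $X \cong A + X + M + X$. Applying the same theorem again gives $X \cong A + X$, and hence $X \cong A + (X + M + X)$. Symmetrically one obtains $X \cong (X + M + X) + B$. From these one can conclude via Corollary \ref{CSBLO}, but the route through Corollary \ref{XinitXfinYconv} is shorter and is the one I would use.
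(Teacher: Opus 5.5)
Your proof is correct, but it takes a different route from the paper's.

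The paper does not go through Corollary \ref{XinitXfinYconv}. It applies Theorem \ref{XcongAXBiffcongAXandXB} twice:
\begin{itemize}
\item first to $X \cong A + X + (M + X + B)$, which strips off $A$ and gives $X \cong (X + M) + X + B$;
\item then to that isomorphism, which strips off $B$ and gives $X \cong X + M + X$.
\end{itemize}

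Your reduction is cleaner conceptually. It shows the statement is an instance of a Schr\"oder--Bernstein-type principle: any $Y$ that contains $X$ both initially and finally, and that embeds convexly in $X$, is isomorphic to $X$. All three hypotheses follow at once from Proposition \ref{leqslantstosums}. There is no circularity, because Corollary \ref{XinitXfinYconv} is proved beforehand from Theorem \ref{XcongAXBiffcongAXandXB} and Corollary \ref{CSBLO} alone. Underneath, both arguments rest on Lindenbaum's absorption theorem. The paper's version is slightly more self-contained, since it uses only that theorem.

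One remark on your alternative sketch. Once you have $X \cong A + X + (M + X)$, the forward direction of Theorem \ref{XcongAXBiffcongAXandXB} already gives $X \cong X + (M + X)$ directly. So the detour through Corollary \ref{CSBLO} is unnecessary. With that shortcut, your alternative is essentially the paper's proof, with $B$ stripped before $A$ rather than after.
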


\begin{proof}
Twice applying Theorem \ref{XcongAXBiffcongAXandXB}, we have
\begin{equation*}
    \begin{array}{rcl}
    X \cong A + X + (M + X + B) & \Rightarrow & X \cong X + (M + X + B)\\
    & & \phantom{X} \cong (X + M) + X + B, \\
    & \Rightarrow & X \cong (X + M) + X \\
    & & \phantom{X} \cong X + M + X,
    \end{array}
\end{equation*} 
as claimed. 
\end{proof}

\theoremstyle{definition}
\newtheorem{2XconvXiff2XcongX}[lct]{Corollary}
\begin{2XconvXiff2XcongX} \label{2XconvXiff2XcongX}
$2X \cong X$ if and only if $2X \leqslant_{conv} X$. 
\end{2XconvXiff2XcongX}

\begin{proof}
As isomorphisms are convex embeddings, the forward direction is immediate. If $2X \leqslant_{conv} X$, then $X \cong A + X + X + B$ for some $A, B$. Now apply Corollary \ref{XcongAXMXB} with $M = \emptyset$. 
\end{proof}

\theoremstyle{definition}
\newtheorem{splittingdefn}[lct]{Definition}
\begin{splittingdefn}\label{splittingdefn}
$X$ is \textit{splitting} if $2X \cong X$. 
\end{splittingdefn}

Corollary \ref{2XconvXiff2XcongX} says that $X$ is splitting if and only if $2X \leqslant_{conv} X$. Observe that $X$ is splitting if and only if $nX \cong mX$ for every pair of integers $n, m \geq 1$. 

\theoremstyle{definition}
\newtheorem{splittingdichthm}[lct]{Theorem}
\begin{splittingdichthm} \label{splittingdichthm}
(Tarski; \cite[1.47]{Tarski}) The following are equivalent:
\begin{itemize}
    \item[i.] There exist natural numbers $n < m$ such that $mX \leqslant_{conv} nX$.
    \item[ii.] For every pair of natural numbers $n, m \geq 1$, $nX \cong mX$ (i.e., $X$ is splitting). 
\end{itemize}
\end{splittingdichthm}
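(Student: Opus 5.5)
The implication (ii)$\Rightarrow$(i) is immediate: take $n=1$, $m=2$ and use that isomorphisms are convex embeddings. For (i)$\Rightarrow$(ii), I would reduce to showing $2X \leqslant_{conv} X$, since Corollary \ref{2XconvXiff2XcongX} then gives $2X \cong X$. As observed after Definition \ref{splittingdefn}, this is equivalent to $nX \cong mX$ for all $n, m \geq 1$.

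I may assume $n \geq 1$. If $n = 0$, then $mX \leqslant_{conv} \emptyset$ forces $X = \emptyset$, and everything is trivial.

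The first step is to bootstrap the hypothesis so that the number of copies on the left becomes large. Suppose $mX \leqslant_{conv} nX$ with $n < m$. By Proposition \ref{leqslantstosums} there are $P, Q$ with $nX \cong P + mX + Q$. Adding $(m-n)X$ on the right gives
\[
mX \cong nX + (m-n)X \cong P + mX + Q + (m-n)X .
\]
Now $mX = nX + (m-n)X$, so inside the final piece $mX + Q + (m-n)X$ the copy of $mX$ contains an initial $nX$. I would replace that $nX$ by $P + mX + Q$. This exhibits $mX + (m-n)X = (2m-n)X$ as a convex suborder of $mX$. Since $\leqslant_{conv}$ is transitive, $(2m-n)X \leqslant_{conv} mX \leqslant_{conv} nX$. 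Thus the hypothesis holds with $m$ replaced by $m' = 2m - n$. Since $m' - n = 2(m-n)$, iterating gives $kX \leqslant_{conv} nX$ with $k - n = 2^j(m-n)$ for every $j$. In particular, for some $k \geq 2n$ we have $kX \leqslant_{conv} nX$. Truncating the embedded sum (any initial convex piece of a convex image is still convex) then gives $2nX \leqslant_{conv} nX$.

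The final step writes $2nX$ as the $n$-sum $2X + 2X + \cdots + 2X$ and $nX$ as $X + X + \cdots + X$. Corollary \ref{sumAiconvsumBi} then yields $2X \leqslant_{conv} X$, and Corollary \ref{2XconvXiff2XcongX} finishes the proof.

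The only step requiring care is the bootstrap. One has to check that substituting $P + mX + Q$ for the initial copy of $nX$ inside $mX$ really produces $(2m-n)X$ as a convex subinterval. I would verify this with explicit cut sequences, in the style of the proof of Proposition \ref{AplusBconvXplusY}, rather than by formal algebra.
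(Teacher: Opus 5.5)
Your bootstrap step does not produce what you claim. After substituting $P + mX + Q$ for the initial $nX$ inside the middle copy of $mX$, the decomposition you actually get is
\[
mX \cong P + \bigl(P + mX + Q + (m-n)X\bigr) + Q + (m-n)X .
\]
The new copy of $mX$ is followed by $Q$, not by $(m-n)X$. So what you have exhibited is only $mX + Q + (m-n)X \leqslant_{conv} mX$, which gives no more adjacent copies of $X$ than you started with, and iterating gains nothing. The trick works if $Q = \emptyset$, and the mirror-image substitution works if $P = \emptyset$. For a general convex embedding both junk pieces may be nonempty. The cut-sequence check you deferred fails exactly here. Rearranging finite sums cannot remove $P$ and $Q$; removing them is precisely what Lindenbaum's absorption theorem does.

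The missing idea is to upgrade the convex embedding to an isomorphism before bootstrapping. Since $mX \cong nX + (m-n)X \cong (m-n)X + nX$, you have $nX \leqslant_{init} mX$ and $nX \leqslant_{fin} mX$. Together with $mX \leqslant_{conv} nX$, Corollary \ref{XinitXfinYconv} (which rests on Theorem \ref{XcongAXBiffcongAXandXB}) gives $mX \cong nX$. Now $nX \cong nX + (m-n)X$ with no $P$ or $Q$, so your substitution works cleanly: $nX \cong nX + k(m-n)X$ for every $k$, hence $2nX \leqslant_{init} nX$. From there your final step via Corollary \ref{sumAiconvsumBi} and Corollary \ref{2XconvXiff2XcongX} is correct.

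With this repair, your argument differs from the paper's only at the end. The paper instead reads $(m-n)X + nX \cong nX$ through Theorem \ref{XcongAXBiffABladder} to get $\omega X \leqslant_{init} nX$. It then applies Corollary \ref{directedrefinementforomegaA} to obtain $\omega X \leqslant_{conv} X$, and hence $2X \leqslant_{conv} X$.
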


\begin{proof}
Assume (i.). Since $mX \cong nX + (m-n)X$ we have $nX \leqslant_{init} mX$. Symmetrically, $nX \leqslant_{fin} mX$. Since also $mX \leqslant_{conv} nX$, Corollary \ref{XinitXfinYconv} implies $mX \cong nX$. Writing this as $(m-n)X + nX \cong nX$, it follows from Theorem \ref{XcongAXBiffABladder} that $\omega (m-n)X \leqslant_{init} nX$, which gives $\omega X \leqslant_{init} nX$. 

From this and Corollary \ref{directedrefinementforomegaA} it follows $\omega X \leqslant_{conv} X$. Since $2X \leqslant_{init} \omega X$, we have $2X \leqslant_{conv} X$. Hence $2X \cong X$ by Corollary \ref{2XconvXiff2XcongX}, which gives (ii.).

The reverse direction is immediate. 
\end{proof}

Theorem \ref{splittingdichthm} immediately yields the following, which says that the finite multiples of a given linear order are either pairwise distinct or all isomorphic. 

\theoremstyle{definition}
\newtheorem{splittingdichcor}[lct]{Corollary}
\begin{splittingdichcor}\label{splittingdichcor}
(The splitting dichotomy) Exactly one of the following holds:
\begin{itemize}
    \item[i.] For every pair of natural numbers $n \neq m$, $nX \not\cong mX$; 
    \item[ii.] For every pair of natural numbers $n, m \geq 1$, $nX \cong mX$ (i.e., $X$ is splitting). 
\end{itemize}
\end{splittingdichcor}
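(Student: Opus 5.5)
The splitting dichotomy will follow almost immediately from Theorem \ref{splittingdichthm}. I would read (i.) with its natural range of $n, m \geq 1$, since $0X = 0$ differs from every $mX$ with $m \geq 1$ unless $X$ is empty. I also take $X$ nonempty; the empty order is trivially splitting and also vacuously satisfies the degenerate reading of (i.).

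First, I will show that (i.) and (ii.) cannot both hold. Since $X \neq \emptyset$, (ii.) gives $1X \cong 2X$, and $1 \neq 2$, which directly contradicts (i.).

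Second, I will show that at least one of them holds. Suppose (i.) fails. Then there are natural numbers $n \neq m$, both at least $1$, with $nX \cong mX$; after relabeling we may assume $n < m$. An isomorphism is in particular a convex embedding, so $mX \leqslant_{conv} nX$. This is exactly condition (i.) of Theorem \ref{splittingdichthm}. That theorem then gives its condition (ii.), namely $nX \cong mX$ for all $n, m \geq 1$, so $X$ is splitting.

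There is no real obstacle here; all the content lies in Theorem \ref{splittingdichthm}. The only care needed is the bookkeeping about the range of $n, m$ and the empty order, which I would state explicitly as a convention at the start of the proof.
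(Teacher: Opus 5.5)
Your proof is correct and follows the paper's route exactly: the paper reads the corollary off Theorem \ref{splittingdichthm}, and you do the same by observing that an isomorphism $nX \cong mX$ with $n < m$ is in particular a convex embedding $mX \leqslant_{conv} nX$. One correction to your side remark: the empty order does not satisfy (i.) under any reading, because all its multiples are empty and hence isomorphic. It satisfies only (ii.), so the dichotomy holds for it and it does not need to be excluded.
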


The following theorem gives left and right-sided finite cancellation laws for $LO$.

\theoremstyle{definition}
\newtheorem{leqslantinitfincancellationthm}[lct]{Theorem} 
\begin{leqslantinitfincancellationthm}\label{leqslantinitfincancellationthm}
(Tarski; \cite[1.45]{Tarski}) Fix a natural number $n \geq 1$. 
\begin{itemize}
    \item[i.] If $nA \leqslant_{init} nB$, then $A \leqslant_{init} B$;
    \item[ii.] If $nA \leqslant_{fin} nB$, then $A \leqslant_{fin} B$.
\end{itemize}
\end{leqslantinitfincancellationthm}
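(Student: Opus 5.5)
The plan is to compare, inside $nB$, the initial copy of $nA$ with the canonical decomposition of $nB$ into $n$ copies of $B$. I will locate a copy of $A$ that sits convexly inside one copy of $B$. Then I will use Lindenbaum's absorption theorem \ref{XcongAXBiffcongAXandXB} to slide it to the left end. Part (ii.) follows from (i.) by reversing all orders, so I only treat (i.).

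Label $nB = [c, d]$. Fix a cut sequence $c = b_0 < b_1 < \cdots < b_n = d$ witnessing $nB$, so $[b_i, b_{i+1}] \cong B$. Fix a cut sequence $c = a_0 < a_1 < \cdots < a_n$ with $[a_i, a_{i+1}] \cong A$, witnessing $nA \leqslant_{init} nB$; in particular $a_n \leq b_n$. (I assume $A$ nonempty; otherwise the claim is trivial.) If $a_1 \leq b_1$, then $[c, a_1]$ is an initial segment of $[c, b_1]$, so $A \leqslant_{init} B$ and we are done.

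So assume $a_1 > b_1$. Then $[c, b_1]$ is an initial segment of $[c, a_1]$, which gives $A \cong B + E$ for some $E$. Since $a_1 > b_1$ and $a_n \leq b_n$, there is a least $k$ with $2 \leq k \leq n$ and $a_k \leq b_k$. Minimality gives $a_{k-1} > b_{k-1}$, so
\[
b_{k-1} < a_{k-1} < a_k \leq b_k .
\]
Hence the copy $[b_{k-1}, b_k] \cong B$ decomposes as $P + [a_{k-1}, a_k] + Q$, with $P = [b_{k-1}, a_{k-1}]$ and $Q = [a_k, b_k]$. This yields $B \cong P + A + Q$.

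Substituting $B \cong P + A + Q$ into $A \cong B + E$ gives
\[
A \cong P + A + (Q + E).
\]
By Theorem \ref{XcongAXBiffcongAXandXB}, this implies $A \cong P + A$. Therefore
\[
B \cong P + A + Q \cong (P + A) + Q \cong A + Q,
\]
so $A \leqslant_{init} B$ by Proposition \ref{leqslantstosums}.

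The only delicate step is the choice of the index $k$. It must be chosen so that one copy of $A$ lands inside a single copy of $B$. It must also be available at the same time as the relation $A \cong B + E$, which is needed to set up the absorption identity. Both come from the first-crossing argument on the two cut sequences. Once that is in place, the rest is a direct appeal to Lindenbaum's theorem.
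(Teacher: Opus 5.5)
Your proof is correct and follows the paper's argument. The paper also splits on whether the first copy of $A$ sits initially in the first copy of $B$. Otherwise it obtains $A \cong B + X$ together with a copy of $A$ convex in a single copy of $B$, and absorbs via Theorem \ref{XcongAXBiffcongAXandXB}. The only difference is that you locate that convex copy by a direct first-crossing argument, where the paper cites Corollary \ref{sumAiconvsumBi}, whose proof is the same idea.
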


\begin{proof}
For (i.): By induction on $n$. The claim is immediate for $n = 1$. Assume the claim for $n$, and suppose $(n+1) A \leqslant_{init} (n+1)B$. 

Write $(n+1)A = A_0 + A_1 + \cdots + A_n$ and $(n+1)B = B_0 + B_1 + \cdots + B_n$ where $A_i = A$ and $B_i = B$ for all $i \leq n$. Fix an initial embedding $f: (n+1)A \rightarrow (n+1)B$. Then either $f[A_0]$ is initial in $B_0$, which witnesses $A \leqslant_{init} B$ and we are done, or $B_0$ is initial in $f[A_0]$, which witnesses $B \leqslant_{init} A$. 

In the second case $f[A_1 + \cdots + A_n]$ is convex in $B_1 + \cdots + B_n$, which witnesses $nA \leqslant_{conv} nB$. Then Corollary \ref{sumAiconvsumBi} implies $A \leqslant_{conv} B$. Thus in this second case we have both that $A \cong B + X$ and $B \cong Y + A + Z$ for some $X, Y, Z \in LO$. Substituting, this gives $A \cong Y + A + Z + B$, which yields $A \cong Y + A$ by \ref{XcongAXBiffcongAXandXB}. Hence $B \cong A + Z$, and so $A \leqslant_{init} B$, and we are again done. 

The proof for (ii.) is symmetric. 
\end{proof}

Lindenbaum's cancellation theorem \ref{lct} follows from Theorem \ref{leqslantinitfincancellationthm}. 

\theoremstyle{definition}
\newtheorem{lctintext}[lct]{Theorem}
\begin{lctintext}\label{lctintext}
Suppose there is a natural number $n \geq 1$ such that $nA \cong nB$. Then $A \cong B$. 
\end{lctintext}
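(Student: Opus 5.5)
The plan is to derive this directly from Theorem \ref{leqslantinitfincancellationthm} together with Lindenbaum's Corollary \ref{CSBLO}. No new combinatorics should be needed: all the work of cancelling the factor $n$ has already been done in the one-sided cancellation laws.

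First, since $nA \cong nB$ and isomorphisms are both initial and final embeddings, we have
\[
nA \leqslant_{init} nB \quad \text{and} \quad nB \leqslant_{fin} nA.
\]
Second, apply Theorem \ref{leqslantinitfincancellationthm}(i.) to the first relation to obtain $A \leqslant_{init} B$. Then apply Theorem \ref{leqslantinitfincancellationthm}(ii.) to the second relation, with the roles of $A$ and $B$ swapped, to obtain $B \leqslant_{fin} A$.

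Third, invoke Corollary \ref{CSBLO} with $X = A$ and $Y = B$. We have $A \leqslant_{init} B$ and $B \leqslant_{fin} A$, so $A \cong B$, as required.

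There is essentially no obstacle here. The only point requiring care is to choose the directions of the embeddings so that they match the hypotheses of Corollary \ref{CSBLO}: one initial embedding from $A$ into $B$ and one final embedding from $B$ into $A$. Using initial embeddings in both directions, or final embeddings in both directions, would not be enough by itself, because Corollary \ref{CSBLO} is a mixed Schröder--Bernstein statement.
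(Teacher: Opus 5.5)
Your proposal is correct and is the same argument as the paper's: you cancel $n$ on the initial side via Theorem \ref{leqslantinitfincancellationthm}(i.) to get $A \leqslant_{init} B$, and on the final side via Theorem \ref{leqslantinitfincancellationthm}(ii.) to get $B \leqslant_{fin} A$. You then conclude $A \cong B$ by Corollary \ref{CSBLO}.
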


\begin{proof}
Since $nA \cong nB$ implies $nA \leqslant_{init} nB$, we have $A \leqslant_{init} B$ by Theorem \ref{leqslantinitfincancellationthm}. On the other hand, $nA \cong nB$ also implies $nB \leqslant_{fin} nA$, which gives $B \leqslant_{fin} A$. Now $A \cong B$ follows from \ref{CSBLO}.
\end{proof}

Central to our study of $(LO, +)$ will be various notions of ``archimedean comparability" between orders $A$ and $B$. A reasonable candidate for such a notion would be to define $A$ and $B$ as comparable if $A$ is convexly embeddable in some finite multiple $nB$ of $B$, and $B$ is in turn convexly embeddable in some finite multiple $mA$ of $A$. 

For the problems we consider, it turns out to be more useful to view $A$ and $B$ as comparable when a connected \textit{pair} of $A$'s can be embedded in some finite multiple of $B$, and vice versa. The following proposition says that for orders that are comparable in this sense, one is splitting if and only if the other is splitting.

\theoremstyle{definition}
\newtheorem{2AconvnB2BconvmAsplittingiff}[lct]{Lemma}
\begin{2AconvnB2BconvmAsplittingiff} \label{2AconvnB2BconvmAsplittingiff}
Suppose $2A \leqslant_{conv} nB$ and $2B \leqslant_{conv} mA$ for some natural numbers $n, m \geq 1$. Then $A$ is splitting if and only if $B$ is splitting. 
\end{2AconvnB2BconvmAsplittingiff}

\begin{proof}
Suppose first that $A$ is splitting. Then $2A \cong mA$, and so $2B \leqslant_{conv} A$. We also have $2A \cong nA$, and so $nA \leqslant_{conv} nB$, which yields $A \leqslant_{conv} B$ by \ref{sumAiconvsumBi}. By transitivity of $\leqslant_{conv}$, $2B \leqslant_{conv} B$. Then $B$ is splitting by \ref{2XconvXiff2XcongX}, as desired.  

The argument for the converse is symmetric.  
\end{proof}

\theoremstyle{definition}
\newtheorem{simconvdefn}[lct]{Definition}
\begin{simconvdefn}\label{simconvdefn}
Define $A \sim_{conv} B$ if there exist natural numbers $n, m \geq 1$ such that $2A \leqslant_{conv} nB$ and $2B \leqslant_{conv} mA$. 
\end{simconvdefn}

Thus Lemma \ref{2AconvnB2BconvmAsplittingiff} says that whenever $A \sim_{conv} B$, $A$ is splitting if and only if $B$ is splitting. 

\section{Euclidean algorithms for almost commuting pairs}\label{section:euclideanalgos}

We define Euclidean algorithms for pairs of linear orders $A, B$ that, in a one-sided sense, almost additively commute. The left-sided version of the algorithm applies to pairs satisfying the relation 
\[
B + A \leqslant_{init} A + B.
\]
The symmetric right-sided version applies to pairs satisfying 
\[
B + A \leqslant_{fin} A + B.
\]
These algorithms are the basis for the subsequent work in the paper.

\subsection{Division setups}\label{subsect:divissetups}

Fix $A, B \in LO$ such that $B + A \leqslant_{init} A + B$. Label $A + B = [c, d]$ by its endcuts. Fix a cut sequence
\[
c = a_{0l} < a_{0r} = b_{0l} < b_{0r} = d
\]
where $a_{0r} = b_{0l}$ is the cut at the $+$ sign in $A + B$, so that $[a_{0l}, a_{0r}] \cong A$ and $[b_{0l}, b_{0r}] \cong B$. Denote these intervals by $A_0$ and $B_0$, respectively. 

Since $B + A \leqslant_{init} A + B$, there is also a cut sequence
\[
c = b^{1l} < b^{1r} = a^{1l} < a^{1r} \leq d
\]
in $A + B$ where $[b^{1l}, b^{1r}] \cong B$ and $[a^{1l}, a^{1r}] \cong A$. We write $B_1$ and $A_1$ for these intervals. Let $X$ denote the extra segment $[a^{1r}, d]$. Then this labeling witnesses $A + B \cong B + A + X$. 

Fix isomorphisms $\tau: A_0 \rightarrow A_1$ and $\rho: B_1 \rightarrow B_0$, i.e. $\tau: [a_{0l}, a_{0r}] \rightarrow [a^{1l}, a^{1r}]$ and $\rho: [b^{1l}, b^{1r}] \rightarrow [b_{0l}, b_{0r}]$. Then $\tau$ and $\rho$ are partial convex self-embeddings of $A+B$ that are both increasing at the left endcut $c$. 

Observe $\tau(c) = \tau(a_{0l}) = a^{1l} = b^{1r}$ so that $B_1 = [c, \tau(c)]$. Hence $B_0 = \rho[B_1] = [\rho(c), \rho\tau(c)]$. Similarly, $A_0 = [c, \rho(c)]$ and hence $A_1 = [\tau(c), \tau\rho(c)]$. The fact that $B_1 + A_1$ is initial in $A_0 + B_0$ is expressed by the inequality $\tau\rho(c) \leq \rho\tau(c) = d$. 

Either $\tau(c) \leq \rho(c)$ or $\tau(c) > \rho(c)$ depending on whether $B_1$ sits initially in $A_0$ or vice versa. In the first case, we say $B$ is the \textit{(left) divisor} of the \textit{(left) dividend} $A$ in the relation $B + A \leqslant_{init} A + B$, with the terminology reversed in the second case. 

\theoremstyle{definition}
\newtheorem{leftsetupdefn}[lct]{Definition}
\begin{leftsetupdefn}\label{leftsetupdefn}
Suppose that $I = [c, d]$ is a linear order labeled by its endcuts, and $\tau$ and $\rho$ are partial convex self-embeddings of $I$ satisfying the following \textit{left setup conditions}:
\begin{itemize}
    \item[i.] $c \in \textrm{dom}(\rho) \cap \textrm{dom}(\tau)$,
    \item[ii.] $\rho$ and $\tau$ are increasing at $c$,
    \item[iii.] $\rho(c) \in \textrm{dom}(\tau)$ and $\tau(c) \in \textrm{dom}(\rho)$,
    \item[iv.] $\tau\rho(c) \leq \rho\tau(c) = d$. 
\end{itemize}
The triple $(I =[c, d]; \rho, \tau)$ is a \textit{left division setup}.

If $\tau(c) \leq \rho(c)$, the setup is in \textit{Case (I.)}. 

If $\tau(c) \geq \rho(c)$, the setup is in \textit{Case (II.)}.
\end{leftsetupdefn}

Fix a left division setup $(I = [c, d]; \rho, \tau)$. Labeling $A = [c, \rho(c)]$ and $B = [c, \tau(c)]$, we have $\tau[A] = [\tau(c), \tau\rho(c)] \cong A$ and $\rho[B] = [\rho(c), \rho\tau(c)] \cong B$. The cut sequence $c < \rho(c) < \rho\tau(c) = d$ decomposes $I$ as a sum:
\begin{equation*}\label{AB}
\begin{array}{rcl}
I & \cong & [c, \rho(c)] + [\rho(c), \rho\tau(c)] \\
& \cong & A + \rho[B] \\
& \cong & A + B. 
\end{array}
\end{equation*}

Label $X = [\tau \rho(c), \rho \tau(c)]$. Then the sequence $c < \tau(c) < \tau\rho(c) \leq \rho\tau(c) = d$ yields the decomposition
\begin{equation*}\label{BAX}
\begin{array}{rcl}
I & \cong & [c, \tau(c)] + [\tau(c), \tau\rho(c)] + [\tau\rho(c), \rho\tau(c)] \\
& \cong & B + \tau[A] + X \\
& \cong & B + A + X. 
\end{array}
\end{equation*}

Thus the left setup $(I; \rho, \tau)$ witnesses a relation of the form $B + A \leqslant_{init} A + B$. Conversely, by the preceding discussion such a relation (along with a choice of cuts and isomorphisms witnessing it) determines a pair of functions $\rho, \tau$ that satisfy the left setup conditions on $I = A + B$. 

We will also call a relation of the form $B + A \leqslant_{init} A + B$ (or equivalently, $A + B \cong B + A + X$) a left division setup, with the caveat that an actual left setup is not determined by such a relation until a pair of cut sequences and isomorphisms witnessing the relation are fixed. 

If they are fixed and labeled as above, then the resulting left setup is in Case (I.) if $B$ is the divisor of the setup and $A$ the dividend, and Case (II.) if the roles are reversed. Said another way, Case (I.) corresponds to the dividend being adjacent to the extra segment $X$ in the isomorphism $A + B \cong B + A + X$, and Case (II.) to the divisor being adjacent to $X$. 

Right division setups are defined symmetrically. 

\theoremstyle{definition}
\newtheorem{rightsetupdefn}[lct]{Definition}
\begin{rightsetupdefn}\label{rightsetupdefn}
Suppose that $I = [c, d]$ is a linear order labeled by its endcuts, and $\tau$ and $\rho$ are partial convex self-embeddings of $I$ satisfying the following \textit{right setup conditions}:
\begin{itemize}
    \item[i.] $d \in \textrm{dom}(\rho) \cap \textrm{dom}(\tau)$,
    \item[ii.] $\rho$ and $\tau$ are decreasing at $d$,
    \item[iii.] $\rho(d) \in \textrm{dom}(\tau)$ and $\tau(d) \in \textrm{dom}(\rho)$,
    \item[iv.] $c = \rho\tau(d) \leq \tau\rho(d)$. 
\end{itemize}
The triple $(I =[c, d]; \rho, \tau)$ is a \textit{right division setup}.

If $\tau(d) \geq \rho(d)$, the setup is in \textit{Case (I.)}. 

If $\tau(d) \leq \rho(d)$, the setup is in \textit{Case (II.)}.
\end{rightsetupdefn}

If we label the $\rho$-jump $[\rho(d), d]$ (now beginning from the right) by $A$ and the $\tau$ jump [$\tau(d), d]$ by $B$, then a right division setup corresponds to a relation of the form $A + B \leqslant_{fin} B + A$, or equivalently $X + A + B \cong B + A$. 

If $\rho(d) \leq \tau(d)$, then $B$ is the \textit{right divisor} and $A$ the \textit{right dividend} of the system, and the roles reverse if $\tau(d) \leq \rho(d)$. Again, Case (I.) corresponds to the dividend being adjacent to the extra segment $X$, and Case (II.) the divisor. 

A \textit{symmetric setup} is a left setup $(I = [c, d]; \rho, \tau)$ satisfying $\tau\rho(c) = \rho\tau(c) = d$. Symmetric setups correspond to commuting isomorphisms $A + B \cong B + A$. Since there is no gap segment $X$ for symmetric setups, the distinction between Cases (I.) and (II.) vanishes. 

\subsection{Stage of the left Euclidean algorithm} \label{subsect:leftalgostage}
We define a stage of the left-sided Euclidean algorithm and then walk through an instance of such a stage. A stage for the right-sided algorithm is defined symmetrically.  

\theoremstyle{definition}
\newtheorem{leftalgostage}[lct]{Definition}
\begin{leftalgostage}\label{leftalgostage}
A stage of the \textit{left-sided Euclidean algorithm} is executed as follows: 

INPUT: a left setup $(I = [c, d]; \rho, \tau)$. 
\begin{itemize}
    \item[--] CASE (I.):
        \begin{itemize}
            \item[--] Subcase (I.i): $\tau^n(c) < \rho(c)$ for every $n \geq 0$. STOP.
            \item[--] Subcase (I.ii): $\tau^N(c) = \rho(c)$ for some $N \geq 1$. STOP.
            \item[--] Subcase (I.iii): There exists $N \geq 1$ such that $\tau^N(c) < \rho(c) < \tau^{N+1}(c)$.
            \begin{itemize}
                \item[$\rightarrow$] Let $c' = \tau^N(c)$;
                \item[$\rightarrow$] Let $I' = [\tau^N(c), d] = [c', d]$;
                \item[$\rightarrow$] Let $\rho' = \rho\tau^{-N}$ and $\tau' = \tau$. 
                \item[$\rightarrow$] GOTO next stage on input $(I' = [c', d]; \rho', \tau')$.
            \end{itemize}
        \end{itemize}
    \item[--] CASE (II.): 
        \begin{itemize}
            \item[--] Subcase (II.i): $\rho^n(c) < \tau(c)$ for every $n \geq 0$. STOP.
            \item[--] Subcase (II.ii): $\rho^N(c) = \tau(c)$ for some $N \geq 1$. STOP.
            \item[--] Subcase (II.iii): There exists $N \geq 1$ such that $\rho^N(c) < \tau(c) < \rho^{N+1}(c)$. 
            \begin{itemize}
                \item[$\rightarrow$] Let $c' = \rho^N(c)$;
                \item[$\rightarrow$] Let $I' = [\rho^N(c), d] = [c', d]$;
                \item[$\rightarrow$] Let $\tau' = \tau \rho^{-N}$ and $\rho' = \rho$. 
                \item[$\rightarrow$] GOTO next stage on input $(I' = [c', d]; \rho', \tau')$.
            \end{itemize}
        \end{itemize}
\end{itemize}
\end{leftalgostage}

Consider an input left setup $(I = [c, d]; \rho, \tau)$. Departing slightly from our conventions above, let $A = [c, \rho(c)]$ and $B = [c, \tau(c)]$. Let $X = [\tau\rho(c), \rho\tau(c)] = [\tau\rho(c),d]$. Suppose first the setup is in Case (I.). 

In Subcase (I.i), the cut sequence
\[
c < \tau(c) < \tau^2(c) < \ldots
\]
traverses the $\omega$-orbital $O_{\tau}(c) \cong \omega [c, \tau(c)] = \omega B$. Since the iterates $\tau^n(c)$ lie below $\rho(c)$, this orbital is initial in $A$. Hence $\omega B \leqslant_{init} A$, and so $B + A \cong A$. In this case we say the algorithm \textit{terminates in an absorbed factor}. 

We remark that Subcase (I.i) must occur if $\tau\rho(c) \leq \rho(c)$ (i.e. if $\tau$ is non-increasing at $\rho(c)$, i.e. if $\rho[B]$ is a final segment of $X$) as then $\tau$ is a compression of $A$. 

In Subcase (I.ii), the cut sequence
\[
c < \tau(c) < \ldots < \tau^N(c) = \rho(c)
\]
yields the decomposition
\begin{equation*}
\begin{array}{rcl}
A & = & [c, \rho(c)] \\
& \cong & [c, \tau(c)] + [\tau(c), \tau^2(c)] + \cdots + [\tau^{N-1}(c), \tau^N(c)] \\
& = & B + \tau[B] + \tau^2[B] + \cdots + \tau^{N-1}[B] \\
& \cong & NB,
\end{array}
\end{equation*}
and we say the algorithm \textit{terminates in a divisor}. 

In Subcase (I.iii), let $A' = [\tau^N(c), \rho(c)] = [c', \rho(c)]$. We view $A'$ as the remainder in the division of $A$ by $B$ from the left. Notice that we have 
\begin{equation*}
\begin{array}{rcl}
A & = & [c, \rho(c)] \\
& \cong & [c, \tau(c)] + [\tau(c), \tau^2(c)] + \cdots + [\tau^{N-1}(c), \tau^N(c)] + [\tau^N(c), \rho(c)] \\
& \cong & NB + A'.
\end{array}
\end{equation*}

Now let $B' = \tau^N[B] = [\tau^N(c), \tau^{N+1}(c)]$, and notice that we have $A' = [c', \rho'(c')]$ and $B' = [c', \tau'(c')]$. Observe that $I'$ is traversed by the cut sequence
\[
\tau^N(c) < \rho(c) < \tau^{N+1}(c) < \tau\rho(c) \leq \rho\tau(c) = d.
\]
which is the same as the cut sequence
\[
c' < \rho'(c') < \tau'(c') < \tau'\rho'(c') \leq \rho'\tau'(c') = d.
\]
Thus the input $(I'=[c', d]; \rho', \tau')$, on which we GOTO the next stage, is a left setup in Case (II.).

To express this arithmetically, observe that the subsequence $c'< \rho'(c') < \rho'\tau'(c')$ yields the decomposition
\begin{equation*}
\begin{array}{rcl}
I' & \cong & [c', \rho'(c')] + [\rho'(c'), \rho'\tau'(c')] \\
& = & A' + \rho'[B'] \\
& \cong & A' + B'. 
\end{array}
\end{equation*}
whereas the subsequence $c' < \tau'(c') < \tau'\rho'(c') \leq \rho'\tau'(c')$ yields the decomposition
\begin{equation*}
\begin{array}{rcl}
I' & \cong & [c', \tau'(c')] + [\tau'(c'), \tau'\rho'(c')] + [\tau'\rho'(c'), \rho'\tau'(c')] \\
& = & B' + \tau'[A'] + X \\
& \cong & B' + A' + X.
\end{array}
\end{equation*}
Since $A'$ is initial in $B'$, we are set up to divide $B'$ by $A'$ from the left. This is a Case (II.) setup since $A'$ is adjacent to the extra segment $X$. 

Suppose now we begin in Case (II.). The discussion is almost symmetric with Case (I.). 

In Subcase (II.i), $\rho$ witnesses that $B$ has an initial segment isomorphic to $\omega A$. We note that since $\rho$ is increasing at $\tau(c)$ (as $\rho\tau(c) = d > \tau(c)$), Subcase (II.i) is never forced to occur in the same sense as is possible for Subcase (I.i).

In Subcase (II.ii), $\rho$ witnesses $B \cong NA$.

In Subcase (II.iii), define the remainder $B' = [\rho^N(c), \tau(c)] = [c', \tau(c)]$. Then
\begin{equation*}
B = [c, \tau(c)] \cong [c, \rho^N(c)] + [\rho^N(c), \tau(c)] \cong NA + B'. 
\end{equation*}

It is straightforward to check that $(I' = [c', d]; \rho', \tau')$, the input data on which we GOTO the next stage, is a left setup in Case (I.). To express this arithmetically, let $A' = [c', \rho'(c')] \cong A$. Then on one hand we have 

\begin{equation*}
\begin{array}{rcl}
I' & = & [c', \rho'\tau'(c')] \\
& \cong & [c', \rho'(c')] + [\rho'(c'), \rho'\tau'(c')] \\
& = & A' + \rho[B'] \\
& \cong & A' + B',
\end{array}
\end{equation*}
and on the other,
\begin{equation*}
\begin{array}{rcl}
I' & \cong & [c', \tau'(c')] + [\tau'(c'), \tau'\rho'(c')] + [\tau'\rho'(c'), \rho'\tau'(c')] \\
& = & B' + \tau'[A'] + X \\
& \cong & B' + A' + X.
\end{array}
\end{equation*}
Since $B'$ is initial in $A'$, it is set up to divide $A'$ from the left, which is a Case (I.) setup. 

\subsection{Run of the left algorithm}\label{subsect:leftalgorun}

Beginning at Stage 0, on an input left setup $(I; \rho, \tau)$ the Euclidean algorithm runs until termination at some finite stage, or for infinitely many stages. We describe the arithmetic information yielded by the algorithm in both cases. 

Since they will feature in our later results, including the revised version \ref{revisedtarconj} of Tarski's conjecture \ref{tarconj}, we will be especially interested in the one-sided infinite discrete sums $\omega A$ and $\omega^* A$ of orders $A$ appearing as an additive term at some stage of the algorithm.

Suppose the algorithm does not terminate before Stage $k$. Let $I_k$ denote the interval from the input setup at Stage $k$, and let $A_k$ and  $A_{k+1}$ respectively denote the dividend and divisor at Stage $k$. Then either
\[
I_k \cong A_k + A_{k+1} \cong A_{k+1} + A_k + X 
\]
or
\[
I_k \cong A_k + A_{k+1} + X \cong A_{k+1} + A_k
\]
depending on whether the input setup at Stage $k$ is in Case (I.) or (II.). 

\underline{Non-termination at Stage $k$}: If the algorithm does not terminate at Stage $k$, define $I_{k+1} = I_k'$ in the notation above. Then for some $N_k \geq 1$ we have
\[
A_k \cong N_k A_{k+1} + A_{k+2}
\]
where $A_{k+2} = A_k'$ in the notation above, and either
\[
I_{k+1} \cong A_{k+2} + A_{k+1} \cong A_{k+1} + A_{k+2} + X
\]
or 
\[
I_{k+1} \cong A_{k+2} + A_{k+1} + X \cong A_{k+1} + A_{k+2}
\]
depending on whether the output setup at Stage $k$ is in Case (II.) or (I.). 

\underline{Termination at Stage $k$}: Suppose now the algorithm terminates at Stage $k$. From the previous stages we have the system
\begin{equation}\label{euclideanisosfinite}
\begin{array}{rcl}
A_0 & \cong & N_0 A_1 + A_2 \\
A_1 & \cong & N_1 A_2 + A_3 \\
& \vdots &  \\
A_{k-1} & \cong & N_{k-1}A_k + A_{k+1}.
\end{array}
\end{equation}

$\rightarrow$ \underline{Termination in a divisor}: If the algorithm terminates in a divisor, we have $A_k \cong N_k A_{k+1}$ for some $N_k \geq 1$. We note that if $k > 0$, then since the input setup at Stage $k$ does not have $\rho(c) = \tau(c)$, termination in a divisor implies that in fact $N_k \geq 2$.

By inductively back substituting into the isomorphisms from the system \ref{euclideanisosfinite} (just as in the usual Euclidean algorithm), we may write each $A_i$ for $i \leq k$ as a multiple $A_i \cong M_i A_{k+1}$ of $A_{k+1}$. Thus for every $i \leq k$ we have
\[
\begin{array}{rcl}
\omega A_i & \cong & \omega (M_i A_{k+1}) \\
& \cong & \omega A_{k+1}.
\end{array}
\]
Likewise
\[
\begin{array}{rcl}
\omega^* A_i & \cong & \omega^* A_{k+1}, \\
\mathbb{Z} A_i & \cong & \mathbb{Z}A_{k+1}.
\end{array}
\]
In particular, $\omega A_i \cong \omega A_j$, $\omega^*A_i \cong \omega^* A_j$ and $\mathbb{Z}A_i \cong \mathbb{Z}A_j$ for all $i, j \leq k$. 

$\rightarrow$ \underline{Termination in an absorbed factor}: If the algorithm terminates in an absorbed factor, we have $A_{k+1} + A_k \cong A_k$. 

If $k = 0$, then $A_1 + A_0 \cong A_0$, which gives $\omega A_1 \leqslant_{init} A_0$ by \ref{absorbsleftrightcoro}. 

If $k > 0$, then we claim that for each $0 \leq i \leq k$, there is a positive integer $M_i$ such that 
\begin{itemize}
    \item[i.] if $i$ is odd, then $A_{k-i} \cong M_i A_k + A_{k+1}$;
    \item[ii.] if $i$ is even, then $A_{k-i} \cong M_i A_k$. 
\end{itemize}
Indeed, if $i = 0$, the claim holds with $M_0 = 1$. If $i = 1$, the last isomorphism in the system \ref{euclideanisosfinite} gives the claim by taking $M_1 = N_{k-1}$. 

Suppose $i > 1$ and the claim is true for $j < i$. If $i$ is even, then by induction we have
\[
\begin{array}{rcl}
A_{k-(i-1)} & \cong & M_{i-1} A_k + A_{k+1}, \\
A_{k-(i-2)} & \cong & M_{i-2} A_k. 
\end{array}
\]
Then since $A_{k - i} \cong N_{k - i} A_{k - (i-1)} + A_{k - (i-2)}$ we have
\[
\begin{array}{rcl}
A_{k - i} & \cong & N_{k-i} A_{k - (i-1)} + A_{k - (i-2)} \\
& \cong & N_{k-i}(M_{i-1} A_k + A_{k+1}) + M_{i-2} A_k \\
& \cong & \underbrace{M_{i-1} A_k + A_{k+1} + M_{i-1} A_k + A_{k+1} + \cdots + M_{i-1} A_k + A_{k+1}}_{\textrm{$N_{k-i}$ times}} + M_{i-2} A_k \\
& \cong & M_i A_k,
\end{array}
\]
where $M_i = N_{k-i}M_{i-1} + M_{i-2}$ and in passing from the third to fourth line we have used $A_{k+1} + A_k \cong A_k$ repeatedly to absorb the $A_{k+1}$ terms into the $A_k$ terms at their right.  

If $i$ is odd, then we have
\[
\begin{array}{rcl}
A_{k-(i-1)} & \cong & M_{i-1} A_k, \\
A_{k-(i-2)} & \cong & M_{i-2} A_k + A_{k+1}. 
\end{array}
\]
Then
\[
\begin{array}{rcl}
A_{k - i} & \cong & N_{k-i} A_{k - (i-1)} + A_{k - (i-2)} \\
& \cong & N_{k-i}M_{i-1} A_k + M_{i-2} A_k + A_{k+1} \\
& \cong & M_i A_k + A_{k+1},
\end{array}
\]
where again $M_i = N_{k-i}M_{i-1} + M_{i-2}$. By induction, the claim is proved. 

Thus for $0 \leq i \leq k$, if $i \equiv k \pmod 2$ we have
\[
\begin{array}{rcl}
\omega A_i & \cong & \omega (M_i A_k) \\
& \cong & \omega A_k; \\
\omega^* A_i & \cong & \omega^*(M_i A_k) \\
& \cong & \omega^* A_k; \\
\mathbb{Z}A_i & \cong & \mathbb{Z}(M_i A_k) \\
& \cong & \mathbb{Z}A_k.
\end{array}
\]
If $i \equiv k + 1 \pmod 2$ we have
\[
\begin{array}{rcl}
\omega A_i & \cong & \omega (M_i A_k + A_{k+1}) \\
& \cong & M_i A_k + A_{k+1} + M_i A_k + A_{k+1} + \cdots \\
& \cong & M_i A_k + M_i A_k + \cdots \\
& \cong & \omega A_k; \\
\omega^* A_i & \cong & \omega^*(M_i A_k + A_{k+1}) \\
& \cong & \cdots + M_i A_k + A_{k+1} + M_i A_k + A_{k+1} \\
& \cong & \cdots + M_i A_k + M_i A_k + A_{k+1} \\
& \cong & \omega^*A_k + A_{k+1}; \\
\mathbb{Z}A_i & \cong & \mathbb{Z}(M_i A_k + A_{k+1}) \\
& \cong & \cdots + M_i A_k + A_{k+1} + M_i A_k + A_{k+1} + \cdots \\
& \cong & \cdots + M_i A_k + M_i A_k + \cdots \\
& \cong & \mathbb{Z}A_k. 
\end{array}
\]

In summary, for all $0 \leq i, j \leq k$ we have $\omega A_i \cong \omega A_j \cong \omega A_k$ and $\mathbb{Z}A_i \cong \mathbb{Z}A_j \cong \mathbb{Z}A_k$. If $i \equiv j \equiv k \pmod 2$ then also $\omega^*A_i \cong \omega^* A_j \cong \omega^*A_k$. If $i+1 \equiv j \equiv k \pmod 2$, then $\omega^*A_i \cong \omega^* A_k + A_{k+1} \cong \omega^*A_j + A_{k+1}$.

\underline{Non-termination}: Finally, suppose that the algorithm does not terminate at any stage. Then after infinitely many stages we have a system of isomorphisms:
\begin{equation}\label{turkish}
\{A_k \cong N_k A_{k+1} + A_{k+2}: k \in \omega\}.
\end{equation}
We will study such systems in later sections. In Section \ref{section:symboldynamrepnsofdivissystems}, we will show that frequently (though not always) the infinite discrete sums of the terms appearing in such systems satisfy the same isomorphisms as in the terminating cases. 

More specifically, we will show that if the system \ref{turkish} satisfies a sufficiency hypothesis, then we have $\omega A_i \cong \omega A_j$ and $\mathbb{Z}A_i \cong \mathbb{Z}A_j$ for all $i, j \geq 0$; and $\omega^*A_i \cong \omega^*A_j$ for all $i, j \geq 0$ with $i \equiv j \pmod 2$. See Theorem \ref{sufficiencyimpliesIsosforomegaprods}.

\subsection{Run of the right algorithm}\label{subsect:runofrightalgo} 

The right Euclidean algorithm on the input of a right setup $(I = [c, d]; \rho, \tau)$ is defined symmetrically to the left algorithm. We will not write it out explicitly. Instead, we simply record the analogous arithmetic data from a run of the algorithm. 

Suppose the right algorithm is run on an input right setup $(I = [c, d]; \rho, \tau)$. If the algorithm terminates at Stage $k$, then from the previous stages we have a system
\[
\{A_i \cong A_{i+2} + N_i A_{i+1}: 0 \leq i < k\},
\]
where $A_i$ and $A_{i+1}$ denote the dividend and divisor at Stage $i$ of the run. 

If the algorithm terminates in a divisor, then each $A_i$ for $i \leq k$ is a finite multiple of $A_{k+1}$. Then we have $\omega A_i \cong \omega A_j \cong \omega A_{k+1}$, $\omega^* A_i \cong \omega^* A_j \cong \omega^* A_{k+1}$ and $\mathbb{Z}A_i \cong \mathbb{Z}A_j \cong \mathbb{Z}A_{k+1}$ for all $i, j \leq k + 1$. 

If it terminates in an absorbed factor, we have $A_k + A_{k+1} \cong A_k$. If $k = 0$, this gives $A_0 + A_1 \cong A_0$ and hence $\omega^*A_1 \leqslant_{fin} A_0$. If $k > 0$, then for all $i \leq k$, there is a positive integer $M_i$ such that
\begin{itemize}
    \item[i.] if $i \equiv k \pmod 2$, then $A_i \cong M_i A_k$;
    \item[ii.] if $i \not\equiv k \pmod 2$, then $A_i \cong A_{k+1} + M_i A_k$.
\end{itemize}
It follows that $\omega^*A_i \cong \omega^*A_j \cong \omega^*A_k$ and $\mathbb{Z}A_i \cong \mathbb{Z}A_j \cong \mathbb{Z}A_k$ for all $i, j \leq k$. If $i \equiv j \equiv k \pmod 2$ then $\omega A_i \cong \omega A_j \cong \omega A_k$, whereas if $i + 1 \equiv j \equiv k \pmod 2$, then $\omega A_i \cong A_{k+1} + \omega A_k \cong A_{k+1} + \omega A_j$. 

If the algorithm does not terminate at any stage, then it yields a system 
\[
\{A_k \cong A_{k+2} + N_k A_{k+1}: k \in \omega\}.
\]

\subsection{Symmetric runs}\label{subsect:runofsymmetricalgo}

Finally, we record the arithmetic information yielded by a run of the left algorithm on a symmetric setup $(I; \rho, \tau)$. It is straightforward to check that a run of the right algorithm on the same input yields the same data. 

If the algorithm does not terminate before Stage $k$, then the input interval $I_k$ at Stage $k$ satisfies
\[
I_k \cong A_k + A_{k+1} \cong A_{k+1} + A_k.
\]
Likewise, from the previous stages $k' \leq k$, we have the commuting isomorphisms $A_{k'} + A_{k'+1} \cong A_{k'+1} + A_{k'}$.

We claim that in fact we have $A_i + A_j \cong A_j + A_i$ for all $i, j \leq k + 1$. Indeed, suppose we have $i, j$ with $i < j-1 \leq k+1$. By back substitution in the isomorphisms $A_{k'} \cong N_{k'} A_{k'+1} + A_{k'+2}$, we may write $A_i$ as a sum of $A_j$ and $A_{j-1}$. Since $A_j$ commutes with $A_j$ and $A_{j-1}$, it follows $A_j$ commutes with $A_i$, as claimed. 

Returning to Stage $k$ of the algorithm: if the algorithm terminates in a divisor at Stage $k$, then the arithmetic information yielded is the same as in the asymmetric case: we have $\omega A_i \cong \omega A_j \cong \omega A_{k+1}$, $\omega^* A_i \cong \omega^* A_j \cong \omega^* A_{k+1}$ and $\mathbb{Z}A_i \cong \mathbb{Z}A_j \cong \mathbb{Z}A_{k+1}$ for all $i, j \leq k + 1$.

If it terminates in an absorbed factor, then $A_{k+1} + A_k \cong A_k + A_{k+1} \cong A_k$. When $k = 0$, this gives $A_1 + A_0 \cong A_0 + A_1 \cong A_0$, and hence $\omega A_1 \leqslant_{init} A_0$ and $\omega^* A_1 \leqslant_{fin} A_0$. 

If $k > 0$, then from the previous stage we have the isomorphism 
\[
A_{k-1} \cong N_{k-1}A_k + A_{k+1}.
\]
Since $A_k + A_{k+1} \cong A_k$, this gives $A_{k-1} \cong N_{k-1}A_k$. Now back substitution into the previous identities $A_i \cong N_i A_{i+1} + A_{i+2}$ yields the same arithmetic data as if the algorithm terminated in the divisor $A_k$ at the previous stage: $\omega A_i \cong \omega A_j \cong \omega A_k$, $\omega^* A_i \cong \omega^* A_j \cong \omega^* A_k$ and $\mathbb{Z}A_i \cong \mathbb{Z}A_j \cong \mathbb{Z}A_k$ for all $i, j \leq k$.

For this reason, in practice we often assume that a terminating run of the algorithm on a symmetric setup terminates in a divisor. 

Finally, if the algorithm does not terminate, we finish with a system
\[
\{A_k \cong N_k A_{k+1} + A_{k+2}: k \in \omega\}
\]
with the property that the terms $A_k$ appearing in this system pairwise additively commute. We will see later that in this case we always have $\omega A_i \cong \omega A_j$ and $\omega^* A_i \cong \omega^* A_j$ (and hence $\mathbb{Z}A_i \cong \mathbb{Z}A_j$) for all $i, j \in \omega$. 

\section{Division systems}\label{section:divisionsystems}

In this section we study the systems of isomorphisms $A_k \cong N_k A_{k+1} + A_{k+2}$ that result from runs of the Euclidean algorithm, as well as analogous ``rational" systems arising from repeated divisions without remainders. In proofs of arithmetic results that involve runs of the algorithm, it is usually only the abstract properties of these resulting systems that are relevant.

In the definition below we use an indexing variable $M$ that can take the value $\omega$. We adopt the convention that when $M = \omega$ and $n \in \omega$, then $n + M = M = \omega$.  

\theoremstyle{definition}
\newtheorem{leftdivissystemdefn}[lct]{Definition}
\begin{leftdivissystemdefn}\label{leftdivissystemdefn}
Fix $M$, $1 \leq M \leq \omega$. A \textit{left division system} $\mathcal{D}$ (of \textit{length $M$}) consists of a sequence $\{N_k: 0 \leq k < M\}$ of natural numbers $N_k \geq 1$, and a sequence of linear orders $\{A_k: 0 \leq k < 2+M\}$ such that
\begin{itemize}
    \item[i.] for all $k < M$, $A_k \cong N_k A_{k+1} + A_{k+2}$; 
    \item[ii.] for all $k < 1 + M$, $A_k \neq \emptyset$. 
\end{itemize}
We write $\mathcal{D} = \{A_k; N_k; M\}$.

The system is \textit{terminating} if $M < \omega$. In this case, the terms $A_M$ and $A_{M+1}$ from the last isomorphism $A_{M-1} \cong N_{M-1}A_M + A_{M+1}$ satisfy
\[
A_{M+1} + A_M \cong A_M.
\]
If $A_{M+1} \neq \emptyset$, then the system \textit{terminates in an absorbed factor}. If $A_{M+1} = \emptyset$, then the system \textit{terminates in a divisor}.

The system is \textit{non-terminating} if $M = \omega$. 
\end{leftdivissystemdefn}

The notion of a \textit{right division system} is defined symmetrically. Such a system consists of isomorphisms of the form 
\[
A_k \cong A_{k+2} + N_kA_{k+1}.
\]

A \textit{symmetric division system} is a left division system in which $A_i + A_j \cong A_j + A_i$ for all terms $A_i, A_j$ appearing in the system. 

As we observed in Section \ref{subsect:runofsymmetricalgo}, if a symmetric system terminates in an absorbed factor, its final isomorphism is of the form 
\[
A_{M-1} \cong N_k A_M + A_{M+1},
\]
where $A_{M+1} + A_M \cong A_M + A_{M+1} \cong A_M$. By absorbing its $A_{M+1}$ term, the final isomorphism yields $A_{M-1} \cong N_k A_M$. Thus by replacing the final isomorphism with its absorbed form if necessary, we may always assume that a terminating symmetric system terminates in a divisor.

Unless it terminates at Stage $0$ in an absorbed factor, a run of the left or right Euclidean algorithm yields a corresponding left or right division system, and a run from a symmetric setup yields a symmetric system. It can be shown, however, that there exist non-terminating division systems that do not arise from runs of the division algorithm. 

\theoremstyle{definition}
\newtheorem{rationaldivissystemdefn}[lct]{Definition}
\begin{rationaldivissystemdefn}\label{rationaldivissystemdefn}
Fix $M$, $1 \leq M \leq \omega$. A \textit{rational division system} $\mathcal{D}$ (of \textit{length $M$}) consists of a sequence $\{N_k: k < M\}$ of natural numbers $N_k \geq 2$, and a sequence of non-empty linear orders $\{A_k: k < 1 + M\}$ such that for all $k < M$ we have 
\[
A_k \cong N_k A_{k+1}.
\]

The system is \textit{terminating} if $M < \omega$, and \textit{non-terminating} if $M = \omega$. 
\end{rationaldivissystemdefn}

The term \textit{division system} and notation $\{A_k; N_k; M\}$ refers to either a left system, right system, or rational system of some length $M \leq \omega$.

In a left system, whenever $A_k$ and $A_{k+1}$ are both defined we have that either $A_k \cong N_k A_{k+1} + A_{k+2}$ or $A_{k+1} + A_k \cong A_k$ (and hence $\omega A_{k+1} \leqslant_{init} A_k$). In either case, $A_{k+1}$ is initial in $A_k$. Further, whenever $A_{k+2}$ is defined we have $A_{k+2} \leqslant_{fin} A_k$, again as $A_k \cong N_k A_{k+1} + A_{k+2}$. 

Symmetrically, in a right system, $A_{k+1}$ is final in $A_k$ and $A_{k+2}$ is initial in $A_k$ whenever these terms are defined. From this observation the proposition below follows by induction. 

\theoremstyle{definition}
\newtheorem{initfinaltermsindivissys}[lct]{Proposition}
\begin{initfinaltermsindivissys}\label{initfinaltermsindivissys}
Suppose $\mathcal{D} = \{A_k; N_k; M\}$ is a division system and $n < 2 + M$. 
\begin{itemize}
    \item[i.] If $\mathcal{D}$ is a left system, then $A_n \leqslant_{init} A_k$ whenever $k \leq n$, and $A_n \leqslant_{fin} A_k$ whenever $k \leq n$ and $n \equiv k \pmod 2$. 
    \item[ii.] If $\mathcal{D}$ is a right system, then $A_n \leqslant_{fin} A_k$ whenever $k \leq n$, and $A_n \leqslant_{init} A_k$ whenever $k \leq n$ and $n \equiv k \pmod 2$. 
    \item[iii.] If $\mathcal{D}$ is a symmetric or rational system, then $A_n \leqslant_{init} A_k$ and $A_n \leqslant_{fin} A_k$ whenever $k \leq n$. 
\end{itemize}
\end{initfinaltermsindivissys}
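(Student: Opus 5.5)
The plan is to prove each part by induction on $n - k$, using only the one-step relations recorded just before the statement. First I record these relations precisely for a left system. Whenever $k < M$ we have $A_k \cong N_k A_{k+1} + A_{k+2}$ with $N_k \geq 1$. This gives $A_{k+1} \leqslant_{init} A_k$, since $A_k \cong A_{k+1} + ((N_k - 1)A_{k+1} + A_{k+2})$, and it gives $A_{k+2} \leqslant_{fin} A_k$, both via Proposition \ref{leqslantstosums}.

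The only index where the recursion is not available is $k = M$ when $M < \omega$. That is the case $n = M+1$, $k = M$ of part (i.). There we use the terminal relation $A_{M+1} + A_M \cong A_M$ from Definition \ref{leftdivissystemdefn}, which directly gives $A_{M+1} \leqslant_{init} A_M$. So for every pair of consecutive indices $k, k+1 < 2+M$ we have $A_{k+1} \leqslant_{init} A_k$. For every $k < M$ we have $A_{k+2} \leqslant_{fin} A_k$.

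Part (i.) then follows by chaining. For $k \leq n < 2+M$, transitivity of $\leqslant_{init}$ along $A_n \leqslant_{init} A_{n-1} \leqslant_{init} \cdots \leqslant_{init} A_k$ gives $A_n \leqslant_{init} A_k$; the case $k = n$ is trivial. When $n \equiv k \pmod 2$, write $n = k + 2j$. Each step from $A_{k+2i}$ to $A_{k+2i+2}$ uses $k + 2i < M$, which holds because $k + 2i + 2 \leq n < 2 + M$. Chaining these $j$ steps with transitivity of $\leqslant_{fin}$ gives $A_n \leqslant_{fin} A_k$.

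Part (ii.) is the mirror image. From $A_k \cong A_{k+2} + N_k A_{k+1}$ we get $A_{k+1} \leqslant_{fin} A_k$ and $A_{k+2} \leqslant_{init} A_k$, and at termination the right-hand analogue $A_M + A_{M+1} \cong A_M$ gives $A_{M+1} \leqslant_{fin} A_M$. The same two chaining arguments then apply with the roles of $\leqslant_{init}$ and $\leqslant_{fin}$ swapped.

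For part (iii.), a rational system has $A_k \cong N_k A_{k+1}$ with $N_k \geq 2$ and terms only up to index $M$. So $A_{k+1}$ is both initial and final in $A_k$, and chaining gives the claim.

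For a symmetric system I apply part (i.) to get the initial embeddings. For the final embeddings I use commutativity. When $k < M$, we have $A_k \cong N_k A_{k+1} + A_{k+2} \cong A_{k+2} + N_k A_{k+1}$, obtained by moving $A_{k+2}$ leftward past each copy of $A_{k+1}$ using $A_{k+1} + A_{k+2} \cong A_{k+2} + A_{k+1}$. Hence $A_{k+1} \leqslant_{fin} A_k$. When $k = M$, $A_{M+1} + A_M \cong A_M + A_{M+1} \cong A_M$ gives $A_{M+1} \leqslant_{fin} A_M$. Chaining again finishes the proof.

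The only real care needed is this bookkeeping at the boundary index $k = M$ in the terminating case, where the recursion is replaced by the absorption relation. Otherwise the argument is routine.
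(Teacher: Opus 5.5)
Your proposal is correct and follows the paper's argument. The paper likewise records that in a left system $A_{k+1} \leqslant_{init} A_k$ (from the recursion, or from the absorption relation $A_{M+1} + A_M \cong A_M$ at the terminal index) and $A_{k+2} \leqslant_{fin} A_k$, symmetrically for right systems, and concludes by induction; your rewriting $A_k \cong A_{k+2} + N_k A_{k+1}$ for symmetric systems just makes explicit the paper's view of a symmetric system as both a left and a right system.
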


The next proposition is an arithmetic expression of the heuristic that consecutive terms from a division system are almost additively commuting. 

\theoremstyle{definition}
\newtheorem{AkplusAkplusone}[lct]{Proposition}
\begin{AkplusAkplusone}\label{AkplusAkplusone}
Suppose $\mathcal{D} = \{A_k; N_k; M\}$ is a division system. 
\begin{itemize}
    \item[i.] If $\mathcal{D}$ is a left system, then for all $k < M$ we have
    \[
    \begin{array}{rcl}
    A_k + A_{k+1} & \cong & N_k A_{k+1} + A_{k+2} + A_{k+1} \\ 
    A_{k+1} + A_k & \cong & N_k A_{k+1} + A_{k+1} + A_{k+2}.
    \end{array}
    \]
    \item[ii.] If $\mathcal{D}$ is a right system, then for all $k < M$ we have
    \[
    \begin{array}{rcl}
    A_{k+1} + A_k & \cong & A_{k+1} + A_{k+2} + N_k A_{k+1} \\ 
    A_k + A_{k+1} & \cong & A_{k+2} + A_{k+1} + N_k A_{k+1}.
    \end{array}
    \]
\end{itemize}
\end{AkplusAkplusone}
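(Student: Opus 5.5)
The proof is a direct substitution using the defining isomorphism of the system together with associativity of the sum. We treat the left case (i.); the right case (ii.) is the mirror image.

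For (i.), fix $k < M$. Definition \ref{leftdivissystemdefn} gives $A_k \cong N_k A_{k+1} + A_{k+2}$. Adding $A_{k+1}$ on the right and using that the sum respects isomorphism in each summand, we get
\[
A_k + A_{k+1} \cong N_k A_{k+1} + A_{k+2} + A_{k+1}.
\]
This is the first identity. For the second, add $A_{k+1}$ on the left instead:
\[
A_{k+1} + A_k \cong A_{k+1} + N_k A_{k+1} + A_{k+2} \cong (N_k+1)A_{k+1} + A_{k+2}.
\]
Since $(N_k + 1)A_{k+1} \cong N_k A_{k+1} + A_{k+1}$, regrouping by associativity yields $N_k A_{k+1} + A_{k+1} + A_{k+2}$, which is the second identity.

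For (ii.), start from the right-system isomorphism $A_k \cong A_{k+2} + N_k A_{k+1}$. Adding $A_{k+1}$ on the left gives
\[
A_{k+1} + A_k \cong A_{k+1} + A_{k+2} + N_k A_{k+1}.
\]
Adding $A_{k+1}$ on the right gives
\[
A_k + A_{k+1} \cong A_{k+2} + (N_k+1)A_{k+1} \cong A_{k+2} + A_{k+1} + N_k A_{k+1},
\]
where the last step uses $(N_k+1)A_{k+1} \cong A_{k+1} + N_k A_{k+1}$.

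There is no real obstacle. The only point to note is that for $k = M-1$ in a terminating system, the term $A_{k+2} = A_{M+1}$ is defined (possibly empty), so the identities remain meaningful. The interpretation is that the two sums differ only by where the single $A_{k+2}$ block sits relative to one copy of $A_{k+1}$.
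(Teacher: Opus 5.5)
Your proposal is correct and matches the paper's proof: the paper substitutes $A_k \cong N_k A_{k+1} + A_{k+2}$, regroups $A_{k+1} + N_k A_{k+1} + A_{k+2}$ as $N_k A_{k+1} + A_{k+1} + A_{k+2}$, and leaves the right case as symmetric. Your remark that $A_{M+1}$ is defined (possibly empty) when $k = M-1$ is accurate and harmless.
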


\begin{proof}
For (i.): The first isomorphism is immediate from $A_k \cong N_k A_{k+1} + A_{k+2}$. For the second, we have
\[
A_{k+1} + A_k \cong A_{k+1} + N_k A_{k+1} + A_{k+2} \cong N_k A_{k+1} + A_{k+1} + A_{k+2},
\]
as claimed. The proof for (ii.) is symmetric. 
\end{proof}

If $\mathcal{D} = \{A_k; N_k; M\}$ is a left or right system, then from the isomorphisms in Proposition \ref{AkplusAkplusone} we can simulate a run of the left or right Euclidean algorithm.

For example, suppose $\mathcal{D}$ is a left system. Let $I_0 = A_0 + A_1$ and label $I_0 = [c, d]$ by its endcuts. Then from the proposition we have
\[
I_0 \cong N_0A_1 + A_2 + A_1
\]
Let $c_1$ be a cut in $I_0$ corresponding to the left $+$ sign in this expression, and let $I_1 = [c_1, d]$. Then 
\[
I_1 = [c_1, d] \cong A_2 + A_1 \cong N_1 A_2 + A_2 + A_3
\]
Let $c_2$ be a cut in $I_1$ corresponding to the left $+$ sign, and let $I_2 = [c_2, d] \cong A_2 + A_3$. Now iterate. If $M < \omega$, we obtain a cut sequence $c < c_1 < c_2 < \ldots < c_M < d$ witnessing
\[
I_0 \cong N_0 A_1 + N_1 A_2 + \cdots + N_{M-1}A_M + R,
\]
where $R \cong A_M + A_{M+1}$ or $R \cong A_{M+1} + A_M$ depending on the parity of $M$. 

Similarly, beginning with $I_0' = A_1 + A_0 = [c', d']$, we obtain a cut sequence $c' < c_1' < c_2' < \ldots < c_M' < d'$ witnessing
\[
I_0' \cong N_0 A_1 + N_1 A_2 + \cdots + N_{M-1}A_M + R',
\]
where $R' \cong A_{M+1} + A_M$ if $R \cong A_M + A_{M+1}$ and $R' \cong A_M + A_{M+1}$ if $R \cong A_{M+1} + A_M$.

If instead $M = \omega$, then we obtain cut sequences $c < c_1 < c_2 < \ldots$ and $c' < c_1' < c_2' < \ldots$ with limits $b \leq d$ and $b' \leq d'$ witnessing
\[
\begin{array}{rcl}
I_0 = A_0 + A_1 & \cong & \sum_{k \in \omega} N_kA_{k+1} + L \\
I_0' = A_1 + A_0 & \cong & \sum_{k \in \omega} N_kA_{k+1} + L'
\end{array}
\]
where $L = [b, d]$ and $L' = [b', d']$. It need not be that $L \cong L'$, but if this is the case then these decompositions show $A_0 + A_1 \cong A_1 + A_0$. 

Taking tails of the sums $\sum_{k < M} N_k A_{k+1}$ corresponding to the intervals $I_k$ and $I_k'$ in the decompositions above yields analogous decompositions for pairs $A_k + A_{k+1}$ and $A_{k+1} + A_k$. Again, we may interpret these expressions as showing that consecutive terms in a left division system are almost pairwise commuting, up to a difference term on the right. A symmetric discussion holds for right systems. 

\subsection{A splitting dichotomy theorem for division systems}

The following lemma is useful for proving that terms in a given division system are splitting. We will use it to prove a splitting dichotomy for division systems in Proposition \ref{splittingdichthmfordivsystems} below. 

\theoremstyle{definition}
\newtheorem{splittinglemmadivissystems}[lct]{Lemma}
\begin{splittinglemmadivissystems}\label{splittinglemmadivissystems}
Suppose $\mathcal{D} = \{A_k; N_k; M\}$ is a division system of length $M$, and fix $k < M$. 

If $\mathcal{D}$ is a left system, then
\begin{itemize}
    \item[i.] $A_k \leqslant_{init} (N_k + 1)A_{k+1}$;
    \item[ii.] $A_k \leqslant_{init} A_{k+1} + A_k$;
    \item[iii.] $2A_{k+1} \leqslant_{init} 2A_k$; 
    \item[iv.] $2A_{k+2} \leqslant_{init} A_k$. 
\end{itemize}

Symmetrically, if $\mathcal{D}$ is a right system, then 
\begin{itemize}
    \item[i.] $A_k \leqslant_{fin} (N_k + 1)A_{k+1}$;
    \item[ii.] $A_k \leqslant_{fin} A_k + A_{k+1}$;
    \item[iii.] $2 A_{k+1} \leqslant_{fin} 2A_k$; 
    \item[iv.] $2A_{k+2} \leqslant_{fin} A_k$. 
\end{itemize}
\end{splittinglemmadivissystems}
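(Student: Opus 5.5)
Each item follows from the defining isomorphism $A_k \cong N_k A_{k+1} + A_{k+2}$, valid for $k < M$, together with the fact that $A_{k+2} \leqslant_{init} A_{k+1}$. I will treat the left case; the right case is the mirror image, obtained by reversing all sums.

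For the initiality of $A_{k+2}$ in $A_{k+1}$: if $k+1 < M$ then $A_{k+1} \cong N_{k+1}A_{k+2} + A_{k+3}$, which is initial-witnessing since $N_{k+1} \geq 1$. If $k+1 = M$ then $A_{M+1} + A_M \cong A_M$, so again $A_{M+1} \leqslant_{init} A_M$. This is the observation preceding Proposition \ref{initfinaltermsindivissys}, and it also covers $A_{M+1} = \emptyset$.

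With this in hand, item (i.) is immediate. We have $A_{k+1} \cong A_{k+2} + Y$ for some $Y$, so
\[
(N_k+1)A_{k+1} \cong N_kA_{k+1} + A_{k+2} + Y \cong A_k + Y.
\]
Hence $A_k \leqslant_{init} (N_k+1)A_{k+1}$ by Proposition \ref{leqslantstosums}.

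For (ii.), Proposition \ref{AkplusAkplusone} gives
\[
A_{k+1} + A_k \cong N_kA_{k+1} + A_{k+1} + A_{k+2}.
\]
The right side has $N_kA_{k+1} + A_{k+2} \cong A_k$ as an initial segment, because $A_{k+2} \leqslant_{init} A_{k+1}$. Adding $N_kA_{k+1}$ on the left preserves this by Proposition \ref{BinitCimpliesABinitAC}.

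For (iii.), write
\[
2A_k \cong N_kA_{k+1} + A_{k+2} + N_kA_{k+1} + A_{k+2}.
\]
If $N_k \geq 2$, then $2A_{k+1}$ is visibly initial. If $N_k = 1$, then $2A_k$ begins with $A_{k+1} + A_{k+2} + A_{k+1}$, and this contains $A_{k+1} + A_{k+2} + A_{k+3}'$ style pieces only roughly. It is cleaner to argue instead that $A_{k+1} + A_k$ is initial in $2A_k$ and then use (ii.). Indeed, $2A_k \cong A_{k+1} + N_kA_{k+1}\ldots$ fails directly, so the plan is as follows. From (ii.) applied and $A_k \cong A_{k+1} + (\text{rest})$, we get $2A_k \cong A_{k+1} + R + A_k$, where $A_k \cong A_{k+1} + R$. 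Since $A_{k+1} \leqslant_{init} A_k$, it follows that $2A_{k+1} \leqslant_{init} A_{k+1} + A_k$. The remaining task is to show $A_{k+1} + A_k \leqslant_{init} 2A_k$.

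I would do this by checking $A_{k+1} + A_k \cong N_kA_{k+1} + A_{k+1} + A_{k+2}$, which has initial segment $(N_k+1)A_{k+1}$, and comparing it with $2A_k \cong N_kA_{k+1} + A_{k+2} + N_kA_{k+1} + \cdots$. I expect this comparison to be the main obstacle. It reduces to showing $A_{k+1} \leqslant_{init} A_{k+2} + A_{k+1}$, which is item (ii.) one level down when $k+1 < M$, and which holds trivially when $k+1 = M$ because then $A_{M+1} + A_M \cong A_M$. So I would prove (ii.) first, then deduce (iii.) from the chain
\[
2A_{k+1} \leqslant_{init} A_{k+1} + A_{k+2} + A_{k+1} \leqslant_{init} N_kA_{k+1} + A_{k+2} + A_{k+1} \leqslant_{init} 2A_k,
\]
using Proposition \ref{BinitCimpliesABinitAC} at each step.

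For (iv.), if $N_k \geq 2$, then $2A_{k+2} \leqslant_{init} 2A_{k+1} \leqslant_{init} A_k$. If $N_k = 1$, then $A_k \cong A_{k+1} + A_{k+2}$. Here I use $A_{k+1} \cong A_{k+2} + Y$, and $A_{k+2} \leqslant_{init} A_{k+1} + A_{k+2}$ in the same manner as above. This gives $2A_{k+2} \leqslant_{init} A_{k+2} + A_{k+1}$, which lies inside $A_{k+1} + A_{k+2}$ by (ii.) at level $k+1$, or trivially when $k+1 = M$.
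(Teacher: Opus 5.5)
Your preliminary observation ($A_{k+2}\leqslant_{init}A_{k+1}$, including the case $k+1=M$) and items (i.)--(iii.) are sound and follow the paper. For (iii.), the chain you finally write down for $N_k=1$ is exactly the paper's argument. It passes through $N_kA_{k+1}+A_{k+2}+A_{k+1}\cong A_k+A_{k+1}$. Delete the abandoned detour via $A_{k+1}+A_k\leqslant_{init}2A_k$; the reduction you claim for it is not justified.

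One citation is missing in (iv.) for $N_k\geq 2$. The step $2A_{k+2}\leqslant_{init}2A_{k+1}$ does not follow from $A_{k+2}\leqslant_{init}A_{k+1}$ alone: for instance $1\leqslant_{init}1+\omega^*$, but $2\not\leqslant_{init}2(1+\omega^*)$. It is item (iii.) at level $k+1$. When $k+1=M$, it instead follows from $\omega A_{M+1}\leqslant_{init}A_M$.

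The genuine gap is (iv.) when $N_k=1$ and $k+1<M$. You correctly obtain $2A_{k+2}\leqslant_{init}A_{k+2}+A_{k+1}$. To land in $A_k\cong A_{k+1}+A_{k+2}$ you then need $A_{k+2}+A_{k+1}\leqslant_{init}A_{k+1}+A_{k+2}$. Your justification, (ii.) at level $k+1$, asserts $A_{k+1}\leqslant_{init}A_{k+2}+A_{k+1}$. That says nothing about where $A_{k+2}+A_{k+1}$ (or $2A_{k+2}$) sits inside $A_{k+1}+A_{k+2}$.

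Moreover, the comparison you need is false in general. Take $M=2$, $N_0=N_1=1$, $A_2=\omega$, $A_3=1$, so that $A_3+A_2\cong A_2$, $A_1\cong\omega+1$ and $A_0\cong\omega+1+\omega$. Then $A_2+A_1\cong\omega+\omega+1$ is not initial in $A_1+A_2\cong\omega+\omega$.

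The missing idea, which is the paper's route, is a further split on $N_{k+1}$:
\begin{itemize}
\item[--] If $N_{k+1}\geq 2$, then $2A_{k+2}\leqslant_{init}A_{k+1}\leqslant_{init}A_k$ directly.
\item[--] If $N_{k+1}=1$, then $A_k\cong A_{k+2}+A_{k+3}+A_{k+2}$, and what you need is $A_{k+2}\leqslant_{init}A_{k+3}+A_{k+2}$. This is item (ii.) at level $k+2$, not $k+1$, when $k+2<M$. When $k+2=M$, it is the absorption $A_{k+3}+A_{k+2}\cong A_{k+2}$, which gives $A_k\cong 2A_{k+2}$, as in the example above.
\end{itemize}
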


\begin{proof}
Suppose first $\mathcal{D}$ is a left system and $M < \omega$. Since $k < M$, we have $A_k \cong N_k A_{k+1} + A_{k+2}$. 

For (i.): Since $A_{k+2} \leqslant_{init} A_{k+1}$, we have
\[
A_k \cong N_k A_{k+1} + A_{k+2} \leqslant_{init} N_k A_{k+1} + A_{k+1} \cong (N_k + 1)A_{k+1},
\]
which gives $A_k \leqslant_{init} (N_k+1) A_{k+1}$.

For (ii.): We have 
\[
A_{k+1} + A_k \cong A_{k+1} + (N_k A_{k+1} + A_{k+2}) \cong (N_k+1)A_{k+1} + A_{k+2}.
\]
Since $A_k \leqslant_{init} (N_k + 1)A_{k+1}$ by (i.), it follows $A_k \leqslant_{init} A_{k+1} + A_k$.

For (iii.): This is immediate if $N_k \geq 2$, as then $2A_{k+1} \leqslant_{init} A_k$. Otherwise $A_k \cong A_{k+1} + A_{k+2}$, and so
\begin{equation}\label{ayoo}
2A_k \cong A_{k+1} + A_{k+2} + A_{k+1} + A_{k+2}.
\end{equation}
If $k = M - 1$, then $A_{k+2} = A_{M+1}$ is left absorbed by $A_{k+1} = A_M$, and we deduce $2A_k \cong 2A_{k+1} + A_{k+2}$, which gives $2A_{k+1} \leqslant_{init} 2A_k$. If $k < M-1$, applying (ii.) to the index $k + 1$, we have $A_{k+1} \leqslant_{init} A_{k+2} + A_{k+1}$. Along with \ref{ayoo} this yields the chain
\[
A_{k+1} + A_{k+1} \leqslant_{init} A_{k+1} + A_{k+2} + A_{k+1} \leqslant_{init} 2A_k,
\]
which again gives $2A_{k+1} \leqslant_{init} 2A_k$. 

For (iv.): If $k = M-1$, then $A_{k+2} + A_{k+1} \cong A_{k+1}$. Hence $\omega A_{k+2} \leqslant_{init} A_{k+1}$ by \ref{absorbsleftrightcoro}, which certainly implies $2A_{k+2} \leqslant_{init} A_k$. 

If $k < M-1$, we have $A_{k+1} \cong N_{k+1}A_{k+2} + A_{k+3}$. If $N_{k+1} > 1$, then $2A_{k+2} \leqslant_{init} A_{k+1}$ and hence $2A_{k+2} \leqslant_{init} A_k$. So suppose $N_{k+1} = 1$. Then $A_{k+1} \cong A_{k+2} + A_{k+3}$. 

If now $N_k > 1$, then $2A_{k+1} \leqslant_{init} A_k$. By (iii.) applied to the index $k+1$, we have $2A_{k+2} \leqslant_{init} 2A_{k+1}$; hence $2A_{k+2} \leqslant_{init} A_k$ as well. So suppose finally that $N_k = 1$. Then 
\begin{equation}\label{yo}
A_k \cong A_{k+1} + A_{k+2} \cong A_{k+2} + A_{k+3} + A_{k+2}.
\end{equation}

If $k = M-2$, then $A_{k+3} = A_{M+1}$ is absorbed on the left by $A_{k+2} = A_M$, and we deduce $A_k \cong A_{k+2} + A_{k+2} \cong 2A_{k+2}$, which implies $2A_{k+2} \leqslant_{init} A_k$. Otherwise $k < M - 2$, and then by (ii.) applied to the index $k + 2$ we have $A_{k+2} \leqslant_{init} A_{k+3} + A_{k+2}$. Now along with \ref{yo}, we get the chain
\[
2A_{k + 2} \cong A_{k+2} + A_{k+2} \leqslant_{init} A_{k+2} + A_{k+3} + A_{k+2} \cong A_k,
\]
which yields $2A_{k + 2} \leqslant_{init} A_k$ in this case as well. This concludes the proof of (iv.).

The proofs when $M=\omega$ are the same, without the casework involving absorbed factors. 

The corresponding facts for right systems are proved symmetrically.    
\end{proof}

The next two propositions are splitting dichotomy theorems for division systems. They say essentially that the terms $A_k$ appearing in a division system are either all splitting or all non-splitting. This is true outright when the system is non-terminating, and also for arbitrary symmetric and rational systems. For terminating one-sided systems, the situation is slightly more delicate. 

\theoremstyle{definition}
\newtheorem{splittingdichthmfordivsystems}[lct]{Proposition}
\begin{splittingdichthmfordivsystems}\label{splittingdichthmfordivsystems}
Suppose $\mathcal{D} = \{A_k; N_k; M\}$ is a left or right division system. 
\begin{itemize}
    \item[i.] If $M < \omega$, then $A_k$ is splitting for some $k < M$ if and only if $A_k$ is splitting for all $k \leq M$. 
    \item[ii.] If $M = \omega$, then $A_k$ is splitting for some $k \in \omega$ if and only if $A_k$ is splitting for all $k \in \omega$. 
\end{itemize} 
\end{splittingdichthmfordivsystems}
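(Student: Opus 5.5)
The plan is to show that any two consecutive terms $A_k, A_{k+1}$ with $k < M$ satisfy $A_k \sim_{conv} A_{k+1}$ in the sense of Definition \ref{simconvdefn}. Then Lemma \ref{2AconvnB2BconvmAsplittingiff} says $A_k$ is splitting if and only if $A_{k+1}$ is. Chaining these equivalences along the indices $0, 1, \ldots, M$ (or along all of $\omega$ when $M = \omega$) gives both parts of the proposition.

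Assume first that $\mathcal{D}$ is a left system and fix $k < M$. I will read off the two required convex embeddings from Lemma \ref{splittinglemmadivissystems}.
\begin{itemize}
    \item[--] For $2A_{k+1} \leqslant_{conv} nA_k$: part (iii.) of that lemma gives $2A_{k+1} \leqslant_{init} 2A_k$, so $n = 2$ works.
    \item[--] For $2A_k \leqslant_{conv} mA_{k+1}$: part (i.) gives $A_k \leqslant_{init} (N_k+1)A_{k+1}$. By Proposition \ref{leqslantstosums} this means $(N_k+1)A_{k+1} \cong A_k + Y$ for some $Y$. Then $2A_k$ embeds convexly, indeed initially, into $A_k + Y + A_k + Y \cong 2(N_k+1)A_{k+1}$, because $A_k$ is initial in the second copy $A_k + Y$. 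So $m = 2(N_k+1)$ works.
\end{itemize}
Hence $A_k \sim_{conv} A_{k+1}$ for every $k < M$, and Lemma \ref{2AconvnB2BconvmAsplittingiff} applies.

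For (i.), suppose $M < \omega$ and $A_j$ is splitting for some $j < M$. The equivalences for the pairs $(A_k, A_{k+1})$ with $k < M$ link every index in $\{0, \ldots, M\}$ to $j$, so every $A_k$ with $k \leq M$ is splitting. Note that $A_{M+1}$ is not reached: the last pair we can use is $(A_{M-1}, A_M)$, which is why the statement stops at $k \leq M$. The reverse implication is trivial. For (ii.), with $M = \omega$, the same chaining covers all of $\omega$.

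Right systems are handled by the symmetric halves of Lemma \ref{splittinglemmadivissystems}, with final embeddings in place of initial ones; the relation $\sim_{conv}$ is insensitive to this change. I expect no real obstacle, since the work is already done in Lemma \ref{splittinglemmadivissystems}. The only points needing care are the small computation passing from $A_k \leqslant_{init} (N_k+1)A_{k+1}$ to a convex embedding of $2A_k$ into a finite multiple of $A_{k+1}$, and checking that the lemma's hypotheses (non-emptiness of the terms involved, $k < M$) hold at every link of the chain.
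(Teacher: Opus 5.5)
Your reduction fails at the second bullet, the claim $2A_k \leqslant_{conv} mA_{k+1}$. From $(N_k+1)A_{k+1}\cong A_k+Y$ you get $2(N_k+1)A_{k+1}\cong A_k+Y+A_k+Y$, but there the two copies of $A_k$ are separated by $Y$. Knowing that $A_k$ is initial in the second block $A_k+Y$ places one copy of $A_k$; it says nothing about $A_k+A_k$ being convex in the sum. That would need something like $A_k\leqslant_{init}Y+A_k+Y$, and you have no argument for it.

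Nor can a better embedding rescue the plan, because $A_k\sim_{conv}A_{k+1}$ is false in general. Here is a sketch of a counterexample.
\begin{itemize}
\item[--] Use Theorem \ref{leftsymbolicrepnthm}(ii.) to build a non-terminating left system whose real representation (Theorem \ref{leftrealrepnthm}) has $R=\emptyset$. Choose the replacement orders pairwise convexly incomparable across distinct $E_G^{\pm}$-classes, e.g.\ $\aleph_\alpha$-dense orders with distinct $\alpha$.
\item[--] Then $A_0\cong(0,1](K_{[x]})$, $A_1\cong(0,a_1](K_{[x]})$, and $2A_0\cong(0,2](K_{[x]})$.
\item[--] By the argument of Theorem \ref{groupsoftranslationsarefull}, any convex embedding $2A_0\to mA_1$ condenses to a translation $x\mapsto x+g$ with $g=q+\nu a_1\in G$.
\item[--] In $2A_0$, a point $p+na_1$ carries a backward-orbit block iff $n\le 0$. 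In the $j$-th copy of $A_1$, which condenses onto $(ja_1,(j+1)a_1]$, it does so iff $n\le j$.
\item[--] The points $1$ and $1+a_1$ of $(0,2]$ carry backward- and forward-orbit blocks respectively. Their images lie in consecutive copies $j$ and $j+1$. This forces both $\nu\le j$ and $1+\nu>j+1$, a contradiction.
\end{itemize}
So $2A_0\not\leqslant_{conv}mA_1$ for every $m$, and your chain of equivalences has no valid links in such a system.

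This is why the paper never asserts $A_k\sim_{conv}A_{k+1}$. It uses Lemma \ref{2AconvnB2BconvmAsplittingiff} only in the upward direction. There the hypothesis that $A_k$ is splitting supplies the missing embedding: $2A_k\cong A_k\leqslant_{init}(N_k+1)A_{k+1}$, together with $2A_{k+1}\leqslant_{init}2A_k$ from Lemma \ref{splittinglemmadivissystems}(iii.). The downward direction needs a separate argument. If $1\le k<M$ and both $A_k$ and $A_{k+1}$ are splitting, then
\[
A_{k-1}\leqslant_{init}(N_{k-1}+1)A_k\cong A_k\leqslant_{init}(N_k+1)A_{k+1}\cong A_{k+1}.
\]
Also $A_{k+1}\leqslant_{fin}A_{k-1}$, since $A_{k-1}\cong N_{k-1}A_k+A_{k+1}$. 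Corollary \ref{CSBLO} then gives $A_{k-1}\cong A_{k+1}$, so $A_{k-1}$ is splitting. To finish, propagate upward from a splitting $A_k$ with $k<M$ through $A_M$, then downward to $A_0$.
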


\begin{proof}
For (i.): Suppose that $\mathcal{D}$ is a left division system.

We first make the following claim:
\begin{equation}\label{ktokplusone}
\textrm{If $k < M$ and $A_k$ is splitting, then $A_{k+1}$ is splitting.}
\end{equation}
Indeed, assume $k < M$ and $A_k \cong 2A_k$. By Lemma \ref{splittinglemmadivissystems}.(i.), $A_k \leqslant_{init} (N_k+1)A_{k+1}$, and hence $2A_k \leqslant_{init} (N_k+1)A_{k+1}$. On the other hand, $2A_{k+1} \leqslant_{init} 2A_k$ by \ref{splittinglemmadivissystems}.(iii.). Now \ref{2AconvnB2BconvmAsplittingiff} implies $A_{k+1}$ is splitting, as claimed. 

We next claim:
\begin{equation}\label{kplusonetominusone}
\textrm{If $1 \leq k < M$ and both $A_k$ and $A_{k+1}$ are splitting, then $A_{k-1}$ is splitting.}
\end{equation}
Assume the hypothesis of the claim. We will show that in fact $A_{k-1} \cong A_{k+1}$. Since $k \geq1$, we have $A_{k-1} \cong N_{k-1}A_k + A_{k+1}$, and so $A_{k+1} \leqslant_{fin} A_{k-1}$. On the other hand, we have 
\[
A_k \leqslant_{init} (N_k + 1)A_{k+1} \cong A_{k+1}
\]
where the isomorphism holds because $A_{k+1}$ is splitting. Similarly, 
\[
A_{k-1} \leqslant_{init} (N_{k-1} + 1)A_k \cong A_k
\]
as $A_k$ is also splitting. Thus we have
\[
A_{k-1} \leqslant_{init} A_k \leqslant_{init} A_{k+1}
\]
which yields $A_{k-1} \leqslant_{init} A_{k+1}$. Now \ref{CSBLO} gives $A_{k-1} \cong A_{k+1}$. In particular, $A_{k-1}$ is splitting, as claimed.

Now we may finish the proof of (i.). Only the forward direction is non-trivial, so suppose $A_k$ is splitting for some $k < M$. Then from the claim \ref{ktokplusone} we get by induction that $A_n$ is splitting for all $k \leq n \leq M$, and in particular $A_{k+1}$ is splitting. If $k = 0$, we are done. If $k > 0$, then \ref{kplusonetominusone} gives that $A_{k-1}$ is splitting. By induction, $A_n$ is splitting for all $0 \leq n \leq k$, and hence for all $0 \leq n \leq M$. 

When $M = \omega$, essentially the same argument applies and yields (ii.). For right systems, the argument is symmetric. 
\end{proof}

\theoremstyle{definition}
\newtheorem{splittingdichthmrationalsystems}[lct]{Proposition}
\begin{splittingdichthmrationalsystems}\label{splittingdichthmrationalsystems}
Suppose $\mathcal{D} = \{A_k; N_k; M\}$ is a rational system of length $M$. Then $A_k$ is splitting for some $k$ if and only if $A_k$ is splitting for all $k$.  
\end{splittingdichthmrationalsystems}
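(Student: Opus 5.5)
The plan is to show that splitting propagates both forward and backward along the system, one index at a time, exactly as in the proof of Proposition \ref{splittingdichthmfordivsystems}. Here each step is simpler, because consecutive terms satisfy the exact relation $A_k \cong N_k A_{k+1}$ with $N_k \geq 2$. Only the forward direction of the equivalence needs proof. So suppose $A_j$ is splitting for some index $j$, and we aim to show that every $A_k$, for $k < 1 + M$, is splitting. By induction on the distance from $j$, it suffices to prove two one-step claims for each $k < M$: if $A_k$ is splitting then so is $A_{k+1}$, and if $A_{k+1}$ is splitting then so is $A_k$.

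\emph{Backward step.} Suppose $A_{k+1} \cong 2A_{k+1}$. By the remark after Definition \ref{splittingdefn}, $nA_{k+1} \cong mA_{k+1}$ for all $n, m \geq 1$. Then
\[
2A_k \cong 2N_k A_{k+1} \cong N_k A_{k+1} \cong A_k,
\]
so $A_k$ is splitting.

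\emph{Forward step.} Suppose $A_k \cong 2A_k$. We check the hypotheses of Lemma \ref{2AconvnB2BconvmAsplittingiff} for the pair $A_{k+1}, A_k$.
\begin{itemize}
\item[--] Since $N_k \geq 2$, we have $2A_{k+1} \leqslant_{init} N_k A_{k+1} \cong A_k$, so $2A_{k+1} \leqslant_{conv} 1 \cdot A_k$.
\item[--] We have $2A_k \cong 2N_k A_{k+1}$, so $2A_k \leqslant_{conv} (2N_k)A_{k+1}$.
\end{itemize}
Lemma \ref{2AconvnB2BconvmAsplittingiff} then says $A_{k+1}$ is splitting if and only if $A_k$ is, and $A_k$ is splitting by assumption. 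Alternatively, one can argue directly: $A_k$ splitting gives $N_k A_{k+1} \cong 2N_k A_{k+1}$, so $2N_kA_{k+1} \leqslant_{conv} N_k A_{k+1}$ with $N_k < 2N_k$, and Theorem \ref{splittingdichthm} gives that $A_{k+1}$ is splitting.

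Combining the two steps by induction upward and downward from $j$ covers all indices $k < 1 + M$, whether $M$ is finite or $\omega$. I expect no real obstacle. The only point needing care is bookkeeping: the indices in a rational system run over $k < 1+M$, and each step uses only an index $k < M$ for which the isomorphism $A_k \cong N_k A_{k+1}$ is actually available.
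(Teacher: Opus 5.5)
Your proposal is correct and follows the paper's proof. The paper likewise uses $A_k \cong N_kA_{k+1}$ (hence $2A_k \cong 2N_kA_{k+1}$) to check that each consecutive pair satisfies the hypotheses of Lemma \ref{2AconvnB2BconvmAsplittingiff}, then inducts along the indices. Your direct computation for the backward step is just an explicit instance of that lemma's two-sided conclusion.
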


\begin{proof}
The isomorphism $A_k \cong N_k A_{k+1}$ gives $2A_k \cong 2N_k A_{k+1}$. It follows that each pair of consecutive terms $A_k, A_{k+1}$ from the system satisfies the hypotheses of Lemma \ref{2AconvnB2BconvmAsplittingiff}. Thus $A_k$ is splitting if and only if $A_{k+1}$ is splitting, and the proposition follows by induction.  
\end{proof}

\theoremstyle{definition}
\newtheorem{divisionsystemsplittingdefn}[lct]{Definition}
\begin{divisionsystemsplittingdefn}\label{divisionsystemsplittingdefn}
A division system $\{A_k; N_k; M\}$ is \textit{non-splitting} if $A_k$ is non-splitting for all $0 \leq k < M$.
\end{divisionsystemsplittingdefn}

Equivalently, by Propositions \ref{splittingdichthmfordivsystems} and \ref{splittingdichthmrationalsystems}, a system is non-splitting if its initial term $A_0$ is non-splitting. If a system is not non-splitting, it is \textit{splitting}; equivalently, it is splitting if $A_k$ is splitting for every $k < M$. 

The proofs of Propositions \ref{splittingdichthmfordivsystems} and \ref{splittingdichthmrationalsystems} show that the terms $A_k$ from a splitting system are pairwise nearly isomorphic. We record this observation in the following proposition. 

\theoremstyle{definition}
\newtheorem{isosoftermsinsplittingsystems}[lct]{Proposition}
\begin{isosoftermsinsplittingsystems}\label{isosoftermsinsplittingsystems}
Suppose that $\{A_k; N_k; M\}$ is a splitting division system. 
\begin{itemize}
    \item[i.] If it is a left system, then for all $k, k' < M$ we have $A_k \leqslant_{init} A_{k'}$, and if $k \equiv k' \pmod 2$, then $A_k \cong A_{k'}$.
    \item[ii.] If it is a right system, then for all $k, k' < M$ we have $A_k \leqslant_{fin} A_{k'}$, and if $k \equiv k' \pmod 2$, then $A_k \cong A_{k'}$.
    \item[iii.] If it is a symmetric system, then for all $k, k' < M$ we have $A_k \cong A_{k'}$.
    \item[iv.] If it is a rational system, then for all $k, k' \leq M$ we have $A_k \cong A_{k'}$.
\end{itemize}
\end{isosoftermsinsplittingsystems}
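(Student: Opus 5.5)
The plan is to extract the isomorphisms already established inside the proofs of Propositions \ref{splittingdichthmfordivsystems} and \ref{splittingdichthmrationalsystems}, and then chain them.

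\emph{Left systems.} Since the system is splitting, every $A_k$ with $k \le M$ is splitting. For $k < M$, Lemma \ref{splittinglemmadivissystems}.(i.) gives
\[
A_k \leqslant_{init} (N_k+1)A_{k+1} \cong A_{k+1},
\]
where the isomorphism uses that $A_{k+1}$ is splitting. Proposition \ref{initfinaltermsindivissys}.(i.) gives $A_{k+1} \leqslant_{init} A_k$. Hence $A_k \leqslant_{init} A_{k+1}$ and $A_{k+1} \leqslant_{init} A_k$ for every $k < M$ with $k+1 < M$. Chaining these by transitivity of $\leqslant_{init}$ shows $A_k \leqslant_{init} A_{k'}$ for all $k, k' < M$.

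For the parity claim, take $1 \le k$ with $k+1 < M$, so that $A_{k-1} \cong N_{k-1}A_k + A_{k+1}$. This gives $A_{k+1} \leqslant_{fin} A_{k-1}$. Exactly as in the proof of claim \ref{kplusonetominusone}, we have $A_{k-1} \leqslant_{init} A_{k+1}$, and Corollary \ref{CSBLO} then yields $A_{k-1} \cong A_{k+1}$. Inducting along indices of equal parity gives $A_k \cong A_{k'}$ whenever $k \equiv k' \pmod 2$. The one point to check is that the indices used stay in range; since the statement only concerns $k, k' < M$, the required instances of the isomorphism $A_{k-1} \cong N_{k-1}A_k + A_{k+1}$ are available.

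\emph{Right systems.} Part (ii.) is the mirror image. Use the right-hand versions of Lemma \ref{splittinglemmadivissystems} and Proposition \ref{initfinaltermsindivissys}, and swap the roles of init and fin. Corollary \ref{CSBLO} applies in either orientation.

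\emph{Symmetric systems.} For (iii.), a symmetric system is a left system, so part (i.) gives $A_k \cong A_{k+2}$. It remains to get $A_k \cong A_{k+1}$. By Proposition \ref{initfinaltermsindivissys}.(iii.), $A_{k+1} \leqslant_{fin} A_k$. Part (i.) gives $A_k \leqslant_{init} A_{k+1}$. Corollary \ref{CSBLO} then gives $A_k \cong A_{k+1}$.

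\emph{Rational systems.} For (iv.), take $k < M$. Since $A_{k+1}$ is splitting, $A_k \cong N_k A_{k+1} \cong A_{k+1}$ directly. Induction then gives $A_k \cong A_{k'}$ for all $k, k' \le M$.

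The only delicate spot is the index bookkeeping in the terminating case of (i.) and (ii.), to make sure each invoked isomorphism exists. I expect no other obstacle.
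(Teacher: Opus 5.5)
Your proposal is correct and matches the paper's proof: part (i.) is the argument inside the proof of Proposition \ref{splittingdichthmfordivsystems}, namely Lemma \ref{splittinglemmadivissystems}.(i.) together with splitting to get the init-embeddings, then Corollary \ref{CSBLO} across two steps; (ii.) is its mirror; and (iv.) is immediate from splitting. The only cosmetic difference is in (iii.), where you take $A_{k+1} \leqslant_{fin} A_k$ from Proposition \ref{initfinaltermsindivissys}.(iii.) instead of regarding the symmetric system as a right system and invoking (ii.) as the paper does, which is if anything slightly cleaner.
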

\begin{proof}
The proof of (i.) is contained in the proof of Proposition \ref{splittingdichthmfordivsystems}, and (ii.) is symmetric. 

Since a symmetric system $\{A_k\}$ may be viewed as both a left and right system, (iii.) follows from \ref{CSBLO} combined with (i.) and (ii.), as these conditions imply $A_k \leqslant_{init} A_{k'}$ and $A_{k'} \leqslant_{fin} A_k$ for all $k, k' < M$. 

For splitting rational systems it is immediate that $A_k \cong A_{k'}$ for all $k, k' \leq M$, as one of these terms may always be written as a multiple of the other using the isomorphisms from the system. Thus (iv.) holds. 
\end{proof}

\subsection{Division condensations}\label{subsect:simDcondensation}

To every division system, we associate a global binary relation on $LO$.

\theoremstyle{definition}
\newtheorem{simDdefn}[lct]{Definition}
\begin{simDdefn}\label{simDdefn}
Suppose $\mathcal{D} = \{A_k; N_k; M\}$ is a division system. Given $X \in LO$ and $x, y \in X$, define 
\[
\textrm{$x \sim_{\mathcal{D}} y$ if $A_k \not \leqslant_{conv} [\{x, y\}]$ for all $k < M$.}
\]
\end{simDdefn}

If $\mathcal{D}$ is non-terminating, then the associated relation $\sim_{\mathcal{D}}$ defines a condensation on every order $X$.

\theoremstyle{definition}
\newtheorem{simDisacondpropn}[lct]{Proposition}
\begin{simDisacondpropn}\label{simDisacondpropn}
Suppose $\mathcal{D}$ is a non-terminating division system and $X \in LO$. Then:

\begin{itemize}
    \item[i.] $\sim_{\mathcal{D}}$ is a condensation of $X$.
    \item[ii.]  $\sim_{\mathcal{D}}$ is invariant under convex embeddings. That is, if $f: X \rightarrow Y$ is a convex embedding, then $x \sim_{\mathcal{D}} y$ in $X$ if and only if $f(x) \sim_{\mathcal{D}} f(y)$ in $Y$.
    \item[iii.] The condensed order $X/{\sim_{\mathcal{D}}}$ is either a singleton or densely ordered. 
\end{itemize}
\end{simDisacondpropn}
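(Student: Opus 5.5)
The plan is to get everything from two facts about a non-terminating system: the terms shrink under $\leqslant_{conv}$, and at every stage two copies of a later term fit into an earlier one. Concretely, from Lemma \ref{splittinglemmadivissystems}.(iv.) (or its right-sided version), or directly from $A_k \cong N_k A_{k+1}$ for rational systems, I expect $2A_{k+2} \leqslant_{conv} A_k$ for every $k$. Proposition \ref{initfinaltermsindivissys} gives $A_n \leqslant_{conv} A_k$ whenever $k \leq n$. Since $M = \omega$, the condition ``$A_k \not\leqslant_{conv} J$ for all $k$'' is downward closed in $J$ under convex embedding. It is also equivalent to ``$A_k \not\leqslant_{conv} J$ for all sufficiently large $k$'', because $A_n \leqslant_{conv} A_k$.

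For (i.), reflexivity is immediate: $[\{x,x\}] = \{x\}$, and $A_k \not\leqslant_{conv} \{x\}$ for all $k$, since $A_k$ has at least two points by $2A_{k+2} \leqslant_{conv} A_k$ and non-emptiness. Symmetry is built into the definition. For transitivity and convexity, suppose $x \sim y$ and $y \sim z$, and that some $A_k \leqslant_{conv} [\{x,z\}]$. The interval $[\{x,z\}]$ is contained in $[\{x,y\}] \cup [\{y,z\}]$, which is a sum of two intervals (or lies inside one of them, giving a contradiction directly). Taking $k$ even smaller index-wise is unnecessary. Instead, note $2A_{k+2} \leqslant_{conv} A_k \leqslant_{conv} [\{x,y\}] + [\{y,z\}]$ (up to the overlap at $y$). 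Proposition \ref{AplusBconvXplusY} then puts one copy of $A_{k+2}$ convexly inside one of the two pieces, contradicting $x\sim y$ or $y \sim z$. If $x<y<z$ and $x \sim z$, then $[x,y] \subseteq [x,z]$ gives $x \sim y$, so classes are intervals.

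For (ii.), a convex embedding $f$ maps $[\{x,y\}]$ isomorphically onto $[\{f(x),f(y)\}]$, since the image is convex. So $A_k \leqslant_{conv}$ one interval iff $A_k \leqslant_{conv}$ the other, which gives invariance directly.

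For (iii.), suppose $X/{\sim}$ has two classes $\gamma(x) < \gamma(y)$. I will show there is a class strictly between them. Since $x \not\sim y$, some $A_k \leqslant_{conv} [x,y]$. Then $A_{k+2} + A_{k+2} + A_{k+4}+A_{k+4}$-type nesting gives, via $2A_{k+2}\leqslant_{conv} A_k$ applied twice, three consecutive disjoint intervals $J_1<J_2<J_3$ inside $[x,y]$, each containing a copy of $A_{k+4}$. Pick $z \in J_2$ (nonempty). Then $[x,z] \supseteq J_1$, so $A_{k+4} \leqslant_{conv} [x,z]$ and $x \not\sim z$; similarly $z \not\sim y$. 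So $\gamma(x)<\gamma(z)<\gamma(y)$ and the quotient is dense. The main care point is (i.)'s transitivity, in particular handling the shared point $y$ when splitting $[\{x,z\}]$: write $[x,z] = [x,y) + [y,z]$, and apply Proposition \ref{AplusBconvXplusY} to a copy of $2A_{k+2}$ sitting inside.
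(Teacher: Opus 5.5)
Your proof is correct and follows essentially the same route as the paper: for transitivity you split a copy of $A_k$ that straddles $y$ into two pieces that must land on opposite sides of $y$. Part (ii.) is immediate from convexity, and for (iii.) you find three successive blocks inside a copy of $A_k$ and pick $z$ in the middle one. The differences are minor. You use the derived bound $2A_{k+2} \leqslant_{conv} A_k$ from Lemma \ref{splittinglemmadivissystems}.(iv.) together with Proposition \ref{AplusBconvXplusY}, which handles left, right, and rational systems uniformly and also justifies reflexivity, since each $A_k$ has at least two points. The paper instead splits directly along $A_k \cong N_kA_{k+1} + A_{k+2}$ (and $N_kA_{k+1} + N_{k+2}A_{k+3} + A_{k+4}$) for left systems and says the other cases are similar.
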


\begin{proof}
For (i.): We must show the relation $\sim_{\mathcal{D}}$ is a convex equivalence relation on $X$, that is, an equivalence relation with convex classes. It is clearly reflexive and symmetric. Since $A_k \not \leqslant_{conv} [\{x, y\}]$ implies $A_k \not \leqslant_{conv} [\{x, z\}]$ for any $z \in [\{x, y\}]$, its classes are convex. 

To show it is transitive, fix $x, y, z \in X$ with $x \sim_{\mathcal{D}} y$ and $y \sim_{\mathcal{D}} z$. If $z \in [\{x, y\}]$ then by the convexity of $\sim_{\mathcal{D}}$ we have $x \sim_{\mathcal{D}} z$. 

So suppose $z \not\in [\{x, y\}]$. Without loss of generality, we may assume $x < y < z$. Suppose there is $k$ such that $A_k \leqslant_{conv} [x, z]$. Fix a subinterval $I = [c, d]$ of $[x, z]$ that is isomorphic to $A_k$. 

We must have $c < y < d$. Otherwise, $A_k$ convexly embeds in either $[x, y]$ or $[y, z]$, contradicting $x \sim_{\mathcal{D}} y$ or $y \sim_{\mathcal{D}} z$. If the system is a left division system, then since it is non-terminating we have $A_k \cong N_k A_{k+1} + A_{k+2}$. Fix a cut $c'$ with $c < c' < d$ corresponding to the cut at the $+$ sign in this expression. If $c' \leq y$ then $A_{k+1} \leqslant_{conv} [x, y]$, and if $c' \geq y$ then $A_{k+2} \leqslant_{conv} [y, z]$, a contradiction in either case. 

Hence $x \sim_{\mathcal{D}} z$. The argument is similar in the cases when the system is right or rational. Thus $\sim_{\mathcal{D}}$ is transitive, and hence a condensation of $X$.

For (ii.): By the convexity of $f$ we have $A_k \leqslant_{conv} [\{x, y\}]$ if and only if $A_k \leqslant_{conv} [\{f(x), f(y)\}]$.

For (iii.) Let $\delta$ denote the condensation map for $\sim_{\mathcal{D}}$ on $X$, and suppose there are $x, y \in X$ with $\delta(x) < \delta(y)$. Then $x \not\sim_{\mathcal{D}} y$, and so $A_k \leqslant_{conv} [x, y]$ for some $k$. Fix $I = [c, d] \cong A_k$ with $x \leq c < d \leq y$. 

If the system is a left system, we have
\[
A_k \cong N_k A_{k+1} + A_{k+2} \cong N_k A_{k+1} + N_{k+2}A_{k+3} + A_{k+4}. 
\]
Fix $c' < c''$ with $c < c' < c'' < d$ corresponding to the cuts at the $+$ signs in the expression on the right. Then for any $z \in [c', c'']$,  we have $A_{k+1} \leqslant_{conv} [x, z]$ and $A_{k+4} \leqslant_{conv} [z, y]$ which gives $x \not\sim_{\mathcal{D}} z$ and $z \not\sim_{\mathcal{D}} y$. Hence $\delta(x) < \delta(z) < \delta(y)$. Since $x, y$ were arbitrary, $X/{\sim_{\mathcal{D}}}$ is dense. 

The arguments for right systems and rational systems are similar. 
\end{proof}

As in the proof of Proposition \ref{simDisacondpropn}, we will write $\delta_{\mathcal{D}}$, or simply $\delta$, for the condensation map of a non-terminating system $\mathcal{D}$. 

Part (ii.) of Proposition \ref{simDisacondpropn} can be reformulated as follows.

\theoremstyle{definition}
\newtheorem{simDclassesinvarunderconvfcoro}[lct]{Corollary}
\begin{simDclassesinvarunderconvfcoro}\label{simDclassesinvarunderconvfcoro}
Suppose $\mathcal{D}$ is a non-terminating division system and $X \in LO$. If $f: X \rightarrow Y$ is a convex embedding, then for every $x \in X$ we have
\[
f[\delta(x)] = \delta(f(x)) \cap f[X].
\]
In particular, if $\delta(f(x)) \subseteq f[X]$, then $f[\delta(x)] = \delta(f(x))$. 
\end{simDclassesinvarunderconvfcoro}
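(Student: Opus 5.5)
The plan is to deduce the corollary directly from Proposition \ref{simDisacondpropn}.(ii.). We prove the two inclusions $f[\delta(x)] \subseteq \delta(f(x)) \cap f[X]$ and $\delta(f(x)) \cap f[X] \subseteq f[\delta(x)]$ separately. We then read off the ``in particular'' clause.

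For the first inclusion, take $y \in \delta(x)$, so that $x \sim_{\mathcal{D}} y$ in $X$. By part (ii.) of the proposition, $f(x) \sim_{\mathcal{D}} f(y)$ in $Y$. Hence $f(y) \in \delta(f(x))$, and trivially $f(y) \in f[X]$.

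For the reverse inclusion, take $z \in \delta(f(x)) \cap f[X]$. Since $z \in f[X]$, we may write $z = f(y)$ for some $y \in X$. Then $f(x) \sim_{\mathcal{D}} f(y)$ in $Y$, and the backward direction of part (ii.) gives $x \sim_{\mathcal{D}} y$. So $y \in \delta(x)$ and $z \in f[\delta(x)]$.

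The one point to check is that part (ii.) really is a biconditional. This rests on the fact that for a convex embedding $f$ and $x, y \in X$, the interval $[\{f(x), f(y)\}]$ in $Y$ equals $f[[\{x, y\}]]$. The inclusion $\supseteq$ holds because $f$ is order-preserving. The inclusion $\subseteq$ holds because $f[X]$ is convex, so every point of $Y$ lying between $f(x)$ and $f(y)$ is in $f[X]$, and its preimage lies between $x$ and $y$. Consequently $f$ restricts to an isomorphism $[\{x,y\}] \cong [\{f(x), f(y)\}]$, and so $A_k \leqslant_{conv} [\{x,y\}]$ if and only if $A_k \leqslant_{conv} [\{f(x),f(y)\}]$.

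Finally, if $\delta(f(x)) \subseteq f[X]$, then $\delta(f(x)) \cap f[X] = \delta(f(x))$, which gives $f[\delta(x)] = \delta(f(x))$. There is no real obstacle here; the whole argument is a restatement of Proposition \ref{simDisacondpropn}.(ii.).
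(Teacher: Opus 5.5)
Your proposal is correct and follows the paper's route: the paper presents this corollary, without separate proof, as a direct reformulation of Proposition \ref{simDisacondpropn}.(ii.). Your two-inclusion argument, together with your check that $f$ restricts to an isomorphism $[\{x,y\}] \cong [\{f(x),f(y)\}]$, spells out exactly that reformulation.
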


It follows from Corollary \ref{simDclassesinvarunderconvfcoro} that if $\mathcal{D}$ is non-terminating and $f: X \rightarrow Y$ is an isomorphism, then $f$ lifts to an isomorphism $f: X/{\sim_{\mathcal{D}}} \rightarrow Y/{\sim_{\mathcal{D}}}$ defined by (and well-defined by) the rule $f[\delta(x)] = \delta(f(x))$. 

\subsection{$\mathcal{D}$-branchings}\label{subsect:Dbranchings}

Given a non-terminating division system $\mathcal{D}$, we will be interested in computing the condensation classes $\delta(x)$ of $\sim_{\mathcal{D}}$ in a given order $X$ via countable nested intersections of copies of the division terms $A_k$. This turns out to always be possible when $\mathcal{D}$ is non-splitting.

\theoremstyle{definition}
\newtheorem{Dbranchingdefn}[lct]{Definition}
\begin{Dbranchingdefn}\label{Dbranchingdefn}
Suppose $\mathcal{D} = \{A_k\}$ is a non-terminating division system. A \textit{$\mathcal{D}$-branching} in $X$ consists of a decreasing sequence of intervals $\{I_n: n \in \omega\}$
\[
    X \supseteq I_0 \supseteq I_1 \supseteq \ldots
\]
along with two sequences of natural numbers $\{k_n: n \in \omega\}$ and $\{l_n: n \in \omega\}$, the first of which is strictly increasing, such that for all $n \in \omega$ we have
\begin{itemize}
    \item[i.] $I_n \cong A_{k_n}$;
    \item[ii.] $A_{l_n} \leqslant_{conv} (I_n \setminus I_{n+1}$).
\end{itemize}
We denote the $\mathcal{D}$-branching by $\{I_n; k_n; l_n\}$, or simply $\{I_n\}$.
\end{Dbranchingdefn}

When $\mathcal{D}$ is non-splitting, $\mathcal{D}$-branchings are the means by which we will zoom in on $\sim_{\mathcal{D}}$-classes $\delta(x)$ in a given order $X$. They are typically constructed by a version of the following procedure: 
\begin{itemize}
    \item Start with an interval $I_0 \subseteq X$ that is isomorphic to $A_k = A_{k_0}$ for some term $A_k$ appearing in $\mathcal{D}$ (a left system, say);
    \item Decompose $I_0$ to witness $A_k \cong N_k A_{k+1} + A_{k+2}$;
    \item Choose a subinterval $I_1$ corresponding to one of the terms $A_{k_1}$ in this expression;
    \item Iterate. 
\end{itemize}
All interval sequences $\{I_n\}$ constructed this way are branchings. Definition \ref{Dbranchingdefn} allows for slightly more general constructions. 

Suppose that $\{I_n\}$ is a $\mathcal{D}$-branching. Since the $I_n$ are intervals, each difference $I_n \setminus I_{n+1}$ consists of an initial segment $L_n$ and final segment $R_n$ of $I_n$. 

The branching is \textit{rightward} if for infinitely many $n$ we have $A_{l_n} \leqslant_{conv} L_n$ and \textit{leftward} if for infinitely many $n$ we have $A_{l_n} \leqslant_{conv} R_n$. Condition (ii.) in the definition implies that any branching is either leftward or rightward. A \textit{middle branching} is a branching that is both leftward and rightward.

We will be interested in taking intersections of branchings. In general, if $\{I_n\}$ is a descending sequence of intervals and we label each $I_n = [c_n, d_n]$ by its endcuts, then letting $c$ denote the cut at the right of the non-decreasing sequence $c_0 \leq c_1 \leq \ldots$ and $d$ denote the cut at the left of the non-increasing sequence $\ldots \leq d_1 \leq d_0$, we have $\bigcap_n I_n = [c, d]$. In the case when $c = d$ this intersection is formally empty, but we sometimes view it as the cut singleton $\{c\}$.

\theoremstyle{definition}
\newtheorem{Dbranchingintersectionpropn}[lct]{Proposition}
\begin{Dbranchingintersectionpropn}\label{Dbranchingintersectionpropn}
Suppose $\mathcal{D} = \{A_k\}$ is a non-terminating and non-splitting division system and $\{I_n; k_n; l_n\}$ is a $\mathcal{D}$-branching in $X$. Fix a point $x \in \bigcap_n I_n$. 
\begin{itemize}
    \item[i.] If the branching is rightward, $\bigcap_n I_n$ is an initial segment of $\delta(x)$;
    \item[ii.] If the branching is leftward, $\bigcap_n I_n$ is a final segment of $\delta(x)$;
    \item[iii.] If the branching is middle, $\bigcap_n I_n = \delta(x)$.
\end{itemize}
\end{Dbranchingintersectionpropn}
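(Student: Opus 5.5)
The plan is to prove two inclusions separately. First, $\bigcap_n I_n \subseteq \delta(x)$ for every branching; this is where non-splitting is used. Second, on each side where the branching is "directed", no point of $\delta(x)$ lies beyond $\bigcap_n I_n$. Parts (i.)--(iii.) then follow at once. Since $\bigcap_n I_n$ is an interval containing $x$ and contained in the interval $\delta(x)$, showing that no point of $\delta(x)$ lies to the left of it makes it an initial segment of $\delta(x)$. Showing the same on the right makes it a final segment, and both together give equality.

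\textbf{Step 1: $\bigcap_n I_n \subseteq \delta(x)$.} Fix $y \in \bigcap_n I_n$ and suppose toward a contradiction that $A_k \leqslant_{conv} [\{x,y\}]$ for some $k$. Since $[\{x,y\}] \subseteq I_n \cong A_{k_n}$ for every $n$ and $k_n \to \infty$, we get $A_k \leqslant_{conv} A_m$ for arbitrarily large $m$; fix such an $m \geq k+2$ of the same parity as $k$. Treat a left system; right systems are symmetric, and rational systems are easier since there $A_k \cong N_kA_{k+1}$ with $N_k\ge 2$.
\begin{itemize}
\item By Proposition \ref{initfinaltermsindivissys}, $A_m \leqslant_{init} A_{k+2}$, so $A_k \leqslant_{conv} A_{k+2}$, say $A_{k+2} \cong P + A_k + Q$.
\item By Lemma \ref{splittinglemmadivissystems}.(iv.), $A_k \cong 2A_{k+2} + R$ for some $R$.
\item Combining these, $A_k \cong P + A_k + (Q + P) + A_k + (Q + R)$.
\item Corollary \ref{XcongAXMXB} then yields $A_k \cong A_k + M + A_k$ with $M = Q + P$.
\end{itemize}
From this I will aim to show $2A_k \leqslant_{conv} A_k$, which makes $A_k$ splitting by Corollary \ref{2XconvXiff2XcongX} and contradicts non-splitting via Proposition \ref{splittingdichthmfordivsystems}. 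The route is to extract an adjacent pair $A_{k+2}+A_{k+2}$ inside a copy of $A_k$ and feed in the comparison $A_k \leqslant_{conv} A_{k+2}$ once more. Concretely, I would use $2A_{k+2} \leqslant_{conv} A_k$ together with $2A_k \leqslant_{conv} \ell A_{k+2}$ for suitable $\ell$, and apply Lemma \ref{2AconvnB2BconvmAsplittingiff} to the pair $A_k, A_{k+2}$. The second relation should follow from $A_k \leqslant_{conv} A_{k+2}$ and $A_k \leqslant_{init}(N_k+1)A_{k+1}$.

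I expect this step to be the main obstacle. The issue is turning the non-adjacent convex embedding $A_k \leqslant_{conv} A_{k+2}$ into a genuinely adjacent double copy. If the Lemma \ref{2AconvnB2BconvmAsplittingiff} route stalls, I will instead iterate the identity $A_k \cong A_k+M+A_k$ with Theorem \ref{XcongAXBiffABladder} to produce an $\omega$-orbital and then apply Corollary \ref{directedrefinementforomegaA}.

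\textbf{Step 2: the directed sides.} Suppose the branching is rightward, and let $z \in X$ lie to the left of $\bigcap_n I_n$. Then $z \notin I_{n_0}$ for some $n_0$, so $z$ lies left of every $I_n$ with $n \ge n_0$. Choose $n \ge n_0$ with $A_{l_n} \leqslant_{conv} L_n$, which is possible because the branching is rightward. Since $L_n$ is the initial part of $I_n$ lying left of $I_{n+1} \ni x$, we have $L_n \subseteq [z, x]$. Hence $A_{l_n} \leqslant_{conv} [z,x]$, so $z \not\sim_{\mathcal{D}} x$. Thus $\delta(x)$ has no points left of $\bigcap_n I_n$, and with Step 1 this gives (i.). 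The leftward case is symmetric using the final parts $R_n$ and gives (ii.). A middle branching satisfies both, so $\bigcap_n I_n = \delta(x)$, which is (iii.).
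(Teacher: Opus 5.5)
Your Step 2 is the paper's argument for the directed sides. The first half of Step 1 also matches the paper: you get $A_k \leqslant_{conv} A_{k+2}$ from $\bigcap_n I_n \leqslant_{conv} A_{k_n}$, and $2A_{k+2} \leqslant_{init} A_k$ from Lemma \ref{splittinglemmadivissystems}.(iv.). But Step 1 is never closed, and neither of your proposed finishes works.

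\begin{itemize}
\item[i.] Lemma \ref{2AconvnB2BconvmAsplittingiff} only says that two comparable orders are splitting together or non-splitting together. Applied to $A_k, A_{k+2}$, it can at best recover the splitting dichotomy of Proposition \ref{splittingdichthmfordivsystems}. It can never contradict non-splitting.
\item[ii.] The identity $A_k \cong A_k + M + A_k$ cannot by itself force $A_k$ to split. Take $X$ to be the rationals in $[0,1]$ and $M = \mathbb{Q}$. Cantor's theorem gives $X \cong X + M + X$, yet $X + X$ contains a jump pair, so $X \not\cong 2X$. Iterating that identity through Theorem \ref{XcongAXBiffABladder} and Corollary \ref{directedrefinementforomegaA} will therefore not produce $2A_k \leqslant_{conv} A_k$.
\end{itemize}
Trying to make $A_k$ split is the wrong target.

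The missing step follows at once from what you already wrote down. Substitute $A_k \cong 2A_{k+2} + R$ into $A_{k+2} \cong P + A_k + Q$. This gives $A_{k+2} \cong P + 2A_{k+2} + R + Q$, that is, $2A_{k+2} \leqslant_{conv} A_k \leqslant_{conv} A_{k+2}$. Hence $2A_{k+2} \leqslant_{conv} A_{k+2}$, so $A_{k+2}$ is splitting by Corollary \ref{2XconvXiff2XcongX}. This contradicts that $\mathcal{D}$ is non-splitting. It is exactly the paper's proof of the inclusion $\bigcap_n I_n \subseteq \delta(x)$, and with it your argument is complete.
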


\begin{proof}: For (i.): As above, label each $I_n = [c_n, d_n]$ by its endcuts, as well as $\bigcap_n I_n = [c, d]$. For each $n_0$ we have $\bigcap_n I_n \leqslant_{conv} I_{n_0} \cong A_{k_{n_0}}$. Since the indices $k_n$ are strictly increasing, this implies $\bigcap_n I_n \leqslant_{conv} A_k$ for all $k$.  

We first show $\bigcap_n I_n \subseteq \delta(x)$. Fix $y \in \bigcap_n I_n$. If $y \not\sim x$, then for some $k$ we have $A_k \leqslant_{conv} [\{y, x\}]$. Since $[\{y, x\}] \leqslant_{conv} \bigcap_n I_n \leqslant_{conv} A_{k+2}$, this gives $A_k \leqslant_{conv} A_{k+2}$. By Lemma \ref{splittinglemmadivissystems}, it follows that $2A_{k+2} \leqslant_{conv} A_{k+2}$. Thus $A_{k+2}$ is splitting by \ref{2XconvXiff2XcongX}, and so $\mathcal{D}$ is splitting as well, a contradiction. Thus $y \sim x$ and $\bigcap_n I_n \subseteq \delta(x)$ as claimed. 

We now show $\bigcap_n I_n$ is initial in $\delta(x)$. Fix $x' \in \delta(x)$ and suppose $x' < y$ for some $y \in \bigcap_n I_n$. If $x' \not \in \bigcap_n I_n$, then for all sufficiently large $n$ we have $x' < c_n$. Since the branching is rightward, there is $N$ such that $x' < c_N < c_{N+1}$ and $A_{l_N} \leqslant_{conv} [c_N, c_{N+1}]$. It follows $A_{l_N} \leqslant_{conv} [x', y]$, and so $x' \not\sim y$, i.e. $x' \not\in \delta(y)$. Since $y \in \bigcap_n I_n \subseteq \delta(x)$ we have $\delta(y) = \delta(x)$. Hence $x' \not\in \delta(x)$, a contradiction, and it follows $\bigcap_n I_n$ is initial in $\delta(x)$, as claimed. 

A symmetric argument gives (ii.). Then (iii.) is immediate by combining (i.) and (ii.). 
\end{proof}

In Section \ref{section:symboldynamrepnsofdivissystems}, we will construct symbolic representations of the orders $A_k$ in a given non-terminating system $\mathcal{D}$. We will use Proposition \ref{Dbranchingintersectionpropn} to show that for non-splitting $\mathcal{D}$, these representations are canonically expressed in terms of the $\sim_{\mathcal{D}}$-classes $\delta(x)$.  

\section{New commutativity laws for $(LO, +)$}\label{section:commutativitylaws}

In this section, we solve Tarski's Generalized Sum Problem \ref{tgsp} for $LO$ in the affirmative; see Theorem \ref{generalizedsumproblemsoln} below. Along the way, we establish several arithmetic conditions on pairs $A, B \in LO$ that either imply or are implied by the isomorphism $A + B \cong B + A$. These results will be used in the proof of the revised version \ref{revisedtarconj} of Tarski's Commuting Pairs Conjecture \ref{tarconj} in Section \ref{section:continuationandvaluation}.

Our fundamental result of this section is the following theorem.

\theoremstyle{definition}
\newtheorem{BplusAinitandfinAplusB}[lct]{Theorem}
\begin{BplusAinitandfinAplusB}\label{BplusAinitandfinAplusB}
If $B + A \leqslant_{init} A + B$ and $B + A \leqslant_{fin} A + B$ then $A + B \cong B + A$.
\end{BplusAinitandfinAplusB}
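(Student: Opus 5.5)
The plan is to reduce everything to Corollary \ref{CSBLO} (or Corollary \ref{XinitXfinYconv}), applied to $A+B$ and $B+A$ or to suitable tails of them. The hypothesis $B + A \leqslant_{init} A + B$ already gives one of the two required embeddings. What is missing is an embedding in the other direction, namely $A + B \leqslant_{fin} B + A$ or at least $A + B \leqslant_{conv} B + A$.

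To obtain this extra information I fix cut sequences and isomorphisms witnessing $A + B \cong B + A + X$. This gives a left division setup $(I=[c,d];\rho,\tau)$, and I run the left Euclidean algorithm of Section \ref{section:euclideanalgos} on it. The second hypothesis, $A + B \cong Y + B + A$, is carried along as extra data. I would first check that it survives each stage. If $A \cong NB + A'$ is the division performed at a stage, then
\[
A + B \cong NB + A' + B, \qquad B + A \cong (N+1)B + A'.
\]
I hope to show that the final embedding of $B+A$ into $A+B$ restricts to a final embedding $B' + A' \leqslant_{fin} A' + B'$ for the next-stage pair. To do this I would use the directed-refinement facts (Proposition \ref{directedrefinementpost}, Corollary \ref{sumAiconvsumBi}) to locate where the copies of $B$ land. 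If this fails, I would instead keep the weaker information that $A_k + A_{k+1}$ has $A_{k+1} + A_k$ as a final segment up to a discrete prefix.

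I then split into cases according to how the run ends.

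\emph{Termination in a divisor.} Back substitution writes $A \cong mC$ and $B \cong nC$, so $A + B \cong (m+n)C \cong B + A$.

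\emph{Termination at Stage $0$ in an absorbed factor.} Here $\omega B \leqslant_{init} A$, so $B + A \cong A$. The final hypothesis reads $A + B \cong Y + A$, and I would use Theorem \ref{XcongAXBiffcongAXandXB} and Corollary \ref{absorbsleftrightcoro} to show that $B$ is also absorbed by $A$ on the right. That gives $A + B \cong A \cong B + A$.

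\emph{Termination at a later stage in an absorbed factor.} This case reduces to the previous one by the back-substitution formulas $A_{k-i} \cong M_iA_k$ or $A_{k-i} \cong M_iA_k + A_{k+1}$, applied at the last stage where the transported final condition is available.

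\emph{Non-terminating runs.} Here I use the decompositions from Section \ref{section:divisionsystems}:
\[
A + B \cong \sum_{k} N_kA_{k+1} + L, \qquad B + A \cong \sum_k N_kA_{k+1} + L',
\]
where $L$ and $L'$ are final segments. It suffices to prove $L \cong L'$. The initial hypothesis says that the cut sequences for the two decompositions match up, so $L \cong L' + X$, and hence $L' \leqslant_{init} L$. The final hypothesis gives $B + A \leqslant_{fin} A + B$. Applying Proposition \ref{directedrefinementpost}(ii) to the final embedding, and using that $\omega$-sums cannot land inside a final tail without being cut off, I expect to get either $L \leqslant_{fin} L'$ directly, or $\sum_{k\ge j} N_kA_{k+1} + L \leqslant_{fin} L'$ for some $j$. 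In the first case Corollary \ref{CSBLO} gives $L \cong L'$. In the second case I would absorb the tail sum using $L \cong L' + X$ and Theorem \ref{XcongAXBiffABladder}.

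The main obstacle is this last case, where the final embedding need not respect the cuts $c_k$ produced by the algorithm. I would first try to show that it can be chosen to send tail cuts to tail cuts, arguing as in the orbital analysis of Section \ref{section:partialconvexselfembeds}. If that fails, I would fall back on the $\sim_{\mathcal{D}}$-condensation of Proposition \ref{simDisacondpropn}, which is invariant under convex embeddings, so that $L$ and $L'$ can be compared class by class.
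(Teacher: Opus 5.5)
There are two genuine gaps, and both sit at steps you treat as routine. One thing you were unsure of is actually fine: the transport of the final relation. At a Case (I.iii) stage with $P \cong NQ + P'$, both $Q + P'$ and $P' + Q$ are final in $NQ + P' + Q$. So by Proposition \ref{bothinitbothfin}, either the relation $Q + P' \leqslant_{fin} P' + Q$ transports, or $P' + Q \leqslant_{fin} Q + P'$; in the latter case the new initial relation and Corollary \ref{CSBLO} give commutation at once. Case (II.iii) is identical, so later-stage termination does reduce to Stage $0$.

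\textbf{First gap: the Stage-$0$ absorbed case.} From $\omega B \leqslant_{init} A$ and $A \leqslant_{fin} A + B$ you need $A + B \cong A$. Theorem \ref{XcongAXBiffcongAXandXB} and Corollary \ref{absorbsleftrightcoro} do not give this, because nothing in the hypotheses presents $A$ in the form $C + A + D$ with $D$ built from $B$. This is exactly the paper's case (ii.), the most delicate part of its proof. There one compares a final copy of $A_k$ inside $A_k + A_{k+1}$ with the $+$ sign. When they overlap, one analyzes the orbital at the right endcut of the overlapping isomorphism: a finitary orbital forces $A_{k+1}$ to split, and an $\omega^*$-orbital yields either $\omega^*A_{k+1} \leqslant_{fin} A_k$ or again splitting.

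A shorter repair is available. Suppose $A \cong Z + nB$ (trivially true for $n = 0$). Then $A$ and $(n+1)B$ are both final in $A + B \cong Z + (n+1)B$, so by Proposition \ref{bothinitbothfin} one of two things holds.
\begin{itemize}
\item[i.] $A \leqslant_{fin} (n+1)B$. Then $\omega B \leqslant_{conv} (n+1)B$, so $B$ splits by Corollaries \ref{directedrefinementforomegaA} and \ref{2XconvXiff2XcongX}, and $A \cong B$ by Corollary \ref{CSBLO}.
\item[ii.] $(n+1)B \leqslant_{fin} A$. If this holds for every $n$, Lemma \ref{nAinitallAiffomegaAinitTarski} gives $\omega^*B \leqslant_{fin} A$.
\end{itemize}
The opposite orientation, $\omega A \leqslant_{init} B$, really is immediate: $B + A \leqslant_{fin} A + B \cong B$, and Theorem \ref{XcongAXBiffcongAXandXB} finishes.

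\textbf{Second gap: the non-terminating case.} Your case split on the final embedding $g\colon B + A \rightarrow A + B$ is oriented the wrong way. Write $b'$ and $b$ for the cuts preceding $L'$ and $L$.

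If $g(b') \le b$, then $L$ is final in $g[L']$, so $L \leqslant_{fin} L'$. Together with your $L' \leqslant_{init} L$, Corollary \ref{CSBLO} finishes immediately. Your alternative $\sum_{k \ge j} N_kA_{k+1} + L \leqslant_{fin} L'$ is only a subcase of this and needs no absorption.

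The case you never address is $g(b') > b$. Then the images of the tail cuts of $B + A$ eventually exceed $b$, so $\sum_{i \ge j} N_iA_{i+1} + L' \leqslant_{fin} L$. The facts $L \cong L' + X$ and $L' \leqslant_{fin} L$ do not yield $L \cong L'$ via Corollary \ref{CSBLO} or Theorem \ref{XcongAXBiffABladder}.

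What closes this case is the splitting dichotomy, which your plan never invokes.
\begin{itemize}
\item[i.] If the system is splitting, Lemma \ref{splittinglemmadivissystems} gives $A_0 \leqslant_{init} A_1 \leqslant_{init} A_0$. One of $A_0, A_1$ is final in the other, since both are final in $A + B$. So Corollary \ref{CSBLO} gives $A_0 \cong A_1$ directly.
\item[ii.] If the system is non-splitting, $L$ is final in every tail $[c_n, d]$, which is $A_n + A_{n+1}$ or $A_{n+1} + A_n$. So $A_{j+1} \leqslant_{conv} L$ would give $2A_{j+3} \leqslant_{conv} A_{j+5} + A_{j+6}$ (or the reversed sum). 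Proposition \ref{AplusBconvXplusY} and Lemma \ref{splittinglemmadivissystems} then produce a splitting term, a contradiction. Hence $g(b') = b$.
\end{itemize}
Your $\sim_{\mathcal{D}}$ fallback points in this direction, but it is only valid for non-splitting systems, so the splitting case must be handled separately.
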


\begin{proof}
Fix a left setup $(I = [c, d]; \rho, \tau)$ witnessing $B + A \leqslant_{init} A + B$. Let $A_0, A_1$ respectively denote the dividend and divisor among $A, B$ in this setup. Run the left Euclidean algorithm on input $(I; \rho, \tau)$.  

Suppose first the algorithm terminates at a finite stage $k$. From the previous stages, we have the division system
\[
\begin{array}{rcl}
A_0 & \cong & N_0A_1 + A_2 \\
A_1 & \cong & N_1A_2 + A_3 \\
& \vdots & \\
A_{k-1} & \cong & N_{k-1} A_k + A_{k+1}.
\end{array}
\]

If the algorithm terminates in a divisor, we have $A_k \cong N_k A_{k+1}$. By back substitution, $A_0 \cong M A_{k+1}$ and $A_1 \cong N A_{k+1}$ for some integers $M, N \geq 1$. Hence $A_0 + A_1 \cong A_1 + A_0$, i.e. $A + B \cong B + A$, and we are done in this case. 

If the algorithm terminates in an absorbed factor, then $A_{k+1} + A_k \cong A_k$, and hence $\omega A_{k+1} \leqslant_{init} A_k$. If $k > 0$, then from the discussion in Section \ref{subsect:leftalgorun}, there are integers $N, N' \geq 1$ such that one of $A$, $B$ is isomorphic to $NA_k + A_{k+1}$ and the other is isomorphic to $N'A_k$, depending on the parity of $k$. And if $k = 0$ (i.e. the algorithm terminates at Stage $0$ in an absorbed factor, so that $A_1 + A_0 \cong A_0$), then the same is true taking $N = 0$ and $N' = 1$. 

In either case, let $M = N + N'$. Then $M \geq 1$ and one of the following holds:
\begin{itemize}
    \item[i.] $A + B \cong MA_k$ and $B + A \cong MA_k + A_{k+1}$;
    \item[ii.] $A + B \cong MA_k + A_{k+1}$ and $B + A \cong MA_k$. 
\end{itemize}

Suppose (i.) holds. Then since $MA_k \leqslant_{init} MA_k + A_{k+1}$, it follows $A + B \leqslant_{init} B + A$. By hypothesis we have $B + A \leqslant_{fin} A + B$. Therefore $A + B \cong B + A$ by Corollary \ref{CSBLO}, and we are done in case (i.).  

Now suppose (ii.) holds. Then from $B + A \leqslant_{fin} A + B$, we get
\[
MA_k \leqslant_{fin} MA_k + A_{k+1}. 
\]
Since $A_k \leqslant_{fin} MA_k$, it follows $A_k \leqslant_{fin} MA_k + A_{k+1}$. Since also $A_k + A_{k+1} \leqslant_{fin} MA_k + A_{k+1}$, by \ref{bothinitbothfin} one of the following holds:
\begin{itemize}
    \item[a.] $A_k \leqslant_{fin} A_k + A_{k+1}$;
    \item[b.] $A_k + A_{k+1} \leqslant_{fin} A_k$. 
\end{itemize}

Suppose (ii.b.) holds. Then $A_k \cong Y + A_k + A_{k+1}$ for some $Y$. Then by \ref{XcongAXBiffcongAXandXB} we have $A_k + A_{k+1} \cong A_k$. Then
\[
A + B \cong MA_k + A_{k+1} \cong MA_k \cong B + A,
\]
and we are done in case (ii.b.).

Now suppose (ii.a.) holds. Label $A_k + A_{k+1} = [l, r]$ and let $p$ denote the cut at the $+$ sign in this expression, so that $[l, p] \cong A_k$ and $[p, r] \cong A_{k+1}$. Let $q$ be a cut in $A_k + A_{k+1}$ witnessing $A_k \leqslant_{fin} A_k + A_{k+1}$, i.e. such that $[q, r] \cong A_k$. Then one of the following holds:
\begin{itemize}
    \item[x.] $q \geq p$;
    \item[y.] $q < p$.
\end{itemize}

If (ii.a.x.) holds, then $A_k \leqslant_{fin} A_{k+1}$. Since $A_{k+1} \leqslant_{init} A_k$, this gives $A_k \cong A_{k+1}$ by \ref{CSBLO}. Since $\omega A_{k+1} \leqslant_{init} A_k$, we have the chain
\[
2A_{k+1} \leqslant_{init} \omega A_{k+1} \leqslant_{init} A_k \cong A_{k+1}
\]
which gives $2A_{k+1} \leqslant_{init} A_{k+1}$. Hence $A_{k+1}$ is splitting by \ref{2XconvXiff2XcongX}, and so $A_k$ is splitting as well. Now we have
\[
A + B \cong MA_k + A_{k+1} \cong MA_k + A_k \cong (M+1)A_k \cong A_k \cong MA_k \cong B + A, 
\]
and we are done in case (ii.a.x.). 

Suppose (ii.a.y.) holds, and fix an isomorphism $f: [q, r] \rightarrow [l, p]$. Since $q < p$, $f$ is an overlapping partial convex self-embedding, decreasing at the right endcut $r$ of its domain. The jumps in the orbital $O_f(r)$ are isomorphic to $[f(r), r] = [p, r] \cong A_{k+1}$. There are two further possibilities:
\begin{itemize}
    \item[0.] $O_f(r)$ is a finitary orbital;
    \item[1.] $O_f(r)$ is an $\omega^*$-orbital. 
\end{itemize}

If (ii.a.y.0.), from the discussion in Section \ref{subsect:orbsofends} (see equation \ref{finitaryleftorbitaldecomp}) we have that $O_f(r)$ decomposes $\textrm{ext}(f) = [l, r] \cong A_k + A_{k+1}$ as a sum $C + NA_{k+1}$ for some final segment $C$ of $A_{k+1}$. Viewing $C + NA_{k+1}$ as a final segment of $(N+1)A_{k+1}$, we have the chain
\[
\omega A_{k+1} \leqslant_{init} A_k \leqslant_{init} A_k + A_{k+1} \cong C + NA_{k+1} \leqslant_{fin} (N+1)A_{k+1}.
\]
It follows $\omega A_{k+1} \leqslant_{conv} (N+1)A_{k+1}$, and hence $\omega A_{k+1} \leqslant_{conv} A_{k+1}$ by \ref{directedrefinementforomegaA}. Thus $A_{k+1}$ is splitting. From this and the hypothesis $A_k \leqslant_{fin} A_k + A_{k+1}$, we get
\[
A_k \leqslant_{fin} A_k + A_{k+1} \leqslant_{fin} (N+1)A_{k+1} \cong A_{k+1}.
\]
Again, this yields $A_k \cong A_{k+1}$, which as in case (ii.a.x.) yields $A + B \cong B + A$, concluding (ii.a.y.0.). 

Finally, suppose (ii.a.y.1) holds. Then $O_f(r) \cong \omega^*A_{k+1}$. Let $b$ denote the cut at the left of $O_f(r)$. If $b \geq q$, then $[b, r]$ is final in $[q, r]$, which witnesses $\omega^* A_{k+1} \leqslant_{fin} A_k$. This gives $A_k + A_{k+1} \cong A_k$, and then again we have
\[
A + B \cong MA_k + A_{k+1} \cong MA_k \cong B + A.
\]
If instead $b < q$, then $[q, r]$ is final in $[f^n(r), r]$ for some $n$, witnessing $A_k \leqslant_{fin} nA_{k+1}$. As before, this implies $A_k \cong A_{k+1} \cong 2A_{k+1} \cong 2A_k$, which once again gives $A + B \cong B + A$, and we are done in case (ii.a.y.1.). This concludes case (ii.), and hence concludes the case when the algorithm terminates at a finite stage. 

Now suppose the algorithm is non-terminating. Then it yields a non-terminating left division system
\[
\mathcal{D} = \{A_k \cong N_k A_{k+1} + A_{k+2}: k \in \omega\}.
\]
This system is either splitting or non-splitting. Suppose first the system is splitting. Then by \ref{splittingdichthmfordivsystems} both $A_0$ and $A_1$ are splitting. 

We always have $A_1 \leqslant_{init} A_0$, and since $A_0 \leqslant_{init} (N_0 + 1) A_1 \cong A_1$ by \ref{splittinglemmadivissystems}, we have $A_0 \leqslant_{init} A_1$ as well. On the other hand, since $B + A \leqslant_{fin} A + B$, either $A \leqslant_{fin} B$ or $B \leqslant_{fin} A$. That is, either $A_0 \leqslant_{fin} A_1$ or $A_1 \leqslant_{fin} A_0$. In either case, we conclude from \ref{CSBLO} that $A_0 \cong A_1$. In particular $A_0 + A_1 \cong A_1 + A_0$, i.e $A + B \cong B + A$, and we are done in this case. 

Now suppose the system is non-splitting. From the discussion following Proposition \ref{AkplusAkplusone}, the isomorphisms in the system $\mathcal{D}$ yield the decompositions
\[
\begin{array}{rclcl}
A_0 + A_1 & \cong & N_0A_1 + N_1A_2 + \cdots + L & = & \sum_{i \in \omega} N_i A_{i+1} + L \\
A_1 + A_0 & \cong & N_0A_1 + N_1A_2 + \cdots + L' & = & \sum_{i \in \omega} N_i A_{i+1} + L'
\end{array}
\]
To show $A_0 + A_1 \cong A_1 + A_0$, it suffices to show $L \cong L'$. 

Label $A_0 + A_1 = [c, d] = I_0$, and fix a sequence of cuts 
\[
c = c_0 < c_1 < c_2 < \ldots < b \leq d
\]
witnessing the decomposition for $A_0 + A_1$, so that $c_n$ corresponds to the cut at the $+$ sign to the left of the term $N_n A_{n+1}$, and $b$ to the cut at the $+$ sign preceding $L$. Label $I_n = [c_n, d] \cong \sum_i N_{n+i} A_{n+i+1} + L$. 

It follows from the discussion after Proposition \ref{AkplusAkplusone} that for $n$ even we have $I_n \cong A_n + A_{n+1}$ and for $n$ odd we have $I_n \cong A_{n+1} + A_n$. Each difference $I_n \setminus I_{n+1} = [c_n, c_{n+1}]$ is isomorphic to $N_n A_{n+1}$. We also have $[b, d] = \bigcap_n I_n \cong L$. 

Similarly, label $A_1 + A_0 = [c', d'] = I_0'$ and fix a sequence of cuts
\[
c' = c_0' < c_1' < c_2' < \ldots < b' \leq d'
\]
witnessing the decomposition for $A_1 + A_0$. Label $I_n' = [c_n', d']$. Then for $n$ even we have $I_n' \cong A_{n+1} + A_n$ and for $n$ odd we have $I_n' \cong A_n + A_{n+1}$. Further, $[c_n', c_{n+1}'] \cong N_n A_{n+1}$ for all $n$, and $[b', d'] = \bigcap I_n' \cong L'$. 

Suppose for concreteness $A_0 = A$ and $A_1 = B$; the argument for the reverse case is symmetric. Fix an embedding $f: B + A \rightarrow A + B$ witnessing $B + A \leqslant_{fin} A + B$, i.e. a final embedding $f: A_1 + A_0 \rightarrow A_0 + A_1$. Then $f(d') = d$. 

We claim $f(b') = b$. If this holds, then $f[L'] = [f(b'), f(d')] = [b, d] = L$, witnessing $L' \cong L$, and we are done. 

So suppose the claim is false. Then either $f(b') < b$ or $f(b') > b$. If $f(b') < b$ then for all sufficiently large $k$ we have $f(b') < c_k$. Fix an even such $k$. We have $[c_k, c_{k+1}] \cong N_k A_{k+1}$ is a subinterval of $[f(b'), d] = f[L'] \cong L'$. It follows $A_{k+1} \leqslant_{conv} L'$. Since $2A_{k+3} \leqslant_{init} A_{k+1}$ by \ref{splittinglemmadivissystems}, it follows $2A_{k+3} \leqslant_{conv} L'$. 

Since $L'$ is final in $I_n'$ for all $n'$, in particular $L' \leqslant_{fin} I_{k+5}'$. Since $k$ is even, $I_{k+5}' \cong A_{k+5} + A_{k+6}$. Thus we have the chain
\[
2A_{k+3} \cong A_{k+3} + A_{k+3} \leqslant_{conv} L' \leqslant_{conv} A_{k+5} + A_{k+6}.
\]
From \ref{AplusBconvXplusY} it follows either $A_{k+3} \leqslant_{conv} A_{k+5}$ or $A_{k+3} \leqslant_{conv} A_{k+6}$. Since $2A_{k+5} \leqslant_{init} A_{k+3}$ and $2A_{k+6} \leqslant_{init} A_{k+4} \leqslant_{init} A_{k+3}$, we get either $2A_{k+5} \leqslant_{conv} A_{k+5}$ or $2A_{k+6} \leqslant_{init} A_{k+6}$. Hence either $A_{k+5}$ or $A_{k+6}$ is splitting, which implies that $\mathcal{D}$ is splitting by \ref{splittingdichthmfordivsystems}, a contradiction. 

The case when $f(b') > b$ is similar. Thus $f(b') = b$, as claimed. As noted, it follows $A_0 + A_1 \cong A_1 + A_0$, i.e. $A + B \cong B + A$. We are done. 
\end{proof}

One way we will apply Theorem \ref{BplusAinitandfinAplusB} is through the following corollary. 

\theoremstyle{definition}
\newtheorem{AplusBBplusAinitfinXandY}[lct]{Corollary}
\begin{AplusBBplusAinitfinXandY}\label{AplusBBplusAinitfinXandY}
Suppose $A + B \leqslant_{init} X$ and $B + A \leqslant_{init} X$, and $A + B \leqslant_{fin} Y$ and $B + A \leqslant_{fin} Y$. Then $A + B \cong B + A$. 
\end{AplusBBplusAinitfinXandY}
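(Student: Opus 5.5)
The plan is to reduce the corollary to Theorem \ref{BplusAinitandfinAplusB} by producing the two one-sided relations $B + A \leqslant_{init} A + B$ and $B + A \leqslant_{fin} A + B$, possibly after swapping the names of $A$ and $B$, which is harmless because the conclusion $A + B \cong B + A$ is symmetric in $A$ and $B$.

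First, since $A + B$ and $B + A$ are both initial in $X$, Proposition \ref{bothinitbothfin}.(i.) gives that one of $A + B \leqslant_{init} B + A$ or $B + A \leqslant_{init} A + B$ holds. Likewise, since both are final in $Y$, part (ii.) of the same proposition gives that one of $A + B \leqslant_{fin} B + A$ or $B + A \leqslant_{fin} A + B$ holds. This leaves four combinations to check.

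The two ``matched'' combinations go directly to the theorem. If $B + A \leqslant_{init} A + B$ and $B + A \leqslant_{fin} A + B$, then Theorem \ref{BplusAinitandfinAplusB} applies as stated. If $A + B \leqslant_{init} B + A$ and $A + B \leqslant_{fin} B + A$, we apply the theorem with the roles of $A$ and $B$ interchanged.

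The two ``mixed'' combinations are even easier and do not need the theorem. Suppose $B + A \leqslant_{init} A + B$ and $A + B \leqslant_{fin} B + A$. Taking $X' = B + A$ and $Y' = A + B$, we have $X' \leqslant_{init} Y'$ and $Y' \leqslant_{fin} X'$, so Lindenbaum's Corollary \ref{CSBLO} gives $B + A \cong A + B$. The other mixed case, $A + B \leqslant_{init} B + A$ together with $B + A \leqslant_{fin} A + B$, is symmetric and is handled the same way by Corollary \ref{CSBLO}.

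There is no substantial obstacle here. All of the real content sits in Theorem \ref{BplusAinitandfinAplusB}, and the only step needing care is checking that in the mixed cases the directions of the initial and final embeddings line up correctly for Corollary \ref{CSBLO}.
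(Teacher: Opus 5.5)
Your proposal is correct and follows the paper's own proof: use Proposition \ref{bothinitbothfin} to compare $A+B$ and $B+A$ on both sides, then settle the matched cases with Theorem \ref{BplusAinitandfinAplusB} (swapping $A$ and $B$ if needed) and the mixed cases with Corollary \ref{CSBLO}. The paper merely shortens the four-way case split by taking $A + B \leqslant_{init} B + A$ without loss of generality at the start.
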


\begin{proof}
Since $A + B \leqslant_{init} X$ and $B + A \leqslant_{init} X$, either $A + B \leqslant_{init} B + A$ or $B + A \leqslant_{init} A + B$. Suppose without loss of generality that $A + B \leqslant_{init} B + A$. 

Symmetrically, $A + B \leqslant_{fin} Y$ and $B + A \leqslant_{fin} Y$ implies $A + B \leqslant_{fin} B + A$ or $B + A \leqslant_{fin} A + B$. In the former case, by Theorem \ref{BplusAinitandfinAplusB} we have $A + B \cong B + A$. In the latter case, we get $A + B \cong B + A$ by \ref{CSBLO}.
\end{proof}

\theoremstyle{definition}
\newtheorem{omegaABsumdefn}[lct]{Definition}
\begin{omegaABsumdefn}\label{omegaABsumdefn}
\phantom{.}
\begin{itemize}
\item[i.] An \textit{$\omega$-$AB$-sum} is an $\omega$-sum
\[
C_0 + C_1 + \cdots = \sum_{i \in \omega} C_i
\]
such that for all $i \in \omega$, $C_i = A$ or $C_i = B$.
\item[ii.] An \textit{$\omega^*$-$AB$-sum} is an $\omega^*$-sum
\[
\cdots + C_1 + C_0 = \sum_{i \in \omega^*} C_i 
\]
such that for all $i \in \omega^*$, $C_i = A$ or $C_i = B$.
\end{itemize}
\end{omegaABsumdefn}

Two $\omega$-$AB$-sums $\sum_{i \in \omega} C_i$ and $\sum_{i \in \omega} D_i$ are \textit{offset} if one of $C_0$ and $D_0$ is equal to $A$ and the other is equal to $B$. Offset $\omega^*$-$AB$-sums are defined symmetrically.

We will use the following theorem to prove that the isomorphisms between sums appearing in the Generalized Sum Problem imply $A + B \cong B + A$. 

\theoremstyle{definition}
\newtheorem{offsetomegaABsumthm}[lct]{Theorem}
\begin{offsetomegaABsumthm}\label{offsetomegaABsumthm}
\phantom{.}
\begin{itemize}
\item[i.] Suppose $\sum_{i \in \omega} C_i$ and $\sum_{i \in \omega} D_i$ are offset $\omega$-$AB$-sums, and $\sum_i C_i \leqslant_{init} \sum_i D_i$. Then $A + B \leqslant_{init} \sum_i D_i$ and $B + A \leqslant_{init} \sum_i D_i$.
\item[ii.] Suppose $\sum_{i \in \omega^*} C_i$ and $\sum_{i \in \omega^*} D_i$ are offset $\omega^*$-$AB$-sums, and $\sum_i C_i \leqslant_{fin} \sum_i D_i$. Then $A + B \leqslant_{fin} \sum_i D_i$ and $B + A \leqslant_{fin} \sum_i D_i$.
\end{itemize}
\end{offsetomegaABsumthm}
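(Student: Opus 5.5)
We would prove (i.) directly; (ii.) is the mirror image obtained by reversing all orders. Write $S = \sum_i C_i$ and $T = \sum_i D_i$, fix an initial embedding $f: S \rightarrow T$, and suppose without loss of generality that $D_0 = A$ and $C_0 = B$. The case $D_0 = B$, $C_0 = A$ should be handled by the same argument with the letters swapped, since the conclusion is symmetric in $A$ and $B$. Label cut sequences $d_0 < d_1 < \cdots$ in $T$ at the $+$ signs of $\sum_i D_i$, and $c_0 < c_1 < \cdots$ at the images under $f$ of the $+$ signs of $\sum_i C_i$, with $c_0 = d_0$ the left endcut of $T$. Then $[d_0,d_1] \cong A$ and $[c_0,c_1] \cong B$ are both initial in $T$. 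By Proposition \ref{bothinitbothfin}, either $B \leqslant_{init} A$ or $A \leqslant_{init} B$.

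\textbf{The easy half.} Suppose $B \leqslant_{init} A$. Then $B \leqslant_{init} D_1$ whichever letter $D_1$ is. Since $D_1 \leqslant_{init} \sum_{i \geq 1} D_i$, we get $B \leqslant_{init} \sum_{i\geq 1} D_i$. Proposition \ref{BinitCimpliesABinitAC}.(i.) then gives $A + B \leqslant_{init} A + \sum_{i\geq1} D_i = T$. Symmetrically, if $A \leqslant_{init} B$, then $A \leqslant_{init} C_1$ and hence $A \leqslant_{init} \sum_{i \geq 1} C_i$. This gives $B + A \leqslant_{init} S \leqslant_{init} T$. So in each case one of the two required embeddings comes for free.

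\textbf{The hard half.} This is the main obstacle: obtaining the opposite ordering. We describe the case $B \leqslant_{init} A$, where we need $B + A \leqslant_{init} T$; the other case is analogous with the roles of $S$ and $T$ exchanged. Let $m \geq 1$ be the number of leading $B$'s in $S$ (possibly $m = \omega$), and compare the cuts $c_1 < c_2 < \cdots$ with $d_1$.
\begin{itemize}
\item[1.] If $c_k < d_1$ for all $k$, then $S$ begins with an infinite run of $B$'s lying inside $[d_0,d_1]$. This witnesses $\omega B \leqslant_{init} A$, so $B + A \cong A \leqslant_{init} T$ by Corollary \ref{absorbsleftrightcoro}.
\item[2.] Otherwise the first $A$-block of $S$, or some $B$-block, straddles or passes $d_1$. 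We then perform a Euclidean step. Write $A \cong NB + A'$, where $N$ is the number of complete $C$-blocks inside $[d_0,d_1]$ and $A'$ is the leftover initial segment of the next $C$-block. Translating by the left shift that removes these $N$ blocks from both sums yields a new pair of offset-type sums over the letters $B, A'$. These are arranged as in the left division setup of Section \ref{section:euclideanalgos}.
\end{itemize}

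We would then run the left Euclidean algorithm. This produces a division system whose terms are built from $A$ and $B$, together with initial embeddings of each stage into a tail of $T$. If the run terminates in a divisor or an absorbed factor, back substitution as in Section \ref{subsect:leftalgorun} expresses $B + A$ as an initial segment of $T$. For an absorbed factor we would use Lemma \ref{AplusomegaA} and Corollary \ref{absorbsleftrightcoro}, as in case 1. If the run does not terminate, the two cut sequences interleave forever. Both $B + A$ and $A + B$ are then initial in $\sum_k N_k A_{k+1}$, which is initial in $T$. The delicate bookkeeping is in tracking, at each stage, which letter sits at the front of each of the two sums, and in checking that the offset property is preserved by the shift. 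That is where we expect most of the work to lie.
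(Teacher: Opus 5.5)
Your easy half is correct and is the same as the paper's. The gap is the hard half: with your labelling $C_0 = B$, $D_0 = A$, $B \leqslant_{init} A$, you need $B + A \leqslant_{init} T$. What you give there is a plan rather than an argument, and the plan does not work as described.

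\textbf{The step is not a Euclidean step.} Your $N$ counts the complete $C$-blocks inside $[d_0,d_1]$; it is not a maximal quotient. So when the straddling block $C_N$ is an $A$, the leftover $A'$ can still have $B$ as an initial segment, and the same division by $B$repeats. Concretely:
\begin{itemize}
\item[--] Take $B = 1$, $A = \omega + \mathbb{Z}$, $T = \omega A$ and $S = B + \omega A$.
\item[--] Send $C_1$ onto the points $1, 2, \ldots$ of $D_0$, the $\mathbb{Z}$-part of $D_0$, and the first $n \geq 1$ points of $D_1$. This is possible since $\omega + \mathbb{Z} + n \cong \omega + \mathbb{Z}$. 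Place the remaining $C_i$ accordingly.
\item[--] Your cuts then march through the $\omega$-part of $D_0$ forever, and no left division system ever arises.
\end{itemize}

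\textbf{The non-terminating case is backwards.} By the discussion after Proposition \ref{AkplusAkplusone}, $\sum_k N_kA_{k+1}$ is an initial segment of $A_0 + A_1$ and of $A_1 + A_0$, not the other way round. The final remainders $L, L'$ need not vanish, and nothing you say compares $L'$ with the part of $T$ beyond the limit cut. In the example above, the removed blocks sum to $\omega B = \omega$, and $B + A \cong \omega + \mathbb{Z}$ is not initial in it.

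\textbf{A slip in case 1.} The hypothesis that $c_k < d_1$ for all $k$ only gives $S \leqslant_{init} A$, not that every $C_i = B$. If some $C_j = A$, you need Theorem \ref{XcongAXBiffABladder} applied to $jB + A \leqslant_{init} A$ to get $\omega B \leqslant_{init} A$.

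\textbf{The missing idea.} No iteration is needed. The orbital dichotomy of Section \ref{subsect:orbsofends}, applied to a single overlap map, already packages the whole recursion. Let $N$ be least with $C_N = A$.
\begin{itemize}
\item[--] If there is no such $N$, either $\omega B \leqslant_{init} A$, so $B + A \cong A$; or $A \leqslant_{init} MB$ for some $M$, whence $B + A \leqslant_{init} (M+1)B \leqslant_{init} S$.
\item[--] If $N = 1$, then $B + A = C_0 + C_1 \leqslant_{init} S$.
\item[--] If $N \geq 2$ and $c_N \geq d_1$, let $M \leq N$ be least with $d_1 \leq c_M$. Then $A \leqslant_{init} MB$ and $(M+1)B \leqslant_{init} S$, using $B \leqslant_{init} A$ when $M = N$.
\item[--] If $c_N < d_1$, the isomorphism $f\colon [d_0,d_1] \to [c_N,c_{N+1}]$ is overlapping with jump $[d_0,c_N] \cong NB$.
\begin{itemize}
\item[--] If $O_f(d_0)$ is an $\omega$-orbital, then $\omega B \leqslant_{init} A$.
\item[--] If it is finitary, then $A \cong K(NB) + C$ with $C \leqslant_{init} NB$. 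Hence $B + A \cong B + KNB + C \leqslant_{init} B + KNB + (N-1)B + C \cong NB + A \leqslant_{init} S$. Here $C \leqslant_{init} (N-1)B + C$ follows by splitting on $C \leqslant_{init} B$ versus $B \leqslant_{init} C$ (Proposition \ref{bothinitbothfin}).
\end{itemize}
\end{itemize}

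The other case, $A \leqslant_{init} B$ with target $A + B \leqslant_{init} T$, goes the same way. There $C_0$ lies inside a finite sum $D_0 + \cdots + D_{M-1}$. The overlap map sends $C_0$ onto the first $B$-block $D_N$ of $T$, with jump $[d_0,d_N] \cong NA$.
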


\begin{proof}
For (i.): By symmetry, we may assume $C_0 = A$ and $D_0 = B$. 

Let $X = \sum_i D_i$ and label $X = [y, z]$ by its endcuts. Our goal is to show $A + B$ and $B + A$ both embed initially in $X$. 

Fix a cut sequence
\[
y = d_0 < d_1 < d_2 < \ldots 
\]
witnessing $X = \sum_i D_i$, i.e. such that $[d_i, d_{i+1}] \cong D_i$ for all $i \in \omega$ and such that the cut at the right of this sequence is the right endcut $z$.

Also fix a cut sequence
\[
y = c_0 < c_1 < c_2 < \ldots 
\]
witnessing $\sum_i C_i \leqslant_{init} X$. One of the following holds:
\begin{itemize}
    \item[I.] $c_1 \leq d_1$;
    \item[II.] $c_1 > d_1$. 
\end{itemize}

Suppose (I.). Then $A \cong [c_0, c_1] \leqslant_{init} [d_0, d_1] \cong B$, which gives $A \leqslant_{init} B$.

We claim $B + A \leqslant_{init} X$. Indeed, if $D_1 = A$, then we have 
\[
B + A \cong D_0 + D_1 \leqslant_{init} \sum_{i \in \omega} D_i = X.
\]
If instead $D_1 = B$, then $B + B \cong D_0 + D_1 \leqslant_{init}X$. Since $A \leqslant_{init} B$ we have 
\[
B + A \leqslant_{init} B + B \leqslant_{init} X.
\]
Thus in either case $B + A \leqslant_{init} X$, as claimed. 

It remains to show $A + B \leqslant_{init} X$ in case (I.). One of the following conditions holds:
\begin{itemize}
    \item[a.] $C_i = A$ for all $i \in \omega$;
    \item[b.] There is a least $N \geq 1$ such that $C_N = B$. 
\end{itemize}

Suppose (I.a.). Then $\sum_i C_i \cong \omega A$. Let $c$ denote the cut at the right of the sequence $c_i$ in $X$. If $c \leq d_1$, then this witnesses $\omega A \leqslant_{init} [d_0, d_1] \cong B$. In this case, $A + B \cong B$. Hence in particular $A + B \leqslant_{init} B$. Since $B$ is initial in $X$, we have $A + B \leqslant_{init} X$. 

Otherwise let $M$ be least such that $d_1 \leq c_M$. Then $B \cong [d_0, d_1] \leqslant_{init} [c_0, c_M] \cong MA$. Hence $A + B \leqslant_{init} A + MA \cong (M+1)A$. Then since 
\[
(M+1)A \leqslant_{init} \omega A \cong \sum_{i\in \omega} C_i \leqslant_{init} X
\]
we have $A + B \leqslant_{init} X$, which finishes the argument for (I.a.) 

Suppose (I.b.). If $N = 1$, then $A + B = C_0 + C_1 \leqslant_{init} \sum_i C_i \leqslant_{init} X$, and we are done immediately. So suppose $N \geq 2$. One of the following holds:
\begin{itemize}
    \item[x.] $c_N \geq d_1$;
    \item[y.] $c_N < d_1$. 
\end{itemize}

Suppose (I.b.x.). Let $M \leq N$ be least such that $d_1 \leq c_M$. Then, as above, we have $B \cong [d_0, d_1]$ initial in $MA \cong [c_0, c_M]$, and hence $A + B \leqslant_{init} (M+1)A$. 

We claim that we again $(M+1)A \leqslant_{init} \sum_i C_i$. This is immediate if $M < N$, as then $C_0 + \cdots + C_M \cong (M+1)A$. Otherwise $M = N$, and so $C_0 + \cdots + C_{M-1} + C_M \cong MA + B$. Then since $A \leqslant_{init} B$, we have $MA + A$ initial in $MA + B$. So again we have $(M+1)A \leqslant_{init} \sum_i C_i$, which gives $A + B \leqslant_{init} X$, concluding (I.b.x.). 

Suppose (I.b.y.). We have $[d_0, d_1] = D_0 \cong B \cong C_N = [c_N, c_{N+1}]$. Fix an isomorphism $f: [d_0, d_1] \rightarrow [c_N, c_{N+1}]$. Then $f$ is a partial convex self-embedding of $X$, which is overlapping since $d_0 < c_N < d_1$. 

Observe that the jump $A_{d_0, f}$ equals $[d_0, c_N] = [c_0, c_N] \cong NA$. There are two final cases to consider:
\begin{itemize}
    \item[1.] $O_f(d_0)$ is an $\omega$-orbital;
    \item[2.] $O_f(d_0)$ is finitary. 
\end{itemize}

If (I.b.x.1.), then $\omega NA \cong \omega A$ is initial in $\textrm{dom}(f) = [d_0, d_1] \cong B$. Hence $A + B \cong B \leqslant_{init} X$, and we are done as above. 

Finally, suppose (I.b.x.2.). Then for some $K \geq 1$ (as $f$ is overlapping) and some order $C$ initial in $NA$, we have that $\textrm{dom}(f) = [d_0, d_1]$ is isomorphic to $K(NA) + C$ (see equation \ref{finitaryleftorbitaldecompdomain}), and hence $B \cong K(NA) + C$. 

Now, we know $NA + B \cong C_0 + \cdots + C_{N-1} + C_N \leqslant_{init} \sum_i C_i$, which gives $NA + KNA + C \leqslant_{init} \sum_i C_i$. We may rewrite this as 
\begin{equation}\label{marlboro}
A + KNA + (N-1)A + C \leqslant_{init} \sum_i C_i.
\end{equation}
We claim $C \leqslant_{init} (N-1)A + C$; if this holds, then combining with \ref{marlboro} we get the chain
\begin{equation}\label{lasso}
A + B \cong A + KNA + C \leqslant_{init} A + KNA + (N-1)A + C \leqslant_{init} \sum_i C_i \leqslant_{init} X,
\end{equation}
as desired. So we prove the claim. We know $C \leqslant_{init} NA$, i.e. 
\begin{equation}\label{cowboy}
C \leqslant_{init} (N-1)A + A.
\end{equation}
Since also $A \leqslant_{init} NA$, it follows $C \leqslant_{init} A$ or $A \leqslant_{init} C$. If $C \leqslant_{init} A$, then since $N \geq 2$ we immediately have $C \leqslant_{init} (N-1)A + C$. If $A \leqslant_{init} C$, we have $(N-1)A + A \leqslant_{init} (N-1)A + C$, which from \ref{cowboy} also gives $C \leqslant_{init} (N-1)A + C$, as claimed. 

Thus the chain \ref{lasso} holds, and we have $A + B \leqslant_{init} X$, concluding the proof for (I.b.x.2.), and hence the proof for (I.).

Now suppose (II.). The proof is similar to (I.), but with slightly less casework. We summarize it and leave the details. 

Since now $B \leqslant_{init} A$, we have easily $A + B \leqslant_{init} C_0 + C_1 \leqslant_{init} \sum_i C_i \leqslant_{init} \sum_i D_i$. So we must check $B + A \leqslant_{init} \sum_i D_i$. 

Since $\sum_i C_i$ is initial in $\sum_i D_i$, the first term $C_0 \cong A$ is initial in some finite sum $D_0 + \cdots + D_{N-1}$. If all of these terms are $B$, then $A$ is initial in $NB$. Then it is not hard to see $(N+1)B$ is initial in $\sum_i D_i$, and hence $B + A$ is as well. 

If instead the least copy of $A$ is $D_N$ and this term overlaps the initial term $C_0 \cong A$, we fix an isomorphism $f$ from $C_0$ to $D_N$. Then $f$ is an overlapping partial convex embedding of $X$, with initial jump some finite multiple of $B$. If its left orbital is an $\omega$-orbital, then it is isomorphic to $\omega B$ and we have $B + A \cong A$, which yields $B + A \leqslant_{init} \sum_i D_i$. If its left orbital is finitary, then an analogous argument to the finitary case for (I.) again shows $B + A \leqslant_{init} \sum_i D_i$. 

The proof for (ii.) is symmetric. 
\end{proof}

We may also consider finite $AB$-sums. 

\theoremstyle{definition}
\newtheorem{ABsumdefn}[lct]{Definition}
\begin{ABsumdefn}\label{ABsumdefn}
An \textit{$AB$-sum} is an $n$-sum for some $n \in \omega$,
\[
C_0 + C_1 + \cdots + C_{n-1} = \sum_{i < n} C_i,
\]
such that for all $i < n$, $C_i = A$ or $C_i = B$.
\medskip

Two $AB$-sums $\sum_{i < n} C_i$ and $\sum_{j < m} D_j$ are \textit{offset left} if one of $C_0$ and $D_0$ is equal to $A$ and the other to $B$, and \textit{offset right} if one of $C_{n-1}$ and $D_{m-1}$ is equal to $A$ and the other to $B$. They are \textit{offset} if they are offset left and right.
\end{ABsumdefn}

The following theorem solves the Generalized Sum Problem affirmatively. 

\theoremstyle{definition}
\newtheorem{generalizedsumproblemsoln}[lct]{Theorem}
\begin{generalizedsumproblemsoln}\label{generalizedsumproblemsoln}
Suppose that $\sum_{i < n} C_i \cong \sum_{j < m} D_j$ are isomorphic $AB$-sums.
\begin{itemize}
    \item[i.] If the sums are offset left, then either $A + B \leqslant_{init} B + A$ or $B + A \leqslant_{init} A + B$;
    \item[ii.] If the sums are offset right, then either $A + B \leqslant_{fin} B + A$ or $B + A \leqslant_{fin} A + B$;
    \item[iii.] If the sums are offset, then $A + B \cong B + A$. 
\end{itemize}
\end{generalizedsumproblemsoln}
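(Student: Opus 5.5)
The plan is to reduce the finite statement to the infinite one, Theorem \ref{offsetomegaABsumthm}, by passing to $\omega$-sums of the isomorphic order, and then to finish (iii.) with Corollary \ref{AplusBBplusAinitfinXandY}. Write $S = \sum_{i<n} C_i \cong \sum_{j<m} D_j = T$. Form the $\omega$-sum $\omega S \cong \omega T$. Concatenating the $C_i$'s repeatedly exhibits $\omega S$ as an $\omega$-$AB$-sum $\sum_{i\in\omega} C'_i$ with $C'_i = C_{i \bmod n}$. Likewise $\omega T$ is an $\omega$-$AB$-sum $\sum_{i\in\omega} D'_i$ with $D'_i = D_{i \bmod m}$. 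If the original sums are offset left, then $C'_0 = C_0$ and $D'_0 = D_0$ are one $A$ and one $B$, so these two $\omega$-$AB$-sums are offset. Since $\omega S \cong \omega T$, we trivially have $\sum_i C'_i \leqslant_{init} \sum_i D'_i$.

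For (i.), Theorem \ref{offsetomegaABsumthm}(i.) then gives $A+B \leqslant_{init} \omega T$ and $B+A \leqslant_{init} \omega T$. By Proposition \ref{bothinitbothfin}(i.), one of $A+B \leqslant_{init} B+A$ or $B+A \leqslant_{init} A+B$ holds, which is (i.).

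Statement (ii.) is the mirror argument. Use $\omega^* S \cong \omega^* T$, written as $\omega^*$-$AB$-sums indexed so that the term adjacent to the right end is $C_{n-1}$, respectively $D_{m-1}$. Offset right means exactly that these $\omega^*$-sums are offset. Theorem \ref{offsetomegaABsumthm}(ii.) gives that $A+B$ and $B+A$ both embed finally in $\omega^* T$, and Proposition \ref{bothinitbothfin}(ii.) yields (ii.).

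For (iii.), we have simultaneously $A+B, B+A \leqslant_{init} X := \omega T$ and $A+B, B+A \leqslant_{fin} Y := \omega^* T$. Corollary \ref{AplusBBplusAinitfinXandY} then gives $A+B \cong B+A$ directly.

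The only delicate step is bookkeeping the $\omega^*$ indexing so that ``offset right'' of the finite sums translates into ``offset'' for the $\omega^*$-$AB$-sums in the sense of Definition \ref{omegaABsumdefn}. One also has to check the degenerate case where some $n$ or $m$ is $0$: then the sums cannot be offset, so that case is vacuous. All the real work is already in Theorems \ref{offsetomegaABsumthm} and \ref{BplusAinitandfinAplusB}, so I expect no genuine obstacle beyond verifying that an isomorphism counts as an initial (resp. final) embedding, which is immediate.
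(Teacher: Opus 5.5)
Your proof is correct and follows the paper's route: extend to offset $\omega$-$AB$- and $\omega^*$-$AB$-sums, apply Theorem \ref{offsetomegaABsumthm} together with Proposition \ref{bothinitbothfin}, and then combine (i.) and (ii.) for (iii.). The differences are cosmetic. You extend periodically via $\omega S \cong \omega T$ and $\omega^* S \cong \omega^* T$, whereas the paper pads both sums with copies of $A$. You also cite Corollary \ref{AplusBBplusAinitfinXandY} directly, whereas the paper re-runs its case split between Corollary \ref{CSBLO} and Theorem \ref{BplusAinitandfinAplusB}.
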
 

\begin{proof}
For (i.): Extend $\sum_{i < n} C_i$ and $\sum_{j < m} D_j$ to $\omega$-$AB$-sums by defining $C_i = D_j = A$ for all $i \geq n$ and $j \geq m$. Then the resulting sums $\sum_{i \in \omega} C_i$ and $\sum_{j \in \omega} D_j$ are (left) offset and remain isomorphic. In particular, $\sum_{i \in \omega} C_i \leqslant_{init} \sum_{j \in \omega} D_j$. By Theorem \ref{offsetomegaABsumthm}, $A + B$ and $B + A$ are both initial in $\sum_{j \in \omega} D_j$. It follows that either $A + B \leqslant_{init} B + A$ or $B + A \leqslant_{init} A + B$.

For (ii.): The argument is symmetric.

For (iii.): Since the sums are offset left, we have from (i.) that either $A + B \leqslant_{init} B + A$ or $B + A \leqslant_{init} A + B$. By symmetry, we may assume $B + A \leqslant_{init} A + B$. 

Since the sums are also offset right, we get from (ii.) that either $A + B \leqslant_{fin} B + A$ or $B + A \leqslant_{fin} A + B$. In the former case, Theorem \ref{CSBLO}, and in the latter, Theorem \ref{BplusAinitandfinAplusB}, now give $A + B \cong B + A$. 
\end{proof}

Since the sums in Tarski's Generalized Sum Problem are offset, Theorem \ref{generalizedsumproblemsoln}.(iii.) solves the problem in the affirmative. Theorem \ref{generalizedsumproblemsoln} is more general, since it applies to $AB$-sums $\sum_{i < n} C_i \cong \sum_{j < m} D_j$ in which $C_0 = C_{n-1}$ and $D_0 = D_{m-1}$, one of these pairs equaling $A$ and the other $B$. In particular, we have the following corollary, which was proved by Tarski in \cite{Tarski} by a different route. 

\theoremstyle{definition}
\newtheorem{nAcongmBimpliesABcommute}[lct]{Proposition}
\begin{nAcongmBimpliesABcommute}\label{nAcongmBimpliesABcommute} (Tarski; \cite[1.49]{Tarski})
If $nA \cong mB$ for some $n, m \in \omega$, then $A + B \cong B + A$.
\end{nAcongmBimpliesABcommute}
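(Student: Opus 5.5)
The plan is to reduce to Theorem \ref{generalizedsumproblemsoln}.(iii.) by writing $nA \cong mB$ as an isomorphism of offset $AB$-sums. Two trivial cases come first. If $n = 0$ or $m = 0$, then (with the convention that $0X = \emptyset$) one side is empty. If $n=0$ and $m \geq 1$, then $mB \cong \emptyset$ forces $B = \emptyset$, and $A + B \cong B + A$ holds trivially; the case $m = 0$ is symmetric. Similarly, if either $A$ or $B$ is empty, the conclusion is immediate. So assume $n, m \geq 1$ and $A, B \neq \emptyset$.

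Now consider the $AB$-sums $\sum_{i<n} C_i$ with every $C_i = A$, and $\sum_{j<m} D_j$ with every $D_j = B$. By hypothesis they are isomorphic. Their first terms are $C_0 = A$ and $D_0 = B$, so they are offset left. Their last terms are $C_{n-1} = A$ and $D_{m-1} = B$, so they are offset right. Hence the sums are offset in the sense of Definition \ref{ABsumdefn}.

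Theorem \ref{generalizedsumproblemsoln}.(iii.) then applies directly and yields $A + B \cong B + A$. The paper's remark preceding the statement already observes that the theorem covers exactly this situation, where $C_0 = C_{n-1}$ and $D_0 = D_{m-1}$.

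I do not expect any real obstacle. The only point needing care is the degenerate-index bookkeeping: the definition of an $AB$-sum allows $n \in \omega$, so $n = 0$ must be handled separately as above, since "offset" refers to $C_0$ and $C_{n-1}$, which do not exist when $n = 0$. Beyond that, the argument is a one-line appeal to the earlier theorem.
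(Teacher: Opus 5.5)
Your argument is the paper's: $nA$ and $mB$ are offset $AB$-sums, so Theorem \ref{generalizedsumproblemsoln}.(iii.) applies directly. One small correction to your degenerate-case bookkeeping: your case split skips $n = m = 0$. There the hypothesis holds vacuously but the conclusion fails (e.g.\ $A = 1$, $B = \omega$, since $1 + \omega \cong \omega \not\cong \omega + 1$), so the statement must be read with $n, m \geq 1$, as the paper implicitly does.
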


\begin{proof}
Since $nA$ and $mB$ are offset $AB$-sums, the proposition follows immediately from Theorem \ref{generalizedsumproblemsoln}. 
\end{proof}

If $nA \cong mB$, then $\omega A \cong \omega B$ since
\[
\omega A \cong \omega (nA) \cong \omega (mB) \cong \omega B.
\]
And symmetrically, we have $\omega^*A \cong \omega^*B$. Thus the following corollary, which is an immediate consequence of Theorem \ref{offsetomegaABsumthm}, may be viewed as a generalization of Proposition \ref{nAcongmBimpliesABcommute}. 

\theoremstyle{definition}
\newtheorem{omegaAomegastarAinitfinomegaBomegastarB}[lct]{Corollary}
\begin{omegaAomegastarAinitfinomegaBomegastarB}\label{omegaAomegastarAinitfinomegaBomegastarB}
\phantom{.}
\begin{itemize}
    \item[i.] If $\omega A \leqslant_{init} \omega B$, then $A + B \leqslant_{init} \omega B$ and $B + A \leqslant_{init} \omega B$. 
    \item[ii.] If $\omega^* A \leqslant_{fin} \omega^*B$, then $A + B \leqslant_{fin} \omega^*B$ and $B + A \leqslant_{fin} \omega^*B$.
    \item[iii.] If $\omega A \leqslant_{init} \omega B$ and $\omega^* A \leqslant_{fin} \omega^*B$, then $A + B \cong B + A$. 
\end{itemize}
\end{omegaAomegastarAinitfinomegaBomegastarB}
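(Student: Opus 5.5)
The plan is to derive all three parts directly from Theorem \ref{offsetomegaABsumthm} by viewing $\omega A$ and $\omega B$ as offset $\omega$-$AB$-sums, and then to combine the one-sided conclusions via Corollary \ref{AplusBBplusAinitfinXandY}.

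For (i.), we take $C_i = A$ for all $i \in \omega$ and $D_i = B$ for all $i \in \omega$. Then $\sum_i C_i = \omega A$ and $\sum_i D_i = \omega B$ are $\omega$-$AB$-sums. They are offset, since $C_0 = A$ and $D_0 = B$. The hypothesis $\omega A \leqslant_{init} \omega B$ is exactly $\sum_i C_i \leqslant_{init} \sum_i D_i$, so Theorem \ref{offsetomegaABsumthm}.(i.) gives $A + B \leqslant_{init} \omega B$ and $B + A \leqslant_{init} \omega B$.

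Part (ii.) is the mirror image. We use the constant $\omega^*$-$AB$-sums $\omega^*A$ and $\omega^*B$, which are offset, and apply Theorem \ref{offsetomegaABsumthm}.(ii.) to get $A + B \leqslant_{fin} \omega^*B$ and $B + A \leqslant_{fin} \omega^*B$.

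For (iii.), we combine (i.) and (ii.). Both $A + B$ and $B + A$ embed initially in $X = \omega B$, and both embed finally in $Y = \omega^* B$. Corollary \ref{AplusBBplusAinitfinXandY} then yields $A + B \cong B + A$.

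There is no real obstacle here. The only step to check is that the constant sums count as offset $\omega$-$AB$-sums in the sense of Definition \ref{omegaABsumdefn}, and they clearly do. All the substantive work is already contained in Theorem \ref{offsetomegaABsumthm} and in Theorem \ref{BplusAinitandfinAplusB}, which underlies Corollary \ref{AplusBBplusAinitfinXandY}.
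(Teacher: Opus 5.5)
Your proof is correct and matches the paper's argument: (i.) and (ii.) come from Theorem \ref{offsetomegaABsumthm} applied to the constant, offset $\omega$- and $\omega^*$-$AB$-sums, and (iii.) follows from Corollary \ref{AplusBBplusAinitfinXandY} with $X = \omega B$ and $Y = \omega^* B$.
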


\begin{proof}
Since $\omega A$ and $\omega B$ are offset $\omega$-$AB$-sums, (i.) is immediate from Theorem \ref{offsetomegaABsumthm}; (ii.) is symmetric; (iii.) now follows from Corollary \ref{AplusBBplusAinitfinXandY}, taking $X = \omega B$ and $Y = \omega^*B$. 
\end{proof}

The symmetrized converse to Corollary \ref{omegaAomegastarAinitfinomegaBomegastarB}.(iii) is also true. 

\theoremstyle{definition}
\newtheorem{ABcommuteiffomegaAomegastarAinitfinomegaBomegastarBorvv}[lct]{Theorem}
\begin{ABcommuteiffomegaAomegastarAinitfinomegaBomegastarBorvv}\label{ABcommuteiffomegaAomegastarAinitfinomegaBomegastarBorvv}
$A + B \cong B + A$ if and only if one of the following conditions holds:
\begin{itemize}
    \item[i.] $\omega A \leqslant_{init} \omega B$ and $\omega^*A \leqslant_{fin} \omega^*B$;
    \item[ii.] $\omega B \leqslant_{init} \omega A$ and $\omega^* B \leqslant_{fin} \omega^* A$.
\end{itemize}
\end{ABcommuteiffomegaAomegastarAinitfinomegaBomegastarBorvv}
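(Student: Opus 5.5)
The backward direction is already in hand. If (i.) holds, then $A + B \cong B + A$ by Corollary \ref{omegaAomegastarAinitfinomegaBomegastarB}.(iii.). If (ii.) holds, the same corollary applies with the roles of $A$ and $B$ exchanged. So the work is in the forward direction. Assume $A + B \cong B + A$, fix a symmetric setup $(I; \rho, \tau)$ witnessing it, and run the left Euclidean algorithm. By Section \ref{subsect:runofsymmetricalgo} this produces a symmetric division system $\{A_k; N_k; M\}$ with $\{A_0, A_1\} = \{A, B\}$, in which all terms pairwise commute. I would split into cases according to how the run ends.

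\emph{Terminating runs.} If the run terminates in a divisor at stage $k$, then both $A_0$ and $A_1$ are finite multiples of $A_{k+1}$. Hence $\omega A \cong \omega B$ and $\omega^* A \cong \omega^* B$, and both (i.) and (ii.) hold. A termination in an absorbed factor at a stage $k > 0$ reduces to the divisor case, as observed in Section \ref{subsect:runofsymmetricalgo}. Termination in an absorbed factor at stage $0$ gives $A_1 + A_0 \cong A_0 + A_1 \cong A_0$. By Corollary \ref{absorbsleftrightcoro}, $\omega A_1 \leqslant_{init} A_0 \leqslant_{init} \omega A_0$ and $\omega^* A_1 \leqslant_{fin} A_0 \leqslant_{fin} \omega^* A_0$. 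This is (i.) or (ii.), depending on which of $A, B$ is $A_1$.

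\emph{Non-terminating runs.} If the system is splitting, Proposition \ref{isosoftermsinsplittingsystems}.(iii.) gives $A_0 \cong A_1$, and we are done. The remaining case is a non-terminating, non-splitting symmetric system. Here I will aim to show $\omega A_0 \cong \omega A_1$; the $\omega^*$ statement follows by the symmetric argument.

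\begin{itemize}
\item[1.] Set $Y = \omega(A_0 + A_1)$. Commutativity gives $Y \cong A_0 + Y \cong A_1 + Y$. Hence, by Corollary \ref{absorbsleftrightcoro}, both $\omega A_0$ and $\omega A_1$ embed initially in $Y$.
\item[2.] By Proposition \ref{bothinitbothfin}, $\omega A_0$ and $\omega A_1$ are comparable under $\leqslant_{init}$. Let $b_0$ and $b_1$ be the cuts in $Y$ at the right of these initial copies. It suffices to show $b_0 = b_1$.
\item[3.] Suppose instead that $b_0 < b_1$, say. Then the segment $[b_0, b_1]$ lies inside a tail of the $\omega A_1$-decomposition. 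Refining each $A_1$ via $A_k \cong N_k A_{k+1} + A_{k+2}$, $[b_0, b_1]$ is then a convex piece of a sum of copies of terms $A_j$, with $j$ as large as we like.
\item[4.] On the other side, $[b_0, b_1]$ sits immediately to the right of $\omega A_0 = \sum_i A_0$, a limit of copies of $A_0$, and hence of copies of the deeper terms $A_k$.
\item[5.] I would then mimic the final argument in the proof of Theorem \ref{BplusAinitandfinAplusB}. Either some $A_k$, or a pair $2A_{k+3}$, embeds convexly into $[b_0, b_1]$ and then into a small sum $A_{j} + A_{j+1}$ with $j \gg k$. Lemma \ref{splittinglemmadivissystems} and Proposition \ref{AplusBconvXplusY} then force some term to be splitting, contradicting non-splitting via Proposition \ref{splittingdichthmfordivsystems}.
\item[6.] If, instead, $[b_0, b_1]$ contains no copy of any $A_k$, then $b_0 \sim_{\mathcal{D}} b_1$ in the sense of the condensation of Proposition \ref{simDisacondpropn}. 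I would then use $\mathcal{D}$-branchings (Proposition \ref{Dbranchingintersectionpropn}) to identify the cut at the end of each $\omega$-sum with an endcut of a single $\delta$-class, and conclude $b_0 = b_1$.
\end{itemize}

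The main obstacle is this non-terminating, non-splitting case, and specifically step 6. Showing that the two limit cuts $b_0$ and $b_1$ coincide needs control of $\delta$-classes at the right end of $\omega$-sums. If that approach stalls, I would first try the alternative of establishing $\omega A_k \cong \omega A_{k+2}$ directly from the commuting system. From that one gets $\omega A_0 \cong \omega(A_0 + A_1) \cong \omega A_1$ by an infinite interleaving (back-and-forth) argument.
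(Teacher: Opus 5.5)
Your backward direction, and your handling of terminating runs and of splitting non-terminating systems, match the paper's proof. The gap is in the one hard case, a non-terminating non-splitting system: steps 3--6 never establish $b_0=b_1$.

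\textbf{Why steps 3--6 fail.} Step 5 borrows the ending of the proof of Theorem \ref{BplusAinitandfinAplusB}. There, the leftover segment $L'$ is final in every $I_n'\cong A_n+A_{n+1}$, and that nested structure is what puts convex copies of deep terms inside small sums. Nothing of the kind holds for $[b_0,b_1]$. If $b_0<b_1$, then $[b_0,b_1]$ is a nonempty final segment of the copy $[c,b_1]\cong\omega A_1$. So it has a tail copy of $\omega A_1$ as a final segment. Nothing places it inside a small sum $A_j+A_{j+1}$, so step 5 has no mechanism. The situation of step 6, with no convex copy of any $A_k$, can never occur. As written the crux is unproved, and the fallback via $\omega A_k\cong\omega A_{k+2}$ is not specified.

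\textbf{The missing idea.} Apply directed refinement to the initial piece $[c,b_0]$ rather than to the remainder.
\begin{itemize}
\item[i.] Suppose $b_0<b_1$. Pick a point strictly between them. It lies in some summand of $[c,b_1]\cong A_1+A_1+\cdots$, so $\omega A_0\cong[c,b_0]\leqslant_{init} nA_1$ for some $n$. Corollary \ref{directedrefinementforomegaA} gives $\omega A_0\leqslant_{conv}A_1$. Since $A_1\leqslant_{init}A_0$, this yields $2A_0\leqslant_{conv}A_0$, so $A_0$ splits by Corollary \ref{2XconvXiff2XcongX}.
\item[ii.] Suppose $b_1<b_0$. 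The same reasoning gives $\omega A_1\leqslant_{conv}A_0\leqslant_{init}(N_0+1)A_1$, using Lemma \ref{splittinglemmadivissystems}. Directed refinement again gives $\omega A_1\leqslant_{conv}A_1$, so $A_1$ splits.
\end{itemize}
Both cases contradict non-splitting, so $b_0=b_1$ and $\omega A_0\cong\omega A_1$. The mirror argument gives $\omega^*A_0\cong\omega^*A_1$: use $\omega^*(A_0+A_1)$, absorption on the right, and Corollary \ref{directedrefinementforomegaA}.(ii.).

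\textbf{Comparison with the paper.} With this repair your route is genuinely different from the paper's, and much lighter. The paper settles this case through Proposition \ref{Isosofomegaprodsfornonsplsymmsystems}, which rests on the symmetric real representation of Theorem \ref{symmetricrealrepnthm}. There, $\mathcal{D}$-branchings and the invariance of $\sim_{\mathcal{D}}$-classes give $L\cong L'$, from which one reads off $\omega A_k\cong R+(0,\infty)(K_{[x]})$. Your repaired argument needs only three ingredients:
\begin{itemize}
\item[a.] the absorption identity $A_i+\omega(A_0+A_1)\cong\omega(A_0+A_1)$;
\item[b.] Proposition \ref{bothinitbothfin};
\item[c.] directed refinement.
\end{itemize}
It uses no real representation, only non-splitting together with $A_1\leqslant_{init}A_0\leqslant_{init}(N_0+1)A_1$. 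In exchange, the paper's route yields the Aronszajn-type structural description, which it needs later anyway.
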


We will prove Theorem \ref{ABcommuteiffomegaAomegastarAinitfinomegaBomegastarBorvv} in Section \ref{section:continuationandvaluation}, and from it deduce the revised version of Tarski's conjecture \ref{revisedtarconj}. 

\section{A proof of Lindenbaum's Division Theorem}\label{section:linddivisthm}

In this section we use the Euclidean algorithm to give a proof of Lindenbaum's Division Theorem \ref{ldt}. It may be contrasted with Tarski's proof \cite[1.50]{Tarski} of the division theorem for a general ordinal algebra. As far as the authors are aware, Tarski's proof is the only previously published proof of the Division Theorem. 

Like the proofs of our other arithmetic results for $(LO, +)$, the proof here requires casework to handle the absorption and splitting phenomena that occur in sums of general linear orders but not in sums of natural (or real) numbers. However, the approach of the proof is to straightforwardly divide over an equation just as though working over $(\mathbb{N}, +)$ or $(\mathbb{R}, +)$: given a pair of orders $A, B \in LO$ that have a common finite multiple $nA \cong mB$, we will apply the Euclidean algorithm to find their greatest common divisor $C \in LO$.

Theorem \ref{divtheorem} below is the Division Theorem written in a slightly generalized form. Modulo the Cancellation Theorem \ref{lct}, this statement is equivalent to the original. On the other hand, as stated Theorem \ref{divtheorem} also implies \ref{lct}.

\theoremstyle{definition}
\newtheorem{divtheorem}[lct]{Theorem}
\begin{divtheorem}\label{divtheorem}
Suppose there are natural numbers $n, m \geq 1$ such that $nA \cong mB$. Let $d = \gcd{(n, m)}$, and let $m' = \frac{m}{d}$ and $n' = \frac{n}{d}$. Then there exists a linear order $C$ such that $A \cong m'C$ and $B \cong n'C$. 
\end{divtheorem}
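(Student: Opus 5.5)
We run the left Euclidean algorithm on the pair $A, B$ and follow it as if dividing natural numbers $n$ and $m$, keeping track of how many copies of each order appear. By Proposition \ref{nAcongmBimpliesABcommute}, $nA \cong mB$ gives $A + B \cong B + A$. So we can fix a symmetric setup on $I = A + B$ and run the algorithm, which yields a symmetric division system $\{A_k; N_k; M\}$ with $\{A_0, A_1\} = \{A, B\}$. We first treat two degenerate situations.

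\emph{The splitting case.} If $A$ is splitting, then $nA \cong mB$ gives $2B \leqslant_{conv} nA \cong A$ and $2A \leqslant_{conv} mB$. Lemma \ref{2AconvnB2BconvmAsplittingiff} then makes $B$ splitting, and cancellation-type arguments (Corollary \ref{sumAiconvsumBi} with \ref{CSBLO}) give $A \cong B$. We take $C = A$, which works because $m'C \cong C \cong n'C$.

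\emph{Non-splitting, non-terminating case.} Here we want a contradiction. Any term $A_k$ from the system embeds convexly in $nA \cong mB$. Since both sides are finite sums of $A$'s and $B$'s, Corollary \ref{sumAiconvsumBi} lets us compare the number of copies of the small terms $A_k$ in each side. I would argue via the condensation $\sim_{\mathcal D}$: an isomorphism $nA \cong mB$ induces an isomorphism of condensations $(nA)/{\sim_{\mathcal D}} \cong (mB)/{\sim_{\mathcal D}}$, i.e.\ $n\bar A \cong m\bar B$ at the quotient level. The $\mathcal D$-branchings of Proposition \ref{Dbranchingintersectionpropn} should make the quotients behave like real intervals of lengths $\alpha, \beta$ with $n\alpha = m\beta$ and $\alpha/\beta$ having an infinite continued fraction $[N_0; N_1, \dots]$. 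That ratio would be irrational, contradicting $\alpha/\beta = m/n$. This is the main obstacle. The representation machinery of later sections is not available here, so I would first try a purely arithmetic substitute. Counting $\leqslant_{conv}$-copies of $A_k$ through Corollary \ref{sumAiconvsumBi} and the non-splitting Lemma \ref{splittinglemmadivissystems}.(iv) should show that the integer pair $(n,m)$ undergoes the same Euclidean steps with the same quotients $N_k$: $A_0 \cong N_0A_1 + A_2$ forces $m = N_0 n + r$ in the appropriate orientation. Infinitely many strictly decreasing positive integer remainders are impossible.

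\emph{Terminating case.} The symmetric system terminates (after absorbing, as in Section \ref{subsect:runofsymmetricalgo}) in a divisor $A_{k+1} =: C$. Back substitution writes $A \cong pC$ and $B \cong qC$ for positive integers $p, q$ produced by the Euclidean recursion from the $N_i$, so $\gcd(p,q)=1$ (consecutive continuant numbers). Then $nA \cong mB$ becomes $npC \cong mqC$. If $C$ is splitting we are back in the splitting case. Otherwise the splitting dichotomy, Corollary \ref{splittingdichcor}, forces $np = mq$. Coprimality then gives $p = m'$ and $q = n'$, proving the theorem.
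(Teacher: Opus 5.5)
Your splitting case and your terminating case are sound. The terminating case (back-substitute to get $A \cong pC$, $B \cong qC$ with $p, q$ coprime integers from the recursion, then apply Corollary \ref{splittingdichcor} to $npC \cong mqC$) is a slightly more direct version of the paper's Claims 1--2. You should also say why, when $A$ is non-splitting, the run cannot stop at Stage $0$ in an absorbed factor; this is routine, since for example $\omega B \leqslant_{init} A \leqslant_{init} nA \cong mB$ would give $\omega B \leqslant_{conv} B$ by Corollary \ref{directedrefinementforomegaA}, making $B$ split.

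The genuine gap is the non-terminating, non-splitting case, which you flag yourself, and neither of your sketches closes it.

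\emph{Arithmetic route.} From $A_0 \cong N_0A_1 + A_2$ and commutativity you get $nN_0A_1 + nA_2 \cong mA_1$, and non-splitting does force $m > nN_0$. But to iterate the descent you need the new relation $nA_2 \cong (m - nN_0)A_1$, which means cancelling the common initial summand $nN_0A_1$. Nothing you cite provides this. An isomorphism $P + Y \cong P + Z$ may move the $+$ sign, leaving only $Y \cong J + Z$ with $P + J \cong P$ (or the symmetric relation). Left cancellation genuinely fails in $LO$: $\eta + (1 + \eta) \cong \eta + \eta$, yet $1 + \eta \not\cong \eta$. Counting convex copies via Corollary \ref{sumAiconvsumBi} and Lemma \ref{splittinglemmadivissystems} only yields inequalities such as $m > nN_0$, never the isomorphism the descent needs.

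\emph{Condensation route.} An order isomorphism of the quotients carries no length information, since real intervals of any two positive lengths are order-isomorphic. That length is an isomorphism invariant is exactly what non-splitting has to be used to prove. This route can be completed with Theorem \ref{symmetricrealrepnthm} and Propositions \ref{lengthisinvariantwhenDnonsplitting} and \ref{ellIisadditivewhenDnonsplittingpropn}, none of which depends on the Division Theorem. They give $\ell(nA_0) = n$ and $\ell(mA_1) = ma_1$, forcing $a_1$ to be rational. But that is substantial machinery, not the quotient argument you describe.

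The paper sidesteps cancellation by comparing against convergents. Write the hypothesis as $a_1A_0 \cong a_0A_1$. Let $c^K_0/c^K_1$ be the $K$th convergent of $[N_0; N_1, \ldots]$, computed from $c^K_{K+1} = 1$ and $c^K_k = N_kc^K_{k+1} + c^K_{k+2}$.
\begin{itemize}
\item[1.] An induction on $K$, using that all terms of the symmetric system commute, gives $c^K_1A_0 \cong c^K_0A_1 + A_{K+2}$ for $K$ even and $c^K_1A_0 + A_{K+2} \cong c^K_0A_1$ for $K$ odd.
\item[2.] The infinite continued fraction has an irrational value, which therefore differs from $a_0/a_1$. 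Choose $K$ of the appropriate parity with $c^K_0/c^K_1$ strictly between $a_0/a_1$ and that value.
\item[3.] Multiply $a_1A_0 \cong a_0A_1$ by $c^K_1$ and substitute, distributing via commutativity. This produces $pA_1 \leqslant_{conv} qA_1$ with $p > q$, so $A_1$ splits by Theorem \ref{splittingdichthm}.
\end{itemize}
Only one-directional convex embeddings are ever required, which is why no cancellation law is needed.
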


The proof goes by first observing that if $A$ and $B$ are splitting, then the statement trivializes. In the non-splitting case, we show that the Euclidean algorithm, applied to a symmetric setup witnessing $A + B \cong B + A$, proceeds in exactly the same way as the classical Euclidean algorithm for dividing the greater of the natural numbers $n'$ and $m'$ by the lesser. Hence the algorithm terminates; the order $C$ will be the final remainder term.

\begin{proof}
Writing $nA \cong mB$ as $d(n'A) \cong d(m'B)$ and then applying cancellation \ref{lctintext} yields $n'A \cong m'B$. By Lemma \ref{nAcongmBimpliesABcommute} we have $A + B \cong B + A$.

If $n' = m' = 1$, then $A \cong B$; let $C = A$ and we are done. Otherwise, since $\gcd (n', m') = 1$, either $n' < m'$ or $m' < n'$. 

By symmetry, we may assume $n' < m'$. If $n' = 1$, then $A \cong m'B$. Let $C = B$ and we are again done. So assume $1 < n' < m'$. 

Now from $n'A \cong m'B$ and Lemma \ref{2AconvnB2BconvmAsplittingiff} it follows that $A$ is splitting if and only if $B$ is splitting. In the splitting case, $A \cong n'A \cong m'B \cong B$. Then also $A \cong m'A \cong n'A \cong n'B \cong B$. Letting $C = A$, we are again done. 

So assume $A$ and $B$ are non-splitting. Let $A_0 = A$, $A_1 = B$, $a_1 = n'$, and $a_0 = m'$ so that
\begin{equation}\label{a1A0equalsa0A1}
a_1A_0 \cong a_0A_1
\end{equation}
Since $a_0 > a_1$, it follows from \ref{a1A0equalsa0A1} that $a_1A_1 \leqslant_{init} a_1A_0$ and $a_1 A_1 \leqslant_{fin} a_1 A_0$. Then by Theorem \ref{leqslantinitfincancellationthm}, $A_1 \leqslant_{init} A_0$ and $A_1 \leqslant_{fin} A_0$. 

Fix a symmetric division setup $(I = [c, d]; f, g)$ witnessing $A_0 + A_1 \cong A_1 + A_0$ and run the (left, say) Euclidean algorithm. 

We claim that $A_0$ is the dividend and $A_1$ is the divisor in this setup. Otherwise, we have $A_0 \leqslant_{init} A_1$, which along with $A_1 \leqslant_{fin} A_0$ yields $A_1 \cong A_0$ by \ref{CSBLO}. But then \ref{a1A0equalsa0A1} along with the splitting dichotomy theorem \ref{splittingdichthm} implies $A_0$ and $A_1$ are splitting, contradicting our hypothesis. 

If the algorithm terminates at Stage $0$ in an absorbed factor, then $\omega A_1 \leqslant_{init} A_0$. Then since $a_1A_0 \cong a_0A_1 \leqslant_{init} \omega A_1 \leqslant A_0$, we then have $a_1A_0 \leqslant_{init} A_0$. Hence $A_0$ is splitting by $\ref{2XconvXiff2XcongX}$, a contradiction. 

Thus the algorithm does not terminate at Stage 0 in an absorbed factor and we may let $\{A_k \cong N_k A_{k+1} + A_{k+2}: k < M\}$ be the symmetric division system (of length $1 \leq M \leq \omega$) resulting from the run of the algorithm. As noted in Section \ref{subsect:runofsymmetricalgo}, we have $A_i + A_j \cong A_j + A_i$ for all terms $A_i$, $A_j$ appearing in the system. As noted in Section \ref{section:divisionsystems}, if the system is terminating, then by rewriting its final isomorphism if necessary we may assume that it terminates in a divisor. 

We will show that the system is necessarily terminating. Toward this, run the (classical) Euclidean algorithm for dividing $a_0$ by $a_1$, and let $\{a_k = M_k a_{k+1} + a_{k+2}\}$ be the resulting division system. We will show that $\{A_k \cong N_k A_{k+1} + A_{k+2}\}$ is obtained from this system by replacing each term $a_i$ by $A_i$. That is, these systems have the same finite length and the same coefficients.

Consider the system of equations $\{b_k = N_k b_{k+1} + b_{k+2}\}$ obtained from the division system $\{A_k \cong N_k A_{k+1} + A_{k+2}\}$ by replacing each term $A_k$ with a variable term $b_k$. 

Suppose first that our division system $\{A_k \cong N_k A_{k+1} + A_{k+2}\}$ is terminating (in a divisor), with final isomorphism $A_K \cong N_K A_{K+1}$. Set $b_{K+1} = 1$ and let $b_0 > b_1 > \ldots > b_{K}$ be the sequence of natural numbers determined by the equations $b_k = N_k b_{k+1} + b_{k+2}$ for $k \leq K$. 
\medskip

\underline{Claim 1}: $b_0 A_1 \cong b_1 A_0$. 
\begin{proof}
We prove the claim by induction on $K$. If $K = 0$ then our division system consists of the single isomorphism $A_0 \cong N_0 A_1$. In this case $b_1 = 1$ and $b_0 = N_0$, so that $A_0 \cong N_0 A_1$ witnesses the claim. 

For larger $K$, assume the claim holds for systems of shorter length. Observe that $\{A_k \cong N_k A_{k+1} + A_{k+2}: 1 \leq k \leq K\}$ is a division system of length one less than the original system. By induction, $b_1 A_2 \cong b_2 A_1$. From this, the isomorphism $A_0 \cong N_0 A_1 + A_2$, and the equation $b_0 = N_0b_1 + b_2$ we obtain
\[
b_1 A_0 \cong b_1(N_0A_1 + A_2) \cong b_1N_0A_1 + b_1A_2 \cong N_0 b_1 A_1 + b_2 A_1 \cong b_0 A_1, 
\]
as desired. Note here that $A_1 + A_2 \cong A_2 + A_1$ is needed to justify the distribution \[
b_1(N_0A_1 + A_2) \cong b_1N_0A_1 + b_1A_2.
\]
This completes the induction and proves the claim. 
\end{proof}

\underline{Claim 2}: $\frac{a_0}{a_1} = \frac{b_0}{b_1}$.

\begin{proof}
Suppose $\frac{a_0}{a_1} > \frac{b_0}{b_1}$, so that $a_0b_1 > b_0a_1$. From $a_0A_1 \cong a_1A_0$ and $b_0A_1 \cong b_1A_0$ we obtain
\[
a_0b_1 A_1 \cong a_1b_1A_0 \cong a_1b_0A_1.
\]
Since $a_0b_1 > b_0a_1$, this implies $A_1$ is splitting by \ref{splittingdichthm}, a contradiction. The argument for $\frac{a_0}{a_1} < \frac{b_0}{b_1}$ is symmetric, and the claim is proved. 
\end{proof}

Since $b_0$ and $b_1$ are coprime (as $\gcd (b_0, b_1) = b_K = 1$), and $a_0$ and $a_1$ are also coprime, Claim 2 implies $b_0 = a_0$ and $b_1 = a_1$. Hence the system $\{b_k = N_k b_{k+1} + b_{k+2}\}$ is the same as $\{a_k = M_ka_{k+1} + a_{k+2}\}$ up to relabeling each $b_k$ as $a_k$; in particular $N_k = M_k$ for all $k \leq K$. 

Since setting $a_{K+1} = 1$ in the system $\{a_k = N_ka_{k+1} + a_{k+2}\}$ determines $a_0 = m'$ and $a_1 = n'$, back-substituting $A_{K+1}$ in the isomorphisms $A_k \cong N_kA_{k+1} + A_{k+2}$ yields the isomorphisms $A_0 \cong m' A_{K+1}$ and $A_1 \cong n' A_{K+1}$. Letting $C = A_{K+1}$, we are done in the case when the system is terminating. 

Now suppose $\{A_k \cong N_k A_{k+1} + A_{k+2}\}$ is a non-terminating system, and let $\{b_k = N_kb_{k+1} + b_{k+2}\}$ be the corresponding non-terminating system of equations in the variables $b_k$. For a fixed $K$, introduce variables $c_k^K$ for $0 \leq k \leq K+1$ and consider the truncated system of equations
\[
\begin{array}{rcl}
c_0^K & = & N_0 c_1^K + c_2^K \\
c_1^K & = & N_1 c_2^K + c_3^K \\
& \vdots & \\
c_{K-1}^K & = & N_{K-1} c_K^K + c_{K+1}^K \\
c_K^K & = & N_K c_{K+1}^K.
\end{array}
\]

Set $b_0 = 1$ and let $\{b_k\}$ be the unique sequence of positive real numbers satisfying the equations from our system $b_k = N_k b_{k+1} + b_{k+2}$. From the divided through equations
\[
\frac{b_k}{b_{k+1}} = N_k + \frac{b_{k+2}}{b_{k+1}}
\]
each $b_k$ may be computed via its continued fraction expansion
\[
\frac{b_{k+1}}{b_k} = 1/(N_k + 1/(N_{k+1} + 1/(N_{k+2} + \cdots))).
\]
In particular,
\[
b_1 = 1/(N_0 + 1/(N_1 + 1/(N_2 + \cdots))).
\]

We quote some standard facts about continued fractions and the sequence $\{b_k\}$. 

Since the system $\{b_k = N_kb_{k+1} + b_{k+2}\}$ is non-terminating, the ratios $\frac{b_k}{b_{k+1}}$ are irrational. In particular, $\frac{b_0}{b_1} = \frac{1}{b_1}$ is irrational and hence so is $b_1$. 

For each $K$, let $c^K_{K+1} = 1$ and then let $c^K_k$ for $0 \leq k \leq K$ be the positive integers satisfying the truncated system $\{c_k^K = N_k c_{k+1}^K + c_{k+2}^K: 0 \leq k \leq K\}$. Then the ratios $\frac{c^K_0}{c^K_1}$ converge to $\frac{b_0}{b_1}$ as $K \rightarrow \infty$. 

More specifically, the subsequence of even terms $\frac{c^{2K}_0}{c^{2K}_1}$ increases to $\frac{b_0}{b_1}$, and the sequence of odd terms $\frac{c^{2K+1}_0}{c^{2K+1}_1}$ decreases to $\frac{b_0}{b_1}$:
\[
\frac{c^{0}_0}{c^{0}_1} < \frac{c^{2}_0}{c^{2}_1} < \frac{c^{4}_0}{c^{4}_1} < \ldots < \frac{b_0}{b_1} < \ldots < \frac{c^{5}_0}{c^{5}_1} < \frac{c^{3}_0}{c^{3}_1} < \frac{c^{1}_0}{c^{1}_1}.
\]
\medskip

\underline{Claim 3}: For $K$ even, we have
\[
c_1^K A_0 \cong c_0^K A_1 + A_{K+2}
\]

For $K$ odd, we have
\[
c_1^K A_0 + A_{K+2} \cong c_0^K A_1.
\]
\begin{proof}
By induction on $K$. If $K = 0$, the truncated system consists of a single equation $c^0_0 = N_0c^0_1$. Since $c^0_1 = 1$, we have $c^0_0 = N_0$. Thus
\[
c^0_1 A_0 = A_0 \cong N_0 A_1 + A_2 \cong c_0^0A_1 + A_2,
\]
which is of the desired form. 

Fix $K \geq 1$ and assume the claim for shorter truncations. Suppose first that $K$ is odd. Since $A_0 \cong N_0A_1 + A_2$, using commutativity of the terms $A_1$ and $A_2$ we have
\begin{equation}\label{c1expK}
c_1^K A_0 \cong c_1^KN_0A_1 + c_1^K A_2.
\end{equation}

We may apply our induction hypothesis to the shorter truncated system $\{c^K_k = N_kc^K_{k+1} + c^K_{k+2}: 1 \leq k \leq K\}$ corresponding to the division system $\{A_k \cong N_k A_{k+1} + A_{k+2}: 1 \leq k < \omega \}$ to conclude
\begin{equation}\label{c2expKA1}
c_2^K A_1 \cong c_1^K A_2 + A_{K+2}
\end{equation}
since, if we re-index this system using $k' = k - 1$ and $K' = K - 1$, then $K'$ is even. From \ref{c1expK} we have
\[
c_1^K A_0 + A_{K+2} \cong c_1^KN_0A_1 + c_1^K A_2 + A_{K+2}.
\]
Then from \ref{c2expKA1},
\[
c_1^K A_0 + A_{K+2} \cong c_1^KN_0A_1 + c_2^KA_1 \cong (c_1^KN_0 + c_2^K)A_1 = c_0^KA_1,
\]
as desired. The induction step when $K$ is even is similar. 
\end{proof}

Since $\frac{b_0}{b_1}$ is irrational, we have $\frac{b_0}{b_1} \neq \frac{a_0}{a_1}$. Suppose $\frac{b_0}{b_1} > \frac{a_0}{a_1}$. Let $K$ be even and large enough such that $\frac{c_0^K}{c_1^K} > \frac{a_0}{a_1}$. Then $a_1c_0^K > a_0c_1^K$. 

From $a_0A_1 \cong a_1A_0$ we get $a_0c_1^KA_1 \cong a_1c_1^KA_0$. Then since $K$ is even we obtain
\[
a_0c_1^KA_1 \cong a_1c_1^KA_0 \cong a_1(c_0^KA_1 + A_{K+2}).
\]
Since $A_{K+2}$ commutes with $A_1$, we may distribute the $a_1$ on the right to conclude 
\[
a_0c_1^KA_1 \cong a_1c_0^KA_1 + a_1A_{K+2}.
\]
In particular $a_1c_0^KA_1 \leqslant_{conv} a_0c_1^KA_1$. But since $a_1c_0^K > a_0 c_1^K$, this implies that $A_1$ is splitting by \ref{splittingdichthm}, contradicting our hypothesis. 

We obtain a similar contradiction in the case when $\frac{b_0}{b_1} < \frac{a_0}{a_1}$. Hence the system must be terminating, and we are done. 
\end{proof}

\section{Symbolic and dynamical representations for division systems}\label{section:symboldynamrepnsofdivissystems}

In this section we establish structural representation theorems for the orders $A_k$ from a given non-terminating division system $\{A_k: k < \omega\}$. These representations are of two types: symbolic representations $X(I_{[u]})$ obtained as replacements of a lex-ordered sequence space $X$ up to the eventual equality relation $E_0$ on $X$, and dynamical representations $\mathbb{R}(K_{[x]})$ obtained as replacements of $\mathbb{R}$ up to the orbit equivalence relation $E_G$ of a group of translations on $\mathbb{R}$.

Here, $X$ is the space of continued fraction sequences in the base $\{a_k\}$ of a collection of real numbers satisfying the equations from the division system, and $G$ is the group of translations generated by these numbers. In Section \ref{section:aronszajnrepnthm}, we will use these representations to generalize Aronszajn's representation theorem for commuting pairs $A + B \cong B + A$ from \cite{Aronszajn}. In Section \ref{section:commutesemigroupsinLOrepn}, we use them again to help identify the commutative semigroups representable in $(LO, +)$.

\subsection{Symbolic representations of left division systems}\label{subsect:leftdivissystemsymbolicrepn}

A \textit{finite sequence} is a function $r$ with $\textrm{dom}(r) = n = \{0, 1, \ldots, n-1\}$ for some $n \in \omega$. The \textit{length} of such an $r$, denoted $|r|$, is $n$. We write $r_i$ for $r(i)$, and also write $r = r_0r_1 \ldots r_{n-1}$. An \textit{infinite sequence} is a function $u$ with domain $\omega$. We write $u = u_0u_1 \ldots$ where $u_i = u(i)$. 

Given a finite or infinite sequence $u$ and $n \in \textrm{dom}(u)$, we write $u \upharpoonright n$ for the restriction of $u$ to $n$, i.e. $u \upharpoonright n = u_0u_1 \ldots u_{n-1}$. Given a finite sequence $r$, $ru$ denotes the sequence obtained by concatenating $r$ on the right by $u$. 

A \textit{tree} is a set $T$ of finite sequences that is closed under initial segments: if $r \in T$ then for all $n \in \textrm{dom}(r)$ we have $(r \upharpoonright n) \in T$. If $s, r \in T$ with $|s| \geq |r|$ and $s \upharpoonright |r| = r$, we say $s$ \textit{extends} $r$ or $s$ is \textit{below} $r$. If $|s| = |r| + 1$ and $s$ extends $r$, then $s$ is a \textit{successor} of $r$. That is, a successor of $r$ is a sequence $s = rs_{|r|}$ extends $r$ by a single entry. A sequence $r \in T$ is also called a \textit{node}. A node is \textit{terminal} if it has no successors in $T$. The \textit{body} of $T$, denoted $[T]$, is the set of infinite sequences $u$ such that $(u \upharpoonright n) \in T$ for all $n \in \omega$. 

We write $\omega^{<\omega}$ for the tree of all finite sequences $r$ with entries $r_i \in \omega$. We write $\omega^{\omega}$ for the set of all infinite sequences $u$ with entries $u_i \in \omega$. Observe $[\omega^{<\omega}] = \omega^{\omega}$.

\theoremstyle{definition}
\newtheorem{setofdiviscoeffsdefn}[lct]{Definition}
\begin{setofdiviscoeffsdefn}\label{setofdiviscoeffsdefn}
A set of \textit{division coefficients} is a set $\mathcal{C} = \{N_k: k \in \omega\}$ of natural numbers $N_k \geq 1$. If $N_k \geq 2$ for all $k \in \omega$, we also say $\mathcal{C}$ is a set of \textit{rational division coefficients}. 
\end{setofdiviscoeffsdefn}

Fix a set of division coefficients $\mathcal{C} = \{N_k: k \in \omega\}$. 

\theoremstyle{definition}
\newtheorem{leftdivistreedefn}[lct]{Definition}
\begin{leftdivistreedefn}\label{leftdivistreedefn}
The \textit{left division tree $T^{\mathcal{C}} = T$ on the coefficients $\mathcal{C}$} is defined recursively as follows. 
\begin{itemize}
    \item $T_0 = \{\emptyset\}$. 
    \item Given $T_n$ for $n \in \omega$, and a terminal node $r \in T_n$ with $|r| = k$, define $S_r$ to be the set of extensions $\{r0, r1, \ldots, r(N_k-1), rN_k, rN_k0\}$ of $r$, consisting of the successors $ri$ for $0 \leq i \leq N_k$ and the double successor $rN_k0$. 

    Define 
    \[ 
    T_{n+1} = T_n \cup \bigcup_{\textrm{$r \in T_n$ terminal}} S_r.
    \]
    \item Define
    \[
    T = \bigcup_{n \in \omega} T_n.
    \]
\end{itemize} 
\end{leftdivistreedefn}

Intuitively, the root node $\emptyset$ in $T$ represents the initial order $A_0$ from some non-terminating left division system $\{A_k\}$ on the coefficients $\mathcal{C} = \{N_k\}$. Each node $r \in T$ with $|r| = k$ represents an interval in $A_0$ isomorphic to $A_k$. 

Notice that if $r \in T$ is terminal in $T_n$ for some $n$, and $|r| = k$, then the extensions of $r$ that are terminal in $T_{n+1}$ are $r0, r1, \ldots, r(N_k-1)$, and $rN_k0$. These nodes are listed in lexicographical order and have lengths $k+1, k+1, \ldots, k+1$, and $k+2$, respectively, reflecting that $A_k \cong N_k A_{k+1} + A_{k+2}$. The successor $rN_k$ of $r$, which also has length $k+1$ but is not terminal in $T_{n+1}$, has only the single successor $rN_k0$. We think of this node as representing an initial segment of $A_{k+1}$ instead of $A_{k+1}$ itself, namely, the initial segment of $A_{k+1}$ corresponding to the first copy of $A_{k+2}$ (represented by the node $rN_k0$) in the sum $A_{k+1} \cong N_{k+1} A_{k+2} + A_{k+3}$.

The nodes $r \in T$ that are terminal in $T_n$ for some $n$ are called \textit{regular}. 

\theoremstyle{definition}
\newtheorem{leftdivisorderdefn}[lct]{Definition}
\begin{leftdivisorderdefn}\label{leftdivisorderdefn}
The \textit{left division order $X^{\mathcal{C}} = X$ on the coefficients $\mathcal{C}$} is the body $[T]$ of the left division tree $T = T^{\mathcal{C}}$ equipped with the lexicographical ordering $<_{lex}$. 
\end{leftdivisorderdefn}

Until further notice, $T = T^{\mathcal{C}}$ denotes the left division tree and $X = X^{\mathcal{C}}$ denotes the corresponding left division order for our fixed set of coefficients $\mathcal{C}$. Observe that $T \subseteq \omega^{<\omega}$ and $X = [T] \subseteq \omega^{\omega}$. 

Given $r \in T$, let $T_r$ denote the set of sequences in $T$ beginning with $r$, that is, $T_r = \{s \in T: \textrm{$|s| \geq |r|$ and $s \upharpoonright |r| = r$}\}$. Let $X_r = \{u \in X: u \upharpoonright |r| = r\}$. 

Observe that $X_{\emptyset} = X$ and each $X_r$ is convex in $X$; we call the intervals $X_r$ \textit{basic intervals}. Observe that if $r$ is an irregular node then $X_r = X_{r0}$. For regular nodes $r$ and extensions $ri$ of $r$, $X_{ri}$ is always a strict subinterval of $X_r$. 

Given $u, v \in \omega^{\omega}$, we say $u$ is \textit{eventually equal} to $v$, and write $u E_0 v$, if there are finite sequences $r, s \in \omega^{<\omega}$ with $|r| = |s|$ and an infinite sequence $u' \in \omega^{\omega}$ such that $u = ru'$ and $v = su'$. Then $E_0$ is an equivalence relation on $\omega^{\omega}$ called the \textit{eventual equality relation}. 

Let $E_0^X$ denote the restriction of $E_0$ to $X$. We often write $E_0$ instead of $E_0^X$. We write $[u]_{E_0^X}$, or often simply $[u]$, for the $E_0^X$-class of a given $u \in X$. 

\theoremstyle{definition}
\newtheorem{E0classesdenseinX}[lct]{Proposition}
\begin{E0classesdenseinX}\label{E0classesdenseinX}
For all $u \in X$ and $r \in T$, we have $[u] \cap X_r \neq \emptyset$.
\end{E0classesdenseinX}

\begin{proof}
Fix $r \in T$, and suppose $|r| = k$. We may assume $r$ is regular, since if not we may extend it to a regular successor. Fix $u \in X$, and let $s = u \upharpoonright k$. We may assume $s$ is also regular: if not, then $su_k$ is regular, and since $r0$ is always regular, we may replace $r$ with $r0$ and $s$ with $su_k$ to get an extension of $r$ and initial sequence of $u$ both of length $k+1$ and regular. 

Then $u = su'$ for some infinite tail-sequence $u'$. By the recursive construction of $T$, we have that $ru' \in X$. Then clearly $ru' \in [u] \cap X_r$. 
\end{proof}

Below is our symbolic representation theorem for non-terminating left division systems. Recall the definition from Section \ref{subsection:replacementsuptoE} of replacement up to an equivalence relation. 

\theoremstyle{definition}
\newtheorem{leftsymbolicrepnthm}[lct]{Theorem}
\begin{leftsymbolicrepnthm}\label{leftsymbolicrepnthm}
Suppose $T$ is the left division tree for the set of division coefficients $\mathcal{C} = \{N_k\}$, and $X = [T]$ is the corresponding left division order. 

\begin{itemize}
    \item[i.] For every non-terminating left division system  $\{A_k: k \in \omega\}$ on the coefficients $\mathcal{C}$, there exists a collection of orders $\{I_{[u]}: u \in X\}$ indexed by the $E_0$-classes $[u]$ of $X$ such that for every regular node $r \in T$ with $|r| = k$, we have $A_k \cong X_r(I_{[u]})$. In particular, $A_0 \cong X(I_{[u]})$. 
    
    \item[ii.] Conversely, suppose $\{I_{[u]}: u \in X\}$ is a collection of orders indexed by the $E_0$-classes of $X$ and $X(I_{[u]})$ is the corresponding replacement of $X$ up to $E_0$. Then for every pair of regular nodes $r, s \in T$ with $|r| = |s|$ we have $X_r(I_{[u]}) \cong X_s(I_{[u]})$.
    
   Furthermore, if for each $k$ we fix $A_k$ isomorphic to $X_r(I_{[u]})$ for some (equivalently, any) regular node $r$ with $|r| = k$, then $A_k \cong N_k A_{k+1} + A_{k+2}$ for all $k \in \omega$, that is, $\{A_k: k \in \omega\}$ is a non-terminating left division system on the coefficients $\mathcal{C}$. 
\end{itemize}
\end{leftsymbolicrepnthm}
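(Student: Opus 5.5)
We prove (ii.) first, since it is essentially combinatorial, and then build the representation in (i.) by unwinding the system through an infinite tree of nested intervals.

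For (ii.), fix a collection $\{I_{[u]}\}$. For regular $r, s$ with $|r| = |s| = k$, the recursive construction of $T$ shows that the subtree $T_r$ is obtained from $T_s$ by replacing the prefix $s$ with $r$. This is because the shape of the extensions of a regular node depends only on its length, via the coefficients $N_k, N_{k+1}, \ldots$. Hence the map $su' \mapsto ru'$ is a lex-order isomorphism $X_s \to X_r$. It also preserves $E_0$-classes, since $su' \mathrel{E_0} ru'$. So it lifts to an isomorphism $X_s(I_{[u]}) \cong X_r(I_{[u]})$, mapping the copy $I_{[su']}$ identically onto $I_{[ru']} = I_{[su']}$. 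For the division identity, decompose a regular $X_r$ with $|r| = k$ as the lex-ordered convex partition
\[
X_r = X_{r0} \cup \cdots \cup X_{r(N_k-1)} \cup X_{rN_k},
\]
where $X_{rN_k} = X_{rN_k0}$. The first $N_k$ pieces are regular of length $k+1$, and the last is regular of length $k+2$. Replacing commutes with this convex partition, so $X_r(I_{[u]}) \cong N_k A_{k+1} + A_{k+2}$. Nonemptiness of the $A_k$ is not asserted here; if it is needed, we would allow empty orders or note that the definition requires it only in applications.

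For (i.), let $\{A_k\}$ be a non-terminating left system. We recursively choose, for each regular node $r$ with $|r| = k$, an interval $J_r \subseteq A_0$ together with an isomorphism $J_r \cong A_k$, starting with $J_\emptyset = A_0$. Given $J_r$, pull back via the fixed isomorphism the decomposition $A_k \cong N_k A_{k+1} + A_{k+2}$. This gives consecutive intervals $J_{r0} < \cdots < J_{r(N_k-1)} < J_{rN_k0}$ partitioning $J_r$. For irregular $rN_k$ we set $J_{rN_k} = J_{rN_k0}$. For $u \in X$, let $K_u = \bigcap_n J_{u\upharpoonright n}$, a convex (possibly empty) subset of $A_0$. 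Every point of $A_0$ lies in exactly one $K_u$, obtained by following the unique branch of pieces containing it. The sets $K_u$ are ordered lexicographically in $u$. Hence $A_0 \cong X(K_u)$, and likewise $J_r \cong X_r(K_u)$.

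The remaining and main step is to make the replacement orders depend only on the $E_0$-class. The plan is to choose the isomorphisms coherently rather than arbitrarily. Fix once and for all, for each $k$, an isomorphism $\phi_k : A_k \to N_k A_{k+1} + A_{k+2}$. Define the isomorphism $J_r \to A_{|r|}$ at each child by composing the parent's isomorphism with $\phi_k$ and restricting to the corresponding summand. Then for regular $r, s$ of equal length, the composite isomorphism $J_s \to A_k \to J_r$ carries each $J_{su'\upharpoonright n}$ onto $J_{ru'\upharpoonright n}$. This holds because both are pulled back from the same decomposition of $A_k$, induced by $\phi_k, \phi_{k+1}, \ldots$. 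It follows that $K_{su'} \cong K_{ru'}$ for every tail $u'$. Since any two $E_0$-equivalent $u, v$ have the form $ru'$ and $su'$ with $|r| = |s|$, we may enlarge the prefix to regular nodes, as in the proof of Proposition~\ref{E0classesdenseinX}. Hence $K_u \cong K_v$ whenever $u \mathrel{E_0} v$. We then choose one representative $I_{[u]}$ per class and replace each $K_u$ by it; this is permitted by the convention on replacements up to $E$ in Section~\ref{subsection:replacementsuptoE}. This gives $A_k \cong J_r \cong X_r(I_{[u]})$ for every regular $r$ with $|r| = k$. The care needed is to verify that the composite isomorphism respects the entire infinite nested family, and not merely the first level of the decomposition. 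This follows by induction on depth, since every level is pulled back from the same fixed maps $\phi_j$.
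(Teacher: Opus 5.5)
Your proposal is correct and follows the paper's proof essentially step for step. Your coherent isomorphisms built from the fixed $\phi_k$ are the paper's embeddings $f_{ri} = f_r f^k_i$, your $K_u$ are its intersections $\bigcap_n A_{u \upharpoonright n}$, and your composite $J_s \to A_k \to J_r$ is its transfer map $f_s f_r^{-1}$ matching $A_{rt}$ with $A_{st}$. Part (ii.) is handled by the same relabeling $ru \mapsto su$ and the same decomposition of $X_r$.

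On the nonemptiness point you left open, Proposition~\ref{E0classesdenseinX} settles it. If some $I_{[u]}$ is nonempty, then every basic interval $X_r$ meets $[u]$, so every $A_k$ is nonempty. Only the degenerate all-empty collection has to be excluded, and the paper passes over that case silently as well.
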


\begin{proof}
For (i.): For each $k \in \omega$, fix a cut sequence in $A_k$
\[
a^k_0 < a^k_1 < \ldots < a^k_{N_k} < a^k_{N_k+1}
\]
that witnesses $A_k \cong N_k A_{k+1} + A_{k+2}$, i.e. such that $a^k_0, a^k_{N_k+1}$ are the endcuts of $A_k$, $[a^k_i, a^k_{i+1}] \cong A_{k+1}$ for $i < N_k$, and $[a^k_{N_k}, a^k_{N_k+1}] \cong A_{k+2}$.

Fix isomorphisms $f^k_i: A_{k+1} \rightarrow [a^k_i, a^k_{i+1}]$ for $i < N_k$, and an isomorphism $f^k_{N_k0}: A_{k+2} \rightarrow [a^k_{N_k}, a^k_{N_k+1}]$. (The subscript in the second map is the $2$-sequence $N_k0$.)

We inductively define convex embeddings $f_r: A_{|r|} \rightarrow A_0$ for each regular node $r \in T$, as follows. 
\begin{itemize}
    \item $f_{\emptyset} = \textrm{id}_{A_0}$; 
    \item If $r$ is regular with $|r| = k$ and $f_r$ is defined, define 
    \[
    f_{ri} = f_rf^k_i
    \]
    for all $i \in \{0, 1, \ldots, N_k-1, N_k0\}$. 
\end{itemize}
Denote the image of $f_r$ by $A_r$. By induction, $A_r \cong A_{|r|}$ for all regular $r$. For $r$ irregular, define $A_r = A_{r0}$. 

For a regular $r \in T$ with $|r| = k$, and for $i \in \{0, 1, \ldots, N_k-1, N_k0\}$, the intervals $A_{ri}$ partition $A_r$ in the lexicographic order of their indices. It follows that for $r, s \in T$, we have $A_r \subseteq A_s$ if $r$ extends $s$ and $A_r < A_s$ if $r <_{lex} s$ (i.e. if for some $i < \min(|r|, |s|)$, we have $r_i < s_i$).

For a fixed $u \in X$, define
\[
I_u = \bigcap_{n \in \omega} A_{u \upharpoonright n}. 
\]
The intersection of a descending sequence of intervals is either an interval or cut, which we view as an empty interval. Thus each $I_u$ is a (possibly empty) interval in $A_0$. It follows from the preceding paragraph that these intervals are ordered lexicographically by their indices $u$. Moreover, a straightforward induction shows that each point $a \in A_0$ belongs to a unique $I_u$, i.e. these intervals partition $A_0$. Thus $A_0 \cong X(I_u)$. 

Relativizing this discussion to $A_r$, we have that $A_r \cong X_r(I_u) \cong A_{|r|}$ for every regular $r \in T$. 

We next show that $I_u \cong I_v$ whenever $u E_0 v$, so that in fact each $A_r$ has the form $X_r(I_{[u]})$. 

For regular nodes $r, s \in T$ with $|r| = |s|$, the map $f_s f_r^{-1}: A_r \rightarrow A_s$ is an isomorphism. Further, for any $i \in \{0, 1, \ldots, N_k-1, N_k0\}$, we have 
\[
\begin{array}{rcl}
f_s f_r^{-1} f_{ri} & = & f_sf_r^{-1}f_rf_i^k \\
& = & f_sf_i^k \\
& = & f_{si}.
\end{array}
\]

By induction, for any finite sequence $t$ such that $rt \in T$ and $rt$ is regular, we have $f_s f_r^{-1}f_{rt} = f_{st}$. It follows $f_s f_r^{-1}[A_{rt}] = A_{st}$. 

Now suppose $u, v \in X$ and $u E_0 v$ as witnessed by the equations $u = ru'$ and $v = su'$ for some $r, s \in T$ with $|r| = |s|$. Extending $r$ and $s$ by the first entry of $u'$ if necessary (which must be $0$ if one of $r, s$ is irregular) we may assume $r$ and $s$ are regular. Then we have
\[
\begin{array}{rcl}
f_s f_r^{-1}[I_u] & = & f_s f_r^{-1}[I_{ru'}] \\
& = & f_s f_r^{-1} \left[\bigcap_n A_{(ru') \upharpoonright n} \right] \\
& = & f_s f_r^{-1} \left[\bigcap_{n \geq \textrm{dom}(r)} A_{(ru') \upharpoonright n} \right] \\ 
& = & f_s f_r^{-1} \left[\bigcap_n A_{r(u' \upharpoonright n)} \right] \\
& = & \bigcap_n f_s f_r^{-1} [A_{r(u' \upharpoonright n)}] \\
& = & \bigcap_n A_{s(u' \upharpoonright n)} \\
& = & I_{su'} \\
& = & I_v.
\end{array}
\]
Hence $I_u \cong I_v$. 

Since $u$ and $v$ were arbitrary, for any fixed $u \in X$ the intervals $I_v$ for $v \in [u]$ have a common order type. Let $I_{[u]}$ be a fixed order of this type for every class $[u]$. Then $A_0 \cong X(I_{[u]})$, and more generally, $A_r \cong X_r(I_{[u]}) \cong A_{|r|}$ for every regular $r$. This concludes the proof of (i.).

For (ii.): Fix $r, s \in T$ regular with $|r| = |s|$. We first check that the restricted replacements $X_r(I_{[u]})$ and $X_s(I_{[u]})$ are isomorphic. 

By the construction of $X$, for every infinite sequence $u$ we have $ru \in X$ if and only if $su \in X$. Since $ru E_0 su$ for any such $u$, and hence $I_{ru} = I_{su}$, the rule $(ru, i) \mapsto (su, i)$ is a well-defined mapping from $X_r(I_{[u]})$ to $X_s(I_{[u]})$. Moreover, it is clearly order-preserving and bijective, i.e., an order-isomorphism of $X_r(I_{[u]})$ and $X_s(I_{[u]})$. Thus $X_r(I_{[u]}) \cong X_s(I_{[u]})$, as claimed. 

Now, for each $k \in \omega$, fix an order $A_k$ of the same order type as every $X_r(I_{[u]})$ with $r$ regular and $|r| = k$. From the definition of $X$ we have
\[
X_r \cong X_{r0} + X_{r1} + \cdots + X_{r(N_k-1)} + X_{rN_k0}.
\]
Thus we have
\[
\begin{array}{rcl}
A_k & \cong & X_r(I_{[u]}) \\
& \cong & X_{r0}(I_{[u]}) + X_{r1}(I_{[u]}) + \cdots + X_{r(N_k-1)}(I_{[u]}) + X_{rN_k0}(I_{[u]}) \\
& \cong & A_{k+1} + A_{k+1} + \cdots + A_{k+1} + A_{k+2} \\
& \cong & N_k A_{k+1} + A_{k+2}.
\end{array}
\]
Hence $\{A_k\}$ is a non-terminating left division system on the coefficients $\mathcal{C}$. 
\end{proof}

\subsubsection{Labeling the division tree $X$} \label{subsubsect:labellingdivistreeX}

Our dynamical representation for a given non-terminating left division system is described in Section \ref{subsect:leftsystemrealrepn} below. It will be obtained from the symbolic representation in Theorem \ref{leftsymbolicrepnthm} via a certain pushforward map from the $\mathbb{Z}$-product of the division tree $X$ onto $\mathbb{R}$. 

Toward defining this representation, in this section we analyze the structure of $X$ and introduce some notation. It follows from the work below that $X$ is an order-theoretic Cantor set, i.e. a separable, complete linear order with a countable dense collection of jumps. 

From the definition of the division tree $T$ we have that the points in $X$ are precisely those $u \in \omega^{\omega}$ that satisfy, for each $k \in \omega$, the following conditions:
\begin{itemize}
    \item[i.] $0 \leq u_k \leq N_k$;
    \item[ii.] if $u_k = N_k$ then $u_{k+1} = 0$. 
\end{itemize}

For $k \in \omega$, let $0^k$ denote the $k$-length sequence $00 \ldots 0$. Notice $0^k \in T$ for every $k \in \omega$. Let $\overline{0}$ denote the identically zero sequence $000\ldots \in X$. Observe that $\overline{0}$ is the left endpoint of $X$. 

Let $e_k$ denote the sequence $0^kN_k0N_{k+2}0\ldots \in X$. Explicitly, $(e_k)_i = 0$ for $i < k$, and for $i \in \omega$, $(e_k)_{k+i} = N_{k+i}$ for $i$ even and $(e_k)_{k+i} = 0$ for $i$ odd. Observe $e_k$ is the right endpoint of the basic interval $X_{0^k}$. In particular, $e_0 = N_00N_20\ldots$ is the right endpoint of $X$. Also observe $e_k E_0 e_{k'}$ if and only if $k \equiv k' \pmod 2$. 

We say that a pair of points $a < b$ in a linear order $A$ is a \textit{jump pair} if $b$ is the successor of $a$ in $A$ (i.e. if there is no $c \in A$ such that $a < c < b$). We say $a$ is the \textit{left point} and $b$ is the \textit{right point} of the jump pair.

The proposition below identifies the jump pairs in $X$. 

\theoremstyle{definition}
\newtheorem{jumppairpropn}[lct]{Proposition}
\begin{jumppairpropn}\label{jumppairpropn}
Fix two points $u < v$ in $X$. 
\begin{itemize}
    \item[i.] $u, v$ are a jump pair if and only if for some regular node $r \in T$ of length $k = |r|$ we have:
    \begin{itemize}
        \item $0 \leq u_k < N_k$;
        \item $v_k = u_k + 1$;
        \item $u = ru_kN_{k+1}0N_{k+3}0\ldots$;
        \item $v = r(u_k+1)000\ldots = rv_k\overline{0}$.
    \end{itemize}
    \item[ii.] If $u, v$ are not a jump pair, there is $s \in T$ such that $u < X_s < v$. 
\end{itemize} 
\end{jumppairpropn}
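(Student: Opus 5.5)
The plan is to analyze $u<v$ through their first point of disagreement, relying on the explicit description of $X$: $u\in X$ if and only if $0\le u_k\le N_k$ for all $k$, and $u_k=N_k$ implies $u_{k+1}=0$. Let $k$ be least with $u_k\neq v_k$ and put $r=u\upharpoonright k=v\upharpoonright k$. Since $u<v$ we have $u_k<v_k\le N_k$, so $u_k<N_k$, and $r u_k$ is a node. Regularity of $r$ takes a small argument. If $r$ were irregular, it would be a node of the form $r'N_{k-1}$ whose only extension is $r0$, which would force $u_k=v_k=0$. Hence $r$ is regular, and the successors $r0,\dots,r(N_k-1),rN_k$ of $r$ list the basic intervals $X_{ri}$, partitioning $X_r$ in lexicographic order.

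First I will identify the extreme points of each basic interval. For regular $s$ with $|s|=j$, the left endpoint of $X_s$ is $s\overline{0}$, since all-zero tails are always admissible. The right endpoint is $sN_j0N_{j+2}0\dots$, obtained by choosing greedily the largest allowed entry at each position: $N_j$, then the forced $0$, then $N_{j+2}$, and so on. The same greedy argument shows that the maximum of $X_{ri}$ for $i<N_k$ is $ri\,N_{k+1}0N_{k+3}0\dots$, because $ri$ is regular of length $k+1$. This is the shifted analogue of $e_k$.

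For the backward direction of (i.), suppose $u$ and $v$ have the displayed form. Then $u=\max X_{ru_k}$ and $v=\min X_{r(u_k+1)}$, and these two intervals are adjacent blocks of the partition of $X_r$, so nothing lies between $u$ and $v$. For the forward direction together with (ii.), suppose $u<v$ with $k,r$ as above. If $v_k>u_k+1$, take $s=r(u_k+1)$; then $u<X_s<v$ and we are done. So assume $v_k=u_k+1$. If $u$ is not the maximum of $X_{ru_k}$, then $u$ and that maximum disagree at some first index $j>k$, where $u_j$ is strictly smaller than the greedy value. I will then produce a node $s$ extending $u\upharpoonright j$ with entry $u_j+1$ at position $j$, or, when $u_j+1$ violates the constraint because the previous entry was $N_{j-1}$, a suitable alternative. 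This gives $u<X_s\le\max X_{ru_k}<v$. If instead $v$ is not the minimum of $X_{rv_k}$, then some $v_j>0$ with $j>k$, and $s=(v\upharpoonright j)0$ lies in $T$ and satisfies $u<X_s<v$. If $u$ is the maximum and $v$ the minimum, we are exactly in the jump-pair form. This establishes (ii.) and the forward direction of (i.): whenever the form fails, a basic interval separates $u$ and $v$, and since basic intervals are nonempty, $u,v$ cannot be a jump pair.

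The main obstacle is the case where $u$ is not maximal in $X_{ru_k}$. There one must exhibit a node strictly between $u$ and the greedy maximum while respecting the constraint ``$N_j$ is followed by $0$.'' The key observation I plan to use is this: at the first index $j$ where $u$ departs from the greedy sequence, the greedy entry there is $N_j$, because positions forced to be $0$ must agree. Therefore $u_j<N_j$, and the node $(u\upharpoonright j)(u_j+1)$ is admissible, since the preceding entry is not $N_{j-1}$. I also need to check that this node is regular, or else pass to its extension by $0$.
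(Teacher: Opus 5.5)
Your proposal is correct and follows the same route as the paper's proof. Both take the first index $k$ of disagreement, separate $u$ and $v$ by $X_{r(u_k+1)}$ when $v_k>u_k+1$, and otherwise compare $u$ with the rightmost branch below $ru_k$ and $v$ with the leftmost branch below $rv_k$. You also supply details the paper leaves as ``easy to see'': why $r$ is regular, and the parity observation that $u$ first departs from $ru_kN_{k+1}0N_{k+3}0\ldots$ at a position where the greedy entry is $N_j$, which makes $(u\upharpoonright j)(u_j+1)$ an admissible node.
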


\begin{proof}
Since $u < v$ there is $r \in T$ of some length $k$ such that $u, v$ both begin with $r$ and $u_k < v_k$. Note that $r$ is necessarily regular. 

If $u_k+1 < v_k$, then $u < X_{r(u_k+1)} < v$. So suppose $u_k + 1 = v_k$. 

If for all $i \geq 0$ we have that $u_{k+i+1}$ is the rightmost node below $u_{k+i}$, and $v_{k+i+1}$ is leftmost below $v_{k+i}$, then $u = ru_kN_{k+1}0N_{k+3}0\ldots$ and $v = rv_k\overline{0}$. It is easy to see that $u, v$ form a jump pair, and conversely, any pair of this form is a jump pair. 

If either $u_{k+i+1}$ is not always rightmost below $u_{k+i}$ or $v_{k+i+1}$ is not always leftmost below $v_k$, we may find a finite sequence $s \in T$ lexicographically between $u$ and $v$, which gives $u < X_s < v$.
\end{proof}

It follows from Propositions \ref{E0classesdenseinX} and \ref{jumppairpropn} that for every pair $u < v$ in $X$ and $E_0$-class $[w]$, either $u, v$ form a jump pair or there is $w' \in [w]$ such that $u < w' < v$. 

From Proposition \ref{jumppairpropn}, if $v$ is the right point of a jump pair in $X$, then $v \in [\overline{0}]$. Conversely, if $v E_0 \overline{0}$ and $v > \overline{0}$, then $v$ is the right point of a jump pair. Indeed, if $k$ is maximal such that $v_k > 0$, so that $v = rv_k\overline{0}$ for some $r$ of length $k$, then $u = r(v_k-1)N_{k+1}0N_{k+3}0\ldots$ is the left point in a jump pair with $v$. 

If $u$ is the left point in a jump pair in $X$, then from Proposition \ref{jumppairpropn} either $u \in [e_0]$ or $u \in [e_1]$. Conversely, if $u \in [e_0]$ and $u < e_0$, then $u$ is the left point of a jump pair with some $v \in [\overline{0}]$. Namely, if $u_k$ is the last coordinate of $u$ that is not maximal below $u_{k-1}$, so that $u=ru_kN_{k+1}0N_{k+3}0\ldots$, then $v=r(u_k+1)\overline{0}$ is the successor of $u$. On the other hand, \textit{every} point $u \in [e_1]$ is the left point of a jump pair in $X$. 

If $u < v$ is a jump pair in $X$ and $u \in [e_0]$, then we say $u, v$ is a  \textit{backward orbit pair}. If $u \in [e_1]$ then it is a \textit{forward orbit pair}.

Since $[\overline{0}]$, $[e_0]$, and $[e_1]$ are disjoint, two jumps in $X$ are never consecutive. That is, no point in $X$ is at once the right point in a jump pair and the left point in another.

It will be useful to extend the symbolic representation from Theorem \ref{leftsymbolicrepnthm} of the initial order $A_0$ in a left division system to its $\mathbb{Z}$-product $\mathbb{Z}A_0$. To do this, we first consider the $\mathbb{Z}$-product $\mathbb{Z}X$ of the division order $X$. We extend $E_0$ to $\mathbb{Z}X$ by defining $(m, u) E_0 (n, v)$ in $\mathbb{Z}X$ whenever $u E_0 v$ in $X$.

The following proposition identifies the jump pairs in $\mathbb{Z}X$. Its proof is clear.

\theoremstyle{definition}
\newtheorem{jumppairsinZXpropn}[lct]{Proposition}
\begin{jumppairsinZXpropn}\label{jumppairsinZXpropn}
Fix points $(m, u) < (n, v)$ in $X$. Exactly one holds:
\begin{itemize}
    \item[i.] $n = m$ and $u < v$ is a jump pair in $X$;
    \item[ii.] $n = m+1$, $u = e_0$, and $v = \overline{0}$;
    \item[iii.] There is $s \in T$ and $k \in \mathbb{Z}$ with $m \leq k \leq n$ such that $(m, u) < X_{(k, s)} < (n, v)$. 
\end{itemize}
If (i.) or (ii.) holds, then $(m, u), (n, v)$ form a jump pair. Conversely, every jump pair in $\mathbb{Z}X$ satisfies (i.) or (ii.). 
\end{jumppairsinZXpropn}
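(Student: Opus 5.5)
The proof reduces everything to Proposition \ref{jumppairpropn} applied inside the blocks $\{m\}\times X$ of the lexicographic product $\mathbb{Z}X$. It also uses the facts that $\overline{0}$ is the least point and $e_0$ the greatest point of $X$. Throughout, I read the basic interval $X_{(k,s)}$ as the copy $\{k\}\times X_s$ of $X_s$ inside $\mathbb{Z}X$. Fix $(m,u)<(n,v)$, so either $m=n$ and $u<v$, or $m<n$. I would first show that at least one of (i.)--(iii.) holds, then show that (i.) and (ii.) each give a jump pair while (iii.) does not. Exclusivity and the converse then follow at once.

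First, suppose $m=n$. If $u,v$ is a jump pair in $X$, we are in (i.). Otherwise Proposition \ref{jumppairpropn}.(ii.) gives $s\in T$ with $u<X_s<v$, so $(m,u)<X_{(m,s)}<(m,v)$ and (iii.) holds with $k=m$.

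Next, suppose $m<n$. If $n\ge m+2$, take $k=m+1$ and $s=\emptyset$. Then the whole block $\{m+1\}\times X$ lies strictly between the two points, which is (iii.). So assume $n=m+1$.
\begin{itemize}
\item[a.] If $u<e_0$, I would exhibit a basic interval between $u$ and the right end of $X$. Since $u$ and $e_0$ are distinct, either they form a jump pair, or Proposition \ref{jumppairpropn}.(ii.) gives $X_s$ with $u<X_s<e_0$. In the jump-pair case, the singleton $\{e_0\}$ itself is a basic-interval-like set to the right of $u$ within block $m$. To stay literally inside the statement's notation, I would argue instead that $e_0$ is not the right point of any jump pair, since right points lie in $[\overline{0}]$ and $e_0\notin[\overline{0}]$. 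Hence $u<e_0$ is not a jump pair, so Proposition \ref{jumppairpropn}.(ii.) applies, and some $X_{(m,s)}$ lies strictly between $(m,u)$ and $(m+1,v)$.
\item[b.] Symmetrically, if $v>\overline{0}$: $\overline{0}$ is not a left point of any jump pair, since left points lie in $[e_0]\cup[e_1]$. So Proposition \ref{jumppairpropn}.(ii.) applied to $\overline{0}<v$ gives some $X_{(m+1,s)}$ strictly between the two points.
\item[c.] If $u=e_0$ and $v=\overline{0}$, we are in (ii.).
\end{itemize}

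For the jump-pair claims: in case (i.), any point strictly between $(m,u)$ and $(m,v)$ has first coordinate $m$, so it would have to come from a point of $X$ strictly between $u$ and $v$, and there is none. In case (ii.), any point strictly between $(m,e_0)$ and $(m+1,\overline{0})$ would have first coordinate $m$ or $m+1$, and would then exceed the maximum $e_0$ or precede the minimum $\overline{0}$, which is impossible.

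Conversely, (iii.) yields a point strictly between the two given points, because basic intervals are nonempty. So (iii.) excludes the jump-pair property, and hence excludes (i.) and (ii.). Cases (i.) and (ii.) are mutually exclusive since they require different relations between $m$ and $n$.

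The only delicate step is the boundary case (a./b.). It needs the observation, from the discussion after Proposition \ref{jumppairpropn}, that $e_0\notin[\overline{0}]$ and $\overline{0}\notin[e_0]\cup[e_1]$. I expect this check to be the main obstacle. It holds because $N_k\ge1$, so $e_0$ is not eventually zero; and $\overline{0}$ is not eventually equal to $e_0$ or $e_1$ for the same reason.
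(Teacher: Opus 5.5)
Your proof is correct and is essentially the argument the paper leaves to the reader (``Its proof is clear''). You reduce to Proposition~\ref{jumppairpropn} inside each block, take $s=\emptyset$ when $n\ge m+2$, and handle $n=m+1$ via the extremal points $e_0$ and $\overline{0}$ together with $e_0\notin[\overline{0}]$ and $\overline{0}\notin[e_0]\cup[e_1]$ (since every $N_k\ge 1$). You can delete the aside in (a.) about $\{e_0\}$ being ``basic-interval-like'', since your own argument shows that the jump-pair case there never arises.
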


Here $X_{(k, s)}$ denotes the interval in $\mathbb{Z}X$ consisting of points $(k, u)$ with $s$ initial in $u$. 

It follows from Proposition \ref{jumppairsinZXpropn} that in every jump pair $(m, u) < (n, v)$ in $\mathbb{Z}X$ we have $(n, v) E_0 (0, \overline{0})$, and either $(m, u) E_0 (0, e_0)$ or $(m, u) E_0 (0, e_1)$. Conversely, since in $\mathbb{Z}X$ the left endpoint $(m, \overline{0})$ in every copy of $X$ forms a jump pair with the right endpoint $(m-1, e_0)$ of the previous copy, now every point in $[(0, \overline{0})]$ is the right point in a jump pair, and every point in $[(0, e_0)] \cup [(0, e_1)]$ is the left point of a jump pair. 

Copying over the terminology from $X$, we say that pairs for which $(m, u) E_0 (0, e_1)$ are \textit{forward orbit pairs}, and pairs for which $(m, u) E_0 (0, e_0)$ are \textit{backward orbit pairs}. 

Given a left division system $\{A_k\}$ on the division coefficients $\mathcal{C}$, the representation for $A_0 \cong X(I_{[u]})$ from Theorem \ref{leftsymbolicrepnthm} can be lifted to get a representation for $\mathbb{Z}A_{0}$.

\theoremstyle{definition}
\newtheorem{leftsymbolicrepnthmforZA0}[lct]{Theorem}
\begin{leftsymbolicrepnthmforZA0}\label{leftsymbolicrepnthmforZA0}
Suppose $\{A_k: k \in \omega\}$ is a non-terminating left division system on the coefficients $\mathcal{C}$. 

Then there is a collection of orders $I_{[(n,u)]}$ indexed by the $E_0$-classes of $\mathbb{Z}X$ such that $\mathbb{Z}A_0 \cong \mathbb{Z}X(I_{[(n, u)]})$. Moreover, for every $n \in \mathbb{Z}$ and regular node $r \in T$ with $|r| = k$, we have $A_k \cong X_{(n,r)}(I_{[(n,u)]})$.
\end{leftsymbolicrepnthmforZA0}

\begin{proof}
Let $A_0 \cong X(I_{[u]})$ be the symbolic representation of $A_0$ from Theorem \ref{leftsymbolicrepnthm}. Define $I_{(n, u)} = I_u$ for every $n \in \mathbb{Z}$ and $u \in X$. Then $I_{(m, u)} \cong I_{(n, v)}$ whenever $(m, u)E_0(n,v)$. Hence the replacement $\mathbb{Z}X(I_{(n, u)})$ is a replacement of $\mathbb{Z}X$ up to $E_0$, which we denote $\mathbb{Z}X(I_{[(n, u)]})$. 

For a given $n \in \mathbb{Z}$ and $r \in T$, it is straightforward to see from the definition of the replacements that 
\[
X_{(n, r)}(I_{[(n, u)]}) \cong X_r(I_{[u]}) \cong A_{|r|}.
\]
In particular, for every $n \in \mathbb{Z}$ we have $X_{(n, \emptyset)}(I_{[(n, u)]}) \cong X(I_{[u]}) \cong A_0$, which yields
\[
\begin{array}{rcl}
\mathbb{Z}X(I_{[(n, u)]}) & \cong & \cdots + X_{(-1, \emptyset)}(I_{[(n, u)]}) + X_{(0, \emptyset)}(I_{[(n, u)]}) + X_{(1, \emptyset)}(I_{[(n, u)]}) + \cdots \\
& \cong & \mathbb{Z}A_0,
\end{array}
\]
as desired. 
\end{proof}

The extended representation of $\mathbb{Z}A_0$ from Theorem \ref{leftsymbolicrepnthmforZA0} will be pushed forward in the next section to get a representation of $\mathbb{Z}A_0$ as a replacement of $\mathbb{R}$. 

\subsection{Real representations of left systems}\label{subsect:leftsystemrealrepn}

We now turn to describing our dynamical representation for left systems. 

First, we will show that after pasting together the points in each of its jump pairs, $\mathbb{Z}X$ equipped with $E_0$ is naturally order-isomorphic to $\mathbb{R}$ equipped with the orbit equivalence relation $E_G$ of a group of translations generated by real numbers satisfying the equations from a division system on $\mathcal{C}$.

Consider the system of equations $\{a_k = N_k a_{k+1} + a_{k+2}\}$ on the coefficients $\mathcal{C}$. Define $a_0 = 1$ and then let $\{a_k: k \in \omega\}$ be the unique sequence of real numbers satisfying these equations. As discussed in Section \ref{section:linddivisthm}, the $a_k$ may be computed via the continued fraction expansions that are determined by the equations in the system. We have that $a_k$ is irrational for $k \geq 1$, and the sequence $\{a_k\}$ decreases to $0$.

A \textit{translation} of $\mathbb{R}$ is a map of the form $f(x) = x + a$ for some $a \in \mathbb{R}$. We often identify the translation $f$ with $a$, and by extension, the group of all translations of $\mathbb{R}$ (under composition) with the additive group $(\mathbb{R}, +)$. 

Let $G = \langle a_0, a_1 \rangle$ be the group of translations of $\mathbb{R}$ generated by the translations $f(x) = x + a_0 = x + 1$ and $g(x) = x + a_1$, which we identify with $1$ and $a_1$.  

Let $E_G$ be the orbit equivalence relation of $G$. Then for $x, y \in \mathbb{R}$ we have $x E_G y$ if and only if there are integers $m, n$ such that $y = x + m + na_1$. By the irrationality of $a_1$, every orbit equivalence class $[x]_{E_G} = [x]$ is dense in $\mathbb{R}$. 

If we view $G$ as a subgroup of $(\mathbb{R}, +)$, then $[0] = G$. 

Suppose $x \in [0]$, so that $x = m + na_1$ for some $m, n \in \mathbb{Z}$. We say that $x$ is in the \textit{forward orbit of $0$ under $G$} (or simply, the \textit{forward orbit of $G$}) if $n > 0$, and $x$ is in the \textit{backward orbit of $0$} if $n \leq 0$. 

Write $[0]_{E_G}^+ = [0]^+$ for the forward orbit of $0$ and $[0]_{E_G}^- = [0]^-$ for the backward orbit. Then $[0] = [0]^+ \cup [0]^-$. By the irrationality of $a_1$, $[0]^+ \cap [0]^- = \emptyset$. Both the forward and backward orbits of $0$ are dense in $\mathbb{R}$. 

Write $E_G^{\pm}$ for the equivalence relation obtained from $E_G$ by splitting $[0]$ into its forward and backward parts, so that $[0]^+ = [a_1]_{E_G^{\pm}}$ and $[0]^- = [0]_{E_G^{\pm}}$, and for $x \not\in [0]$, we have $[x]_{E_G^{\pm}} = [x]_{E_G}$. 

We adopt the convention that the notation $[x]$ without any subscript refers to $[x]_{E_G}$. When we wish to specify the $E_G^{\pm}$-class of $x$, we write $[x]_{E_G^{\pm}}$. 

There is a close connection between the forward and backward orbit distinction in $\mathbb{R}$ and $\mathbb{Z}X$ that explains the terminology in $\mathbb{Z}X$. 

\theoremstyle{definition}
\newtheorem{sigmadefn}[lct]{Definition}
\begin{sigmadefn}\label{sigmadefn}
Define a map $\sigma: \mathbb{Z}X \rightarrow \mathbb{R}$ by the rule
\[
\sigma(m, u) = m + \sum_{k \in \omega} u_k a_{k+1}.
\]
\end{sigmadefn}

We will use the map $\sigma$ to push the representation $\mathbb{Z}X(I_{[(n, u)]})$ of $\mathbb{Z}A_0$ as a replacement of $\mathbb{Z}X$ up to $E_0$ forward to a representation $\mathbb{R}(K_{[x]})$ of $\mathbb{Z}A_0$ as a replacement of $\mathbb{R}$ up to $E_G^{\pm}$. 

Toward this, we now establish a series of lemmas whose content is essentially that, up to condensing jump pairs to singletons and distinguishing the forward and backward orbits of the origin,  $\sigma$ is an order-isomorphism of $\mathbb{Z}X$ equipped with $E_0$ and $\mathbb{R}$ equipped with $E_G$.

\theoremstyle{definition}
\newtheorem{normalformsumdefn}[lct]{Definition}
\begin{normalformsumdefn}\label{normalformsumdefn}
A formal sum $\sum_{k \in \omega} x_k a_{k+1}$ with integer coefficients $x_k$ is a \textit{normal form sum} if for all $k \in \omega$ we have $0 \leq x_k \leq N_k$, and whenever $x_k = N_k$ then $x_{k+1} = 0$. 
\end{normalformsumdefn}

\theoremstyle{definition}
\newtheorem{sigmamekismplusaklem}[lct]{Lemma}
\begin{sigmamekismplusaklem}\label{sigmamekismplusaklem}
For all $m \in \mathbb{Z}$ and $k \in \omega$, we have $\sigma(m, e_k) = m + a_k$.
\end{sigmamekismplusaklem}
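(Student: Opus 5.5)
Since $\sigma(m, e_k) = m + \sum_{i \in \omega} (e_k)_i a_{i+1}$, the plan is to reduce to the identity
\[
\sum_{j \in \omega} N_{k+2j}\, a_{k+2j+1} = a_k,
\]
because by definition $(e_k)_i = 0$ for $i < k$, $(e_k)_{k+2j} = N_{k+2j}$, and $(e_k)_{k+2j+1} = 0$. The series has nonnegative terms, so convergence is automatic once the partial sums are shown to be bounded. In fact they will be computed exactly.

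The key step is to telescope using the defining equations $a_i = N_i a_{i+1} + a_{i+2}$. Rewriting each term as $N_{k+2j} a_{k+2j+1} = a_{k+2j} - a_{k+2j+2}$ gives
\[
\sum_{j < J} N_{k+2j}\, a_{k+2j+1} = a_k - a_{k+2J}.
\]
This can be verified by a trivial induction on $J$.

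To finish, let $J \to \infty$ and use the fact, recorded above, that $\{a_k\}$ decreases to $0$. Then $a_{k+2J} \to 0$, so the infinite sum equals $a_k$, and hence $\sigma(m, e_k) = m + a_k$.

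The only point needing care is this limit $a_{k} \to 0$. The paper quotes it as standard, but if a justification is wanted, one can argue directly. The sequence is positive and strictly decreasing, since $a_{k+2} < a_k$ and $a_{k+1} \le N_k a_{k+1} < a_k$. From $a_k \ge a_{k+1} + a_{k+2} \ge 2a_{k+2}$ we get $a_{k+2} \le a_k/2$, so $a_k \to 0$ geometrically along each parity class. The telescoping argument above only needs the even-shifted subsequence $a_{k+2J}$, so this bound suffices. I expect no real obstacle here; the lemma is essentially a telescoping computation.
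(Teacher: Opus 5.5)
Your proposal is correct and follows essentially the same route as the paper. The paper repeatedly expands the remainder term to get $a_k = \sum_{i \le j} N_{k+2i} a_{k+2i+1} + a_{k+2j+2}$ and lets $a_{k+2j+2} \to 0$, which is exactly your telescoping identity; your bound $a_{k+2} \le a_k/2$ (using positivity of the $a_k$) is a welcome justification of the limit, which the paper only asserts.
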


\begin{proof}
Fix $k \in \omega$. By repeated expansion of remainder terms, we have
\[
\begin{array}{rcl}
a_k & = & N_k a_{k+1} + a_{k+2} \\
& = & N_k a_{k+1} + N_{k+2} a_{k+3} + a_{k+4} \\
& \vdots & \\
& = & N_k a_{k+1} + N_{k+2} a_{k+3} + \cdots + N_{k+2i} a_{k+2i+1} + a_{k+2i+2}\\
& \vdots &.
\end{array}
\]

Since the terms $a_{k+2i+2}$ converge to $0$, the partial sums $\sum_{i=0}^m N_{k+2i} a_{k+2i+1}$ converge to $a_k$, i.e. $a_k$ may be expressed as a convergent series
\[
a_k = N_k a_{k+1} + N_{k+2} a_{k+3} + \cdots = \sum_{i\in\omega} N_{k+2i} a_{k+2i+1}.
\]
Now observe that
\[
\sigma(m, e_k) = m + 0a_0 + 0a_1 + \cdots + 0a_{k-1} +  \sum_{i\in\omega} N_{k+2i} a_{k+2i+1} = m + a_k,
\]
as claimed. 
\end{proof}

The next lemma will allow us to show that $\sigma$ is essentially $<$-preserving. 

\theoremstyle{definition}
\newtheorem{boundingnormalformsumlemma}[lct]{Lemma}
\begin{boundingnormalformsumlemma}\label{boundingnormalformsumlemma}
Suppose $\sum_i x_i a_{i+1}$ is a normal form sum such that $x_i = 0$ for all $i < k$. Then the sum converges to a real number $x \leq a_k$, and $x = a_k$ if and only if $x_{k+2i} = N_{k+2i}$ and $x_{k+2i+1} = 0$ for all $i \in \omega$. 
\end{boundingnormalformsumlemma}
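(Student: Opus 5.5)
Convergence comes first and is routine: the coefficients $x_i$ are nonnegative, so the partial sums are nondecreasing, and it suffices to bound them uniformly by $a_k$. So the real content is the following finite statement, which I would prove for all $k$ and all $n \geq k$ by downward induction on $k$ (or induction on $n-k$). If $x_k, \dots, x_{n}$ satisfy the normal form constraints ($0 \leq x_i \leq N_i$, and $x_i = N_i \Rightarrow x_{i+1} = 0$), then
\[
S_{k,n} := \sum_{i=k}^{n} x_i a_{i+1} \leq a_k - a_{n+2} \ \text{ if } x_n = N_n, \qquad S_{k,n} \leq a_k - a_{n+1} \ \text{ otherwise}.
\]
This is the same kind of bound as for Zeckendorf/Ostrowski expansions.

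For the inductive step I split on $x_k$. If $x_k < N_k$, then $x_k a_{k+1} \leq (N_k-1)a_{k+1}$, and by induction the tail from index $k+1$ is at most $a_{k+1}$ minus a positive remainder. This gives $S_{k,n} < N_k a_{k+1} \leq a_k - a_{k+2} < a_k$, with a quantified gap of at least $a_{k+2}$. If $x_k = N_k$, then $x_{k+1} = 0$, so the tail starts at index $k+2$ and by induction is bounded by $a_{k+2}$ minus a remainder. Using $a_k = N_k a_{k+1} + a_{k+2}$, this gives $S_{k,n} \leq a_k - (\text{remainder})$. Letting $n \to \infty$ and using $a_j \to 0$ yields $x \leq a_k$.

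For the equality case, the backward direction is exactly the computation in Lemma \ref{sigmamekismplusaklem}: the sum $\sum_i N_{k+2i}a_{k+2i+1}$ equals $a_k$. For the forward direction, I would first locate the least index $j \geq k$ where the pattern fails. At indices before $j$ the sum agrees with the pattern, so repeated use of $a_{k+2i} = N_{k+2i}a_{k+2i+1} + a_{k+2i+2}$ reduces the question to showing that the tail starting at $j$ is strictly less than $a_j$. Here $j$ has the right parity, so the pattern there demands $x_j = N_j$, and in the other parity case the pattern forces $x_j=0$ automatically, so failure must happen at a position demanding $N_j$. Thus $x_j < N_j$, and the first case of the inductive bound gives tail $\leq N_j a_{j+1} - a_{j+1} + a_{j+1} \leq a_j - a_{j+2}$ in the limit. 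More carefully, the tail is at most $(N_j - 1)a_{j+1} + a_{j+1} = N_j a_{j+1} = a_j - a_{j+2} < a_j$, since $a_{j+2} > 0$.

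The main obstacle is making the strict inequality survive the limit. A strict bound on every partial sum only gives $\leq$ after passing to $n \to \infty$. So the key point is to carry the explicit gap, a fixed positive $a_{j+2}$ independent of $n$, rather than mere strictness. I also need positivity of every $a_i$, which holds because the sequence is defined by continued fractions and decreases to $0$.
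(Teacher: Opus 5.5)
Your proof is correct. Its overall shape matches the paper's: you split on $x_k<N_k$ versus $x_k=N_k$ (which forces $x_{k+1}=0$). For the equality case you walk along the pattern $N_k,0,N_{k+2},0,\dots$ to the first failure index $j$, which must satisfy $j\equiv k \pmod 2$ and $x_j<N_j$.

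Where you genuinely differ is the key tail estimate. The paper splits the tail into even- and odd-offset terms and compares each termwise with the series $\sum_i N_{m+2i}a_{m+2i+1}=a_m$ from Lemma \ref{sigmamekismplusaklem}. Done correctly, that comparison bounds the two parts by $a_{k+1}$ and $a_{k+2}$, not by $a_{k+2}$ and $a_{k+3}$ as displayed there. For instance, $x_{k+1}=N_{k+1}$, $x_{k+2}=0$, $x_{k+3}=N_{k+3},\dots$ makes the odd part equal to $a_{k+1}>a_{k+2}$. So when $x_k<N_k$ the parity split only yields $x\le (N_k-1)a_{k+1}+a_{k+1}+a_{k+2}=a_k$, not the strict inequality that the ``only if'' direction needs.

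The parity split discards the coupling $x_i=N_i\Rightarrow x_{i+1}=0$ between neighbouring coefficients. That coupling is exactly what makes the sharp bound $\sum_{i>j}x_ia_{i+1}\le a_{j+1}$ true. Your induction on finite partial sums uses the coupling at every step, so it supplies precisely the estimate behind the paper's line $x\le (x_k+1)a_{k+1}$.

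One simplification: the remainder terms $a_{n+1},a_{n+2}$ are not needed. The plain bound $S_{k,n}\le a_k$ is already inductive: the case $x_k<N_k$ gives at most $N_ka_{k+1}$, and the case $x_k=N_k$ gives at most $N_ka_{k+1}+a_{k+2}=a_k$. As you correctly stress, strictness then comes from the fixed gap $a_{j+2}$ at the first failure index, not from strictness of the partial sums.
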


\begin{proof}
Let $x$ denote the normal form sum $\sum_i x_i a_{i+1}$. Since its first $k$ terms are $0$, we have
\[
x = \sum_i x_i a_{i+1} = \sum_i x_{k+i} a_{k+i+1} = x_k a_{k+1} + x_{k+1}a_{k+2} + \cdots.
\]
Suppose first $x_k < N_k$. Separating the terms $x_{k+i}a_{k+i+1}$ by parity, we have
\[
\begin{array}{rcl}
x & = & x_ka_{k+1} + (x_{k+1} a_{k+2} + x_{k+3}a_{k+4} + \cdots) + (x_{k+2}a_{k+3} + x_{k+4}a_{k+5} + \cdots) \\
& \leq & x_ka_{k+1} + a_{k+2} + a_{k+3},
\end{array}
\]
where the inequality follows from the proof of Lemma \ref{sigmamekismplusaklem} and the fact that $x$ is a normal form sum. In particular, the sums in the above expression converge, and hence $x$ does as well. 

Since $a_{k+2} + a_{k+3} \leq N_{k+1}a_{k+2} + a_{k+3} = a_{k+1}$, we have 
\[
x \leq (x_k + 1)a_{k+1} \leq N_ka_{k+1} < a_k,
\]
and we are done in this case. 

Now suppose $x_k = N_k$. Then, since $x$ is in normal form, $x_{k+1} = 0$. Isolating the term $x_{k+2}a_{k+3}$ and separating the subsequent terms by parity we have
\[
x = N_ka_{k+1} + x_{k+2}a_{k+3} + (x_{k+3}a_{k+4} + x_{k+4}a_{k+5} + \cdots)
\]
If $x_{k+2} < N_{k+2}$, then if we separate the parenthesized terms above by parity and again apply the proof of Lemma \ref{sigmamekismplusaklem} we obtain
\[
x \leq N_ka_{k+1} + (x_{k+2} + 1)a_{k+3} \leq N_ka_{k+1} + N_{k+2}a_{k+3} < N_ka_{k+1} + a_{k+2} = a_k.
\]
Otherwise $x_{k+2} = N_{k+2}$ and we induct. If at some finite stage the induction terminates we prove $x < a_k$. If it never terminates, we show
\[
x = N_ka_{k+1} + N_{k+2}a_{k+3} + \cdots = a_k,
\]
as claimed. 
\end{proof}

\theoremstyle{definition}
\newtheorem{sigmaontoRpropn}[lct]{Proposition}
\begin{sigmaontoRpropn}\label{sigmaontoRpropn}
The following hold:
\begin{itemize}
    \item[i.] The map $\sigma: \mathbb{Z}X \rightarrow \mathbb{R}$ is well-defined, that is, for every $u \in X$ the sum $\sum_{k} u_k \alpha_{k+1}$ converges. 
    \item[ii.] Given points $(m, u) < (n, v)$ in $\mathbb{Z}X$, we have $\sigma(m, u) \leq \sigma(n, v)$ in $\mathbb{R}$, and $\sigma(m, u) = \sigma(n, v)$ if and only if $(m, u), (n, v)$ form a jump pair.  
    \item[iii.] $\sigma$ is surjective.
\end{itemize}
\end{sigmaontoRpropn}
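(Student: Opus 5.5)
Part (i.) follows from Lemma \ref{boundingnormalformsumlemma} with $k = 0$. For $u \in X$, the sequence of coefficients $u_k$ satisfies exactly the normal form conditions of Definition \ref{normalformsumdefn}. So $\sum_k u_k a_{k+1}$ converges, to a value in $[0, a_0] = [0,1]$. Hence $\sigma(m,u) \in [m, m+1]$, with $\sigma(m, \overline{0}) = m$ and $\sigma(m, e_0) = m+1$ by Lemma \ref{sigmamekismplusaklem}.

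For (ii.), first dispose of the case $m < n$. Then $\sigma(m,u) \leq m+1 \leq n \leq \sigma(n,v)$. Equality forces $n = m+1$, $\sigma(m,u) = m+1$ and $\sigma(n,v) = n$. By the equality clause of Lemma \ref{boundingnormalformsumlemma} (with $k=0$), the first condition gives $u = e_0$. The second gives $v = \overline{0}$, since a normal form sum with nonnegative coefficients and positive $a_{i}$ is $0$ only when all coefficients vanish. This is exactly case (ii.) of Proposition \ref{jumppairsinZXpropn}.

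Now suppose $m = n$ and $u < v$. Let $r$ be the common initial segment of length $k$, so $u_k < v_k$. Then
\[
\sigma(m,v) - \sigma(m,u) = (v_k - u_k - 1)a_{k+1} + \Big(a_{k+1} - \sum_{i>k} u_i a_{i+1}\Big) + \sum_{i > k} v_i a_{i+1}.
\]
The tail $\sum_{i>k} u_i a_{i+1}$ is a normal form sum beginning at index $k+1$, so by Lemma \ref{boundingnormalformsumlemma} it is at most $a_{k+1}$. (Here $u_k < N_k$, so no constraint is violated at the shift.) Hence every bracket is nonnegative, and $\sigma(m,u) \le \sigma(m,v)$.

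Equality holds iff three things happen together. First, $v_k = u_k+1$. Second, the tail of $u$ attains $a_{k+1}$, i.e. $u_{k+1+2i} = N_{k+1+2i}$ and $u_{k+2+2i} = 0$ for all $i$. Third, the tail of $v$ vanishes, i.e. $v = r v_k \overline{0}$. This is precisely the jump pair description in Proposition \ref{jumppairpropn}(i.). The converse direction is the same computation run backwards.

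For (iii.), fix $x \in \mathbb{R}$ and write $x = m + t$ with $m = \lfloor x \rfloor$ and $t \in [0,1)$. It suffices to produce $u \in X$ with $\sum u_k a_{k+1} = t$. We use a greedy continued-fraction-type expansion. Set $t_0 = t < a_0$. Inductively, given a remainder $t_k$ with $0 \leq t_k < a_k$ (or $t_k \le a_k$ at odd steps), let $u_k = \lfloor t_k / a_{k+1} \rfloor$ and $t_{k+1} = t_k - u_k a_{k+1}$.

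Since $a_k = N_k a_{k+1} + a_{k+2} < (N_k+1)a_{k+1}$, we get $u_k \leq N_k$. If $u_k = N_k$, then $t_{k+1} < a_{k+2} < a_{k+1}$, forcing $u_{k+1} = 0$. So the normal form condition holds. Moreover $t_{k+1} < a_{k+1}$, so the remainders go to $0$ because $a_k \to 0$. Hence the series converges to $t$, and $u \in X$.

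The main obstacle is bookkeeping in (ii.). One must check that the tail bound of Lemma \ref{boundingnormalformsumlemma} applies to shifted normal form sums, and that the equality characterization matches the jump pair shape exactly, including the regularity of $r$ and the constraint $u_k < N_k$. In (iii.), the only subtlety is keeping the invariant $t_k < a_k$ strict, so the greedy digit never exceeds $N_k$.
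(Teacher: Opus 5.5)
Your proof is correct and follows essentially the same route as the paper. For (ii.) you compare at the first differing coordinate and bound the tails with Lemma \ref{boundingnormalformsumlemma}, splitting off the case $m<n$ where the paper instead sets $u_{-1}=m$, $v_{-1}=n$; for (iii.) you use the same greedy expansion, and your hedge ``or $t_k \le a_k$ at odd steps'' is unnecessary, since your own argument shows the strict invariant $0 \le t_k < a_k$ is preserved at every step.
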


\begin{proof}
For (i.): By definition of $X$, $\sum_k u_k a_{k+1}$ is a normal form sum. By Lemma \ref{boundingnormalformsumlemma}, this sum converges to a real number $x \leq a_0 = 1$. 

For (ii.): Writing $u_{-1} = m$ and $v_{-1} = n$, let $k$ be least such that $u_k \neq v_k$. Then $u_k < v_k$, and we have
\[
\begin{array}{rcl}
\sigma(m, u) & = & u_{k-1} + u_0a_1 + \cdots + u_{k-1}a_k + u_ka_{k+1} + \sum_i u_{k+i +1} a_{k+i+2}, \\
\sigma(n, v) & = & u_{k-1} + u_0a_1 + \cdots + u_{k-1}a_k + v_ka_{k+1} + \sum_i v_{k+i+1} a_{k+i+2}.
\end{array}
\]

By Lemma \ref{boundingnormalformsumlemma}, we have $\sum_i u_{k+i+1} a_{k+i+2} \leq a_{k+1}$. It moreover follows from Lemma \ref{boundingnormalformsumlemma} that $\sum_i u_{k+i+1} a_{k+i+2} = a_{k+1}$ and $\sum_i v_{k+i+1} a_{k+i+2} = 0$ if $(m, u), (n, v)$ form a jump pair. In this case, we have $\sigma(m, u) = \sigma(n, v)$. 

If they do not form a jump pair, then again by Lemma \ref{boundingnormalformsumlemma} we have that either $\sum_i u_{k+i+1} a_{k+i+2} < a_{k+1}$ or $\sum_i v_{k+i+1} a_{k+i+2} > 0$, in which case $\sigma(m, u) < \sigma(n, v)$, as claimed. 

For (iii.): We sketch the idea, which is straightforward. Fix $x \in \mathbb{R}$. 

Let $m \in \mathbb{Z}$ be least such that $x \in [m, m+1)$. Now let $u_0$ be maximal such that $m + u_0a_1 \leq x$. Since $1 = a_0 = N_0a_1 + a_2$, we have $0 \leq u_0 \leq N_0$. 

If $u_0 < N_0$, then $x \in [m + u_0a_1, m + (u_0+1)a_1)$, which has length $a_1 = N_1a_2 + a_3$. In this case, let $u_1$ be maximal such that $m + u_0a_1 + u_1a_2 \leq x$, and continue. 

If instead $u_0 = N_0$, then $x \in [m + N_0a_1, m + N_0a_1 + a_2) = [m + N_0a_1, 1)$, which has length $a_2 = N_2a_3 + a_4$. In this case, let $u_1 = 0$, and let $u_2$ be maximal such that $m + u_0a_1 + u_2a_3 \leq x$, and continue. 

The partial sums $m + u_0a_1 + u_1a_2 + \cdots + u_ka_{k+1}$ constructed this way converge to $x = m + \sum_i u_ia_{i+1} = \sigma(m, u)$, where $u = (u_0, u_1, \ldots)$. By construction, the sum $\sum_i u_ia_{i+1}$ is in normal form and thus $u \in X$. Hence $\sigma$ is surjective. 
\end{proof}

We write $[(0, \overline{0})]^+$ for the set of $(n, v) \in [(0, \overline{0})]$ belonging to forward orbit jump pairs, and correspondingly $[(0, \overline{0})]^-$ for the set of $(n, v) \in [(0, \overline{0})]$ belonging to backward orbit pairs. 

\theoremstyle{definition}
\newtheorem{sigmaimagesoffwdbwdorbitsarefwdbwdorbits}[lct]{Lemma}
\begin{sigmaimagesoffwdbwdorbitsarefwdbwdorbits}\label{sigmaimagesoffwdbwdorbitsarefwdbwdorbits}
The images of $[(0, \overline{0})]^+$ and $[(0, \overline{0})]^-$ under $\sigma$ are $[0]^+$ and $[0]^-$ respectively. 
\end{sigmaimagesoffwdbwdorbitsarefwdbwdorbits}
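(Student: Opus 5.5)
We must show that $\sigma$ maps $[(0,\overline{0})]^+$ onto $[0]^+$ and $[(0,\overline{0})]^-$ onto $[0]^-$. The plan is to first compute $\sigma$ on the $E_0$-classes of $(0,\overline 0)$, $(0,e_0)$ and $(0,e_1)$, and then use surjectivity of $\sigma$ (Proposition \ref{sigmaontoRpropn}) for the reverse inclusions.

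\textbf{Step 1: $\sigma$ on $[(0,\overline 0)]$.} If $(n,v)E_0(0,\overline 0)$, then $v$ is eventually zero. Hence $\sigma(n,v)=n+\sum_{k<K}v_ka_{k+1}$ is a finite integer combination of the $a_j$. Each $a_j$ is an integer combination $p_j+q_ja_1$, by downward recursion from $a_{k+2}=a_k-N_ka_{k+1}$. So $\sigma(n,v)\in G=[0]$.

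\textbf{Step 2: $\sigma$ on the left points of jump pairs.} Left points lie in $[(0,e_0)]\cup[(0,e_1)]$. For $(m,u)$ with $u=ru_kN_{k+1}0N_{k+3}0\ldots$, Lemma \ref{sigmamekismplusaklem} shows the tail sums to $a_{k+1}$. This again gives a point of $G$, namely the value $\sigma$ takes at the paired right point.

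\textbf{Step 3: sorting into forward and backward orbits.} The crux is the parity bookkeeping of the $a_1$-coefficient. Write $a_j=p_j+q_ja_1$ with $q_0=0$, $q_1=1$ and $q_{j+2}=q_j-N_jq_{j+1}$. By induction, $q_j>0$ for $j$ odd and $q_j\le 0$ for $j$ even. Since $a_1$ is irrational, the sign of the $a_1$-coefficient of a point of $G$ is well defined.

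Take a jump pair with left point $u=ru_kN_{k+1}0\ldots$. Its common $\sigma$-value is $m+\sigma'(r)+u_ka_{k+1}+a_{k+1}$. I would instead evaluate it through the tail-type of $u$. The pair is forward exactly when $u\in[e_1]$, i.e. when $k+1$ is odd, so the tail $e_{k+1}$ has the parity of $e_1$. The key identity to prove by induction on the finite prefix is
\[
\sigma(m,u)-\sigma(0,e_{j}) \in \mathbb{Z}+\mathbb{Z}a_1,
\]
with the $a_1$-coefficient tracked to land in the forward or backward orbit according to the parity of $j$.

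This step is the main obstacle. The naive coefficient can mix signs because the prefix $r$ contributes terms $u_ia_{i+1}$ of both parities. My fallback is a monotonicity argument:
\begin{enumerate}
\item[a.] Use order-preservation of $\sigma$ (Proposition \ref{sigmaontoRpropn}.ii).
\item[b.] Use density, together with the fact that the images of forward and backward pairs are disjoint.
\item[c.] Show $\sigma$ restricted to forward pairs is an order-isomorphism onto a countable dense set containing $\sigma(0,e_1)=a_1\in[0]^+$.
\item[d.] Check that this set is invariant under translation by $1$ and by $a_1$, and by $-1$ and $-a_1$ where defined.
\end{enumerate}
Invariance under these translations would follow from the self-similarity maps $ru'\mapsto su'$ used in Theorem \ref{leftsymbolicrepnthm}.ii. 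A set containing $a_1$ with this invariance contains $[0]^+$, and likewise the backward images contain $0=\sigma(0,\overline0)$ and hence $[0]^-$.

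\textbf{Step 4: reverse inclusions.} Given $x\in[0]^+$, Proposition \ref{sigmaontoRpropn}.iii gives a preimage. The greedy construction in that proof then produces either an eventually-zero $v$ or the left point of a jump pair. Step 3 identifies which class the pair lies in, which completes both equalities.
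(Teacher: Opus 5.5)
Your Step 1, the recursion $q_{j+2}=q_j-N_jq_{j+1}$ with its sign pattern, and the overall plan (show the two inclusions, show every point of $[0]$ is hit, then use disjointness) all match the paper. The gap is exactly where you flag it: Step 3 is never carried out, and it is the whole content of the lemma. The mixed signs coming from the prefix are not an obstruction; they are handled by an extremal bound, not by term-by-term bookkeeping.

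A forward right point $(n,v)$ has finite support, with last nonzero entry $v_{2k}$ at an even index. So the $a_1$-coefficient of $\sigma(n,v)$ is $\sum_{i\le 2k}v_iq_{i+1}$, where $q_{i+1}>0$ for even $i$ and $q_{i+1}<0$ for odd $i$. Over the box $0\le v_i\le N_i$, $v_{2k}\ge 1$, this sum is minimized at $v_{2i}=0$ ($i<k$), $v_{2k}=1$, $v_{2i+1}=N_{2i+1}$. That configuration need not be in normal form; it only gives a lower bound. There the sum is the coefficient of $N_1a_2+N_3a_4+\cdots+N_{2k-1}a_{2k}+a_{2k+1}$, which telescopes via $a_j=N_ja_{j+1}+a_{j+2}$ to $a_1$. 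Hence the coefficient is $\ge 1$ and $\sigma(n,v)\in[0]^+$.

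Symmetrically, a backward right point is either $(n,\overline 0)$, with $\sigma=n\in[0]^-$, or has last nonzero entry at an odd index $2k+1$. In the latter case the maximal configuration gives $N_0a_1+N_2a_3+\cdots+N_{2k}a_{2k+1}+a_{2k+2}=a_0=1$, so the coefficient is $\le 0$. This is the paper's argument.

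Your fallback does not replace this. The maps $ru'\mapsto su'$ preserve tails, but every right point has tail $\overline 0$. Its label is determined by the parity of the position of its last nonzero entry, and these maps change that parity whenever the entry lies in the prefix. For example, when $N_0\ge 2$, the map $1\overline 0\mapsto 0\overline 0$ sends a forward right point to a backward one; this is translation by $-a_1$.

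More fundamentally, ``invariance under $\pm 1,\pm a_1$ where defined'' does not encode the asymmetry you need. The forward images must be closed under $+a_1$ only, and the backward images under $-a_1$ only, and that asymmetry is precisely what the lemma asserts. If you work with left points instead, whose labels are their $E_0$-classes, label preservation becomes automatic. But then realizing translation by $a_1$ at every point requires the same normal-form carrying you were trying to avoid.

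Finally, Step 4 is also unsupported. Nothing in the greedy construction of Proposition \ref{sigmaontoRpropn}.iii shows that a point of $[0]$ has a finitely supported or $e_0/e_1$-tailed preimage. The paper proves this directly: it shows every $m+ka_1$ has a finitely supported normal form, by induction on $k$ with carries such as $N_0a_1+a_1=1+(N_1-1)a_2+a_3$. The two inclusions together with $[0]^+\cap[0]^-=\emptyset$ then give equality.
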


\begin{proof}
We first claim that for $(n, u) \in [(0, \overline{0})]^+$ we have $\sigma(n, u) \in [0]^+$. Since $(n, u) \in [(0, \overline{0})]^+$, the last nonzero entry $u_{2k}$ of $u$ is of even index. Thus we have
\begin{equation}\label{kool}
\sigma(n, u) = n + u_0a_1 + u_1a_2 + \cdots + u_{2k-1}a_{2k} + u_{2k}a_{2k+1}
\end{equation}

Since $a_{k+2} = a_k - N_ka_{k+1}$ for all $k$ and $a_1 \equiv 1a_1 \pmod 1$, by induction we have that for all $k \geq 1$ there is an integer $M_k > 0$ such that $a_k \equiv M_k a_1 \pmod 1$ when $k$ is odd and $a_k \equiv -M_ka_1 \pmod 1$ when $k$ is even. 

Thus we have
\[
\begin{array}{rcl}
\sigma(n, u) & \equiv & u_0M_1a_1 - u_1M_2a_1 + \cdots - u_{2k-1}M_{2k}a_1 + u_{2k}M_{2k+1}a_1 \pmod 1 \\
& \equiv & (u_0M_1 - u_1M_2 + \cdots - u_{2k-1}M_{2k} + u_{2k}M_{2k+1})a_1 \pmod 1.
\end{array}
\]

Since the sum \ref{kool} for $\sigma(n, u)$ is in normal form, the minimal possible value of the coefficient sum $u_0M_1 - u_1M_2 + \cdots - u_{2k-1}M_{2k} + u_{2k}M_{2k+1}$ above occurs when $u_0 = u_2 = \ldots = u_{2k-2} = 0$, $u_{2k} = 1$ (as we must have $u_{2k} > 0$), and $u_{2i+1} = N_{2i+1}$ for all $i < k$, i.e., when 
\[
\begin{array}{rcl}
\sigma(n, u) & = & n + N_1a_2 + N_3a_4 + \cdots + N_{2k-1}a_{2k} + a_{2k+1} \\
& = & n + a_1. 
\end{array}
\]

Thus the coefficient sum $u_0M_1 - u_1M_2 + \cdots - u_{2k-1}M_{2k} + u_{2k}M_{2k+1}$ is positive. Label it $N$. Then $\sigma(n, u) \equiv Na_1 \pmod 1$ and hence $\sigma(n, u) = m'+ Na_1$ for some $m' \in \mathbb{Z}$, i.e. $\sigma(n, u) \in [0]^+$. 

A symmetric argument shows that for $(n, u) \in [(0, \overline{0})]^-$, we have $\sigma(n, u) \in [0]^-$. 

To finish the proof of the lemma, it suffices to check that $\sigma$ maps $[(0, \overline{0})]$ onto $[0]$. For this, it suffices to check that for every integer $k$, we have that $x = ka_1$ can be written as a normal form sum with only finitely many nonzero coefficients, as then there is a corresponding $(n, u) \in [(0, \overline{0})]$ such that $\sigma(n, u) = x$. 

We first show this is true for $k > 0$, by induction. Since $1a_1$ is a normal form sum, the claim holds for $k = 1$. Suppose it is true for $k$, so that we have
\[
ka_1 = m + x_0a_1 + \cdots + x_na_{n+1}
\]
in normal form. Then
\[
(k+1)a_1 = m + (x_0 + 1)a_1 + \cdots + x_na_{n+1}.
\]

If $x_0 < N_0$, then this sum is in normal form. Otherwise $x_0 = N_0$ and $x_1 = 0$. Expanding the second term in the sum as 
\[
N_0a_1 + a_1 = N_0a_1 + N_1a_2 + a_3 = 1 + 0a_1 + (N_1-1)a_2 + a_3
\]
we have
\[
(k+1)a_1 = (m+1) + 0a_1 + (N_1 - 1)a_2 + (x_2 + 1)a_3 + \cdots + x_n a_{n+1}.
\]

If $x_2 < N_2$, then this sum is in normal form. Otherwise, $x_2 = N_2$ and $x_3 = 0$. Now similarly, we expand the term $(x_2 + 1)a_3$ as 
\[
N_2a_3 + a_3 = N_2a_3 + N_3a_4 + a_5 = a_2 + (N_3-1)a_4 + a_5,
\]
and then write the sum as follows:
\[
(k+1)a_1 = (m+1) + 0a_1 + N_1a_2 + 0a_3 + (N_3 - 1)a_4 + (x_4+1)a_5 + \cdots + x_n a_{n+1}.
\]
And so on. This process terminates in a sum in normal form, either before reaching the last nonzero term in the original sum, or as a sum of the form
\[
(k+1)a_1 = (m+1) + 0a_1 + N_1a_2 + 0a_3 + N_3a_4 + 0a_5 + \cdots + N_{2k-1}a_{2k} + a_{2k+1}.
\]
In either case, we finish having written $(k+1)a_1$ as a normal form sum with only finitely many nonzero coefficients, as desired. 

For $k < 0$, we proceed similarly. First we note 
\begin{equation}\label{look}
-a_1 = -1 + (1-a_1) = -1 + (N_0 - 1)a_1 + a_2,
\end{equation}
which is of the desired normal form. For a fixed $k < 0$, suppose we have
\[
ka_1 = m + x_0a_1 + \cdots + x_na_{n+1}
\]
in normal form. If $x_0 \geq 1$, then
\[
(k-1)a_1 = m + (x_0 - 1)a_1 + \cdots + x_n a_{n+1},
\]
which is in normal form. Otherwise, $x_0 = 0$ and then from \ref{look} we have
\[
(k-1)a_1 = ka_1 + (-a_1) =  m-1 + (N_0-1)a_1 + (x_1 + 1)a_2 + \cdots + x_na_{n+1}.
\]
If $x_1 < N_1$, then this sum is in normal form. Otherwise, $x_1 = N_1$ and $x_2 = 0$. Now by a similar argument as in the $k > 0$ case we continue, finishing either before reaching the last nonzero term, or as a sum in the form
\[
(k-1)a_1 = m-1 + N_0a_1 + 0a_2 + N_2a_3 + \cdots + N_{2(k-1)}a_{2k-1} + a_{2k}.
\]
In either case, we have written $(k-1)a_1$ as a sum of the desired normal form. We are done. 
\end{proof}

\theoremstyle{definition}
\newtheorem{sigmapreservesE0lemma}[lct]{Lemma}
\begin{sigmapreservesE0lemma}\label{sigmapreservesE0lemma}
Suppose $x, y \in \mathbb{R}$, $x, y \not \in [0]$, and $x E_G^{\pm} y$ (equivalently, $x E_G y$). Then $\sigma^{-1}(x)$ and $\sigma^{-1}(y)$ are singletons and we have $\sigma^{-1}(x) E_0 \sigma^{-1}(y)$. 
\end{sigmapreservesE0lemma}

\begin{proof}
Proposition \ref{sigmaontoRpropn} implies that for $(n, u) \in \mathbb{Z}X$, we have $\sigma(n, u) \in [0]$ if and only if $(n, u)$ belongs to a jump pair. Thus $x, y \not\in [0]$ implies $\sigma^{-1}(x) = (m,u)$ and $\sigma^{-1}(y) = (n, v)$ are singletons. 

Said another way, $x, y$ have unique normal form sum representations:
\[
\begin{array}{rcl}
x & = & m + \sum_i u_i a_{i+1} \\
y & = & n + \sum_i v_i a_{i+1}.
\end{array}
\]

From $x E_G y$ we have $y = x + l + ka_1$ for some $l, k \in \mathbb{Z}$. We want to show $(m, u) E_0 (n, v)$. It suffices to check the cases when $y = x + a_1$ and $y = x - a_1$, as then the general case follows easily by induction. 

We have
\[
x + a_1 = m + (u_0 + 1)a_1 + u_1a_2 + \cdots.
\]
If $u_0 < N_0$, then this sum is in normal form. Otherwise $u_0 = N_0$ and $u_1 = 0$. Proceed as in the proof of Lemma \ref{boundingnormalformsumlemma}. The rewriting process must terminate at some finite stage, since the only way it does not is if $u_{2k} = N_{2k}$ and $u_{2k+1} = 0$ for all $k$. But then $(n, u) = (n, e_0)$ is the left point of a jump pair, a contradiction. 

Thus we finish with a normal form sum representation of $x + a_1$. The proof for $x - a_1$ is similar. 
\end{proof}

Below is our dynamical representation theorem for left division systems on the coefficients $\mathcal{C}$.

\theoremstyle{definition}
\newtheorem{leftrealrepnthm}[lct]{Theorem}
\begin{leftrealrepnthm}\label{leftrealrepnthm}
Suppose that $\{A_k\}$ is a non-terminating left division system on the coefficients $\mathcal{C}$. Then there is a collection of orders $K_{[x]}$ indexed by the $E_G^{\pm}$-classes of $\mathbb{R}$ such that $\mathbb{Z}A_0 \cong \mathbb{R}(K_{[x]})$. 

Moreover, there are decompositions
\[
\begin{array}{rcl}
K_0 & \cong & L + R \\
K_{a_1} & \cong & L' + R
\end{array}
\]
such that for every $k \in \omega$, we have
\[
\begin{array}{rcl}
\textrm{$k$ even} & \Rightarrow & A_k \cong R + (0, a_k)(K_{[x]}) + L \\
\textrm{$k$ odd} & \Rightarrow & A_k \cong R + (0, a_k)(K_{[x]}) + L'.
\end{array}
\]
\end{leftrealrepnthm}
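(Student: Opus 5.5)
We push the symbolic representation of Theorem \ref{leftsymbolicrepnthmforZA0} forward along $\sigma$. Start from $\mathbb{Z}A_0 \cong \mathbb{Z}X(I_{[(n,u)]})$. By Proposition \ref{sigmaontoRpropn}, $\sigma$ is a surjective order-homomorphism $\mathbb{Z}X \to \mathbb{R}$ whose nontrivial fibres are exactly the jump pairs. So $\sigma$ is a condensation map, and it collapses each jump pair $(m,u)<(n,v)$ to one real $x\in[0]$. Define $K_x = \sigma^{-1}(x)(I_{[(n,u)]})$. For $x\notin[0]$ this is just $I_{\sigma^{-1}(x)}$. For $x\in[0]$ it is $I_{(m,u)}+I_{(n,v)}$. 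The associativity law for replacements then gives $\mathbb{Z}A_0 \cong \mathbb{R}(K_x)$.

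Next we check that this replacement is up to $E_G^{\pm}$. For $x,y\notin[0]$ with $xE_Gy$, Lemma \ref{sigmapreservesE0lemma} gives $\sigma^{-1}(x)E_0\sigma^{-1}(y)$, hence $K_x\cong K_y$. For $x\in[0]$, Lemma \ref{sigmaimagesoffwdbwdorbitsarefwdbwdorbits} says $x\in[0]^-$ exactly when its fibre is a backward orbit pair and $x\in[0]^+$ exactly when it is a forward orbit pair. In a backward pair the right point is $E_0$-equivalent to $(0,\overline 0)$ and the left point to $(0,e_0)$. In a forward pair the right point is again $E_0$-equivalent to $(0,\overline 0)$ and the left point to $(0,e_1)$. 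So we set $R=I_{[(0,\overline0)]}$, $L=I_{[(0,e_0)]}$ and $L'=I_{[(0,e_1)]}$, and obtain $K_x\cong L+R$ on $[0]^-$ (in particular $K_0$) and $K_x\cong L'+R$ on $[0]^+$ (in particular $K_{a_1}$). We then fix representatives $K_{[x]}$ for each class.

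The main work is the formula for $A_k$. Take the regular node $0^k$. By Theorem \ref{leftsymbolicrepnthmforZA0}, $A_k\cong X_{(0,0^k)}(I)$, and $X_{(0,0^k)}$ is the interval $[(0,\overline0),(0,e_k)]$ of $\mathbb{Z}X$. By Lemma \ref{sigmamekismplusaklem} and Proposition \ref{sigmaontoRpropn}, $\sigma$ maps this interval onto $[0,a_k]$. Every point strictly between the endpoints has its whole $\sigma$-fibre inside the interval, since no jump pair straddles an endpoint except at the endpoints themselves. So the interior contributes exactly $(0,a_k)(K_{[x]})$.

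The endpoints are the delicate part. The point $(0,\overline0)$ is the right point of the backward pair with $(-1,e_0)$, so only its half $R$ is included. The point $(0,e_k)$ is the left point of a jump pair, and only that half is included. This half is $I_{[(0,e_k)]}$, which equals $L$ for $k$ even and $L'$ for $k$ odd, because $e_kE_0e_{k'}$ if and only if $k\equiv k'\pmod 2$. Thus $A_k\cong R+(0,a_k)(K_{[x]})+L$ for $k$ even and $A_k\cong R+(0,a_k)(K_{[x]})+L'$ for $k$ odd. The obstacle to watch is the endpoint bookkeeping, in particular that the half-fibres at the ends are exactly $R$ and $L$ or $L'$, with no further jump pairs hidden at the boundary; Propositions \ref{jumppairpropn} and \ref{jumppairsinZXpropn} handle this.
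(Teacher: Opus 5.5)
Your proposal is correct and follows the paper's proof essentially step for step. You push the symbolic representation $\mathbb{Z}X(I_{[(n,u)]})$ forward along $\sigma$, merge each jump pair into $K_x = I_{(m,u)} + I_{(n,v)}$ over $[0]$, and use Lemmas \ref{sigmapreservesE0lemma} and \ref{sigmaimagesoffwdbwdorbitsarefwdbwdorbits} to get invariance under $E_G^{\pm}$. You then read off $A_k$ from $X_{(0,0^k)} = [(0,\overline{0}),(0,e_k)]$ using $\sigma(0,e_k)=a_k$. Your endpoint bookkeeping keeps only the half $R$ of $K_0$ and only the half $I_{(0,e_k)}$ at $a_k$, which is $L$ or $L'$ by parity. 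This is spelled out a bit more carefully than in the paper.
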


Here $(0, a_k)$ denotes the open interval in $\mathbb{R}$, and $(0, a_k)(K_{[x]})$ the restriction of the replacement $\mathbb{R}(K_{[x]})$ to this interval. 

\begin{proof}
Let $\mathbb{Z}X(I_{[(n, u)]})$ be the symbolic representation of $\mathbb{Z}A_0$ from Theorem \ref{leftsymbolicrepnthmforZA0}. 

We push forward via $\sigma$ the replacement $\mathbb{Z}X(I_{[(n, u)]})$ of $\mathbb{Z}X$ to a replacement $\mathbb{R}(K_x)$ of $\mathbb{R}$. More precisely, for $x \in \mathbb{R}$ such that $\sigma^{-1}(x)$ is a singleton $(m, u)$, define 
\[
K_x = I_{(m, u)}.
\]
By Proposition \ref{sigmaontoRpropn}, this happens exactly for $x \not\in [0]$. 

For $x \in [0]$, $\sigma^{-1}(x)$ is a jump pair $\{(m, u), (n, v)\}$, say with $(m, u) < (n, v)$. For such an $x$, define
\[
K_x = I_{(m, u)} + I_{(n, v)}. 
\]

Consider the resulting replacement $\mathbb{R}(K_x)$. It follows from Lemmas \ref{sigmaimagesoffwdbwdorbitsarefwdbwdorbits} and \ref{sigmapreservesE0lemma} and the fact that $\mathbb{Z}X(I_{[(n, u)]})$ is a replacement up to $E_0$ that $\mathbb{R}(K_x)$ is a replacement up to $E_G^{\pm}$. We denote it $\mathbb{R}(K_{[x]})$.

For $(m, u) < (n, v)$ a jump pair in $\mathbb{Z}X$, for the moment let us view the sum $I_{(m, u)} + I_{(n, v)}$ set-theoretically as the disjoint union of $I_{(m, u)}$ and $I_{(n, v)}$. Then the rule
\begin{equation}\label{pushit}
(n, u, i) \mapsto (\sigma(n, u), i)
\end{equation}
defines a well-defined map from $\mathbb{Z}X(I_{[(n, u)]})$ to $\mathbb{R}(K_{[x]})$, which for jump pairs $(m, u) < (n, v)$ takes the orders $I_{(m, u)}$ and $I_{(n, v)}$ replacing the adjacent points $(m, u), (n, v)$ onto the order $I_{(m, u)} + I_{(n, v)}$ replacing $x = \sigma(m, u) = \sigma(n, v)$. By the definition of the replacement $\mathbb{R}(K_{[x]})$, this map is clearly an order-isomorphism. Hence 
\[
\mathbb{Z}A_0 \cong \mathbb{Z}X(I_{[(n, u)]}) \cong \mathbb{R}(K_{[x]}).
\]

Next, from the definition of $\mathbb{R}(K_{[x]})$ we have the isomorphisms
\[
\begin{array}{rcccl}
K_0 & \cong & I_{(-1, e_0)} + I_{(0, \overline{0})} & \cong & L + R \\
K_{a_1} & \cong & I_{(0, e_1)} + I_{(0, a_1\overline{0})} & \cong & L' + R
\end{array}
\]
where $L = I_{(-1, e_0)}$, $L' = I_{(0, e_1)}$, and $R = I_{(0, \overline{0})} \cong I_{(0, a_1\overline{0})}$.

For a given $k$, we have $A_k \cong X_{(0, 0^k)}(I_{[(n, v)]})$ by Theorem \ref{leftsymbolicrepnthmforZA0}. The left endpoint of $X_{(0, 0^k)}$ is $(0, \overline{0})$ and the right endpoint is $(0, e_k)$. 

Since $\sigma(0, \overline{0}) = 0$ and, by Lemma \ref{sigmamekismplusaklem}, $\sigma(0, e_k) = a_k$, we have that the interval $X_{(0, 0^k)}(I_{[(n, v)]})$ in $\mathbb{Z}X(I_{[(n, v)]}$ pushes forward (via \ref{pushit}) to an interval in $\mathbb{R}(K_{[x]})$ that decomposes as 
\[
I_{(0, \overline{0})} + (0, a_k)(K_{[x]}) + I_{(0, e_k)}
\]
Since $I_{(0, e_k)} \cong I_{(0, e_0)} \cong L$ when $k$ is even and $I_{(0, e_k)} \cong I_{(0, e_1)} \cong L'$ when $k$ is odd, the last claim in the statement of the theorem follows. 
\end{proof}

\theoremstyle{definition}
\newtheorem{symbolicrealrepndefn}[lct]{Definition}
\begin{symbolicrealrepndefn}\label{symbolicrealrepndefn}
Let $\mathcal{D} = \{A_k\}$ be a fixed non-terminating left division system on the coefficients $\mathcal{C}$. The representations
\[
\begin{array}{rcl}
A_{|r|} & \cong & X_r(I_{[u]}) \\
\mathbb{Z}A_0 & \cong & \mathbb{Z}X(I_{[(n, u)]})
\end{array}
\]
yielded by Theorems \ref{leftsymbolicrepnthm} and \ref{leftsymbolicrepnthmforZA0} are \textit{symbolic representations} of the orders $A_k$ and $\mathbb{Z}A_0$ for the system $\mathcal{D}$.

The representations
\[
\begin{array}{rcl}
A_{2k} & \cong & R + (0, a_{2k})(K_{[x]}) + L \\
A_{2k+1} & \cong & R + (0, a_{2k})(K_{[x]}) + L' \\
\mathbb{Z}A_0 & \cong & \mathbb{R}(K_{[x]})
\end{array}
\]
yielded by Theorem \ref{leftrealrepnthm} are \textit{real representations} of the orders $A_k$ and $\mathbb{Z}A_0$ for $\mathcal{D}$.
\end{symbolicrealrepndefn}

\theoremstyle{definition}
\newtheorem{sufficientsymmdefn}[lct]{Definition}
\begin{sufficientsymmdefn}\label{sufficientsymmdefn}
The real representation $\mathbb{Z}A_0 \cong \mathbb{R}(K_{[x]})$ from Theorem \ref{leftrealrepnthm} is \textit{sufficient} if $K_0 \cong K_{a_1}$. It is \textit{symmetric} if $L \cong L'$.
\end{sufficientsymmdefn}

Since $K_0 \cong L + R$ and $K_{a_1} \cong L' + R$, symmetry of the real representation in Definition \ref{sufficientsymmdefn} implies sufficiency.

The following proposition says that sufficiency is just the assertion $K_a \cong K_{a'}$ for all $a, a' \in G = [0]$. 

\theoremstyle{definition}
\newtheorem{sufficientimpliesrealrepnisuptoEG}[lct]{Proposition}
\begin{sufficientimpliesrealrepnisuptoEG}\label{sufficientimpliesrealrepnisuptoEG}
If the real representation $\mathbb{Z}A_0 \cong \mathbb{R}(K_{[x]})$ from Theorem \ref{leftrealrepnthm} is sufficient, then $\mathbb{R}(K_{[x]})$ is a replacement up to $E_G$. 
\end{sufficientimpliesrealrepnisuptoEG}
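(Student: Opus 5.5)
We need to show that $K_x \cong K_y$ whenever $x E_G y$. By Theorem \ref{leftrealrepnthm}, $\mathbb{R}(K_{[x]})$ is already a replacement up to $E_G^{\pm}$. The relations $E_G$ and $E_G^{\pm}$ differ only on the single class $[0] = G$, which $E_G^{\pm}$ splits into the backward orbit $[0]^- = [0]_{E_G^{\pm}}$ and the forward orbit $[0]^+ = [a_1]_{E_G^{\pm}}$. So for $x, y \notin [0]$ there is nothing to prove. It remains to show that $K_a \cong K_{a'}$ for all $a, a' \in G$.

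Fix $a, a' \in G$. There are three cases, according to which half of $G$ each lies in.
\begin{itemize}
\item[i.] If both lie in $[0]^-$, then $a E_G^{\pm} 0$ and $a' E_G^{\pm} 0$, so $K_a \cong K_0 \cong K_{a'}$ because the replacement is up to $E_G^{\pm}$.
\item[ii.] If both lie in $[0]^+$, the same argument gives $K_a \cong K_{a_1} \cong K_{a'}$.
\item[iii.] If one lies in $[0]^-$ and the other in $[0]^+$, we get $K_a \cong K_0$ and $K_{a'} \cong K_{a_1}$, or the reverse. The sufficiency hypothesis $K_0 \cong K_{a_1}$ then gives $K_a \cong K_{a'}$.
\end{itemize}
In every case $x E_G y \Rightarrow K_x \cong K_y$. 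Under the paper's convention in Section \ref{subsection:replacementsuptoE}, where replacement up to $E$ requires only isomorphism of the replaced orders, this means $\mathbb{R}(K_{[x]})$ is a replacement up to $E_G$.

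The only point needing care is that the $E_G^{\pm}$-invariance from Theorem \ref{leftrealrepnthm} really holds on each half of $G$. By construction $K_x = I_{(m,u)} + I_{(n,v)}$ for the jump pair $\sigma^{-1}(x)$. By Lemma \ref{sigmaimagesoffwdbwdorbitsarefwdbwdorbits}, jump pairs of the same orbit type have left points in the same $E_0$-class (either $[(0,e_0)]$ or $[(0,e_1)]$) and right points in $[(0,\overline{0})]$. Since $\mathbb{Z}X(I_{[(n,u)]})$ is a replacement up to $E_0$, these sums are isomorphic to $L+R$ or $L'+R$ respectively. This is the one step worth double-checking; everything else is immediate.
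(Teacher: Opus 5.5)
Your proposal is correct and is essentially the paper's argument: the paper likewise combines the $E_G^{\pm}$-invariance from Theorem \ref{leftrealrepnthm} with $K_0 \cong K_{a_1}$ to get $K_x \cong K_y$ for all $x, y \in [0]$, and hence for all $E_G$-equivalent pairs. Your three-case split just spells this out. The closing re-check of $E_G^{\pm}$-invariance is harmless but unnecessary, since that invariance is already part of the statement of Theorem \ref{leftrealrepnthm}.
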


\begin{proof}
Immediate from Theorem \ref{leftrealrepnthm}: if $K_0 \cong K_{a_1}$ then $K_x \cong K_y$ for all $x, y \in [0]$. Then $K_x \cong K_y$ for all $x, y$ such that $xE_Gy$. 
\end{proof}

The following lemma says that when its real representation is sufficient, every $a \in G$ induces an automorphism of $\mathbb{Z}A_0 \cong \mathbb{R}(K_{[x]})$.

\theoremstyle{definition}
\newtheorem{sufficientthenGactsbyautos}[lct]{Lemma}
\begin{sufficientthenGactsbyautos}\label{sufficientthenGactsbyautos}
Suppose the representation $\mathbb{Z}A_0 \cong \mathbb{R}(K_{[x]})$ from Theorem \ref{leftrealrepnthm} is sufficient and $\varphi: K_0 \rightarrow K_{a_1}$ is a fixed isomorphism. Then any $a \in G$ induces an automorphism of $\mathbb{R}(K_{[x]})$ via the map
\[
(x, k) \mapsto \left\{  \begin{array}{ll}
                            (x+a, \varphi(k)) & \textrm{if $x \in [0]^-, g(x) \in [0]^+$}; \\
                            (x+a, \varphi^{-1}(k)) & \textrm{if $x \in [0]^+, g(x) \in [0]^-$}; \\
                            (x+a, k) & \textrm{otherwise}.
                        \end{array} \right.
\]
If the representation is symmetric, then $\varphi$ may be taken to be the identity. 
\end{sufficientthenGactsbyautos}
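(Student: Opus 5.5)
The plan is to verify directly that the displayed rule defines a bijective, order-preserving map $\Phi_a:\mathbb{R}(K_{[x]})\to\mathbb{R}(K_{[x]})$. Here $g(x)$ in the statement is read as the translate $x+a$. First, the rule is well defined, i.e.\ the second coordinate lands in $K_{x+a}$. Translation by $a\in G$ preserves $E_G$-classes, so it preserves $[0]$ and the complement of $[0]$. For $x\notin[0]$, $x$ and $x+a$ lie in the same $E_G^{\pm}$-class. Since $\mathbb{R}(K_{[x]})$ is a replacement up to $E_G^{\pm}$, we take (as the notation allows) $K_x=K_{x+a}$, so the identity on the second coordinate is legitimate. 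If the convention is only $K_x\cong K_{x+a}$, we first fix one representative order per $E_G^{\pm}$-class and transport along fixed isomorphisms; this changes nothing below.

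For $x\in[0]$ there are four cases, according to whether $x$ and $x+a$ lie in $[0]^-$ or $[0]^+$. If both lie in the same half, then $K_x$ and $K_{x+a}$ are the same representative order, namely $K_0$ or $K_{a_1}$. If $x\in[0]^-$ and $x+a\in[0]^+$, then $K_x=K_0$ and $K_{x+a}=K_{a_1}$, so $\varphi$ maps correctly. Symmetrically, $\varphi^{-1}$ handles the case $x\in[0]^+$, $x+a\in[0]^-$. Here we use sufficiency only through the fixed isomorphism $\varphi:K_0\to K_{a_1}$, as in Proposition \ref{sufficientimpliesrealrepnisuptoEG}.

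Next, order preservation. Take $(x,k)<(y,l)$. If $x<y$, then $x+a<y+a$, and the lexicographic order gives $\Phi_a(x,k)<\Phi_a(y,l)$ regardless of the fibre maps. If $x=y$, both points are sent by the same fibre map, which is one of $\mathrm{id}$, $\varphi$, or $\varphi^{-1}$, each an isomorphism. Hence $k<l$ implies that their images are ordered the same way.

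For surjectivity, we exhibit $\Phi_{-a}$ as an inverse. It translates by $-a$, and on each fibre it applies the inverse of the fibre map used by $\Phi_a$. Indeed, $x+a\in[0]^+$ and $x\in[0]^-$ means $\Phi_{-a}$ is in the $\varphi^{-1}$ case at $x+a$, and similarly for the other cases. Thus $\Phi_{-a}\circ\Phi_a=\mathrm{id}$, and by symmetry $\Phi_a\circ\Phi_{-a}=\mathrm{id}$. So $\Phi_a$ is an automorphism.

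In the symmetric case $L\cong L'$, we have $K_0\cong L+R$ and $K_{a_1}\cong L'+R$, and by choice of representatives we may take $K_0=K_{a_1}$ and $\varphi=\mathrm{id}$. Then the rule reduces to $(x,k)\mapsto(x+a,k)$. The main (though mild) obstacle is the bookkeeping convention for representatives in a replacement up to an equivalence relation. Everything else is routine, because translations are order-preserving bijections of $\mathbb{R}$ that respect $E_G$, and they permute the two halves $[0]^\pm$ only in the manner captured by the four cases.
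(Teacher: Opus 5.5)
Your proposal is correct and takes essentially the same route as the paper: a case-by-case well-definedness check, with the two mixed cases across $[0]^-$ and $[0]^+$ handled by $\varphi$ and $\varphi^{-1}$ and all other cases staying within a single $E_G^{\pm}$-class, after which the map is an automorphism. You also spell out, correctly, what the paper leaves as ``clearly'': order preservation, the inverse $\Phi_{-a}$, the convention on representatives for a replacement up to an equivalence relation, and the symmetric clause.
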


\begin{proof}
If $x \in [0]^-$ and $x+a \in [0]^+$, then $K_x \cong K_0$ and $K_{x+a} \cong K_{a_1}$, hence the rule $(x, k) \mapsto (x+a, \varphi(k))$ is well-defined. 

Symmetrically, $(x, k) \mapsto (x+a, \varphi(k))$ is well-defined if $x \in [0]^+, x+a \in [0]^-$. 

In all other cases $K_x \cong K_{x+a}$ since not only $x E_G x+a$ but $x E_G^{\pm} (x+a)$, and so $(x, k) \mapsto (x+a, k)$ is well-defined. Thus the map is well-defined globally, and clearly is an automorphism of $\mathbb{R}(K_{[x]})$. 
\end{proof}

Since the discussion in Section \ref{subsect:leftalgorun} concerning the data yielded by a run of the left Euclidean algorithm depends only on arithmetic identities satisfied by the division terms $A_k$, and not the convex embeddings from which they arose, it applies to an arbitrary terminating left division system $\{A_k; M\}$. Thus from that discussion, we know that for such a system we have, for $k, k' < M$,
\[
\begin{array}{rccl}
\mathbb{Z} A_k & \cong & \mathbb{Z} A_{k'}; & \\
\omega A_k & \cong & \omega A_{k'}; & \\
\omega^* A_k & \cong & \omega^* A_{k'} & \textrm{if $k \equiv k' \pmod 2$}.
\end{array}
\]

The proposition below says that if a non-terminating left system has a sufficient real representation then the same isomorphisms hold. 

\theoremstyle{definition}
\newtheorem{sufficiencyimpliesIsosforomegaprods}[lct]{Proposition}
\begin{sufficiencyimpliesIsosforomegaprods}\label{sufficiencyimpliesIsosforomegaprods}
Consider the real representation $\mathbb{Z}A_0 \cong \mathbb{R}(K_{[x]})$ from Theorem \ref{leftrealrepnthm}.
\begin{itemize}
    \item[i.] If the representation is sufficient, then for all $k, k' \in \omega$, we have
    \[
    \begin{array}{rccl}
    \mathbb{Z} A_k & \cong & \mathbb{Z} A_{k'}; & \\
    \omega A_k & \cong & \omega A_{k'}; & \\
    \omega^* A_k & \cong & \omega^* A_{k'} & \textrm{if $k \equiv k' \pmod 2$}.
    \end{array}
    \]
    \item[ii.] If the representation is moreover symmetric, then $A_k + A_{k'} \cong A_{k'} + A_k$ and $\omega^* A_k \cong \omega^* A_{k'}$ for every $k, k' \in \omega$.
\end{itemize}
\end{sufficiencyimpliesIsosforomegaprods}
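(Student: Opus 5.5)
The plan is to realize each of $\omega A_k$, $\omega^*A_k$, $\mathbb{Z}A_k$ as a restriction of the real representation $\mathbb{R}(K_{[x]})$ to a suitable unbounded subset of $\mathbb{R}$. Once that is done, the translations in $G$, which by Lemma \ref{sufficientthenGactsbyautos} act as automorphisms of $\mathbb{R}(K_{[x]})$ under sufficiency, will carry these restrictions onto one another.

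\textbf{The $\mathbb{Z}$-products.} By Theorem \ref{leftrealrepnthm}, $A_k \cong R + (0,a_k)(K_{[x]}) + L_k$, where $L_k = L$ for $k$ even and $L_k = L'$ for $k$ odd. Concatenating $\mathbb{Z}$ copies, the seam between consecutive copies is $L_k + R$. This is $K_0$ or $K_{a_1}$, and under sufficiency both are isomorphic to $K_{na_k}$ for every $n$, since $na_k \in G$. Hence
\[
\mathbb{Z}A_k \cong \mathbb{R}(K'_{[x]}),
\]
where $K'$ agrees with $K$ off the points $na_k$ and places a copy of $K_0$ at each $na_k$. By Proposition \ref{sufficientimpliesrealrepnisuptoEG} the original replacement is already up to $E_G$, so $K' \cong K$ pointwise, and $\mathbb{Z}A_k \cong \mathbb{R}(K_{[x]}) \cong \mathbb{Z}A_0$. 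This gives the first isomorphism in (i.).

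\textbf{The one-sided products.} The same gluing gives
\[
\omega A_k \cong R + (0,\infty)(K_{[x]}),
\]
since the right seams $L_k + R$ are absorbed at the points $na_k$, $n \geq 1$. This is independent of $k$, so $\omega A_k \cong \omega A_{k'}$. Symmetrically,
\[
\omega^*A_k \cong (-\infty,0)(K_{[x]}) + L_k,
\]
which depends only on the parity of $k$. One point needs care here: the gluing is an identification of $\mathbb{R}_{>0}$ with the union of the translated intervals $(na_k,(n+1)a_k)$ together with the points $na_k$. This is literally a partition of $\mathbb{R}$, so it is an honest order isomorphism, and the only checking required is that the replacement order at each $na_k$ is the right one.

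\textbf{Part (ii.).} Symmetry gives $L \cong L'$, so $\omega^*A_k$ no longer depends on parity, and all the $\omega^*A_k$ are isomorphic. For commutativity, I would compute both sums directly from the representation. Up to isomorphism,
\[
A_k + A_{k'} \cong R + (0, a_k + a_{k'})(K_{[x]}) + L,
\]
with the middle seam $L + R \cong K_{a_k}$, using that every $K_g$ with $g \in G$ is the common type $L + R$. In the same way,
\[
A_{k'} + A_k \cong R + (0, a_{k'} + a_k)(K_{[x]}) + L,
\]
with seam $K_{a_{k'}}$. The translation-by-$(a_k - a_{k'})$ automorphism restricted to $(0,a_{k'})$, where $\varphi = \mathrm{id}$ is available by symmetry, matches the pieces of the two decompositions.

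The main obstacle I expect is precisely this bookkeeping of where the orders at points of $G$ sit. It is cleanest to note that under symmetry every $K_g$ with $g \in G$ has the type $L + R$, and that $(0,s)(K_{[x]})$ and $(t,t+s)(K_{[x]})$ are isomorphic via translation for any $t \in G$.
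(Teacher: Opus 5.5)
Your proposal is correct and follows the paper's proof essentially verbatim. In both, copies of $R + (0,a_k)(K_{[x]}) + L_k$ are glued along seams $L_k + R \cong K_{na_k}$, using sufficiency and translation by elements of $G$, to get $\omega A_k \cong R + (0,\infty)(K_{[x]})$, $\omega^*A_k \cong (-\infty,0)(K_{[x]}) + L_k$, $\mathbb{Z}A_k \cong \mathbb{R}(K_{[x]})$, and, in (ii.), $A_k + A_{k'} \cong R + (0,a_k+a_{k'})(K_{[x]}) + L \cong A_{k'} + A_k$.

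Your closing remark about the translation-by-$(a_k - a_{k'})$ automorphism is unnecessary and does not literally match the pieces of the two decompositions. It can simply be dropped: both sums are already identified with the same order $R + (0, a_k + a_{k'})(K_{[x]}) + L$.
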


\begin{proof}
We have
\[
\omega A_k \cong R + (0, a_k)(K_{[x]}) + M + R + (0, a_k)(K_{[x]}) + M + R + \cdots
\]
where $M = L$ or $M = L'$ depending on the parity of $k$. In either case, by sufficiency we have $M + R \cong K_{a_k}$. Further, since the representation is a replacement up to $E_G$ and $a_k \in G$, we have $K_{a_k} \cong K_{na_k}$ and $(0, a_k)(K_{[x]}) \cong (na_k, (n+1)a_k)(K_{[x]})$ for every $n \in \omega$, via translation by $na_k \in G$. Hence
\[
\begin{array}{rcl}
\omega A_k & \cong & R + (0, a_k)(K_{[x]}) + M + R + (0, a_k)(K_{[x]}) + M + R + \cdots \\
& \cong & R + (0, a_k)(K_{[x]}) + K_{a_k} + (a_k, 2a_k)(K_{[x]}) + K_{2a_k} + \cdots \\
& \cong & R + (0, \infty)(K_{[x]}).
\end{array}
\]
Since $k$ was arbitrary, we have $\omega A_k \cong \omega A_{k'} \cong R + (0, \infty)(K_{[x]})$ for every $k, k' \in \omega$. 

Similar arguments show that
\[
\mathbb{Z} A_k \cong \mathbb{R}(K_{[x]}) \cong \mathbb{Z}A_0
\]
for every $k, k' \in \omega$, and on the other side,
\[
\omega^* A_k \cong (-\infty, 0)(K_{[x]}) + L
\]
when $k$ is even and 
\[
\omega^* A_k \cong (-\infty, 0)(K_{[x]}) + L'
\]
when $k$ is odd. This proves (i.).

For (ii.), if we have $L \cong L'$, then the above yields 
\[
\omega^*A_k \cong (-\infty, 0)(K_{[x]}) + L \cong \omega^*A_{k'}
\]
Further, 
\[
\begin{array}{rcl}
A_k + A_{k'} & \cong & R + (0, a_k)(K_{[x]}) + L + R + (0, a_{k'}) + L \\
& \cong & R + (0, a_k)(K_{[x]}) + K_{a_k} + (a_k, a_k + a_{k'})(K_{[x]}) + L \\
& \cong & R + (0, a_k + a_{k'})(K_{[x]}) + L
\end{array}
\]
for all $k, k' \in \omega$. Since $a_k + a_{k'} = a_{k'} + a_k$, this implies $A_k + A_{k'} \cong A_{k'} + A_k$.
\end{proof}

There are examples showing that the sufficiency hypothesis from Proposition \ref{sufficiencyimpliesIsosforomegaprods} cannot in general be dropped.

\subsection{Real representations of right systems}\label{subsect:rightsystemrealrepn}

In this section we state the analogous real representation theorem for non-terminating right division systems. The (omitted) proof is symmetric with the left division proof.

Since it is only the real representations of such systems that will be of interest later, we leave out discussing their symbolic representations. 

The set of division coefficients $\mathcal{C} = \{N_k\}$, the sequence $\{a_k\}$, and the group $G$ are the same as in the previous section. 

There is one important difference of notation from the left-sided setting: we now view $G$ as generated by the translations $x \mapsto x - 1$ and $x \mapsto x - a_1$. Consequently, we redefine the forward and backward orbits of $[0] = [0]_{E_G}$:
\[
[0]^+ = \{x \in \mathbb{R}: \exists m, n \in \mathbb{Z}, n > 0, x = m + n(-a_1)\}
\]
and
\[
[0]^- = \{x \in \mathbb{R}: \exists m, n \in \mathbb{Z}, n \leq 0, x = m + n(-a_1)\}.
\]
We again write $E_G^{\pm}$ for the equivalence relation obtained from $E_G$ by splitting $[0]$ into the two classes $[0]^+$ and $[0]^-$. 

\theoremstyle{definition}
\newtheorem{rightrealrepnthm}[lct]{Theorem}
\begin{rightrealrepnthm}\label{rightrealrepnthm}
Suppose that $\{A_k\}$ is a non-terminating right division system on the coefficients $\mathcal{C}$. Then there is a collection of orders $K_{[x]}$ indexed by the $E_G^{\pm}$-classes of $\mathbb{R}$ such that $\mathbb{Z}A_0 \cong \mathbb{R}(K_{[x]})$. 

Moreover, there are decompositions
\[
\begin{array}{rcl}
K_0 & \cong & L + R \\
K_{-a_1} & \cong & L + R'
\end{array}
\]
such that for every $k \in \omega$, 
\[
\begin{array}{rcl}
\textrm{$k$ even} & \Rightarrow & A_k \cong R + (-a_k, 0)(K_{[x]}) + L \\
\textrm{$k$ odd} & \Rightarrow & A_k \cong R' + (-a_k, 0)(K_{[x]}) + L.
\end{array}
\]
\end{rightrealrepnthm}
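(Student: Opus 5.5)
The cleanest route is to reduce to the left-sided Theorem \ref{leftrealrepnthm} by reversal rather than rebuild the symbolic machinery. If $\{A_k\}$ is a non-terminating right division system, so $A_k \cong A_{k+2} + N_k A_{k+1}$ for all $k$, then reversing both sides gives $A_k^* \cong N_k A_{k+1}^* + A_{k+2}^*$. Hence $\{A_k^*\}$ is a non-terminating left division system on the same coefficients $\mathcal{C}$. Theorem \ref{leftrealrepnthm} then gives orders $K'_{[x]}$, indexed by the $E^{\pm}$-classes for the left convention (generators $+1$, $+a_1$), with $\mathbb{Z}A_0^* \cong \mathbb{R}(K'_{[x]})$. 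It also gives decompositions $K'_0 \cong L_0 + R_0$ and $K'_{a_1} \cong L_0' + R_0$, and
\[
A_k^* \cong R_0 + (0, a_k)(K'_{[x]}) + M_k, \qquad M_k \in \{L_0, L_0'\},
\]
with $M_k$ chosen according to the parity of $k$.

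Next I reverse this representation. Since $(\mathbb{Z}A_0^*)^* \cong \mathbb{Z}A_0$ and $\mathbb{R}(K'_x)^* \cong \mathbb{R}^*((K'_x)^*)$, I will precompose with the order-isomorphism $x \mapsto -x$ from $\mathbb{R}$ onto $\mathbb{R}^*$. Setting $K_x = (K'_{-x})^*$ then gives $\mathbb{Z}A_0 \cong \mathbb{R}(K_x)$.

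I then need to check how the orbit structure transforms. Under $x \mapsto -x$, the left forward orbit $\{m + n a_1 : n > 0\}$ maps to $\{m + n(-a_1) : n > 0\}$, which is exactly the redefined right forward orbit $[0]^+$. The same holds for the backward orbits, and $E_G$-classes off $[0]$ map to $E_G$-classes. So $\mathbb{R}(K_{[x]})$ is a replacement up to the right-sided $E_G^{\pm}$. This convention bookkeeping is the only step needing real care, and it is the reason the paper redefines $[0]^\pm$ for right systems.

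Finally I read off the decompositions. We get $K_0 = (K'_0)^* \cong R_0^* + L_0^*$ and $K_{-a_1} = (K'_{a_1})^* \cong R_0^* + L_0'^*$. So I set $L = R_0^*$, $R = L_0^*$ and $R' = L_0'^*$, which gives $K_0 \cong L + R$ and $K_{-a_1} \cong L + R'$, as the theorem requires. Reversing the formula for $A_k^*$ gives
\[
A_k \cong M_k^* + \bigl((0,a_k)(K'_{[x]})\bigr)^* + R_0^*.
\]
The middle term is $(-a_k, 0)(K_{[x]})$, because reversal composed with negation carries $(0, a_k)$ onto $(-a_k, 0)$ with fibers $(K'_{-x})^* = K_x$. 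The outer terms are $M_k^* = R$ for $k$ even and $R'$ for $k$ odd, and $R_0^* = L$.

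The main obstacle is not conceptual; it is keeping the parity and endpoint labels consistent under reversal. I would verify it concretely on $k=0$ and $k=1$: the right endpoint of the left representation, $\sigma(0,e_k) = a_k$, becomes the left endpoint $-a_k$, and the fibre there is of type $(K'_{a_k})^*$, whose $E^{\pm}$-class depends on the parity of $k$. Alternatively, the whole argument could be redone directly with a right division tree and $\sigma(m,u) = m - \sum_k u_k a_{k+1}$, mirroring Lemmas \ref{sigmamekismplusaklem}–\ref{sigmapreservesE0lemma}, but the reversal reduction avoids repeating that work.
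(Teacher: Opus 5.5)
Your reduction is correct. Reversing $A_k \cong A_{k+2} + N_kA_{k+1}$ gives a non-terminating left system $\{A_k^*\}$ on the same coefficients. The identity $(X(I_x))^* = X^*(I_x^*)$, combined with $x \mapsto -x$, transports the left representation of $\mathbb{Z}A_0^*$ to one of $\mathbb{Z}A_0$ with $K_x = (K'_{-x})^*$. Negation carries $\{m+na_1 : n>0\}$ and $\{m+na_1 : n\le 0\}$ exactly onto the right-sided $[0]^+$ and $[0]^-$. Finally, the relabeling $L = R_0^*$, $R = L_0^*$, $R' = (L_0')^*$ yields precisely the stated decompositions and parity pattern.

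The paper omits the proof as ``symmetric with the left division proof.'' That points to rerunning the construction in mirror image: a right division tree, a negated $\sigma$, and right-sided analogues of Lemmas \ref{sigmamekismplusaklem}--\ref{sigmapreservesE0lemma}. Your route instead uses Theorem \ref{leftrealrepnthm} as a black box. It needs no new combinatorics, and it makes transparent why the paper switches its $[0]^{\pm}$ convention for right systems and where $L$, $R$, $R'$ must sit. The mirrored construction would additionally give a right-sided symbolic representation, but the paper explicitly does not use one, so nothing is lost.
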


Symmetrically with the left case, we say that the representation from Theorem \ref{rightrealrepnthm} is \textit{sufficient} if $L + R \cong L + R'$ and \textit{symmetric} if $R \cong R'$. 

If the representation is sufficient, then $K_a \cong K_{a'}$ for any $a, a' \in G = [0]$, in which case any $a \in G$ lifts to an automorphism of $\mathbb{R}(K_{[x]})$ via the right-sided version of Lemma \ref{sufficientthenGactsbyautos}. 

Here is the analogue of Proposition \ref{sufficiencyimpliesIsosforomegaprods}. 

\theoremstyle{definition}
\newtheorem{sufficiencyimpliesIsosforomegaprodsRIGHTVERSION}[lct]{Proposition}
\begin{sufficiencyimpliesIsosforomegaprodsRIGHTVERSION}\label{sufficiencyimpliesIsosforomegaprodsRIGHTVERSION}
Consider the real representation $\mathbb{Z}A_0 \cong \mathbb{R}(K_{[x]})$ from Theorem \ref{rightrealrepnthm}.
\begin{itemize}
    \item[i.] If the representation is sufficient, then for all $k, k' \in \omega$, we have
    \[
    \begin{array}{rccl}
    \mathbb{Z} A_k & \cong & \mathbb{Z} A_{k'}; & \\
    \omega^* A_k & \cong & \omega^* A_{k'}; & \\
    \omega A_k & \cong & \omega A_{k'} & \textrm{if $k \equiv k' \pmod 2$}.
    \end{array}
    \]
    \item[ii.] If the representation is moreover symmetric, then $A_k + A_{k'} \cong A_{k'} + A_k$ and $\omega A_k \cong \omega A_{k'}$ for every $k, k' \in \omega$.
\end{itemize}
\end{sufficiencyimpliesIsosforomegaprodsRIGHTVERSION}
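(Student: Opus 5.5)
The plan is to mirror the proof of Proposition \ref{sufficiencyimpliesIsosforomegaprods}, with the roles of left and right exchanged. We use the real representation from Theorem \ref{rightrealrepnthm}, with $\mathbb{Z}A_0 \cong \mathbb{R}(K_{[x]})$, $K_0 \cong L + R$, $K_{-a_1} \cong L + R'$, and
\[
A_k \cong R_k + (-a_k, 0)(K_{[x]}) + L,
\]
where $R_k = R$ for $k$ even and $R_k = R'$ for $k$ odd.

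By sufficiency (the right-sided analogue of Proposition \ref{sufficientimpliesrealrepnisuptoEG}), $K_a \cong K_{a'}$ for all $a, a' \in G$. Hence $\mathbb{R}(K_{[x]})$ is a replacement up to $E_G$, and every translation by an element of $G$ induces an automorphism, by the right-sided version of Lemma \ref{sufficientthenGactsbyautos}.

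For $\omega^*A_k$, write the left-infinite sum
\[
\cdots + R_k + (-a_k,0)(K_{[x]}) + L + R_k + (-a_k,0)(K_{[x]}) + L.
\]
Each junction $L + R_k$ has type $K_0$ or $K_{-a_1}$, and by sufficiency this is isomorphic to $K_{-na_k}$ for every $n$, since $-na_k \in G$. Translating the $n$-th block by $-na_k \in G$ gives
\[
(-(n+1)a_k, -na_k)(K_{[x]}) \cong (-a_k, 0)(K_{[x]}).
\]
Gluing the blocks, we get $\omega^*A_k \cong (-\infty,0)(K_{[x]}) + L$, which is independent of $k$.

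The same bookkeeping handles the other two sums. It gives $\mathbb{Z}A_k \cong \mathbb{R}(K_{[x]})$ for every $k$, and $\omega A_k \cong R_k + (0,\infty)(K_{[x]})$, which depends only on the parity of $k$. This proves (i.).

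For (ii.), symmetry means $R \cong R'$. Then $\omega A_k \cong R + (0,\infty)(K_{[x]})$ for all $k$. For the sum, write
\[
A_k + A_{k'} \cong R + (-a_k,0)(K_{[x]}) + L + R + (-a_{k'},0)(K_{[x]}) + L.
\]
Identify the middle $L + R$ with $K_{-a_{k'}}$. Translate the first block by $-a_{k'}$ to get $(-a_k - a_{k'}, -a_{k'})(K_{[x]})$. The whole sum then becomes
\[
R + (-(a_k+a_{k'}), 0)(K_{[x]}) + L,
\]
which is symmetric in $k$ and $k'$, so $A_k + A_{k'} \cong A_{k'} + A_k$.

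The main point requiring care is that the translations really act as automorphisms of $\mathbb{R}(K_{[x]})$ even though the points of $[0]$ carry the split forward/backward classes. This is exactly what sufficiency provides through the right-sided Lemma \ref{sufficientthenGactsbyautos}. We also need the junction orders $L + R_k$ to match $K_{-na_k}$ (or $K_{-a_{k'}}$) up to isomorphism, which again follows from sufficiency. Everything else is routine concatenation of intervals.
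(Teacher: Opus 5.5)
Your proposal is correct and takes the same approach as the paper: it mirrors the proof of Proposition~\ref{sufficiencyimpliesIsosforomegaprods}, which is all the paper intends, since it omits the right-sided proof as symmetric. You absorb each junction $L + R_k$ into some $K_g$ with $g \in G$ using sufficiency, translate blocks by multiples of $a_k$, and read off $\omega^*A_k \cong (-\infty,0)(K_{[x]}) + L$, $\omega A_k \cong R_k + (0,\infty)(K_{[x]})$, $\mathbb{Z}A_k \cong \mathbb{R}(K_{[x]})$, and, when $R \cong R'$, $A_k + A_{k'} \cong R + (-(a_k+a_{k'}),0)(K_{[x]}) + L$.
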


\subsection{Real representations of symmetric systems}

We next state the real representation theorem for non-terminating symmetric division systems. There is a new wrinkle here: in general, in order to get symmetry of the representation from the symmetry of the division system, we have to assume an extra hypothesis, namely that the system is non-splitting. 

\theoremstyle{definition}
\newtheorem{symmetricrealrepnthm}[lct]{Theorem}
\begin{symmetricrealrepnthm}\label{symmetricrealrepnthm}
Suppose that $\{A_k\}$ is a non-terminating, non-splitting symmetric division system on the coefficients $\mathcal{C}$. Then there is a collection of orders $K_{[x]}$ indexed by the $E_G$-classes of $\mathbb{R}$ such that $\mathbb{Z}A_0 \cong \mathbb{R}(K_{[x]})$. 

Moreover, there is a decomposition
\[
\begin{array}{rcl}
K_0 & \cong & L + R
\end{array}
\]
such that for every $k \in \omega$, 
\[
\begin{array}{rcl}
A_k & \cong & R + (0, a_k)(K_{[x]}) + L.
\end{array}
\]
\end{symmetricrealrepnthm}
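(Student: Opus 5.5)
Since a symmetric system is in particular a left division system on the coefficients $\mathcal{C}$, I would begin by applying Theorem \ref{leftrealrepnthm}. This gives $\mathbb{Z}A_0 \cong \mathbb{R}(K_{[x]})$ as a replacement up to $E_G^{\pm}$, decompositions $K_0 \cong L + R$ and $K_{a_1} \cong L' + R$, and the formulas $A_{2k} \cong R + (0,a_{2k})(K_{[x]}) + L$ and $A_{2k+1} \cong R + (0,a_{2k+1})(K_{[x]}) + L'$. It then suffices to show the representation is symmetric, i.e.\ $L \cong L'$. Once this holds, $K_0 \cong K_{a_1}$, Proposition \ref{sufficientimpliesrealrepnisuptoEG} makes the replacement one up to $E_G$, and the stated formula $A_k \cong R + (0,a_k)(K_{[x]}) + L$ follows for every $k$.

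Recall that in the symbolic representation $L = I_{(-1,e_0)} \cong I_{e_0}$ and $L' = I_{e_1}$. These are the intersections $\bigcap_n A_{e_0 \upharpoonright n}$ and $\bigcap_n A_{e_1\upharpoonright n}$ of nested copies of the $A_k$. In particular, $L$ is the final segment of $A_0$ cut out by the rightmost branch, and $L'$ is the final segment of $A_1 \cong A_{0^1}$ cut out by its rightmost branch. Both branches are $\mathcal{D}$-branchings, and they are leftward since the discarded pieces $N_kA_{k+1}$ lie to the left. So by Proposition \ref{Dbranchingintersectionpropn}.(ii), which uses non-splitting, $L$ is the final segment of the $\sim_{\mathcal{D}}$-class of the right endpoint (if nonempty) of $A_0$, and similarly $L'$ is the final $\sim_{\mathcal{D}}$-class-piece of $A_1$. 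A cleaner formulation: $L$ is the last $\sim_{\mathcal{D}}$-class of $A_0$, intersected with $A_0$, and $L'$ is the same for $A_1$ (and empty if the endcut is a limit cut).

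The key step is to use symmetry of the system to transport right endpieces. Symmetric systems are also right systems, so Proposition \ref{initfinaltermsindivissys}.(iii) gives $A_1 \leqslant_{fin} A_0$. Fix a final embedding $g: A_1 \to A_0$. By Corollary \ref{simDclassesinvarunderconvfcoro}, $g$ maps the final $\sim_{\mathcal{D}}$-class of $A_1$ onto $\delta(\text{last}) \cap g[A_1]$ in $A_0$. It remains to check that this last class of $A_0$ lies entirely inside $g[A_1]$. If it did not, the class would contain the left endcut region of $g[A_1]$, so the final class would have to contain a copy of $A_1$. That contradicts the fact that it is contained in some $A_k$ for every $k$ (it is the intersection of the branching), which would force $A_1 \leqslant_{conv} A_{3}$ and hence splitting via Lemma \ref{splittinglemmadivissystems}. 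So $g$ carries $L'$ isomorphically onto $L$, giving $L \cong L'$.

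The main obstacle is this identification of $L, L'$ with final $\sim_{\mathcal{D}}$-pieces and the argument that the final embedding respects them. One must handle the cases where the intersection is empty (a cut), and where it is only a final segment of a class that extends leftward. In the latter case I would compare the two leftward branchings directly through $g$, mirroring the $f(b')=b$ argument in the proof of Theorem \ref{BplusAinitandfinAplusB}: if $g$ shifted the limit cut of one branching past that of the other, some $A_k$ would embed convexly in a tail $A_{k+5}+A_{k+6}$, forcing splitting.
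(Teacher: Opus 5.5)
Your overall plan is the paper's: apply Theorem \ref{leftrealrepnthm} and reduce to $L \cong L'$. Then identify $L$ and $L'$ with the last $\sim_{\mathcal{D}}$-classes of $A_0$ and $A_1$ using non-splitting and Proposition \ref{Dbranchingintersectionpropn}, and transport one onto the other by a convex map supplied by symmetry. The real difference is the map. You use a final embedding $g\colon A_1 \to A_0$ (from $A_0 \cong A_2 + N_0A_1$). The paper uses an isomorphism $A_0 + A_1 \to A_1 + A_0$, in which $L'$ and $L$ are both literally the rightmost class. Your version therefore needs an extra check that the last class of $A_0$ lies inside $g[A_1]$.

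However, you have the direction of the branching backwards, and this leaves your identification step unjustified. The branching cutting out $L$ keeps the final copy $A_{N_k0}$ at each stage and discards $N_kA_{k+1}$ on the \emph{left}. In the paper's terminology that is a \emph{rightward} branching, so the relevant clause is Proposition \ref{Dbranchingintersectionpropn}.(i), not (ii). Clause (ii) only says that $L$ is a final segment of its class, which is vacuous because $L$ is already final in $A_0$. It does not give your ``cleaner formulation''. Nor does it justify your later claim that the last class of $A_0$ lies in every $A_k$ ``because it is the intersection of the branching.''

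With clause (i), $L$ is an initial segment of its class and a final segment of $A_0$. Hence $L$ is exactly the last class of $A_0$, and $L=\emptyset$ precisely when $A_0$ has no last class; likewise for $L'$ and $A_1$. So the case you worry about in your last paragraph, a class extending leftward past $L$, cannot occur. No comparison of limit cuts in the style of Theorem \ref{BplusAinitandfinAplusB} is needed.

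Clause (i) also settles your containment check directly. The transported branching $\{g[I_n']\}$ is a rightward $\mathcal{D}$-branching in $A_0$ with intersection $g[L']$. So $g[L']$ is initial in its class and final in $A_0$, hence equals $L$. For the empty case: if $L'=\emptyset$, then $A_1$ has no last class. Neither does $A_0$, since by Corollary \ref{simDclassesinvarunderconvfcoro} a last class of $A_0$ would meet the final segment $g[A_1]$ and pull back to a last class of $A_1$. Hence $L=\emptyset$ too.
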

 
\begin{proof}
View $\{A_k\} = \{A_k \cong N_k A_{k+1} + A_{k+2}\} = \mathcal{D}$ as a left division system.

Let $\mathbb{Z}X(I_{[(n, u)]})$ be the symbolic representation of $\mathbb{Z}A_0$ from Theorem \ref{leftsymbolicrepnthmforZA0}, and let $\mathbb{R}(K_{[x]})$ be its pushed forward real representation (via the map $\sigma$) from Theorem \ref{leftrealrepnthm}. Let 
\[
\begin{array}{rcl}
A_0 & \cong & R + (0, 1)(K_{[x]}) + L \\
A_1 & \cong & R + (0, a_1)(K_{[x]}) + L'
\end{array}
\]
denote the corresponding real representations of $A_0$ and $A_1$. 

Since the system is symmetric we have $A_0 + A_1 \cong A_1 + A_0$. Fix an isomorphism $f: A_0 + A_1 \rightarrow A_1 + A_0$. View $L'$ as a final segment of $A_1$, and $A_1$ as a final segment of $A_0 + A_1$. Likewise, view $L$ as a final segment of $A_0$, and $A_0$ of $A_1 + A_0$. 

From the proof of \ref{leftrealrepnthm} we have $L \cong I_{(0, e_0)}$ and $L' \cong I_{(0, e_1)}$. 

Recall from the proof of Theorem \ref{leftsymbolicrepnthm} that in the symbolic representation $A_0 \cong X(I_{[u]})$, the rightmost replacement order $I_{e_0}$, which is isomorphic to $I_{(0, e_0)}$, is obtained as an intersection of a rightward $\mathcal{D}$-branching $\{I_n\}$ in $A_{\emptyset} \cong A_0$: namely, by taking $I_0$ to be the final copy $A_{N_00}$ of $A_2$ within $A_{\emptyset} \cong N_0 A_1 + A_2$, taking $I_1$ to be the final copy $A_{N_00N_20}$ of $A_4$ within $A_{N_00} \cong N_2 A_3 + A_4$, and so on. 

Recall the definition of the branching condensation $\sim_{\mathcal{D}}$ from Section \ref{subsect:simDcondensation}. Since $\mathcal{D}$ is non-splitting, by Proposition \ref{Dbranchingintersectionpropn} the branching intersection $\bigcap_n I_n \cong L$ is an initial segment of the $\sim_{\mathcal{D}}$-class $\delta(x)$ for any given $x \in L$. Since $L$ is a final segment of $A_0$, it must in fact equal $\delta(x)$. Thus, if $L$ is non-empty, it is the rightmost $\sim_{\mathcal{D}}$-class in $A_0$. 

By a similar argument $L'$ is the rightmost $\sim_{\mathcal{D}}$-class in $A_1$, if it is nonempty. By Proposition \ref{simDclassesinvarunderconvfcoro}, since $\mathcal{D}$ is non-splitting, $\sim_{\mathcal{D}}$-classes are preserved by convex embeddings. Since $f$ is an isomorphism (i.e., a surjective convex embedding), it follows one of $A_0 + A_1$ and $A_1 + A_0$ has a nonempty rightmost $\sim_{\mathcal{D}}$-class if and only if the other does, and in this case, $f$ sends one of these classes onto the other. That is, either $L = L' = \emptyset$ or $f[L'] = L$. In either case, $L \cong L'$. 

Thus the representation $\mathbb{R}(K_{[x]})$ is symmetric, and the theorem now follows from Theorem \ref{leftrealrepnthm}. 
\end{proof}

In Section \ref{subsect:invarofrepnsfornonsplitsys}, we will generalize the rigidity argument for $\sim_{\mathcal{D}}$-classes of non-splitting systems from the proof above. See Theorem \ref{simdequalssimDfornonsplittinsystems}.

\theoremstyle{definition}
\newtheorem{Isosofomegaprodsfornonsplsymmsystems}[lct]{Proposition}
\begin{Isosofomegaprodsfornonsplsymmsystems}\label{Isosofomegaprodsfornonsplsymmsystems}
Consider the real representation $\mathbb{Z}A_0 \cong \mathbb{R}(K_{[x]})$ from Theorem \ref{symmetricrealrepnthm}. Then for all $k, k' \in \omega$ we have the identities:
\[
    \begin{array}{rccl}
    \mathbb{Z} A_k & \cong & \mathbb{Z} A_{k'}; & \\
    \omega^* A_k & \cong & \omega^* A_{k'}; & \\
    \omega A_k & \cong & \omega A_{k'}. & 
    \end{array}
\]
\end{Isosofomegaprodsfornonsplsymmsystems}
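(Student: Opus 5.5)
The plan is to reduce this to Proposition \ref{sufficiencyimpliesIsosforomegaprods}. In the proof of Theorem \ref{symmetricrealrepnthm}, the system $\mathcal{D} = \{A_k\}$ is viewed as a non-terminating left division system. The representation $\mathbb{Z}A_0 \cong \mathbb{R}(K_{[x]})$ is precisely the real representation from Theorem \ref{leftrealrepnthm} attached to that left system. The extra content established there is that $L \cong L'$, i.e. that this left real representation is \emph{symmetric} in the sense of Definition \ref{sufficientsymmdefn}. Symmetry implies sufficiency, because $K_0 \cong L + R \cong L' + R \cong K_{a_1}$.

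First, I apply Proposition \ref{sufficiencyimpliesIsosforomegaprods}.(i.) to the left representation. This gives $\mathbb{Z}A_k \cong \mathbb{Z}A_{k'}$ and $\omega A_k \cong \omega A_{k'}$ for all $k, k'$. It also gives $\omega^*A_k \cong \omega^*A_{k'}$, but only when $k \equiv k' \pmod 2$.

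Second, I remove the parity restriction using part (ii.) of the same proposition, which requires symmetry. Concretely, the computation in its proof gives $\omega^*A_k \cong (-\infty,0)(K_{[x]}) + L$ for $k$ even and $\omega^*A_k \cong (-\infty,0)(K_{[x]}) + L'$ for $k$ odd. Since $L \cong L'$, these two orders coincide for every $k$.

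Two details need checking. The first is that the representation in Theorem \ref{symmetricrealrepnthm} really is the left-pushed representation with the same $L$, $L'$, and $R$; this is how the theorem was proved, so it can be quoted. The second is that the $E_G^{\pm}$ versus $E_G$ indexing causes no trouble, since by Proposition \ref{sufficientimpliesrealrepnisuptoEG} sufficiency makes the replacement one up to $E_G$.

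There is no serious obstacle; the only substantive input, $L \cong L'$, comes from the non-splitting rigidity argument already carried out. If one preferred to avoid the left-sided asymmetry, an alternative would be to treat $\mathcal{D}$ also as a right system and invoke Proposition \ref{sufficiencyimpliesIsosforomegaprodsRIGHTVERSION}. However, that would require separately verifying sufficiency for the right representation, so I would stick with the left-sided route.
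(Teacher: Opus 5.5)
Your proposal is correct and matches the paper's one-line proof: the representation from Theorem \ref{symmetricrealrepnthm} is the left real representation with $L \cong L'$, hence symmetric and therefore sufficient. Both parts of Proposition \ref{sufficiencyimpliesIsosforomegaprods} then apply, with part (ii.) removing the parity restriction on the $\omega^*$ identities.
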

\begin{proof}
Straightforward from Theorems \ref{sufficiencyimpliesIsosforomegaprods} and \ref{symmetricrealrepnthm}.
\end{proof}

\subsection{Representations of rational systems}\label{subsect:repnsforrationalsystems}

In this section we present the analogous symbolic and real representation theorems for non-terminating rational division systems. We indicate the proofs only briefly: though they are not identical to the corresponding proofs from Section \ref{subsect:leftdivissystemsymbolicrepn} for left systems, they take the same approach and are in fact simpler. In particular, there is no distinction between forward and backward orbits of the origin, and the representations are automatically sufficient. 

Let $\mathcal{C} = \{N_k\}$ denote a fixed set of rational division coefficients. 

\theoremstyle{definition}
\newtheorem{rationaldivtreedefn}[lct]{Definition}
\begin{rationaldivtreedefn}\label{rationaldivtreedefn}
The \textit{rational division tree $T^{\mathcal{C}} = T$ on the coefficients $\mathcal{C}$} is defined recursively as follows. 
\begin{itemize}
    \item $T_0 = \{\emptyset\}$. 
    \item Given $T_k$ for $k \in \omega$ and a terminal node $r \in T_k$, we have by induction that $|r| = k$. Define the set of extensions $S_r = \{r0, r1, \ldots, r(N_k-1)\}$ of $r$, consisting of the successors $ri$ for $0 \leq i \leq N_k-1$.

    Define 
    \[ 
    T_{k+1} = T_k \cup \bigcup_{\textrm{$r \in T_k$ terminal}} S_r.
    \]
    \item Define
    \[
    T = \bigcup_{k \in \omega} T_k.
    \]
\end{itemize}
\end{rationaldivtreedefn}

\theoremstyle{definition}
\newtheorem{rationaldivorderdefn}[lct]{Definition}
\begin{rationaldivorderdefn}\label{rationaldivorderdefn}
The \textit{rational division order $X^{\mathcal{C}} = X$ on the coefficients $\mathcal{C}$} is the body $[T]$ of the rational division tree $T$ on the coefficients $\mathcal{C}$, equipped with the lexicographical ordering. 
\end{rationaldivorderdefn}

We copy over the notation and terminology from the one-sided representation setting. In particular $T_r$, $X_r$, and $E_0$ have their expected meanings for rational division trees $T$ and orders $X$.

\theoremstyle{definition}
\newtheorem{rationalsymbolicrepnthm}[lct]{Theorem}
\begin{rationalsymbolicrepnthm}\label{rationalsymbolicrepnthm}
Suppose $T$ is the rational division tree for the division coefficients $\mathcal{C}$ and $X$ is the corresponding rational division order.
\begin{itemize}
    \item[i.] For every non-terminating rational division system  $\{A_k: k \in \omega\}$ on the coefficients $\mathcal{C}$, there exists a collection of orders $\{I_{[u]}: u \in X\}$ indexed by the $E_0$-classes $[u]$ of $X$ such that for every node $r \in T$ with $|r| = k$, we have $A_k \cong X_r(I_{[u]})$. In particular, $A_0 \cong X(I_{[u]})$.

    \item[ii.] Conversely, suppose $\{I_{[u]}: u \in X\}$ is a collection of orders indexed by the $E_0$-classes of $X$ and $X(I_{[u]})$ is the corresponding replacement of $X$ up to $E_0$. Then for every pair of nodes $r, s \in T$ with $|r| = |s|$ we have $X_r(I_{[u]}) \cong X_s(I_{[u]})$. 
    
   Furthermore, if for each $k$ we fix $A_k$ isomorphic to $X_r(I_{[u]})$ for some (equivalently, any) node $r$ with $|r| = k$, then $A_k \cong N_k A_{k+1}$ for all $k \in \omega$, that is, $\{A_k: k \in \omega\}$ is a non-terminating rational division system on the coefficients $\mathcal{C}$. 
\end{itemize}
\end{rationalsymbolicrepnthm}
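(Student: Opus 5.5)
The plan is to follow the proof of Theorem \ref{leftsymbolicrepnthm} essentially verbatim. The rational tree is simpler: every node is regular, and each node $r$ with $|r| = k$ has exactly the successors $r0, \ldots, r(N_k-1)$.

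For (i.), for each $k$ I would fix a cut sequence $a^k_0 < a^k_1 < \cdots < a^k_{N_k}$ in $A_k$ witnessing $A_k \cong N_k A_{k+1}$, together with isomorphisms $f^k_i : A_{k+1} \rightarrow [a^k_i, a^k_{i+1}]$ for $i < N_k$. Recursively set $f_\emptyset = \textrm{id}_{A_0}$ and $f_{ri} = f_r f^k_i$. Writing $A_r$ for the image of $f_r$, the intervals $A_{ri}$ for $i < N_k$ partition $A_r$ in lexicographic order. For $u \in X$ put $I_u = \bigcap_n A_{u \upharpoonright n}$. As before, the $I_u$ are (possibly empty) intervals ordered lexicographically by $u$. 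They partition $A_0$, since each point $a$ determines a unique branch by choosing at each level the unique $A_{ri}$ containing $a$. This gives $A_0 \cong X(I_u)$, and relativizing gives $A_r \cong X_r(I_u) \cong A_{|r|}$.

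For $E_0$-invariance, take $r, s$ with $|r| = |s|$. Induction on $t$ gives $f_s f_r^{-1} f_{rt} = f_{st}$, hence $f_s f_r^{-1}[A_{rt}] = A_{st}$. Consequently $f_s f_r^{-1}[I_{ru'}] = I_{su'}$, by the same intersection computation as before, so we may choose a common type $I_{[u]}$ on each class.

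For (ii.), the key observation is that in the rational tree, membership of $ru$ in $X$ depends only on the tail $u$ and the length $|r|$. Indeed, the constraint on the tail is just $0 \le u_i < N_{|r|+i}$. Hence $(ru, i) \mapsto (su, i)$ is a well-defined order isomorphism $X_r(I_{[u]}) \to X_s(I_{[u]})$. Then $X_r = X_{r0} + \cdots + X_{r(N_k-1)}$ yields $A_k \cong N_k A_{k+1}$.

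I expect no real obstacle, since this is strictly easier than the left case. The only point needing care is the partition claim that every $a \in A_0$ lies in exactly one $I_u$, and I would verify it by the branch-selection argument just described.
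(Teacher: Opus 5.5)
Your proposal is correct and follows the paper's route. The paper also just adapts the proof of Theorem \ref{leftsymbolicrepnthm}: it builds the embeddings $f_{ri} = f_r f^k_i$, takes $I_u = \bigcap_n A_{u \upharpoonright n}$, transports along $f_s f_r^{-1}$ to get $E_0$-invariance, and for (ii.) uses that membership of $ru$ in $X$ depends only on $|r|$ and the tail, exploiting as you do that the rational tree has no irregular nodes.
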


\begin{proof}
Analogous to the proof of Theorem \ref{leftsymbolicrepnthm}. Since the system is rational, the tree $T$ has no irregular nodes. For every $r \in T$, recursively define embeddings $f_r: A_{|r|} \rightarrow A_0$ via the rule 
\[
f_{ri} = f_r f_i^k
\]
for $0 \leq i \leq N_k-1$, where $f_i^k$ is a fixed embedding of $A_{k+1}$ onto its $i$th copy in $A_k$. Then taking intersections of iterated images of the maps $f_r$ yields a representation of the form $A_0 \cong X(I_{[u]})$, and more generally $A_k \cong X_r(I_{[u]})$ for any $|r| = k$. This yields (i.); (ii.) is also straightforward adaptation.
\end{proof}

Consider the sequence of rational numbers $a_k$ determined by the equations 
\[
a_k = N_k a_{k+1}
\]
(on the coefficients $N_k \in \mathcal{C}$) by setting $a_0 = 1$. Thus for $k > 0$, we have 
\[
a_k = \frac{1}{N_0N_1 \cdots N_{k-1}}.
\]

Let $G$ denote the group of translations on $\mathbb{R}$ generated by the maps $x \mapsto x + a_k$, and let $E_G$ denote its orbit equivalence relation. Since the sequence $a_k$ converges to $0$, the orbits $[x]_{E_G} = [x]$ are each dense in $\mathbb{R}$. 

\theoremstyle{definition}
\newtheorem{rationalrealrepnthm}[lct]{Theorem}
\begin{rationalrealrepnthm}\label{rationalrealrepnthm}
Suppose that $\{A_k\}$ is a rational division system on the coefficients $\mathcal{C}$. Then there is a collection of orders $K_{[x]}$ indexed by the $E_G$-classes of $\mathbb{R}$ such that $\mathbb{Z}A_0 \cong \mathbb{R}(K_{[x]})$. 

Moreover, there is a decomposition
\[
\begin{array}{rcl}
K_0 & \cong & L + R
\end{array}
\]
such that for every $k \in \omega$, 
\[
\begin{array}{rcl}
A_k & \cong & R + (0, a_k)(K_{[x]}) + L.
\end{array}
\]
\end{rationalrealrepnthm}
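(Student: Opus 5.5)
We follow the pattern of the proof of Theorem \ref{leftrealrepnthm}, but with the rational division order in place of the left one. By Theorem \ref{rationalsymbolicrepnthm}, fix the symbolic representation $A_0 \cong X(I_{[u]})$, where $X$ is the rational division order on $\mathcal{C}$. As in Theorem \ref{leftsymbolicrepnthmforZA0}, we lift it to $\mathbb{Z}A_0 \cong \mathbb{Z}X(I_{[(n,u)]})$ by setting $I_{(n,u)} = I_u$. We then define $\sigma: \mathbb{Z}X \to \mathbb{R}$ by
\[
\sigma(m,u) = m + \sum_k u_k a_{k+1}.
\]
Here normal form means $0 \leq u_k \leq N_k - 1$, so $\sigma(m,u)$ is just the mixed-radix expansion of a real number in base $(N_0, N_1, \ldots)$.

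The analogues of Lemmas \ref{sigmamekismplusaklem}--\ref{sigmapreservesE0lemma} are simpler here, since they are the standard facts about mixed-radix expansions.

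\begin{itemize}
\item[i.] Since $a_k = \sum_{i \geq k} (N_i - 1) a_{i+1}$, the right endpoint $e_k = 0^k (N_k-1)(N_{k+1}-1)\cdots$ of $X_{0^k}$ satisfies $\sigma(0,e_k) = a_k$.
\item[ii.] The map $\sigma$ is surjective and weakly order-preserving. It identifies exactly the jump pairs of $\mathbb{Z}X$, namely the pairs $r i (N-1)(N-1)\cdots < r(i+1)\overline{0}$ together with $(m-1,e_0) < (m,\overline{0})$. The images of these pairs are exactly the points of $[0] = G = \bigcup_k a_k\mathbb{Z}$, the $N$-adic rationals.
\item[iii.] For $x, y \notin G$, we have $x E_G y$ if and only if $\sigma^{-1}(x) \mathrel{E_0} \sigma^{-1}(y)$. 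Indeed, adding $a_k$ to a number with infinitely many non-maximal digits changes only finitely many digits, because the carry terminates.
\end{itemize}

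There is only one class of left jump points and one class of right jump points, $[(0,e_0)]$ and $[(0,\overline{0})]$. So no forward/backward splitting of $[0]$ arises, and the pushforward will automatically be a replacement up to $E_G$ itself.

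Next we push forward. For $x \notin G$, set $K_x = I_{\sigma^{-1}(x)}$. For $x \in G$ with preimage the jump pair $(m,u) < (n,v)$, set $K_x = I_{(m,u)} + I_{(n,v)}$. Every such pair has $(m,u) \mathrel{E_0} (0,e_0)$ and $(n,v) \mathrel{E_0} (0,\overline{0})$. Writing $L = I_{(0,e_0)}$ and $R = I_{(0,\overline{0})}$, this gives $K_x \cong L + R$ uniformly for $x \in G$, and in particular $K_0 \cong L+R$. The map $(n,u,i) \mapsto (\sigma(n,u),i)$ is then an isomorphism $\mathbb{Z}X(I_{[(n,u)]}) \cong \mathbb{R}(K_{[x]})$.

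For the formula for $A_k$, use $A_k \cong X_{(0,0^k)}(I_{[(n,u)]})$. The interval $X_{(0,0^k)}$ runs from $(0,\overline{0})$ to $(0,e_k)$, and its $\sigma$-image is $[0,a_k]$. Its two endpoints contribute only the halves $R$ and $I_{(0,e_k)} \cong L$ of the split orders over $0$ and $a_k$. Hence
\[
A_k \cong R + (0,a_k)(K_{[x]}) + L.
\]
Note that $e_k \mathrel{E_0} e_0$ for every $k$, since all $e_k$ end in maximal digits, so no parity distinction appears.

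The main obstacle is item iii, the invariance of $E_0$ under $\sigma$ in both directions. Specifically, one must check that every $x$ with $x E_G y$ corresponds to eventually equal digit sequences, including when translating by $a_k$ for large $k$ and by negative multiples. We handle this by the same carry/borrow argument as in Lemma \ref{sigmapreservesE0lemma}, applied to translation by $\pm a_k$ for each $k$. It terminates because $x \notin G$ forces infinitely many digits that are non-maximal and infinitely many that are nonzero. Since $G$ is generated by the $a_k$, iterating gives the general case.
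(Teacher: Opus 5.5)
Your proposal is correct and takes essentially the same route as the paper: lift the symbolic representation of Theorem~\ref{rationalsymbolicrepnthm} to $\mathbb{Z}X$, push it forward along the mixed-radix map $\sigma$, and replace each point of $G=[0]$ by the condensed jump pair $I_{e_0}+I_{\overline{0}}\cong L+R$; you in fact supply more detail than the paper, for example in verifying the formula for $A_k$. One minor correction to your justification: in mixed radix, the carry or borrow produced by adding $\pm a_k$ moves toward the finitely many more significant digits and the integer part, so it always terminates, and the hypothesis $x\notin G$ is needed only to ensure that $\sigma^{-1}(x)$ and $\sigma^{-1}(x\pm a_k)$ are singletons, so that the carried sequence really is the preimage (this differs from the continued-fraction setting of Lemma~\ref{sigmapreservesE0lemma}, where the rewriting runs toward less significant digits and termination genuinely depends on $x\notin[0]$).
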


\begin{proof}
From the representation $X(I_{[u]})$ of $A_0$ we may write down a representation $\mathbb{Z}X(I_{[(n, u)]})$ of $\mathbb{Z}A_0$ that we will push forward to get $\mathbb{R}(K_{[x]})$.

Define the map $\sigma: \mathbb{Z}X \rightarrow \mathbb{R}$ as before:
\[
\sigma(n, u) = n + \sum_{k \in \omega} u_k a_{k+1}.
\]
Then $\sigma$ is essentially an order-isomorphism of $\mathbb{Z}X$ and $\mathbb{R}$, up to condensing each jump pair in $\mathbb{Z}X$ onto a singleton in $[0]_{E_G} = [0]$. 

Let $e_0$ denote the sequence $(N_0-1)(N_1-1)\ldots \in X$. Then $e_0$ is the right endpoint of $X$; $\overline{0}$ is its left endpoint. 

Each jump pair in $X$ has its left point in the $E_0$-class of $e_0$ and its right point in the $E_0$-class of $\overline{0}$. Thus if we replace each $x \in \mathbb{R}$ by 
\[
K_x = I_{\sigma^{-1}(x)}
\]
whenever $\sigma^{-1}(x)$ is a singleton, and by 
\[
K_x = I_{e_0} + I_{\overline{0}}
\]
whenever $x$ is a condensed jump pair (i.e. when $x \in [0]$), the resulting replacement $\mathbb{R}(K_{[x]})$ is up to $E_G$ and naturally isomorphic (via $\sigma$) to $\mathbb{Z}X(I_{[(n, u)]}) \cong \mathbb{Z}A_0$. 
\end{proof}

Since the replacement $\mathbb{R}(K_{[x]})$ is up to $E_G$ in Theorem \ref{rationalrealrepnthm}, every $a \in G$ induces the automorphism $(x, k) \mapsto (x+a, k)$ of $\mathbb{R}(K_{[x]})$. 

\theoremstyle{definition}
\newtheorem{Isosofomegaprodsforrationalsystems}[lct]{Proposition}
\begin{Isosofomegaprodsforrationalsystems}\label{Isosofomegaprodsforrationalsystems}
Consider the real representation $\mathbb{Z}A_0 \cong \mathbb{R}(K_{[x]})$ from Theorem \ref{rationalrealrepnthm}. Then for all $k, k' \in \omega$, we have $A_k + A_{k'} \cong A_{k'} + A_k$ and we have the identities:
\[
    \begin{array}{rccl}
    \mathbb{Z} A_k & \cong & \mathbb{Z} A_{k'}; & \\
    \omega^* A_k & \cong & \omega^* A_{k'}; & \\
    \omega A_k & \cong & \omega A_{k'}. & 
    \end{array}
\]
\end{Isosofomegaprodsforrationalsystems}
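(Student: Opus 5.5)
The plan is to follow the proof of Proposition \ref{sufficiencyimpliesIsosforomegaprods}, using the real representation $\mathbb{Z}A_0 \cong \mathbb{R}(K_{[x]})$ from Theorem \ref{rationalrealrepnthm}. That representation is a replacement up to $E_G$, where $G$ is generated by all the $a_k = 1/(N_0\cdots N_{k-1})$. In particular, every $a \in G$ induces the automorphism $(x,k)\mapsto(x+a,k)$. It follows that for any $s<t$ and $a \in G$ we have
\[
(s,t)(K_{[x]}) \cong (s+a,t+a)(K_{[x]}) \quad\text{and}\quad K_{s} \cong K_{s+a}.
\]
Moreover $K_a \cong K_0 \cong L+R$ for every $a \in G$, so the representation behaves as if it were symmetric. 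This is the point where, in the one-sided case, a sufficiency or symmetry hypothesis was needed.

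First, the commutativity $A_k + A_{k'} \cong A_{k'} + A_k$. Using $A_k \cong R + (0,a_k)(K_{[x]}) + L$, I glue the middle $L + R$ into $K_{a_k}$. This gives
\[
A_k + A_{k'} \cong R + (0,a_k)(K_{[x]}) + K_{a_k} + (a_k, a_k+a_{k'})(K_{[x]}) + L,
\]
where the second open interval is translated by $a_k \in G$. The right-hand side is $R + (0, a_k + a_{k'})(K_{[x]}) + L$. The same computation with $k, k'$ swapped gives the same order, since $a_k+a_{k'} = a_{k'}+a_k$. (Alternatively, commutativity is immediate by writing both $A_k$ and $A_{k'}$ as multiples of $A_{\max(k,k')}$ via the system.)

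Next, the infinite sums. For $\omega A_k$ I alternate blocks exactly as in the left case:
\[
\omega A_k \cong R + (0,a_k)(K_{[x]}) + K_{a_k} + (a_k,2a_k)(K_{[x]}) + K_{2a_k} + \cdots \cong R + (0,\infty)(K_{[x]}).
\]
This uses that $na_k \in G$ and that the points $na_k$ are unbounded. Symmetrically, $\omega^* A_k \cong (-\infty,0)(K_{[x]}) + L$ and $\mathbb{Z}A_k \cong \mathbb{R}(K_{[x]})$. None of these depends on $k$, which gives all three identities.

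The only point needing care is the bookkeeping of the endpoint replacement orders. I must check that the $L$ closing one copy and the $R$ opening the next really assemble to $K_{na_k}$, and that translation preserves these point-orders. This follows from the $E_G$-invariance, since $na_k \in [0]$. I expect no genuine obstacle; the proof is a verbatim adaptation of Proposition \ref{sufficiencyimpliesIsosforomegaprods}.(ii.) with $L' = L$.
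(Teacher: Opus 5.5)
Your proposal is correct and is essentially the paper's argument. The paper's proof just says the result is straightforward from Theorem \ref{rationalrealrepnthm}, meaning your gluing computation from Proposition \ref{sufficiencyimpliesIsosforomegaprods} with $L' = L$, which works because the rational representation is automatically up to $E_G$. As you note parenthetically, back substitution $A_k \cong (N_k N_{k+1}\cdots N_{k'-1})A_{k'}$ for $k < k'$ would in fact give all three identities together with commutativity at once via Proposition \ref{omeganXcongomegaX}, without using the real representation at all.
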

\begin{proof}
Straightforward from Theorem \ref{rationalrealrepnthm}.
\end{proof}

\subsection{Invariance of representations for non-splitting systems}\label{subsect:invarofrepnsfornonsplitsys}

In this section we prove rigidity theorems for the real representations $\mathbb{R}(K_{[x]})$ of non-splitting division systems $\mathcal{D}$. We will see that the non-splitting hypothesis is a strong one: it implies that these representations are canonical. Our results here are the main tools we will use in our classification of the commutative semigroups representable in $(LO, +)$ in Section \ref{section:commutesemigroupsinLOrepn}. 

Fix a set of division coefficients $\mathcal{C}$, and also fix $\mathcal{D} = \{A_k: k \in \omega\}$ a non-terminating left, right, or rational division system on these coefficients. Let $X(I_{[u]})$ and $\mathbb{Z}X(I_{[u]}) \cong \mathbb{R}(K_{[x]})$ be the representations for $A_0$ and $\mathbb{Z}A_0$ obtained above. 

A priori, these representations are not invariants of the order type of $A_0$: they depend on the other orders $A_k$ appearing in the system $\mathcal{D}$, the particular convex embeddings $f_r$ used to embed these orders in $A_0$, and the intervals $I_u$ these maps uncover.  

The intervals $I_u$ are obtained by iteratively decomposing $A_0$ via the isomorphisms in $\mathcal{D}$ (or more precisely, via the maps $f_r$ witnessing these isomorphisms), and taking an intersection along a descending sequence of intervals $\{I_n\}$, the $n$th appearing at the $n$th stage of the decomposition, and each isomorphic to some $A_k$. 

Such a sequence is a $\mathcal{D}$-branching (see Definition \ref{Dbranchingdefn}). In the case when the system is non-splitting, this observation combined with Proposition \ref{Dbranchingintersectionpropn} may be used to show that the representations we obtain by this process, written in terms of the intervals $I_u$, are canonical. 

Define a condensation $\sim_d$ on the symbolic representation $X(I_{[u]})$ by the rule 
\[
\textrm{$(u, i) \sim_d (v, j)$ if $u = v$ or $u, v$ form a jump pair in $X$.}
\]
This condensation condenses each replacement order $I_u$ in $X(I_{[u]})$ back into a point, as well as the intervals $I_u + I_v$ corresponding to a replaced jump pair. 

Let $d$ denote the condensation map for $\sim_d$, so that $d(u,i)$ denotes the $\sim_d$-class of any given $(u,i) \in X(I_{[u]})$.

We also write $\sim_d$ for the condensation defined on the representation $\mathbb{Z}X(I_{[u]})$ by the same rule, as well as the corresponding condensation on the real representation $\mathbb{R}(K_{[x]})$ (i.e. the relation obtained by pushing forward $\sim_d$ via $\sigma$). On $\mathbb{R}(K_{[x]})$, $\sim_d$ is just the replacement condensation defined by 
\[
\textrm{$(x, k) \sim_d (x', l)$ if $x = x'$,} 
\]
and the $d$-classes are just the replacement orders $K_x$. 

The condensation $\sim_d$ is defined on the \textit{representations} $X(I_{[u]})$, $\mathbb{Z}X(I_{[u]})$, and $\mathbb{R}(K_{[x]})$. A priori, to view $\sim_d$ as defined on the represented orders $A_0$ and $\mathbb{Z}A_0$ requires fixing isomorphisms between the orders and these representations. 

Recall the definition \ref{simDdefn} of the condensation scheme $\sim_{\mathcal{D}}$ associated to $\mathcal{D}$. This is a global condensation scheme on $LO$, invariant under convex embeddings, which depends only on the order types of the orders $A_k$ appearing in $\mathcal{D}$. As before we write $\delta$ for the condensation map of this condensation. 

\theoremstyle{definition}
\newtheorem{simdequalssimDfornonsplittinsystems}[lct]{Theorem}
\begin{simdequalssimDfornonsplittinsystems}\label{simdequalssimDfornonsplittinsystems}
Suppose $\mathcal{D}$ is a non-splitting system. Then $\sim_d$ coincides with $\sim_{\mathcal{D}}$ on each of the representations $X(I_{[u]})$, $\mathbb{Z}X(I_{[u]})$, and $\mathbb{R}(K_{[x]})$.
\end{simdequalssimDfornonsplittinsystems}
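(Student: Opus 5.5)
We prove the statement on $X(I_{[u]})$ by showing that each $\sim_d$-class is contained in a single $\sim_{\mathcal{D}}$-class and, conversely, each $\sim_{\mathcal{D}}$-class lies inside a single $\sim_d$-class. We then transfer the result to $\mathbb{Z}X(I_{[u]})$ and to $\mathbb{R}(K_{[x]})$. Throughout we identify $A_0$ with $X(I_{[u]})$ through the maps $f_r$ of Theorem \ref{leftsymbolicrepnthm} (or their right and rational analogues). Under this identification $I_u = \bigcap_n A_{u\upharpoonright n}$, and each basic block $A_r$ is an interval isomorphic to $A_{|r|}$.

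\textbf{Step 1: every $\sim_d$-class lies in one $\sim_{\mathcal{D}}$-class.} Fix $u \in X$. Passing along regular initial segments of $u$ gives a descending sequence of blocks $A_{u\upharpoonright n}$. At each regular node $r$, the block $A_r$ is split into $N_k$ copies of $A_{k+1}$ followed by one copy of $A_{k+2}$. The difference between $A_r$ and the next block along $u$ is therefore nonempty and contains a full copy of some $A_l$. Hence this descending sequence, reindexed on regular nodes, is a $\mathcal{D}$-branching. By Proposition \ref{Dbranchingintersectionpropn}, $I_u$ is either initial or final in $\delta(x)$ for any $x \in I_u$, and in particular $I_u \subseteq \delta(x)$. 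This inclusion argument, from the first paragraph of that proposition's proof, needs only the non-splitting hypothesis.

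Now let $u<v$ be a jump pair. We need a point of $I_u$ and a point of $I_v$ that are $\sim_{\mathcal{D}}$-equivalent. Suppose instead that some $A_k$ convexly embeds in $[x,y]$ with $x\in I_u$ and $y \in I_v$. The interval $[x,y]$ lies inside $I_u + I_v$. Each of $I_u$ and $I_v$ convexly embeds in $A_j$ for every $j$, since each is the intersection of blocks of unboundedly large index. So $A_k \leqslant_{conv} A_{k+2}+A_{k+2}$. Proposition \ref{AplusBconvXplusY} together with Lemma \ref{splittinglemmadivissystems} then yields $2A_{k+2}\leqslant_{conv} A_{k+2}$, and Corollary \ref{2XconvXiff2XcongX} shows that $\mathcal{D}$ is splitting, a contradiction. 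Hence each $d$-class sits inside one $\delta$-class.

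\textbf{Step 2: each $\sim_{\mathcal{D}}$-class lies inside a single $\sim_d$-class.} Take $x<y$ with $d(x)\neq d(y)$, so that $x\in I_u$, $y \in I_v$, $u<v$, and $u,v$ are not a jump pair. By Proposition \ref{jumppairpropn}(ii.) there is $s\in T$ with $u<X_s<v$, and we may take $s$ regular. Then $A_s \cong A_{|s|}$ lies strictly between $x$ and $y$, so $x\not\sim_{\mathcal{D}} y$. Combining Steps 1 and 2 gives equality on $X(I_{[u]})$.

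\textbf{Step 3: transfer to $\mathbb{Z}X(I_{[u]})$ and $\mathbb{R}(K_{[x]})$.} For $\mathbb{Z}X(I_{[u]})$, the arguments above apply inside each copy of $X$. The only new case is the jump pair $(m,e_0),(m+1,\overline 0)$ across adjacent copies, and the same $A_{k+2}+A_{k+2}$ argument handles it. For pairs in different copies that are not a jump pair, a whole basic block lies between them. Finally, $\sim_d$ on $\mathbb{R}(K_{[x]})$ is by definition the pushforward along the isomorphism induced by $\sigma$. Since $\sim_{\mathcal{D}}$ is invariant under isomorphisms by Corollary \ref{simDclassesinvarunderconvfcoro}, the equality transfers.

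I expect the main obstacle to be the branching bookkeeping in Step 1. Irregular nodes $rN_k$ satisfy $X_{rN_k}=X_{rN_k0}$ and must be skipped, and one has to check in the right and rational cases that every step of the branching genuinely discards a full copy of some $A_l$. The jump-pair estimate is routine once the embeddings $I_u \leqslant_{conv} A_j$ for all $j$ are recorded.
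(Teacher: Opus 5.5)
Your proof is correct in outline, but it takes a different route from the paper. One step, the jump-pair estimate, does not go through as written, though it is easily repaired.

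\textbf{Comparison with the paper.} The paper uses all of Proposition \ref{Dbranchingintersectionpropn}. It sorts $u$ by its $E_0$-class:
\begin{itemize}
\item the branching along $u$ is middle when $u\notin[e_0]\cup[e_1]\cup[\overline{0}]$, which gives $I_u=\delta(x)$ outright;
\item it is rightward for $u\in[e_0]\cup[e_1]$ and leftward for $u\in[\overline{0}]$, which makes $I_u$ an initial, respectively final, segment of a $\sim_{\mathcal{D}}$-class.
\end{itemize}
The paper then joins the two halves $I_u,I_v$ of a jump pair using the density of $X/{\sim_{\mathcal{D}}}$ (Proposition \ref{simDisacondpropn}(iii.)), and treats the endpoints $\overline{0}$ and $e_0$ separately.

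You use only the inclusion $\bigcap_n I_n\subseteq\delta(x)$ from that proposition. You prove the reverse inclusion $\delta(x)\subseteq d(x)$ directly from Proposition \ref{jumppairpropn}(ii.): non-adjacent $I_u,I_v$ are separated by a whole block $A_s\cong A_{|s|}$. You handle jump pairs by a direct computation instead of density. This gives a uniform argument with no case split by orbit class or endpoint, and it makes clear that non-splitting is needed only for $d(x)\subseteq\delta(x)$. The paper's version is shorter because the directional parts of the proposition are already proved.

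\textbf{The step that fails.} There are two problems in the jump-pair step.
\begin{itemize}
\item From $I_u\leqslant_{conv}A_j$ and $I_v\leqslant_{conv}A_j$ you cannot conclude $I_u+I_v\leqslant_{conv}A_j+A_j$. Concatenating convex embeddings needs a final embedding on the left and an initial one on the right.
\item Even granting $A_k\leqslant_{conv}A_{k+2}+A_{k+2}$, Proposition \ref{AplusBconvXplusY} applied to $A_k\cong N_kA_{k+1}+A_{k+2}$ yields only $N_kA_{k+1}\leqslant_{conv}A_{k+2}$ or the vacuous $A_{k+2}\leqslant_{conv}A_{k+2}$. So $2A_{k+2}\leqslant_{conv}A_{k+2}$ does not follow.
\end{itemize}

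The repair uses exactly the tools you named.
\begin{itemize}
\item Let $c$ be the cut between the adjacent intervals $I_u$ and $I_v$.
\item Apply Proposition \ref{AplusBconvXplusY} to $[x,y]=[x,c]+[c,y]$. This gives $N_kA_{k+1}\leqslant_{conv}I_u$ or $A_{k+2}\leqslant_{conv}I_v$.
\item Since $I_u$ and $I_v$ convexly embed in every $A_j$, this yields $A_{k+1}\leqslant_{conv}A_{k+3}$ or $A_{k+2}\leqslant_{conv}A_{k+4}$.
\item Lemma \ref{splittinglemmadivissystems}(iv.) then gives $2A_{k+3}\leqslant_{conv}A_{k+3}$ or $2A_{k+4}\leqslant_{conv}A_{k+4}$, contradicting non-splitting.
\end{itemize}
The same fix works for right and rational systems, and for the cross-copy jump pair $(m,e_0),(m+1,\overline{0})$ in $\mathbb{Z}X$. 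With this change your argument is complete.
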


\begin{proof}
Assume $\mathcal{D}$ is a left division system. The proofs for right systems and rational systems are similar. 

We prove the statement first for $X(I_{[u]})$. From the proof of Theorem \ref{leftsymbolicrepnthm}, for each $u \in X$ we have 
\[
I_u = \bigcap_{k \in \omega} A_{u \upharpoonright k}.
\]
When $u \upharpoonright k$ is a regular initial sequence of $u$ we have
\[
\begin{array}{rcl}
A_{u \upharpoonright k} & \cong & A_{(u \upharpoonright k)0} + A_{(u \upharpoonright k)1} + \cdots + A_{(u \upharpoonright k)(N_k-1)} + A_{(u \upharpoonright k)N_k0} \\
& \cong & A_{k+1} + A_{k+1} + \cdots + A_{k+1} + A_{k+2}.
\end{array}
\]

Let $\{k_n\}$ be the strictly increasing sequence of indices such that $u \upharpoonright k_n$ is the $n$th regular initial sequence of $u$. Let $I_n^u = A_{u \upharpoonright k_n}$. Then $\{I_n^u\}$ is clearly a $\mathcal{D}$-branching. Note that this branching is rightward when $u \in [e_0] \cup [e_1]$, leftward when $u \in [\overline{0}]$, and middle exactly when $u \not\in [e_0] \cup [e_1] \cup [\overline{0}]$.

Fix $x \in X(I_{[u]})$. Then $x \in I_u$ for a unique $u \in X$. We claim that the $\sim_{\mathcal{D}}$-class of $x$ coincides with its $\sim_d$-class in $X(I_{[u]})$, i.e. $d(x) = \delta(x)$. 

If $u \not\in [e_0] \cup [e_1] \cup [\overline{0}]$, then $u$ is not in a jump pair and $I_u = d(x)$. It is also the intersection of the middle branching $\{I^u_n\}$, which by Proposition \ref{Dbranchingintersectionpropn} is the $\sim_{\mathcal{D}}$-class of $x$. That is, $d(x) = \delta(x) = I_u$ and we are done in this case. 

If $u \in [e_0] \cup [e_1]$ and $u < e_0$, then $u$ is the left point of a jump pair $u < v$ in $X$ with $v \in [\overline{0}]$. Thus $d(x) = I_u + I_v$. By Proposition \ref{Dbranchingintersectionpropn}, $I_u$ is initial in $\delta(x)$, since it is obtained as the intersection of the rightward branching $\{I_n^u\}$. Symmetrically $I_v$, which is right adjacent to $I_u$, is obtained as the intersection of a leftward branching, and hence final in the $\sim_{\mathcal{D}}$-class $\delta(y)$ of any $y \in I_v$. 

Since $I_u$ and $I_v$ are adjacent, they must be intervals in the same $\sim_{\mathcal{D}}$-class $C$, as otherwise there would exist two adjacent $\sim_{\mathcal{D}}$-classes in $X$, contradicting Proposition \ref{simDisacondpropn}.(iii.). It follows that $C$ (in which $I_u$ is initial and $I_v$ is final) is $I_u + I_v$, i.e. $d(x) = \delta(x) = C = I_u + I_v$. Thus again, the $\sim_d$ and $\sim_{\mathcal{D}}$-classes of $x$ coincide.

The argument is symmetric when $u \in [\overline{0}]$ and $u > \overline{0}$. It remains to check the claim in the cases when $u = \overline{0}$ and $u = e_0$. If $u = \overline{0}$, then $I_u = d(x)$, and it is final $\delta(x)$, being obtained as the intersection of a leftward branching. But then since $I_u$ is initial in $X(I_{[u]})$, it must in fact coincide with $\delta(x)$. Thus we are done again in this case, and the case when $u = e_0$ is symmetric. 

Since $x$ was arbitrary, $\sim_d$ and $\sim_{\mathcal{D}}$ coincide on $X(I_{[u]})$, as desired. The proofs for $\mathbb{Z}A_0$ and $\mathbb{R}(K_{[x]})$ are similar. 
\end{proof}

Theorem \ref{simdequalssimDfornonsplittinsystems} implies that every automorphism of $\mathbb{R}(K_{[x]})$ determines an automorphism of $\mathbb{R}$. 

\theoremstyle{definition}
\newtheorem{autosofRKxlifttoautosofRfornonsplittinsystems}[lct]{Proposition}
\begin{autosofRKxlifttoautosofRfornonsplittinsystems}\label{autosofRKxlifttoautosofRfornonsplittinsystems}
Suppose $\mathcal{D}$ is a non-splitting system. For an automorphism $f: \mathbb{R}(K_{[x]}) \rightarrow \mathbb{R}(K_{[x]})$, define $\hat{f}: \mathbb{R} \rightarrow \mathbb{R}$ by the rule 
\[
\hat{f}(x) = y \Leftrightarrow f[K_x] = K_y.
\]
Then $\hat{f}$ is an automorphism of $\mathbb{R}$.
\end{autosofRKxlifttoautosofRfornonsplittinsystems}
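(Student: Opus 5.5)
The plan is to reduce everything to Theorem \ref{simdequalssimDfornonsplittinsystems} and Corollary \ref{simDclassesinvarunderconvfcoro}. By Theorem \ref{simdequalssimDfornonsplittinsystems}, on $\mathbb{R}(K_{[x]})$ the condensation $\sim_d$, whose classes are exactly the replacement orders $K_x$ for $x \in \mathbb{R}$, coincides with $\sim_{\mathcal{D}}$. Since $f$ is an automorphism, and in particular a surjective convex embedding, Corollary \ref{simDclassesinvarunderconvfcoro} applies with $f[\mathbb{R}(K_{[x]})] = \mathbb{R}(K_{[x]})$. It gives $f[\delta(p)] = \delta(f(p))$ for every point $p$. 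Hence $f$ maps each $\sim_{\mathcal{D}}$-class onto a $\sim_{\mathcal{D}}$-class, that is, each $K_x$ onto some $K_y$.

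There is one subtlety here. Replacement orders $K_x$ may be empty, and then $K_x$ is not a $\sim_d$-class, so the rule defining $\hat f$ says nothing at such $x$. I would first check whether the representation forces every $K_x$ to be nonempty. If not, I would interpret $\hat f$ on the set $\{x : K_x \neq \emptyset\}$ and extend it. This extension is the step I expect to be the main obstacle. I would try the following. The nonempty replacement set $S=\{x : K_x\neq\emptyset\}$ is a union of $E_G^{\pm}$-classes and is nonempty. Moreover, since the orbit classes are dense (by the irrationality of $a_1$, or in the rational case because $a_k \to 0$), $S$ is dense in $\mathbb{R}$. For $\mathcal{D}$ non-splitting the $A_k$ are nonempty, so the $K$-data are nonempty on a dense set.

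The induced map $\hat f\colon S\to S$ is a well-defined bijection, because $f^{-1}$ induces its inverse. It is also order-preserving: $K_x<K_{x'}$ implies $f[K_x]<f[K_{x'}]$. An order-automorphism of a dense subset $S\subseteq\mathbb{R}$ extends uniquely to an order-automorphism of $\mathbb{R}$, by taking suprema: set $\hat f(z)=\sup\{\hat f(s): s\in S, s<z\}$. This extension is monotone and has dense image, hence it is a surjective increasing map, and therefore a homeomorphism and order-automorphism of $\mathbb{R}$. For this step I need $\hat f[S]$ to be unbounded in both directions. That holds because $S$ is unbounded and $\hat f$ is a bijection of $S$ onto itself.

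If instead all $K_x$ are nonempty, $\hat f$ is defined directly on all of $\mathbb{R}$ and is an order-preserving bijection, so the extension step disappears. In either case the key input is Theorem \ref{simdequalssimDfornonsplittinsystems}, which is where non-splitting is used: it guarantees that the point-classes $K_x$ are intrinsic and so must be permuted by any automorphism.
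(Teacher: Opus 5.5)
Your proposal is correct and follows the paper's proof. Theorem \ref{simdequalssimDfornonsplittinsystems} together with Corollary \ref{simDclassesinvarunderconvfcoro} shows that $f$ permutes the $\sim_d$-classes $K_x$, and $\hat{f}$ is then an order-preserving bijection because $f$ is one. Your extra treatment of possibly empty $K_x$, defining $\hat{f}$ on the dense set $S=\{x : K_x\neq\emptyset\}$ and extending by suprema, is a valid refinement: the paper's argument picks a point $(x,k)\in K_x$ and so tacitly assumes every replacement order is nonempty.
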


\begin{proof}
We first check that $f$ is well-defined. Since $f$ preserves $\sim_{\mathcal{D}}$ by Proposition \ref{simDclassesinvarunderconvfcoro}, $f$ preserves $\sim_d$ by Theorem \ref{simdequalssimDfornonsplittinsystems}. That is, 
\[
f[d(x, k)] = d(f(x, k))
\]
for any $(x, k) \in \mathbb{R}(K_{[x]})$. But $d(x, k) = K_x$ and $d(f(x, k)) = d(y, k') = K_y$ where $(y, k') = f(x, k)$, which shows there is $y \in \mathbb{R}$ such that $f[K_x] = K_y$. Thus $\hat{f}$ is well-defined. 

It is now straightforward to see that $\hat{f}$ is order-preserving (since $f$ is order-preserving and preserves $\sim_d$) and onto (since $f$ is onto), and hence an automorphism of $\mathbb{R}$, as claimed.
\end{proof}

When $\mathcal{D}$ is non-splitting, we will continue to use the notation from Proposition \ref{autosofRKxlifttoautosofRfornonsplittinsystems} and write $\hat{f}$ for the automorphism of $\mathbb{R}$ condensed from an automorphism $f: \mathbb{R}(K_{[x]}) \rightarrow \mathbb{R}(K_{[x]})$.

For a linear order $A$, we write $\textrm{Aut}(A) = \textrm{Aut}(A, <)$ for the group (under composition) of automorphisms of $A$. 

\theoremstyle{definition}
\newtheorem{ftofhatishomo}[lct]{Proposition}
\begin{ftofhatishomo}\label{ftofhatishomo}
Suppose $\mathcal{D}$ is a non-splitting system. The map
\[
\begin{array}{rcl}
\varphi: \textrm{Aut}(\mathbb{R}(K_{[x]})) & \rightarrow & \textrm{Aut}(\mathbb{R}) \\
\varphi(f) & = & \hat{f}
\end{array}
\]
is a group homomorphism with kernel 
\[
N = \{f \in \textrm{Aut}(\mathbb{R}(K_{[x]}): \forall x \in \mathbb{R}, f[K_x] = K_x\}.
\]
\end{ftofhatishomo}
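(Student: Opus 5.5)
The plan is to verify the homomorphism property directly from the defining rule of $\hat{f}$, and then read off the kernel. By Proposition \ref{autosofRKxlifttoautosofRfornonsplittinsystems}, for each $f \in \textrm{Aut}(\mathbb{R}(K_{[x]}))$ the map $\hat{f}$ is a well-defined automorphism of $\mathbb{R}$, characterized by $f[K_x] = K_{\hat{f}(x)}$ for every $x \in \mathbb{R}$. So $\varphi$ is well-defined as a map into $\textrm{Aut}(\mathbb{R})$. One point to keep in mind is that some replacement orders $K_x$ may be empty. The characterization should therefore be read through the condensation $\sim_d$, as in the proof of that proposition: $\hat{f}$ is the map induced on the condensed order $\mathbb{R}(K_{[x]})/{\sim_d} \cong \mathbb{R}$, extended to the points $x$ with $K_x$ empty. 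I will treat $\hat{f}$ as that uniquely determined induced map. Uniqueness is what lets equalities of the form $\widehat{g} = h$ be checked on the $\sim_d$-classes, that is, on the nonempty $K_x$.

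For composition, take $f, g \in \textrm{Aut}(\mathbb{R}(K_{[x]}))$ and any $x$. Then
\[
(fg)[K_x] = f[g[K_x]] = f[K_{\hat{g}(x)}] = K_{\hat{f}(\hat{g}(x))},
\]
so $\widehat{fg} = \hat{f}\hat{g}$ on every $x$ with $K_x$ nonempty. Extending to the rest by the same uniqueness/continuity argument that defines $\hat{f}$ gives $\widehat{fg} = \hat{f}\hat{g}$ everywhere. Likewise $\widehat{\textrm{id}} = \textrm{id}_{\mathbb{R}}$. These together show that $\varphi$ is a group homomorphism, and inverses are then automatically preserved.

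For the kernel, $f \in \ker\varphi$ means $\hat{f} = \textrm{id}_{\mathbb{R}}$. By the defining rule this is equivalent to $f[K_x] = K_x$ for all $x$, which is exactly the set $N$ in the statement. Conversely, if $f[K_x] = K_x$ for all $x$, then $\hat{f}(x) = x$ by the definition of $\hat{f}$.

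The only step needing care is the handling of empty $K_x$: the rule $f[K_x] = K_y$ alone does not pin $y$ down when $K_x = \emptyset$. I would resolve this exactly as Proposition \ref{autosofRKxlifttoautosofRfornonsplittinsystems} implicitly does. The $x$ with $K_x$ nonempty form a union of $E_G$-classes, and hence are dense in $\mathbb{R}$ if there are any. An order-automorphism of $\mathbb{R}$ is determined by its values on a dense set. So both the homomorphism identity and the kernel description reduce to the nonempty case. The degenerate case in which every $K_x$ is empty cannot occur, since $A_0$ is nonempty.
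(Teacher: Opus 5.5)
Your proof is correct and is the same direct verification the paper intends (its proof just says ``Straightforward''): $(fg)[K_x]=f[K_{\hat g(x)}]=K_{\hat f\hat g(x)}$ gives $\widehat{fg}=\hat f\hat g$, and $\hat f=\textrm{id}$ unwinds to $f[K_x]=K_x$ for all $x$. Your handling of empty $K_x$ fills a point the paper leaves implicit. You reduce to the dense set of $x$ with $K_x\neq\emptyset$ and use that an automorphism of $\mathbb{R}$ is determined there. One small correction: for one-sided systems that set is a union of $E_G^{\pm}$-classes rather than $E_G$-classes, but each such class, including $[0]^+$ and $[0]^-$, is still dense, so the argument goes through unchanged.
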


\begin{proof}
Straightforward.
\end{proof}

For the next theorem, recall the archimedean comparability relation $\sim_{conv}$ from Definition \ref{simconvdefn}: $A \sim_{conv} B$ if $2A \leqslant_{conv} mB$ and $2B \leqslant_{conv} nA$ for some integers $n, m \geq 1$. The theorem says that any non-terminating system $\mathcal{D}'$ on an initial order $A_0'$ with $A_0' \sim_{conv} A_0$ determines the same condensation as $\mathcal{D}$.

\theoremstyle{definition}
\newtheorem{AconvAprimesamecondensation}[lct]{Theorem}
\begin{AconvAprimesamecondensation}\label{AconvAprimesamecondensation}
Suppose $\mathcal{D}' = \{A_k'\}$ is a non-terminating division system (on some set of division coefficients $\mathcal{C}')$ with $A_0' \sim_{conv} A_0$. Then $\sim_{\mathcal{D}}$ and $\sim_{\mathcal{D'}}$ coincide (on every linear order $A$).
\end{AconvAprimesamecondensation}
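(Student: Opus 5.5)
The plan is to reduce the statement to a mutual cofinality property of the two families $\{A_k\}$ and $\{A_k'\}$ under $\leqslant_{conv}$. Specifically, I would show:
\[
\textrm{for every } k \textrm{ there is } j \textrm{ with } A'_j \leqslant_{conv} A_k, \qquad \textrm{and for every } j \textrm{ there is } k \textrm{ with } A_k \leqslant_{conv} A'_j. \tag{$\ast$}
\]
Granting $(\ast)$, the theorem follows directly from Definition \ref{simDdefn}. Suppose $x \not\sim_{\mathcal{D}'} y$ in some order $A$. Then $A'_j \leqslant_{conv} [\{x,y\}]$ for some $j$. By $(\ast)$ there is $k$ with $A_k \leqslant_{conv} A'_j$, so by transitivity $A_k \leqslant_{conv} [\{x,y\}]$, i.e. $x \not\sim_{\mathcal{D}} y$. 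The reverse implication is symmetric. Note that $(\ast)$ depends only on the order types of the terms, and this is why the conclusion holds on every linear order $A$.

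To prove $(\ast)$ I would use two arithmetic facts about a single non-terminating system. The first is a halving property: $2A_{k+2} \leqslant_{conv} A_k$ for all $k$. For left and right systems this is Lemma \ref{splittinglemmadivissystems}.(iv.). For rational systems it holds because $A_k \cong N_k A_{k+1}$ with $N_k \geq 2$, which gives $2A_{k+1} \leqslant_{init} A_k$. Iterating, $2^n A_{k+2n} \leqslant_{conv} A_k$ for all $n$, since convex embeddings compose and a sum of convex copies sits convexly in a sum.

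The second fact is a bounded-multiple property: for each $k$ there is $M_k \geq 1$ with $A_0 \leqslant_{conv} M_k A_k$. This follows by iterating Lemma \ref{splittinglemmadivissystems}.(i.), namely $A_i \leqslant_{init} (N_i+1)A_{i+1}$, together with Proposition \ref{BinitCimpliesABinitAC}. For right systems the final versions are used instead, and for rational systems the identity $A_i \cong N_iA_{i+1}$ is exact. Neither fact uses non-splitting, so the argument will be uniform across splitting and non-splitting systems and across left, right and rational systems.

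Now fix $k$. From $A_0' \sim_{conv} A_0$ we have $2A'_0 \leqslant_{conv} mA_0$ for some $m$, and hence
\[
2^{n}A'_{2n} \leqslant_{conv} A'_0 \leqslant_{conv} mM_k A_k.
\]
Choose $n$ with $2^n \geq mM_k$. Then I would use a pigeonhole refinement of Corollary \ref{sumAiconvsumBi}: if a $p$-fold sum $C_0 + \cdots + C_{p-1}$ embeds convexly in a $q$-fold sum $D_0 + \cdots + D_{q-1}$ with $p \geq q$, then some $C_i$ lies entirely inside some $D_j$. The reason is that the $q-1$ interior cuts of the target can meet the interiors of at most $q-1$ of the $p$ source summands, so at least one summand avoids all of them. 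Applying this with $C_i = A'_{2n}$ and $D_j = A_k$ yields $A'_{2n} \leqslant_{conv} A_k$. The other half of $(\ast)$ is symmetric, using $2A_0 \leqslant_{conv} m'A'_0$.

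I expect the main obstacle to be the bookkeeping in the bounded-multiple step across the three kinds of systems. There one has to check that the initial/final embeddings from Lemma \ref{splittinglemmadivissystems} chain correctly into a convex embedding $A_0 \leqslant_{conv} M_kA_k$, and in particular that the remainder terms $A_{i+2}$ are absorbed into an extra copy of $A_{i+1}$ at each stage. The pigeonhole step is elementary but should be stated carefully in terms of cut sequences, in the style of the proof of Proposition \ref{AplusBconvXplusY}.
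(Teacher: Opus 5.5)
The gap is in the two auxiliary facts you feed into the pigeonhole; the rest of your argument is sound. Your reduction to the cofinality statement $(\ast)$ is correct, and so is your pigeonhole lemma. Together they are essentially the paper's argument, whose pigeonhole step is Corollary \ref{sumAiconvsumBi}.

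\textbf{The false principle.} Both the iterated halving $2^nA_{k+2n} \leqslant_{conv} A_k$ and the bound $A_0 \leqslant_{conv} M_kA_k$ are justified by the principle that $C_0 \leqslant_{conv} D_0$ and $C_1 \leqslant_{conv} D_1$ give $C_0 + C_1 \leqslant_{conv} D_0 + D_1$. That principle is false: $1 \leqslant_{conv} \mathbb{Q}$, but $2 \not\leqslant_{conv} \mathbb{Q}+\mathbb{Q} \cong \mathbb{Q}$. It also fails for initial embeddings: $B \leqslant_{init} C$ does not give $2B \leqslant_{init} 2C$ (take $B = 1$, $C = 1 + \mathbb{Q}$). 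So combining Lemma \ref{splittinglemmadivissystems}.(i.) with Proposition \ref{BinitCimpliesABinitAC} only lets you enlarge the last summand of $(N_0+1)A_1$, not all of them.

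\textbf{The conclusions can fail.} This is more than a bookkeeping issue: both conclusions can be false for one-sided systems.
\begin{itemize}
\item[(a)] Use Theorem \ref{leftsymbolicrepnthm}.(ii.) to build a left system whose fibers are pairwise incompatible. For example, take $\aleph_\alpha$-dense orders with distinct $\alpha$ on distinct classes, as in the proof of Theorem \ref{XSxrepninLOnocolorsthm}. Choose them so that $L + R \not\cong L' + R$. This system is non-splitting and its real representation is insufficient.
\item[(b)] In such a system, a convex embedding between represented orders condenses to a translation by some $g \in G$.
\item[(c)] That translation must send fibers of type $L+R$ (over $[0]^-$) to fibers of type $L+R$, and fibers of type $L'+R$ (over $[0]^+$) to fibers of type $L'+R$. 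This includes the junction fibers between consecutive summands.
\item[(d)] For the golden coefficients $N_k = 1$, a direct check of these constraints shows $4A_4 \not\leqslant_{conv} A_0$. It also shows $A_0 \not\leqslant_{conv} MA_2$ for every $M$. For instance, with $g = 0$ the final class $L$ of $A_0$ would have to sit initially in a fiber of type $L'+R$.
\end{itemize}
Both of your facts do hold for rational systems, where $A_k \cong N_kA_{k+1}$ exactly, but not for left or right systems in general.

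\textbf{The repair.} Let the pigeonhole act on summands that are not all equal, which is what the paper does.
\begin{itemize}
\item[(1)] Given $k$, use iterated splitting in $\mathcal{D}$ to write $mA_0 \cong \sum_{i<q} A_{k_i}$ with every $k_i \geq k$.
\item[(2)] Each $A_{k_i} \leqslant_{conv} A_k$ by Proposition \ref{initfinaltermsindivissys}.
\item[(3)] Split $2A_0'$ via $\mathcal{D}'$ into $p \geq q$ nonempty summands.
\item[(4)] Apply Corollary \ref{sumAiconvsumBi} to the first $q$ of them, or use your own pigeonhole lemma. This yields some $A'_j \leqslant_{conv} A_{k_i} \leqslant_{conv} A_k$.
\end{itemize}
Neither the halving property nor a bounded-multiple bound is needed. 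The other half of $(\ast)$ is symmetric, using $2A_0 \leqslant_{conv} m'A_0'$.
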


\begin{proof}
Fix a linear order $A$, and suppose there exist points $x < y$ in $A$ such that $x \sim_{\mathcal{D}} y$ but $x \not\sim_{\mathcal{D}'} y$. Then for some $k$ we have $A_k' \leqslant_{conv} [x, y]$. 

By hypothesis we have $2A_0 \leqslant_{conv} mA_0'$ for some $m \geq 1$. By iteratively splitting terms using the isomorphisms from $\mathcal{D}'$, we may write $mA_0'$ as a sum
\[
mA_0' \cong \sum_{i < N} A_{k_i}'
\]
such that $k_i \geq k$ for all $i < N$. Then $A_{k_i}' \leqslant_{conv} A_k$ for all $i < N$. 

Now by iteratively splitting terms via the isomorphisms from $\mathcal{D}$, we may find $M \geq N$ such that $2A_0$ decomposes as an $M$-sum of terms $A_{k_j}$ from $\mathcal{D}$:
\[
2A_0 \cong \sum_{j < M} A_{k_j}.
\]
Then since $M \geq N$ and $2A_0 \leqslant_{conv} mA_0'$ we have
\[
\sum_{j < N} A_{k_j} \leqslant_{init} \sum_{j < M} A_{k_j} \leqslant_{conv} \sum_{i < N} A_{k_i}'.
\] 
By Corollary \ref{sumAiconvsumBi} we deduce $A_{k_i} \leqslant_{conv} A_{k_i}'$ for some $i < N$. But then 
\[
A_{k_i} \leqslant_{conv} A_{k_i}' \leqslant_{conv} A_k' \leqslant_{conv} [x, y],
\]
contradicting $x \sim_{\mathcal{D}} y$. 

A symmetric argument shows $x \sim_{\mathcal{D}'} y$ implies $x \sim_{\mathcal{D}} y$. Thus $\sim_{\mathcal{D}}$ and $\sim_{\mathcal{D}'}$ coincide on $A$, as claimed. 
\end{proof}

We note that Theorem \ref{AconvAprimesamecondensation} does not assume that $\mathcal{D}$ is non-splitting.

Suppose that $A_0$ is a non-splitting order. Then any non-terminating division system $\mathcal{D}$ with initial order $A_0$ is non-splitting. Suppose we have two such systems $\mathcal{D}$ and $\mathcal{D}'$, and let $\mathbb{R}(K_{[x]})$ and $\mathbb{R}(K_{[x]}')$ denote the representations of $\mathbb{Z}A_0$ obtained from $\mathcal{D}$ and $\mathcal{D}'$, respectively. For the moment we identify these orders with $\mathbb{Z}A_0$, and view each replacement order $K_x$ from $\mathbb{R}(K_{[x]})$ and $K_x'$ from $\mathbb{R}(K_{[x]}')$ as intervals in $\mathbb{Z}A_0$. 

Taken together, Theorems \ref{simdequalssimDfornonsplittinsystems} and \ref{AconvAprimesamecondensation} imply that these representations are ``the same" in the sense that the partitions of $\mathbb{Z}A_0$ into the intervals $K_x$ and $K_x'$ determined by these representations are the same. That is, for every $x \in \mathbb{R}$ there is $y \in \mathbb{R}$ such that $K_x = K_y'$. It will follow from our work below that since both representations are normalized by representing $A_0$ as a replacement of the unit interval, we in fact have $K_x = K_x'$ for all $x \in \mathbb{R}$. 

However, the groups $G$ and $G'$ associated to $\mathcal{D}$ and $\mathcal{D}'$ depend on the isomorphisms in these systems, and may differ, as may their associated orbit equivalence relations $E_G$ and $E_{G'}$. We view $\mathbb{R}(K_{[x]})$ as being a replacement up to $E_G$ and $\mathbb{R}(K_{[x]}')$ as being up to $E_{G'}$, so there is a sense in which these representations still carry slightly different information.

In fact, since $K_x = K_x'$ for all $x$, both representations are up to $E_H$, where $H$ is the group generated by $G \cup G'$, and may be up to a coarser equivalence relation still, depending on whether there exist other translations of $\mathbb{R}$ not in $H$ that induce automorphisms of $\mathbb{R}(K_{[x]})$. Nonetheless, we will continue somewhat informally to refer to ``the" real representation $\mathbb{Z}A_0 \cong \mathbb{R}(K_{[x]})$ when $A_0$ is non-splitting. 

\subsubsection{Length invariance in non-splitting systems} \label{subsubsect:lengthinvarnonsplittinsystems} For this section, we assume again that $\mathcal{D}$ is non-terminating and $\mathbb{R}(K_{[x]}) \cong \mathbb{Z}A_0$ is the associated real representation. Also as before, $d$ is the condensation map for $\sim_d$, and $\delta$ for $\sim_{\mathcal{D}}$. When $\mathcal{D}$ is non-splitting, the two relations coincide on $\mathbb{Z}A_0$ (i.e. $d = \delta$) by Theorem \ref{simdequalssimDfornonsplittinsystems}.

\theoremstyle{definition}
\newtheorem{elldefn}[lct]{Definition}
\begin{elldefn}\label{elldefn}
Given an interval $I \subseteq \mathbb{R}(K_{[x]})$, define $\ell(I)$ to be the length in $\mathbb{R}$ of the condensed image $d[I]$ of $I$ under the replacement condensation map. We call $\ell(I)$ the \textit{length} of $I$ in $\mathbb{R}(K_{[x]})$.
\end{elldefn}

Since $\mathbb{R}$ is Dedekind complete, every closed and bounded interval in $\mathbb{R}$ is of the form $[a, b]$ for some $a \leq b$ in $\mathbb{R}$. This observations leads to the following proposition, which characterizes the forms of intervals in $\mathbb{R}(K_{[x]})$ in terms of their lengths.  

\theoremstyle{definition}
\newtheorem{ellforintervalspropn}[lct]{Proposition}
\begin{ellforintervalspropn}\label{ellforintervalspropn}
Suppose $I \subseteq \mathbb{R}(K_{[x]})$ is an interval. 
\begin{itemize}
    \item[i.] $\ell(I) = 0$ if and only if $I \subseteq K_a$ for some $a \in \mathbb{R}$;
    \item[ii.] $0 < \ell(I) < \infty$ if and only if $I$ is of the form
    \[
    I \cong R + (a, b)(K_{[x]}) + L
    \]
    where $R$ is final (and possibly empty) in $K_a$ and $L$ is initial (and possibly empty) in $K_b$.
    \item[iii.] $\ell(I) = \infty$ if and only if $I$ is of one of the following forms
    \[
    \begin{array}{rcl}
    I & \cong & R + (a, \infty)(K_{[x]}) \\
    I & \cong & I \cong (-\infty, b)(K_{[x]}) + L \\
    I & \cong & \mathbb{R}(K_{[x]})
    \end{array}
    \]
    for some $a, b \in \mathbb{R}$ and some (possibly empty) intervals $R$ final $K_a$ and $L$ initial in $K_b$.
\end{itemize}
\end{ellforintervalspropn}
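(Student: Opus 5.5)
The plan is to push $I$ forward under the condensation (replacement) map $d:\mathbb{R}(K_{[x]})\to\mathbb{R}$, $d(x,k)=x$. This map is a surjective order-homomorphism, so $J=d[I]$ is a convex subset of $\mathbb{R}$. The proof then reduces to three things: classifying convex subsets of $\mathbb{R}$ by length, using Dedekind completeness; checking that the fibres of $d$ over interior points of $J$ lie entirely in $I$; and checking that the fibres over endpoints of $J$ meet $I$ in an initial or final segment.

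First I record the basic fact. If $a<c<b$ with $a,b\in J$, choose $p\in I\cap K_a$ and $q\in I\cap K_b$. Every point of $K_c$ lies strictly between $p$ and $q$ in the lexicographic order, so $K_c\subseteq I$ by convexity. Hence $I$ contains $\bigcup_{c\in \mathrm{int}(J)}K_c$, which is $(\inf J,\sup J)(K_{[x]})$ when $J$ is nondegenerate. For an endpoint $a=\min J$ that is attained, the set $I\cap K_a$ is an interval of $K_a$. If $J$ has a point above $a$, this interval is final in $K_a$, because any point of $K_a$ above a point of $I\cap K_a$ lies below some point of $I$ in a higher fibre. Symmetrically, $I\cap K_b$ is initial in $K_b$ when $b=\max J$ is attained.

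Now I split by $\ell(I)=\mathrm{length}(J)$.

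\textbf{Case $\ell(I)=0$.} Here $J$ is empty or a single point $a$, so $I\subseteq K_a$. Conversely, if $I\subseteq K_a$ then $d[I]\subseteq\{a\}$.

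\textbf{Case $0<\ell(I)<\infty$.} Set $a=\inf J$ and $b=\sup J$; these exist by completeness of $\mathbb{R}$. Put $R=I\cap K_a$ and $L=I\cap K_b$, with each empty if the endpoint is not in $J$. The observations above give $I=R+(a,b)(K_{[x]})+L$, with $R$ final in $K_a$ and $L$ initial in $K_b$.

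\textbf{Case $\ell(I)=\infty$.} The same argument applies, with one or both bounds infinite, and yields the three listed forms.

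For each case the converse is immediate: the condensed image of an interval of the displayed form is an interval in $\mathbb{R}$ with endpoints $a,b$ (possibly infinite), so its length is as claimed. The condition $a<b$ in the finite case guarantees the length is positive.

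The only delicate point is a convention: I would read the length of a convex subset of $\mathbb{R}$ as $\sup J-\inf J$. The remaining work is just the care about endpoint fibres, so I do not expect a genuine obstacle.
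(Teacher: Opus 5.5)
Your argument is correct and is exactly the argument the paper leaves as ``straightforward'' (it only remarks that Dedekind completeness of $\mathbb{R}$ is the key point). You condense via $d$, take $a=\inf d[I]$ and $b=\sup d[I]$, show every fibre strictly between them lies in $I$, and show the endpoint fibres meet $I$ in a final, respectively initial, segment.

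One small correction. The paper explicitly allows replacement orders to be empty, and the $I_u$ here can be empty, so $d$ need not be surjective and $d[I]$ need not be convex. In that case $\mathrm{int}(d[I])$ can even be empty. Your proof still works because it only uses that any $c$ strictly between two points of $d[I]$ has $K_c\subseteq I$, together with $\inf$ and $\sup$. The converse directions then need one extra observation: $\{x: K_x\neq\emptyset\}$ is a nonempty union of $E_G^{\pm}$-classes, each of which is dense, so it is dense in $\mathbb{R}$.
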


\begin{proof}
Straightforward. 
\end{proof}

In case (ii.) of Proposition \ref{ellforintervalspropn} we have $\ell(I) = b-a$. 

As previously, let $a_0 = 1$ and let $\{a_k: k \in \omega\}$ be the positive real numbers satisfying the equational versions of the isomorphisms in $\mathcal{D}$. Let $G = \langle a_k: k \in \omega \rangle$ denote the group generated by the translations $x \mapsto x + a_k$, viewed as a subgroup of $(\mathbb{R}, +)$. Since $\mathcal{D}$ is non-terminating, $G$ is dense in $\mathbb{R}$. 

The following definition of sufficiency for the real representation captures the previous definitions when $\mathcal{D}$ is a division system of a specific type. 

\theoremstyle{definition}
\newtheorem{sufficientgeneraldefn}[lct]{Definition}
\begin{sufficientgeneraldefn}\label{sufficientgeneraldefn}
The representation $\mathbb{R}(K_{[x]})$ is \textit{sufficient} if it is a replacement up to $E_G$, i.e., if $K_a \cong K_{a'}$ for all $a, a' \in G$. 
\end{sufficientgeneraldefn}

\theoremstyle{definition}
\newtheorem{sufficiencyimpliesGactsonrepnlemma}[lct]{Lemma}
\begin{sufficiencyimpliesGactsonrepnlemma}\label{sufficiencyimpliesGactsonrepnlemma}
If $\mathbb{R}(K_{[x]})$ is sufficient then any $a \in G$ induces an automorphism of $R(K_{[x]})$ via the map
\[
(x, k) \mapsto (x+a, \varphi_x^a(k))
\]
where for each $x \in \mathbb{R}$ and $a \in G$, $\varphi_x^a$ is a fixed isomorphism of $K_a$ and $K_{x+a}$. 
\end{sufficiencyimpliesGactsonrepnlemma}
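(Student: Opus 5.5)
The plan is to define the map $F_a\colon \mathbb{R}(K_{[x]}) \rightarrow \mathbb{R}(K_{[x]})$ by $F_a(x,k) = (x+a, \varphi^a_x(k))$ and check directly that it is well-defined, order-preserving, and surjective. Here $\varphi^a_x$ should be read as a fixed isomorphism $K_x \rightarrow K_{x+a}$; the statement's ``$K_a$'' is presumably a typo for $K_x$.

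The first step is to show that for each $x \in \mathbb{R}$ and $a \in G$ such an isomorphism exists. This is exactly where sufficiency enters. Since $a \in G$, the points $x$ and $x+a$ lie in the same $E_G$-orbit, so $x \, E_G \, (x+a)$. By Definition \ref{sufficientgeneraldefn}, $\mathbb{R}(K_{[x]})$ is a replacement up to $E_G$, so $K_x \cong K_{x+a}$. Using choice, fix for every pair $(x,a)$ an isomorphism $\varphi^a_x\colon K_x \rightarrow K_{x+a}$. With these fixed, $F_a$ is a well-defined function on all of $\mathbb{R}(K_{[x]})$.

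The second step is order preservation, which follows from the lexicographic ordering of the replacement. Suppose $(x,k) < (y,l)$. If $x < y$, then $x+a < y+a$, so $F_a(x,k) < F_a(y,l)$ regardless of the second coordinates. If $x = y$, then $k < l$ in $K_x$, and since $\varphi^a_x$ is an embedding, $\varphi^a_x(k) < \varphi^a_x(l)$ in $K_{x+a}$; the images again compare correctly. So $F_a$ is an embedding.

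The third step is surjectivity. Given $(y,l)$, set $x = y - a$. Then $\varphi^a_x$ maps $K_x$ onto $K_y$, so $l = \varphi^a_x(k)$ for some $k$, and $F_a(x,k) = (y,l)$. Hence $F_a$ is an automorphism.

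None of these steps is a serious obstacle. The only point that needs care is making sure that $K_x \cong K_{x+a}$ holds for every $x$, including $x \in [0]$, where the one-sided representations distinguish forward and backward orbits. Sufficiency is precisely the hypothesis that removes this distinction, since it makes the replacement up to $E_G$ rather than up to $E_G^{\pm}$. This is the analogue of the casework in Lemma \ref{sufficientthenGactsbyautos}, and here it is absorbed into the choice of the isomorphisms $\varphi^a_x$.
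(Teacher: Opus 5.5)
Your proof is correct and is exactly the routine verification the paper leaves as ``Clear''. Sufficiency (the replacement is up to $E_G$) gives $K_x \cong K_{x+a}$ for every $x$ and every $a \in G$, and the lexicographic ordering of the replacement then makes the coordinatewise map an order-preserving bijection. You are also right that $\varphi^a_x$ should be read as an isomorphism $K_x \rightarrow K_{x+a}$ rather than one from $K_a$.
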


\begin{proof}
Clear.
\end{proof}

\theoremstyle{definition}
\newtheorem{ellIlessellJmeansIconvJforsuffrepns}[lct]{Lemma}
\begin{ellIlessellJmeansIconvJforsuffrepns}\label{ellIlessellJmeansIconvJforsuffrepns}
Suppose $\mathbb{R}(K_{[x]})$ is sufficient and $I, J \subseteq \mathbb{R}(K_{[x]})$ are intervals. If $\ell(I) < \ell(J)$ then $I \leqslant_{conv} J$. 
\end{ellIlessellJmeansIconvJforsuffrepns}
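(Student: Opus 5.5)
The plan is to classify $I$ and $J$ by the forms of Proposition \ref{ellforintervalspropn} and then build a convex embedding of $I$ into $J$ using a translation from $G$, lifted to an automorphism of $\mathbb{R}(K_{[x]})$ by Lemma \ref{sufficiencyimpliesGactsonrepnlemma}. Since $\ell(I) < \ell(J)$, we have $\ell(J) > 0$, and $\ell(I) < \infty$.

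\emph{Case $\ell(J) = \infty$.} Then $J$ contains an interval of the form $(a, \infty)(K_{[x]})$ or $(-\infty, b)(K_{[x]})$. Hence $J$ contains $[a', b'](K_{[x]})$ for real $a' < b'$ of any length we like. Every interval of finite length is convex in some such $[c, e](K_{[x]})$. So this case reduces to the finite case with $J$ replaced by a long enough bounded subinterval.

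\emph{Case $0 < \ell(J) < \infty$.} Write $J \cong R_J + (c, e)(K_{[x]}) + L_J$ with $e - c = \ell(J)$. If $\ell(I) = 0$, then $I \subseteq K_a$ for some $a$, so $I \leqslant_{conv} K_a$. Since $G$ is dense, pick $g \in G$ with $a + g \in (c, e)$. The automorphism induced by $g$ sends $K_a$ onto $K_{a+g}$, and $K_{a+g} \subseteq (c, e)(K_{[x]}) \subseteq J$. This gives $I \leqslant_{conv} J$. If instead $\ell(I) = b - a > 0$ and $I \cong R + (a, b)(K_{[x]}) + L$, then $I$ is convex in $[a, b](K_{[x]})$, the restriction including the whole of $K_a$ and $K_b$. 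Because $b - a < e - c$ and $G$ is dense, choose $g \in G$ with $c < a + g$ and $b + g < e$. The automorphism $(x, k) \mapsto (x + g, \varphi^g_x(k))$ maps $[a, b](K_{[x]})$ onto $[a + g, b + g](K_{[x]})$. This image lies inside $(c, e)(K_{[x]})$, which is convex in $J$. Composing gives the required convex embedding.

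\emph{Main obstacle.} The delicate point is the endpieces. An interval $I$ may contain only parts $R$, $L$ of $K_a$ and $K_b$, while $J$ may contain only parts of its own end blocks. The strict inequality $\ell(I) < \ell(J)$ is what lets us shift all of $[a, b](K_{[x]})$, end blocks included, strictly inside the open part $(c, e)$. There it sits well away from $J$'s partial ends, which avoids any comparison of $R, L$ with $R_J, L_J$. I would also verify that the induced map is an automorphism carrying each $K_x$ onto $K_{x+g}$, so that restricted replacements over intervals map to restricted replacements over the translated intervals. This uses sufficiency, via Lemma \ref{sufficiencyimpliesGactsonrepnlemma}, together with the density of $G$, which holds because $\mathcal{D}$ is non-terminating.
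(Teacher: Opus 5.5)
Your proposal is correct and follows essentially the same route as the paper's proof. Both put $I$ inside $[a,b](K_{[x]})$ and $(c,d)(K_{[x]})$ inside $J$ via Proposition \ref{ellforintervalspropn}, then use density of $G$ and the automorphism given by sufficiency to translate $[a,b](K_{[x]})$ strictly inside $(c,d)(K_{[x]})$; the $\ell(I)=0$ and $\ell(J)=\infty$ cases are handled by the same translation idea, and your reduction of the infinite case to a long bounded subinterval of $J$ is only a cosmetic reordering.
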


\begin{proof}
First assume $0 < \ell(I) < \ell(J) < \infty$. By Proposition \ref{ellforintervalspropn}, we have
\[
\begin{array}{rcl}
I & \cong & R + (a, b)(K_{[x]}) + L \\
J & \cong & R' + (c, d)(K_{[x]}) + L'
\end{array}
\]
where $R$ is final in $K_a$, $L$ is initial in $K_b$, $R'$ is final in $K_c$, $L'$ is initial in $K_d$, and $b - a = \ell(I) < \ell(J) = d - c$. In particular, we have
\[
\textrm{$I \leqslant_{conv} [a, b](K_{[x]})$ and $(c, d)(K_{[x]}) \leqslant_{conv} J$.}
\]

Since $b - a < d - c$ and $G$ is dense in $\mathbb{R}$, there is $x_0 \in G$ such that 
\[
c < a+x_0 < b+x_0 < d.
\]
Since the representation is sufficient, $x \mapsto x + x_0$ lifts to an automorphism of $\mathbb{R}(K_{[x]})$. In particular, we have the chain
\[
I \leqslant_{conv} [a, b](K_{[x]}) \cong [a+x_0, b+x_0](K_{[x]}) \subseteq (c, d)(K_{[x]}) \leqslant_{conv} J. 
\]
Hence $I \leqslant_{conv} J$, as claimed. 

If $0 < \ell(I) < \ell(J) = \infty$, a similar proof works. 

Finally, if $\ell(I) = 0$, then $I \subseteq K_x$ for some $x \in \mathbb{R}$. Then $x + a$ is in the interior of the condensed image $d[J]$ of $J$ for some $a \in G$, by density of $G$. This implies $K_{x+a} \subseteq J$ and hence $K_x \leqslant_{conv} J$, yielding $I \leqslant_{conv} J$. 
\end{proof}

Each term $A_k$ from $\mathcal{D}$ is isomorphic to an interval of the form $R + (0, a_k)(K_{[x]}) + L$ in $\mathbb{R}(K_{[x]})$ of length $a_k$. It follows from Lemma \ref{ellIlessellJmeansIconvJforsuffrepns} that if $\mathbb{R}(K_{[x]})$ is sufficient and $I \subseteq \mathbb{R}(K_{[x]})$ is an interval with $\ell(I) > a_k$, then $A_k \leqslant_{conv} I$. Conversely, if $\ell(I) < a_k$, then $I \leqslant_{conv} A_k$. 

The next sequence of results are rigidity theorems for the representation $\mathbb{R}(K_{[x]})$ when $\mathcal{D}$ non-splitting. From a long view, they are consequences of the Splitting Dichotomy Theorem \ref{splittingdichthm}. In particular, Proposition \ref{lengthisinvariantwhenDnonsplitting} below, which says that for non-splitting $\mathcal{D}$, length is an order type invariant for intervals $I \subseteq \mathbb{R}(K_{[x]})$, implies Corollary \ref{splittingdichcor} of the splitting dichotomy for the terms $A_k$, which says that finite multiples $nA$ of a non-splitting order $A$ are pairwise distinct. 

\theoremstyle{definition}
\newtheorem{lengthisinvariantwhenDnonsplitting}[lct]{Proposition}
\begin{lengthisinvariantwhenDnonsplitting}\label{lengthisinvariantwhenDnonsplitting}
Suppose $\mathcal{D}$ is non-splitting, $\mathbb{R}(K_{[x]})$ is sufficient, and $I, J \subseteq \mathbb{R}(K_{[x]})$ are intervals. If $I \cong J$, then $\ell(I) = \ell(J)$. 
\end{lengthisinvariantwhenDnonsplitting}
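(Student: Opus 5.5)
We argue by contradiction. Suppose $I \cong J$ but, say, $\ell(I) < \ell(J)$. Since $\ell(J) > \ell(I)$, we will find a finite multiple of $I$ sitting convexly inside $J$, and then use $I \cong J$ to conclude that some term $A_k$ is splitting. The key tool is Lemma \ref{ellIlessellJmeansIconvJforsuffrepns} together with the translation automorphisms from Lemma \ref{sufficiencyimpliesGactsonrepnlemma}, which let us move intervals around freely.

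First we dispose of degenerate lengths. If $\ell(I) = 0$ then $I$ lies inside a single $K_a$. In that case $J$ contains some $K_{x}$ in its interior and, if $\ell(J)>0$, a copy of some $A_k$. This gives $A_k \leqslant_{conv} J \cong I \leqslant_{conv} K_a \leqslant_{conv} A_{k+2}$. By Lemma \ref{splittinglemmadivissystems} this yields $2A_{k+2} \leqslant_{conv} A_{k+2}$, so $A_{k+2}$ is splitting by Corollary \ref{2XconvXiff2XcongX}, contradicting non-splitting via Proposition \ref{splittingdichthmfordivsystems} (or its rational and symmetric analogues). Here we use that $K_a$ has length $0$, so $K_a \leqslant_{conv} A_{k+2}$ by Lemma \ref{ellIlessellJmeansIconvJforsuffrepns}.

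If $\ell(J)=\infty$ and $\ell(I)<\infty$, then $J$ contains intervals of every finite length. In particular it contains $2A_k$ for large $A_k$: pick $a_k$ with $2a_k+\epsilon<\ell(J)$, or simply any finite length $>2\ell(I)+1$. Hence $2I \leqslant_{conv} J \cong I$, so $I$ is splitting. Then $A_k \leqslant_{conv} I$ for a suitable $k$ and $I \leqslant_{conv} mA_k$ for some $m$, because $\ell(I)$ is finite and $mA_k$ has length $ma_k$. Lemma \ref{2AconvnB2BconvmAsplittingiff} then makes $A_k$ splitting, which is a contradiction. The identical argument handles $\ell(J)=\infty$ and $\ell(I)=0$.

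In the main case $0<\ell(I)<\ell(J)<\infty$, we iterate. Choose $n$ with $n\ell(I) < (n-1)\ell(J)$, i.e.\ $\ell(J)-\ell(I)$ is at least $\ell(J)/n$ roughly, and aim to show $(n+1)I \leqslant_{conv} nI$ for suitable $n$. Concretely, $nJ \cong nI$. Place $n$ adjacent copies of $J$ in $\mathbb{R}(K_{[x]})$ using translations by elements of $G$ close to multiples of $\ell(J)$; density of $G$ and sufficiency make this possible up to small error. Their concatenation is an interval of length about $n\ell(J)$, which exceeds $(n+1)\ell(I)$ plus slack when $n$ is large. The cleaner route is to use Lemma \ref{ellIlessellJmeansIconvJforsuffrepns} inductively: $mI$ is convexly embeddable in an interval of length at most $m\ell(I)+m\epsilon$, obtained by translating copies of $[a,b](K_{[x]})$ to be adjacent, since $I \leqslant_{conv} [a,b](K_{[x]})$. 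Likewise, $nJ$ contains an interval of length $n\ell(J) - n\epsilon$. Choosing $n$ and then $\epsilon$ so that $(n+1)(\ell(I)+\epsilon) < n(\ell(J)-\epsilon)$ gives $(n+1)I \leqslant_{conv} nJ \cong nI$. Theorem \ref{splittingdichthm} then says $I$ is splitting, and we conclude as above via Lemma \ref{2AconvnB2BconvmAsplittingiff} against a suitable $A_k$.

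The main obstacle is the bookkeeping in this step. The endpoints of translated intervals land on points $x$ where $K_x$ must be split or used only partially, and we must verify that gluing copies of $[a,b](K_{[x]})$ via $G$-translates really produces a convex subinterval of controlled length, and that $nJ$ genuinely contains a long "clean" interval. We would handle this by working only with open intervals $(c,d)(K_{[x]})$, which are strictly contained, and absorbing endpoint pieces into the $\epsilon$ slack using Lemma \ref{ellIlessellJmeansIconvJforsuffrepns} repeatedly.
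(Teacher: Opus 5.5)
Your case $\ell(I)=0$ is fine. The main case, however, has a genuine gap, and so does the case $\ell(J)=\infty$, where you assert $2I\leqslant_{conv}J$. Both rest on two claims: that $(n+1)I$ convexly embeds into an interval of $\mathbb{R}(K_{[x]})$ of length about $(n+1)\ell(I)$, and that $nJ$ contains an interval of length about $n\ell(J)$. Neither is available.

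Write $I\cong R+(a,b)(K_{[x]})+L$ with $R$ final in $K_a$ and $L$ initial in $K_b$. Then $I+I$ contains the junction $L+R$. This junction convexly embeds no $A_k$, so it is a whole $\sim_{\mathcal{D}}$-class of $I+I$. By Corollary \ref{simDclassesinvarunderconvfcoro} and Theorem \ref{simdequalssimDfornonsplittinsystems}, any convex embedding of $I+I$ into $\mathbb{R}(K_{[x]})$ must carry $L+R$ onto an entire $K_y$.

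Sufficiency only gives $K_x\cong K_{x+g}$ for $g\in G$; it says nothing about $L+R$ for arbitrary $a,b$. So in general $I+I$ has no convex copy in $\mathbb{R}(K_{[x]})$ at all. For instance, this happens if each $K_x$ with $x\notin G$ is a single point colored by the $E_G$-class of $x$, and $a,b\notin G$.

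``Translating copies of $[a,b](K_{[x]})$ to be adjacent'' does not work either. $\mathbb{R}$ has no jumps, and $I\leqslant_{conv}P$ does not imply $mI\leqslant_{conv}mP$. Lemma \ref{ellIlessellJmeansIconvJforsuffrepns} applies only to genuine intervals of $\mathbb{R}(K_{[x]})$, and assigning a length to $(n+1)I$ would presuppose the invariance you are proving. Multiples $mA_k$ are harmless precisely because their junctions are isomorphic to $K_{a_k}$; that is exactly what fails for a general $I$.

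The missing idea is to use $I\cong J$ once, with no multiples.
\begin{itemize}
\item By sufficiency, translate $I$ by some $g\in G$ to a copy $I'\subseteq J$, so that $J=P+I'+Q$ with $\ell(P)>0$.
\item Since $I'\cong J$, this gives $J\cong P+J+Q$, so $\omega P\leqslant_{init}J$ by Theorem \ref{XcongAXBiffABladder}.
\item Transporting along $J\cong I$, you get a copy of $\omega P$ as an initial segment of $I$, hence an interval of finite length.
\item Every non-empty final segment of this copy contains a copy of $P$, and therefore of a fixed $A_k$ with $a_k<\ell(P)$.
\item Some non-empty final segment has length $<a_{k+2}$, so by Lemma \ref{ellIlessellJmeansIconvJforsuffrepns} it embeds convexly in $A_{k+2}$.
\item Thus $A_k\leqslant_{conv}A_{k+2}$. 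Together with $2A_{k+2}\leqslant_{conv}A_k$, this makes $A_{k+2}$ splitting, a contradiction.
\end{itemize}

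This is essentially the paper's argument. There, $\sim_{\mathcal{D}}$-rigidity first strips off the end classes $R,R',L,L'$. The isomorphism then restricts to a compression of $(c,d)(K_{[x]})$ onto $(a,b)(K_{[x]})$, and its initial $\omega$-orbital plays the role of $\omega P$.
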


\begin{proof}
Suppose toward a contradiction we have $I \cong J$ but $\ell(I) < \ell(J)$. We again assume first that $0 < \ell(I)$ and $\ell(J) < \infty$.

By Proposition \ref{ellforintervalspropn}, $I$ and $J$ have the forms
\[
\begin{array}{rcl}
I & \cong & R + (a, b)(K_{[x]}) + L \\
J & \cong & R' + (c, d)(K_{[x]}) + L'
\end{array}
\]
with $b-a < d-c$. By the proof of \ref{ellIlessellJmeansIconvJforsuffrepns}, sufficiency implies there is a translated copy of $I$ of the form $R + (a', b')(K_{[x]}) + L$ with $c < a' < b' < d$. Replacing $I$ with this copy, we may assume $a = a'$ and $b = b'$. 

Since $I \cong J$ there is an an isomorphism 
\begin{equation}\label{turkishgold}
f: R' + (c, d)(K_{[x]}) + L' \rightarrow R + (a, b)(K_{[x]}) + L. 
\end{equation}

By Corollary \ref{simDclassesinvarunderconvfcoro}, $f$ sends $\sim_{\mathcal{D}}$-classes to $\sim_{\mathcal{D}}$-classes. As $\mathcal{D}$ is non-splitting, by Theorem \ref{simdequalssimDfornonsplittinsystems} these classes are precisely the replacement orders $K_x$ and the segments $R, R', L, L'$ in the expression above. Since $R'$ is the initial $\sim_{\mathcal{D}}$-class in the domain of $f$, it must map onto the initial class $R$ in its image. Likewise $f[L'] = L$, and it follows that $f$ restricts to an isomorphism
\[
f: (c, d)(K_{[x]}) \rightarrow (a, b)(K_{[x]}). 
\]
Identify $f$ with this restriction, and label $(c, d)(K_{[x]}) = [l, r]$ by its endcuts. Then $f$ is a convex self-embedding of $[l, r]$ (i.e., compression) with an initial $\omega$-orbital $O_f(l)$ whose initial jump $A_{l, f}$ equals $(c, a](K_{[x]})$. 

Label this jump $C = (c, a](K_{[x]})$. Then $O_f(l) \cong \omega C$. Since $\ell(C) = a - c > 0$, there is $k \in \omega$ such that $a_k < \ell(C)$. By Lemma \ref{ellIlessellJmeansIconvJforsuffrepns}, $A_k \leqslant_{conv} C$. Since every non-empty final segment of $\omega C$ embeds a convex copy of  $C$, it follows $A_k$ convexly embeds in every non-empty final segment of $O_f(l)$. 

As $O_f(l)$ is bounded in $\mathbb{R}(K_{[x]})$ (specifically, it is bounded above by $r$ which belongs to the interval $(a, b)(K_{[x]})$), we may find a non-empty final segment $O'$ of $O_f(l)$ such that $\ell(O') < a_{k+2}$. By Lemma \ref{ellIlessellJmeansIconvJforsuffrepns}, we have $O' \leqslant_{conv} A_{k+2}$. With the above, we get the chain
\[
A_k \leqslant_{conv} O' \leqslant_{conv} A_{k+2},
\]
which gives $A_k \leqslant_{conv} A_{k+2}$. Since $2A_{k+2} \leqslant_{conv} A_k$ (by Lemma \ref{splittinglemmadivissystems} when $\mathcal{D}$ is one-sided and by definition when $\mathcal{D}$ is rational), this implies $A_{k+2}$ is splitting, contradicting that $\mathcal{D}$ is non-splitting. 

If $0 < \ell(I) < \ell(J) = \infty$, then a similar argument works taking one or both of $c = -\infty$ and $d = \infty$. 

f $\ell(I) = 0 < \ell(J)$, then $I \subseteq K_x$ for some $x$. Then any isomorphism between $I$ and $J$ embeds a copy of some $A_k$ into $I$. By splitting $A_k$ into a sum of terms if need be, we may assume this copy is bounded below by some $p \in K_x$ and above by some $q \in K_x$, contradicting $p \sim_{\mathcal{D}} q$. 
\end{proof}

\theoremstyle{definition}
\newtheorem{lengthforembeddedIforDnonsplittingdefn}[lct]{Definition}
\begin{lengthforembeddedIforDnonsplittingdefn}\label{lengthforembeddedIforDnonsplittingdefn}
Suppose $\mathcal{D}$ is non-splitting, $\mathbb{R}(K_{[x]})$ is sufficient, and $I \leqslant_{conv} \mathbb{R}(K_{[x]})$. Define $\ell(I)$ as $\ell(f[I])$ for any fixed convex embedding $f: I \rightarrow \mathbb{R}(K_{[x]})$
\end{lengthforembeddedIforDnonsplittingdefn}

By Proposition \ref{lengthisinvariantwhenDnonsplitting},  $\ell(I)$ is well-defined in Definition \ref{lengthforembeddedIforDnonsplittingdefn}, as the value $\ell(f[I])$ does not depend on the particular embedding $f$. 

\theoremstyle{definition}
\newtheorem{ellIisadditivewhenDnonsplittingpropn}[lct]{Proposition}
\begin{ellIisadditivewhenDnonsplittingpropn}\label{ellIisadditivewhenDnonsplittingpropn}
Suppose $\mathcal{D}$ is non-splitting, $\mathbb{R}(K_{[x]})$ is sufficient, $I \leqslant_{conv} \mathbb{R}(K_{[x]})$, and $I \cong I_0 + I_1$. Then $\ell(I) = \ell(I_0) + \ell(I_1)$. 
\end{ellIisadditivewhenDnonsplittingpropn}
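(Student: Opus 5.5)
Fix a convex embedding $f: I \rightarrow \mathbb{R}(K_{[x]})$; by Definition \ref{lengthforembeddedIforDnonsplittingdefn} we have $\ell(I) = \ell(f[I])$. Let $c$ be the cut in $I$ at the $+$ sign of $I \cong I_0 + I_1$. Then $f[I]$ splits as $J_0 + J_1$, where $J_0 = f[[\cdot, c]] \cong I_0$ and $J_1 \cong I_1$ are intervals of $\mathbb{R}(K_{[x]})$. The restrictions of $f$ are themselves convex embeddings of $I_0$ and $I_1$ into $\mathbb{R}(K_{[x]})$. So, again by Definition \ref{lengthforembeddedIforDnonsplittingdefn} and Proposition \ref{lengthisinvariantwhenDnonsplitting}, $\ell(I_0) = \ell(J_0)$ and $\ell(I_1) = \ell(J_1)$. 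This reduces the statement to additivity of $\ell$ for a pair of adjacent intervals $J_0 < J_1$ inside the representation, with $J = J_0 \cup J_1$ an interval.

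To prove that, I pass to the condensation $d$. Since $J_0$ and $J_1$ are adjacent intervals, $d[J_0]$ and $d[J_1]$ are intervals of $\mathbb{R}$ whose union is $d[J]$, and which overlap in at most one point. That point is some $x$ with $K_x$ straddling the cut between $J_0$ and $J_1$. Lebesgue length (or simply endpoint arithmetic, via Proposition \ref{ellforintervalspropn}) is additive over such decompositions, since a shared endpoint contributes zero length. I will run a short case check:
\begin{itemize}
\item[(1)] If both lengths are finite, write $d[J_0]$ with endpoints $a \le b$ and $d[J_1]$ with endpoints $b' \le e$. Adjacency forces $b = b'$: the right end of $J_0$ and the left end of $J_1$ sit either in the same $K_b$ or at consecutive positions, and two distinct reals always have some $K_y$ between them. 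Then $\ell(J) = e - a = (b-a) + (e-b)$.
\item[(2)] If one length is infinite, both sides are $\infty$.
\item[(3)] If one part has length $0$, i.e. it is contained in a single $K_x$, its image is a point already in the closure of the other image, so the total is unchanged.
\end{itemize}

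The main point needing care is step (1): showing that $d$ of two adjacent intervals share an endpoint. This holds because $\mathbb{R}$ is dense and $d$ is the replacement condensation, so there are no jumps between distinct classes. Everything else is bookkeeping. The earlier invariance result, Proposition \ref{lengthisinvariantwhenDnonsplitting}, is what makes the lengths $\ell(I_0)$ and $\ell(I_1)$ independent of the particular embedding used to compute them.
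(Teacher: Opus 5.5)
Your proposal is correct and follows essentially the paper's argument: fix one convex embedding $f$, note that the images of the two endcuts and of the cut at the $+$ sign lie in classes $K_a, K_b, K_c$ with $a \le b \le c$, and conclude $c-a=(c-b)+(b-a)$, with Proposition \ref{lengthisinvariantwhenDnonsplitting} letting the lengths of $I_0$ and $I_1$ be computed from the restrictions of $f$. Your extra case analysis (zero and infinite lengths, and the shared endpoint $b$) just makes explicit what the paper leaves implicit. Two small corrections: read $\ell$ via endpoint arithmetic rather than Lebesgue measure, since some $K_y$ may be empty. Also, the $K_y$ you need strictly between two distinct reals must be a nonempty one; it exists because, by sufficiency, nonempty replacement orders occur along an entire (dense) $G$-orbit.
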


\begin{proof}
Label $I = [l, r]$ by its endcuts, and let $l < m < r$ be a cut decomposition witnessing $I \cong I_0 + I_1$. Fix a convex embedding $f: I \rightarrow \mathbb{R}(K_{[x]})$. Then $f(l) \in K_a$, $f(m) \in K_b$, and $f(r) \in K_c$ for some $a, b, c \in \mathbb{R}$ with $a \leq b \leq c$. Then
\[
\ell(I) = \ell(f[I]) = c - a = (c - b) + (b - a) = \ell(f[I_0]) + \ell(f[I_1]) = \ell(I_0) + \ell(I_1),
\]
as claimed. 
\end{proof}

As before, we identify the additive group of real numbers $(\mathbb{R}, +)$ with the subgroup of $\textrm{Aut}(\mathbb{R}, <)$ consisting of translations, and call subgroups of $(\mathbb{R}, +)$ \textit{groups of translations}.

An automorphism $f \in \textrm{Aut}(\mathbb{R})$ is called \textit{irreducible} if $f$ has no fixed points. Equivalently, $f$ is irreducible if $\mathbb{R}$ consists of a single $\mathbb{Z}$-orbital of $f$, i.e. $O_f(x) = \mathbb{R}$ for all points or cuts $x \in \mathbb{R}$ (excluding endcuts, which are fixed). The identity map is also irreducible, by convention.

For the next two propositions, let $\varphi$ denote the group homomorphism from Lemma \ref{ftofhatishomo}: 
\[
\begin{array}{rcl}
\varphi: \textrm{Aut}(\mathbb{R}(K_{[x]})) & \rightarrow & \textrm{Aut}(\mathbb{R}) \\
\varphi(f) & = & \hat{f}.
\end{array}
\]

Both of the propositions below are rigidity theorems: they are each expressions of the fact that for non-splitting systems $\mathcal{D}$, automorphisms of $\mathbb{R}(K_{[x]})$ cannot compress intervals (except infinitesimally). 

\theoremstyle{definition}
\newtheorem{fhatalwaysirredwhenDnonsplitting}[lct]{Proposition}
\begin{fhatalwaysirredwhenDnonsplitting}\label{fhatalwaysirredwhenDnonsplitting}
Suppose $\mathcal{D}$ is non-splitting and $\mathbb{R}(K_{[x]})$ is sufficient. Then for every $f \in \textrm{Aut}(\mathbb{R}(K_{[x]})$, the condensed automorphism $\hat{f} \in \textrm{Aut}(\mathbb{R})$ is irreducible. 
\end{fhatalwaysirredwhenDnonsplitting}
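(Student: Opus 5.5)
We argue by contradiction: suppose $f \in \textrm{Aut}(\mathbb{R}(K_{[x]}))$ and $\hat{f}$ is neither the identity nor fixed-point-free. Then $\hat{f}$ has a fixed point $p \in \mathbb{R}$ and moves some point $q$; without loss of generality $p < q$ and (replacing $f$ by $f^{-1}$ if needed) $\hat{f}(q) < q$. Let $p' = \sup\{y \leq q : \hat{f}(y) = y\}$. By continuity of $\hat{f}$ (an order automorphism of $\mathbb{R}$), $\hat{f}(p') = p'$, and $\hat{f}$ has no fixed points in $(p', q]$, so $\hat{f}(y) < y$ there. This gives an interval on which $\hat f$ compresses: $\hat{f}$ maps $[p', q]$ onto $[p', \hat{f}(q)] \subsetneq [p', q]$, and the iterates $\hat{f}^n(q)$ decrease to $p'$.

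Next we lift this to $\mathbb{R}(K_{[x]})$. Set $J = (p', q](K_{[x]})$, i.e. the interval consisting of the replacement orders $K_y$ for $p' < y \leq q$ (dropping $K_{p'}$, which $f$ maps onto itself). Since $f[K_y] = K_{\hat f(y)}$, $f$ restricts to an isomorphism of $J$ onto its proper initial segment $(p', \hat{f}(q)](K_{[x]})$. Viewing $f^{-1}$ restricted to this image as a partial convex self-embedding, or directly using $f\upharpoonright J$ as a compression of $J$ fixing the left endcut-side behavior, we get the decreasing orbit of the right endcut: the segments $C_n = (\hat f^{n+1}(q), \hat f^n(q)](K_{[x]}) = f^n[C_0]$ are all isomorphic to $C_0$, and $J \cong \omega^* C_0$. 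By Proposition \ref{lengthisinvariantwhenDnonsplitting} (length is an isomorphism invariant for non-splitting $\mathcal{D}$ with sufficient representation), $\ell(C_n) = \ell(C_0) = q - \hat f(q) > 0$ for every $n$.

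This is the contradiction: the $C_n$ are pairwise disjoint subintervals of the bounded interval $J$ of length $q - p' < \infty$, so by additivity (Proposition \ref{ellIisadditivewhenDnonsplittingpropn}, applied to finite sums $C_{N-1} + \cdots + C_0 \leqslant_{conv} J$) we would get $N\,\ell(C_0) \leq q - p'$ for all $N$, impossible. Equivalently and more concretely, $\sum_n (\hat f^n(q) - \hat f^{n+1}(q)) = q - p'$ converges, so the lengths $\hat f^n(q) - \hat f^{n+1}(q)$ tend to $0$, contradicting their constancy. Alternatively one can mimic the proof of Proposition \ref{lengthisinvariantwhenDnonsplitting}: pick $k$ with $a_k < \ell(C_0)$ so $A_k \leqslant_{conv} C_n$ for all $n$, and a $C_n$ with $\ell(C_n)$ small forces $A_k \leqslant_{conv} A_{k+2}$, making $\mathcal{D}$ splitting. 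The main subtlety I expect is bookkeeping at the fixed point $p'$ (ensuring $K_{p'}$ is excluded and the pieces $C_n$ are genuinely intervals of the form in Proposition \ref{ellforintervalspropn}), plus the separate easy case where $\hat f$ is the identity, which is irreducible by convention; unbounded cases do not arise since we work inside $[p',q]$.
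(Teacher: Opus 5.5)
Your proof is correct and is essentially the paper's argument. The paper picks a cut moved by $\hat f$, notes that its $\hat f$-orbital is a $\mathbb{Z}$-orbital bounded on one side, and observes that the $f$-iterates of the jump are isomorphic intervals in $\mathbb{R}(K_{[x]})$ whose lengths tend to $0$, contradicting Proposition \ref{lengthisinvariantwhenDnonsplitting}; your fixed point $p'$ and intervals $C_n = f^n[C_0]$, with $\ell(C_n) = \hat f^n(q) - \hat f^{n+1}(q) \to 0$, are a concrete version of the same idea.
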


\begin{proof}
Suppose $\hat{f}$ is not irreducible. Then in particular $f$ is not the identity. Let $c$ be a cut in $\mathbb{R}$ at which $\hat{f}$ is not fixed. Then since $\hat{f}$ is an automorphism, $O_{\hat{f}}(c)$ is a $\mathbb{Z}$-orbital, which must be bounded on at least one side in $\mathbb{R}$ since $\hat{f}$ is not irreducible. 

Suppose without loss of generality $O_{\hat{f}}(c)$ is bounded on the right, and that ${\hat{f}}$ is increasing on $O_{\hat{f}}(c)$. Let $m$ denote the right endcut of $O_{\hat{f}}(c)$. Then $[c, m]$ is isomorphic to $\omega A_{c, \hat{f}}$. 

Let $l$ denote the length of the interval $A_{c, \hat{f}}$ in $\mathbb{R}$. Then there is some $n$ such that $l$ is strictly greater than the length $l'$ of $\hat{f}^n[A_{c, \hat{f}}]$, as these lengths go to $0$ (as $\hat{f}^n(c)$ approaches $m$). 

Viewing $c$ also as a cut in $\mathbb{R}(K_{[x]})$, we have
\[
A_{c, f} = A_{c, \hat{f}}(K_{[x]}).
\]
Hence $\ell(A_{c, f}) = l$, and likewise $\ell(f^n[A_{c, f}]) = l' < l$. But $A_{c, f} \cong f^n[A_{c, f}]$, contradicting Proposition \ref{lengthisinvariantwhenDnonsplitting}. 
\end{proof}

The H\"older-Conrad theorem (cf. \cite[Chapter 3]{DNR}) states that groups of irreducible order-automorphisms are isomorphic to subgroups $H \leq (\mathbb{R}, +)$. The following proposition is a rigidity theorem in the same spirit, deduced directly from our arithmetic theory for $(LO, +)$. The fact that the group $H$ appearing in the proposition is strict in $(\mathbb{R}, +)$ will feature in our classification of $LO$-representable commutative semigroups in Section \ref{section:commutesemigroupsinLOrepn}.

\theoremstyle{definition}
\newtheorem{autosofRKxaretranslationsDnonsplitting}[lct]{Proposition}
\begin{autosofRKxaretranslationsDnonsplitting}\label{autosofRKxaretranslationsDnonsplitting}
Suppose $\mathcal{D}$ is non-splitting and $\mathbb{R}(K_{[x]})$ is sufficient. Let 
\[
H = \varphi[\textrm{Aut}(\mathbb{R}(K_{[x]})]
\]
be the image of the homomorphism $\varphi$, so that $H \cong \textrm{Aut}(\mathbb{R}(K_{[x]})/\textrm{ker}(\varphi)$. Then $H$ is a strict subgroup of $(\mathbb{R}, +)$. 
\end{autosofRKxaretranslationsDnonsplitting}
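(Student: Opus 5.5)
The plan has two parts. First, show that $H$ is a group of translations, i.e. a subgroup of $(\mathbb{R},+)$. Second, show it is a proper subgroup.

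\emph{$H$ consists of translations.} Fix $f \in \textrm{Aut}(\mathbb{R}(K_{[x]}))$ and consider $\hat{f}$. For any $a<b$ in $\mathbb{R}$, the interval $[a,b](K_{[x]})$ is carried by $f$ onto $[\hat{f}(a),\hat{f}(b)](K_{[x]})$. Here we use that $f$ maps each $K_x$ onto $K_{\hat{f}(x)}$, which comes from Theorem \ref{simdequalssimDfornonsplittinsystems} and Corollary \ref{simDclassesinvarunderconvfcoro}. Both intervals have finite positive length, and by Proposition \ref{lengthisinvariantwhenDnonsplitting} they have equal length. Hence
\[
\hat{f}(b)-\hat{f}(a) = b-a \quad \textrm{for all } a<b .
\]
So $\hat{f}$ is an order-preserving isometry of $\mathbb{R}$, which means $\hat{f}(x) = x + t_f$ for a fixed $t_f$. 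By Proposition \ref{ftofhatishomo}, $\varphi$ is a homomorphism, so $H$ is a subgroup of the translations, identified with a subgroup of $(\mathbb{R},+)$. This step does not even need Proposition \ref{fhatalwaysirredwhenDnonsplitting}, though it is consistent with it.

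\emph{$H$ is proper.} Suppose instead that $H = \mathbb{R}$. Every translation then lifts to an automorphism, so all the replacement orders $K_x$ are isomorphic; call this common order $K$. In that case
\[
\mathbb{R}(K_{[x]}) \cong \mathbb{R}K \quad\textrm{and}\quad A_0 \cong R + (0,1)K + L,
\]
with $R$ final and $L$ initial in $K$ (via $K_0 \cong L+R$).

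The aim is to produce a convex copy of $2A_0$ inside $A_0$, or some similar splitting witness, using the scaling symmetry of $\mathbb{R}$. Since $(0,1)K \cong (0,2)K$ via the dilation $x\mapsto 2x$, we get
\[
2A_0 \cong R + (0,1)K + K + (1,2)K + L \cong R + (0,2)K + L .
\]
If $R$ and $L$ are empty, this already gives $A_0 \cong 2A_0$, a contradiction. In general, $R+(0,2)K+L$ embeds convexly in $[0,2](K)$. The dilation $x \mapsto x/2$ maps $[0,2]$ onto $[0,1]$, so $[0,2](K) \cong [0,1](K)$ via $(x,k)\mapsto(x/2,k)$. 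Then $[0,1](K) \cong K + (0,1)K + K$, and this contains $A_0$ only up to the pieces of $K$ at the two ends.

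To deal with these endpoint pieces, I would instead work inside the interior. Choose $a_k$ small and consider the interval $(0,4a_k)K$. It contains $2A_k$ as a convex subinterval, namely $[a_k/2, 5a_k/2]$ taken with full $K$'s at the ends. By dilation, $(0,4a_k)K \cong (0,a_k/2)K$, which is convex in $A_k$. Hence $2A_k \leqslant_{conv} A_k$, so $A_k$ is splitting by Corollary \ref{2XconvXiff2XcongX}. This contradicts the assumption that $\mathcal{D}$ is non-splitting.

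The main obstacle I expect is justifying carefully that $H=\mathbb{R}$ forces all the $K_x$ to be isomorphic, so that $\mathbb{R}(K_{[x]})$ really is the product $\mathbb{R}K$ and dilations are available. The point is that $h \in H$ means some automorphism $f$ has $f[K_x]=K_{x+h}$ for every $x$, so $K_x \cong K_y$ for all $x,y$. Once this holds, dilations are automorphisms of $\mathbb{R}K$, and the contradiction with non-splitting is routine.
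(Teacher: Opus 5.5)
Your proof is correct. The translation step takes a more direct route than the paper, while the strictness step is essentially the paper's argument.

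\textbf{Translation step.} The paper first invokes Proposition~\ref{fhatalwaysirredwhenDnonsplitting}, so that $\hat f$ is irreducible and hence increasing or decreasing on all of $\mathbb{R}$. It then assumes two cuts $c, c'$ are displaced by different amounts. Using the $\mathbb{Z}$-orbital structure of $\hat f$, it cuts $[c,\hat f(c)]$ and $[c',\hat f(c')]$ into pieces and obtains two intervals $B \cong B'$ (isomorphic via a power of $f$) of different lengths. This contradicts Proposition~\ref{lengthisinvariantwhenDnonsplitting}. You instead apply that proposition directly to $I=[a,b](K_{[x]})$ and $f[I]=[\hat f(a),\hat f(b)](K_{[x]})$. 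This immediately shows $\hat f$ is an order-preserving isometry, hence a translation. Both arguments rest on the same key lemma, but yours makes the irreducibility result and the orbital bookkeeping unnecessary.

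\textbf{Strictness step.} Your worry about the end pieces $R$ and $L$ is unfounded. The dilation acts only on the open middle part, so the chain you already wrote gives
\[
2A_0 \cong R+(0,2)K+L \cong R+(0,1)K+L \cong A_0
\]
for arbitrary $R$ and $L$. This is exactly the paper's argument; the paper simply splits $(0,1)$ at $\tfrac12$ instead of dilating $(0,2)$. Your detour through $2A_k \leqslant_{conv} A_k$ is also valid, but it is not needed. It uses that the junction piece of $A_k + A_k$ (namely $L+R$, $L'+R$, or $L+R'$, depending on the type of system and the parity of $k$) is isomorphic to some $K_x$, and hence to $K$.
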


\begin{proof}
We first check that $\hat{f}$ is a translation for every $f \in \textrm{Aut}(\mathbb{R}(K_{[x]}))$. By Lemma \ref{fhatalwaysirredwhenDnonsplitting}, $\hat{f}$ is irreducible. If $\hat{f}$ is the identity, we are done, so suppose $\hat{f}$ is not the identity. Since $\mathbb{R}$ consists of a single $\mathbb{Z}$-orbital of $\hat{f}$, $\hat{f}$ is either increasing or decreasing on $\mathbb{R}$. Suppose without loss of generality it is increasing. 

Suppose toward a contradiction that $\hat{f}$ is not a translation. Then we can find cuts $c, c' \in \mathbb{R}$ and positive reals $r, r' \in \mathbb{R}_{>0}$ such that $\hat{f}(c) = c + r$, $\hat{f}(c') = c' + r'$, and $r < r'$. 

Since $O_{\hat{f}}(c) = O_{\hat{f}}(c') = \mathbb{R}$ there is a unique $N \in \mathbb{Z}$ such that 
\[
c' \leq \hat{f}^N(c) < \hat{f}(c') \leq \hat{f}^{N+1}(c).
\]
Let $\hat{B}' = [c', \hat{f}^N(c)]$ and $\hat{C}' = [\hat{f}^N(c), \hat{f}(c')]$. Thus 
\[
[c', \hat{f}(c')] = [c', c' + r']
\]
decomposes into the initial segment $\hat{B}'$ and final segment $\hat{C}'$, and 
\[
[c, \hat{f}(c)] = [c, c + r]
\]
decomposes into the initial segment $\hat{C} = \hat{f}^{-N}[\hat{C}']$ and final segment $\hat{B} = \hat{f}^{-N+1}[\hat{B}']$. Since $r' < r$, either the length of $\hat{B}'$ is less than that of $\hat{B}$ or the length of $\hat{C}'$ is less than that of $\hat{C}$. Suppose the former, and let $B = \hat{B}(K_{[x]})$ and $B' = \hat{B}'(K_{[x]})$ be the corresponding intervals in $\mathbb{R}(K_{[x]})$. Then $\ell(B) > \ell(B')$ and also $B \cong B'$ (as witnessed by $f^{N-1}$), contradicting Lemma \ref{lengthisinvariantwhenDnonsplitting}. The case when the lengths of $\hat{C}'$ and $\hat{C}$ differ is similar. 

Thus $\hat{f}$ is a translation, and since $\hat{f}$ was arbitrary, we have that $H$ is a subgroup of $(\mathbb{R}, +)$. 

It follows that $\mathbb{R}(K_{[x]})$ is in fact a replacement up to the orbit equivalence relation $E_H$ of $H$, i.e. $K_x \cong K_{h(x)}$ for all $x \in \mathbb{R}$ and $h \in H$: indeed, we have $h = \hat{f}$ for some $f \in \textrm{Aut}(\mathbb{R}(K_{[x]})$, and $K_x \cong f[K_x] = K_{\hat{f}(x)} = K_{h(x)}$. 

If $H = (\mathbb{R}, +)$, then this yields $K_x \cong K_y$ for every $x, y \in \mathbb{R}$. Letting $K$ be a fixed order of the common order type of the orders $K_x$, we have $\mathbb{R}(K_{[x]})$ is (isomorphic to) the lexicographic product $\mathbb{R}K$. Moreover, $A_0$ is of the form
\[
R + (0, 1)K + L, 
\]
for some $R$, $L$ such that $L + R \cong K$. But then 
\begin{equation}\label{newport}
A_0 \cong R + (0, \frac{1}{2})K + L + R + (\frac{1}{2}, 1)K + L.
\end{equation}
Since the interval $(0, \frac{1}{2})$ and $(\frac{1}{2}, 1)$ are each isomorphic to $(0, 1)$ it follows $(0, \frac{1}{2})K$ and $(\frac{1}{2}, 1)K$ are each isomorphic to $(0, 1)K$. Then the isomorphism \ref{newport} witnesses $A_0 \cong A_0 + A_0$, contradicting that $A_0$ is non-splitting. Thus $H$ is strict in $(\mathbb{R}, +)$, as claimed. 
\end{proof}

For most of the results of this section, we assume sufficiency of the representation $\mathbb{R}(K_{[x]})$ as a hypothesis. However, in Section \ref{section:continuationandvaluation} we will need to \textit{prove} the sufficiency of the representation of a certain non-splitting system $\mathcal{D}$ by way of a length invariance argument. In order to do this, in Proposition \ref{ellIlessellJnonisoinsufficientsetting} below we establish a weak version of Proposition \ref{lengthisinvariantwhenDnonsplitting} in which we do not assume sufficiency of the representation, but assume instead that the intervals $I$ and $J$ satisfy $I \subseteq J$. In order to adapt the proof, we first show the following weak analogue of Proposition \ref{ellIlessellJmeansIconvJforsuffrepns}, which again works in the absence of sufficiency. 

\theoremstyle{definition}
\newtheorem{embeddingAkwheninsufficientlemma}[lct]{Lemma}
\begin{embeddingAkwheninsufficientlemma}\label{embeddingAkwheninsufficientlemma}
Let $\mathbb{R}(K_{[x]})$ be the real representation of $\mathcal{D}$. Fix $x \in \mathbb{R}$. 
\begin{itemize}
    \item[i.] For all $\varepsilon > 0$, there exists an integer $k \geq 0$ such that 
    \[
    A_k \leqslant_{conv} [x, x + \varepsilon](K_{[x]}).
    \]
    \item[ii.] For every integer $k \geq 0$, there exists $\varepsilon > 0$ such that 
    \[
    [x, x + \varepsilon](K_{[x]}) \leqslant_{conv} A_k.
    \]
\end{itemize}
\end{embeddingAkwheninsufficientlemma}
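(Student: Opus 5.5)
Both parts follow from the symbolic picture behind the real representation. There every term $A_k$ occupies, up to isomorphism, a \emph{basic} interval of $\mathbb{Z}X(I_{[(n,u)]})$, and $\sigma$ carries basic intervals onto real intervals of length $a_{|r|}$ (or $a_{|r|+1}$). So in both directions we avoid translating intervals, which would need sufficiency. Instead we locate basic intervals inside or around $[x, x+\varepsilon]$.

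Recall the structure. By Theorem \ref{leftsymbolicrepnthmforZA0}, for every $n \in \mathbb{Z}$ and regular $r \in T$ with $|r| = k$ we have $X_{(n,r)}(I_{[(n,u)]}) \cong A_k$. Under the isomorphism $(n,u,i) \mapsto (\sigma(n,u), i)$ from the proof of Theorem \ref{leftrealrepnthm}, the basic interval $X_{(n,r)}$ has left endpoint $(n, r\overline{0})$ and right endpoint $(n, r e')$, where $e'$ is the rightmost continuation. By Lemmas \ref{sigmamekismplusaklem} and \ref{boundingnormalformsumlemma}, $\sigma$ maps these endpoints to $s$ and $s + a_k$, where $s = n + \sum_{i<k} r_i a_{i+1}$. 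Hence the copy of $A_k$ sits in $\mathbb{R}(K_{[x]})$ as an interval of the form $R'' + (s, s+a_k)(K_{[x]}) + L''$, with $R''$ final in $K_s$ and $L''$ initial in $K_{s+a_k}$. The right and rational cases are analogous, using their own trees and $\sigma$.

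For (i.), fix $\varepsilon > 0$. The real numbers $s$ arising as left endpoints of regular nodes of length $k$ are spaced at most $a_k$ apart: consecutive regular nodes of length $k$ differ by $a_{k}$ or by $a_{k+1} < a_k$. Choose $k$ with $3a_k < \varepsilon$. Then some such $s$ satisfies $x < s < s + a_k < x + \varepsilon$. The corresponding copy of $A_k$ lies inside $K_s \cup (s, s+a_k)(K_{[x]}) \cup K_{s+a_k}$, which is contained in $[x, x+\varepsilon](K_{[x]})$, so $A_k \leqslant_{conv} [x, x+\varepsilon](K_{[x]})$.

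For (ii.), fix $k$. The condensed images of the basic intervals $X_{(n,r)}$ with $r$ regular of length $k+2$ form a cover of $\mathbb{R}$ by closed intervals of length $a_{k+2}$ or $a_{k+3}$. At worst $x$ is the shared endpoint of two consecutive pieces $P < P'$. Take $\varepsilon$ smaller than $a_{k+3}$. Then $[x, x+\varepsilon](K_{[x]})$ lies inside the union of the copy corresponding to $P$, together with the piece of $K_x$ it contains, and the copy corresponding to $P'$.

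Here is the step I expect to be the main obstacle. The two adjacent basic pieces are siblings or near-siblings in the tree, so their union is a convex subset of a single basic interval of length $k+1$ or $k$. Specifically, it is convex in $X_{(n,r)}$ for a common ancestor $r$ with $|r| \ge k$. The exception is a boundary across copies $n, n+1$ or across a jump pair; there I would refine further, shrinking $\varepsilon$ so that the window starts inside the piece containing $K_x$ on its right side. If the common ancestor has length at least $k$, then by Proposition \ref{initfinaltermsindivissys} each $A_{k'}$ with $k' \ge k$ embeds convexly in $A_k$, giving $[x,x+\varepsilon](K_{[x]}) \leqslant_{conv} A_{|r|} \leqslant_{conv} A_k$.

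The care is in the case where the nearest common ancestor of the two pieces is short, e.g. $x$ is an integer, so the union straddles two copies of $A_0$. To handle it, I would first choose $\varepsilon$ so small that the window $[x, x+\varepsilon]$ meets only $K_x$ and the single piece immediately to the right of $x$. This works once we note that $x$ is the left endpoint of basic intervals of every depth along the branch through $x$, namely $(n, u)$ with $\sigma(n,u) = x$ the right point if $x$ is in a jump pair. The window is then $R_x + (x, x+\varepsilon)(K_{[x]})$, contained in one basic interval, which is a copy of $A_{k'}$ with $k' \ge k$; conclude as above.
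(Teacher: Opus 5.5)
Your part (i) is correct. Consecutive regular level-$k$ nodes can have left endpoints up to $a_k+a_{k+1}$ apart rather than $a_{k+1}$, but $3a_k<\varepsilon$ absorbs this. It is also more careful than the paper's (i), which appeals to the intervals $A_{u\upharpoonright k}$ along the branch through $x$; for $x\notin[0]$ those always protrude to the left of $K_x$.

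Part (ii), however, has a genuine gap when $x\in[0]$. In that case $\sigma^{-1}(x)=\{(m,u)<(n,v)\}$ is a jump pair and $K_x=I_{(m,u)}+I_{(n,v)}$. The window $[x,x+\varepsilon](K_{[x]})$ contains all of $K_x$ for every $\varepsilon>0$. Your last step treats the window as $R_x+(x,x+\varepsilon)(K_{[x]})$, which silently discards $I_{(m,u)}$, and no shrinking of $\varepsilon$ removes it. Suppose $u$ and $v$ first differ at coordinate $j$ (take $j=-1$ and $n=m+1$ when $x\in\mathbb{Z}$). Then every basic interval containing both $I_{(m,u)}$ and $I_{(n,v)}$ has depth at most $j$, and there is none at all when $x\in\mathbb{Z}$. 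So for $j<k$, no basic interval of depth $\ge k$ contains the window, and your containment argument has nothing to work with. This is not an edge case. Every shared endpoint of consecutive level-$(k+2)$ pieces is such a jump pair, with $j$ anywhere below $k+2$, so the sibling heuristic does not reach it either.

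The missing idea is self-similarity through the $E_0$-invariance of the replacement. Write $A_{(n,r)}$ for the copy of $A_{|r|}$ sitting over the basic interval $X_{(n,r)}$, for $r$ regular. Choose $j'\ge k$ with $j'\equiv j\pmod 2$, and let $u'=0^{j'+1}N_{j'+1}0N_{j'+3}0\cdots$ and $v'=0^{j'}1\overline{0}$. This is a jump pair inside $X_{0^{j'}}$ with $(0,u') E_0 (m,u)$, so $I_{(m,u)}\cong I_{(0,u')}$. For $p\ge\max(j,j')+2$, the nodes $v\upharpoonright p$ and $v'\upharpoonright p$ are both regular of length $p$. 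For $\varepsilon<a_p$, the window is an initial segment of $I_{(m,u)}+A_{(n,v\upharpoonright p)}$. Hence
\[
[x,x+\varepsilon](K_{[x]})\leqslant_{conv} I_{(m,u)}+A_{(n,v\upharpoonright p)}\cong I_{(0,u')}+A_{(0,v'\upharpoonright p)}\leqslant_{conv} A_{(0,0^{j'})}\cong A_{j'}\leqslant_{conv} A_k .
\]
The paper's own proof of (ii) shrinks $\varepsilon$ until the window lies in $A_{v\upharpoonright k}$. It overlooks the same piece $I_{(m,u)}$ once $k>j$, so this repair is needed there as well.
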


\begin{proof}
For (1.): Identifying $K_x$ with the corresponding interval in the symbolic representation $\mathbb{Z}A_0$, we have that $K_x$ is either isomorphic to (if $x \not \in [0]$), or has a final segment isomorphic to (if $x \in [0]$), some $I_{(n, u)} \cong I_u$ obtained as an intersection of the leftward branching
\[
A_{u_0} \supseteq A_{u_0 u_1} \supseteq \cdots \supseteq A_{u \upharpoonright n} \supseteq \cdots. 
\]

Thus for any interval $I \subseteq \mathbb{R}(K_{[x]})$ extending $K_x$ to the right, by the leftwardness of the branching there exists $k$ such that $A_{u \upharpoonright k} \subseteq I$. Passing to $k+1$ if need be, we may assume $u \upharpoonright k$ is a regular node, and hence $A_{u \upharpoonright k} \cong A_k$. Thus $A_k \leqslant_{conv} I$, and (i.) follows. 

For (ii.): Since the branching is leftward, $A_{u \upharpoonright k}$ strictly extends $K_x$ to the right. There must exist some $\varepsilon > 0$ such that $I = [x, x+\varepsilon](K_{[x]}) \subseteq A_{u \upharpoonright k}$. Thus $I \leqslant_{conv} A_{u \upharpoonright k}$. We have that $A_{u \upharpoonright k}$ is either isomorphic to $A_k$ or $A_{k+1}$, depending on the regularity of $u \upharpoonright k$, but since $A_{k+1} \leqslant_{conv} A_k$, in either case we have $I \leqslant_{conv} A_k$, which gives (ii.).
\end{proof}

By symmetric arguments, the symmetrized versions of (i.) and (ii.), obtained by replacing $[x, x+\varepsilon](K_{[x]})$ with $[x - \varepsilon, x](K_{[x]})$, also hold. 

\theoremstyle{definition}
\newtheorem{ellIlessellJnonisoinsufficientsetting}[lct]{Proposition}
\begin{ellIlessellJnonisoinsufficientsetting}\label{ellIlessellJnonisoinsufficientsetting}
Suppose $\mathcal{D}$ is non-splitting and we have intervals $I \subseteq J \subseteq \mathbb{R}(K_{[x]})$ such that $\ell(I) < \ell(J)$. Then $I \not\cong J$. 
\end{ellIlessellJnonisoinsufficientsetting}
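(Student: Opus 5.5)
We follow the proof of Proposition \ref{lengthisinvariantwhenDnonsplitting}, replacing every use of sufficiency by the inclusion $I \subseteq J$ and by Lemma \ref{embeddingAkwheninsufficientlemma}. Suppose toward a contradiction that $f: J \rightarrow I$ is an isomorphism. We first treat the case $0 < \ell(I)$ and $\ell(J) < \infty$. By Proposition \ref{ellforintervalspropn}, write $J \cong R' + (c,d)(K_{[x]}) + L'$ and $I \cong R + (a,b)(K_{[x]}) + L$. Since $I \subseteq J$ we have $c \leq a < b \leq d$, and since $\ell(I)<\ell(J)$, at least one of $c<a$ or $b<d$ holds; say $c < a$ (the other case is symmetric, using the mirrored version of Lemma \ref{embeddingAkwheninsufficientlemma}).

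Next we identify the $\sim_{\mathcal{D}}$-classes. As $\mathcal{D}$ is non-splitting, Theorem \ref{simdequalssimDfornonsplittinsystems} says the $\sim_{\mathcal{D}}$-classes of $\mathbb{R}(K_{[x]})$ are exactly the orders $K_y$. By Corollary \ref{simDclassesinvarunderconvfcoro}, the classes of $J$ are therefore $R'$, the $K_y$ with $c<y<d$, and $L'$, and similarly for $I$. Since $f$ is an isomorphism it preserves $\sim_{\mathcal{D}}$-classes. Hence $f[R'] = R$, $f[L']=L$, and $f$ restricts to an isomorphism $(c,d)(K_{[x]}) \rightarrow (a,b)(K_{[x]})$. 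Now $(a,b)(K_{[x]})\subseteq (c,d)(K_{[x]})$, so labelling $(c,d)(K_{[x]}) = [l,r]$, $f$ is a compression with $f(l) > l$. Its initial orbital $O_f(l)$ is therefore an $\omega$-orbital, whose first jump $C = A_{l,f}$ contains $(c,a)(K_{[x]})$.

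The orbital is bounded above by $r$, so the condensed cuts $\hat f^n(l)$ increase to some limit $m \leq d$ in $\mathbb{R}$. Here we do not have translation invariance, so we localize at $m$. By part (ii.) of the mirrored Lemma \ref{embeddingAkwheninsufficientlemma} at the point $m$, for any $k$ there is $\varepsilon>0$ such that $[m-\varepsilon,m](K_{[x]}) \leqslant_{conv} A_{k+2}$. Thus every sufficiently late final segment $O'$ of $O_f(l)$ satisfies $O' \leqslant_{conv} A_{k+2}$. On the other hand, pick $y$ with $c<y<a$ and $\varepsilon_0>0$ with $[y,y+\varepsilon_0]\subseteq(c,a)$. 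Part (i.) of Lemma \ref{embeddingAkwheninsufficientlemma} gives $k$ with $A_k \leqslant_{conv} [y,y+\varepsilon_0](K_{[x]}) \subseteq C$, and since every nonempty final segment of $\omega C$ contains a copy of $C$, we get $A_k \leqslant_{conv} O'$. We choose $k$ first and then $\varepsilon$, which is the same order of quantifiers as in the earlier proof. The result is $A_k \leqslant_{conv} A_{k+2}$. Combined with $2A_{k+2}\leqslant_{conv}A_k$ (Lemma \ref{splittinglemmadivissystems}, or by definition for rational systems), Corollary \ref{2XconvXiff2XcongX} makes $A_{k+2}$ splitting, which contradicts non-splitting.

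The remaining cases go as follows. If $\ell(J)=\infty$, the same argument works with $c=-\infty$ or $d=\infty$; the only care needed is that the limit $m$ is finite because the orbital is bounded by a point of $I$ when $I$ has finite length. If $\ell(I)=0<\ell(J)$, then $I\subseteq K_y$ for some $y$, while by Lemma \ref{embeddingAkwheninsufficientlemma}(i.) $J$ contains a copy of some $A_k$, and after splitting it we may assume this copy lies strictly inside $J$. Its image under $f$ would then lie in a single $\sim_{\mathcal{D}}$-class, which is impossible. The main obstacle is the step that replaces the density-of-$G$ argument of Lemma \ref{ellIlessellJmeansIconvJforsuffrepns}: we must get both $A_k \leqslant_{conv} C$ and $O'\leqslant_{conv}A_{k+2}$ from purely local information. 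Lemma \ref{embeddingAkwheninsufficientlemma} is designed for exactly this, applied at two different points of $\mathbb{R}$.
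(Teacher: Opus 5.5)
Your proposal is correct and follows the paper's proof essentially step for step. You use preservation of $\sim_{\mathcal{D}}$-classes to reduce to a compression of $(c,d)(K_{[x]})$ with an initial $\omega$-orbital, and then replace the two uses of Lemma \ref{ellIlessellJmeansIconvJforsuffrepns} with local applications of Lemma \ref{embeddingAkwheninsufficientlemma} (part (i.) inside the first jump, the mirrored part (ii.) at the limit point of the orbital) to get $A_k \leqslant_{conv} A_{k+2}$; your handling of the infinite-length and zero-length cases is slightly more explicit than the paper's.
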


\begin{proof}
Follow the proof of Proposition \ref{lengthisinvariantwhenDnonsplitting}. Assume first as in that proof that $I$ and $J$ have the forms
\[
\begin{array}{rcl}
I & \cong & R + (a, b)(K_{[x]}) + L \\
J & \cong & R' + (c, d)(K_{[x]}) + L'.
\end{array}
\]

Since $I \subseteq J$ by assumption, we do not need to invoke the sufficiency of the representation to get a translation sending $I$ into $J$. Thus we have $c \leq a < b \leq d$. Since $\ell(I) < \ell(J)$, at least one of $c < a$ and $b < d$ holds. By symmetry we may assume $c < a$. Since $I \cong J$ we have an isomorphism $f: J \rightarrow I$ as in the proof of \ref{lengthisinvariantwhenDnonsplitting}; we follow the proof from there. 

Now, instead of invoking Lemma \ref{ellIlessellJmeansIconvJforsuffrepns} to get $A_k \leqslant_{conv} C$, observe that since $\ell(C) > 0$, $C$ it contains an interval of the form $[x, x + \varepsilon](K_{[x]})$ for some $\varepsilon > 0$, and hence convexly embeds some $A_k$ by Lemma \ref{embeddingAkwheninsufficientlemma}. Recall that $C$ embeds in every non-empty final segment of $O_f(l)$. 

To get around the second use of Lemma \ref{ellIlessellJmeansIconvJforsuffrepns}, we argue as follows. Since the orbital $O_f(l)$ is bounded in $\mathbb{R}(K_{[x]})$ and has positive length, it has the form $R + (y, x)(K_{[x]}) + L$ by \ref{ellforintervalspropn} for some $y < x$ in $\mathbb{R}$. 

Choose $\varepsilon > 0$ sufficiently small so that $x - \varepsilon > y$ and $[x - \varepsilon, x](K_{[x]}) \leqslant_{conv} A_{k+2}$, which is possible by the (symmetrized version of) Lemma \ref{embeddingAkwheninsufficientlemma}. Since $[x - \varepsilon, x](K_{[x]})$ contains a final segment of $O_f(l)$, and all final segments of $O_f(l)$ convexly embed $C$ and hence $A_k$, we have the chain
\[
A_k \leqslant_{conv} [x - \varepsilon, x](K_{[x]}) \leqslant_{conv} A_{k+2},
\]
which implies that $\mathcal{D}$ is splitting, a contradiction. The cases when $\ell(I) = 0$ and $\ell(J) = \infty$ can be handled by similar adaptations of the proof of \ref{lengthisinvariantwhenDnonsplitting}.
\end{proof}

\section{Generalizing Aronszajn's representation theorem}\label{section:aronszajnrepnthm} 

In this section we use the results of Section \ref{section:symboldynamrepnsofdivissystems} to deduce Aronszajn's representation theorem from \cite{Aronszajn} for commuting pairs $A + B \cong B + A$ in $LO$, as well as two generalizations. As with some of our other results, these generalizations are one-sided versions of a previously established theorem. 

This sidedness is reflected in differences in the proofs: whereas Aronszajn proved his representation theorem using a natural automorphism of $\mathbb{Z}(A+B)$ induced by the identity $A + B \cong B + A$, our results rely on the division systems yielded by the one-sided Euclidean algorithms from Section \ref{section:euclideanalgos} for dividing one of $A$ and $B$ by the other. As discussed there, runs of such algorithms yield identities between the infinite discrete products of $A$ and $B$. However, they do not in general imply the existence of the kind of automorphism used in Aronszajn's proof. 

\theoremstyle{definition}
\newtheorem{leftrightsymmdividesdefn}[lct]{Definition}
\begin{leftrightsymmdividesdefn}\label{leftrightsymmdividesdefn}
\phantom{.}
\begin{itemize}
    \item[i.] $B$ \textit{left divides} $A$ if either $\omega B \leqslant_{init} A$, or there is a left division system $\{A_k \cong N_k A_{k+1} + A_{k+2}: k < M\}$ of some length $M \leq \omega$ such that $A_0 \cong A$ and $A_1 \cong B$.
    \item[ii.] $B$ \textit{right divides} $A$ if either $\omega^* B \leqslant_{fin} A$, or there is a right division system $\{A_k \cong A_{k+2} + N_k A_{k+1}: k < M\}$ of some length $M \leq \omega$ such that $A_0 \cong A$ and $A_1 \cong B$.
    \item[iii.] $B$ \textit{symmetrically divides} $A$ if either $\omega B \leqslant_{init} A$ and $\omega^* B \leqslant_{fin} A$, or there is a symmetric division system $\{A_k \cong N_k A_{k+1} + A_{k+2}: k < M\}$ of some length $M \leq \omega$ such that $A_0 \cong A$ and $A_1 \cong B$.
\end{itemize}
\end{leftrightsymmdividesdefn}

In Definition \ref{leftrightsymmdividesdefn}, we say that $B$ is a \textit{left divisor}, \textit{right divisor}, and \textit{symmetric divisor} of $A$, respectively. 

As in Section \ref{section:symboldynamrepnsofdivissystems}, if $G = \langle 1, a_1 \rangle$ is the group of translations of generated by $x \mapsto x + 1$ and $x \mapsto x + a_1$, we write $E_G$ for the orbit equivalence relation of $G$ and $E_G^{\pm}$ for the equivalence relation obtained from $E_G$ by splitting $[0]_{E_G}$ into its forward and backward orbits, switching conventions for these orbits when we view $G$ as $\langle -1, -a_1 \rangle$ (i.e., when working with right division systems) as in Section \ref{subsect:rightsystemrealrepn}. 

Here are our one-sided generalizations of Aronszajn's representation. In the interest of generality, we state these theorems assuming the hypothesis of a one-sided division system, but we have in mind the relations $B + A \leqslant_{init} A + B$ and $B + A \leqslant_{fin} A + B$ (which imply the existence of corresponding one-sided division systems) as the arithmetic hypotheses analogous to the hypothesis $A + B \cong B + A$ in Aronszajn's theorem. 

\theoremstyle{definition}
\newtheorem{leftaronszajnrepnthm}[lct]{Theorem}
\begin{leftaronszajnrepnthm}\label{leftaronszajnrepnthm}
Suppose that $B$ left divides $A$. One of the following holds:
\begin{itemize}
    \item[i.] $B + A \cong A$.
    \item[ii.] There are orders $C$ and $D$ and natural numbers $n, m \geq 1$ such that $A \cong nC$, $B \cong mC + D$, and $D + C \cong C$. 
    \item[iii.] There are orders $C$ and $D$ and natural numbers $n, m \geq 1$ such that $A \cong nC + D$, $B \cong mC$, and $D + C \cong C$. 
    \item[iv.] There is an irrational number $a_1 \in (0, 1)$, a replacement $\mathbb{R}(K_{[x]})$ up to the equivalence relation $E_G^{\pm}$ from the group of translations $G$ generated by $x \mapsto x + 1$ and $x \mapsto x + a_1$, and decompositions
    \[
    \begin{array}{rcl}
    K_0 & \cong & L + R \\
    K_{a_1} & \cong & L' + R
    \end{array}
    \]
    such that 
    \[
    \begin{array}{rcl}
    A & \cong & R + (0, 1)(K_{[x]}) + L \\
    B & \cong & R + (0, a_1)(K_{[x]}) + L'.
    \end{array}
    \]
\end{itemize}
\end{leftaronszajnrepnthm}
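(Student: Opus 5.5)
We split according to the form of the witness to ``$B$ left divides $A$'' in Definition \ref{leftrightsymmdividesdefn}.(i.). If $\omega B \leqslant_{init} A$, then Corollary \ref{absorbsleftrightcoro}.(i.) gives $B + A \cong A$, which is conclusion (i.). Otherwise we have a left division system $\mathcal{D} = \{A_k \cong N_k A_{k+1} + A_{k+2}: k < M\}$ with $A_0 \cong A$ and $A_1 \cong B$. We then split on whether $M < \omega$ or $M = \omega$.

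\emph{Terminating case.} For $M < \omega$ we reuse the back-substitution computations of Section \ref{subsect:leftalgorun}. These depend only on the isomorphisms in the system, so they apply to an arbitrary terminating left system, as the paper notes just before Proposition \ref{sufficiencyimpliesIsosforomegaprods}.

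Suppose first that the system terminates in a divisor. Then every $A_i$ is a multiple of $A_{M+1}$. Set $C = A_{M+1}$ and $D = \emptyset$. This gives (ii.), or equally (iii.), since $D + C \cong C$ holds trivially.

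Suppose instead that it terminates in an absorbed factor, so $A_{M+1} + A_M \cong A_M$. Set $C = A_M$ and $D = A_{M+1}$. The parity claim proved there gives positive integers $M_i$ with
\[
A_{M-i} \cong M_i C \quad (i \text{ even}), \qquad A_{M-i} \cong M_i C + D \quad (i \text{ odd}).
\]
We take $i = M$ and $i = M-1$ to read off $A_0$ and $A_1$. According to the parity of $M$, one of $A, B$ is a multiple of $C$ and the other has the form $mC + D$. This yields (ii.) or (iii.), with $D + C \cong C$ supplied by the absorption.

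\emph{Non-terminating case.} When $M = \omega$, apply Theorem \ref{leftrealrepnthm} to $\mathcal{D}$, which is a system on coefficients $\mathcal{C} = \{N_k\}$. It produces a replacement $\mathbb{R}(K_{[x]})$ up to $E_G^{\pm}$, where $G = \langle 1, a_1 \rangle$, together with decompositions $K_0 \cong L + R$ and $K_{a_1} \cong L' + R$. Since $a_0 = 1$, it also gives
\[
A \cong A_0 \cong R + (0,1)(K_{[x]}) + L, \qquad B \cong A_1 \cong R + (0,a_1)(K_{[x]}) + L'.
\]
This is exactly (iv.), once we know $a_1$ is irrational and lies in $(0,1)$. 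Both facts were recorded in Section \ref{subsect:leftsystemrealrepn}. Irrationality comes from the non-terminating continued fraction. The bound holds because $a_1 = 1/(N_0 + a_2/a_1)$ with $N_0 \geq 1$ and $a_2 > 0$, which forces $a_1 < 1$.

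\emph{Main obstacle.} The only real bookkeeping is in the absorbed-factor case. There we must check that the indices $k = M$ in the back-substitution line up with $A_0 = A$ and $A_1 = B$, and we must handle $M = 0$ separately. For $M = 0$ there are no isomorphisms in the system, and the termination condition is $A_1 + A_0 \cong A_0$, i.e. $B + A \cong A$. That is conclusion (i.) directly, or equivalently (iii.) with $C = A$, $n = 1$, $m$ vacuous. So we route the $M = 0$ case to (i.). The remaining steps are direct citations of earlier results.
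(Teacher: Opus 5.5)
Your proposal has the same structure as the paper's proof. The case $\omega B \leqslant_{init} A$ gives (i.). A terminating system gives (ii.) or (iii.) by back-substitution, according to the parity of $M$. A non-terminating system gives (iv.) by Theorem \ref{leftrealrepnthm} applied with $k = 0, 1$. Your treatment of the absorbed-factor subcase is correct, including the parity bookkeeping with $C = A_M$ and $D = A_{M+1}$. Your treatment of the non-terminating case is also correct, including the check that $a_1 \in (0,1)$ is irrational.

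There is, however, an off-by-one error in the divisor subcase. You set $C = A_{M+1}$, carrying over the stage indexing of Section \ref{subsect:leftalgorun}, where termination at Stage $k$ in a divisor means $A_k \cong N_k A_{k+1}$. But in Definition \ref{leftdivissystemdefn}, a system of length $M$ terminates in a divisor precisely when $A_{M+1} = \emptyset$. Its last isomorphism then reads $A_{M-1} \cong N_{M-1}A_M$. Taken literally, your $C$ is empty, and $A \cong nC$ is impossible since $A_0 \neq \emptyset$.

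The fix is to take $C = A_M$. Back-substitution then makes every $A_i$ with $i \leq M$ a positive multiple of $A_M$, and you can set $D = A_{M+1} = \emptyset$. This is exactly the uniform choice $C = A_M$, $D = A_{M+1}$ that the paper makes in both subcases, and that you already make in the absorbed subcase.

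Finally, your separate treatment of $M = 0$ is unnecessary. Definition \ref{leftdivissystemdefn} requires $1 \leq M$. The stage-$0$ absorbed situation is precisely the first alternative $\omega B \leqslant_{init} A$ in Definition \ref{leftrightsymmdividesdefn}, which you already dispatched to (i.).
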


\begin{proof}
If $\omega B \leqslant_{init} A$, then $B + A \cong A$, i.e. (i.) holds. Otherwise, we have the left division system $\{A_k: k < M\}$.

If $M < \omega$, then by back substitution from the last isomorphism in the system (see the discussion in Section \ref{subsect:leftalgorun}), either (ii.) or (iii.) holds, depending on the parity of $M$, with $C = A_M$ and $D = A_{M+1}$ (which is possibly empty).  

Finally, if $M = \omega$, then (iv.) holds by Theorem \ref{leftrealrepnthm}.
\end{proof}

\theoremstyle{definition}
\newtheorem{rightaronszajnrepnthm}[lct]{Theorem}
\begin{rightaronszajnrepnthm}\label{rightaronszajnrepnthm}
Suppose that $B$ right divides $A$. One of the following holds:
\begin{itemize}
    \item[i.] $A + B \cong A$.
    \item[ii.] There are orders $C$ and $D$ and natural numbers $n, m \geq 1$ such that $A \cong nC$, $B \cong D + mC$, and $C + D \cong C$. 
    \item[iii.] There are orders $C$ and $D$ and natural numbers $n, m \geq 1$ such that $A \cong D + nC$, $B \cong mC$, and $C + D \cong C$. 
    \item[iv.] There is an irrational number $a_1 \in (0, 1)$, a replacement $\mathbb{R}(K_{[x]})$ up to the equivalence relation $E_G^{\pm}$ from the group of translations $G$ generated by $x \mapsto x + 1$ and $x \mapsto x - a_1$, and decompositions
    \[
    \begin{array}{rcl}
    K_0 & \cong & L + R \\
    K_{-a_1} & \cong & L + R'
    \end{array}
    \]
    such that 
    \[
    \begin{array}{rcl}
    A & \cong & R + (-1, 0)(K_{[x]}) + L \\
    B & \cong & R' + (-a_1, 0)(K_{[x]}) + L.
    \end{array}
    \]
\end{itemize}
\end{rightaronszajnrepnthm}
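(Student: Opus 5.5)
The proof will mirror the proof of Theorem \ref{leftaronszajnrepnthm}, with left replaced by right throughout. By Definition \ref{leftrightsymmdividesdefn}.(ii.), either $\omega^* B \leqslant_{fin} A$ or there is a right division system $\{A_k \cong A_{k+2} + N_k A_{k+1}: k < M\}$ with $A_0 \cong A$ and $A_1 \cong B$. In the first case, Corollary \ref{absorbsleftrightcoro}.(ii.) gives $A + B \cong A$, which is (i.).

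Now suppose the system exists and is terminating, with $M < \omega$. The final isomorphism is $A_{M-1} \cong A_{M+1} + N_{M-1}A_M$, and the terminating condition for a right system (the mirror of Definition \ref{leftdivissystemdefn}) gives $A_M + A_{M+1} \cong A_M$. Set $C = A_M$ and $D = A_{M+1}$, where $D$ may be empty; then $C + D \cong C$. As in Section \ref{subsect:runofrightalgo}, back substitution shows that each $A_i$ with $i \le M$ is isomorphic to either $M_i C$ or $D + M_i C$, with the form depending on the parity of $M - i$. The extra $D$ terms that appear in the middle of a sum are absorbed into the copy of $C$ on their left, using $C + D \cong C$. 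I will run this induction as the mirror image of the left-sided computation in Section \ref{subsect:leftalgorun}. It follows that exactly one of $A_0, A_1$ has the form $nC$ and the other has the form $D + mC$, giving (ii.) or (iii.) according to the parity of $M$. The case $M = 1$ is immediate from $A_0 \cong A_2 + N_0 A_1$ with $A_1 = C$.

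Finally, if $M = \omega$, Theorem \ref{rightrealrepnthm} applies with the coefficients $\mathcal{C} = \{N_k\}$. It yields $a_1 \in (0,1)$, irrational by the continued fraction discussion in Section \ref{section:linddivisthm}, together with a replacement $\mathbb{R}(K_{[x]})$ up to $E_G^{\pm}$ for $G = \langle -1, -a_1\rangle$ and decompositions $K_0 \cong L + R$ and $K_{-a_1} \cong L + R'$. The theorem gives $A_0 \cong R + (-1, 0)(K_{[x]}) + L$ (the case $k = 0$, with $a_0 = 1$) and $A_1 \cong R' + (-a_1, 0)(K_{[x]}) + L$, which is (iv.). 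The group $G$ is the same whether its generators are written as $-1$ or as $+1$, so the description in the statement matches Section \ref{subsect:rightsystemrealrepn}.

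The only step needing any care is the parity bookkeeping in the terminating case. The main thing to check is that the $D$ terms are absorbed on the correct side, namely by the $C$ to their left, so that the leftover $D$ sits at the left end of the sum as the statement requires. The rest is direct citation of earlier results.
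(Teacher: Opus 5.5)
Your proposal is correct and is the paper's proof: the paper simply declares this theorem symmetric to Theorem \ref{leftaronszajnrepnthm}, and you carry out that mirror argument. It uses absorption when $\omega^* B \leqslant_{fin} A$, back substitution with $C = A_M$ and $D = A_{M+1}$ (absorbing each interior $D$ into the $C$ on its left) in the terminating case, and Theorem \ref{rightrealrepnthm} in the non-terminating case. Your remark that writing the integer generator as $+1$ or $-1$ does not change $G$, nor the forward/backward split defining $E_G^{\pm}$, correctly reconciles the statement with the convention of Section \ref{subsect:rightsystemrealrepn}.
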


\begin{proof}
Symmetric. 
\end{proof}

We conclude with a proof Aronszajn's representation theorem, stated below in a slightly reformulated way to fit the language of this paper. The proof here differs from the original: in particular, it cases out on whether the division system in question is splitting or non-splitting, so as to leverage the rigidity theorems from Section \ref{subsubsect:lengthinvarnonsplittinsystems}. 

\theoremstyle{definition}
\newtheorem{symmetricaronszajnrepnthm}[lct]{Theorem}
\begin{symmetricaronszajnrepnthm}\label{symmetricaronszajnrepnthm}
Suppose that $B$ symmetrically divides $A$. One of the following holds:
\begin{itemize}
    \item[i.] $A + B \cong B + A \cong A$.
    \item[ii.] There is an order $C$ and natural numbers $n, m \geq 1$ such that $A \cong nC$ and $B \cong mC$. 
    \item[iii.] There is an irrational number $a_1 \in (0, 1)$, a replacement $\mathbb{R}(K_{[x]})$ up to the orbit equivalence relation $E_G$ of the group of translations $G$ generated by $x \mapsto x + 1$ and $x \mapsto x + a_1$, and a decomposition
    \[
    \begin{array}{rcl}
    K_0 & \cong & L + R \\
    K_{a_1} & \cong & L + R
    \end{array}
    \]
    such that 
    \[
    \begin{array}{rcl}
    A & \cong & R + (0, 1)(K_{[x]}) + L \\
    B & \cong & R + (0, a_1)(K_{[x]}) + L.
    \end{array}
    \]
\end{itemize}
\end{symmetricaronszajnrepnthm}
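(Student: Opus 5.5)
We split according to which clause of Definition \ref{leftrightsymmdividesdefn}.(iii.) holds. If $\omega B \leqslant_{init} A$ and $\omega^* B \leqslant_{fin} A$, then Corollary \ref{absorbsleftrightcoro} gives $B + A \cong A$ and $A + B \cong A$, which is (i.). Otherwise fix a symmetric division system $\{A_k \cong N_k A_{k+1} + A_{k+2}: k < M\}$ with $A_0 \cong A$ and $A_1 \cong B$.

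\emph{Terminating case.} If $M < \omega$, we first replace the final isomorphism by its absorbed form, as in Section \ref{subsect:runofsymmetricalgo}, so that the system terminates in a divisor. Back substitution then writes every $A_k$ with $k \le M$ as a finite multiple of $A_{M+1}$. Taking $C = A_{M+1}$ gives (ii.).

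\emph{Non-terminating, non-splitting case.} If $M = \omega$ and the system is non-splitting, Theorem \ref{symmetricrealrepnthm} gives a replacement $\mathbb{R}(K_{[x]})$ up to $E_G$ and a decomposition $K_0 \cong L + R$. It also gives $A_k \cong R + (0,a_k)(K_{[x]}) + L$ for all $k$. Since the replacement is up to $E_G$, we have $K_{a_1} \cong K_0 \cong L + R$. Specializing to $k = 0,1$ yields the representations of $A$ and $B$ in (iii.). As noted in Section \ref{section:linddivisthm}, $a_1$ is irrational, and $0 < a_1 < 1$ because $1 = N_0a_1 + a_2$ with $a_2 > 0$.

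\emph{Non-terminating, splitting case.} If $M = \omega$ and the system is splitting, Proposition \ref{isosoftermsinsplittingsystems}.(iii.) gives $A_k \cong A_{k'}$ for all $k, k'$, so $A \cong B$ with $A$ splitting. Then (ii.) holds with $C = A$ and $n = m = 1$. In this case (i.) also holds, since $A + B \cong 2A \cong A$.

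The main obstacle is the non-splitting case, specifically obtaining $K_0 \cong K_{a_1}$ with the same pieces $L$ and $R$. This symmetry is exactly what Theorem \ref{symmetricrealrepnthm} supplies through the rigidity of $\sim_{\mathcal{D}}$-classes, and the non-splitting hypothesis is what makes that theorem applicable. The splitting case must be separated off because the real representation need not be symmetric there. The remaining bookkeeping is routine.
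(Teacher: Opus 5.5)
Your proof is correct and follows the paper's argument essentially step for step: Corollary \ref{absorbsleftrightcoro} in the absorbed case, back substitution in the terminating case, Proposition \ref{isosoftermsinsplittingsystems} in the splitting case, and Theorem \ref{symmetricrealrepnthm} in the non-terminating non-splitting case. In the splitting case the paper concludes (i.), and you note that (ii.) holds as well. There is one indexing slip: under Definition \ref{leftdivissystemdefn}, a system of length $M$ that terminates in a divisor has $A_{M+1} = \emptyset$, so the common divisor is $C = A_M$ (as in the paper), not $A_{M+1}$.
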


\begin{proof}
If $\omega B \leqslant_{init} A$ and $\omega^* B \leqslant_{conv} A$ then $B + A \cong A + B \cong A$, i.e. (i.) holds. Otherwise, we have the symmetric division system $\{A_k: k < M\}$.

If $M < \omega$ we may assume the system terminates in a divisor. Then by back substitution, we have (ii.) with $C = A_M$. 

So suppose $M = \omega$. Consider first the case when the system is splitting. Then by Proposition \ref{isosoftermsinsplittingsystems} we have $A \cong B$. Thus 
\[
A + B \cong B + A \cong A + A \cong A,
\]
so we again have (i.). 

Now suppose the system is non-splitting. Then we have (iii.) by Theorem \ref{symmetricrealrepnthm}. 
\end{proof}

The resemblance between conditions (i.) and (ii.) in Theorem \ref{symmetricaronszajnrepnthm} and the corresponding conditions in Tarski's Conjecture \ref{tarconj} indicates a real connection. Indeed, we will show in Section \ref{section:revisedtarskiconj} below that $A + B \cong B + A$ holds if and only if one of $A$ and $B$ symmetrically divides the other.

Thus Tarski's Conjecture amounts to the assertion: one of $A$ and $B$ symmetrically divides the other if and only if (the symmetrized version of) condition (i.) holds, or (ii.) holds. But there is a third possibility, namely condition (iii.), and there exist examples of pairs satisfying condition (iii.) but neither (i.) nor (ii.), as Lindenbaum showed; see Aronszajn's discussion in \cite{Aronszajn}. 

\section{Tarski's conjecture}\label{section:revisedtarskiconj}

In this section we prove the revised version \ref{revisedtarconj} of Tarski's Conjecture \ref{tarconj}, which gives an arithmetic characterization of the additively commuting pairs $A, B \in LO$ (Theorem \ref{revisedtarconjintext} below). That is to say, this characterization is written in terms of (infinite) discrete sums of the orders $A$ and $B$, and not in terms of replacements of $\mathbb{R}$ like the representation theorems from Section \ref{section:aronszajnrepnthm}. In the companion paper \cite{ErvinPaul}, we show that this characterization holds in an arbitrary ordinal algebra. 

We also prove the closely related ``symmetrized" characterization stated first as Theorem \ref{ABcommuteiffomegaAomegastarAinitfinomegaBomegastarBorvv} (Theorem \ref{ABcommuteifOmegasumsinitfin}). Finally, we justify the discussion at the end of Section \ref{section:aronszajnrepnthm}, by observing that $A + B \cong B + A$ if and only if one of $A, B$ symmetrically divides the other. All of the proofs follow quickly from our previous work. 

\theoremstyle{definition}
\newtheorem{ABcommuteifOmegasumsinitfin}[lct]{Theorem}
\begin{ABcommuteifOmegasumsinitfin}\label{ABcommuteifOmegasumsinitfin}
$A + B \cong B + A$ if and only if one of the following conditions holds:
\begin{itemize}
    \item[i.] $\omega A \leqslant_{init} \omega B$ and $\omega^* A \leqslant_{fin} \omega^*B$; 
    \item[ii.] $\omega B \leqslant_{init} \omega A$ and $\omega^* B \leqslant_{fin} \omega^* A$. 
\end{itemize}
\end{ABcommuteifOmegasumsinitfin}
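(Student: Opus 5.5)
The backward direction is already done. If (i.) holds, Corollary \ref{omegaAomegastarAinitfinomegaBomegastarB}.(iii) gives $A + B \cong B + A$. If (ii.) holds, the same corollary with the roles of $A$ and $B$ exchanged gives it. So all the work is in the forward direction: assuming $A + B \cong B + A$, we must produce (i.) or (ii.).

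Fix a symmetric setup witnessing $A + B \cong B + A$ and run the left Euclidean algorithm. By symmetry, relabel so that $A_0$ is the dividend and $A_1$ the divisor. We then aim to prove $\omega A_1 \leqslant_{init} \omega A_0$ and $\omega^* A_1 \leqslant_{fin} \omega^* A_0$. Since $\{A_0, A_1\} = \{A, B\}$, this is (i.) or (ii.). We split into cases according to how the run ends, using the data recorded in Section \ref{subsect:runofsymmetricalgo}.

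\emph{Termination at Stage $0$ in an absorbed factor.} Here $A_1 + A_0 \cong A_0 + A_1 \cong A_0$, so Corollary \ref{absorbsleftrightcoro} gives $\omega A_1 \leqslant_{init} A_0$ and $\omega^* A_1 \leqslant_{fin} A_0$. Since $A_0$ is initial in $\omega A_0$, we get $\omega A_1 \leqslant_{init} \omega A_0$. Since $A_0$ is final in $\omega^* A_0$, we get $\omega^* A_1 \leqslant_{fin} \omega^* A_0$.

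\emph{Termination at a later stage.} As noted in Section \ref{subsect:runofsymmetricalgo}, we may take the termination to be in a divisor. Then $A_0 \cong M_0 C$ and $A_1 \cong M_1 C$ for some order $C$ and integers $M_0, M_1 \geq 1$. Proposition \ref{omeganXcongomegaX} gives $\omega A_0 \cong \omega A_1$ and $\omega^* A_0 \cong \omega^* A_1$, and (i.) follows.

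\emph{Non-terminating, splitting system.} Proposition \ref{isosoftermsinsplittingsystems}.(iii) gives $A_0 \cong A_1$, and we are done trivially.

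\emph{Non-terminating, non-splitting system.} Proposition \ref{Isosofomegaprodsfornonsplsymmsystems}, which rests on the symmetric real representation of Theorem \ref{symmetricrealrepnthm} together with Proposition \ref{sufficiencyimpliesIsosforomegaprods}, gives $\omega A_0 \cong \omega A_1$ and $\omega^* A_0 \cong \omega^* A_1$. Again both parts of (i.) hold.

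The real content sits in this last case. It is handled by the earlier result that non-splitting forces $L \cong L'$, which uses the $\sim_{\mathcal{D}}$-rigidity. We are entitled to cite it, so the expected obstacle is only bookkeeping: checking that the symmetric run yields a genuine symmetric division system in each non-absorbed case, and that the Stage-$0$ absorbed case is the only one not covered by the division-system results. We would verify this against Section \ref{subsect:runofsymmetricalgo} before writing out the cases.
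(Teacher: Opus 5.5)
Your proposal is correct and follows the paper's own proof essentially step for step. The backward direction goes through Corollary \ref{omegaAomegastarAinitfinomegaBomegastarB}.(iii.). The forward direction runs the Euclidean algorithm on a symmetric setup and splits into the Stage-$0$ absorbed case, the terminating case, and the non-terminating splitting and non-splitting cases. The only cosmetic point is that your ``later stage'' case should also cover termination at Stage $0$ in a divisor ($A_0 \cong N_0 A_1$). The same argument handles it, taking $C = A_1$.
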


\begin{proof}
If either (i.) or (ii.) holds, then $A + B \cong B + A$ by Corollary \ref{omegaAomegastarAinitfinomegaBomegastarB}.(iii.). 

Conversely, suppose $A + B \cong B + A$. Let $(I; \rho, \tau)$ be a symmetric setup witnessing the isomorphism, and run the Euclidean algorithm on this setup. Assume first that $A$ is the dividend and $B$ the divisor for this setup. 

If the algorithm terminates at Stage $0$ in an absorbed factor, then $B + A \cong A + B \cong A$. This gives $\omega B \leqslant_{init} A$ and $\omega^* B \leqslant_{fin} A$, which certainly implies $\omega B \leqslant_{init} \omega A$ and $\omega^* B \leqslant_{fin} \omega^*A$, i.e. (ii.) holds. Otherwise the algorithm yields a symmetric division system $\{A_k\}$ of some length $M \leq \omega$. 

If $M < \omega$, then we have $\omega A \cong \omega B$ and $\omega^* A \cong \omega^* B$ by the discussion in Section \ref{subsect:runofsymmetricalgo}, which also implies (ii.).

If $M = \omega$, then the system $\{A_k\}$ is non-terminating. If it is splitting, we have $A \cong B$ by Proposition \ref{isosoftermsinsplittingsystems}, which implies (ii.). If it is non-splitting, then we have $\omega A \cong \omega B$ and $\omega^* A \cong \omega^* B$ by Proposition \ref{Isosofomegaprodsfornonsplsymmsystems}, which again gives (ii.). 

Thus in all cases we have (ii.). If instead $B$ is the dividend and $A$ the divisor of the setup, then in all cases we have (i.). 
\end{proof}

\theoremstyle{definition}
\newtheorem{revisedtarconjintext}[lct]{Theorem}
\begin{revisedtarconjintext}\label{revisedtarconjintext}
$A + B \cong B + A$ if and only if one of the following conditions holds:
\begin{itemize}
    \item[i.] $A + B \cong B + A \cong A$;
    \item[ii.] $A + B \cong B + A \cong B$;
    \item[iii.] $\omega A \cong \omega B$ and $\omega^*A \cong \omega^*B$. 
\end{itemize}
\end{revisedtarconjintext}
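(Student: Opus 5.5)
The plan is to deduce Theorem \ref{revisedtarconjintext} from the symmetrized characterization Theorem \ref{ABcommuteifOmegasumsinitfin} together with the case analysis of its proof. The backward direction is quick. If (i.) or (ii.) holds, then $A + B \cong B + A$ is part of the hypothesis. If (iii.) holds, then trivially $\omega A \leqslant_{init} \omega B$ and $\omega^* A \leqslant_{fin} \omega^* B$, so Corollary \ref{omegaAomegastarAinitfinomegaBomegastarB}.(iii.) gives $A + B \cong B + A$.

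For the forward direction, assume $A + B \cong B + A$. I will rerun the argument from the proof of Theorem \ref{ABcommuteifOmegasumsinitfin}, but keep track of the stronger conclusions obtained in each case. Fix a symmetric setup witnessing the isomorphism and run the left Euclidean algorithm. By symmetry, take $A$ to be the dividend and $B$ the divisor. There are four cases.

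\begin{itemize}
\item[a.] The algorithm terminates at Stage $0$ in an absorbed factor. Then $B + A \cong A + B \cong A$, which is exactly (i.). (If the roles of $A$ and $B$ are swapped, this case gives (ii.) instead.)
\item[b.] The algorithm yields a terminating symmetric system. By Section \ref{subsect:runofsymmetricalgo} we may assume it terminates in a divisor $C$, so $A \cong nC$ and $B \cong mC$. Then $\omega A \cong \omega C \cong \omega B$ and $\omega^* A \cong \omega^* C \cong \omega^* B$, which is (iii.).
\item[c.] The system is non-terminating and splitting. Then $A \cong B$ by Proposition \ref{isosoftermsinsplittingsystems}, so (iii.) holds trivially.
\item[d.] The system is non-terminating and non-splitting. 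Then Proposition \ref{Isosofomegaprodsfornonsplsymmsystems} gives $\omega A_k \cong \omega A_{k'}$ and $\omega^* A_k \cong \omega^* A_{k'}$ for all $k, k'$. Taking $k = 0$ and $k' = 1$ gives (iii.).
\end{itemize}

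The only point needing care is that these cases are exhaustive. A run of the algorithm on a symmetric setup either stops at Stage $0$ in an absorbed factor or produces a symmetric division system of length $M \leq \omega$. That system is either terminating or not, and a non-terminating one is either splitting or non-splitting by Proposition \ref{splittingdichthmfordivsystems}. All of these facts are already established in the paper, so I expect no real obstacle; the main work is recognizing that the proof of Theorem \ref{ABcommuteifOmegasumsinitfin} in fact yields isomorphisms rather than mere embeddings, except in the absorbed-factor case, which lands in (i.) or (ii.).
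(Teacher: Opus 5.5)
Your proposal is correct and matches the paper's argument. For the forward direction, the paper notes that the case analysis in the proof of Theorem \ref{ABcommuteifOmegasumsinitfin} actually yields (i.) or (ii.) in the Stage-$0$ absorbed-factor case and yields the isomorphisms $\omega A \cong \omega B$ and $\omega^* A \cong \omega^* B$ in every other case, as you spell out; the converse likewise follows from Corollary \ref{omegaAomegastarAinitfinomegaBomegastarB}.(iii.), with (i.) and (ii.) trivially implying commutativity.
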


\begin{proof}
The proof Theorem \ref{ABcommuteifOmegasumsinitfin} shows in fact that if $A + B \cong B + A$, then one of (i.), (ii.), or (iii.) holds. 

Conversely, (i.) and (ii.) immediately imply $A + B \cong B + A$. If (iii.) holds, then $A + B \cong B + A$ follows from Corollary \ref{omegaAomegastarAinitfinomegaBomegastarB}.(iii.).
\end{proof}

\theoremstyle{definition}
\newtheorem{ABcommuteiffonesymmdividesother}[lct]{Theorem}
\begin{ABcommuteiffonesymmdividesother}\label{ABcommuteiffonesymmdividesother}
$A + B \cong B + A$ if and only if one of $A$ and $B$ symmetrically divides the other. 
\end{ABcommuteiffonesymmdividesother}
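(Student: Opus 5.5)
For the forward direction, I would proceed exactly as in the proof of Theorem \ref{ABcommuteifOmegasumsinitfin}. Assume $A + B \cong B + A$, fix a symmetric setup $(I; \rho, \tau)$ witnessing this isomorphism, and run the left Euclidean algorithm on it. By relabeling, I may assume $A$ is the dividend and $B$ is the divisor, and the goal is then to show that $B$ symmetrically divides $A$.

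There are two cases for how the run goes.
\begin{itemize}
\item[(a)] The algorithm terminates at Stage $0$ in an absorbed factor. Then $B + A \cong A + B \cong A$. By Corollary \ref{absorbsleftrightcoro}, this gives $\omega B \leqslant_{init} A$ and $\omega^* B \leqslant_{fin} A$, which is the first clause of Definition \ref{leftrightsymmdividesdefn}.(iii.).
\item[(b)] Otherwise, the run yields a symmetric division system $\{A_k \cong N_k A_{k+1} + A_{k+2} : k < M\}$ of some length $1 \leq M \leq \omega$, with $A_0 = A$ and $A_1 = B$. This is the second clause of the definition.
\end{itemize}

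The one point needing care in case (b) is that the output really is a symmetric division system in the sense of Section \ref{section:divisionsystems}. Pairwise commutativity of all the terms $A_i, A_j$ is shown in Section \ref{subsect:runofsymmetricalgo}. Nonemptiness of $A_k$ for $k < 1 + M$ holds because each term arises as a divisor or a nonempty remainder at a stage where the algorithm did not stop. If the run terminates at a later stage in an absorbed factor, the last isomorphism still has the required form $A_{M-1} \cong N_{M-1}A_M + A_{M+1}$ with $A_{M+1} + A_M \cong A_M$, which is admissible by Definition \ref{leftdivissystemdefn}. If $A$ or $B$ is empty, the statement is trivial, since $\omega \emptyset = \emptyset$ is initial in anything.

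For the backward direction, suppose $B$ symmetrically divides $A$ (the other case is symmetric).
\begin{itemize}
\item[(a)] If $\omega B \leqslant_{init} A$ and $\omega^* B \leqslant_{fin} A$, then by Corollary \ref{absorbsleftrightcoro} we get $B + A \cong A$ and $A + B \cong A$, hence $A + B \cong B + A$.
\item[(b)] If there is a symmetric division system with $A_0 \cong A$ and $A_1 \cong B$, then by definition its terms pairwise commute, so $A + B \cong A_0 + A_1 \cong A_1 + A_0 \cong B + A$.
\end{itemize}
Alternatively, one could invoke Theorem \ref{symmetricaronszajnrepnthm} together with Proposition \ref{Isosofomegaprodsfornonsplsymmsystems}, but this is unnecessary.

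I do not expect a real obstacle here. The only delicate step is the bookkeeping in the forward direction: matching the output of the algorithm on a symmetric setup to Definition \ref{leftrightsymmdividesdefn}.(iii.), including the terminating-in-absorbed-factor case and the convention on which of $A$, $B$ is the dividend.
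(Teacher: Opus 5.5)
Your proposal is correct and matches the paper's proof. The forward direction reruns the Euclidean algorithm on a symmetric setup exactly as in the proof of Theorem~\ref{ABcommuteifOmegasumsinitfin}, and the backward direction unpacks Definition~\ref{leftrightsymmdividesdefn}.(iii.); your remark that commutativity in the second clause is immediate from the definition of a symmetric division system is slightly more direct than the paper's appeal back to that earlier proof.
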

\begin{proof}
The proof of Theorem \ref{ABcommuteifOmegasumsinitfin} shows that if $A + B \cong B + A$, then either $A + B \cong B + A \cong A$ (in which case $B$ symmetrically divides $A$), or $B + A \cong A + B \cong B$ (so $A$ symmetrically divides $B$), or there is a symmetric division system with initial terms $A$ and $B$ (in which case one of $A$ and $B$ symmetrically divides the other). 

Conversely, suppose that $B$ symmetrically divides $A$. Then either $A + B \cong B + A \cong A$, or there is a symmetric division system with initial terms $A_0 = A$ and $A_1 = B$. In this second case, again by the proof of Theorem \ref{ABcommuteifOmegasumsinitfin}, we have $A + B \cong B + A$. And likewise, if instead $A$ symmetrically divides $B$.  
\end{proof}

\section{Continuation and valuation}\label{section:continuationandvaluation}

In this section we introduce and study several notions of archimedean dominance in $(LO, +)$, as well as the corresponding notions of archimedean equivalence. Like the notions of divisibility considered in previous sections, these notions come in both one-sided and symmetric versions. We then investigate the relationship between these notions and commutativity in $(LO, +)$, and show that symmetric notion of archimedean equivalence induces a global valuation on $(LO, +)$ akin to valuations of ordered groups and ordered fields in terms of their archimedean classes. 

This valuation structure is reflected in our classification of the commutative semigroups representable in $(LO, +)$ in Section \ref{section:commutesemigroupsinLOrepn}. It can be viewed as an abstract generalization of McCleary's structure theory for group actions $G \curvearrowright X$ by lattice-ordered groups $G$ on linear orders $X$ from \cite{McCleary}.

\subsection{Archimedean orderings on $LO$}\label{subsect:archimedeanorderingsonLO} \phantom{.}

\theoremstyle{definition}
\newtheorem{lesssimrelnsdefn}[lct]{Definition}
\begin{lesssimrelnsdefn}\label{lesssimrelnsdefn}
Define binary relations $\lesssim_{init}$, $\lesssim_{fin}$, and $\lesssim_{conv}$ on $LO$ as follows:
\begin{itemize}
    \item[i.] $A \lesssim_{init} B$ if $\omega A \leqslant_{init} \omega B$. 
    \item[ii.] $A \lesssim_{fin} B$ if $\omega^* A \leqslant_{fin} \omega^* B$. 
    \item[iii.] $A \lesssim_{conv} B$ if $\mathbb{Z}A \leqslant_{conv} \mathbb{Z}B$. 
\end{itemize}
Also define a binary relation $\lessapprox$ on $LO$ by the rule:
\[
\textrm{$A \lessapprox B$ if $A \lesssim_{init} B$ and $A \lesssim_{fin} B$.}
\]
\end{lesssimrelnsdefn}

Observe that each of the relations $\lesssim_{init}$, $\lesssim_{fin}$, and $\lesssim_{conv}$ is transitive on $LO$. Hence $\lessapprox$ is also transitive on $LO$. 

In this notation, Theorem \ref{ABcommuteifOmegasumsinitfin} says exactly that $A + B \cong B + A$ if and only if either $A \lessapprox B$ or $B \lessapprox A$. 

\theoremstyle{definition}
\newtheorem{gplusfembeddingdefn}[lct]{Definition}
\begin{gplusfembeddingdefn}\label{gplusfembeddingdefn}
If $g: X \rightarrow Y$ and $f: X' \rightarrow Y'$ are embeddings, the \textit{concatenation} of $g$ and $f$ is the embedding $g + f: X + X' \rightarrow Y + Y'$ satisfying $(g + f) \upharpoonright X = g$ and $(g + f) \upharpoonright X' = f$.
\end{gplusfembeddingdefn}

Observe that if $g$ is a final embedding and $f$ is an initial embedding, then $g + f$ is a convex embedding. 

Suppose $A \lesssim_{init} B$ as witnessed by an initial embedding $f: \omega A \rightarrow \omega B$, and $A \lesssim_{fin} B$ as witnessed by a final embedding $g: \omega^*A \rightarrow \omega^* B$. Then $g + f$ witnesses $A \lesssim_{conv} B$. Thus $A \lessapprox B$ implies $A \lesssim_{conv} B$. The converse is false in general.

\theoremstyle{definition}
\newtheorem{ZAcentZBdefn}[lct]{Definition}
\begin{ZAcentZBdefn}\label{ZAcentZBdefn}
Identify $\mathbb{Z}A$ with $\omega^*A + \omega A$ and $\mathbb{Z}B$ with  $\omega^*B + \omega B$. Define
\[
\mathbb{Z}A \leqslant_{cent} \mathbb{Z}B
\]
if there is a convex embedding $F: \mathbb{Z}A \rightarrow \mathbb{Z}B$ such that $F = g + f$ for some final embedding $g: \omega^* A \rightarrow \omega^* B$ and initial embedding $f: \omega A \rightarrow \omega B$. 

We call $F$ a \textit{centered convex embedding} of $\mathbb{Z}A$ into $\mathbb{Z}B$. 
\end{ZAcentZBdefn}

Analogously, we write $\mathbb{Z}A \cong_{cent} \mathbb{Z}B$ if there is a centered isomorphism of $\mathbb{Z}A$ and $\mathbb{Z}B$, i.e. an isomorphism $F: \mathbb{Z}A \rightarrow \mathbb{Z}B$ that can be decomposed as the concatenation $g + f$ of isomorphisms $g: \omega^*A \rightarrow \omega^*B$ and $f: \omega A \rightarrow \omega B$. 

\theoremstyle{definition}
\newtheorem{AlessapproxBiffZAcentZBpropn}[lct]{Proposition}
\begin{AlessapproxBiffZAcentZBpropn}\label{AlessapproxBiffZAcentZBpropn}
$A \lessapprox B$ if and only if $\mathbb{Z}A \leqslant_{cent} \mathbb{Z}B$. 
\end{AlessapproxBiffZAcentZBpropn}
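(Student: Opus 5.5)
Both directions come almost directly from the definitions, and the only care needed is in handling $\omega^*A$ and $\omega A$ as the two halves of $\mathbb{Z}A$.

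For the forward direction, assume $A \lessapprox B$. Then $A \lesssim_{init} B$ and $A \lesssim_{fin} B$, so we can fix an initial embedding $f: \omega A \rightarrow \omega B$ and a final embedding $g: \omega^* A \rightarrow \omega^* B$. Identify $\mathbb{Z}A = \omega^*A + \omega A$ and $\mathbb{Z}B = \omega^*B + \omega B$, and form the concatenation $F = g + f$ from Definition \ref{gplusfembeddingdefn}. As observed after that definition, a final embedding concatenated with an initial embedding is convex. To check this here, note that $g[\omega^*A]$ is a final segment of $\omega^*B$ ending at the cut at the $+$ sign in $\omega^*B + \omega B$. Likewise $f[\omega A]$ is an initial segment of $\omega B$ starting at that same cut. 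Their union is therefore an interval of $\mathbb{Z}B$. So $F$ is a centered convex embedding, and $\mathbb{Z}A \leqslant_{cent} \mathbb{Z}B$.

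For the converse, suppose $F: \mathbb{Z}A \rightarrow \mathbb{Z}B$ is a centered convex embedding, so $F = g + f$ with $g: \omega^*A \rightarrow \omega^*B$ final and $f: \omega A \rightarrow \omega B$ initial. Then $f$ witnesses $\omega A \leqslant_{init} \omega B$, which is $A \lesssim_{init} B$. Similarly $g$ witnesses $\omega^* A \leqslant_{fin} \omega^* B$, which is $A \lesssim_{fin} B$. Together these give $A \lessapprox B$.

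There is no real obstacle. The one point to watch is that ``centered'' is defined with respect to the fixed decompositions $\mathbb{Z}A = \omega^*A + \omega A$ and $\mathbb{Z}B = \omega^*B + \omega B$. It is this choice that lets $F$ be split into its restrictions to the two halves, and that makes those restrictions land in the corresponding halves of $\mathbb{Z}B$.
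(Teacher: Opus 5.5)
Your proposal is correct and follows the paper's own argument. The paper likewise treats the result as immediate from the definition of a centered convex embedding: concatenating the final embedding of $\omega^*A$ into $\omega^*B$ with the initial embedding of $\omega A$ into $\omega B$ gives the forward direction, and the converse simply reads the two embeddings off the decomposition $F = g + f$.
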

 
\begin{proof}
Immediate from Definition \ref{ZAcentZBdefn} and the preceding discussion.  
\end{proof}

The following proposition illustrates a sense in which the relations $\lesssim_{\star}$ may be viewed as notions of archimedean dominance on $LO$.

\theoremstyle{definition}
\newtheorem{AlessapproxBiffnAlesssomemB}[lct]{Proposition}
\begin{AlessapproxBiffnAlesssomemB}\label{AlessapproxBiffnAlesssomemB}
For $\star \in \{init, fin, conv\}$, if $A \lesssim_{\star} B$ then for every $n \in \omega$, there exists $m \in \omega$ such that $nA \leqslant_{\star} mB$. 
\end{AlessapproxBiffnAlesssomemB}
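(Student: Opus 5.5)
The plan is to treat the three cases $\star \in \{init, fin, conv\}$ separately. In each case the $n$ copies of $A$ sit inside the corresponding infinite sum of copies of $A$, and the directed refinement corollaries then push this into a finite piece of the target infinite sum of copies of $B$.

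\emph{Case $\star = init$.} Suppose $\omega A \leqslant_{init} \omega B$ via an initial embedding $f$. Since $nA \leqslant_{init} \omega A$, composing gives an initial embedding of $nA$ into $\omega B$, so we may assume $A \neq \emptyset$ (if $A = \emptyset$, take $m = 0$). Let $y$ be the left endcut of $\omega B$ and $d_0 < d_1 < \ldots$ the cut sequence witnessing $\omega B = \sum_i B$. The image of $nA$ is $[y, e]$ for some cut $e$.

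The one point to check is that $e$ lies at or before some $d_m$. This holds because $nA$ is a proper initial segment of $\omega A$ when $A \neq \emptyset$, so $f(e)$ is strictly to the left of the image of the next copy of $A$. Hence $e$ does not equal the right endcut of $\omega B$, which is the supremum of the $d_i$. Therefore $e \leq d_m$ for some $m$, and so $nA \leqslant_{init} mB$.

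\emph{Case $\star = fin$.} This is the mirror image of the $init$ case, using $\omega^*$-sums and final embeddings.

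\emph{Case $\star = conv$.} Suppose $\mathbb{Z}A \leqslant_{conv} \mathbb{Z}B$. Then $nA \leqslant_{conv} \mathbb{Z}A \leqslant_{conv} \mathbb{Z}B$. The image of $nA$ is an interval $[e_1, e_2]$ contained in the image of $\mathbb{Z}A$. Since $nA$ is bounded on both sides inside $\mathbb{Z}A$ by further copies of $A$ (again for $A \neq \emptyset$), the cuts $e_1$ and $e_2$ are neither the left nor the right endcut of $\mathbb{Z}B$. Each of them therefore lies between two cuts of the $\mathbb{Z}$-sequence witnessing $\mathbb{Z}B = \sum_{i\in\mathbb{Z}} B$, so there are $j < k$ with $[e_1, e_2] \subseteq [d_j, d_k] \cong (k-j)B$. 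Taking $m = k - j$ gives $nA \leqslant_{conv} mB$.

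The only delicate step is this non-endcut argument: showing that the image endpoint is not the supremum (or infimum) of the $d_i$. It follows in every case because the embedded domain extends strictly beyond $nA$. The case $A = \emptyset$ is trivial throughout.
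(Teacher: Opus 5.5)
Your argument is correct and is exactly the routine verification the paper dismisses as ``Clear'': the image of $nA$ is bounded (on the right, or on both sides for $\mathbb{Z}A$) by the image of a further nonempty copy of $A$. So its endcuts are not the limit cuts of the sequence witnessing $\omega B$ (resp.\ $\omega^* B$, $\mathbb{Z}B$), and hence fall within finitely many copies of $B$. The only blemishes are cosmetic: you write $f(e)$ where $e$ is already the image cut, and the directed refinement corollaries mentioned in your opening sentence are never actually used.
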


\begin{proof}
Clear. 
\end{proof}

We will use the lemma below to refine the notions $\lesssim_{init}$ and $\lesssim_{fin}$ in the following Proposition \ref{lesssiminitandlesssimfinrefinements}. 

\theoremstyle{definition}
\newtheorem{AinitBinitAandsplimpliesomegasiso}[lct]{Lemma}
\begin{AinitBinitAandsplimpliesomegasiso}\label{AinitBinitAandsplimpliesomegasiso}
Suppose that $A$ and $B$ are splitting. 
\begin{itemize}
    \item[i.] If $A \leqslant_{init} B$ and $B \leqslant_{init} A$, then $\omega A \cong \omega B$.
    \item[ii.] If $A \leqslant_{fin} B$ and $B \leqslant_{fin} A$, then $\omega^*A \cong \omega^*B$. 
    \item[iii.] If $A \leqslant_{conv} B$ and $B \leqslant_{conv} A$, then $\mathbb{Z}A \cong \mathbb{Z}B$. 
\end{itemize}
\end{AinitBinitAandsplimpliesomegasiso}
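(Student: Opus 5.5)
The plan is to show that splitting collapses the relevant comparisons, so that in each case the hypotheses force $A$ and $B$ to be essentially interchangeable inside the infinite sums.

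\emph{Part (i.)} From $A \leqslant_{init} B$ write $B \cong A + X$. Since $B$ is splitting, $B \cong 2B \cong A + X + A + X$. I would rather aim directly at a comparison of $\omega$-sums:
\[
\omega A \leqslant_{init} \omega B \quad\text{and}\quad \omega B \leqslant_{init} \omega A ,
\]
and then conclude $\omega A \cong \omega B$ by combining these with final embeddings and Corollary \ref{CSBLO}. The point is that $\omega A$ and $\omega B$ have few initial/final comparabilities, so we need a cleaner route, namely showing $A + \omega B \cong \omega B$ and $B + \omega A \cong \omega A$.

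\emph{Key step.} Since $B$ is splitting, $2B \cong B$, and by Theorem \ref{XcongAXBiffABladder} with $B \cong B + B$ we get $\omega B \leqslant_{init} B$. Hence $B \cong \omega B + Y$ for some $Y$. Then
\[
\omega B \cong \omega(\omega B + Y),
\]
and $A \leqslant_{init} B$ lets $A$ be absorbed on the left. More precisely, I would show $A + B \cong B$: from $B \cong A + X$ and $B \cong B + B \cong B + A + X$, we get $B \leqslant_{conv}$-type relations, and Theorem \ref{XcongAXBiffcongAXandXB} should yield $A + B \cong B$, i.e.\ $\omega A \leqslant_{init} B$ by Corollary \ref{absorbsleftrightcoro}. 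The cleanest derivation of this: $A + B \cong A + 2B$, and $B \cong A + X$ gives $2B \cong A + X + B$. If we can write $B \cong Z + A + B$, Theorem \ref{XcongAXBiffcongAXandXB} gives $B \cong A + B$ only after arranging the correct order. This step is the main obstacle. Symmetrically, $\omega B \leqslant_{init} A$ holds. Then $\omega A \leqslant_{init} B \leqslant_{init} \omega B$ and $\omega B \leqslant_{init} A \leqslant_{init} \omega A$. Writing $\omega B \cong \omega A + U$ and $\omega A \cong \omega B + V$ gives $\omega A \cong \omega A + U + V$, which is not yet an isomorphism. So I would instead note that $\omega A \leqslant_{init} B$ yields $B \cong \omega A + B'$. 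Hence $\omega B \cong \omega A + B' + \omega B$, and absorbing shows $\omega B$ absorbs $\omega A$-type prefixes. Combining the two directions with Lemma \ref{AplusomegaA}-style absorption should give $\omega A \cong \omega B$. I would first try this via $\omega B \cong \omega(\omega A + B') \cong \omega A + B' + \omega B$, together with the symmetric expression, and then attempt a back-and-forth interleaving along $\omega$.

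\emph{Parts (ii.) and (iii.)} Part (ii.) is symmetric to (i.). For (iii.), splitting gives $\omega A \leqslant_{init} A$ and $\omega^*A \leqslant_{fin} A$, so $A \cong \mathbb{Z}A$-like absorption holds; combined with convex embeddings in both directions, I would use Corollary \ref{XinitXfinYconv} after reducing to the case where $\mathbb{Z}A$ absorbs $B$ on both sides.
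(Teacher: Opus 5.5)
Your proposal has a genuine gap in (i.): it never produces the isomorphism. You reach $\omega A \leqslant_{init} \omega B$ and $\omega B \leqslant_{init} \omega A$ and rightly note this is not enough. But the finish (``should give'', ``attempt a back-and-forth interleaving along $\omega$'') is not an argument, and you never find the final embedding that Corollary \ref{CSBLO} would need.

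The step you flag as the main obstacle is in fact trivial: from $B \cong A + X$ and $2A \cong A$ you get $A + B \cong 2A + X \cong A + X \cong B$.

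The missing idea is a \emph{middle absorption} identity, and your opening line was one step away from it. Since $2B \cong A + X + A + X$, the order $A + X + A$ is initial in $2B \cong B \leqslant_{init} A$. On the other hand, $A$ is trivially initial and final in $A + X + A$. So Corollary \ref{XinitXfinYconv} (or \ref{CSBLO}) gives $A \cong A + X + A$. Then regroup:
$\omega B \cong \omega(2A + X) \cong A + \omega(A + X + A) \cong A + \omega A \cong \omega A$.
This is essentially the paper's proof, and (ii.) is symmetric.

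For (iii.), your proposed reduction to the case where $\mathbb{Z}A$ absorbs $B$ on both sides is not available. Take $A = B = 1 + \mathbb{Q}$, which is splitting: then $\mathbb{Z}A \cong \mathbb{Q}$, while $B + \mathbb{Z}A$ has a left endpoint. Nor can Corollary \ref{XinitXfinYconv} be applied to $\mathbb{Z}A$ and $\mathbb{Z}B$, since you have no initial and final embeddings between them.

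The same middle-absorption trick works here instead:
\begin{itemize}
\item[1.] $A \leqslant_{conv} B$ and $2A \cong A$ give $B \cong Y + 2A + Y'$.
\item[2.] Then $A + Y' + Y + A$ is convex in $2B \cong B \leqslant_{conv} A$.
\item[3.] Since $A$ is initial and final in $A + Y' + Y + A$, Corollary \ref{XinitXfinYconv} gives $A \cong A + Y' + Y + A$.
\item[4.] Shifting the grouping of the $\mathbb{Z}$-sum yields $\mathbb{Z}B \cong \mathbb{Z}(Y + 2A + Y') \cong \mathbb{Z}(A + Y' + Y + A) \cong \mathbb{Z}A$.
\end{itemize}
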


\begin{proof}
For (i.): Since $A \leqslant_{init} B$ and $A$ is splitting we have $2A \leqslant_{init} B$, and so $B \cong 2A + X$ for some $X$. Hence $2B \cong 2A + X + 2A + X$, and since $B$ is splitting, $B \cong 2A + X + 2A + X$.

Of course $A \leqslant_{init} A + X + A$ and $A \leqslant_{fin} A + X + A$. On the other hand, since 
\[
B \cong 2A + X + 2A + X \cong A + (A + X + A) + A + X
\]
we have $A + X + A \leqslant_{conv} B$. Thus 
\[
A + X + A \leqslant_{conv} B \leqslant_{init} A,
\]
which gives $A + X + A \leqslant_{conv} A$. By \ref{XinitXfinYconv}, we deduce $A \cong A + X + A$. 

Now observe
\[
\begin{array}{rcl}
\omega B & \cong & B + B + B + \cdots \\
& \cong & 2A + X + 2A + X + \cdots \\
& \cong & A + (A + X + A) + (A + X + A) + \cdots \\
& \cong & \omega A, 
\end{array}
\]
as claimed. The proof for (ii.) is symmetric.

For (iii.): a similar argument works. We have $A \leqslant_{conv} B$ and hence $2A \leqslant_{conv} B$, which gives $B \cong X + 2A + Y$ for some $X, Y$. Thus
\[
X + A + A + Y + X + A + A + Y \cong 2B \cong B \leqslant_{conv} A, 
\]
which, reasoning as above, gives $A \cong A + Y + X + A$. Now notice
\[
\mathbb{Z}B \cong \mathbb{Z}(A + Y + X + A),
\]
and we are done. 
\end{proof}

The relations $\lesssim_{init}$ and $\lesssim_{fin}$ admit the following refinements. 

\theoremstyle{definition}
\newtheorem{lesssiminitandlesssimfinrefinements}[lct]{Proposition}
\begin{lesssiminitandlesssimfinrefinements}\label{lesssiminitandlesssimfinrefinements}
The following equivalences hold:
\begin{itemize}
    \item[i.] $A \lesssim_{init} B$ if and only if $\omega A \leqslant_{init} B$ or $\omega A \cong \omega B$;
    \item[ii.] $A \lesssim_{fin} B$ if and only if $\omega^*A \leqslant_{fin} B$ or $\omega^*A \cong \omega^*B$. 
\end{itemize}
\end{lesssiminitandlesssimfinrefinements}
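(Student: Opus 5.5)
We prove (i.); (ii.) is symmetric. The backward direction is easy. If $\omega A \cong \omega B$ then trivially $\omega A \leqslant_{init} \omega B$. If $\omega A \leqslant_{init} B$, then since $B \leqslant_{init} \omega B$, transitivity of $\leqslant_{init}$ gives $\omega A \leqslant_{init} \omega B$.

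For the forward direction, fix an initial embedding $f: \omega A \to \omega B$. Write $\omega B = B_0 + B_1 + \cdots$ with cuts $d_0 < d_1 < \cdots$, and let $b$ be the cut at the right of $f[\omega A]$. There are two cases.

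\emph{Case 1: $b \leq d_1$.} Then $f$ witnesses $\omega A \leqslant_{init} B_0 \cong B$, and we are done.

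\emph{Case 2: $b > d_1$.} Then $B \cong [d_0, d_1]$ is a proper initial segment of $f[\omega A]$, so $B \leqslant_{init} \omega A$. Since $d_1$ lies in $f[\omega A]$, it lies below $f(a_M)$ for some cut $a_M$ of the decomposition $\omega A = \sum A$. This gives $B \leqslant_{init} MA$.

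The aim in Case 2 is to show $\omega A \cong \omega B$, or else to fall back into the first alternative. We already have $\omega A \leqslant_{init} \omega B$. If we also had $\omega B \leqslant_{fin} \omega A$, Corollary \ref{CSBLO} would finish the argument.

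From $B \leqslant_{init} MA$ we get $MA \cong B + Y$ for some $Y$. The plan is to compare $\omega A \cong \omega(MA) \cong B + Y + B + Y + \cdots$ with $\omega B$.

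The natural sub-split is on whether $A \leqslant_{init} B$.

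\emph{Sub-case (a): $A \leqslant_{init} B$.} Write $B \cong A + Z$. Then $\omega B \cong A + Z + A + Z + \cdots$ and $\omega A \cong (B+Y) + (B+Y) + \cdots$. I would then use Corollary \ref{directedrefinementforomegaA} to localize the initial embedding $f$ inside finite pieces.

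\emph{Sub-case (b): $B \leqslant_{init} A$, i.e.\ $A \cong B + W$.} Here I would try to run a left-setup style comparison. The hypothesis $\omega A \leqslant_{init} \omega B$ gives $A + B \leqslant_{init} \omega B$ and $B + A \leqslant_{init} \omega B$ by Corollary \ref{omegaAomegastarAinitfinomegaBomegastarB}. Hence either $B + A \leqslant_{init} A + B$ or the reverse holds. In the first case we can run the left Euclidean algorithm and read off the outcome from Section \ref{subsect:leftalgorun}:
\begin{itemize}
\item[$\bullet$] If it terminates in a divisor or after stage $0$, then $\omega A \cong \omega B$.
\item[$\bullet$] If it terminates at stage $0$ in an absorbed factor, then $\omega(\text{divisor}) \leqslant_{init} \text{dividend}$. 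This yields either $\omega B \leqslant_{init} A$, and hence $\omega A \cong \omega B$ by absorption, or $\omega A \leqslant_{init} B$.
\item[$\bullet$] The non-terminating case is the main obstacle.
\end{itemize}

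For the non-terminating case I would split again.

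\emph{Splitting system.} Lemma \ref{AinitBinitAandsplimpliesomegasiso}(i.) gives $\omega A \cong \omega B$, since $A \leqslant_{init} B$ and $B \leqslant_{init} A$ both hold by Proposition \ref{isosoftermsinsplittingsystems}.

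\emph{Non-splitting system.} Here I expect to need the real representation of Theorem \ref{leftrealrepnthm}, and to prove its sufficiency $K_0 \cong K_{a_1}$ from the initial embedding $\omega A \leqslant_{init} \omega B$ together with length invariance, Proposition \ref{ellIlessellJnonisoinsufficientsetting}. This is the step the paper foreshadowed. Once sufficiency holds, Proposition \ref{sufficiencyimpliesIsosforomegaprods} yields $\omega A \cong \omega B$.
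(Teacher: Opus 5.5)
There is a genuine gap: the forward direction is never proved. Sub-case (a) is only a plan (``I would then use Corollary \ref{directedrefinementforomegaA} to localize''). In sub-case (b), the case you yourself call the main obstacle, the non-terminating non-splitting system, is left as ``I expect to need \ldots sufficiency,'' with no argument. Proving sufficiency is not routine: in the paper it requires the continuation lemma, which needs a specific three-term comparability hypothesis that you have not derived. There are smaller holes as well. The stage-$0$ absorbed-factor bullet concludes $\omega A \cong \omega B$ ``by absorption'' without justification; what actually follows from $\omega B \leqslant_{init} A$ and $\omega A \leqslant_{init} \omega B$ is $\omega A \leqslant_{init} A$, hence $A$ is splitting, and a further argument is still needed. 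You also treat only one of the two possible setups.

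The missing idea is that none of this machinery is needed. You should split your Case 2 according to where $b$, the right endcut of $f[\omega A]$, lies in $\omega B$.
\begin{itemize}
\item[(1)] If $b$ is the right endcut of $\omega B$, then $f$ is onto, so $\omega A \cong \omega B$ outright.
\item[(2)] Otherwise $d_N < b \leq d_{N+1}$ for some $N \geq 1$.
\begin{itemize}
\item[$\cdot$] Then $\omega A \leqslant_{init} (N+1)B$, so $\omega A \leqslant_{conv} B$ by Corollary \ref{directedrefinementforomegaA}.
\item[$\cdot$] Taking $M$ least with $d_N \leq a_M$, you have $NB \leqslant_{init} MA$, hence $\omega A \leqslant_{conv} MA$. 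Corollary \ref{directedrefinementforomegaA} again gives $\omega A \leqslant_{conv} A$, so $A$ is splitting.
\item[$\cdot$] For $B$, put $Q = [b, d_{N+1}]$. Then $(N+1)B \cong \omega A + Q$. The tail $[a_M, b]$ is again isomorphic to $\omega A$, so $[a_M, d_{N+1}] \cong (N+1)B$. This interval is final in $[d_N, d_{N+1}] \cong B$, so $(N+1)B \leqslant_{fin} B$, and $B$ is splitting by Corollary \ref{2XconvXiff2XcongX}.
\item[$\cdot$] Now $NB \leqslant_{init} MA \leqslant_{init} (N+1)B$ collapses to $B \leqslant_{init} A \leqslant_{init} B$, and Lemma \ref{AinitBinitAandsplimpliesomegasiso}(i.) gives $\omega A \cong \omega B$.
\end{itemize}
\end{itemize}
In particular, when $A$ is non-splitting, case (2) cannot occur. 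So the non-terminating non-splitting scenario you flagged never needs a real-representation or sufficiency argument, because there $f$ is already an isomorphism.
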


\begin{proof}
For (i.): The backward direction is clear. So suppose $A \lesssim_{init} B$, and in $\omega B$ fix a cut sequence
\[
b_0 < b_1 < b_2 < \ldots 
\]
witnessing the decomposition
\[
\omega B \cong B + B + B + \cdots
\]
i.e., so that $b_0$ is the left endcut of $\omega B$ and $b_i$ is the cut at the $i$th $+$ sign for $i \geq 1$. Also in $\omega B$, fix a cut sequence
\[
b_0 = a_0 < a_1 < a_2 < \ldots 
\]
witnessing $\omega A \leqslant_{init} \omega B$. 

If $a_i < b_1$ for all $i \in \omega$, then the cut sequence above witnesses $\omega A \leqslant_{init} B$. If instead the sequence $a_i$ converges to the right endcut of $\omega B$, then the sequence witnesses $\omega A \cong \omega B$. 

Otherwise, there is an integer $N \geq 1$ such that the right endcut $c$ of the sequence $a_i$ satisfies $b_N < c \leq b_{N+1}$. Let $M \geq 1$ be least such that $b_N \leq a_M$. Then since $[b_0, b_N] \cong NB$, $[b_0, b_{N+1}] \cong (N+1)B$, and $[a_0, a_M] \cong MA$, the sequence $a_i$ witnesses
\[
NB \leqslant_{init} MA \leqslant_{init} \omega A \leqslant_{init} (N+1)B.
\]
By Corollary \ref{directedrefinementforomegaA}, $\omega A \leqslant_{init} (N+1)B$ implies $\omega A \leqslant_{conv} B$. Since $B \leqslant_{conv} NB \leqslant_{conv} MA$, we have $\omega A \leqslant_{conv} MA$. Again by \ref{directedrefinementforomegaA}, this yields $\omega A \leqslant_{conv} A$, and in particular $2A \leqslant_{conv} A$. Thus $A$ is splitting. 

We claim $B$ is also splitting. Let $Q = [c, b_{N+1}]$ be the final segment of $[b_N, b_{N+1}]$ above $\omega A$. Then we have
\[
(N+1)B \cong [b_0, b_{N+1}] \cong [b_0, c] + [c, b_{N+1}] \cong \omega A + Q.
\]
On the other hand, since the tail segment $[a_M, c]$ of $\omega A$ is also isomorphic to $\omega A$, we have
\[
[a_M, b_{N+1}] \cong [a_M, c] + [c, b_{N+1}] \cong \omega A + Q \cong (N+1)B.
\]
Since $[a_M, b_{N+1}]$ is final in $[b_N, b_{N+1}] \cong B$, it follows $(N+1)B \leqslant_{fin} B$. As $N \geq 1$, we deduce $B$ is splitting from \ref{2XconvXiff2XcongX}, as claimed. 

Now $NB \leqslant_{init} MA \leqslant_{init} (N+1)B$ yields $B \leqslant_{init} A \leqslant_{init} B$, so (i.) follows by Lemma \ref{AinitBinitAandsplimpliesomegasiso}, and the argument for (ii.) is symmetric. 
\end{proof}

Since $X + Y \cong Y$ if and only if $\omega X \leqslant_{init} Y$, and $Y + X \cong Y$ if and only if $\omega^*X \leqslant_{fin} Y$, Proposition \ref{lesssiminitandlesssimfinrefinements} can be reformulated as follows.

\theoremstyle{definition}
\newtheorem{lessimninitlesssimfinreformulations}[lct]{Corollary}
\begin{lessimninitlesssimfinreformulations}\label{lessimninitlesssimfinreformulations}
The following equivalences hold:
\begin{itemize}
    \item[i.] $A \lesssim_{init} B$ if and only if $A + B \cong B$ or $\omega A \cong \omega B$;
    \item[ii.] $A \lesssim_{fin} B$ if and only if $B + A \cong B$ or $\omega^*A \cong \omega^* B$. 
\end{itemize}
\end{lessimninitlesssimfinreformulations}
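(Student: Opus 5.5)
The corollary should follow directly from Proposition \ref{lesssiminitandlesssimfinrefinements} by translating its first disjunct through Corollary \ref{absorbsleftrightcoro}. I will carry out (i.); (ii.) is the mirror argument.

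For (i.), Proposition \ref{lesssiminitandlesssimfinrefinements}.(i.) gives that $A \lesssim_{init} B$ holds exactly when $\omega A \leqslant_{init} B$ or $\omega A \cong \omega B$. By Corollary \ref{absorbsleftrightcoro}.(i.), $\omega A \leqslant_{init} B$ is equivalent to $B$ absorbing $A$ on the left, that is, to $A + B \cong B$. Substituting this into the first disjunct yields (i.) verbatim.

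For (ii.), Proposition \ref{lesssiminitandlesssimfinrefinements}.(ii.) gives that $A \lesssim_{fin} B$ holds exactly when $\omega^* A \leqslant_{fin} B$ or $\omega^*A \cong \omega^* B$. Corollary \ref{absorbsleftrightcoro}.(ii.) rewrites $\omega^* A \leqslant_{fin} B$ as $B + A \cong B$, which gives (ii.).

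No step is really an obstacle, since all the substance lies in Proposition \ref{lesssiminitandlesssimfinrefinements}. The only point needing care is to apply the absorption corollary with the roles correctly assigned: the absorbed order is $A$ and the absorbing order is $B$. With that assignment, $A + B \cong B$ means $B$ absorbs $A$ on the left and matches $\omega A \leqslant_{init} B$, while $B + A \cong B$ means $B$ absorbs $A$ on the right and matches $\omega^* A \leqslant_{fin} B$.
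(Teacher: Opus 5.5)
Your proposal is correct and follows the paper's route exactly: the paper also obtains this corollary by substituting the absorption equivalences $A + B \cong B \Leftrightarrow \omega A \leqslant_{init} B$ and $B + A \cong B \Leftrightarrow \omega^* A \leqslant_{fin} B$ from Corollary \ref{absorbsleftrightcoro} into Proposition \ref{lesssiminitandlesssimfinrefinements}. You have also assigned the roles correctly, with $A$ as the absorbed order and $B$ as the absorbing one.
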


We also get analogs of Proposition \ref{lesssiminitandlesssimfinrefinements} and Corollary \ref{lessimninitlesssimfinreformulations} for $\lessapprox$ by extending the notion of centered convex embedding. 

\theoremstyle{definition}
\newtheorem{ZAcent2Bdefn}[lct]{Definition}
\begin{ZAcent2Bdefn}\label{ZAcent2Bdefn}
Identify $\mathbb{Z}A$ with $\omega^*A + \omega A$ and $2B$ with  $B + B$. Define
\[
\mathbb{Z}A \leqslant_{cent} 2B
\]
if there is a convex embedding $F: \mathbb{Z}A \rightarrow 2B$ such that $F = g + f$ for some final embedding $g: \omega^* A \rightarrow B$ and initial embedding $f: \omega A \rightarrow B$. 

We call $F$ a \textit{centered convex embedding} of $\mathbb{Z}A$ into $2B$. 
\end{ZAcent2Bdefn}

\theoremstyle{definition}
\newtheorem{AlessapproxBreformulations}[lct]{Proposition}
\begin{AlessapproxBreformulations}\label{AlessapproxBreformulations}
The following are equivalent:
\begin{itemize}
    \item[i.] $A \lessapprox B$;
    \item[ii.] $\mathbb{Z}A \leqslant_{cent} 2B$, or $\mathbb{Z}A \cong_{cent} \mathbb{Z}B$; 
    \item[iii.] $A + B \cong B + A \cong B$, or $\omega A \cong \omega B$ and $\omega^*A \cong \omega^*B$. 
\end{itemize}
\end{AlessapproxBreformulations}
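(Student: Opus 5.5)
The plan is to prove the cycle (i.) $\Leftrightarrow$ (iii.) using Proposition \ref{lesssiminitandlesssimfinrefinements}, and then identify (ii.) with (iii.) using the definitions of centered embeddings together with Corollary \ref{absorbsleftrightcoro}.

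For (i.) $\Rightarrow$ (iii.), assume $A \lessapprox B$, so that $A \lesssim_{init} B$ and $A \lesssim_{fin} B$. By Proposition \ref{lesssiminitandlesssimfinrefinements}, either $\omega A \leqslant_{init} B$ or $\omega A \cong \omega B$. Likewise, either $\omega^* A \leqslant_{fin} B$ or $\omega^*A \cong \omega^*B$. This gives four cases.

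\textbf{Both absorption alternatives.} By Corollary \ref{absorbsleftrightcoro} we get $A + B \cong B \cong B + A$, which is the first alternative of (iii.).

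\textbf{Both isomorphism alternatives.} These give the second alternative of (iii.) directly.

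\textbf{Mixed cases.} These are the main obstacle. Suppose, say, $\omega A \leqslant_{init} B$ and $\omega^*A \cong \omega^*B$. Then $A + B \cong B$, and I expect to show $B + A \cong B$ as well, which again gives the first alternative.

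\begin{itemize}
\item[--] First, $A \lessapprox B$ implies $A + B \cong B + A$ by Corollary \ref{omegaAomegastarAinitfinomegaBomegastarB}.(iii.). Hence $B + A \cong A + B \cong B$ immediately, and the mixed case is resolved.
\item[--] The symmetric mixed case, $\omega^* A \leqslant_{fin} B$ and $\omega A \cong \omega B$, is handled the same way. Here $B + A \cong B$, and commutativity gives $A + B \cong B$.
\end{itemize}

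The remaining worry is whether commutativity is truly available. It is, since $\lessapprox$ unpacks exactly to the hypotheses $\omega A \leqslant_{init} \omega B$ and $\omega^* A \leqslant_{fin} \omega^* B$ of that corollary. So the mixed cases collapse.

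For (iii.) $\Rightarrow$ (i.), there are two routes.

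\begin{itemize}
\item[--] If $A + B \cong B + A \cong B$, Corollary \ref{absorbsleftrightcoro} gives $\omega A \leqslant_{init} B \leqslant_{init} \omega B$ and $\omega^*A \leqslant_{fin} B \leqslant_{fin} \omega^* B$.
\item[--] If $\omega A \cong \omega B$ and $\omega^*A \cong \omega^*B$, then $A \lessapprox B$ holds trivially.
\end{itemize}

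Finally, I will show (ii.) $\Leftrightarrow$ (iii.) by unpacking the definitions. A centered embedding $\mathbb{Z}A \leqslant_{cent} 2B$ consists of a final embedding $\omega^*A \to B$ and an initial embedding $\omega A \to B$. That is, $\omega^* A \leqslant_{fin} B$ and $\omega A \leqslant_{init} B$, which by Corollary \ref{absorbsleftrightcoro} is equivalent to $A + B \cong B + A \cong B$.

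Similarly, $\mathbb{Z}A \cong_{cent} \mathbb{Z}B$ means precisely that $\omega^*A \cong \omega^*B$ and $\omega A \cong \omega B$, via concatenation of the two isomorphisms. Each direction is immediate, since concatenating a final and an initial embedding is convex (the remark after Definition \ref{gplusfembeddingdefn}).
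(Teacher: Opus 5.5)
Your proof is correct. It differs from the paper's at the one substantive point: the mixed case $\omega A \leqslant_{init} B$, $\omega^*A \cong \omega^*B$, and its mirror image.

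The paper runs the cycle (i.) $\Rightarrow$ (ii.) $\Rightarrow$ (iii.) $\Rightarrow$ (i.) and handles the mixed case by hand. From $\omega^*A \cong \omega^*B$ it gets $B \leqslant_{fin} mA$, so $\omega A \leqslant_{conv} mA$, and hence $\omega A \leqslant_{conv} A$ by Corollary \ref{directedrefinementforomegaA}, which makes $A$ splitting. Then $A \leqslant_{init} B \leqslant_{fin} mA \cong A$ together with Corollary \ref{CSBLO} gives $A \cong B$, so the case lands in $\mathbb{Z}A \cong_{cent} \mathbb{Z}B$.

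You instead apply Corollary \ref{omegaAomegastarAinitfinomegaBomegastarB}.(iii.) to get $A + B \cong B + A$ outright. A single absorption then becomes a double one, and the case lands in the absorption alternative. This is legitimate and not circular, since that corollary is established in Section \ref{section:commutativitylaws} independently of this proposition. It also collapses the four-way case split: either some absorption holds (hence both), or Proposition \ref{lesssiminitandlesssimfinrefinements} forces both isomorphisms.

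The trade-off is in what each route depends on and what it yields:
\begin{itemize}
\item[--] Your shortcut rests on the Euclidean-algorithm machinery behind Theorem \ref{BplusAinitandfinAplusB}. The paper's argument uses only directed refinement and Corollary \ref{CSBLO}.
\item[--] The paper's argument also extracts sharper information: in the mixed case $A$ and $B$ are splitting and isomorphic, so both alternatives of (ii.) hold.
\end{itemize}

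Your handling of (ii.) $\Leftrightarrow$ (iii.) and (iii.) $\Rightarrow$ (i.) fills in what the paper calls straightforward. It matches clause by clause via Corollary \ref{absorbsleftrightcoro} and the convexity of concatenating a final with an initial embedding.
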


\begin{proof}
Assume (i.). Then $A \lesssim_{init} B$, and so $\omega A \leqslant_{init} B$ or $\omega A \cong \omega B$ by Proposition \ref{lesssiminitandlesssimfinrefinements}. Also $A \lesssim_{fin} B$, and so either $\omega^*A \leqslant_{fin} B$ or $\omega^* A \cong \omega^* B$. 

If $\omega A \leqslant_{init} B$ as witnessed by $f$ and $\omega^* A \leqslant_{fin} B$ as witnessed by $g$, then $g + f$ witnesses $\mathbb{Z}A \leqslant_{cent} 2B$. 

If $\omega A \cong \omega B$ as witnessed by $f$ and $\omega^* A \cong \omega^* B$ as witnessed by $g$, then $g + f$ witnesses $\mathbb{Z}A \cong_{cent} \mathbb{Z}B$. 

Suppose $\omega A \leqslant_{init} B$ and $\omega^* A \cong \omega^* B$. The latter implies $B \leqslant_{fin} mA$ for some $m \geq 1$. Then
\[
\omega A \leqslant_{init} B \leqslant_{fin} mA
\]
which yields $\omega A \leqslant_{conv} mA$, and hence $\omega A \leqslant_{conv} A$ by \ref{directedrefinementforomegaA}. Hence $A$ is splitting. 

Since $\omega A \leqslant_{init} B$ we have $A \leqslant_{init} B$. On the other hand, $B \leqslant_{fin} mA \cong A$. Thus $A \cong B$ by \ref{CSBLO}, which implies $\mathbb{Z}A \cong_{cent} \mathbb{Z}B$. 

Symmetrically, if $\omega A \cong \omega B$ and $\omega^*A \leqslant_{fin} B$, then $\mathbb{Z}A \cong_{cent} \mathbb{Z}B$. 

Thus in all cases, either $\mathbb{Z}A \leqslant_{cent} 2B$ or $\mathbb{Z}A \cong_{cent} \mathbb{Z}B$ holds, which concludes the proof that (i.). implies (ii.). 

The proofs the (ii.) implies (iii.) and (iii.) implies (i.) are straightforward. 
\end{proof}

We define the equivalence relations corresponding to the orderings $\lesssim_{\star}$ and $\lessapprox$. 

\theoremstyle{definition}
\newtheorem{simstarandapproxdefns}[lct]{Definition}
\begin{simstarandapproxdefns}
For $\star \in \{init, fin, conv\}$, define
\[
\textrm{$A \sim_{\star} B$ if $A \lesssim_{\star} B$ and $B \lesssim_{\star} A$.}
\]
Also define
\[
\textrm{$A \approx B$ if $A \lessapprox B$ and $B \lessapprox A$.} 
\]
\end{simstarandapproxdefns}

Notice that $A \approx B$ if and only if $A \sim_{init} B$ and $A \sim_{fin} B$: both assertions are equivalent to the conjunction
\[
(\omega A \leqslant_{init} \omega B) \wedge (\omega^* A \leqslant_{fin} \omega^*B) \wedge (\omega B \leqslant_{init} \omega A) \wedge (\omega^* B \leqslant_{fin} \omega^* A).
\]

The relations $\sim_{\star}$ and $\approx$ are equivalence relations on $LO$. The following propositions illustrates a sense in which they may be viewed as notions of archimedean equivalence. 

\theoremstyle{definition}
\newtheorem{AsimstarBiffnAleqstarmBleqstarnApropn}[lct]{Proposition}
\begin{AsimstarBiffnAleqstarmBleqstarnApropn}\label{AsimstarBiffnAleqstarmBleqstarnApropn}
For $\star \in \{init, fin, conv\}$, if $A \sim_{\star} B$ then the following conditions hold:
\begin{itemize}
    \item[i.] For every $n \in \omega$, there is $m \in \omega$ such that $nA \leqslant_{\star} mB$;
    \item[ii.] For every $m' \in \omega$, there is $n' \in \omega$ such that $m'B \leqslant_{\star} n'A$. 
\end{itemize}
\end{AsimstarBiffnAleqstarmBleqstarnApropn}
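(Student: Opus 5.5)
The plan is to derive both conditions directly from Proposition \ref{AlessapproxBiffnAlesssomemB}, applied twice. The hypothesis $A \sim_{\star} B$ means by definition that $A \lesssim_{\star} B$ and $B \lesssim_{\star} A$. Applying Proposition \ref{AlessapproxBiffnAlesssomemB} to $A \lesssim_{\star} B$ gives (i.). Applying the same proposition with the roles of $A$ and $B$ exchanged, to $B \lesssim_{\star} A$, gives (ii.). So the statement is an immediate corollary, and the only real content is the justification of Proposition \ref{AlessapproxBiffnAlesssomemB}, which the paper marks as clear. I would spell that out so the argument is self-contained.

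For $\star = init$, suppose $f: \omega A \to \omega B$ is an initial embedding. Fix $n$. Then $nA$ is an initial segment of $\omega A$, and its image $f[nA]$ is an initial segment of $\omega B$. Let $b_0 < b_1 < \cdots$ be the cut sequence at the $+$ signs of $\omega B$, and let $c$ be the right endcut of $f[nA]$. Since $nA$ is a proper initial segment of $\omega A$ (when $A \neq \emptyset$) and $f$ is initial, $c$ lies strictly below the right endcut of $\omega B$. The sequence $b_i$ has supremum that right endcut, so some $b_m \geq c$. Then $nA \cong [b_0, c]$ is initial in $[b_0, b_m] \cong mB$, that is, $nA \leqslant_{init} mB$. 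If $A = \emptyset$, take $m = 0$ (or $m = 1$).

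The case $\star = fin$ is the mirror image, using $\omega^*$-sums and final segments. For $\star = conv$, take a convex embedding $F: \mathbb{Z}A \to \mathbb{Z}B$. The image of the bounded interval $nA$, say the copies indexed $0, \dots, n-1$ in $\mathbb{Z}A$, is a convex subset of $\mathbb{Z}B$. Its endcuts lie strictly inside $\mathbb{Z}B$, because $nA$ is bounded on both sides by further copies of $A$ in $\mathbb{Z}A$ (again assuming $A$ nonempty). The $+$-cuts of $\mathbb{Z}B$ are cofinal and coinitial, so the image sits inside some block of $m$ consecutive copies of $B$. This gives $nA \leqslant_{conv} mB$.

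The only step needing care is exactly this boundedness argument: showing that the image of $nA$ cannot reach the end of $\omega B$ or $\mathbb{Z}B$. It follows because the image of the strictly larger interval $(n+1)A$ still lies inside the target, so the image of $nA$ is bounded away from the end by a nonempty image of $A$. Everything else is bookkeeping.
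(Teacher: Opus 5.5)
Your proposal is correct and matches the paper, whose proof likewise just applies Proposition \ref{AlessapproxBiffnAlesssomemB} once to $A \lesssim_{\star} B$ and once to $B \lesssim_{\star} A$. The paper leaves that proposition as ``clear''; your verification of it is sound, since the nonempty image of the next copy of $A$ keeps the image of $nA$ away from the end of $\omega B$ (or of $\mathbb{Z}B$), which confines it to finitely many copies of $B$.
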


\begin{proof}
Follows from Proposition \ref{AlessapproxBiffnAlesssomemB}. 
\end{proof}

Proposition \ref{AsimstarBiffnAleqstarmBleqstarnApropn} immediately yields the following.

\theoremstyle{definition}
\newtheorem{AsimstarBthenAspliffBspl}[lct]{Proposition}
\begin{AsimstarBthenAspliffBspl}\label{AsimstarBthenAspliffBspl}
Suppose $A \sim_{\star} B$ for some $\star \in \{init, fin, conv\}$ or $A \approx B$. Then $A$ is splitting if and only if $B$ is splitting.     
\end{AsimstarBthenAspliffBspl}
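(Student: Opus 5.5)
The plan is to reduce each case to the archimedean comparability relation $\sim_{conv}$ of Definition \ref{simconvdefn} and then invoke Lemma \ref{2AconvnB2BconvmAsplittingiff}, which says that $A \sim_{conv} B$ forces $A$ and $B$ to be simultaneously splitting or non-splitting. So it suffices to show that each of the hypotheses $A \sim_{init} B$, $A \sim_{fin} B$, $A \sim_{conv} B$ (in the sense of the relations $\lesssim_\star$) and $A \approx B$ yields integers $n, m \geq 1$ with $2A \leqslant_{conv} nB$ and $2B \leqslant_{conv} mA$.

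First I will dispose of $\approx$. By the remark following the definition, $A \approx B$ is equivalent to the conjunction of $A \sim_{init} B$ and $A \sim_{fin} B$, so in particular it implies $A \sim_{init} B$. Hence the $\approx$ case follows once the $init$ case is handled.

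For $\star \in \{init, fin, conv\}$ I will apply Proposition \ref{AsimstarBiffnAleqstarmBleqstarnApropn} with the value $2$. Clause (i) gives some $m$ with $2A \leqslant_\star mB$, and clause (ii) gives some $n'$ with $2B \leqslant_\star n'A$. Each of $\leqslant_{init}$ and $\leqslant_{fin}$ is a special case of $\leqslant_{conv}$, so in all three cases these relations hold with $\leqslant_{conv}$.

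The only small point is that the integers produced might be $0$. However, $2A \leqslant_{conv} 0$ forces $A$ to be empty, in which case we may simply replace the multiplier by $1$. We may also assume $A$ and $B$ are nonempty, since the empty order is splitting and $A \sim_\star \emptyset$ forces $A = \emptyset$. With that adjustment, $A \sim_{conv} B$ in the sense of Definition \ref{simconvdefn}, and Lemma \ref{2AconvnB2BconvmAsplittingiff} finishes the proof. I expect no real obstacle here; the substance lies entirely in the already-established Lemma \ref{2AconvnB2BconvmAsplittingiff} and Proposition \ref{AsimstarBiffnAleqstarmBleqstarnApropn}.
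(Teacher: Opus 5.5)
Your proposal is correct and is essentially the paper's proof, which likewise just observes that Proposition \ref{AsimstarBiffnAleqstarmBleqstarnApropn} supplies the hypotheses of Lemma \ref{2AconvnB2BconvmAsplittingiff}. Your additional bookkeeping is harmless and left implicit in the paper: you reduce $\approx$ to $\sim_{init}$ and bump a possibly-zero multiplier up to $1$. The latter needs no emptiness argument, since $2A \leqslant_{conv} mB$ always gives $2A \leqslant_{conv} (m+1)B$.
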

\begin{proof}
By Proposition \ref{AsimstarBiffnAleqstarmBleqstarnApropn}, $A$ and $B$ satisfy the hypotheses of Lemma \ref{2AconvnB2BconvmAsplittingiff}. 
\end{proof}

The relations $\sim_{\star}$ and $\approx$ may be cleanly characterized arithmetically in terms of isomorphisms between infinite discrete products.

\theoremstyle{definition}
\newtheorem{simstarandisosofinfdiscreteproducts}[lct]{Proposition}
\begin{simstarandisosofinfdiscreteproducts}\label{simstarandisosofinfdiscreteproducts}
The following equivalences hold:
\begin{itemize}
\item[i.] $A \sim_{init} B$ if and only if $\omega A \cong \omega B$;
\item[ii.] $A \sim_{fin} B$ if and only if $\omega^* A \cong \omega^* B$; 
\item[iii.] $A \sim_{conv} B$ if and only if $\mathbb{Z}A \cong \mathbb{Z}B$;
\item[iv.] $A \approx B$ if and only if $\mathbb{Z}A \cong_{cent} \mathbb{Z}B$.
\end{itemize}
\end{simstarandisosofinfdiscreteproducts}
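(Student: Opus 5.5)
Each of the four equivalences has an easy backward direction, so the plan concentrates on the forward directions, which come from Cantor--Schröder--Bernstein arguments (Corollary \ref{CSBLO}) applied to the infinite discrete sums.

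For (i.), if $\omega A \cong \omega B$ then $\omega A \leqslant_{init} \omega B$ and conversely, so $A \sim_{init} B$. For the forward direction, suppose $\omega A \leqslant_{init} \omega B$ and $\omega B \leqslant_{init} \omega A$. I would use the refinement in Proposition \ref{lesssiminitandlesssimfinrefinements}: either $\omega A \cong \omega B$ and we are done, or $\omega A \leqslant_{init} B$ and symmetrically $\omega B \leqslant_{init} A$. In that remaining case $A \cong \omega B + X$, and Lemma \ref{AplusomegaA} gives $B + A \cong A$; likewise $A + B \cong B$. Then $A \cong B + A$ and $B \cong A + B$, so $A \leqslant_{fin} B$ and $B \leqslant_{fin} A$, whence $A \cong B$ by Corollary \ref{CSBLO}. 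In particular $\omega A \cong \omega B$. Part (ii.) is the mirror image, using Proposition \ref{lesssiminitandlesssimfinrefinements}(ii.).

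For (iii.), again the backward direction is trivial. For the forward direction we have $\mathbb{Z}A \leqslant_{conv} \mathbb{Z}B$ and $\mathbb{Z}B \leqslant_{conv} \mathbb{Z}A$. The key point is that $\mathbb{Z}X$ absorbs any convex suborder on both sides. Concretely, if $\mathbb{Z}A \cong P + \mathbb{Z}B + Q$, I want to show $\mathbb{Z}A \cong \mathbb{Z}B$. Substituting the reverse embedding gives $\mathbb{Z}A \cong P + P' + \mathbb{Z}A + Q' + Q$. Theorem \ref{XcongAXBiffcongAXandXB} then yields $\mathbb{Z}A \cong (P+P') + \mathbb{Z}A$ and $\mathbb{Z}A \cong \mathbb{Z}A + (Q'+Q)$. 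The hard part is getting from these absorptions to $\mathbb{Z}A \cong \mathbb{Z}B$. My first attempt is to observe that $\mathbb{Z}A \cong \omega^*A + \omega A$ and that $P + \mathbb{Z}B + Q \cong \mathbb{Z}A$ sits between $\mathbb{Z}B$ and a copy of $\mathbb{Z}A$ that contains it. I would then try to show $\mathbb{Z}B \leqslant_{init}$ and $\leqslant_{fin}$ suitable orders and apply Corollary \ref{XinitXfinYconv} with $X = \mathbb{Z}B$, $Y = \mathbb{Z}A$. This needs $\mathbb{Z}B \leqslant_{init} \mathbb{Z}A$ and $\mathbb{Z}B \leqslant_{fin} \mathbb{Z}A$, which I expect to obtain by absorbing $P$ into the $\omega^*$-end of a suitable tail, using the decomposition $\mathbb{Z}A = \omega^*A + \omega A$ together with Corollary \ref{directedrefinementforomegaA}. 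This step is the main obstacle, since $P$ need not be absorbable a priori. If it fails, the fallback is to show $P + \mathbb{Z}B \cong \mathbb{Z}B$, by showing $\omega^*$-many copies of pieces of $P$ already occur at the left end of $\mathbb{Z}B$ via the orbital analysis of the self-embedding $\mathbb{Z}A \to \mathbb{Z}A$ (Theorem \ref{XcongAXBiffABladder} gives $\omega(P+P')$ initial in $\mathbb{Z}A$).

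For (iv.), if $\mathbb{Z}A \cong_{cent} \mathbb{Z}B$ via $g + f$, then $f$ and $g$ give $\omega A \cong \omega B$ and $\omega^*A \cong \omega^*B$, so $A \approx B$. Conversely, $A \approx B$ means $A \sim_{init} B$ and $A \sim_{fin} B$. By (i.) and (ii.) we get isomorphisms $f\colon \omega A \to \omega B$ and $g\colon \omega^*A \to \omega^*B$, and then $g + f$ is a centered isomorphism.
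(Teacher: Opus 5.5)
Part (iv.) and all the backward directions are fine. The forward arguments for (i.)/(ii.) and for (iii.) have genuine gaps.

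\textbf{Parts (i.) and (ii.).} The step ``$A \cong B + A$ and $B \cong A + B$, so $A \leqslant_{fin} B$ and $B \leqslant_{fin} A$, whence $A \cong B$'' fails on two counts.
\begin{itemize}
\item[--] $A \cong B + A$ only says $B \leqslant_{init} A$; it says nothing about $A \leqslant_{fin} B$.
\item[--] Corollary \ref{CSBLO} needs an initial embedding in one direction and a final one in the other, so two final embeddings would not suffice anyway.
\end{itemize}
Worse, the conclusion $A \cong B$ is false in this case. Take $A = \mathbb{Q}\omega$ and $B = (\mathbb{Q}+1)\omega = A + \omega$. Then $\omega A \cong \omega B \cong A$, so $\omega A \leqslant_{init} B$ and $\omega B \leqslant_{init} A$, and indeed $B + A \cong A$ and $A + B \cong B$. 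But $B$ has a final segment isomorphic to $\omega$ and $A$ does not, so $A \not\cong B$.

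Your two absorption identities do give the correct conclusion directly. By regrouping and Lemma \ref{AplusomegaA},
\[
\omega A \cong \omega(B + A) \cong B + \omega(A + B) \cong B + \omega B \cong \omega B ,
\]
and the mirror computation handles (ii.). This is a bit more direct than the paper's route. The paper instead notes that $\omega A \leqslant_{init} B \leqslant_{init} \omega B \leqslant_{init} A$ forces $A$ and $B$ to be splitting, and then applies Lemma \ref{AinitBinitAandsplimpliesomegasiso}(i.).

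\textbf{Part (iii.).} Here you do not yet have a proof. The reduction to Corollary \ref{XinitXfinYconv} needs $\mathbb{Z}B \leqslant_{init} \mathbb{Z}A$ and $\mathbb{Z}B \leqslant_{fin} \mathbb{Z}A$, which is essentially the whole statement. Knowing that $\mathbb{Z}A$ absorbs $P+P'$ does not by itself let you absorb $P$ into $\mathbb{Z}B$.

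The missing idea is that a non-surjective embedding forces splitting. The argument runs as follows.
\begin{itemize}
\item[--] Suppose the convex embedding $\mathbb{Z}A \to \mathbb{Z}B$ is not onto; if it is onto, you are done. Then its image is bounded on one side. The images of the copies of $A$ accumulate at that endcut, which lies in a single copy of $B$. Hence $\omega A \leqslant_{conv} B$ or $\omega^* A \leqslant_{conv} B$.
\item[--] Each copy of $B$ in the image of $\mathbb{Z}B \to \mathbb{Z}A$ is flanked on both sides by other copies of $B$. So it lies inside finitely many copies of $A$, i.e.\ $B \leqslant_{conv} mA$.
\item[--] Therefore $\omega A$ or $\omega^* A$ convexly embeds in $mA$, hence in $A$ by Corollary \ref{directedrefinementforomegaA}. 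So $A$ is splitting by Corollary \ref{2XconvXiff2XcongX}, and $B \leqslant_{conv} mA \cong A$.
\item[--] Symmetrically, $B$ is splitting and $A \leqslant_{conv} B$, unless the reverse embedding is onto, in which case you are again done.
\item[--] Lemma \ref{AinitBinitAandsplimpliesomegasiso}(iii.) then gives $\mathbb{Z}A \cong \mathbb{Z}B$.
\end{itemize}
This is the paper's argument, and splitting is the ingredient your absorption approach never reaches.
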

\begin{proof}
The backward implications are immediate from the definitions, so we prove the forward ones. 

For (i.): By Proposition \ref{lesssiminitandlesssimfinrefinements}, $A \sim_{init} B$ implies $\omega A \cong \omega B$, or $\omega A \leqslant_{init} B$ and $\omega B \leqslant_{init} A$. In the first case we are done immediately. 

In the second case, since 
\[
\omega A \leqslant_{init} B \leqslant_{init} \omega B \leqslant_{init} A,
\]
we have that $A$, and symmetrically $B$, is splitting. Since clearly $A \leqslant_{init} B \leqslant_{init} A$, we have $\omega A \cong \omega B$ in this case as well, by Lemma \ref{AinitBinitAandsplimpliesomegasiso}. 

The argument for (ii.) is symmetric, and (iv.) follows from (i.) and (ii.). 

For (iii.): Fix a convex embedding $f$ witnessing $\mathbb{Z}A \leqslant_{conv} \mathbb{Z}B$. If $f$ is an isomorphism, then we are done. Otherwise the image of $\mathbb{Z}A$ is bounded on at least one side in $\mathbb{Z}B$. Since the cut at this bounded side falls in some copy of $B$, it follows that either $\omega A \leqslant_{conv} B$ or $\omega^* A \leqslant_{conv} B$.

Since $\mathbb{Z}B \leqslant_{conv} \mathbb{Z}A$, we have that $B \leqslant_{conv} mA$ for some $m \geq 1$. Combining with the above, we conclude $\omega A$ is convexly embeddable in some finite multiple of $A$, and hence $A$ is splitting. Hence $B \leqslant_{conv} mA \cong A$. 

Symmetrically, $B$ is splitting and $A \leqslant_{conv} B$. Now we get $\mathbb{Z}A \cong \mathbb{Z}B$ from Lemma \ref{AinitBinitAandsplimpliesomegasiso}.(iii.).
\end{proof}

For $\star \in \{init, fin, conv\}$, define the strict relations $\lnsim_{\star}$ by
\[
\textrm{$A \lnsim_{\star} B$ if $A \lesssim_{\star} B$ and $A \not \sim_{\star} B$.}
\]

Likewise, $A \lnapprox B$ means $A \lessapprox B$ and $A \not \approx B$.

\theoremstyle{definition}
\newtheorem{AlnsimBcharintermsofinfdiscreteprods}[lct]{Proposition}
\begin{AlnsimBcharintermsofinfdiscreteprods}\label{AlnsimBcharintermsofinfdiscreteprods} The following equivalences hold:
\begin{itemize}
    \item[i.] $A \lnsim_{init} B$ if and only if $\omega A \leqslant_{init} B$ and $\omega A \not\cong \omega B$; 
    \item[ii.] $A \lnsim_{fin} B$ if and only if $\omega^* A \leqslant_{fin} B$ and $\omega^* A \not\cong \omega^* B$;
    \item[iii.] $A \lnapprox B$ if and only if $\mathbb{Z}A \leqslant_{cent} 2B$ and $\mathbb{Z}A \not\cong \mathbb{Z}B$. 
\end{itemize}
\end{AlnsimBcharintermsofinfdiscreteprods}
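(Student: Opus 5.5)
Each part follows by combining the refinement in Proposition \ref{lesssiminitandlesssimfinrefinements} (and Proposition \ref{AlessapproxBreformulations} for $\lessapprox$) with the characterizations of the equivalence relations in Proposition \ref{simstarandisosofinfdiscreteproducts}.

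For (i.), suppose first $A \lnsim_{init} B$. Then $A \lesssim_{init} B$, so by Proposition \ref{lesssiminitandlesssimfinrefinements}.(i.) either $\omega A \leqslant_{init} B$ or $\omega A \cong \omega B$. Since $A \not\sim_{init} B$, Proposition \ref{simstarandisosofinfdiscreteproducts}.(i.) gives $\omega A \not\cong \omega B$. This rules out the second alternative, so $\omega A \leqslant_{init} B$.

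Conversely, suppose $\omega A \leqslant_{init} B$ and $\omega A \not\cong \omega B$. Then $\omega A \leqslant_{init} B \leqslant_{init} \omega B$, so $A \lesssim_{init} B$. Also $A \not\sim_{init} B$, again by Proposition \ref{simstarandisosofinfdiscreteproducts}.(i.). Part (ii.) is the mirror image, using $\omega^*$, final embeddings and Proposition \ref{simstarandisosofinfdiscreteproducts}.(ii.).

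For (iii.), suppose $A \lnapprox B$. By Proposition \ref{AlessapproxBreformulations}, either $\mathbb{Z}A \leqslant_{cent} 2B$ or $\mathbb{Z}A \cong_{cent} \mathbb{Z}B$. The latter would give $A \approx B$ by Proposition \ref{simstarandisosofinfdiscreteproducts}.(iv.), which is excluded, so $\mathbb{Z}A \leqslant_{cent} 2B$. We still need $\mathbb{Z}A \not\cong \mathbb{Z}B$, which is stronger than ruling out a \emph{centered} isomorphism, and this is the main point.

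Write $\mathbb{Z}A \leqslant_{cent} 2B$ as $\omega A \leqslant_{init} B$ and $\omega^* A \leqslant_{fin} B$. Now suppose $\mathbb{Z}A \cong \mathbb{Z}B$. Then $B \leqslant_{conv} \mathbb{Z}A$. A copy of $B$ inside $\mathbb{Z}A$ is bounded, so it lies in some $nA$, giving $B \leqslant_{conv} nA$. Combined with $\omega A \leqslant_{init} B$, this gives $\omega A \leqslant_{conv} nA$. By Corollary \ref{directedrefinementforomegaA} we then get $\omega A \leqslant_{conv} A$, so $A$ is splitting.

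With $A$ splitting, $B \leqslant_{conv} nA \cong A$. We also have $2B \leqslant_{conv} 2A$ and $2A \leqslant_{conv} \omega A \leqslant_{conv} B$, so $2B \leqslant_{conv} B$, and $B$ is splitting by Corollary \ref{2XconvXiff2XcongX}. Now $A \leqslant_{init} B$ and $B \leqslant_{conv} A$. I would upgrade these to full isomorphisms of the $\omega$- and $\omega^*$-sums, as in the proof of Proposition \ref{lesssiminitandlesssimfinrefinements}:
\begin{itemize}
\item[--] Since $A$ is splitting and $\omega A \leqslant_{init} B$, we get $B \cong \omega A + B$. Then $\omega B \cong \omega A + \omega B$, so $\omega A \leqslant_{init} \omega B$.
\item[--] In the other direction, $B \leqslant_{conv} A$ and $A \cong \omega A + A$ give $\omega B \leqslant_{init} \omega A$, via a Lemma \ref{AinitBinitAandsplimpliesomegasiso}-type argument.
\item[--] Together these yield $\omega A \cong \omega B$, and symmetrically $\omega^* A \cong \omega^* B$. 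Hence $A \approx B$, a contradiction.
\end{itemize}
This upgrade is the obstacle. The fallback I would try first is to show directly that $A \cong B$. We already have $A \leqslant_{init} B$ and $A \leqslant_{fin} B$, so by Corollary \ref{XinitXfinYconv} it suffices to have $B \leqslant_{conv} A$, which we do. So $A \cong B$, and hence $A \approx B$, contradicting $A \lnapprox B$.

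For the converse of (iii.), suppose $\mathbb{Z}A \leqslant_{cent} 2B$ and $\mathbb{Z}A \not\cong \mathbb{Z}B$. The centered embedding gives $A \lessapprox B$ by Proposition \ref{AlessapproxBreformulations}. And $A \not\approx B$ because $\approx$ would give $\mathbb{Z}A \cong_{cent} \mathbb{Z}B$ by Proposition \ref{simstarandisosofinfdiscreteproducts}.(iv.), hence $\mathbb{Z}A \cong \mathbb{Z}B$.
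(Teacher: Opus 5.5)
Your proposal is correct. For (i) and (ii) it is exactly the paper's argument; the paper just says the result is immediate from Propositions \ref{lesssiminitandlesssimfinrefinements} and \ref{simstarandisosofinfdiscreteproducts}.

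For (iii) you go further than the paper. Read for (iii), the paper's one-line justification amounts to combining Proposition \ref{AlessapproxBreformulations} with Proposition \ref{simstarandisosofinfdiscreteproducts}.(iv.). That only yields $A \lnapprox B$ iff $\mathbb{Z}A \leqslant_{cent} 2B$ and $\mathbb{Z}A \not\cong_{cent} \mathbb{Z}B$. You correctly notice that the statement as written asks for the stronger $\mathbb{Z}A \not\cong \mathbb{Z}B$ in the forward direction. Your fallback supplies exactly what is missing:
\begin{itemize}
\item[--] A bounded copy of $B$ inside $\mathbb{Z}A \cong \mathbb{Z}B$ gives $B \leqslant_{conv} nA$, hence $\omega A \leqslant_{conv} nA$.
\item[--] So $A$ is splitting by Corollaries \ref{directedrefinementforomegaA} and \ref{2XconvXiff2XcongX}, and therefore $B \leqslant_{conv} nA \cong A$.
\item[--] From $\omega A \leqslant_{init} B$ and $\omega^* A \leqslant_{fin} B$ you read off $A \leqslant_{init} B$ and $A \leqslant_{fin} B$.
\item[--] Corollary \ref{XinitXfinYconv} then gives $A \cong B$, hence $A \approx B$, a contradiction.
\end{itemize}
So your argument proves the statement as literally written, whereas the paper's citation only covers the centered reading.

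When you write it up, make the fallback the main argument and delete the intermediate material. Neither the bulleted ``upgrade'' nor the proof that $B$ is splitting is needed. In particular, your claim $2B \leqslant_{conv} 2A$ does not follow from $B \leqslant_{conv} A$ alone, since convex copies of $B$ in two adjacent copies of $A$ need not be adjacent to each other. It would need the same boundedness argument applied to a copy of $2B$ in $\mathbb{Z}B$. Corollary \ref{XinitXfinYconv} never uses that $B$ is splitting, so you can simply omit that step.
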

\begin{proof}
Immediate from Propositions \ref{lesssiminitandlesssimfinrefinements} and \ref{simstarandisosofinfdiscreteproducts}. 
\end{proof}

We can also arithmetically characterize the situation when both of the conditions from Proposition \ref{lesssiminitandlesssimfinrefinements} implying $A \lesssim_{\star} B$ hold at once.

\theoremstyle{definition}
\newtheorem{bothomegaAleqBandomegaAconjomegaB}[lct]{Proposition}
\begin{bothomegaAleqBandomegaAconjomegaB}\label{bothomegaAleqBandomegaAconjomegaB}
The following equivalences hold:
\begin{itemize}
    \item[i.] $\omega A \leqslant_{init} B$ and $\omega A \cong \omega B$ if and only if $A$ and $B$ are splitting and $A \leqslant_{init} B \leqslant_{init} A$. 
    \item[ii.] $\omega^* A \leqslant_{fin} B$ and $\omega^* A \cong \omega^* B$ if and only if $A$ and $B$ are splitting and $A \leqslant_{fin} B \leqslant_{fin} A$. 
    \item[iii.] $\mathbb{Z}A \leqslant_{cent} 2B$ and $\mathbb{Z}A \cong_{cent} \mathbb{Z}B$ if and only if $A$ and $B$ are splitting and $A \cong B$. 
\end{itemize}
\end{bothomegaAleqBandomegaAconjomegaB}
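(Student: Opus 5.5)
For (i.), I will prove the forward direction first. Assume $\omega A \leqslant_{init} B$ and $\omega A \cong \omega B$. From $\omega A \cong \omega B$ we get $B \leqslant_{init} \omega A$, and composing with $\omega A \leqslant_{init} B$ gives $\omega B \leqslant_{init} \omega A \leqslant_{init} B$. In particular $2B \leqslant_{conv} B$, so $B$ is splitting by Corollary \ref{2XconvXiff2XcongX}. Also $2A \leqslant_{init} \omega A \leqslant_{init} B$. The inequality $A \leqslant_{init} B$ is immediate from $\omega A \leqslant_{init} B$. For $B \leqslant_{init} A$, I will use $B \leqslant_{init} \omega A$: some finite multiple satisfies $B \leqslant_{init} nA$, and moreover $\omega A \leqslant_{conv} nA$ follows by chaining $\omega A \leqslant_{init} B \leqslant_{init} nA$. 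Corollary \ref{directedrefinementforomegaA} then gives $\omega A \leqslant_{conv} A$, so $A$ is splitting. Hence $nA \cong A$, and therefore $B \leqslant_{init} A$.

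For the converse of (i.), assume $A$ and $B$ are splitting and $A \leqslant_{init} B \leqslant_{init} A$. Lemma \ref{AinitBinitAandsplimpliesomegasiso}(i.) gives $\omega A \cong \omega B$. It then remains to show $\omega A \leqslant_{init} B$. Since $A$ is splitting, $A \cong 2A$, and Theorem \ref{XcongAXBiffABladder} applies to $A \cong A + A$, yielding $\omega A \leqslant_{init} A$. Composing this with $A \leqslant_{init} B$ gives $\omega A \leqslant_{init} B$.

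Part (ii.) is the mirror image of (i.).

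For (iii.), assume first that $\mathbb{Z}A \leqslant_{cent} 2B$ and $\mathbb{Z}A \cong_{cent} \mathbb{Z}B$. The centered embedding gives $\omega A \leqslant_{init} B$ and $\omega^*A \leqslant_{fin} B$, while the centered isomorphism gives $\omega A \cong \omega B$ and $\omega^* A \cong \omega^* B$. Parts (i.) and (ii.) then make $A$ and $B$ splitting, with $A \leqslant_{init} B$ and $B \leqslant_{fin} A$, so Corollary \ref{CSBLO} gives $A \cong B$. Conversely, if $A \cong B$ and both are splitting, the hypotheses of (i.) and (ii.) hold trivially. This gives $\omega A \leqslant_{init} B$ and $\omega^*A \leqslant_{fin} B$, which concatenate to witness $\mathbb{Z}A \leqslant_{cent} 2B$, and the isomorphism $A\cong B$ induces a centered isomorphism $\mathbb{Z}A \cong_{cent} \mathbb{Z}B$.

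The only step needing care is extracting splitting of $A$ in the forward direction of (i.). The key is to route through $B \leqslant_{init} nA$ so that Corollary \ref{directedrefinementforomegaA} applies; the remaining steps are routine.
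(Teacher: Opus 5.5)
Your proof is correct and follows the paper's route. The paper's backward direction of (i.) likewise combines Lemma \ref{AinitBinitAandsplimpliesomegasiso} with $\omega A \leqslant_{init} A$; you cite Theorem \ref{XcongAXBiffABladder} for that fact, where the paper builds the cut sequence by hand. Part (ii.) is by symmetry, and (iii.) comes from (i.), (ii.) and Corollary \ref{CSBLO}, while your forward direction of (i.) fills in what the paper only calls ``similar to previous arguments.''

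One step should be made explicit. The claim $B \leqslant_{init} nA$ holds because you embed $B$ as the first summand of $\omega B \cong \omega A$. That image is a proper initial segment of $\omega A$, so it lies inside some $nA$. An arbitrary initial embedding of $B$ into $\omega A$ could instead be onto, so the justification matters.
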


\begin{proof}
For (i.): The forward direction is similar to previous arguments. For the backward direction, by Lemma \ref{AinitBinitAandsplimpliesomegasiso} it remains only to check that $\omega A \leqslant_{init} B$. Label $A = [c, d]$ by its endcuts. Since $A$ is splitting we may iteratively pick a cut sequence witnessing the following sequence of decompositions:
\[
\begin{array}{rcl}
A & \cong & A + A \\
& \cong & A + (A + A) \\
& \cong & A + (A + (A + A))) \\
& \vdots &
\end{array}
\]
i.e. a sequence $a_1 < a_2 < \ldots$ corresponding to the cuts at the $+$ signs in the above expression. Letting $a$ denote the cut at the right of this sequence, we have $[c, a] \cong \omega A$. Hence $\omega A \leqslant_{init} A$. Since $A \leqslant_{init} B$, we are done. 

A symmetric argument gives (ii.). And (iii.) follows from (i.) and (ii.) once we observe that $A \leqslant_{init} B$ (from (i.)) and $B \leqslant_{fin} A$ (from (ii.)) imply $A \cong B$. 
\end{proof}

Proposition \ref{omegaAcongomegaBthenomegaAplusBtoo} below implies that for $\star \in \{init, fin\}$, classes of $\sim_{\star}$-equivalent orders are closed under sums. 

\theoremstyle{definition}
\newtheorem{omegaAcongomegaBthenomegaAplusBtoo}[lct]{Proposition}
\begin{omegaAcongomegaBthenomegaAplusBtoo}\label{omegaAcongomegaBthenomegaAplusBtoo}
\phantom{.}
\begin{itemize}
    \item[i.] If $\omega A \cong \omega B$, then $\omega A \cong \omega(A + B) \cong \omega(B + A) \cong \omega B$;
    \item[ii.] If $\omega^* A \cong \omega^* B$, then $\omega^* A \cong \omega^*(B+A) \cong \omega^*(A + B) \cong \omega^*B$.
\end{itemize}
\end{omegaAcongomegaBthenomegaAplusBtoo}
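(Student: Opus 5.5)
Assume $\omega A \cong \omega B$; part (ii.) will follow by the symmetric argument. The plan is to show $\omega(A+B) \cong \omega A$; the same argument with $A$ and $B$ swapped then gives $\omega(B+A) \cong \omega B \cong \omega A$.

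First we get commutativity. Since $\omega A \leqslant_{init} \omega B$, Corollary \ref{omegaAomegastarAinitfinomegaBomegastarB}.(i.) gives $A + B \leqslant_{init} \omega B$ and $B + A \leqslant_{init} \omega B$. This does not directly give $A+B \cong B+A$, since we have no control over the $\omega^*$ side. So we avoid commutativity and argue directly with absorption.

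The key observation is that $\omega B$ absorbs $B$ on the left, so $B + \omega A \cong B + \omega B \cong \omega B \cong \omega A$. Likewise $A + \omega A \cong \omega A$ by Lemma \ref{AplusomegaA}. Hence $(A+B) + \omega A \cong A + (B + \omega A) \cong A + \omega A \cong \omega A$. In other words, $\omega A$ absorbs $A+B$ on the left. By Corollary \ref{absorbsleftrightcoro}, this gives $\omega(A+B) \leqslant_{init} \omega A$. In the same way, $\omega A$ absorbs $B+A$, so $\omega(B+A) \leqslant_{init} \omega A$.

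For the reverse comparison, regroup
\[
\omega(A+B) = A + B + A + B + \cdots \cong A + \omega(B+A),
\]
which shows $\omega(B+A) \leqslant_{fin} \omega(A+B)$. That is the wrong side for Corollary \ref{CSBLO}. The cleaner route is instead to show $\omega A \leqslant_{init} \omega(A+B)$ and then apply the Cantor--Schröder--Bernstein-type results.

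Here is that route. Write $\omega(A+B) \cong \sum_{i} C_i$ with $C_{2i} = A$ and $C_{2i+1} = B$. Grouping the terms as $A + (B + A) + (B+A) + \cdots$ gives $\omega(A+B) \cong A + \omega(B+A)$. Now $\omega(B+A) \leqslant_{init} \omega A$ from the previous paragraph, and conversely we want $\omega A \leqslant_{init} \omega(B+A)$.

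I expect this step to be the main obstacle. I would handle it with the dichotomy of Proposition \ref{lesssiminitandlesssimfinrefinements} applied to the pair $A$, $A+B$.

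Since $A \leqslant_{init} A+B$, we have $nA \leqslant_{conv} \omega(A+B)$-type comparisons, but it is simpler to compare $\omega$-sums directly. Take $X = \omega(A+B)$, which is an $\omega$-$AB$-sum, and compare it with $\omega A$ and $\omega B$. Since $\omega(A+B) \leqslant_{init} \omega A$, Proposition \ref{lesssiminitandlesssimfinrefinements} (with $\omega\omega(A+B)\cong\omega(A+B)$) yields either $\omega(A+B) \cong \omega A$, and we are done, or $\omega(A+B) \leqslant_{init} A$ strictly. In the second case $A \leqslant_{init} \omega(A+B) \leqslant_{init} A$. Here $\omega(A+B)$ is splitting, so $A$ is splitting. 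By symmetry of the hypothesis, $B$ is then splitting too: $\omega(A+B) \cong \omega(B+A')$-type, or more directly $\omega B \cong \omega A \leqslant_{init} \ldots$. Lemma \ref{AinitBinitAandsplimpliesomegasiso} then finishes, giving $\omega A \cong \omega(A+B)$.

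Verifying that $B$ is splitting in this degenerate case is the place I would check most carefully. Fallback: use $\omega A \cong \omega B$ and Proposition \ref{AsimstarBthenAspliffBspl} to transfer splitting from $A$ to $B$.
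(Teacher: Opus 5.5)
Your first step is correct, and it is cleaner than the paper's explicit cut-sequence construction. From $B+\omega A\cong B+\omega B\cong\omega B\cong\omega A$ you get $(A+B)+\omega A\cong\omega A$, hence $\omega(A+B)\leqslant_{init}\omega A$ by Corollary \ref{absorbsleftrightcoro}. Applying Proposition \ref{lesssiminitandlesssimfinrefinements} to the pair $A+B$, $A$ then correctly reduces to the degenerate case $\omega(A+B)\leqslant_{init}A$. No identity $\omega\omega(A+B)\cong\omega(A+B)$ is needed for this, and that identity is false in general.

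The gap is in that degenerate case. From $\omega(A+B)\leqslant_{init}A\leqslant_{init}A+B$ you can legitimately conclude that $A+B$ is splitting. You give no argument that $\omega(A+B)$ is splitting. Even granting it, the inference ``$A\leqslant_{init}\omega(A+B)\leqslant_{init}A$ with $\omega(A+B)$ splitting, so $A$ is splitting'' is invalid.

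For a counterexample, take $A=\mathbb{Q}2+1$ and $B=1+\mathbb{Q}2$.
\begin{itemize}
\item[(a)] $A+B\cong(\mathbb{Q}+1+\mathbb{Q})2\cong\mathbb{Q}2$, so $\omega(A+B)\cong\mathbb{Q}2$ is splitting.
\item[(b)] $\omega(A+B)\leqslant_{init}A$, and $A\leqslant_{init}\omega(A+B)$ via the pairs over the negative rationals together with the first point of the pair over $0$.
\item[(c)] $\omega(A+B)\leqslant_{init}\omega A$ holds as well.
\item[(d)] Yet $A$ is not splitting: $2A$ has two points with neither predecessor nor successor, while $A$ has one.
\end{itemize}
Every fact you have assembled at that point holds here; only $\omega A\cong\omega B$ fails. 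So the hypothesis must be invoked again, and your proposal never does so.

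Your worry about $B$ is a red herring. Lemma \ref{AinitBinitAandsplimpliesomegasiso} is applied to the pair $A$, $A+B$, so what you need is that $A$ and $A+B$ are splitting. Transferring splitting from $A$ to $B$ via Proposition \ref{AsimstarBthenAspliffBspl} does nothing toward establishing it for $A$.

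The missing idea, which is how the paper proceeds, is to show $A\leqslant_{init}B+A$.
\begin{itemize}
\item[(1)] By Corollary \ref{omegaAomegastarAinitfinomegaBomegastarB}.(i.), $B+A\leqslant_{init}\omega B\cong\omega A$, and also $A\leqslant_{init}\omega A$.
\item[(2)] By Proposition \ref{bothinitbothfin}, either $A\leqslant_{init}B+A$, or $B+A\leqslant_{init}A$. In the latter case $A\leqslant_{fin}B+A$ and Corollary \ref{CSBLO} give $B+A\cong A$, so $A\leqslant_{init}B+A$ anyway.
\item[(3)] In your degenerate case this yields
\[ 2A\leqslant_{init}A+(B+A)\leqslant_{init}2(A+B)\leqslant_{init}\omega(A+B)\leqslant_{init}A, \]
so $A$ is splitting by Corollary \ref{2XconvXiff2XcongX}.
\item[(4)] With $A+B$ splitting and $A\leqslant_{init}A+B\leqslant_{init}A$, Lemma \ref{AinitBinitAandsplimpliesomegasiso} gives $\omega A\cong\omega(A+B)$.
\end{itemize}
With this repair your route works, and it is slightly tidier than the paper's. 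Your dichotomy gives $\omega(A+B)\leqslant_{init}A$ directly, whereas the paper only gets a convex embedding there and must detour through Lemma \ref{2AconvnB2BconvmAsplittingiff}.
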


\begin{proof}
For (i.): Label $\omega A = [a_0, c]$ by its endcuts. Choose a cut $a_1 > a_0$ such that $[a_0, a_1] \cong A$ and $[a_1, c] \cong \omega A$. Since $\omega A \cong \omega B$, we may then choose a cut $b_1 > a_1$ such that $[a_1, b_1] \cong B$ and $[b_1, c] \cong \omega B$; then a cut $a_2 > b_1$ such that $[b_1, a_2] \cong A$ and $[a_2, c] \cong \omega A$; and so on. Continuing in this way we obtain a cut sequence 
\[
a_0 < a_1 < b_1 < a_2 < b_2 < \ldots
\]
witnessing $\omega(A + B) \leqslant_{init} \omega A$. If the sequence converges to $c$, then it witnesses $\omega(A+B) \cong \omega A$ and we are done. 

Otherwise, there is a tail of the sequence that witnesses $\omega(A + B) \leqslant_{conv} A$. Since $A \leqslant_{conv} A + B$ we get $\omega(A+B) \leqslant_{conv} A + B$ and deduce $A + B$ is splitting. 

We claim in this case that $A$ is also splitting. We first check that $A \leqslant_{init} B + A$. Indeed, $B + A$ is initial in $\omega A$ by Corollary \ref{omegaAomegastarAinitfinomegaBomegastarB}, and since $A$ is also initial in $\omega A$, one of $A \leqslant_{init} B + A$ and $B + A \leqslant_{init} A$ holds. In the first case we are done and in the second case, since $A \leqslant_{fin} B + A$ we must have $B + A \cong A$; hence $B + A \leqslant_{init} A$ in this case as well. 

Since $A$ is initial in $B + A$, we have $2A \leqslant_{init} A + (B + A) \leqslant_{init} 2(A+B)$. Hence $2A \leqslant_{init} 2(A+B)$. On the other hand, $\omega(A + B) \leqslant_{conv} A$ implies $2(A + B) \leqslant_{conv} A$. Thus $A$ and $A + B$ satisfy the hypotheses of Lemma \ref{2AconvnB2BconvmAsplittingiff}, so that $A$ is splitting as claimed. 

Clearly $A \leqslant_{init} A + B$. Since $\omega (A + B)$ is strictly initial in $\omega A$, we have $\omega(A + B) \leqslant_{init} mA \cong A$ for some $m$. Thus Proposition \ref{AinitBinitAandsplimpliesomegasiso} applies, and we get $\omega A \cong \omega (A + B)$, as desired. 

Proving $\omega B \cong \omega (B+A)$ is symmetric, which concludes (i.). 

The proof for (ii.) is symmetric. 
\end{proof}

\subsection{Continuation theorems}\label{subsect:continuationthms}

In this section we consider bounded versions of the archimedean relations $\lesssim_{\star}$ and $\sim_{\star}$. We will show that for the equivalence relations at least, the bounded versions are equivalent to their unbounded counterparts. 

It will follow that the converse to Proposition \ref{AsimstarBiffnAleqstarmBleqstarnApropn} holds, i.e. that $\sim_{\star}$ and $\approx$ may be \textit{characterized} as notions of archimedean equivalence. 

We will also show that for $\star \in \{init, fin\}$ the converse to Proposition \ref{AlessapproxBiffnAlesssomemB} holds, i.e., that the relations $\lesssim_{\star}$ can be characterized as notions of archimedean dominance.

\theoremstyle{definition}
\newtheorem{lesssiminitfinbdefn}[lct]{Definition}
\begin{lesssiminitfinbdefn}\label{lesssiminitfinbdefn}
For $\star \in \{init, fin\}$ define:
\begin{itemize}
    \item[i.] $A \lesssim_{\star}^b B$ if there exists a natural number $m$ such that $2A \leqslant_{\star} mB$;
    \item[ii.] $A \sim_{\star}^b B$ if $A \lesssim_{\star}^b B$ and $B \lesssim_{\star}^b A$. 
\end{itemize}
\end{lesssiminitfinbdefn}  

The exponent $b$ stands for \textit{bounded}, in reference to the bound (of $2$) on the coefficient of the embedded order $A$ in the definition.

We also introduce bounded versions of $\lesssim_{conv}$ and $\sim_{conv}$. To get our proofs below to go through, the coefficient of the embedded order for these relations changes from $2$ to $3$. 

\theoremstyle{definition}
\newtheorem{lesssimconvbdefn}[lct]{Definition}
\begin{lesssimconvbdefn}\label{lesssimconvbdefn}
Define:
\begin{itemize}
    \item[i.] $A \lesssim_{conv}^b B$ if there is a natural number $m$ such that $3A \leqslant_{conv} mB$. 
    \item[ii.] $A \sim_{conv}^b B$ if $A \lesssim_{conv}^b B$ and $B \lesssim_{conv}^b A$.
\end{itemize}

\end{lesssimconvbdefn}

While the bounded relations $\lesssim_{\star}^b$ are not equivalent to their unbounded analogs, we will show below that the corresponding equivalence relations $\sim^b_{\star}$ are equivalent to the unbounded versions.

Finally, we define the bounded versions of $\lessapprox$ and $\approx$. 

\theoremstyle{definition}
\newtheorem{AlessapproxbBdefn}[lct]{Definition}
\begin{AlessapproxbBdefn}\label{AlessapproxbBdefn}
Define:
\begin{itemize}
    \item[i.] $A \lessapprox^b B$ if $A \lesssim_{init}^b B$ and $A \lesssim_{fin}^b B$. 
    \item[ii.] $A \approx^b B$ if $A \lessapprox^b B$ and $B \lessapprox^b A$.
\end{itemize}
\end{AlessapproxbBdefn}

The following results are the main results of this section. Theorem \ref{continuationthm} below says that the bounded notions of archimedean equivalence introduced above are equivalent to the corresponding unbounded notions. To prove it, we will need the following lemma.

\theoremstyle{definition}
\newtheorem{continuationlemma}[lct]{Lemma}
\begin{continuationlemma}\label{continuationlemma}
(Continuation lemma): 
\begin{itemize}
\item[i.] Suppose that $\mathcal{D} = \{A_k: k \in \omega\}$ is a non-terminating and non-splitting left division system, and for some $m \in \omega$ one of the following relations holds:
\[
\begin{array}{rcl}
A_0 + A_1 + A_m & \leqslant_{init} & A_1 + A_0 + A_m \\
A_1 + A_0 + A_m & \leqslant_{init} & A_0 + A_1 + A_m.
\end{array}
\]
Then the corresponding real representation $\mathbb{Z}A_0 \cong \mathbb{R}(K_{[x]})$ is sufficient. 
\item[ii.] Suppose that $\mathcal{D} = \{A_k: k \in \omega\}$ is a non-terminating and non-splitting right division system, and for some $m \in \omega$ one of the following relations holds:
\[
\begin{array}{rcl}
A_m + A_1 + A_0 & \leqslant_{fin} & A_m + A_0 + A_1 \\
A_m + A_0 + A_1 & \leqslant_{fin} & A_m + A_1 + A_0.
\end{array}
\]
Then the corresponding real representation $\mathbb{Z}A_0 \cong \mathbb{R}(K_{[x]})$ is sufficient. 
\end{itemize}
\end{continuationlemma}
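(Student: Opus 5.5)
We treat (i.); part (ii.) is the mirror image. Work inside the real representation $\mathbb{Z}A_0 \cong \mathbb{R}(K_{[x]})$ of Theorem \ref{leftrealrepnthm}. There $K_0 \cong L + R$ and $K_{a_1} \cong L' + R$, and
\[
A_0 \cong R + (0,1)(K_{[x]}) + L, \qquad A_1 \cong R + (0,a_1)(K_{[x]}) + L'.
\]
Sufficiency means $K_0 \cong K_{a_1}$, so it is enough to show $L \cong L'$. The idea is to realize both $A_0 + A_1$ and $A_1 + A_0$ as intervals of one ambient replacement. We then read the hypothesis as an isomorphism between two intervals with nested position, and use the length rigidity of Proposition \ref{ellIlessellJnonisoinsufficientsetting}, which needs no sufficiency, to pin down the embedding.

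Concretely, inside $\mathbb{R}(K_{[x]})$ we have:
\[
A_0 + A_1 \cong R + (0,1)(K_{[x]}) + K_1 + (1, 1+a_1)(K_{[x]}) + L',
\]
since $K_1 \cong K_0 \cong L + R$ (here $1 \in [0]^-$), and
\[
A_1 + A_0 \cong R + (0,a_1)(K_{[x]}) + (L' + R) + (a_1, 1+a_1)(K_{[x]}) + L'',
\]
where we must check that the translate $(a_1, 1+a_1)$ of $(0,1)$ carries the right orders. Points of $[0]^-$ map into $[0]^+$ under translation by $a_1$, so this needs care. It may be cleaner to realize $A_1 + A_0$ as $R + (-a_1, 1)(K_{[x]})$-type intervals: $A_1 \cong$ the interval from $-a_1$ to $0$ (using $K_{-a_1}$, $-a_1 \in [0]^-$, and the symbolic tree decomposition), followed by $A_0$ from $0$ to $1$. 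Then $A_0 + A_1$ and $A_1 + A_0$ become intervals of length $1+a_1$ whose extreme pieces are $R, L$ or $R, L'$. The term $A_m$ is appended on the right as an interval of length $a_m$ in both.

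Now suppose, say, $A_0 + A_1 + A_m \leqslant_{init} A_1 + A_0 + A_m$, via an initial embedding $f$. By Corollary \ref{simDclassesinvarunderconvfcoro} and Theorem \ref{simdequalssimDfornonsplittinsystems}, $f$ carries $\sim_{\mathcal{D}}$-classes to $\sim_{\mathcal{D}}$-classes, because $\mathcal{D}$ is non-splitting. These classes are exactly the pieces $K_x$, together with the end pieces $R$, $L$, $L'$. Hence $f$ induces an order embedding $\hat f$ of the condensed real intervals, mapping an initial segment onto an initial segment. Both intervals have total length $1 + a_1 + a_m$. By Proposition \ref{ellIlessellJnonisoinsufficientsetting}, applied after transporting so that the image sits inside the domain (the initial segment case gives $I \subseteq J$ directly), the image cannot have strictly smaller length. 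So $f$ is onto, up to the final class.

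Next we locate the middle cuts. The class $K_1$ sits at position $1$ in $A_0+A_1+A_m$; the class $L'+R$ sits at position $a_1$ in the other sum. We want to argue that $\hat f$ is the identity on positions, i.e.\ $\hat f$ is a translation by $0$. Proposition \ref{ellIlessellJnonisoinsufficientsetting} applied to subintervals $[0,t] \subseteq [0,\hat f(t)]$, or the reverse, forces $\hat f(t) = t$ for every cut $t$. Therefore $f$ maps the class at position $1+a_1$ of the first sum onto the class at the same position of the second. Reading off these classes gives $L' + R_m \cong L + R_m$, where $R_m$ is the left end piece of $A_m$; the same argument at position $1$ versus position $a_1$ compares $K_1 \cong L+R$ with the class there.

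The main obstacle is the bookkeeping. We must make sure the $\sim_{\mathcal{D}}$-classes at the junctions are exactly $L+R$ versus $L'+R$, with the $+A_m$ term ensuring that the junction between the second summand and $A_m$ is an interior class rather than an end segment. Once that is arranged, $\hat f = \mathrm{id}$ gives $K_0 \cong L+R \cong L'+R \cong K_{a_1}$, which is sufficiency. If the translation-rigidity step fails without sufficiency, the fallback is to run the compression-orbital argument from the proof of Proposition \ref{lengthisinvariantwhenDnonsplitting} directly on $f$, showing that any nonzero displacement produces $A_k \leqslant_{conv} A_{k+2}$.
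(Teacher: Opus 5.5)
Your strategy is the paper's. You use that $\sim_{\mathcal{D}}$-classes are carried to $\sim_{\mathcal{D}}$-classes, that Proposition \ref{ellIlessellJnonisoinsufficientsetting} gives rigidity without sufficiency, and that appending $A_m$ makes the junction class ($L'+R$ versus $L+R$) interior, so it must be mapped onto its counterpart. But the step carrying all the weight is not valid as written.

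Proposition \ref{ellIlessellJnonisoinsufficientsetting} only compares two \emph{nested} intervals of one and the same $\mathbb{R}(K_{[x]})$. Without sufficiency there is no length attached to an abstract order like $A_1+A_0+A_m$, since Definition \ref{lengthforembeddedIforDnonsplittingdefn} presupposes sufficiency. So ``both intervals have total length $1+a_1+a_m$'', ``$f$ is onto up to the final class'' and ``$\hat f(t)=t$ for every cut $t$'' have no support until domain and codomain sit in a common copy of $\mathbb{R}(K_{[x]})$. They cannot both be placed there with matching positions. Since $1+a_1\in[0]^+$, $\mathbb{R}(K_{[x]})$ carries $K_{1+a_1}\cong L'+R$ at that position, whereas $A_1+A_0+A_m$ has the class $L+R$ there; such a placement presupposes $L+R\cong L'+R$, which is the conclusion. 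Your compression-orbital fallback needs the same nesting, since it is just the proof of that proposition. Your alternative of placing $A_1$ over $[-a_1,0]$ also fails. Since $-a_1,0\in[0]^-$, that interval is $R+(-a_1,0)(K_{[x]})+L\cong R+(0,a_1)(K_{[x]})+L$, whose right end piece is $L$, not $L'$.

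The missing idea is to apply rigidity only to the part the two sums genuinely share. Your first realization actually works in the middle. Translation by $a_1$ sends $m+na_1$ to $m+(n+1)a_1$, which changes $E_G^{\pm}$-class only when $n=0$, i.e.\ at integers, and the open interval $(0,1)$ contains none. Together with $K_{a_1}\cong L'+R$ this gives $A_0+A_1\cong P+L'$ and $A_1+A_0\cong P+L$ for the same interval $P=R+(0,1+a_1)(K_{[x]})$ of $\mathbb{R}(K_{[x]})$, and $P$ has no final $\sim_{\mathcal{D}}$-class. The paper obtains this $P$ as $\sum_i N_iA_{i+1}$ from the decomposition following Proposition \ref{AkplusAkplusone}, and then proves the leftover final segments are the final classes $L$ and $L'$.

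With $Q=(0,a_m)(K_{[x]})+L_0$, your two orders are $P+(L'+R)+Q$ and $P+(L+R)+Q$. Suppose the initial embedding $f$ did not carry the domain's copy of $P$ onto the codomain's copy. Then $f\upharpoonright P$ or $f^{-1}\upharpoonright P$ would be an isomorphism of $P$ onto a proper initial segment of $P$. Because $P$ has no final class, that segment lies in $R+(0,b')(K_{[x]})+K_{b'}$ for some $b'<1+a_1$, so it is a nested interval of strictly smaller length. Proposition \ref{ellIlessellJnonisoinsufficientsetting} now legitimately forbids this. Hence $f$ maps $(L'+R)+Q$ initially into $(L+R)+Q$. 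Since $Q\neq\emptyset$ lies in other classes, Corollary \ref{simDclassesinvarunderconvfcoro} gives $f[L'+R]=L+R$. Without this common-prefix step, nothing in your argument anchors $\hat f$.
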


Before proving the lemma, let us motivate it by observing that the hypothesis
\[
\textrm{$A + B \leqslant_{init} B + A$ or $B + A \leqslant_{init} A + B$}
\]
is enough in many cases to guarantee that $\omega A \cong \omega B$. But not in all cases: if we fix a left division setup witnessing the hypothesis and run the Euclidean algorithm, then if the resulting division system is non-terminating and non-splitting, sufficiency of the corresponding real representation is in general necessary to conclude $\omega A \cong \omega B$. 

The hypothesis in Lemma \ref{continuationlemma}.(i.) may be read as
\[
\textrm{$A + B + \varepsilon \leqslant_{init} B + A + \varepsilon$ or $B + A + \varepsilon \leqslant_{init} A + B + \varepsilon$}
\]
where $A_0 = A$, $A_1 = B$, and $\varepsilon$ is a small but non-negligible initial segment in $A$. In the case when the resulting division system is non-splitting and non-terminating, under this hypothesis the lemma guarantees that the corresponding real representation \textit{is} sufficient. Hence in particular we have $\omega A \cong \omega B$ by Proposition \ref{sufficiencyimpliesIsosforomegaprods}. We will need this fact to show $A \sim_{init}^b B$ implies $A \sim_{init} B$ in Theorem \ref{continuationthm} below. 

Here is the proof of Lemma \ref{continuationlemma}. 

\begin{proof}
For (i.): Let $A_0 = A$, $A_1 = B$, and $A_m = C$. Suppose we have the second of the two relations in the hypothesis:
\[
B + A + C \leqslant_{init} A + B + C.
\]
The proof when the first relation holds is similar.

By the representation theorem \ref{leftrealrepnthm}, there is an irrational $a_1 \in (0, 1)$ such that we have the following real representations:
\[
\begin{array}{rcl}
\mathbb{Z}A & \cong & \mathbb{R}(K_{[x]}) \\
A & \cong & R + (0, 1)(K_{[x]}) + L \\
B & \cong & R + (0, a_1)(K_{[x]}) + L',
\end{array}
\]
where $K_0 \cong L + R$ and $K_{a_1} \cong L' + R$. Our goal is to show that the representation $\mathbb{R}(K_{[x]})$ is sufficient, i.e. that $K_0 \cong K_{a_1}$, i.e. that $L + R \cong L' + R$.

By the discussion following Proposition \ref{AkplusAkplusone}, there are final segments $M$ and $M'$ of $B + A$ and $A + B$ respectively, such that 
\[
\begin{array}{rcl}
B + A & \cong & \sum_{i \in \omega} N_i A_{i+1} + M \\
A + B & \cong & \sum_{i \in \omega} N_i A_{i+1} + M'.
\end{array}
\]
We claim that $M \cong L$ and $M' \cong L'$, i.e. that $M$ and $M'$ are the final $\sim_{\mathcal{D}}$-classes in $B + A$ and $A + B$ respectively. We first prove $M \cong L$. 

Label $B + A = [c, d]$ by its endcuts, and let $m$ be the cut at the $+$ sign preceding $M$ in the above expression for $B+A$, so that $c < m \leq d$ and 
\[
\begin{array}{rcl}
\textrm{$[c, m]$} & \cong & \sum_{i \in \omega} N_i A_{i+1} \\
\textrm{$[m,d]$} & \cong & M.
\end{array}
\]

Let $d' \leq d$ denote the cut at the left of the rightmost $\sim_{\mathcal{D}}$-class in $B + A$. More precisely, if there is a non-empty rightmost $\sim_{\mathcal{D}}$-class in $B + A$, let $d'$ denote its left endcut, and if there is no such class let $d' = d$.

We prove $d' = m$. Suppose first $d' < m$. It follows from the decomposition above that every final segment of $B + A$ strictly containing $[m, d]$ contains a tail-sum of the form $\sum_{i \geq k} N_i A_{i+1}$ for some large enough $k$; in particular $[d', d]$ contains such a segment. By the discussion after Proposition \ref{AkplusAkplusone}, such a tail-sum is isomorphic to either $A_k + A_{k+1}$ or $A_{k+1} + A_k$, depending on the parity of $k$. Let us suppose the former; the argument in the second case is similar.  

Writing $A_k$ as $N_kA_{k+1} + A_{k+2}$, from the fact that $A_{k+3}$ is final in $A_{k+1}$ it follows we may find points $x < y$ in $[d', d]$ containing a convex copy of $A_{k+3}$, contradicting that $x \sim_{\mathcal{D}} y$. Hence $d' \geq m$. 

If $d' > m$, then $[m, d]$ contains a pair of points $x < y$ such that $x \not\sim_{\mathcal{D}} y$. Hence $A_k \leqslant_{conv} [x, y] \leqslant_{conv} [m, d]$ for some $k$. Since $M$ is a final segment of every tail-sum $\sum_{i \geq n} N_i A_{i+1}$, there are arbitrarily large $n$ such that $[m, d] \leqslant_{fin} A_n + A_{n+1}$. In particular we may find $n \geq k + 3$ such that $[m, d] \leqslant_{fin} A_n + A_{n+1}$. Writing $A_k \cong N_k A_{k+1} + A_{k+2}$, we have the chain
\[
A_{k+1} + A_{k+2} \leqslant_{conv} N_k A_{k+1} + A_{k+2} \cong A_k \leqslant_{conv} [m, d] \leqslant_{fin} A_n + A_{n+1}. 
\]
Thus $A_{k+1} + A_{k+2} \leqslant_{conv} A_n + A_{n+1}$, which yields $A_{k+1} \leqslant_{conv} A_n \leqslant_{conv} A_{k+3}$ or $A_{k+2} \leqslant_{conv} A_{n+1} \leqslant_{conv} A_{k+4}$. Since $2A_{k+3} \leqslant_{conv} A_{k+1}$ and $2A_{k+4} \leqslant_{conv} A_{k+2}$ by Lemma \ref{splittinglemmadivissystems}, we deduce that either $A_{k+3}$ or $A_{k+4}$ is splitting, a contradiction, since $\mathcal{D}$ is non-splitting. Hence $m = d'$ as claimed. 

We now prove $M \cong L$. Identify $A$ with its representation $R + (0, 1)(K_{[x]}) + L$, and fix a final embedding $f: A \rightarrow B + A$. Since $L$ is the final $\sim_{\mathcal{D}}$-class in $A$ and $\sim_{\mathcal{D}}$-classes are invariant under convex embeddings, $f$ must map $L$ onto the final $\sim_{\mathcal{D}}$-class in $B+A$. Thus $f[L] = M$. More precisely, one of $B + A$ and $A$ has a non-empty final $\sim_{\mathcal{D}}$-class if and only if the other does, hence $L$ is non-empty if and only if $M$ is non-empty, in which case $f[L] = M$. Thus $M \cong L$, as claimed. 

A symmetric argument for $A + B$ shows $L' \cong M'$.

Now, as $C = A_m$ there is an irrational $a_m \in (0, a_1)$ such that we have the real representation
\[
C \cong R + (0, a_m)(K_{[x]}) + L_0,
\]
where either $L_0 = L$ or $L_0 = L'$ depending on the parity of $m$, and $K_{a_m} \cong L_0 + R$. We obtain the following expressions for $B + A + C$ and $A + B + C$:
\begin{equation}\label{rubbadubba}
\begin{array}{rcl}
B + A + C & \cong & \sum_i N_i A_{i+1} + L + R + (0, a_m)(K_{[x]}) + L_0 \\
A + B + C & \cong & \sum_i N_i A_{i+1} + L' + R + (0, a_m)(K_{[x]}) + L_0
\end{array}
\end{equation}

By assumption we have $B + A + C \leqslant_{init} A + B + C$. We will use an embedding witnessing this relation to show $L + R \cong L' + R$. The argument for the alternate case when $A + B + C \leqslant_{init} B + A + C$ is essentially symmetric. 

Label $B + A + C = [x, y]$ and $A + B + C = [x', y']$ by their endcuts. Fix cuts $p \leq q$ in $B + A + C$ corresponding to the cuts at the $+$ signs to the left of $L$ and right of $R$ in the above expression \ref{rubbadubba}, so that $[x, p] \cong \sum_i N_iA_{i+1}$, $[p, q] \cong L + R$, and $[q, y] \cong (0, a_m)(K_{[x]}) + L_0$. Fix corresponding cuts $p' \leq q'$ in $A + B + C$, and let $f: B + A + C \rightarrow A + B + C$ be an initial embedding. 

We claim $f(p) = p'$. Since $[x, p] \cong [x', p'] \cong \sum_i N_i A_{i+1}$, if $f(p) < p'$ then $f \upharpoonright [x, p]$ is a strict initial embedding of $\sum_i N_i A_{i+1}$ into itself. And if $f(p) > p'$ then $f^{-1} \upharpoonright [x', p']$ is such an embedding. Thus to show $f(p) = p'$ it suffices to show no such embedding exists. We give a length invariance argument that makes use of Proposition \ref{ellIlessellJnonisoinsufficientsetting}.

Since
\[
\sum_{i \in \omega} N_i A_{i+1} + L' \cong A + B \leqslant_{init} 2A \leqslant_{conv} \mathbb{Z}A \cong \mathbb{R}(K_{[x]}),
\]
we have that $\sum_{i \in \omega} N_i A_{i+1}$ is (isomorphic to) an interval in $\mathbb{R}(K_{[x]})$. Since this interval contains convex copies of the orders $A_k$, its length in $\mathbb{R}(K_{[x]})$ must be strictly positive. Thus by Proposition \ref{ellforintervalspropn}, we have 
\[
\sum_{i \in \omega} N_i A_{i+1} + L' \cong M + (a, b)(K_{[x]}) + N
\]
for some $a < b$ in $\mathbb{R}$, $M$ final in $K_a$, and $N$ initial in $K_b$. (In fact, it is not hard to show this isomorphism holds with $a = 0$ and $b = 1 + a_1$.) Since $L'$ and $N$ both label the final $\sim_{\mathcal{D}}$-class in $A + B$, these segments must coincide, so that
\[
\sum_{i \in \omega} N_i A_{i+1} \cong M + (a, b)(K_{[x]}).
\]

Since $M + (a, b)(K_{[x]})$ has no final $\sim_{\mathcal{D}}$-class, any strict initial segment $I$ of this order must be contained in $M + (a, b')(K_{[x]})$ for some $b' < b$, and hence $\ell(I) \leq b' - a < b - a = \ell(M + (a, b)(K_{[x]}))$. By Proposition \ref{ellIlessellJnonisoinsufficientsetting}, since $\mathcal{D}$ is non-splitting, there is no isomorphism from $\sum_i N_i A_{i+1} \cong M + (a, b)(K_{[x]})$ onto such a strict initial segment, as claimed. (It is exactly here we need the generality of Proposition \ref{ellIlessellJnonisoinsufficientsetting}, since we have not yet established the sufficiency of the representation $\mathbb{R}(K_{[x]})$.) By the above discussion, we have $f(p) = p'$. 

Thus the restriction $f \upharpoonright [p, y]$ witnesses
\[
L + R + (0, a_m)(K_{[x]}) + L_0 \leqslant_{init} L' + R + (0, a_m)(K_{[x]}) + L_0
\]

We next claim that $f(q) = q'$, which implies $f[L + R] = f[[p, q]] = [p', q'] = L' + R$. It suffices to check that $L + R$ and $L' + R$ are the initial $\sim_{\mathcal{D}}$-classes in these orders.

Let $q^*$ denote the right endcut of the initial $\sim_{\mathcal{D}}$-class in $L + R + (0, a_m)(K_{[x]}) + L_0$. More precisely, if there is no initial $\sim_{\mathcal{D}}$-class let $q^* = p$, otherwise let $q^*$ denote the right endcut of this class, which is then equal to $[p, q^*]$. 

We claim $q^* = q$. If $q < q^*$, then $[p, q^*]$ contains an interval of the form $(0, a)(K_{[x]})$, for some $a < a_m$. By Lemma \ref{embeddingAkwheninsufficientlemma}, such an interval convexly embeds $A_k$ for some $k$, contradicting that $[p, q^*]$ is a $\sim_{\mathcal{D}}$-class. If $q > q^*$, then $[p, q] \cong L + R$ convexly embeds some $A_k$, a contradiction, since $L + R \cong K_0$, which is a $\sim_{\mathcal{D}}$-class in $\mathbb{R}(K_{[x]})$ and thus convexly embeds no $A_k$. Hence $q = q^*$ as claimed.

It follows $L + R$ is the initial $\sim_{\mathcal{D}}$-class in $L + R + (0, a_m)(K_{[x]}) + L_0$. A similar argument shows $L' + R$ is the initial $\sim_{\mathcal{D}}$-class in $L' + R + (0, a_m)(K_{[x]}) + L_0$. It follows $f[L + R] = L' + R$, and so $L + R \cong L' + R$. Thus the real representation 
\[
\mathbb{Z}A \cong \mathbb{R}(K_{[x]})
\]
is sufficient, and we are done. 

The proof for (ii.) is symmetric. 
\end{proof}

With Lemma \ref{continuationlemma} in hand, we may prove the equivalence of the bounded relations $\sim_{\star}^b$ and $\approx^b$ with their unbounded counterparts. 

\theoremstyle{definition}
\newtheorem{continuationthm}[lct]{Theorem}
\begin{continuationthm}\label{continuationthm}
(Continuation):
\begin{itemize}
    \item[i.] For $\star \in \{init, fin, conv\}$, $A \sim_{\star}^b B$ if and only if $A \sim_{\star} B$;
    \item[ii.] $A \approx^b B$ if and only if $A \approx B$. 
\end{itemize}
\end{continuationthm}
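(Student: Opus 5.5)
The backward directions are immediate: if $A \sim_{\star} B$, then Proposition \ref{AsimstarBiffnAleqstarmBleqstarnApropn} applied with $n=2$ (or $n=3$ for $\star = conv$) gives $2A \leqslant_\star mB$, and symmetrically for $B$. For (ii.) this is combined with the observation that $\approx$ is the conjunction of $\sim_{init}$ and $\sim_{fin}$, and likewise $\approx^b$ is the conjunction of $\sim^b_{init}$ and $\sim^b_{fin}$. So (ii.) follows from (i.) for $init$ and $fin$, and the real work is the forward direction of (i.). I will treat $\star = init$ in detail; $fin$ is symmetric via right division systems and Lemma \ref{continuationlemma}.(ii.).

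Assume $2A \leqslant_{init} mB$ and $2B \leqslant_{init} nA$. By Proposition \ref{simstarandisosofinfdiscreteproducts} the goal is $\omega A \cong \omega B$.

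First dispose of the splitting case. The hypotheses are exactly those of Lemma \ref{2AconvnB2BconvmAsplittingiff}, so $A$ and $B$ are splitting together. If both are splitting, then $A \leqslant_{init} mB \cong B$ and $B \leqslant_{init} A$. Lemma \ref{AinitBinitAandsplimpliesomegasiso}.(i.) then gives $\omega A \cong \omega B$.

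In the non-splitting case, first get a left division setup. Both $A$ and $B$ are initial in $mB$ and in $nA$. Taking $A + B \leqslant_{init} A + A \leqslant_{init} nA$ (via $B \leqslant_{init} A$ or $A \leqslant_{init} B$, by Proposition \ref{bothinitbothfin}), I will argue that both $A+B$ and $B+A$ embed initially in a common order, for instance $\max(m,n)\cdot$(the larger of $A,B$). More precisely, if $B \leqslant_{init} A$, then $B + A \leqslant_{init} 2A \leqslant_{init} mB$ and $A + B \leqslant_{init} 2A$ up to checking. Proposition \ref{bothinitbothfin} then yields $A+B \leqslant_{init} B+A$ or the reverse.

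Better, I will aim directly for the continuation hypothesis: show that $A+B+\varepsilon$ and $B+A+\varepsilon$ are both initial in some order $X$, for a small term $\varepsilon = A_k$. Then run the left Euclidean algorithm. If it terminates, the analysis of Section \ref{subsect:leftalgorun} gives $\omega A_0 \cong \omega A_1$ in the divisor case. In the absorbed-factor case it gives one of $A, B$ absorbing the other's $\omega$-sum, e.g. $\omega B \leqslant_{init} A$. But then $2A \leqslant_{init} mB \leqslant_{init} A$, so $A$ would be splitting, a contradiction. For $k>0$ absorption, $\omega A_i \cong \omega A_k$ for all $i$ anyway. If it does not terminate, the system is non-terminating and non-splitting. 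Once I verify the hypothesis of Lemma \ref{continuationlemma}.(i.), the real representation is sufficient, and Proposition \ref{sufficiencyimpliesIsosforomegaprods}.(i.) gives $\omega A \cong \omega B$.

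The main obstacle is producing the relation $A_0 + A_1 + A_m \leqslant_{init} A_1 + A_0 + A_m$ (or its reverse) from the bounded hypotheses. My plan is to use $2A \leqslant_{init} mB$ and $2B \leqslant_{init} nA$ to place both $A+B+A_m$ and $B+A+A_m$ initially inside a common sum such as $mB + nA$ or $(m+n)A_0$ decomposed via the division system. I expect to use $A_1 \leqslant_{init} A_0$, Lemma \ref{splittinglemmadivissystems}, and the fact that $A_m$ is initial in both $A_0$ and $A_1$, so that $A_1 + A_0 + A_m \leqslant_{init} A_1 + 2A_0 \leqslant_{init} A_1 + mA_1 \cong (m+1)A_1$. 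Symmetric placements should put $A_0+A_1+A_m$ initially in the same order; Proposition \ref{bothinitbothfin} then delivers one of the two relations. Getting the placements exactly right is where the bound $2$ in the definition is used.

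For $\star = conv$, I will mimic this argument with the coefficient $3$. From $3A \leqslant_{conv} mB$ I extract a centered copy: splitting $mB$ at the cut falling inside the middle copy of $A$ should give simultaneously a $\lesssim^b_{init}$- and $\lesssim^b_{fin}$-type relation between suitable tails. Alternatively, I will reduce via Proposition \ref{simstarandisosofinfdiscreteproducts}.(iii.) and the one-sided cases to conclude $\mathbb{Z}A \cong \mathbb{Z}B$. The splitting case again follows from Lemma \ref{AinitBinitAandsplimpliesomegasiso}.(iii.).
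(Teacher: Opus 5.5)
\paragraph{The init and fin cases.}
Your backward directions, the reduction of (ii.) to (i.), and the case $\star = init$ (hence $fin$) are sound. Your init argument is in fact a bit leaner than the paper's. You dispose of the splitting case up front via Lemmas \ref{2AconvnB2BconvmAsplittingiff} and \ref{AinitBinitAandsplimpliesomegasiso}, and you keep \emph{both} bounded relations. So you have $2A_0 \leqslant_{init} kA_1$ whichever of $A, B$ becomes the dividend, and you never need the paper's length argument (Proposition \ref{ellIlessellJnonisoinsufficientsetting}) identifying $A_0 = A$.

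Two local repairs are needed:
\begin{itemize}
\item[a.] $B \leqslant_{init} A$ gives $A + B \leqslant_{init} 2A$, but does not by itself give $B + A \leqslant_{init} 2A$. So take the common order to be $\omega B$, a multiple of the \emph{smaller} term, using $B + A \leqslant_{init} B + 2A \leqslant_{init} (m+1)B$.
\item[b.] The placement of $A_0 + A_1 + A_m$ is not symmetric to that of $A_1 + A_0 + A_m$. Take $A_m = A_2$. From $A_0 \cong N_0A_1 + A_2$ and $A_2 \leqslant_{init} A_1$ you get $A_1 + A_2 \leqslant_{init} A_0$, hence $A_0 + A_1 + A_2 \leqslant_{init} 2A_0 \leqslant_{init} kA_1$.
\end{itemize}

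\paragraph{The gap: $\star = conv$.}
This case cannot be handled by ``mimicking'' the init case. The hypothesis $A \sim^b_{conv} B$ yields no one-sided bounded relation between $A$ and $B$ themselves. For example, $A = \omega^* + 1$ and $B = 1 + \omega^*$ satisfy $\mathbb{Z}A \cong \mathbb{Z}B$, while $\omega A \not\cong \omega B$ and $\omega^*A \not\cong \omega^*B$; so neither $A \sim^b_{init} B$ nor $A \sim^b_{fin} B$ holds.

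One must therefore pass to rotations, and your centered cut does not produce usable ones. A $+$-sign of $mB$ inside the middle copy $A \cong A_L + A_R$ gives only $A_R + A \leqslant_{init} kB$ and $A + A_L \leqslant_{fin} jB$. These are not of the form $2X \leqslant_{init} kY$. If no $+$-sign meets that copy, you learn only $A \leqslant_{conv} B$.

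The paper rotates $B$ instead. Cut the copy of $B$ containing the left end of the image of $2A$ at that point, so $B \cong C + D$. Then $2A \leqslant_{init} mB'$ with $B' = D + C$, and $\mathbb{Z}B' \cong \mathbb{Z}B$.
\begin{itemize}
\item[a.] If $2B' \leqslant_{init} 2A$, this is the init situation for $(A, B')$.
\item[b.] If $2A \leqslant_{init} 2B'$, the coefficient $3$ enters through $C + 2B' + D \cong 3B \leqslant_{conv} nA$. This only gives $2B' \leqslant_{init} \omega A'$ for a \emph{second} rotation $A' = Q + P$ of $A = P + Q$. To close the loop one must prove $\omega(P+Q) \cong \omega(Q+P)$. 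The paper does this by a separate run of the Euclidean algorithm on $P, Q$, with Lemma \ref{continuationlemma} applied to $P + Q + Q$ versus $Q + P + Q$.
\end{itemize}
This mismatch between the two rotations, and the extra continuation argument that resolves it, is what your proposal is missing.

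\paragraph{The fallback reduction is circular.}
Your fallback ``reduce via Proposition \ref{simstarandisosofinfdiscreteproducts}.(iii.) and the one-sided cases'' is circular as stated. The one-sided cases deliver only $\omega$-isomorphisms. The implication $\omega X \cong \omega Y \Rightarrow \mathbb{Z}X \cong \mathbb{Z}Y$ is Theorem \ref{ericsthm}, which the paper derives from the conv case of this very theorem. You have to check branch by branch that the argument gives $\mathbb{Z}A \cong \mathbb{Z}B'$ directly: the terminating run, the splitting system (Lemma \ref{AinitBinitAandsplimpliesomegasiso}.(iii.)), and the sufficient representation (Proposition \ref{sufficiencyimpliesIsosforomegaprods}.(i.)).
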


Before the proof, let us again give some motivating discussion. Taking $\star = init$, (i.) says that $\omega A \cong \omega B$ if and only if there exist natural numbers $n$ and $m$ such that $2A \leqslant_{init} mB$ and $2B \leqslant_{init} nA$. Thus not only is the converse to Proposition \ref{AsimstarBiffnAleqstarmBleqstarnApropn} true (see Proposition \ref{boundedarchimequivisequivtounboundedarchimeequiv} below), but to guarantee $\omega A \cong \omega B$ it suffices to check that \textit{two} copies of $A$ (instead of any $n$ copies of $A$) can be extended rightward to some $m$ copies of $B$, and vice versa. 

One might ask if this bounded condition could be optimized further while maintaining equivalence with $\omega A \cong \omega B$. Replacing $2$ with $1$ leads to a strictly weaker condition, that is, $A \leqslant_{init} mB$ and $B \leqslant_{init} kA$ does not guarantee $\omega A \cong \omega B$ in general. Indeed, it is not hard to produce examples with $A \leqslant_{init} B \leqslant_{init} A$ but $\omega A \not\cong \omega B$. 

Suppose $\mathcal{D} = \{A_k\}$ is a left division system with $\{A, B\} = \{A_0, A_1\}$. The existence of such a system is implied by $\omega A \cong \omega B$ by Corollary \ref{omegaAomegastarAinitfinomegaBomegastarB}. In the other direction, if the system is terminating, then we have that $\omega A \cong \omega B$. Even when the system is non-terminating, then (as discussed also before Lemma \ref{continuationlemma}) if the corresponding representation is sufficient then the isomorphism still holds. However, there are examples showing that if $\mathcal{D}$ is non-splitting and non-terminating and the corresponding real representation $\mathbb{R}(K_{[x]})$ is insufficient, then it can happen that $\omega A \not\cong \omega B$. 

The proof of the Theorem \ref{continuationthm} below shows that $A \sim_{init}^b B$ implies the existence of such a system, and in the non-terminating and non-splitting case, the $2$ appearing in the definition of $\sim^b_{init}$ is enough (via Lemma \ref{continuationlemma}) to guarantee that the system is sufficient. 

\begin{proof}
For (i.): Suppose $\star = init$. It is easy to see that $A \sim_{init} B$ implies $A \sim_{init}^b B$. 

So suppose $A \sim_{init}^b B$. We must show $A \sim_{init} B$, or equivalently by Proposition \ref{simstarandisosofinfdiscreteproducts}, that $\omega A \cong \omega B$. 

Fix $m \geq 1$ such that $2A \leqslant_{init} mB$. Then $2A \leqslant_{init} (m+1)B$ as well; hence we may assume $m \geq 2$. Then since both $2A \leqslant_{init} mB$ and $2B \leqslant_{init} mB$, either $2A \leqslant_{init} 2B$ or $2B \leqslant_{init} 2A$. 

Symmetrically, if we fix $n \geq 1$ such that $2B \leqslant_{init} nA$, we may deduce that either $2B \leqslant_{init} 2A$ or $2A \leqslant_{init} 2B$. Since the situation is symmetric in $A$ and $B$, we may assume $2B \leqslant_{init} 2A$. 

Then also $B \leqslant_{init} A$ by Proposition \ref{leqslantinitfincancellationthm}. Thus we have the chain:
\begin{equation}\label{turkishtopaz}
B \leqslant_{init} A \leqslant_{init} 2A \leqslant_{init} mB \leqslant_{init} \omega B.  
\end{equation}

From $2A \leqslant_{init} \omega B$ we get $B + 2A \leqslant_{init} B + \omega B \cong \omega B$. For consideration later, we separate out the following relations:
\begin{equation}\label{turkishsilver}
\begin{array}{rcl}
B + 2A & \leqslant_{init} & \omega B, \\
2A & \leqslant_{init} & \omega B. 
\end{array}
\end{equation}

Since $B \leqslant_{init} A$, we have $A + B \leqslant_{init} A + A \cong 2A$. Then since $B + A \leqslant_{init} B + 2A$, we deduce from \ref{turkishsilver}:
\[
\begin{array}{rcl}
B + A & \leqslant_{init} & \omega B, \\
A + B & \leqslant_{init} & \omega B. 
\end{array}
\]
Thus either $B + A \leqslant_{init} A + B$, or vice versa. In either case, we may fix a left setup witnessing the relation, and run the left Euclidean algorithm. 

Suppose the algorithm terminates at Stage 0 in an absorbed factor. If the factor is $B$, then $\omega B \leqslant_{init} A$. Then since $A$ is initial in $mB$, it follows (by a now routine argument) that $B$ is splitting and $A$ is splitting, which yields $A \leqslant_{init} B \leqslant_{init} A$. Then Proposition \ref{AinitBinitAandsplimpliesomegasiso} gives that $\omega A \cong \omega B$, and we are done. The argument is similar if the absorbed factor is $A$. 

If the algorithm terminates at a finite stage, then by the discussion in Section \ref{subsect:leftalgorun} we have $\omega A \cong \omega B$, and we are done. 

If the algorithm is non-terminating but the resulting division system is splitting, then $A$ and $B$ are splitting and $A \leqslant_{init} B \leqslant_{init} A$, so that again we have $\omega A \cong \omega B$, and we are done. 

Thus we may assume that the algorithm is non-terminating, and that the resulting left division system $\mathcal{D} = \{A_k \cong N_k A_{k+1} + A_{k+2}\}$ is non-splitting. 

We have $\{A_0, A_1\} = \{A, B\}$; we claim that in fact $A_0 = A$ and $A_1 = B$. If not, then $A_0 = B$ and $A_1 = A$. By Theorem \ref{leftrealrepnthm}, there is an irrational number $a_1 \in (0, 1)$ such that we have the following real representations for $\mathbb{Z}B = \mathbb{Z}A_0$, $B = A_0$, and $A = A_1$:
\[
\begin{array}{rcl}
\mathbb{Z}B & \cong & \mathbb{R}(K_{[x]}) \\
B & \cong & R + (0, 1)(K_{[x]}) + L \\
A & \cong & R + (0, a_1)(K_{[x]}) + L'.
\end{array}
\]

Let $I$ and $J$ denote the intervals $R + (0, 1)(K_{[x]}) + L$ and $R + (0, a_1)(K_{[x]}) + L'$ in $\mathbb{R}(K_{[x]})$ respectively. Since $B \leqslant_{init} A$, there is an initial segment $I'$ of $J$ such that $I' \cong B$. Then $\ell(I') \leq \ell(J) = a_1 < \ell(I) = 1$. Since $B \cong I' \cong I$ and $I' \subseteq I$, this contradicts Theorem \ref{ellIlessellJnonisoinsufficientsetting}. Hence $A_0 = A$ and $A_1 = B$, as claimed. 

Thus, instead, there is an irrational $a_1 \in (0, 1)$ such that we have the following representations:
\[
\begin{array}{rcl}
\mathbb{Z}A & \cong & \mathbb{R}(K_{[x]}) \\
A & \cong & R + (0, 1)(K_{[x]}) + L \\
B & \cong & R + (0, a_1)(K_{[x]}) + L',
\end{array}
\]
where $K_0 \cong L + R$ and $K_{a_1} \cong L' + R$. Our goal is to show that the representation $\mathbb{R}(K_{[x]})$ is sufficient, i.e. that $K_0 \cong K_{a_1}$, i.e. that $L + R \cong L' + R$.  

Let $C = A_2$. Then there is an irrational $a_2 \in (0, a_1)$ such that we have the real representation
\[
C \cong R + (0, a_2)(K_{[x]}) + L,
\]
where $K_{a_2} \cong K_0 \cong L + R$. 

Since $A \cong N_0 B + C$ and $C$ is initial in $B$, we have $B + C \leqslant_{init} A$ and hence $A + B + C \leqslant_{init} 2A$. Since $C$ is also initial in $A$, we have $B + A + C \leqslant_{init} B + 2A$. Since both $2A$ and $B + 2A$ are initial in $\omega B$ by equation \ref{turkishsilver}, both $A + B + C$ and $B + A + C$ are initial in $\omega B$. It follows that one of $A + B + C$ and $B + A + C$ is initial in the other. 

Hence $A + B + C$ and $B + A + C$ satisfy the hypothesis of the continuation lemma \ref{continuationlemma}.(i.). It follows by that lemma that the real representation 
\[
\mathbb{Z}A \cong \mathbb{R}(K_{[x]})
\]
is sufficient. By Proposition \ref{sufficiencyimpliesIsosforomegaprods}, we have $\omega A_0 \cong \omega A_1$, i.e. $\omega A \cong \omega B$. Thus $A \sim_{init} B$, as desired. 

If $\star = fin$, a symmetric argument yields $A \sim_{fin} B$.

The only remaining case for part (i.) of the theorem is when $\star = conv$. Again, it suffices to check that $A \sim_{conv}^b B$ implies $A \sim_{conv} B$. 

Suppose $A \sim_{conv}^b B$, and fix $m$ such that $3A \leqslant_{conv} mB$. Then also $2A \leqslant_{conv} mB$. Let $f: 2A \rightarrow mB$ be a convex embedding witnessing this relation. 

Fix a cut sequence $a_0 < a_1 < a_2$ in $2A$ witnessing its decomposition into two copies of $A$, i.e. such that $a_0, a_2$ are the endcuts of $2A$ and $[a_0, a_1] \cong [a_1, a_2] \cong A$. Likewise fix a cut sequence $b_0 < b_1 < \ldots < b_m$ in $mB$ witnessing its decomposition into $m$ copies of $B$. 

By removing any copies of $B$ at the left of $mB$ that are disjoint from the image of $f$, we may assume that $b_0 \leq f(a_0) < b_1$. Let $C = [b_0, f(a_0)]$ and $D = [f(a_0), b_1]$. Then we have
\[
B \cong [b_0, b_1] \cong [b_0, f(a_0)] + [f(a_0), b_1] \cong C + D.
\]
Thus
\[
\begin{array}{rcl}
[f(a_0), b_m] & \cong & [f(a_0), b_1] + [b_1, b_2] + \cdots + [b_{m-1}, b_m] \\
& \cong & D + \underbrace{B + B + \cdots + B}_{\textrm{$m-1$ times}} \\ 
& \cong & D + (C + D) + (C + D) + \cdots + (C + D) \\
& \cong & (D + C) + (D + C) + \cdots + (D + C) + D \\
& \cong & (m-1)B' + D,
\end{array}
\]
where $B' = D + C$. 

Note that $f$ is an initial embedding of $2A$ in $[f(a_0), b_m]$, thus witnessing $2A \leqslant_{init} (m-1)B' + D$. Since $(m-1)B' + D \leqslant_{init} (m-1)B' + D + C \cong mB'$, we have $2A \leqslant_{init} mB'$. Adding an extra copy of $B'$ at the right if necessary, we may assume $m \geq 2$. Then $2B' \leqslant_{init} mB'$, and so either $2B' \leqslant_{init} 2A$ or $2A \leqslant_{init} 2B'$. 

Suppose first that $2B' \leqslant_{init} 2A$. Then $B' \leqslant_{init} A$ by \ref{BinitCimpliesABinitAC}, and we have the chain
\begin{equation}\label{turkishopal}
B' \leqslant_{init} A \leqslant_{init} 2A \leqslant_{init} mB' \leqslant_{init} \omega B'.  
\end{equation}
It is from exactly this setup (see equation \ref{turkishtopaz}), with $B$ instead of $B'$, that we argued when $\star = init$ that, in all possible cases, we have $\omega A \cong \omega B$. Those same arguments applied here thus yield $\omega A \cong \omega B'$. 

In fact, the arguments yield $\mathbb{Z}A \cong \mathbb{Z}B'$: in the case when the corresponding Euclidean algorithm is terminating, we get not only $\omega A \cong \omega B'$ but also $\mathbb{Z}A \cong \mathbb{Z}B'$ by the discussion in Section \ref{subsect:leftalgorun}; when the algorithm is non-terminating and the resulting division system is splitting we get $\mathbb{Z}A \cong \mathbb{Z}B'$ by Proposition \ref{AinitBinitAandsplimpliesomegasiso}.(iii.), whose hypotheses are weaker than Proposition \ref{AinitBinitAandsplimpliesomegasiso}.(i.) used to show $\omega A \cong \omega B'$; and when the algorithm is non-terminating and the system is non-splitting, we get $\mathbb{Z}A \cong \mathbb{Z}B'$ from Proposition \ref{sufficiencyimpliesIsosforomegaprods} by the sufficiency of the resulting real representation, established above.

Now observe
\[
\begin{array}{rcl}
\mathbb{Z}B & \cong & \cdots + B + B + B + \cdots \\
& \cong & \cdots + (C + D) + (C + D) + (C + D) + \cdots \\
& \cong & \cdots + C) + (D + C) + (D + C) + (D + \cdots \\
& \cong & \mathbb{Z}B'.
\end{array}
\]
Hence $\mathbb{Z}A \cong \mathbb{Z}B$ as well, which gives $A \sim_{conv} B$ by Proposition \ref{simstarandisosofinfdiscreteproducts}.

It remains to show $A \sim_{conv} B$ in the case when $2A \leqslant_{init} 2B'$. Fix a cut sequence $b_0' < b_1' < b_2'$ in $2B'$ witnessing its decomposition into two copies of $B'$, likewise a cut sequence $a_0 < a_1 < a_2$ in $2A$. Also fix an initial embedding $g: 2A \rightarrow 2B'$. 

Observe that
\[
C + 2B' + D \cong C + 2(D+C) + D \cong C + D + C + D + C + D \cong 3B.
\]
Thus $2B' \leqslant_{conv} 3B$, and by hypothesis there is $n$ such that $3B \leqslant_{conv} nA$. Hence $2B' \leqslant_{conv} nA$, and it follows $2B' \leqslant_{conv} \omega A$.

Fix a cut sequence $a_0' < a_1' < \ldots$ witnessing the decomposition of $\omega A$ into $\omega$ copies of $A$, as well as a convex embedding $h: 2B' \rightarrow \omega A$. Again, by discarding some initial copies of $A$ if need be, we may assume $a_0' \leq h(b_0') < a_1'$. 

Let $P = [a_0', h(b_0')]$ and $Q = [h(b_0'), a_1']$. Thus
\[
A \cong [a_0', a_1'] \cong [a_0', h(b_0')] + [h(b_0'), a_1'] \cong P + Q.
\]

Let $A' = Q + P$. We claim that $\omega A \cong \omega A'$, i.e. that $\omega (P + Q) \cong \omega (Q + P)$. Before proving it, let us check that this claim suffices to finish the argument.

We have $2A \leqslant_{init} 2B'$ by hypothesis, and hence $A \leqslant_{init} B'$. The embedding $h$ witnesses
\[
\begin{array}{rcl}
2B' & \leqslant_{init} & [h(b_0'), a_1'] + [a_1', a_2'] + [a_2', a_3'] + \cdots \\
& \cong & Q + A + A + \cdots \\
& \cong & Q + (P + Q) + (P + Q) + \cdots \\
& \cong & (Q + P) + (Q + P) + \cdots \\
& \cong & \omega A'.
\end{array}
\]
Thus if $\omega A' \cong \omega A$, then we have the chain
\[
A \leqslant_{init} B' \leqslant_{init} 2B' \leqslant_{init} \omega A
\]
which is the setup from which we may prove $\mathbb{Z}A \cong \mathbb{Z}B' \cong \mathbb{Z}B$, as above. Thus $A \sim_{conv} B$. 

So we prove $\omega A \cong \omega A'$, i.e. $\omega (P + Q) \cong \omega (Q + P)$. First note that if we view $h$ as an initial embedding of $2B'$ into $\omega A'$ as above, then its composition $hg$ with the initial embedding $g: 2A \rightarrow 2B'$ witnesses $2A \leqslant_{init} \omega A'$, i.e. $2(P + Q) \leqslant_{init} \omega (Q + P)$. 

It follows $P + Q \leqslant_{init} \omega (Q + P)$. Since $Q + P$ is also initial in $\omega (Q + P)$, one of $P + Q$ and $Q + P$ is initial in the other. Either way, we may fix a left setup witnessing the relation and run the Euclidean algorithm. 

If the algorithm terminates at Stage 0 in an absorbed factor, then one of $P + Q \cong Q$ and $Q + P \cong P$ holds. In the first case we have
\[
\begin{array}{rcl}
\omega(P + Q) & \cong & P + Q + P + Q + \cdots \\
& \cong & Q + P + Q + P + \cdots \\
& \cong & \omega(Q + P),
\end{array}
\]
where in passing from the first to second line we have absorbed the initial $P$ into the adjacent copy of $Q$ at its right. Likewise, in the second case we also have $\omega(P + Q) \cong \omega(Q + P)$. 

If the algorithm terminates at a finite stage, we have routinely that $\omega P \cong \omega Q$. Then by Proposition \ref{omegaAcongomegaBthenomegaAplusBtoo} we have $\omega(P + Q) \cong \omega (Q + P)$. 

If the algorithm is non-terminating and the resulting division system is splitting, then routinely we have $\omega P \cong \omega Q$, and hence again $\omega(P + Q) \cong \omega(Q+P)$. 

Finally, suppose the algorithm is non-terminating and the resulting left division system $\mathcal{D} = \{A_k\}$ is non-splitting. Suppose $P = A_0$ and $Q = A_1$; the argument is similar in the reverse case. Then in particular $Q \leqslant_{init} P$. 

Since $2(P + Q) \leqslant_{init} \omega (Q + P)$, we have $P + Q + P \leqslant_{init} \omega (Q + P)$. Since $Q \leqslant_{init} P$, we have $P + Q + Q \leqslant_{init} P + Q + P$, and it follows $P + Q + Q \leqslant_{init} \omega (Q + P)$. Since also $(Q + P) + Q \leqslant_{init} \omega (Q + P)$, one of $P + Q + Q$ and $Q + P + Q$ is initial in the other. 

Thus the hypothesis of Lemma \ref{continuationlemma}.(i.) is satisfied relative to $P$ and $Q$. We conclude that the resulting real representation of $\mathbb{Z}P$ is sufficient, and then by \ref{sufficiencyimpliesIsosforomegaprods} that $\omega P \cong \omega Q$. Again by Proposition \ref{omegaAcongomegaBthenomegaAplusBtoo}, we have $\omega(P + Q) \cong \omega (Q+P)$. 

Thus in all cases we have $\omega A \cong \omega (P + Q) \cong \omega (Q + P) \cong \omega A'$. By the preceding discussion, this finishes the argument showing $A \sim_{conv}^b B$ implies $A \sim_{conv} B$, and hence the proof of (i.) in the statement of the theorem.

For (ii.): By (i.), $A \approx^b B$ implies both $A \sim_{init} B$ and $A \sim_{fin} B$, and hence $A \approx B$; the reverse implication is again clear. 
\end{proof}

From Theorem \ref{continuationthm} we can show that the converse to Proposition \ref{AsimstarBiffnAleqstarmBleqstarnApropn} holds.

\theoremstyle{definition}
\newtheorem{boundedarchimequivisequivtounboundedarchimeequiv}[lct]{Proposition}
\begin{boundedarchimequivisequivtounboundedarchimeequiv}\label{boundedarchimequivisequivtounboundedarchimeequiv}
For $\star \in \{init, fin, conv\}$, suppose the following conditions hold:
\begin{itemize}
\item[i.] For every $n \in \omega$, there is $m \in \omega$ such that $nA \leqslant_{\star} mB$; 
\item[ii.] For every $m' \in \omega$, there is $n' \in \omega$ such that $m'B \leqslant_{\star} n' A$. 
\end{itemize}
Then $A \sim_{\star} B$. 
\end{boundedarchimequivisequivtounboundedarchimeequiv}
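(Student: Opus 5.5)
The plan is to reduce the statement to the Continuation Theorem \ref{continuationthm}: conditions (i.) and (ii.) should yield the bounded relation $A \sim_{\star}^b B$, and then Theorem \ref{continuationthm}.(i.) converts this into $A \sim_{\star} B$.

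For $\star \in \{init, fin\}$, take $n = 2$ in (i.). This gives some $m$ with $2A \leqslant_{\star} mB$, which is exactly $A \lesssim_{\star}^b B$ in the sense of Definition \ref{lesssiminitfinbdefn}. Symmetrically, taking $m' = 2$ in (ii.) gives $n'$ with $2B \leqslant_{\star} n'A$, which is $B \lesssim_{\star}^b A$. Hence $A \sim_{\star}^b B$.

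For $\star = conv$, take $n = 3$ and $m' = 3$ instead. This produces $3A \leqslant_{conv} mB$ and $3B \leqslant_{conv} n'A$, i.e. $A \sim_{conv}^b B$ in the sense of Definition \ref{lesssimconvbdefn}. In every case Theorem \ref{continuationthm}.(i.) then gives $A \sim_{\star} B$.

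I expect no real obstacle, since all the work is contained in Theorem \ref{continuationthm}. The one point to check is the degenerate coefficient $m = 0$ (or $n' = 0$), where $mB$ is the empty order. If $A$ is nonempty, then $2A \leqslant_{\star} 0$ cannot hold, so $m \geq 1$ automatically. If $A$ is empty, then $2A$ embeds into every order, so we may take $m = 1$. Either way the bounded relations hold as defined.
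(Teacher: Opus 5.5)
Your proposal is correct and follows the paper's proof: read off $A \sim_{\star}^b B$ from (i.) and (ii.), then apply Theorem \ref{continuationthm}.(i.). The only difference is cosmetic: the paper takes coefficient $3$ in all three cases (for $init$ and $fin$ this also gives coefficient $2$, since $2A \leqslant_{\star} 3A$), while you take $2$ for $init$/$fin$ so that it matches Definition \ref{lesssiminitfinbdefn} directly.
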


\begin{proof}
For a fixed $\star \in \{init, fin, conv\}$, (i.) and (ii.) in particular imply that there are $m$ and $n'$ such that $3A \leqslant_{\star} mB$ and $3B \leqslant_{\star} n'A$. It follows that $A \lesssim_{\star}^b B \lesssim_{\star}^b A$, i.e. $A \sim_{\star}^b B$. Hence $A \sim_{\star} B$ by Theorem \ref{continuationthm}, as claimed. 
\end{proof}

Proposition \ref{archimequivalencetolesssimstar} below says that for $\star \in \{init, fin\}$, the converse to Proposition \ref{AlessapproxBiffnAlesssomemB} holds. To prove it, we need the following continuation result due to Tarski. 

\theoremstyle{definition}
\newtheorem{nAinitallAiffomegaAinitTarski}[lct]{Lemma}
\begin{nAinitallAiffomegaAinitTarski}\label{nAinitallAiffomegaAinitTarski}
(Tarski; \cite[1.44]{Tarski}) \phantom{.}
\begin{itemize}
    \item[i.] Suppose that for all integers $k \geq 1$, $kA \leqslant_{init} B$. Then $\omega A \leqslant_{init} B$;
    \item[ii.] Suppose that for all integers $k \geq 1$, $kA \leqslant_{fin} B$. Then $\omega^* A \leqslant_{fin} B$;
\end{itemize}

\begin{proof}
For (i.): If $A$ is splitting, then $\omega A \leqslant_{init} A$ (see e.g. the proof to Proposition \ref{bothomegaAleqBandomegaAconjomegaB}). In this case, $kA \leqslant_{init} B$ implies $\omega A \leqslant_{init} B$ already for $k = 1$. 

So assume $A$ is non-splitting. Label $B = [c, d]$ by its endcuts. We inductively build a cut sequence
\[
c = a_0 < a_1 < a_2 < \ldots
\]
in $B$ witnessing $\omega A \leqslant_{init} B$. At the first stage, since $A \leqslant_{init} B$ there are cuts $c = a_0 < a_1$ such that $[a_0, a_1] \cong A$. 

Suppose that after the $k$th stage we have a cut sequence
\[
c = a_0 < a_1 < \ldots < a_k
\]
in $B$ witnessing $kA \leqslant_{init} B$. Fix a cut sequence
\[
c = a_0' < a_1' < \ldots < a_k' < a_{k+1}'
\]
witnessing $(k+1)A \leqslant_{init} B$. It must be that $a_{k+1}' > a_k$. Otherwise, this sequence witnesses $(k+1)A \leqslant_{init} kA$, which implies $A$ is splitting by Theorem \ref{splittingdichthm}, a contradiction. 

If $a_k' = a_k$, then letting $a_{k+1} = a_{k+1}'$ we have that the sequence
\[
a_0 < a_1 < \ldots < a_k < a_{k+1}
\]
extends the sequence $a_0 < \ldots < a_k$ and witnesses $(k+1)A \leqslant_{init} B$. 

The other possibilities are that $a_k < a_k'$ and $a_k' < a_k$. Suppose first $a_k < a_k'$. Since $[c, a_k'] \cong [c, a_k] \cong kA$, we may fix an isomorphism $f: [c, a_k'] \rightarrow [c, a_k]$. Then $f$ is a compression of $[c, a_k']$ whose rightmost orbital $O_f(a_k')$ is therefore a decreasing $\omega^*$-orbital with initial jump $A_{a_k', f}$. Let $C = A_{a_k', f}$, so that $O_f(a_k') = \omega^*C$. 

We claim that $f^n(a_k') > a_{k-1}$ for all $n \in \omega$. If not, then for some large enough $N$ we have $f^N(a_k') \leq a_{k-1}$. But then $f^N \upharpoonright [c, a_k']$ is an initial embedding of $kA$ in $(k-1)A$, contradicting that $A$ is non-splitting. 

Let $l$ be the left endcut of the orbital $O_f(a_k')$. We have just shown $l \geq a_{k-1}$. Observe $[l, a_k] = [l, f(a_k')] \cong \omega^* C$ so that $\omega^*C$ is final in $[a_{k-1}, a_k]$. It follows that
\[
[a_{k-1}, a_k'] \cong [a_{k-1}, a_k] + [a_k, a_k'] \cong [a_{k-1}, a_k] + C \cong [a_{k-1}, a_k] \cong A.
\]
Thus the cut sequence
\[
a_0 < a_1 < \ldots < a_{k-1} < a_k' < a_{k+1}'
\]
witnesses $(k+1)A \leqslant_{init} B$ and extends original sequence $a_0 < \ldots < a_k$ except at $a_k$. We relabel $a_k'$ as $a_k$ and let $a_{k+1} = a_{k+1}'$ and move onto the next stage. 

The case when $a_k' < a_k$ is similar. 

Since each cut $a_k$ stabilizes after stage $k+2$ at the latest, the limit sequence $a_0 < a_1 < \ldots$ is well-defined and witnesses $\omega A \leqslant_{init} B$, as desired. 

The proof for (ii.) is symmetric.
\end{proof}
\end{nAinitallAiffomegaAinitTarski}

\theoremstyle{definition}
\newtheorem{archimequivalencetolesssimstar}[lct]{Proposition}
\begin{archimequivalencetolesssimstar}\label{archimequivalencetolesssimstar}
Fix $\star \in \{init, fin\}$ and suppose that for every $n \in \omega$, there is $m \in \omega$ such that $nA \leqslant_{\star} mB$. Then $A \lesssim_{\star} B$.
\end{archimequivalencetolesssimstar}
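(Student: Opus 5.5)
Take $\star = init$; the case $\star = fin$ is symmetric. The target is $\omega A \leqslant_{init} \omega B$. The plan is to squeeze $kA$ into a single growing object, apply Lemma \ref{nAinitallAiffomegaAinitTarski} to get $\omega A$ into that object, and then compare it with $\omega B$.

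The first step is to observe that $mB \leqslant_{init} \omega B$ for every $m$. So the hypothesis gives $nA \leqslant_{init} \omega B$ for every $n \geq 1$. Lemma \ref{nAinitallAiffomegaAinitTarski}.(i.), applied with $\omega B$ in place of $B$, then yields $\omega A \leqslant_{init} \omega B$. That is exactly $A \lesssim_{init} B$ by Definition \ref{lesssimrelnsdefn}.

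The only point needing care is whether the hypothesis of Lemma \ref{nAinitallAiffomegaAinitTarski} is met. It requires $kA \leqslant_{init} X$ for a single fixed $X$ and all $k$. Here $X = \omega B$ works, because the bounds $m = m(n)$ may vary with $n$ and each $mB$ is initial in $\omega B$ by transitivity of $\leqslant_{init}$.

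For $\star = fin$ the argument is the same with $\omega^* B$ in place of $\omega B$ and part (ii.) of the lemma. I expect no real obstacle: all the work is in Tarski's continuation lemma, whose proof already handles the splitting case and the non-splitting case via the orbital argument.
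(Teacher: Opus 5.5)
Your proposal is correct and is essentially the paper's proof: absorb each $mB$ into $\omega B$ (respectively $\omega^* B$), so that every $nA$ embeds initially (respectively finally) in it, then apply Tarski's Lemma \ref{nAinitallAiffomegaAinitTarski} with $\omega B$ (respectively $\omega^* B$) playing the role of $B$.
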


\begin{proof}
Suppose $\star = init$. Then since $mB \leqslant_{init} \omega B$ for all $m \in \omega$, the hypothesis implies that for every $n \in \omega$ we have $nA \leqslant_{init} \omega B$. By Lemma \ref{nAinitallAiffomegaAinitTarski} we have $\omega A \leqslant_{init} \omega B$, i.e. $A \lesssim_{\star} B$. 

The proof for $\star = fin$ is symmetric. 
\end{proof}

Since initial and final embeddings are in particular convex embeddings, the conditions $A \lesssim_{init}^b B$ and $A \lesssim_{fin}^b B$ are each stronger than $A \lesssim_{conv}^b B$. Hence $A \sim_{init}^b B$ and $A \sim_{fin}^b B$ each individually imply $A \sim_{conv}^b B$. By Theorem \ref{continuationthm}, the unbounded equivalence relations $A \sim_{init} B$ and $A \sim_{fin} B$ each individually imply $A \sim_{conv} B$, which is not immediate from the definition of these relations. 

Thus the implications in the following diagram hold:
\[
\begin{tikzcd}
                                       & A \sim_{conv} B                                           &                                       \\
A \sim_{init} B \arrow[ru, Rightarrow] &                                                           & A \sim_{fin} B \arrow[lu, Rightarrow] \\
                                       & A \approx B \arrow[lu, Rightarrow] \arrow[ru, Rightarrow] &                                      
\end{tikzcd}
\]

By Proposition \ref{simstarandisosofinfdiscreteproducts}, the top two implications may be phrased arithmetically as follows.

\theoremstyle{definition}
\newtheorem{ericsthm}[lct]{Theorem}
\begin{ericsthm}\label{ericsthm}
\phantom{.}
\begin{itemize}
    \item[i.] $\omega A \cong \omega B$ implies $\mathbb{Z}A \cong \mathbb{Z}B$;
    \item[ii.] $\omega^* A \cong \omega^* B$ implies $\mathbb{Z}A \cong \mathbb{Z}B$.
\end{itemize}  
\end{ericsthm}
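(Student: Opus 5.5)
This is a direct consequence of the Continuation Theorem \ref{continuationthm} combined with the arithmetic characterizations in Proposition \ref{simstarandisosofinfdiscreteproducts}. We prove (i.); (ii.) is symmetric, with $\omega^*$, $\leqslant_{fin}$ and $\sim_{fin}$ in place of $\omega$, $\leqslant_{init}$ and $\sim_{init}$.

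Assume $\omega A \cong \omega B$. By Proposition \ref{simstarandisosofinfdiscreteproducts}.(i.) this is exactly $A \sim_{init} B$, so $A \sim_{init}^b B$ by Theorem \ref{continuationthm}.(i.) (or simply by definition). We then check the bounded convex relation in each direction.

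First, $3A \leqslant_{init} \omega A \cong \omega B$. Since $3A$ is a bounded initial segment of $\omega B$, the directed refinement argument of Proposition \ref{directedrefinementpost} places its right endcut inside some copy of $B$. Hence $3A \leqslant_{init} mB$ for some finite $m$, and in particular $3A \leqslant_{conv} mB$. Symmetrically, $3B \leqslant_{conv} nA$ for some $n$.

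So $A \lesssim_{conv}^b B$ and $B \lesssim_{conv}^b A$, that is, $A \sim_{conv}^b B$. Theorem \ref{continuationthm}.(i.) with $\star = conv$ then gives $A \sim_{conv} B$, and Proposition \ref{simstarandisosofinfdiscreteproducts}.(iii.) converts this to $\mathbb{Z}A \cong \mathbb{Z}B$.

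All of the real difficulty sits in the $conv$ case of the Continuation Theorem, which rests on the continuation lemma \ref{continuationlemma} and the sufficiency of real representations. Here that theorem is simply invoked. The one step needing a moment's care is extracting a finite bound $m$ from $3A \leqslant_{init} \omega B$. This holds because the image of $3A$ has a right endcut that is either strictly below the right endcut of $\omega B$, and so lies in some finite stage, or equal to it. In the latter case $\omega B \cong 3A$ and we may take $m=1$ after noting that $3A \leqslant_{init} 3A$ is itself covered by $B \leqslant_{init} \omega B$. More simply, any $m$ with $3A \leqslant_{init} (m+1)B$ can be obtained, and we use that.
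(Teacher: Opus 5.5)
This is the paper's own route. You use that $\sim_{init}$ (resp. $\sim_{fin}$) yields $\sim_{conv}^b$ because initial and final embeddings are convex, then Theorem \ref{continuationthm} upgrades this to $\sim_{conv}$, and Proposition \ref{simstarandisosofinfdiscreteproducts} turns it into $\mathbb{Z}A \cong \mathbb{Z}B$. Going through the unbounded relation to obtain the coefficient $3$ is in fact more careful than the paper's one-line remark.

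One step needs repair: your treatment of the degenerate case $3A \cong \omega B$ is wrong as written. Taking $m=1$ requires $\omega B \leqslant_{init} B$, and that does not follow from $B \leqslant_{init} \omega B$. Either cite Proposition \ref{AsimstarBiffnAleqstarmBleqstarnApropn} directly, or embed $4A \leqslant_{init} \omega A \cong \omega B$. In the latter case, for $A \neq \emptyset$, the image of the initial $3A$ has right endcut strictly below that of $\omega B$, so it lies in some finite $mB$.
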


As noted above, Theorem \ref{ABcommuteifOmegasumsinitfin} is exactly the statement that $A \approx B$ is equivalent to $A + B \cong B + A$. Collecting our work above, we conclude this section with the following omnibus list of arithmetic equivalences to $A + B \cong B + A$. 

\theoremstyle{definition}
\newtheorem{ABcommutearithmeticequivalenceslist}[lct]{Theorem}
\begin{ABcommutearithmeticequivalenceslist}\label{ABcommutearithmeticequivalenceslist}
The following are equivalent:
\begin{itemize}
    \item[i.] $A + B \cong B + A$;
    \item[ii.] $\omega A \leqslant_{init} \omega B$ and $\omega^*A \leqslant_{fin} \omega^*B$, or $\omega B \leqslant_{init} \omega A$ and $\omega^* B \leqslant_{fin} \omega^* A$; 
    \item[iii.] $A + B \cong B + A \cong A$, or $A + B \cong B + A \cong B$, or $\omega A \cong \omega B$ and $\omega^* A \cong \omega^*B$;
    \item[iv.] $\mathbb{Z}A \leqslant_{cent} \mathbb{Z}B$ or $\mathbb{Z}B \leqslant_{cent} \mathbb{Z}A$;
    \item[v.] $\mathbb{Z}A \leqslant_{cent} 2B$, or $\mathbb{Z}B \leqslant_{cent} 2A$, or $\mathbb{Z}A \cong_{cent} \mathbb{Z}B$;
    \item[vi.] For every natural number $n$ there is $m$ such that $nA \leqslant_{init} mB$ and $nA \leqslant_{fin} mB$, or for every $n$ there is $m$ such that $ nB \leqslant_{init} mA$ and $nB \leqslant_{fin} mA$.
\end{itemize}
\end{ABcommutearithmeticequivalenceslist}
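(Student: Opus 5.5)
The plan is to route every condition through the single statement $A \lessapprox B$ or $B \lessapprox A$. By Theorem \ref{ABcommuteifOmegasumsinitfin}, condition (i.) is equivalent to this disjunction, and condition (ii.) is literally the definition of $A \lessapprox B$ or $B \lessapprox A$ via $\lesssim_{init}$ and $\lesssim_{fin}$. So (i.)$\Leftrightarrow$(ii.) is immediate. For (i.)$\Leftrightarrow$(iii.) I would simply quote Theorem \ref{revisedtarconjintext}.

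For (iv.), Proposition \ref{AlessapproxBiffZAcentZBpropn} gives $A \lessapprox B$ iff $\mathbb{Z}A \leqslant_{cent} \mathbb{Z}B$. Applying it to both orderings shows that (iv.) is the disjunction $A \lessapprox B$ or $B \lessapprox A$, and hence it is equivalent to (ii.).

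For (v.), Proposition \ref{AlessapproxBreformulations} gives $A \lessapprox B$ iff $\mathbb{Z}A \leqslant_{cent} 2B$ or $\mathbb{Z}A \cong_{cent} \mathbb{Z}B$, and symmetrically for $B \lessapprox A$. The disjunction of the two is then exactly (v.), because $\mathbb{Z}A \cong_{cent} \mathbb{Z}B$ and $\mathbb{Z}B \cong_{cent} \mathbb{Z}A$ are the same condition (invert the centered isomorphism).

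The only step needing real input is (vi.). If $A \lessapprox B$, then Proposition \ref{AlessapproxBiffnAlesssomemB}, applied with $\star = init$ and $\star = fin$, gives for each $n$ integers $m_1, m_2$ with $nA \leqslant_{init} m_1B$ and $nA \leqslant_{fin} m_2B$. Taking $m = \max(m_1, m_2)$ works for both, since $m_1B \leqslant_{init} mB$ and $m_2B \leqslant_{fin} mB$.

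Conversely, assume the first alternative of (vi.). Proposition \ref{archimequivalencetolesssimstar}, applied for $\star = init$ and for $\star = fin$, yields $A \lesssim_{init} B$ and $A \lesssim_{fin} B$, that is, $A \lessapprox B$. The second alternative gives $B \lessapprox A$ in the same way.

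I expect no serious obstacle, since all of this is bookkeeping on top of Lemma \ref{nAinitallAiffomegaAinitTarski} and Theorem \ref{ABcommuteifOmegasumsinitfin}. The only point needing care is choosing a common $m$ for the init and fin embeddings in (vi.).
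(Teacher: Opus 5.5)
Your proposal is correct and follows essentially the same route as the paper, which also obtains each equivalence from Theorems \ref{ABcommuteifOmegasumsinitfin} and \ref{revisedtarconjintext} and Propositions \ref{AlessapproxBiffZAcentZBpropn}, \ref{AlessapproxBreformulations}, and \ref{archimequivalencetolesssimstar}. Your forward direction for (vi.) adds a step the paper leaves implicit: you invoke Proposition \ref{AlessapproxBiffnAlesssomemB} and choose a single $m$ that works for both the initial and the final embedding.
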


\begin{proof}
(i.) $\Leftrightarrow$ (ii.) is \ref{ABcommuteifOmegasumsinitfin}; (i.) $\Leftrightarrow$ (iii.) is \ref{revisedtarconjintext}; (iv.) $\Leftrightarrow$ (ii.) is given by \ref{AlessapproxBiffZAcentZBpropn}; (i.) $\Leftrightarrow$ (v.) is given by $\ref{AlessapproxBreformulations}$; by Proposition \ref{archimequivalencetolesssimstar}, (vi.) is equivalent to the assertion that $A \lesssim_{init} B$ and $A \lesssim_{fin} B$ or vice versa, which is equivalent to (ii.).
\end{proof}

\subsection{An archimedean valuation on $LO$} \label{subsect:archimedeanvaluationonLO} In preparation for our representability results in Section \ref{section:commutesemigroupsinLOrepn}, in this section we consider the map 
\[
A \mapsto [A]_{\approx}
\]
that associates to every $A \in LO$ its $\approx$-class $[A]_{\approx} = \{B \in LO: B \approx A\}$. This map may be viewed as a global valuation on $LO$ analogous to the valuation on an abelian orderable group $G$ that associates to each $g \in G$ the set of group elements $h \in G$ to which $g$ is archimedean comparable. We will show that the values $[B]_{\approx}$ lying $\lessapprox$-below a fixed value $[A]_{\approx}$ are \textit{linearly} ordered by $\lessapprox$. 

A binary relation $\leq$ on a class $K$ is a \textit{quasi-order} if it is reflexive and transitive. Given a quasi-order $\leq$ on $K$, the relation $\equiv$ on $K$ defined by $a \equiv b$ if $a \leq b$ and $b \leq a$ is an equivalence relation. We denote the $\equiv$-class of a given $a \in K$ by $[a]_{\equiv}$, or simply $[a]$. The relation induced by $\leq$ on the set of equivalence classes $K /{\equiv}$, which we also denote by $\leq$, is a partial order on $K/{\equiv}$. 

A quasi-order $\leq$ on $K$ is a \textit{quasi-linear-order} if $\leq$ is total, i.e. if $a \leq b$ or $b \leq a$ for all $a, b \in K$; equivalently, if $\leq$ is a linear order on $K/{\equiv}$.  

A quasi-order $\leq$ on $K$ is a \textit{quasi-tree-order} if for every $a \in K$, $\{b \in K: b \leq a\}$ is quasi-linearly-ordered by $\leq$; equivalently, if $\{[b] \in K / {\equiv}: [b] \leq [a]\}$ is linearly ordered by $\leq$. 

\theoremstyle{definition}
\newtheorem{lesssimandlessapproxaretreeorders}[lct]{Theorem}
\begin{lesssimandlessapproxaretreeorders}\label{lesssimandlessapproxaretreeorders}
The relations $\lesssim_{init}, \lesssim_{fin}$, and $\lessapprox$ are quasi-tree-orders on $LO$.
\end{lesssimandlessapproxaretreeorders}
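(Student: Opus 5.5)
Reflexivity and transitivity of all three relations were noted after Definition \ref{lesssimrelnsdefn}, so the plan is to prove the tree property. For $\lesssim_{init}$ this means: if $B \lesssim_{init} A$ and $C \lesssim_{init} A$, then $B \lesssim_{init} C$ or $C \lesssim_{init} B$. The case $\lesssim_{fin}$ is symmetric, and the case $\lessapprox$ will be assembled from the two one-sided cases together with the commutativity characterization.

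For $\lesssim_{init}$, unfold the hypotheses as $\omega B \leqslant_{init} \omega A$ and $\omega C \leqslant_{init} \omega A$. Proposition \ref{bothinitbothfin}.(i.) then gives $\omega B \leqslant_{init} \omega C$ or $\omega C \leqslant_{init} \omega B$, which is exactly $B \lesssim_{init} C$ or $C \lesssim_{init} B$. No real work is needed here, and the argument for $\lesssim_{fin}$ is the mirror image using Proposition \ref{bothinitbothfin}.(ii.).

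For $\lessapprox$, suppose $B \lessapprox A$ and $C \lessapprox A$. By the two one-sided cases, $B$ and $C$ are comparable under $\lesssim_{init}$ and, independently, under $\lesssim_{fin}$. If the two comparisons point the same way we are done. The main obstacle is the mixed case, say $B \lesssim_{init} C$ but $C \lesssim_{fin} B$; here the goal is to upgrade one comparison to a full $\lessapprox$.

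To handle the mixed case, I would first show $B + C \cong C + B$ and then invoke Theorem \ref{ABcommuteifOmegasumsinitfin}, which yields $B \lessapprox C$ or $C \lessapprox B$. To get commutativity I would use Corollary \ref{AplusBBplusAinitfinXandY}, which requires an order $X$ in which both $B + C$ and $C + B$ embed initially, and an order $Y$ in which both embed finally. Take $X = \omega A$. Since $B \lessapprox A$, Theorem \ref{ABcommuteifOmegasumsinitfin} gives $A + B \cong B + A$, and likewise $A + C \cong C + A$. Corollary \ref{omegaAomegastarAinitfinomegaBomegastarB}.(i.), applied with $\omega B \leqslant_{init} \omega A$, gives $B + A \leqslant_{init} \omega A$, and hence $B + \omega A \cong \omega A$ after absorbing. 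Similarly $C + \omega A \cong \omega A$. Then
\[
B + C + \omega A \cong B + \omega A \cong \omega A,
\]
so $B + C \leqslant_{init} \omega A$, and by symmetry $C + B \leqslant_{init} \omega A$. Symmetrically, $Y = \omega^* A$ receives both sums finally, using $\omega^*B \leqslant_{fin} \omega^*A$ and $\omega^*C \leqslant_{fin} \omega^*A$ and absorption on the right. Corollary \ref{AplusBBplusAinitfinXandY} then gives $B + C \cong C + B$, and Theorem \ref{ABcommuteifOmegasumsinitfin} finishes.

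Notice that this argument never used the mixed hypothesis, so the whole $\lessapprox$ case reduces directly to commutativity and does not need the one-sided cases at all. The one point I expect to need care is the absorption $B + \omega A \cong \omega A$. I would justify it from Corollary \ref{absorbsleftrightcoro}: $\omega B \leqslant_{init} \omega A$ means $\omega A$ absorbs $B$ on the left. The symmetric statement $\omega A + C \cong \omega A$ is not what we want; the right-hand side is handled via $\omega^*$ as described.
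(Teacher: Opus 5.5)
Your proof is correct, but for the $\lessapprox$ case it takes a genuinely different route from the paper.

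The paper handles $\lesssim_{init}$ and $\lesssim_{fin}$ exactly as you do, and then builds the $\lessapprox$ case on top of them. Assuming without loss of generality $C \lesssim_{init} B$, the only problematic case is $B \lesssim_{fin} C$. There it applies the refinement Proposition \ref{lesssiminitandlesssimfinrefinements} to get either $\omega^*B \cong \omega^*C$, so that $C \lesssim_{fin} B$ after all, or $\omega^*B \leqslant_{fin} C$. In the latter case a splitting argument via Lemma \ref{2AconvnB2BconvmAsplittingiff} shows $B$ and $C$ are splitting, and Corollary \ref{CSBLO} gives $B \cong C$.

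You instead ignore the one-sided comparisons and prove directly that $B + C \cong C + B$. You absorb $B$ and $C$ into $\omega A$ on the left and into $\omega^*A$ on the right (Corollary \ref{absorbsleftrightcoro}), feed this to Corollary \ref{AplusBBplusAinitfinXandY}, and then read off comparability from Theorem \ref{ABcommuteifOmegasumsinitfin}.

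What each route buys:
\begin{itemize}
\item Your argument is shorter, and it makes explicit the pleasant fact that any two orders $\lessapprox$-below a common order additively commute.
\item The price is that it rests on the heaviest results of the paper. Corollary \ref{AplusBBplusAinitfinXandY} depends on Theorem \ref{BplusAinitandfinAplusB}, the full Euclidean-algorithm analysis including the non-terminating non-splitting case. The forward direction of Theorem \ref{ABcommuteifOmegasumsinitfin} uses the real representation of non-splitting symmetric systems (Proposition \ref{Isosofomegaprodsfornonsplsymmsystems}).
\item The paper's argument needs only the elementary arithmetic of Section \ref{section:basicarithmetic} together with Proposition \ref{lesssiminitandlesssimfinrefinements}.
\end{itemize}
There is no circularity in your version, since every result you cite is proved earlier and independently of this theorem.

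One small cleanup. Your sentence deriving $B + \omega A \cong \omega A$ ``after absorbing'' from $B + A \leqslant_{init} \omega A$ is not justified by that intermediate fact alone. The appeal to $A + B \cong B + A$ is also never used afterwards. The absorption follows directly from $\omega B \leqslant_{init} \omega A$ by Corollary \ref{absorbsleftrightcoro}.(i.), as you note in your last paragraph, so both detours can simply be deleted.
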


\begin{proof}
Fix $A \in LO$, and suppose $B \lesssim_{init} A$ and $C \lesssim_{init} A$. Then $\omega B \leqslant_{init} \omega A$ and $\omega C \leqslant_{init} \omega A$. It follows that either $\omega B \leqslant_{init} \omega C$ or $\omega C \leqslant_{init} \omega B$, i.e. $B \lesssim_{init} C$ or $C \lesssim_{init} B$. Since $B$ and $C$ were arbitrary, $\lesssim_{init}$ is total below $A$. Since $A$ was arbitrary, $\lesssim_{init}$ is a quasi-tree-order on $LO$, as claimed. 

The proof for $\lesssim_{fin}$ is symmetric. 

Now suppose $B \lessapprox A$ and $C \lessapprox A$. We want to show that one of $B$ and $C$ is $\lessapprox$ below the other. 

Since $B \lesssim_{init} A$ and $C \lesssim_{init} A$, one of $B$ and $C$ is $\lesssim_{init}$ below the other. Without loss of generality, suppose $C \lesssim_{init} B$, so that $\omega C \leqslant_{init} \omega B$. 

Symmetrically, one of $B$ and $C$ is $\lesssim_{fin}$ below the other. If $C \lesssim_{fin} B$ then $C \lessapprox B$, and we are done.

So suppose $B \lesssim_{fin} C$. Then $\omega^*B \leqslant_{fin} \omega^* C$. By Proposition \ref{lesssiminitandlesssimfinrefinements}, either $\omega^* B = \omega^* C$ or $\omega^* B \leqslant_{fin} C$. The first case implies $C \lesssim_{fin} B$, which again gives $C \lessapprox B$. 

So suppose $\omega^* B \leqslant_{fin} C$. Since $\omega C \leqslant_{init} \omega B$ gives that $C$ embeds initially in a finite multiple of $B$, it follows from $\omega^* B \leqslant_{fin} C$ that $B$ is splitting. It follows from these relations that $B$ and $C$ satisfy the hypotheses of Lemma \ref{2AconvnB2BconvmAsplittingiff}, hence $C$ is splitting as well. Now it is clear that $C \leqslant_{init} B$ and $B \leqslant_{fin} C$. Hence $B \cong C$ by \ref{CSBLO}, which gives $C \lessapprox B$, and we are again done. 

It follows $\lessapprox$ is a quasi-tree-order on $LO$, as claimed. 
\end{proof}

\section{Colorings of linear orders}\label{section:coloringsoflinearorders}

In this section we define the concept of a colored linear order and introduce some notation for handling such orders. Nearly all of the results in this paper for uncolored linear orders hold for colored linear orders as well, and by the same proofs, up to replacing every convex embedding by a colored one (see Definition \ref{coloredconvembeddingdefn} below). We will use a coloring in Section \ref{section:commutesemigroupsinLOrepn} to make a construction presented there as transparent as possible. Afterwards, we describe how the coloring may be dispensed with via a slightly more complicated construction. 

Given a linear order $X$, a \textit{coloring} of $X$ is a function $\mathsf{c}$ such that $\textrm{dom}(\mathsf{c}) = X$. For each $x \in X$, $\mathsf{c}(x)$ is the \textit{color} of $x$. A \textit{colored linear order} is a pair $(X, \mathsf{c})$ such that $X$ is a linear order and $\mathsf{c}$ is a coloring of $X$. When $\mathsf{c}$ is understood, we write $X$ instead of $(X, \mathsf{c})$ for a colored linear order. The class of colored linear orders is denoted $\mathsf{c}LO$.

Given a linear order $X$ and a collection $\{(I_x, \mathsf{c}_x): x \in X\}$ of colored linear orders indexed by $X$, the \textit{colored replacement} of $X$ by the orders $(I_x, \mathsf{c}_x)$ is the replacement $X(I_x)$ equipped with the coloring $\mathsf{c}$ defined by $\mathsf{c}(x, i) = \mathsf{c}_x(i)$. 

As with uncolored orders, we use summation notation for colored replacements of discrete orders. In particular, if $X = (X, \mathsf{c})$ and $Y = (Y, \mathsf{d})$ are colored orders, then $X + Y$ denotes the colored order $(X + Y, \mathsf{e})$ where $\mathsf{e}$ is defined by the rule $\mathsf{e}(z) = \mathsf{c}(z)$ if $z \in X$ and $\mathsf{e}(z) = \mathsf{d}(z)$ if $z \in Y$. 

\theoremstyle{definition}
\newtheorem{coloredconvembeddingdefn}[lct]{Definition}
\begin{coloredconvembeddingdefn}\label{coloredconvembeddingdefn}
A \textit{colored convex embedding} between $(X, \mathsf{c})$ and $(Y, \mathsf{d})$ is a convex embedding $f: X \rightarrow Y$ such that $\mathsf{c}(x) = \mathsf{d}(f(x))$ for all $x \in X$.  

We write $(X, \mathsf{c}) \leqslant_{conv} (Y, \mathsf{d})$ if there exists a colored convex embedding between $X$ and $Y$. 
\end{coloredconvembeddingdefn}

We emphasize that colored convex embeddings $f: (X, \mathsf{c}) \rightarrow (Y, \mathsf{d})$ are required to be color-preserving, not merely color-class preserving. That is, they satisfy 
\[
\mathsf{c}(x) = \mathsf{d}(f(x))
\]
which is stronger than the condition
\[
\mathsf{c}(x) = \mathsf{c}(x') \Leftrightarrow \mathsf{d}(f(x)) = \mathsf{d}(f(x')).
\]

When $X$ and $Y$ are understood to carry colorings, we similarly copy the notation $X \leqslant_{init} Y$, $X \leqslant_{fin} Y$, and $X \cong Y$ from the uncolored setting to mean that there exists a colored initial embedding, colored final embedding, and colored isomorphism from $X$ to $Y$, respectively.

We will need the following rigidity result for the construction in Section \ref{section:commutesemigroupsinLOrepn} below. It says that if $G$ is a group of translations of $\mathbb{R}$ and we color each point in $\mathbb{R}$ by its orbit equivalence class under the action by $G$, then colored intervals $I \subseteq \mathbb{R}$ cannot be compressed by colored convex self-embeddings of $\mathbb{R}$.

\theoremstyle{definition}
\newtheorem{groupsoftranslationsarefull}[lct]{Theorem}
\begin{groupsoftranslationsarefull}\label{groupsoftranslationsarefull}
Fix a strict subgroup $G \subsetneq (\mathbb{R}, +)$. Let $G$ act on $\mathbb{R}$ by translations, and let $E_G$ denote the orbit equivalence relation of this action. 

Define a coloring $\mathsf{c}$ of $\mathbb{R}$ by the rule $\mathsf{c}(x) = [x]_{E_G}$. View $\mathbb{R}$ as equipped with this coloring. 

Suppose $I \subseteq \mathbb{R}$ is an interval and $f: I \rightarrow \mathbb{R}$ is a colored convex-embedding. Then $f$ is the restriction to $I$ of some translation in $G$. That is, there is $t \in G$ such that for all $x \in I$, $f(x) = x + t$. 
\end{groupsoftranslationsarefull}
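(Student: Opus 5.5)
The plan is to reduce the statement to an elementary fact about continuous maps into subgroups of $\mathbb{R}$. Note first that $I$, being a convex subset of $\mathbb{R}$, is an interval of reals in the usual sense. If $I$ is empty the claim is vacuous. If $I = \{x\}$ is a singleton, color preservation gives $[f(x)]_{E_G} = [x]_{E_G}$, so $f(x) = x + t$ for some $t \in G$, and we are done. So assume $I$ has at least two points.

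Define $t: I \rightarrow \mathbb{R}$ by $t(x) = f(x) - x$. Since $f$ is colored, $\mathsf{c}(f(x)) = \mathsf{c}(x)$, that is, $f(x) \, E_G \, x$. Hence $t(x) \in G$ for every $x \in I$. It remains to show that $t$ is constant, which we will do by a connectedness argument.

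\emph{Step 1: $f$ is continuous.} The map $f$ is strictly increasing, and its image $f[I]$ is convex in $\mathbb{R}$, hence a real interval. A monotone map from a real interval onto a real interval has no jump discontinuities: a jump at some point would leave a gap in the image, contradicting convexity. So $f$ is continuous, and therefore $t$ is continuous.

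\emph{Step 2: $G$ contains no nondegenerate interval.} Suppose $(a,b) \subseteq G$ with $a < b$. Taking differences of elements of $(a,b)$ shows that $(-(b-a), b-a) \subseteq G$. Every real number is a finite sum of elements of this symmetric neighborhood of $0$, so $G = \mathbb{R}$, contradicting the strictness of $G$. Consequently every connected subset of $G$ (in the subspace topology from $\mathbb{R}$) is at most a single point.

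\emph{Step 3: conclude.} Since $I$ is connected and $t$ is continuous, $t[I]$ is a connected subset of $\mathbb{R}$, hence an interval. It lies inside $G$, so by Step 2 it is a single point $t_0 \in G$. Therefore $f(x) = x + t_0$ for all $x \in I$, as required.

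The only steps requiring any care are the continuity of $f$ (Step 1) and the observation in Step 2 that a proper subgroup of $\mathbb{R}$ has empty interior. Both are routine, so no step should pose a serious obstacle.
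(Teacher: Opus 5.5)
Your proof is correct and is essentially the paper's argument: the paper also gets continuity of $g(x)=f(x)-x$ from $f$ being increasing onto an interval, then uses the intermediate value theorem to produce a value of $g$ in the dense complement $\mathbb{R}\setminus G$, contradicting color preservation. This is your connectedness-plus-empty-interior step phrased as a contradiction. Your Step 2 also supplies the justification that a strict subgroup has empty interior (equivalently, dense complement), which the paper simply asserts.
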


\begin{proof}
Fix $x_0 \in I$ and let $t = f(x_0) - x_0$. Then $t \in G$ since $f$ is a colored convex embedding of $I$. Suppose there is $x_1 \in I$ such that $f(x_1) - x_1 \neq t$. Let $t' = f(x_1) - x_1$. 

We assume that $x_0 < x_1$ and $t < t'$; the other cases are handled similarly. Let $g(x) = f(x) - x$. Since $f$ is continuous (as it is increasing and onto an interval), $g$ is continuous as well. Thus for every $r \in [t, t']$ there is $z \in [x_0, x_1]$ such that $g(z) = r$. Since $G$ is a strict subgroup of $(\mathbb{R}, +)$, its complement $\mathbb{R} \setminus G$ is dense in $\mathbb{R}$. In particular, there is $r_0 \in [t, t']$ such that $r_0 \not\in G$. Fix $z_0 \in [x_0, x_1]$ such that $g(z_0) = r_0$. Then $f(z_0) = z_0 + r_0$ and we have 
\[
\mathsf{c}(z_0) = [z_0]_{E_G} \neq [z_0 + r_0]_{E_G} = [f(z_0)]_{E_G} = \mathsf{c}(f(z_0)),\]
a contradiction, as $f$ is color-preserving. 
\end{proof}

\section{Representing commutative semigroups in $(LO, +)$}\label{section:commutesemigroupsinLOrepn}

In this section we characterize the commutative semigroups representable in $(LO, +)$; see Theorem \ref{representablesemigroupstheorem} below. 

\subsection{Value semigroups}\label{subsect:valuesemigroups}

\theoremstyle{definition}
\newtheorem{semigroupisrepresentableinLOdefn}[lct]{Definition}
\begin{semigroupisrepresentableinLOdefn}\label{semigroupisrepresentableinLOdefn}
A semigroup $(S, \oplus)$ is \textit{representable} in $(LO, +)$ if there is a map $\iota: S \rightarrow LO$ such that for all $a, b \in S$:
\begin{itemize}
\item[i.)] $a \neq b$ implies $\iota(a) \not\cong \iota(b)$,
\item[ii.)] $\iota(a \oplus b) \cong \iota(a) + \iota(b)$. 
\end{itemize}
The map $\iota$ is a \textit{representation} of $S$ in $LO$. 
\end{semigroupisrepresentableinLOdefn}

The ordered sum $+$ on $LO$ naturally lifts to the class $LO/{\cong}$ of linear order types, by defining 
\[
[A]_{\cong} + [B]_{\cong} = [A + B]_{\cong}.
\]
Under this operation $(LO/{\cong}, +)$ is a class semigroup. 

It follows from Definition \ref{semigroupisrepresentableinLOdefn} that if $\iota: S \rightarrow LO$ is a representation, then the induced map $\iota: S \rightarrow LO/{\cong}$ is a semigroup embedding. 

A subclass $K \subseteq LO$ is a \textit{class of order types} if it is closed under isomorphism.

\theoremstyle{definition}
\newtheorem{classofordertypesrepresentsdefn}[lct]{Definition}
\begin{classofordertypesrepresentsdefn}\label{classofordertypesrepresentsdefn}
A class of order types $K$ \textit{represents} a semigroup $(S, \oplus)$ if there is a representation $\iota: S \rightarrow K$ such that each $X \in K$ is isomorphic to $\iota(s)$ for some $s \in S$.
\end{classofordertypesrepresentsdefn}

If $\iota: S \rightarrow K$ is a representation witnessing that $K$ represents $S$, then the induced map $\iota: S \rightarrow K/{\cong}$ is bijective, and hence a semigroup isomorphism. Thus $K$ represents $S$ if and only if $(S, \oplus)$ and $(K/{\cong}, +)$ are isomorphic. 

The \textit{trivial semigroup} is the semigroup on one element. Observe that a class $K$ represents the trivial semigroup if and only if $K$ is the order type of a splitting linear order, i.e. if there is $A \in LO$ such that $A \cong A + A$ and $K = \{B \in LO: B \cong A\}$.

Let $\mathbb{R}_{> 0} = (\mathbb{R}_{> 0}, +)$ denote the additive semigroup of positive real numbers.

\theoremstyle{definition}
\newtheorem{strictlyfulldefn}[lct]{Definition}
\begin{strictlyfulldefn}\label{strictlyfulldefn}
Suppose $S$ is a subsemigroup of $(\mathbb{R}_{>0}, +)$. 
\begin{itemize}
    \item[i.] $S$ is \textit{strict} if $S \neq \mathbb{R}_{>0}$. 
    \item[ii.] $S$ is a \textit{halfgroup} if $-S \cup \{0\} \cup S$ is a subgroup of $(\mathbb{R}, +)$, where $-S = \{-x: x \in S\}$.
\end{itemize}
\end{strictlyfulldefn}

Thus $S$ is a strict halfgroup if it is the (strictly) positive cone of a strict subgroup of $(\mathbb{R}, +)$. More generally, a semigroup $S$ is said to be a strict halfgroup if it is isomorphic to a strict halfgroup $S'$ of $\mathbb{R}_{> 0}$.

For $A \in LO$, observe that the $\approx$-class $[A]_{\approx}$ is a class of order types. 

\theoremstyle{definition}
\newtheorem{approxclassesclosedundersum}[lct]{Lemma}
\begin{approxclassesclosedundersum}\label{approxclassesclosedundersum}
$[A]_{\approx}$ is closed under $+$. 
\end{approxclassesclosedundersum}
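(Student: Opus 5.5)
We need to show: if $B \approx A$ and $C \approx A$, then $B + C \approx A$. Since $\approx$ is transitive, it suffices to show $B + C \approx B$ whenever $B \approx C$. By Proposition \ref{simstarandisosofinfdiscreteproducts}, $B \approx C$ means $\omega B \cong \omega C$ and $\omega^* B \cong \omega^* C$. The plan is to verify directly that $\omega(B+C) \cong \omega B$ and $\omega^*(B+C) \cong \omega^* B$. Proposition \ref{simstarandisosofinfdiscreteproducts} then gives $B + C \approx B$, hence $B + C \approx A$.

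For the $\omega$-side, apply Proposition \ref{omegaAcongomegaBthenomegaAplusBtoo}.(i.) to the pair $B, C$. From $\omega B \cong \omega C$ it yields $\omega(B+C) \cong \omega B$. Symmetrically, Proposition \ref{omegaAcongomegaBthenomegaAplusBtoo}.(ii.) applied to $\omega^* B \cong \omega^* C$ yields $\omega^*(B+C) \cong \omega^* B$. Combining these two isomorphisms gives $B + C \sim_{init} B$ and $B + C \sim_{fin} B$, i.e. $B + C \approx B$, using the remark after the definition of $\approx$ that $\approx$ is the conjunction of $\sim_{init}$ and $\sim_{fin}$.

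There is essentially no obstacle, since the substantive work (including the splitting case) was already done inside the proof of Proposition \ref{omegaAcongomegaBthenomegaAplusBtoo}. The only point to check is that the proposition was stated for an arbitrary pair $A, B$ with $\omega A \cong \omega B$, so it applies verbatim to $B, C$. As a final step, I will note that $[A]_{\approx}$ is closed under isomorphism, so "closed under $+$" is meaningful at the level of order types.
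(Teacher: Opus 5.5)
Your proposal is correct and follows the paper's own argument: Proposition \ref{simstarandisosofinfdiscreteproducts} turns $\approx$ into the isomorphisms $\omega B \cong \omega C$ and $\omega^* B \cong \omega^* C$, and Proposition \ref{omegaAcongomegaBthenomegaAplusBtoo} then gives $\omega(B+C) \cong \omega B$ and $\omega^*(B+C) \cong \omega^* B$. The only cosmetic difference is that you first conclude $B + C \approx B$ and then use transitivity, whereas the paper chains the isomorphisms directly back to $\omega A$ and $\omega^* A$.
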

\begin{proof}
Given $B, C \in [A]_{\approx}$, we have $\omega A \cong \omega B \cong \omega C$ and $\omega^* A \cong \omega^* B \cong \omega^* C$ by Proposition \ref{simstarandisosofinfdiscreteproducts}. Hence $\omega(B+C) \cong \omega B \cong \omega A$ and $\omega^*(B+C) \cong \omega^*B \cong \omega^*A$ by Proposition \ref{omegaAcongomegaBthenomegaAplusBtoo}. Thus $B + C \approx A$, i.e. $B + C \in [A]_{\approx}$. The lemma follows. 
\end{proof}

\theoremstyle{definition}
\newtheorem{Aapproxclasseitherfullystrictortrivialrepn}[lct]{Theorem}
\begin{Aapproxclasseitherfullystrictortrivialrepn}\label{Aapproxclasseitherfullystrictortrivialrepn}
Fix $A \in LO$.
\begin{itemize}
    \item[i.] If $A$ is non-splitting, then $[A]_{\approx}$ represents a strict halfgroup $S \subseteq \mathbb{R}_{> 0}$;
    \item[ii.] If $A$ is splitting, then $[A]_{\approx}$ represents the trivial semigroup. 
\end{itemize}
\end{Aapproxclasseitherfullystrictortrivialrepn}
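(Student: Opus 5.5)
Part (ii.) is the easy case, so I will dispose of it first. If $A$ is splitting and $B \approx A$, then $B$ is splitting by Proposition \ref{AsimstarBthenAspliffBspl}. I then want to show $B \cong A$. Since $B \approx A$, Proposition \ref{AlessapproxBreformulations} gives one of two alternatives. The first is $A + B \cong B + A \cong B$ (or the symmetric version with $A$). The second is $\omega A \cong \omega B$ together with $\omega^* A \cong \omega^* B$. In the splitting case the second alternative gives $A \leqslant_{init} \omega B \cong$ a finite-multiple bound, i.e. $A \leqslant_{init} mB \cong B$. Symmetrically it gives $B \leqslant_{fin} A$, so $A \cong B$ by Corollary \ref{CSBLO}. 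In the first alternative, $A \leqslant_{fin} B$, and also $B \leqslant_{init} \omega A \leqslant_{init} A$ because $A$ is splitting. So again $A \cong B$. Hence $[A]_{\approx}$ is a single isomorphism class, closed under $+$ by Lemma \ref{approxclassesclosedundersum}, and so it represents the trivial semigroup.

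For (i.), assume $A$ is non-splitting. The plan is to embed every $B \approx A$ as an interval in a fixed real representation of $\mathbb{Z}A$, and to use the length $\ell$ as the representation. First I produce a non-terminating symmetric division system starting from $A$. Since $A + 2A \cong 2A + A$ trivially, I run the symmetric Euclidean algorithm on $(2A, A)$; it terminates immediately with divisor $A$. That is useless, so instead I pick the system more cleverly. For a non-splitting $A$ I expect to need some $B \approx A$ with $B \not\cong nC$ for a common $C$. If every pair in $[A]_{\approx}$ is commensurable, the class is still representable via rational systems, and then I will use Theorem \ref{rationalrealrepnthm}.

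So the concrete route is the following. If every $B \in [A]_{\approx}$ is commensurable with $A$ over the rationals, I map $B \cong (p/q)$-multiple to $p/q$. I then use Theorem \ref{divtheorem} and the cancellation results (Corollary \ref{splittingdichcor}) to get injectivity and additivity; the image is then a subsemigroup of $\mathbb{Q}_{>0}$, which is a strict halfgroup after I check that it is closed under the needed differences. Otherwise there is some $B \approx A$ with $A + B \cong B + A$ (by Theorem \ref{ABcommuteifOmegasumsinitfin}) whose symmetric Euclidean run is non-terminating. It is non-splitting, so Theorem \ref{symmetricrealrepnthm} gives a symmetric, hence sufficient, representation $\mathbb{Z}A \cong \mathbb{R}(K_{[x]})$. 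Every $C \approx A$ satisfies $\mathbb{Z}C \cong_{cent} \mathbb{Z}A$ by Proposition \ref{simstarandisosofinfdiscreteproducts}, so $C \leqslant_{conv} \mathbb{R}(K_{[x]})$ with finite positive length. I define $\iota^{-1}(C) = \ell(C)$. This is well defined and additive by Proposition \ref{lengthisinvariantwhenDnonsplitting} and Proposition \ref{ellIisadditivewhenDnonsplittingpropn}.

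The main obstacle is injectivity together with the halfgroup property. Concretely, I must show three things: that $\ell(C) = \ell(C')$ forces $C \cong C'$; that the set of lengths $S$ satisfies $S - S \subseteq S \cup -S \cup \{0\}$; and that $S \neq \mathbb{R}_{>0}$. For the first two I plan to use the following fact. If $C, C' \approx A$, both are intervals of the form $R + (0, \ell)(K_{[x]}) + L$ with the same end-classes $R$, $L$, because they are the extreme $\sim_{\mathcal{D}}$-classes. Those classes are determined by $\omega C \cong \omega A$ and $\omega^* C \cong \omega^* A$, by the same argument as in the proof of Lemma \ref{continuationlemma}. Equal length then forces the translation $x \mapsto x + t$ between the two intervals to be an automorphism induced from Proposition \ref{autosofRKxaretranslationsDnonsplitting}, and differences of lengths come from initial segments cut off. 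Strictness follows as in the final part of the proof of Proposition \ref{autosofRKxaretranslationsDnonsplitting}: if $S$ were all of $\mathbb{R}_{>0}$, then $\ell(A)/2 \in S$ would give $A \cong 2C$ with $C \cong$ half of $A$, and the same argument would force $A$ to be splitting.
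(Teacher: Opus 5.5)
Part (ii.) is fine. It is essentially the paper's argument, and the first alternative of Proposition \ref{AlessapproxBreformulations} is superfluous, since $B \approx A$ already gives $\omega A\cong\omega B$ and $\omega^*A\cong\omega^*B$ by Proposition \ref{simstarandisosofinfdiscreteproducts}. Your commensurable case of (i.) also works:
\begin{itemize}
\item well-definedness of $B\mapsto p/q$ follows from Corollary \ref{splittingdichcor};
\item injectivity follows from Theorem \ref{lctintext};
\item additivity follows via Proposition \ref{nAcongmBimpliesABcommute};
\item closure under differences follows from Theorem \ref{divtheorem}, since the common divisor $C$ satisfies $C\approx A$.
\end{itemize}
This is a more elementary substitute for the paper's split into ``there is a non-terminating symmetric or rational system on $A$'' and ``there is none, so $[A]_{\approx}$ represents $\mathbb{Z}_{>0}$''.

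The gap is in the non-commensurable case. You take $S$ to be the set of lengths $\ell(C)$ for $C\approx A$, but you never tie $S$ to the automorphisms of $\mathbb{R}(K_{[x]})$. Without that link, the three things you list do not follow.

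First, Proposition \ref{autosofRKxaretranslationsDnonsplitting} says that automorphisms condense to translations. It does not say that a given translation lifts to an automorphism, so ``equal length forces the translation between the intervals to be an automorphism'' uses it backwards. Injectivity can be rescued without it. Place the first copy of $C$ at the cut $c$ inside $K_0=L+R$ via a centered isomorphism $\mathbb{Z}C\cong_{cent}\mathbb{Z}A$, and identify the end $\sim_{\mathcal{D}}$-classes. This gives $C\cong R+(0,\ell(C))(K_{[x]})+L$ outright.

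Second, ``differences come from initial segments cut off'' at best writes $C'\cong C+D$ with $\ell(D)=\ell(C')-\ell(C)$. You still need $D\approx A$, i.e. $\omega D\cong\omega A$ and $\omega^*D\cong\omega^*A$, and nothing in your sketch provides that.

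Third, your strictness argument fails as stated. The situation $\ell(A)/2\in S$ with $A\cong 2C$ genuinely occurs for non-splitting $A$, for instance $A\cong 2A'$ with $A'$ non-splitting and carrying a non-terminating symmetric system. So it yields no contradiction. What drives the end of the proof of Proposition \ref{autosofRKxaretranslationsDnonsplitting} is that all $K_x$ are isomorphic, so that a dilation of $(0,1)$ onto $(0,\tfrac{1}{2})$ lifts and gives $A\cong 2A$. You have not derived that from $S=\mathbb{R}_{>0}$.

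The missing idea, which is how the paper argues, is that $S=H_{>0}$, where $H$ is the group of condensed automorphisms.
\begin{itemize}
\item $S\subseteq H$. Given $C\approx A$, transport the shift of $\mathbb{Z}C$ along a centered isomorphism to get $f\in\textrm{Aut}(\mathbb{R}(K_{[x]}))$ with $[c,f(c)]\cong C$. By Proposition \ref{autosofRKxaretranslationsDnonsplitting}, $\hat f$ is a translation, necessarily by $\ell(C)$.
\item $H_{>0}\subseteq S$. If $\hat g$ is translation by $u>0$, then $B_g=[c,g(c)]$ has length $u$. The intervals $g^n[B_g]$ tile the final segment of $\mathbb{R}(K_{[x]})$ above $c$, which is isomorphic to $\omega A$, so $\omega B_g\cong\omega A$. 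Symmetrically $\omega^*B_g\cong\omega^*A$, hence $B_g\approx A$ and $u\in S$.
\end{itemize}
Once $S=H_{>0}$, closure under differences comes from $f^{-1}f'$, and strictness is exactly Proposition \ref{autosofRKxaretranslationsDnonsplitting}.
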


\begin{proof}
Suppose first that $A$ is splitting. To prove (ii.) it suffices to check that $[A]_{\approx} = [A]_{\cong}$, i.e. $B \approx A$ if and only if $B \cong A$. 

The backward direction is immediate, so suppose $B \approx A$. Then Proposition \ref{simstarandisosofinfdiscreteproducts} gives that $\omega A \cong \omega B$ and $\omega^* A \cong \omega^* B$. From these isomorphisms it follows that any finite multiple $nA$ of $A$ is initial and final in some finite multiple $mB$ of $B$, and vice versa. Hence $A$ and $B$ satisfy the hypotheses of Lemma \ref{2AconvnB2BconvmAsplittingiff}, and so $B$ is splitting. Now we have that $A$ is initial and final in some $mB \cong B$, and $B$ is initial and final in some $nA \cong A$. Hence $A \cong B$ by \ref{CSBLO}, which finishes the proof for (ii.).

Now suppose $A$ is non-splitting. There are two cases to consider. First, suppose there is a non-terminating division system $\mathcal{D} = \{A_k: k \in \omega\}$ with $A = A_0$ that is either symmetric or rational. Since $A$ is non-splitting, $\mathcal{D}$ is non-splitting. We have the associated representations:
\[
\begin{array}{rcl}
\mathbb{Z}A & \cong & \mathbb{R}(K_{[x]}) \\
K_0 & \cong & L + R \\
A & \cong & R + (0, 1)(K_{[x]}) + L.
\end{array}
\]
These representations are sufficient, since $\mathcal{D}$ is either symmetric or rational. 

Expanding, we have:
\[
\begin{array}{rcl}
\mathbb{R}(K_{[x]}) & \cong & \cdots + K_{-1} + (-1, 0)(K_{[x]}) + K_0 + (0, 1)(K_{[x]}) + K_1 + \cdots \\
 & \cong & \cdots + R + (-1, 0)(K_{[x]}) + L + R + (0, 1)(K_{[x]}) + L + \cdots \\
& \cong & \cdots + A + A + \cdots \\
& \cong & \mathbb{Z}A. 
\end{array}
\]
Let $c$ denote the cut at the central $+$ sign between in $K_0 = L + R$.

By Propositions \ref{fhatalwaysirredwhenDnonsplitting} and \ref{autosofRKxaretranslationsDnonsplitting}, for each automorphism $f$ of $\mathbb{R}(K_{[x]})$, the condensed automorphism $\hat{f}$ of $\mathbb{R}$ is a translation. 

Let $G = \{\hat{f}: f \in \textrm{Aut}(\mathbb{R}(K_{[x]})\}$ be the group of these translations. Viewing $G$ as a subgroup of $(\mathbb{R}, +)$, let $S = \{s \in G: s > 0\}$ be the corresponding halfgroup. By Proposition \ref{autosofRKxaretranslationsDnonsplitting}, $G$ is a strict subgroup of $(\mathbb{R}, +)$, and hence $S$ is a strict halfgroup. We are going to show that $[A]_{\approx}$ represents $S$. 

Suppose $s \in S$ and $f \in \textrm{Aut}(\mathbb{R}(K_{[x]})$ is a representative of $s$, i.e. $\hat{f} = s$. Let $B_f = [c, f(c)]$. We have
\[
B_f \cong R' + (0, s)(K_{[x]}) + L'
\]
Where $R'$ is the final segment of $K_0$ after the cut $c$, and $L'$ is the initial segment of $K_s$ preceding $f(c)$. Then $R' = R$. Since $f[K_0] = K_s$, it follows $L'$ is the image under $f$ of the initial segment of $K_0$ preceding $c$, i.e. $L' = f[L]$. Thus $L' \cong L$, and we have
\[
B_f \cong R + (0, s)(K_{[x]}) + L.
\]
Note that the expression on the right depends only on $s$. Thus for any other $g \in \textrm{Aut}(\mathbb{R}(K_{[s]})$ with $\hat{g} = \hat{f} = s$, we have
\[
B_g \cong B_f \cong R + (0, s)(K_{[x]}) + L.
\]

Let $d$ denote the cut at the right of $\mathbb{R}(K_{[x]})$. The final segment $[c, d](K_{[x]}) \cong \omega A$ of $\mathbb{R}(K_{[x]})$ decomposes as 
\[
\begin{array}{rcl}
[c, d](K_{[x]}) & \cong & [c, f(c)] + [f(c), f^2(c)] + \cdots \\
& \cong & \omega B_f.
\end{array}
\]
Symmetrically, we have $\omega^*B_f \cong \omega^*A$. Hence $B_f \approx A$. 

Combining these observations shows that the rule
\begin{equation}\label{dunhill}
s \mapsto [B_f]_{\cong}
\end{equation}
that associates to each $s \in S$, the isomorphism class $[B_f]_{\cong}$ of any automorphism $f$ with $\hat{f} = s$, is a well-defined map from $S$ into the class $[A]_{\approx}/{\cong}$ of order types $[B]_{\cong}$ of orders $B \approx A$. 

If $\hat{f} = s < s' = \hat{f'}$, then $\ell(B_f) = s < s' = \ell(B_{f'})$. Hence $B_f \not\cong B_{f'}$ by Proposition \ref{lengthisinvariantwhenDnonsplitting}. Thus the map \ref{dunhill} is injective. 

To show it is surjective, fix $B \approx A$. Let $F: \mathbb{Z}B \rightarrow \mathbb{Z}A$ be an embedding of $\mathbb{Z}B$ into $\mathbb{Z}A$ that is centered at $c$. Such an embedding exists by Proposition \ref{simstarandisosofinfdiscreteproducts}. Identify $\mathbb{Z}B$ with its image, and let $f$ be the automorphism of $\mathbb{Z}B$ that sends each copy of $B$ onto the copy to its right. 

Then in particular $[c, f(c)] \cong B$. Since $\mathbb{Z}B \cong \mathbb{Z}A \cong \mathbb{R}(K_{[x]})$ we may identify $f$ with an automorphism of $\mathbb{R}(K_{[x]})$. Then $f$'s condensed form $\hat{f}$ is increasing and irreducible on $\mathbb{R}$. Hence $\hat{f} = s \in S$, and clearly $B_f \cong B$. Thus the map \ref{dunhill} is surjective. 

Finally, we show the rule \ref{dunhill} is a semigroup isomorphism of $S$ with the class of types $[A]_{\approx}/{\cong}$ under the ordered sum. Fix $s = \hat{f}$ and $t = \hat{g}$ in $S$, and fix $h$ such that $\hat{h} = s+t$. Then by above
\[
\begin{array}{rcl}
B_h & \cong & R + (0, s + t)(K_{[x]}) + L \\
& \cong & R + (0, s)(K_{[x]}) + K_s + (s, s+t)(K_{[x]}) + L \\
& \cong & R + (0, s)(K_{[x]}) + L + R + (s, s+t)(K_{[x]}) + L \\
& \cong & R + (0, s)(K_{[x]}) + L + R + (0, t)(K_{[x]}) + L \\
& \cong & B_f + B_g,
\end{array}
\]
where the isomorphisms $K_s \cong K_0 \cong L+R$ and 
\[
R + (s, s+t)(K_{[x]}) + L \cong R + (0, t)(K_{[x]}) + L
\]
follow from the fact that $s \in S$. Thus the rule \ref{dunhill} respects the semigroup operations of $+$ $(\mathbb{R}$ sum) on $S$ and $+$ (ordered sum) on $[A]_{\approx}/{\cong}$, and hence is an isomorphism of $S$ with $[A]_{\approx}/{\cong}$. It follows $[A]_{\approx}$ represents $S$. 

It remains to consider the case when there is no symmetric or rational non-terminating system $\mathcal{D}$ on $A$. We check in this case that $[A]_{\approx}$ represents the discrete halfgroup $\mathbb{Z}_{>0}$. 

We claim that there is a linear order $C \approx A$ such that for every $A' \approx A$ we have $A' \cong nC$ for some $n \in \mathbb{Z}_{>0}$ (i.e. every element of $[A]_{\approx}$ is isomorphic to a finite multiple of $C$). 

If this is true with $C = A$, we are done. Otherwise, there is $B \approx A$ that is not a multiple of $A$. Fix a symmetric division setup witnessing $A + B \cong B + A$ and run the Euclidean algorithm. 

This algorithm must terminate since there is no non-terminating symmetric system on $A$. It cannot terminate at the $0$th stage in an absorbed factor, as this would give that $A$ and $B$ are splitting by Proposition \ref{bothomegaAleqBandomegaAconjomegaB}, contradicting our hypothesis. Thus it terminates at some finite stage, and we may assume since the setup was symmetric that it terminates in a divisor $C$. We have $A \cong nC$ and $B \cong mC$. Notice that we must have $n > 1$, since by assumption $B$ is not a finite multiple of $A$. Notice $C \approx A$. 

If every $A' \approx A$ is a finite multiple of $C$, then we are done. Otherwise, write $C = A_1$, $n = n_1$ and find $B \approx A_1 \approx A$ that is not a multiple of $A_1$. Run the algorithm on a symmetric setup witnessing $A_1 + B \cong B + A_1$ to get $C$ and $n_2 > 1$ such that $A_1 \cong n_2C$. And continue. 

If this process does not terminate, then we obtain a sequence
\[
\begin{array}{rcl}
A & \cong & n_1 A_1 \\
A_1 & \cong & n_2 A_2 \\
A_2 & \cong & n_3 A_3 \\
& \vdots &
\end{array}
\]
which yields a non-terminating rational system on $A$. Hence this process must terminate, and thus yield the desired $C$. 

Since $C$ is non-splitting the map
\[
n \mapsto nC
\]
clearly witnesses that $[A]_{\approx}$ represents $\mathbb{Z}_{>0}$. We are done.
\end{proof}

\subsection{Representing replacement semigroups}\label{subsect:representingreplacementsemigroups}

\theoremstyle{definition}
\newtheorem{replacementsemigroupdefn}[lct]{Definition}
\begin{replacementsemigroupdefn}\label{replacementsemigroupdefn}
Suppose that $X$ is a linear order and $\{(S_x, \oplus_x): x \in X\}$ is a collection of semigroups indexed by $X$. The \textit{replacement semigroup} $X(S_x)$ is the semigroup with underlying set $\{(x, s): x \in X, s \in S_x\}$ and semigroup operation $\oplus$ defined by:
\[
\begin{array}{rcl}
    (x, s) \oplus (y, t) & = & \left\{   \begin{array}{ll}
                                        (x, s) & x > y \\
                                        (y, t) & x < y \\
                                        (x, s \oplus_x t) & x = y.
                                        \end{array}  
                                        \right.
\end{array}
\]
\end{replacementsemigroupdefn}

Notice that if $S_x$ is commutative for every $x \in X$, then $X(S_x)$ is commutative.

\theoremstyle{definition}
\newtheorem{LOsemigroupdefn}[lct]{Definition}
\begin{LOsemigroupdefn}\label{LOsemigroupdefn}
A semigroup $(S, \oplus)$ is an \textit{$LO$ semigroup} if $S$ is isomorphic to a replacement semigroup $X(S_x)$ such that for every $x \in X$, either $S_x$ is a strict halfgroup of $\mathbb{R}_{>0}$, or $S_x$ is trivial. 
\end{LOsemigroupdefn}

A commutative semigroup $(S, \oplus)$ is called \textit{naturally totally ordered} if the non-strict partial ordering relation $\leq$ defined by the rule
\[
\textrm{$s \leq t$ if $s = t$ or there is $r \in S$ such that $s \oplus r = t$}
\]
is total, i.e. a non-strict linear ordering of $S$. Naturally totally ordered semigroups were defined by Clifford in \cite{Clifford}. $LO$ semigroups are easily seen to be naturally totally ordered. 

The representation theorem \ref{representablesemigroupstheorem} asserts that the commutative semigroups representable in $(LO, +)$ are precisely the subsemigroups of $LO$ semigroups. We first establish the backward direction of the theorem. Since subsemigroups of representable semigroups are representable, it suffices to show that an arbitrary $LO$ semigroup $X(S_x)$ may be represented in $(LO, +)$. The proof is via a Hahn-embedding-theorem-type construction, which we now describe. 

\subsubsection{A Hahn construction for representing $LO$ semigroups} Suppose that $X$ is a linear order, and for every $x \in X$ we are given a linear order $A_x$ with distinguished element $0_x$. 

Let $\prod_{x \in X} A_x$ denote the $X$-indexed cartesian product of the orders $A_x$, i.e. the set of all functions $u: X \rightarrow \bigcup_x A_x$ satisfying $u(x) \in A_x$. If there is a linear order $A$ such that $A_x = A$ for all $x \in X$, then $\prod_x A_x = A^X$.

For $u \in \prod_x A_x$, the \textit{support of $u$} is $\{x \in X: u(x) \neq 0_x\}$, denoted $\textrm{spt}(u)$. 

Recall that a linear order $Y$ is \textit{well-ordered} (or \textit{a well-order}) if every non-empty suborder of $Y$ has a least element. Recall also that suborders of well-orders are well-orders, and if $Y$ and $Z$ are well-ordered suborders of a (not necessarily well-ordered) linear order $X$, then $Y \cup Z$ is also well-ordered. 

\theoremstyle{definition}
\newtheorem{AsubxtoXexponentreplacementdefn}[lct]{Definition}
\begin{AsubxtoXexponentreplacementdefn}\label{AsubxtoXexponentreplacementdefn}
Define 
\[
\textrm{$(A_x)^X = \{u \in \prod_x A_x: \textrm{spt$(u)$ is well-ordered}\}$}.
\]
We call $(A_x)^X$ the \textit{$X$-power} of the orders $\{A_x\}$.
\end{AsubxtoXexponentreplacementdefn}

If $u, v \in (A_x)^X$, then $\{x \in X: u(x) \neq v(x) \}$ is a subset of $\textrm{spt}(u) \cup \textrm{spt}(v)$ and hence is also well-ordered. Thus the relation $<_{lex}$ on $(A_x)^X$, defined by 
\[
\textrm{$u <_{lex} v$ when $x_0$ is least in $\{x \in X: u(x) \neq v(x)\}$ and $u(x_0) < v(x_0)$}
\]
is well-defined and total. And it is straightforward to check that in fact $<_{lex}$ linearly orders $(A_x)^X$. We view $(A_x)^X$ as equipped with this ordering. If $X$ is well-ordered, then $(A_x)^X = \prod_{x \in X} A_x$ equipped with the usual lexicographical ordering.

We will view an element $u \in (A_x)^X$ as a generalized sequence, think of $u(x)$ as the $x$th entry of $u$, and denote it by $u_x$. 

In the same spirit, we may also consider initial sequences of elements $u \in (A_x)^X$ and the intervals they determine. 

Suppose $(I, J)$ is a cut in $X$. Given $r \in (A_x)^I$ (where the index variable $x$ in this expression is understood to now range over $I$) and $u \in (A_x)^J$, we write $ru$ for the function with domain $X$ satisfying $ru \upharpoonright I = r$ and $ru \upharpoonright J = u$. Observe that $ru \in (A_x)^X$. 

Conversely, any $v \in (A_x)^X$ is decomposed uniquely as $ru$ for some $r \in (A_x)^I$ and $u \in (A_x)^J$. Said another way, for any cut $(I, J)$ in $X$, we have that $(A_x)^X$ is isomorphic to $(A_x)^I \times (A_x)^J$ with its usual $2$-fold lexicographical ordering. 

Given $r \in (A_x)^I$, define
\[
(A_x)_r^X = \{v \in (A_x)^X: \,\, \textrm{$v = ru$ for some $u \in (A_x)^J$}\},
\]
i.e. $(A_x)_r^X$ is the set of elements in $(A_x)^X$ that have $r$ as an initial sequence. Observe that $(A_x)_r^X$ is convex in $(A_x)^X$. Observe also that for any $r, s \in (A_x)^I$ we have $(A_x)_r^X \cong (A_x)_s^X$ via the map $ru \mapsto su$ for $u \in (A_x)^J$.

For $S$ a strict halfgroup of $\mathbb{R}_{> 0}$, we write $E_S$ for the orbit equivalence relation on $\mathbb{R}$ of the corresponding group $G = -S \cup \{0\} \cup S$, i.e. the relation defined by the rule $x E_S y$ if $y = x \pm s$ for some $s \in S \cup \{0\}$. We write $E_{\mathbb{R}}$ for the equivalence relation on $\mathbb{R}$ with a single equivalence class, i.e. the relation defined by the rule $x E_{\mathbb{R}} y$ for all $x, y \in \mathbb{R}$. 

For the remainder of this section, let $S = X(S_x)$ be a fixed $LO$ semigroup. 

For every $x \in X$ such that $S_x$ is a strict halfgroup, by replacing $S_x$ with a rescaled isomorphic copy if necessary, we may assume $1 \in S_x$.


To show that $X(S_x)$ can be represented in $(LO, +)$, we first construct a linear order $Z$ by modifying the $(A_x)^X$ construction. The orders $I_{(x, s)} \in LO$ that will represent the elements $(x, s) \in X(S_x)$ in our representation theorem will appear as a $\lessapprox$-chain of intervals in $Z$. 

Given $x \in X$, if $S_x$ is a strict halfgroup, let $E_x$ denote $E_{S_x}$; if $S_x$ is trivial, let $E_x = E_{\mathbb{R}}$. Define
\[
\begin{array}{rcccl}
    A_x & = & [0]_{E_x} & = & \left\{   \begin{array}{ll}
                                        -S_x \cup \{0\} \cup S_x & \textrm{if $S_x$ a strict halfgroup;} \\
                                        \mathbb{R} & \textrm{if $S_x$ trivial,} \\
                                        \end{array}  
                                        \right.
\end{array}
\]
and let $0_x = 0$ (i.e., the distinguished element $0_x$ of every $A_x$ is the $0$ of $\mathbb{R}$). Since $1 \in S_x$ whenever $S_x$ is a strict halfgroup, we have $1 \in A_x$ for every $x \in X$. 

Let $Y = (A_x)^{X^*}$. Notice here that we are taking the power over the reverse $X^*$ of $X$.

Label $X^* = [c, d]$ by its endcuts. For a given $y \in X^*$ and initial sequence $r \in (A_x)^{[c, y)}$, the interval $Y_r$ decomposes as the set of intervals $\{Y_{ra}: a \in [0]_{E_y}\}$. These intervals are lexicographically ordered in order type $[0]_{E_y} = A_y$. 

To construct the order $Z$ from $Y$, for every such $r$ we insert points between the intervals $Y_{ra}$ corresponding to points in $\mathbb{R} \setminus A_y$, and then color these points in a way that will allow us to show that the orders $I_{(y, s)}$ and $I_{(z, t)}$ defined below, representing the points $(y, s)$ and $(z, t)$ in $X(S_x)$, are non-isomorphic whenever $(y, s) \neq (z, t)$. 

More precisely, define 
\[
\begin{array}{rcl}
T & = & \{r: \,\, \textrm{$r$ is a function with $\textrm{dom}(r) = [c, y]$ for some $y \in X^*$}, \\
& & \textrm{$r \upharpoonright [c, y) \in (A_x)^{[c, y)}$}, \\
& & \textrm{$r(y) \in \mathbb{R} \setminus A_y$}\}. 
\end{array}
\]

Equip $T$ with a coloring $\mathsf{c}$ such that for all $r, r' \in T$ we have:
\begin{itemize}
    \item[i.] if $\textrm{dom}(r) \neq \textrm{dom}(r')$ then $\mathsf{c}(r) \neq \mathsf{c}(r')$;
    \item[ii.] if $\textrm{dom}(r) = \textrm{dom}(r') = [c, y]$, then $\mathsf{c}(r) = \mathsf{c}(r')$ if and only if $r(y) E_{S_y} r'(y)$. 
\end{itemize}
We may define such a $\mathsf{c}$ explicitly by letting $\mathsf{c}(r)$ be the ordered pair $(y, [r(y)]_{E_{S_y}})$ whenever $\textrm{dom}(r) = [c, y]$. 

Let $Z = Y \cup T$. Observe that any pair of distinct sequences $u, v \in Z$ have a coordinate of least difference, and moreover the lexicographic ordering on $Z$ extends the lexicographic ordering on $Y$ and linearly orders $Z$. 

We extend the coloring $\mathsf{c}$ from $T$ to $Z$ by coloring every point in $Y$ black, where black is a color different from the colors assigned to points in $T$. For $I$ an initial segment of $X^*$ and $r \in (A_x)^I$, let $T_r$ denote the set of $u \in T$ such that $I$ is initial in $\textrm{dom}(u)$ and $u \upharpoonright I = r$. Let $Z_r$ denote $Y_r \cup T_r$. 

Given $y \in X^*$ and an initial sequence $r \in (A_x)^{[c, y)}$, the interval $Z_r$ decomposes as the set of intervals $\{Z_{ra}: a \in A_y\}$, which are lexicographically ordered in order type $[0]_{E_y} = A_y$, and the set of points $\{rb: b \in \mathbb{R} \setminus A_y\}$ which are interspersed between the intervals $Z_{ra}$ in order type $\mathbb{R} \setminus [0]_{E_y}$:
\[
Z_r = \{Z_{ra}: a \in A_y\} \cup \{rb: b \in \mathbb{R} \setminus A_y\}.
\]

We may also explicitly describe the elements of $Z_r$ as sequences. 

\theoremstyle{definition}
\newtheorem{sequencesinZlemma}[lct]{Lemma}
\begin{sequencesinZlemma}\label{sequencesinZlemma}
Fix $y \in X^*$ and $r \in (A_x)^{[c, y)}$. The elements of $Z_r$  have exactly one of the following forms:
\begin{itemize}
    \item[i.] $rb$, where $b \in \mathbb{R} \setminus A_y$;
    \item[ii.] $ramb$, where $a \in A_y$, $m \in (A_x)^{(y, y')}$ for some $y' > y$ in $X^*$, and $b \in \mathbb{R} \setminus A_{y'}$;
    \item[iii.] $rau$, where $a \in A_y$ and $u \in (A_x)^{(y, d]}$. 
\end{itemize}
\end{sequencesinZlemma}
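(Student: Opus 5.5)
This is a direct unwinding of the definitions of $Z = Y \cup T$ and $Z_r = Y_r \cup T_r$. The plan is to take an arbitrary $v \in Z_r$ and split on whether $v \in T_r$ or $v \in Y_r$, then read off the form of $v$ from its domain and the coordinate at which it leaves $(A_x)$. Throughout, $X^* = [c,d]$, and I use the unique-decomposition remark: for a cut $(I,J)$ of $X^*$, every element of $(A_x)^{X^*}$ is uniquely $ru$ with $r \in (A_x)^I$ and $u \in (A_x)^J$. The same holds for partial sequences defined on an initial segment of $X^*$.

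\emph{Case $v \in Y_r$.} Then $v \in (A_x)^{X^*}$ with $v \upharpoonright [c,y) = r$. Decompose along the cut $([c,y],(y,d])$ to write $v = r a u$ with $a = v(y) \in A_y$ and $u = v \upharpoonright (y,d]$. The support of $u$ is a subset of the support of $v$, hence well-ordered, so $u \in (A_x)^{(y,d]}$. This is form (iii).

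\emph{Case $v \in T_r$.} Then $\mathrm{dom}(v) = [c,y']$ for some $y' \in X^*$. By definition of $T_r$, $[c,y)$ is initial in $[c,y']$, so $y' \geq y$.
\begin{itemize}
\item If $y' = y$, then $v = r b$ with $b = v(y) \in \mathbb{R}\setminus A_y$. This is form (i).
\item If $y' > y$, then $v \upharpoonright [c,y') \in (A_x)^{[c,y')}$, so $a = v(y) \in A_y$. Setting $m = v \upharpoonright (y,y')$, its support is again a subset of a well-ordered support, so $m \in (A_x)^{(y,y')}$. Finally $b = v(y') \in \mathbb{R}\setminus A_{y'}$. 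This gives $v = r a m b$, form (ii).
\end{itemize}

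Conversely, each expression of type (i)–(iii) lies in $Z_r$. A type (i) or (ii) sequence satisfies the defining clauses of $T$ and restricts to $r$ on $[c,y)$. For a type (iii) sequence, the support of $rau$ is the union of the supports of $r$, $\{y\}$, and $u$. These occur in order of position, with $\{y\}$ sitting between the other two, so the union is well-ordered; hence $rau \in Y_r$.

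Exclusivity of the three forms is read off from $\mathrm{dom}(v)$ and $v(y)$:
\begin{itemize}
\item form (iii) has domain all of $X^*$, while forms (i) and (ii) have bounded domains $[c,y']$;
\item form (i) has $v(y) \notin A_y$, while form (ii) has $v(y) \in A_y$.
\end{itemize}

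The only point needing any care is the well-orderedness of supports: restrictions of well-ordered supports stay well-ordered, and concatenations along a cut stay well-ordered. Everything else is bookkeeping, so there is no real obstacle.
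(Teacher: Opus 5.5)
Your proof is correct and follows the same route as the paper: split $Z_r = T_r \cup Y_r$, observe that $T_r$ yields forms (i) and (ii) and $Y_r$ yields form (iii). You simply write out the support and well-ordering bookkeeping that the paper leaves implicit.

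One small inference needs patching. From ``$[c,y)$ is initial in $[c,y']$'' you conclude $y' \geq y$, but this fails when $y'$ is the immediate predecessor of $y$ in $X^*$, since then $[c,y) = [c,y']$. That case is excluded because $v(y') \notin A_{y'}$ while $r(y') \in A_{y'}$, which contradicts $v \upharpoonright [c,y) = r$.
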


\begin{proof}
Sequences in $T_r$ have exactly one of the forms (i.) and (ii.), and sequences in $Y_r$ have form (iii.). 
\end{proof}

If $S_y$ is trivial, then $\mathbb{R} \setminus A_y = \emptyset$. Thus only when $S_y$ is a strict halfgroup are there are elements in $Z_r$ of form (i.).

Let $\overline{0}$ denote the identically zero $X^*$-sequence. For $I$ an interval in $X^*$, we write $\overline{0}^I$ for the identically zero sequence in $(A_x)^I$. For a given $y \in X^*$ and $s \in A_y$, we write 
\[
\overline{0}^{[c, y)}s\overline{0}^{(y, d]}
\]
for the sequence with $s$ in the $y$th coordinate and $0$s elsewhere. Note that all such sequences belong to $(A_x)^{X^*} = Y$, and hence also to $Z$.

We may now define the orders $I_{(y, s)}$ that appear in our representation. They are all half-open intervals in $Z$ of the form $[\overline{0}, u)$ for some $u$ of the form $\overline{0}^{[c, y)}s\overline{0}^{(y, d]}$.

\theoremstyle{definition}
\newtheorem{Ixsrepresentingordersdefn}[lct]{Definition}
\begin{Ixsrepresentingordersdefn}\label{Ixsrepresentingordersdefn}
Given $(y, s) \in X(S_x)$, define an interval $I_{(y, s)}$ of $Z$ as follows:
\begin{itemize}
\item[i.] if $S_y$ is a strict halfgroup, let $I_{(y, s)} = [\overline{0}, \overline{0}^{[c, y)}s\overline{0}^{(y, d]})$;
\item[ii.] if $S_y = \{s\}$ is trivial, let $I_{(y, s)} = [\overline{0}, \overline{0}^{[c, y)}1\overline{0}^{(y, d]})$. 
\end{itemize}
\end{Ixsrepresentingordersdefn}

Note that these intervals are well-defined. This is clear in case (ii.). In case (i.), observe we have $S_y \subseteq A_y = [0]_{E_y}$, and for every $s \in S_y$, we have $s > 0$. Thus $\overline{0}^{[c, y)}s\overline{0}^{(y, d]} \in Y \subseteq Z$ and $\overline{0}^{[c, y)}s\overline{0}^{(y, d]} > \overline{0}$. 

Theorem \ref{ZrepresentsXSxincLOthm} below says that if the semigroups $S_x$ are nontrivial for densely many $x \in X$, then \ref{Ixsrepresentingordersdefn} defines a representation of $X(S_x)$ in the class of colored linear orders. After we prove it, we show in Corollary \ref{representsXSxinLOcorollarynodensity} that the density hypothesis can be dispensed with. 

\theoremstyle{definition}
\newtheorem{ZrepresentsXSxincLOthm}[lct]{Theorem}
\begin{ZrepresentsXSxincLOthm}\label{ZrepresentsXSxincLOthm}
Suppose that $\{x \in X: \textrm{$S_x$ is a strict halfgroup}\}$
is dense in $X$. Define 
\[
\begin{array}{rcl}
\iota: X(S_x) & \rightarrow & \mathsf{c}LO \\
\iota(y, s) & = & I_{(y, s)}
\end{array}
\]
Then $\iota$ is a representation of $X(S_x)$ in $\mathsf{c}LO$. 
\end{ZrepresentsXSxincLOthm}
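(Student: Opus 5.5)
We need to show two things: additivity, $I_{(y,s)}+I_{(z,t)}\cong I_{(y,s)\oplus(z,t)}$ as colored orders, and injectivity, $I_{(y,s)}\not\cong I_{(z,t)}$ whenever $(y,s)\neq(z,t)$. Throughout, write $e_{y,s}=\overline{0}^{[c,y)}s\overline{0}^{(y,d]}$. The basic tool is a family of colored translation automorphisms of $Z$. For $a\in A_y$, define $\tau_{y,a}$ by adding $a$ in the $y$-coordinate of every sequence whose domain contains $y$, and leaving every other coordinate alone. On sequences of form (i) of Lemma \ref{sequencesinZlemma} with last coordinate at $y$, we have $b\notin A_y$ implies $b+a\notin A_y$, and $b+a$ lies in the same $E_{S_y}$-class as $b$, so colors are preserved. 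On forms (ii) and (iii) the coordinate at $y$ stays in $A_y$ and later coordinates are unchanged, so colors are again preserved. The map $\tau_{y,a}$ is order preserving because $X^*$-coordinates before $y$ are unchanged and $y$ is shifted uniformly. So $\tau_{y,a}$ is a colored automorphism of $Z$, sending $\overline{0}$ to $e_{y,a}$ and fixing coordinates before $y$.

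\emph{Additivity.} First take $y=z$ with $S_y$ a strict halfgroup. Then $\tau_{y,s}$ maps $[\overline{0},e_{y,t})$ onto $[e_{y,s},e_{y,s+t})$, so
\[
I_{(y,s+t)}=[\overline{0},e_{y,s})\cup[e_{y,s},e_{y,s+t})\cong I_{(y,s)}+I_{(y,t)}.
\]
If $S_y$ is trivial the same computation gives $2I_{(y,s)}\cong[\overline{0},e_{y,2})$, and $\tau$-maps combined with the fact that $\mathbb{R}$ is order-isomorphic to itself by a scaling fixing $0$ identify $[\overline{0},e_{y,2})$ with $[\overline{0},e_{y,1})$. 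This requires a colored automorphism of $Z_{\overline{0}^{[c,y)}}$ scaling the $y$-coordinate by $2$; it is colored because $A_y=\mathbb{R}$ contributes no form-(i) points at $y$.

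Next take $(y,s)\oplus(z,t)=(z,t)$, that is $y<z$ in $X$, so $z$ precedes $y$ in $X^*$. We must absorb: $I_{(y,s)}+I_{(z,t)}\cong I_{(z,t)}$, and by commutativity the order $I_{(z,t)}+I_{(y,s)}$ also needs to be $\cong I_{(z,t)}$. For the first, we find $\omega I_{(y,s)}\leqslant_{init} I_{(z,t)}$ and apply Corollary \ref{absorbsleftrightcoro}. The embedding comes from the cut sequence $e_{y,ns}$, $n\in\omega$, which stays inside $Z_{\overline{0}^{[c,z]}}$, an initial part of $[\overline{0},e_{z,t})$. For the second we need $\omega^*I_{(y,s)}\leqslant_{fin}I_{(z,t)}$. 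Here we use the sequences $\overline{0}^{[c,z)}t\,\overline{0}^{(z,y)}(-ns)\overline{0}^{(y,d]}$, which decrease toward $e_{z,t}$ from below, and translate $I_{(y,s)}$ onto the consecutive gaps by $\tau_{y,\cdot}$ composed with $\tau_{z,t}$. We then check that the gaps exhaust a final segment, and that the cut at their left is an actual cut, since sequences with $z$-coordinate $t$ and well-ordered support in $(z,d]$ form a convex block. Finally $[\overline{0},e_{z,t})$ splits as the $\omega$-part, a middle, and the $\omega^*$-part, so Theorem \ref{XcongAXBiffABladder} gives absorption on both sides.

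\emph{Injectivity.} This is where I expect the main obstacle, and where the density hypothesis and the coloring enter. Suppose $f:I_{(y,s)}\to I_{(z,t)}$ is a colored isomorphism.

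Step 1 is to show $y=z$. The colors appearing in $I_{(y,s)}$ are exactly those $(w,[b])$ with $w\le y$ in $X^*$ (coordinates at or after $y$ in $X^*$) and $S_w$ nontrivial, together with black. The extra form-(i) points $\overline{0}^{[c,y)}b$ with $0<b<s$ supply colors $(y,\cdot)$ only when $S_y$ is nontrivial. By density of nontrivial $S_w$, the sets of colors occurring determine $y$, though care is needed at the boundary between $y$ and its neighbours in $X^*$.

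Step 2 treats $y=z$ with $S_y$ nontrivial and aims to show $s=t$. Restrict $f$ to the form-(i) points at level $y$. These are the points $\overline{0}^{[c,y)}b$ with $b\in(0,s)\setminus A_y$, colored by $[b]_{E_{S_y}}$. By convexity they correspond to a colored convex embedding of a subinterval of $\mathbb{R}$ in which the points of $A_y$ are replaced by blocks. Condensing those blocks to points, as in the proof of Theorem \ref{groupsoftranslationsarefull}, we obtain a colored convex map between intervals of $\mathbb{R}$ colored by $E_{S_y}$-class. Theorem \ref{groupsoftranslationsarefull} then says this map is a translation by some element of the group generated by $S_y$. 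Since $f$ fixes the left endpoint block, which corresponds to $0$, the translation is $0$, and matching right ends gives $s=t$.

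The delicate points are two: verifying that $f$ respects the level-$y$ block structure, which should follow because the colors are distinct per level; and justifying the continuity and condensation step inside Theorem \ref{groupsoftranslationsarefull}.
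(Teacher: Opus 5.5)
Your proposal is correct and follows essentially the paper's proof. Colored coordinate translations (your $\tau_{y,a}$, a global version of the paper's Lemma~0) and, for trivial $S_y$, doubling in the $y$-coordinate give the sum, idempotence and two-sided absorption identities. Injectivity comes from comparing the sets of occurring colors via density when $y\neq z$, and from condensing the level-$y$ blocks and invoking Theorem~\ref{groupsoftranslationsarefull} when $y=z$.

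Three small slips to fix. In Step~1 the colors occurring in $I_{(y,s)}$ are those with $w\ge y$ in $X^*$, as your parenthetical says. In the trivial case the cut points should be $e_{y,n}$ rather than $e_{y,ns}$, since then $I_{(y,s)}=[\overline{0},e_{y,1})$. Your points $\overline{0}^{[c,z)}t\,\overline{0}^{(z,y)}(-ns)\overline{0}^{(y,d]}$ start at $e_{z,t}$ and decrease away from it, not toward it.
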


\begin{proof}

We begin with two lemmas.

\theoremstyle{definition}
\newtheorem*{Lemma0}{Lemma 0}
\begin{Lemma0}\label{Lemma0}
Fix $y \in X^*$ and $r \in (A_x)^{[c, y)}$. Then fix $s \in [0]_{E_y} = A_y$ with $s > 0$. Using the description of sequences given in Lemma \ref{sequencesinZlemma}, define a map $Z_r \rightarrow Z_r$ by the rules
\[
\begin{array}{rcl}
rb & \mapsto & r(b+s) \\
ramb & \mapsto & r(a+s)mb \\
rau & \mapsto & r(a+s)u.
\end{array}
\]
Then this map is a colored automorphism of $Z_r$. 
\end{Lemma0}
\begin{proof}
The lemma asserts that translation by a positive $s \in [0]_{E_y}$ in the $y$th coordinate induces a colored automorphism of $Z_r$. 

The set of entries in the $y$th coordinate of sequences $u \in Z_r$ is $\mathbb{R}$. Since $s \in [0]_{E_y}$, the sequences $r(b+s)$ and $r(a+s)mb$ are in $T_r$ if and only if $rb$ and $rsmb$ are in $T_r$; and $r(a+s)u$ is in $Y_r$ if and only if $rau$ is in $Y_r$. Hence the map is well-defined. Since translation by $s$ is an order-automorphism of $\mathbb{R}$, it follows the map is an order-automorphism of $Z_r$. 


Moreover, it is color-preserving: we have $\mathsf{c}(rau) = \mathsf{c}(r(a+s)u)$ since both sequences belong to $Y$; $\mathsf{c}(ramb) = \mathsf{c}(r(a+s)mb)$ since both sequences have the same domain and same final coordinate; and, in the case when $S_y$ is a strict halfgroup and there is something to check, we have $\mathsf{c}(rb) = \mathsf{c}(r(b+s))$, since both sequences have the same domain and $b E_y (b+s)$. The lemma is proved.
\end{proof}

\theoremstyle{definition}
\newtheorem*{Lemma1}{Lemma 1}
\begin{Lemma1}\label{Lemma1}
Fix $y \in X^*$ and $r \in (A_x)^{[c, y)}$. Suppose $S_y$ is trivial. Define a map $Z_r \rightarrow Z_r$ by the rules
\[
\begin{array}{rcl}
ramb & \mapsto & r(2a)mb \\
rau & \mapsto & r(2a)u.
\end{array}
\]
Then this map is a colored automorphism of $Z_r$. 
\end{Lemma1}

\begin{proof}
Since $a \mapsto 2a$ defines an order-automorphism of $\mathbb{R} = E_{S_y}$, the map defined above is an order-automorphism of $Z_r$. It is clearly color-preserving.  
\end{proof}

The remainder of the proof splits into four claims.

In all of these claims and their proofs, the symbol $\cong$ indicates color-isomorphism. Likewise, $\leqslant_{init}$ and $\leqslant_{fin}$ indicate a colored initial embedding and colored final embedding. 

\theoremstyle{definition}
\newtheorem*{ClaimA}{Claim A}
\begin{ClaimA}\label{ClaimA}
If $y < z$ in $X$, so that $z < y$ in $X^*$, then for any $s \in S_y$ and $t \in S_z$ we have 
\begin{equation*}
I_{(y, s)} + I_{(z, t)} \cong I_{(z, t)} + I_{(y, s)} \cong I_{(z, t)}.
\end{equation*}
\end{ClaimA}

\begin{proof}
We show $\omega I_{(y, s)} \leqslant_{init} I_{(z, t)}$ and $\omega^* I_{(y, s)} \leqslant_{fin} I_{(z, t)}$. 

Let $r = \overline{0}^{[c, y)}$. Label $Z_r = [p, q]$ by its endcuts. We have $Z_r \cong [p, \overline{0}) + [\overline{0}, q]$.

Notice $I_{(x, s)} = [\overline{0}, rs\overline{0}^{(y, d]})$ is initial in the final segment $[\overline{0}, q]$ of $Z_r$. For $k \in \mathbb{Z}$, define
\[
I_k = [r(ks)\overline{0}^{(y, d]}, r((k+1)s)\overline{0}^{(y, d]}).
\]

Thus $I_0 = I_{(y, s)}$. Since the left endpoints of these intervals traverse $Z_r$, we have the decompositions:

\begin{tabular}{rcl}
    $Z_r$ & $\cong$ & $\cdots + I_{-1} + I_0 + I_1 + I_2 + \cdots$; \\
    $[\overline{0}, q]$ & $\cong$ & $I_0 + I_1 + I_2 + \cdots$; \\
    $[p, \overline{0})$ & $\cong$ & $\cdots + I_{-2} + I_{-1}$.
\end{tabular}

Observe that the color-automorphism of $Z_r$ from Lemma 0 with respect to the pair $(s, y)$ (translation by $s$ in coordinate $y$) maps each $I_k$ onto $I_{k+1}$. Thus $I_k \cong I_0 \cong I_{(y, s)}$ for all $k \in \mathbb{Z}$, and we have:

\begin{tabular}{rcl}
    $[\overline{0}, q]$ & $\cong$ & $\omega I_{(y, s)}$; \\
    $[p, \overline{0})$ & $\cong$ & $\omega^* I_{(y, s)}$.
\end{tabular}

Now let $r' = \overline{0}^{[c, z)}$. Label $Z_{r'} = [p', q']$ by its endcuts. 

Observe that $I_{(z, t)} = [\overline{0}, r't\overline{0}^{(z, d]})$ is initial in $[\overline{0}, q']$. Further, since $z < y$ in $X^*$, we have $[\overline{0}, q]$ is initial in $I_{(z, t)}$, which gives $\omega I_{(y, s)} \leqslant_{init} I_{(z, t)}$. 

For $k \in \mathbb{Z}$, define
\[
I_k' = [r'(kt)\overline{0}^{(z, d]}, r'((k+1)t)\overline{0}^{(z, d]}).
\]
Then $I_0' = I_{(z, t)}$. Observe $[p, \overline{0}) \cong \omega^* I_{(y, s)}$ is final in $I_{-1}'$. Now the map on $Z_{r'}$ from Lemma 0 with respect to $(z, t)$ (translation by $t$ in coordinate $z$) witnesses $I_{-1}' \cong I_0'$. Thus we have $\omega^* I_{(y, s)} \leqslant_{fin} I_0' \cong I_{(z, t)}$, which completes the proof of the claim. 
\end{proof}

\theoremstyle{definition}
\newtheorem*{ClaimB}{Claim B}
\begin{ClaimB}\label{ClaimB}
Given $y \in X$ such that $S_y$ is nontrivial, for any $s, t \in S_y$ we have 
\begin{equation*}
    I_{(y, s)} + I_{(y, t)} \cong I_{(y, s + t)}.
\end{equation*}
\end{ClaimB}

\begin{proof}
By cutting $I_{(y, s+t)}$ at the right endpoint $\overline{0}^{[c, y)}s\overline{0}^{(y, d]}$ of $I_{(y, s)}$, we have the decomposition
\[
I_{(y, s+t)} \cong I_{(y, s)} + [\overline{0}^{[c, y)}s\overline{0}^{(y, d]}, \overline{0}^{[c, y)}(s+t)\overline{0}^{(y, d]})
\]

Translating by $s$ in the $y$th coordinate (in $Z_{\overline{0}^{[c, y)}}$, using Lemma 0) sends $I_{(y, t)}$ onto the interval to the right of the $+$ sign in the expression above. Thus 
\[
I_{(y, s+t)} \cong I_{(y, s)} + I_{(y, t)}
\]
as claimed. 
\end{proof}

\theoremstyle{definition}
\newtheorem*{ClaimC}{Claim C}
\begin{ClaimC}\label{ClaimC}
Given $y \in X$, if $S_y = \{s\}$ is trivial, then 
\begin{equation*}
    I_{(y, s)} + I_{(y, s)} \cong I_{(y, s)}.
\end{equation*}
\end{ClaimC}

\begin{proof}
Using Lemma 1 to multiply by $2$ in the $y$th coordinate witnesses that 
\[
I_{(y, s)} = [\overline{0}, \overline{0}^{[c, y)}1\overline{0}^{(y, d]}) \cong [\overline{0}, \overline{0}^{[c, y)}2\overline{0}^{(y, d]}).
\]

By inspection we have
\[
[\overline{0}, \overline{0}^{[c, y)}2\overline{0}^{(y, d]}) \cong I_{(y, s)} + [\overline{0}^{[c, y)}1\overline{0}^{(y, d]}, \overline{0}^{[c, y)}2\overline{0}^{(y, d]}).
\]

Translation by $1$ witnesses that the interval on the right is isomorphic to $I_{(y, s)}$. Combining the above then yields
\[
I_{(y, s)} \cong [\overline{0}, \overline{0}^{[c, y)}2\overline{0}^{(y, d]}) \cong I_{(y, s)} + I_{(y, s)},
\]
as claimed. 
\end{proof}

\theoremstyle{definition}
\newtheorem*{ClaimD}{Claim D}
\begin{ClaimD}\label{ClaimD}
If $(y, s)$ and $(z, t)$ are distinct elements of $X(S_x)$, then $I_{(y, s)} \not\cong I_{(z, t)}$. 
\end{ClaimD}

\begin{proof}
Since $(y, s) \neq (z, t)$, we have that either $y \neq z$, or $y = z$ and $s \neq t$. 

Suppose first $y \neq z$. Without loss of generality, we may assume $y < z$ in $X$, so that $z < y$ in $X^*$. Since strict halfgroups appear densely often among the semigroups $S_x$, there is some $z' \in X^*$ with $z \leq z' < y$ such that $S_{z'}$ is a strict halfgroup. 

Choose $b \in \mathbb{R} \setminus A_{z'}$ with $b > 0$. Then $rb \in Z_r$ where $r = \overline{0}^{[c, z')}$. Observe that if $z < z'$ (in $X^*$), then $Z_r \subseteq I_{(z, t)}$ and hence $rb \in I_{(z, t)}$. If $z = z'$, then we make sure to choose $b$ so that $rb \in I_{(z, t)}$, i.e. so that $0 < b < t$, which is possible since $\mathbb{R} \setminus A_{z'}$ is dense in $\mathbb{R}$ by the strictness of $S_{z'}$. 

We have $I_{(y, s)} \subseteq Z_{r'} \subseteq I_{(z, t)}$, where $r' = \overline{0}^{[c, y)}$. Observe that since $\textrm{dom}(rb) = [c, z']$ is shorter domain than the domain of any element of $Z_{r'}$, its color $\mathsf{c}(rb)$ is different from the color of any element in $Z_{r'}$. In particular there is no point in $I_{(y, s)}$ of color $\mathsf{c}(rb)$. Hence there is no color isomorphism between $I_{(y, s)}$ and $I_{(z, t)}$, as claimed. 

Now suppose $y = z$ and $s \neq t$. Since it contains distinct points we must have that $S_y$ is nontrivial, i.e. a strict halfgroup. 

Without loss of generality, suppose $s < t$. Let $r = \overline{0}^{[c, y)}$. 

Suppose toward a contradiction there is a colored isomorphism $f: I_{(y, t)} \rightarrow I_{(y, s)}$. Since the color assigned to a given $rb$ with $b \in \mathbb{R} \setminus A_y$ is distinct from the color of all points belonging to one of the intervals $Z_{ra}$ with $a \in A_y$, and between any two such intervals such a point appears, we must have that $f$ maps each such point $rb \in I_{(y, t)}$ to a corresponding point in $I_{(y, s)}$, and each such interval $Z_{ra} \subseteq I_{(y, t)}$ onto a corresponding interval in $I_{(y, s)}$. 

That is, for all $0 < b < t$ with $b \not\in A_y$ there is $0 < b' < s$ with $b' \not\in A_y$ and $\mathsf{c}(rb) = \mathsf{c}(rb')$ (i.e. $b E_{S_y} b'$) such that $f(rb) = f(rb')$; and for all $0 < a < t$ with $a \in A_y = [0]_{E_y}$, there is $0 < a' < s$ with $a \in A_y$ such that $f[Z_{ra}] = Z_{ra'}$. 

Condense each such $Z_{ra}$ to a black point. After condensing, observe that the intervals $I_{(y, t)} = [\overline{0}, \overline{0}^{[c, y)}t\overline{0}^{(y, d)})$ and $I_{(y, s)} = [\overline{0}, \overline{0}^{[c, y)}s\overline{0}^{(y, d)})$ are isomorphic to the $\mathbb{R}$ intervals $[0, t)$ and $[0, s)$ respectively. Identify $I_{(y, t)}$ with $[0, t)$ and $I_{(y, s)}$ with $[0, s)$, colored in the same way. Then $[0, t)$ and $[0, s)$ are colored by the orbit equivalence relation $E_G$ as in Theorem \ref{groupsoftranslationsarefull}, where $G = -S_y \cup \{0\} \cup S_y$. The discussion in the previous paragraph shows $f$ is a colored isomorphism of these intervals. But $f$ is not a translation, since $f(0) = 0$ and $f(t) = s < t$, contradicting Theorem \ref{groupsoftranslationsarefull}. 

It follows $I_{(y, s)} \not\cong I_{(z, t)}$. 
\end{proof}

The conjunction of Claims (A.), (B.), (C.), and (D.) gives that $\iota$ is a representation of $X(S_x)$ in $\mathsf{c}LO$. 
\end{proof}

Corollary \ref{representsXSxinLOcorollarynodensity} shows that we can remove the density hypothesis from Theorem \ref{ZrepresentsXSxincLOthm}.

\theoremstyle{definition}
\newtheorem{representsXSxinLOcorollarynodensity}[lct]{Corollary}
\begin{representsXSxinLOcorollarynodensity}\label{representsXSxinLOcorollarynodensity}
Suppose that $X(S_x)$ is an $LO$ semigroup. Then $X(S_x)$ is representable in $\mathsf{c}LO$. 
\end{representsXSxinLOcorollarynodensity}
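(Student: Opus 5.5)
The plan is to reduce to Theorem \ref{ZrepresentsXSxincLOthm} by enlarging the index order so that strict halfgroups occur densely, and then restricting. Given an arbitrary $LO$ semigroup $X(S_x)$, form the lexicographic product $X' = X \times \mathbb{Q}$; more conveniently, $X' = X(\mathbb{Q}_x)$ where each $\mathbb{Q}_x = \{x\}\times(\mathbb{Q}\cup\{0\})$. Define the semigroups $S'_{(x,q)}$ by $S'_{(x,0)} = S_x$ and, for $q \neq 0$, $S'_{(x,q)} = \mathbb{Z}_{>0}$. The latter is a strict halfgroup of $\mathbb{R}_{>0}$, since $\mathbb{Z}\subsetneq\mathbb{R}$. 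Then $\{x' \in X' : S'_{x'}\text{ is a strict halfgroup}\}$ contains every point $(x,q)$ with $q\neq 0$. Between any two points of $X'$ there is such a point: for $(x,q)<(x,q')$ use a rational strictly between $q$ and $q'$ other than $0$, and for $(x,q)<(y,q')$ with $x<y$ use $(x,q'')$ with $q''>\max(q,0)$. So the density hypothesis of Theorem \ref{ZrepresentsXSxincLOthm} holds, and $X'(S'_{x'})$ is an $LO$ semigroup representable in $\mathsf{c}LO$ via some $\iota'$.

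Next, I would observe that $X(S_x)$ embeds as a subsemigroup of $X'(S'_{x'})$ via $(x,s)\mapsto((x,0),s)$. The map is clearly injective. It preserves $\oplus$ by inspecting the three cases of Definition \ref{replacementsemigroupdefn}: the comparison $(x,0)$ versus $(y,0)$ in $X'$ agrees with $x$ versus $y$ in $X$, and when $x=y$ the operation is $\oplus_x$ in both semigroups. Then $\iota = \iota'\circ(\text{embedding})$ satisfies both clauses of Definition \ref{semigroupisrepresentableinLOdefn}, since restriction of a representation to a subsemigroup is again a representation. This gives the corollary.

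The only point needing care is that $X'$ is genuinely a linear order and that the density is checked in $X'$ rather than in $X$; this is routine. If instead one wants representability in uncolored $LO$, that would need a separate decoloring step, but the statement only asks for $\mathsf{c}LO$, so I expect no real obstacle beyond the density verification.
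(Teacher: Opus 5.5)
Your proposal is correct and follows the paper's proof: enlarge $X$ to $X\mathbb{Q}$, identify $X$ with the points $(x,0)$, and place a fixed strict halfgroup at every other point (your choice $\mathbb{Z}_{>0}$ works). This makes the density hypothesis of Theorem \ref{ZrepresentsXSxincLOthm} hold, and you then restrict the resulting representation to the subsemigroup $X(S_x)$. Your explicit density check and your verification that $(x,s)\mapsto((x,0),s)$ is a semigroup embedding fill in details the paper leaves implicit.
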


\begin{proof}
We pass to a larger $LO$ semigroup satisfying the density hypothesis from Theorem \ref{ZrepresentsXSxincLOthm}.

Consider the lexicographic product $X \mathbb{Q} = \{(x, q): x \in X, q \in \mathbb{Q}\}$ obtained by replacing every point in $X$ with a copy of the rationals. Identify $X$ with the set of pairs $(x, 0) \in X \mathbb{Q}$. By the density of $\mathbb{Q}$, for any $x < x'$ in $X$ there is $y \in X\mathbb{Q} \setminus X$ with $x < y < x'$. 

For each $x \in X$, let $S_x' = S_x$. For $x \in X\mathbb{Q} \setminus X$, let $S_x'$ be some fixed strict halfgroup of $\mathbb{R}_{>0}$. Then $X\mathbb{Q}(S_x')$ is an $LO$ semigroup satisfying the hypothesis of Theorem \ref{ZrepresentsXSxincLOthm}. Hence $X\mathbb{Q}(S_x')$ is representable in $\mathsf{c}LO$. Since $X(S_x)$ is a subsemigroup of $X\mathbb{Q}(S_x')$, it follows $X(S_x)$ is representable in $\mathsf{c}LO$. 
\end{proof}

The proof of Theorem \ref{XSxrepninLOnocolorsthm} below shows that the representation of $X(S_x)$ in $\mathsf{c}LO$ from \ref{ZrepresentsXSxincLOthm} may be adapted to get a representation in $LO$. 

\theoremstyle{definition}
\newtheorem{XSxrepninLOnocolorsthm}[lct]{Theorem}
\begin{XSxrepninLOnocolorsthm}\label{XSxrepninLOnocolorsthm}
Suppose $X(S_x)$ is an $LO$ semigroup. Then $X(S_x)$ is representable in $LO$. 
\end{XSxrepninLOnocolorsthm}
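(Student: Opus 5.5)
By Corollary \ref{representsXSxinLOcorollarynodensity}, we may assume $X(S_x)$ satisfies the density hypothesis of Theorem \ref{ZrepresentsXSxincLOthm}. So we have the colored order $Z$ and the colored intervals $I_{(y,s)}$ satisfying Claims A--D. The plan is to remove the coloring by replacing each point of $Z$ with an uncolored linear order that encodes its color. Concretely, we fix an injective assignment $\mathsf{col} \mapsto C_{\mathsf{col}}$ from the set of colors used on $Z$ into $LO$. We then set $\tilde I_{(y,s)} = I_{(y,s)}(C_{\mathsf{c}(z)})$, the replacement of each point $z$ by the gadget for its color.

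The positive identities survive automatically. Every isomorphism used in Claims A--C, including the infinite sums in Claim A, is a colored isomorphism. A colored isomorphism $f$ lifts to an isomorphism of replacements via $(z,i)\mapsto (f(z),i)$, and replacement commutes with sums by associativity. So $\tilde\iota(y,s)=\tilde I_{(y,s)}$ still satisfies $\tilde\iota(a\oplus b)\cong\tilde\iota(a)+\tilde\iota(b)$. The work is entirely in recovering injectivity, i.e.\ the analogue of Claim D.

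For that, the gadgets must be rigid enough that any uncolored isomorphism $F:\tilde I\to\tilde I'$ between such replacements maps each block $C_{\mathsf{c}(z)}$ exactly onto a block $C_{\mathsf{c}(z')}$ of the same color. Then $F$ condenses to a colored isomorphism $I\to I'$, contradicting Claim D. The approach is to choose gadgets that are ``blocks'' canonically identifiable from the order type alone, using the condensation ideas of Section \ref{subsect:simDcondensation}. Here is a proposal.
\begin{itemize}
\item[] Let black points be replaced by $\mathbb{Z}$.
\item[] Let a color indexed by $\alpha$ be replaced by a distinct scattered well-order-like order, such as $\mathbb{Z} + \kappa_\alpha + \mathbb{Z}$, with distinct uncountable regular cardinals $\kappa_\alpha$.
\end{itemize}
Alternatively, take $\mathbb{Z}+(\alpha+1)+\mathbb{Z}$ for distinct ordinals $\alpha$ large enough to index the colors. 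The key feature is that between any two distinct points of $Z$ there are, by construction, infinitely many points (the $\mathbb{Q}$-like density coming from $\mathbb{R}$ and the inserted $T$-points). Therefore the finite-distance condensation, $p\sim q$ iff $[\{p,q\}]$ is finite, or more robustly ``$[\{p,q\}]$ is scattered'', has as classes exactly the gadgets. That condensation is invariant under isomorphism, so $F$ permutes blocks. The block's order type then determines its color.

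The main obstacle is verifying that the ``scattered-interval'' condensation classes of $\tilde I_{(y,s)}$ are precisely the individual gadgets. This needs $Z$ to contain no nontrivial scattered intervals: every interval of $Z$ with two points must contain a copy of $\mathbb{Q}$. I would check this from Lemma \ref{sequencesinZlemma}. Between two distinct sequences with first difference at coordinate $y$, the $y$-th coordinate ranges over a real interval. The points there, from $A_y$ or the inserted $\mathbb{R}\setminus A_y$, form a dense set, giving a dense suborder. A replacement of a dense-without-scattered-intervals order by scattered blocks has exactly the blocks as its maximal scattered intervals. Hence $F$ induces a color-preserving convex bijection, and Claim D finishes the proof. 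If the chosen gadgets' types are not distinguishable enough, I would fall back to choosing them pairwise non-isomorphic scattered orders, which is possible for any set of colors.
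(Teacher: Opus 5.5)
Your proposal is correct. Its skeleton matches the paper's proof:
\begin{itemize}
\item pass to the dense case via the $X\mathbb{Q}$ trick (strictly, what you use is the \emph{proof} of Corollary \ref{representsXSxinLOcorollarynodensity}, together with the fact that subsemigroups of $LO$-representable semigroups are representable);
\item replace each point of $Z$ by an uncolored gadget determined by its color;
\item lift the colored isomorphisms of Claims A--C;
\item recover Claim D by showing that any uncolored isomorphism condenses to a colored one.
\end{itemize}

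Where you differ is the gadget and the rigidity mechanism. The paper replaces color $c_i$ by an $\aleph_i$-dense order $K_i$. It proves Claim ($\dagger$) by a density/cardinality clash: if the image of a block met two blocks, it would contain an entire block $K_w$ of a different color, and $K_w$ cannot convexly embed in $K_i$. That argument needs the extra fact that between any two points of $Z$ there is a point whose color differs from any prescribed one. This is where the paper uses that $X$ is dense without endpoints and that the nontrivial factors occur densely.

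You instead take pairwise non-isomorphic nonempty scattered gadgets and recover the blocks as the classes of the isomorphism-invariant scattered condensation. For this block-recovery step you only need $Z$ to be densely ordered. You verify this correctly via the first coordinate $y$ of difference: every real strictly between the two $y$th entries is realized by a point of $Z$ in between, a point of $Y$ if the real lies in $A_y$ and a point of $T$ otherwise. So your version of the key step is somewhat more economical; the density hypothesis is still needed, but only for Claim D itself.

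One slip to fix: the finite-distance condensation does not work with gadgets like $\mathbb{Z}+\kappa_\alpha+\mathbb{Z}$, since such a block splits into many finite classes. The scattered condensation is the one you must use, as you ultimately do. Any family of pairwise non-isomorphic nonempty scattered orders suffices, for example distinct nonzero ordinals, one per color.
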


\begin{proof}
By the proof of Corollary \ref{representsXSxinLOcorollarynodensity}, we may (replacing $X$ with $X\mathbb{Q}$ if necessary) assume $X$ is dense without endpoints, and $X(S_x)$ satisfies the hypothesis of Theorem \ref{ZrepresentsXSxincLOthm}. 

We encode the coloring $\mathsf{c}$ of the order $Z$ constructed above in an uncolored replacement of $Z$. Then we show that the corresponding replacements of the orders $I_{(y, s)}$ give an uncolored representation of $X(S_x)$. 

Enumerate the colors $\{c_i: i < \kappa\}$ that appear in $Z$ (i.e. $\{c_i\}$ enumerates the outputs of $\mathsf{c}$). Let $\aleph_i$ denote the $i$th infinite cardinal number. 

For each $i < \kappa$, fix a linear order $K_i$ that is $\aleph_i$-dense, that is, an order $K_i$ with the property that for any $k < k'$ in $K_i$, the interval $[k, k']$ has cardinality $\aleph_i$. Then any non-singleton interval $I \subseteq K_i$ is also $\aleph_i$ dense. Consequently, if $i \neq j$ there is no convex embedding of $I$ into $K_j$. 

(We note that $\aleph_i$-dense orders exist for all $i$: concretely one may take $K_i$ to be the suborder consisting of all eventually $0$ sequences in the $\omega$-power $\omega_i^{\omega}$, where $\omega_i$ denotes the least ordinal of cardinality $\aleph_i$.)

Let $Z'$ be the order obtained by replacing every point $u \in Z$ such that $\mathsf{c}(u) = c_i$ by $K_i$. That is, $Z' = Z(K_{[u]})$ is the replacement up to the equivalence relation on $Z$ determined by the color classes of $\mathsf{c}$, satisfying $K_u = K_i$ whenever $\mathsf{c}(u) = c_i$. We view the orders $K_i$ as uncolored, and $Z'$ as uncolored as well. 

If $I$ is an interval in $Z$, then any colored convex embedding $f: I \rightarrow Z$ determines a convex embedding of the restricted replacement $I(K_{[u]})$ into $Z'$, namely the embedding defined by the rule
\[
(u, k) \mapsto (f(u), k).
\]
This map is well-defined since $\mathsf{c}(u) = \mathsf{c}(f(u))$ and hence $K_u = K_{f(u)}$. 

Conversely, suppose $I \subseteq Z$ is an interval and consider the restricted replacement $I(K_{[u]})$, which is an interval in $Z'$. Suppose $F: I(K_{[u]}) \rightarrow Z'$ is a convex embedding whose image is also a replaced interval $I'(K_{[u]})$. 

\underline{Claim ($\dagger$)}: For every $u \in I$, the image $F[K_u] = K_v$ for some $v \in Z$ with $\mathsf{c}(u) = \mathsf{c}(v)$. That is, $F$ condenses to a colored isomorphism $f: I \rightarrow I'$ defined by $f(u) = v$ whenever $F[K_u] = K_v$. 

To prove the claim, we first note that from our assumptions on $X$ it is not difficult to check that $Z$ is densely ordered, and $T$ is dense in $Z$. Further, for any $u < v$ in $Z$ and color $c_i$, there is $w$ in $Z$ with $u < w < v$ and $\mathsf{c}(w) \neq c_i$.

Fix $u \in I$, and suppose $\mathsf{c}(u) = c_i$. Consider $F[K_u]$. If this image intersects two distinct replacement factors $K_v < K_{v'}$, then it contains $K_w$ for some $w$ with $v < w < v'$ and $\mathsf{c}(w) \neq c_i$. Since $K_w \leqslant_{conv} F[K_u]$, we have $K_w \leqslant_{conv} K_u$, contradicting the difference in these intervals' densities. 

Thus $K_u \subseteq K_v$ for some $v \in Z$. If the containment is strict, then the image $F^{-1}(K_v)$ of $K_v$ under the inverse embedding $F^{-1}$ intersects both $K_u$ and $K_{u'}$ for some $u' \neq u$ in $I$, and we get the same contradiction. Claim ($\dagger$) follows.  

For each $(y, s) \in X(S_x)$, let $I_{(y, s)}' = I_{(y, s)}(K_{[u]})$ be the restriction of the replacement $Z' = Z(K_{[u]})$ to $I_{(y, s)}$. 

Let Claims (a.), (b.), (c.), and (d.) be the statements obtained from Claims (A.), (B.), (C.), and (D.) by replacing the orders $I_{(y, s)}$ and $I_{(z, t)}$ in their statements by $I_{(y, s)}'$ and $I_{(z, t)}'$. We may prove Claims (a.), (b.), (c.) using the same proofs for (A.), (B.), and (C.), substituting each interval $I \subseteq Z$ that appears in these proofs by its replaced version $I' = I(K_{[u]})$, and each convex embedding $f$ on such an interval $I$ by its lift to $I'$. 

We may also prove Claim (d.): by ($\dagger$), if we were able to find an isomorphism $F: I_{(y, s)}' \rightarrow I_{(z, t)}'$, then the corresponding condensed isomorphism $f$ would be a color-isomorphism of $I_{(y, s)}$ and $I_{(z, t)}$, contradicting Claim (D.). 

It follows from Claims (a.), (b.), (c.), and (d.) that the rule
\[
(y, s) \mapsto I_{(y, s)}'
\]
defines an uncolored representation of $X(S_x)$, i.e. a representation of $X(S_x)$ in $LO$. 
\end{proof}

\subsection{The representation theorem}\label{subsect:therepnthm}

\theoremstyle{definition}
\newtheorem{representablesemigroupstheorem}[lct]{Theorem}
\begin{representablesemigroupstheorem} \label{representablesemigroupstheorem}
A commutative semigroup $(S, \oplus)$ is representable in $(LO, +)$ if and only if $S$ is isomorphic to a subsemigroup of an $LO$ semigroup.  
\end{representablesemigroupstheorem}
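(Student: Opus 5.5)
The backward direction is already done: by Theorem \ref{XSxrepninLOnocolorsthm} every $LO$ semigroup is representable in $(LO, +)$. A representation of a semigroup restricts to a representation of any of its subsemigroups, since both injectivity on types and additivity are inherited. So any semigroup isomorphic to a subsemigroup of an $LO$ semigroup is representable.

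For the forward direction, fix a representation $\iota: S \rightarrow LO$ of a commutative semigroup $S$. Let $K = \{B : B \cong \iota(s) \text{ for some } s\}$, so that $S \cong (K/{\cong}, +)$. Commutativity of $S$ gives $\iota(a) + \iota(b) \cong \iota(b) + \iota(a)$ for all $a, b$. By Theorem \ref{ABcommuteifOmegasumsinitfin}, for every pair $a, b$ either $\iota(a) \lessapprox \iota(b)$ or $\iota(b) \lessapprox \iota(a)$. Hence $\lessapprox$ is a quasi-linear order on the image, and its $\approx$-classes (restricted to the image) are linearly ordered. Let $X$ be this linear order of classes, and for $x \in X$ let $S_x$ be the set of $s \in S$ whose image lies in class $x$.

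\textbf{Each $S_x$ is a subsemigroup of the right kind.} By Lemma \ref{approxclassesclosedundersum}, $S_x$ is closed under $\oplus$. Fix a representative $A$ of class $x$. By Theorem \ref{Aapproxclasseitherfullystrictortrivialrepn}, $[A]_{\approx}/{\cong}$ is isomorphic either to a strict halfgroup or to the trivial semigroup. Since $\iota$ is injective on types, $S_x$ embeds as a subsemigroup of it. Since $S_x$ is nonempty, in the trivial case $S_x$ is itself trivial.

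\textbf{$S$ embeds in the replacement semigroup.} I would then check that $s \mapsto (x_s, \theta_{x_s}(s))$ is an injective homomorphism into $X(S_x')$, where $S_x'$ is the ambient halfgroup or the trivial semigroup and $\theta_x$ is the embedding just described. Suppose $x_a < x_b$, meaning $\iota(a) \lnapprox \iota(b)$. By Proposition \ref{AlessapproxBreformulations}(iii), either $\iota(a) + \iota(b) \cong \iota(b) + \iota(a) \cong \iota(b)$, giving $a \oplus b = b$, which matches the rule $(x,s)\oplus(y,t) = (y,t)$ for $x<y$; or $\omega\iota(a) \cong \omega\iota(b)$ and $\omega^*\iota(a) \cong \omega^*\iota(b)$, which contradicts strictness. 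When $x_a = x_b$, the map is a homomorphism because each $\theta_x$ is. Injectivity follows from injectivity of each $\theta_x$ together with the disjointness of the classes.

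\textbf{Main obstacle.} The delicate point is verifying that $\lessapprox$ really linearly orders the classes meeting the image and that the strict case forces absorption. Both follow from Theorem \ref{ABcommuteifOmegasumsinitfin} and Proposition \ref{AlessapproxBreformulations}, with Proposition \ref{simstarandisosofinfdiscreteproducts} used to separate the "isomorphic $\omega$-sums" alternative from the strict relation. The resulting $X(S_x')$ is an $LO$ semigroup, which completes the forward direction.
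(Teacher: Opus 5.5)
Your proposal is correct and follows the paper's route: totality of $\lessapprox$ on the commuting image, a replacement semigroup over the linearly ordered $\approx$-classes, closure of each class under $+$ via Lemma \ref{approxclassesclosedundersum}, and Theorem \ref{Aapproxclasseitherfullystrictortrivialrepn} to identify each ambient class as a strict halfgroup or trivial. Your explicit appeal to Proposition \ref{AlessapproxBreformulations} to show that a strict $\lnapprox$ relation forces absorption fills in a step the paper leaves implicit when it writes out $F(A+B)$ by cases.
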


\begin{proof}
Since subsemigroups of representable semigroups are representable, the backward direction of the theorem is given by Theorem \ref{XSxrepninLOnocolorsthm}. 

For the forward direction, suppose $S$ is a commutative semigroup and there is a representation $\iota: S \rightarrow LO$. 

For simplicity of notation, for the remainder of the proof we view $LO$ as the class of linear order types $LO/{\cong}$, and identify each linear order $A$ with its order type $[A]_{\cong}$. Under this identification $(LO, +)$ is a class semigroup.

Then $\iota$ is a semigroup isomorphism of $S$ with its image. Identify $S$ with its image, so that $(S, +)$ is a semigroup of order types. 

For each $A \in S$, we write $[A]$ for $[A]_{\approx} \cap S$, i.e.
\[
[A] = \{B \in S: B \approx A\}.
\]

Since $+$ is commutative on $S$, we have $\lessapprox$ is total on $S$ by Theorem \ref{ABcommutearithmeticequivalenceslist}. Thus $\lessapprox$ quasi-linearly orders $S$, and linearly orders $\{[A]: A \in S\}$. 

Let $X = \{[A]: A \in S\}$. For each $x \in X$, define $S_x = x$. Then $S_x = [A]$ for some $A \in S$. Note that since $S$ is closed under $+$, by Lemma \ref{approxclassesclosedundersum} each $S_x$ is closed under $+$ as well, and hence a semigroup under $\oplus_x = +$.

Consider the replacement semigroup $X(S_x)$, and let $\boxplus$ denote its semigroup operation as defined in \ref{replacementsemigroupdefn}. Observe that $X(S_x)$ consists of all pairs of the form $([A], A)$ for $A \in S$. 

Define $F: S \rightarrow X(S_x)$ by the rule $F(A) = ([A], A)$. By the above discussion, $F$ is clearly injective and surjective. Further, for any $A, B \in S$ we have
\[
\begin{array}{rclcl}
    F(A + B) & = &  ([A+B], A+B) & = &  \left\{ \begin{array}{ll}
                                        ([A], A) & \textrm{if $[B] \lnapprox [A]$} \\
                                        ([B], B) & \textrm{if $[A] \lnapprox [B]$}  \\
                                        ([A], A + B) & \textrm{if $[A] = [B]$} .
                                        \end{array}  
                                        \right. \\
\end{array}
\]
By the definition of $X(S_x)$, the expression on the right equals $([A], A) \boxplus ([B], B)$. Thus $F$ is a semigroup isomorphism. 

Now observe that each factor $S_x = [A] = [A]_{\approx} \cap S$ is a subsemigroup of the semigroup $S_x' = [A]_{\approx}$. Thus $X(S_x)$ is a subsemigroup of $X(S_x')$. By Theorem \ref{Aapproxclasseitherfullystrictortrivialrepn}, each $S_x'$ is either a copy of the trivial semigroup, or isomorphic to a strict halfgroup of $\mathbb{R}_{>0}$. Thus $X(S_x')$ is an $LO$ semigroup, and we are done. 
\end{proof}

\bibliographystyle{amsplain}

\begin{thebibliography}{00}

\bibitem{Aronszajn} N. Aronszajn,
{\it Characterisation of types of order satisfying $\alpha_0 + \alpha_1= \alpha_1+ \alpha_0$},
Fundamenta Mathematicae 39.1 (1952): 65-96.

\bibitem{Clifford} A.H. Clifford, 
{\it Naturally totally ordered commutative semigroups},
American Journal of Mathematics 76.3 (1954): 631-646.

\bibitem{DNR} B. Deroin, A. Navas, and C. Rivas,
{\it Groups, orders, and dynamics}, 
arXiv preprint arXiv:1408.5805 (2014).

\bibitem{ErvinPaul} G. Ervin and E. Paul,
{\it Commutativity laws for ordinal algebras},
forthcoming. 

\bibitem{Jonsson} B. J\'onsson,
{\it Arithmetic of ordered sets},
Ordered Sets: Proceedings of the NATO Advanced Study Institute held at Banff, Canada, August 28 to September 12, 1981. Dordrecht: Springer Netherlands (1982).

\bibitem{LindenbaumTarski} A. Lindenbaum and A. Tarski, 
{\it Communication sur les recherches de le théorie des ensembles},
(1926).

\bibitem{McCleary} S.H. McCleary, 
{\it The structure of intransitive ordered permutation groups},
Algebra Universalis 6.1 (1976): 229-255.

\bibitem{TarskiCard} A. Tarski,
{\it Cardinal Algebras. With an Appendix: Cardinal Products of Isomorphism Types, by Bjarni Jónsson and Alfred Tarski.} Oxford University Press, New York, N. Y., 1949.

\bibitem{Tarski} A. Tarski, 
{\it Ordinal algebras},
North-Holland Publishing Co., Amsterdam (1956), i+133 pp.

\end{thebibliography}

\end{document}